\renewcommand{\section}{\@startsection {section}{1}{\z@}%
             {-3.5ex \@plus -1ex \@minus -.2ex}%
             {2.3ex \@plus .2ex}%
             {\normalfont \Large \scshape\bfseries}}
\renewcommand{\subsection}{\@startsection{subsection}{2}{\z@}%
             {-3.25ex\@plus -1ex \@minus -.2ex}%
             {1.5ex \@plus .2ex}%
             {\normalfont\large\scshape\bfseries}}
\renewcommand{\subsubsection}{\@startsection{subsubsection}{2}{\z@}%
             {-3.25ex\@plus -1ex \@minus -.2ex}%
             {1.5ex \@plus .2ex}%
             {\normalfont\normalsize\scshape\bfseries}}
\theoremstyle{plain}
\newtheorem{theorem}{Theorem}[section]
\newtheorem{proposition}[theorem]{Proposition}
\newtheorem{propositiondefinition}[theorem]{Proposition and Definition}
\newtheorem{lemma}[theorem]{Lemma}
\newtheorem{definition}[theorem]{Definition}
\newtheorem{remark}[theorem]{Remark}
\numberwithin{equation}{section}
\newcommand \bei {\begin{itemize}}
\newcommand \eei {\end{itemize}}
\newcommand \phib {\phi_0}
\newcommand \psib {\phi_1}
\newcommand \be         {\begin{equation}}
\newcommand \bel {\be\label}
\newcommand \eps \epsilon
\newcommand \nablabd {{\overline \nabla}^\dag}
\newcommand \Gammad { {\Gamma^{\dag}}}
\newcommand \Gammab {\overline{\Gamma}}
\newcommand \coeff \kappa
\newcommand \Lcal {\mathcal L}
\newcommand \Ocal {\mathcal O}
\newcommand \Mb {\overline M}
\newcommand \gb {\overline g}
\newcommand \Rb {\overline R}
\newcommand \nablab {\overline \nabla}
\newcommand \nablad {{\nabla^\dag}}
\newcommand \delb {\overline \del}
\newcommand \Rd {{R^\dag}}
\newcommand \Rbd{\overline{R}^{\dag}}
\newcommand \tr {\text{tr \hskip-.cm}}
\newcommand \gd {{g^\dag}}
\newcommand \Kd {{K^\dag}}
\newcommand \Gd  {{G^\dag}}
\newcommand \Nd {{N^\dag}}
\newcommand \gbd {{\overline g^\dag}}
\newcommand \sigmad {{\sigma^\dag}}
\newcommand \Jd {{J^\dag}}
\newcommand \omegad{{\omega^\dag}}
\newcommand \del \partial
\newcommand \Rh {R_0}
\newcommand \Sh {R_1}
\newcommand \Ric {\text{Ric}}
\newcommand \Rm {\text{Rm}}
\newcommand \Acal   {\mathcal A}
\newcommand \RR         {\mathbb R}
\newcommand \ee         {\end{equation}}
\newcommand \la \langle
\newcommand \ra \rangle
\newcommand \rhoR {\theta}
\newcommand \gv{{g^{\ddag}}}
\newcommand \gvzero{{g^{\ddag}_0}}
\newcommand \gvone{{g^{\ddag}_1}}
\newcommand \Rv{{R^{\ddag}}}
\newcommand \nablav{{\nabla^{\ddag}}}
\newcommand \Gammav{{\Gamma^{\ddag}}}
\newcommand \Gv{{G^{\ddag}}}
\newcommand \Kv{{K^{\ddag}}}
\newcommand \phizerod {\phi_0^\dag}
\newcommand \phioned {\phi_1^\dag}
\newcommand \phizerodd{\phi_0^\ddag}
\newcommand \phionedd{\phi_1^\ddag}
\newcommand \rhozerod {\rho_0^\dag}
\newcommand \rhooned {\rho_1^\dag}
\newcommand \varrhozero {\varrho_0}
\newcommand \varrhoone {\varrho_1}
\newcommand \TJ T
\newcommand \TE {T^\dag}
\begin{document}
 
\title[Mathematical Validity of the Theory of Modified Gravity]
{\vskip1.cm MATHEMATICAL VALIDITY OF THE 
$f(R)$ THEORY 
\\
OF MODIFIED GRAVITY} 
\author[Philippe G. L{\smaller e}FLOCH and Yue MA]{\hskip2.cm Philippe G. L{\normalsize E}FLOCH
and Yue MA 
\newline  
\newline
\centerline{\sl Laboratoire Jacques-Louis Lions}
\newline 
\centerline{\sl Centre National de la Recherche Scientifique}
\newline 
\centerline{\sl Universit\'e Pierre et Marie Curie}
\newline 
\centerline{\sl 4 Place Jussieu, 75252 Paris, France} 
\newline  
}
\date{December 2014}

\maketitle


\vskip2.cm 
\begin{abstract} We establish here a well-posedness theory for the $f(R)$ theory of modified gravity, which is a generalization of Einstein's theory of gravitation. The scalar curvature $R$ of the spacetime, which arises in the integrand of the Einstein-Hilbert functional, is replaced here by an arbitrary function $f=f(R)$.  
The field equations involve {\sl up to fourth-order derivatives} of the unknown spacetime metric, and the main challenge is to understand the structure of these high-order derivative terms. First of all, we propose a  {\sl formulation of the initial value problem in modified gravity} when the initial data are prescribed on a spacelike hypersurface. In addition to the induced metric and second fundamental form of the initial slice and the initial matter content, an initial data set for modified gravity must also provide the {\sl spacetime scalar curvature} and its time-derivative. Next, in order to tackle the initial value problem, we introduce an {\sl augmented conformal formulation}, as we call it, in which the spacetime scalar curvature is regarded as an {\sl independent variable.} In particular, in the so-called wave gauge, we prove that the field equations of modified gravity reduce a {\sl coupled system of nonlinear wave-Klein-Gordon equations} with defocusing potential, whose main unknowns are the conformally-transformed metric and the scalar curvature, as well as the matter fields. Based on this novel formulation, we are able to establish the existence of {\sl maximal globally hyperbolic developments of modified gravity} when, for definiteness, the matter is represented by a scalar field. We analyze the so-called {\sl Jordan coupling} and we work with the {\sl Einstein metric}, which is conformally equivalent to the physical metric ---the conformal factor depending upon the (unknown) scalar curvature. Our analysis of these conformal field equations in the Einstein metric leads us to a rigorous validation of the theory of modified gravity. We derive quantitative estimates in suitable functional spaces, which are uniform in terms of the nonlinearity $f(R)$, and we prove that asymptotically flat spacetimes of modified gravity are {\sl `close' to Einstein spacetimes}, when the defining function $f(R)$ in the action functional of modified gravity is `close' to the Einstein-Hilbert integrand~$R$.  
\end{abstract}

\vfill

{\small  

\hskip6.cm  $
\aligned
\text{Email addresses:} \quad & \text{{\tt contact@philippelefloch.org}}
\\
& \text{{\tt ma@ann.jussieu.fr}}
\endaligned
$
}

\newpage 

\setcounter{secnumdepth}{2} \setcounter{tocdepth}{2}

\tableofcontents


\section{Introduction}

In recent years, new observational data have suggested that alternative theories of gravity, based on extensions of Einstein's field equations of general relativity, may be relevant in order to explain, for instance, the accelerated expansion of the Universe and certain instabilities observed in galaxies (without explicitly introducing notions such as 'dark energy' or 'dark matter'). Among these theories, the so-called {\bf $f(R)$--theory of modified gravity} (associated with a prescribed function $f(R)$ of the scalar curvature $R$) was recognized as a physically viable alternative to Einstein's theory. Despite the important role played by this theory in physics\footnote{The reader may refer to the physical and numerical literature \cite{BLMN,BransDicke,Garcia,MagnanoSokolowski,SalgadoMartinez}.}, the corresponding field equations have not been investigated by mathematicians yet. This is due to the fact that the modified gravity equations are significantly more involved than the Einstein equations: they contain up to {\sl fourth-order derivatives} of the unknown metric, rather than solely second-order derivatives.

Our purpose in this monograph is to initiate a rigorous mathematical study of the modified gravity equations and, as our main objectives, to 
\bei 

\item define a suitable {\bf notion of initial data set in modified gravity,} 

\item describe an {\bf initial value formulation} from an arbitrary spacelike hypersurface, 

\item establish the {\bf existence of a globally hyperbolic maximal developments} associated with any given initial data set,

\item and, importantly, to {\bf provide a rigorous validation} that the modified gravity theory is an `approximation' of Einstein's theory.

\eei

As already mentioned, in addition to the (second-order) Ricci curvature terms arising in Einstein equations, the field equations of the $f(R)$-theory involve fourth-order derivatives of the metric and, in fact, second-order derivatives of its scalar curvature. The corresponding system of partial differential equations (after a suitable choice of gauge) consists of a system of {\sl nonlinear wave equations}, which is significantly more involved than Einstein's system. Yet, a remarkable mathematical structure is uncovered in the present work, which is based on a novel formulation, referred to as the {\bf augmented conformal formulation}: we introduce an extended system in which,
both, the metric and its scalar curvature are regarded as {\sl independent unknowns};
we establish the well-posedness of the initial value problem for this augmented formulation, and finally recover
the solutions of interest for the original system of modified gravity.

Recall first that Einstein theory is based on the {\bf Hilbert-Einstein action} 
\be
\Acal_{\text{HE}}[\phi, g] := \int_M \Big( {R_g  \over 16 \pi} + L[\phi,g] \Big) \, dV_g
\ee
associated with a $(3+1)$--dimensional spacetime $(M,g)$ with Lorentzian signature $(-, +,+,+)$
whose canonical volume form is denoted by $dV= dV_g$. Here, and thereafter, we denote by $\Rm= \Rm_g$, $\Ric= \Ric_g$, and $R=R_g$ the Riemann, Ricci, and scalar curvature of the metric $g$, respectively. 
Observe that the above functional $\Acal_{\text{EH}}[g] $ is determined from the scalar curvature $R_g$ and a Lagrangian $L[\phi,g]$, the latter term describing the matter content represented by fields $\phi$ defined on $M$.

It is well-known that critical metrics for the action $\Acal_{\text{EH}}[g]$ ( at least formally)
satisfy Einstein's equation
\be
\label{Eq1-01}
G_g := \Ric_g - {R_g \over 2} \, g = 8 \pi \, T[\phi,g],
\ee
in which the right-hand side\footnote{Greek indices $\alpha, \beta=0, \ldots, 3$ represent spacetimes indices.}
\bel{Eq:12}
T_{\alpha\beta}[\phi,g] := -2 \, {\delta L \over \delta g^{\alpha\beta}} [\phi,g]  + g_{\alpha\beta}\, L[\phi,g]
\ee
is referred to as the stress-energy tensor of the matter model. In the vacuum,
 for instance, these equations are equivalent to the Ricci-flat condition
\bel{eqRicci}
\Ric_g = 0.
\ee

The `higher-order' gravity theory of interest is defined as follows. A smooth function $f: \RR \to \RR$ being prescribed, the {\bf action of the $f(R)$-modified gravity theory} read\footnote{See Buchdahl \cite{Buck}, as well as the earlier proposal by Brans and Dicke~\cite{BransDicke}.}:  
\bel{eq:action}
\Acal_{\text{NG}}[\phi,g] =: \int_M \Big( {f(R_g) \over 16 \pi} + L[\phi, g]\Big) \, dV_g,
\ee
whose critical points satisfy the {\bf field equations of modified gravity}
\bel{Eq1-14}
\aligned
N_g :&= f'(R_g) \, G_g - \frac{1}{2} \Big( f(R_g) - R_g f'(R_g) \Big) g
+  \big( g \, \Box_g   - \nabla d\big)  \big( f'(R_g) \big) 
\\
&= 8 \pi \, T[\phi,g].
\endaligned
\ee
The modified gravity tensor $N_g$ thus ``replaces" Einstein's tensor $G_g$, while the right-hand side\footnote{further discussed shortly below} is still given by  the same expression \eqref{Eq:12}.
Observe that, by taking the trace of \eqref{Eq1-14}, we deduce the scalar equation
\begin{equation}\label{Eq1-15}
\tr{N_g} = f'(R_g)R_g - 2f'(R_g) + 3\Box_gf'(R_g) = 8\pi\tr(T),
\end{equation}
which can be regarded as an {\sl evolution equation for the spacetime curvature} and will play an important role. 

Concerning the matter content, we point out (cf.~Section~\ref{sec original} below for the derivation) that the modified gravity tensor $N_g$ is {\sl divergence free,} that is,
\be
\nabla^\alpha N_{\alpha\beta} = 0,
\ee
so that the matter field satisfies the {\bf matter evolution equation} 
\bel{Eq1-15bis}
\nabla^\alpha T_{\alpha\beta} =0.
\ee

Furthermore, for the nonlinear theory to be a formal extension of the classical theory, we must assume that $f(R) \simeq R$ in the small curvature limit $R \to 0$. Since we will see later that the (positive)
sign of the coefficient $\coeff:=f''(0) >0$ is critical for nonlinear stability, it is convenient to set
\bel{Eq1-16derive}
f'(R) = 1 + \coeff \big( R  + \kappa \Ocal(R^2) \big),
\ee
which after integration yields
\bel{Eq1-16}
f(R) = R + \coeff \Big( \frac{R^2}{2}  + \kappa \Ocal(R^3) \Big),
\ee
where, by definition, the remainder $\Ocal(z^2)/z^2$ remains bounded when $z \to 0$ (uniformly in $\kappa$, if this parameter is taken to vary). In particular, the function $f$ is increasing and strictly convex in a neighborhood of the origin and, therefore, one-to-one\footnote{Of course, the term $\kappa \Ocal(R^3)$ in \eqref{Eq1-16} could be taken to vanish identically, which corresponds to the quadratic action $\int_M \Big( R_g + {\kappa \over 2} (R_g)^2  + 16 \pi L[\phi, g]\Big) \, dV_g$ often treated in the physical literature.}.

As we will see, in local coordinates, the field equations \eqref{Eq1-14} take the form of a nonlinear system of fourth-order partial differential equations (PDE's), while the Einstein equation \eqref{Eq1-01} leads to only second-order equations. Our  challenge in the present work is investigating the role of these fourth-order terms and generalizing the mathematical methods that were originally developed for the Einstein's equations. Furthermore, one formally would
expect to recover Einstein's theory by letting the coefficient $\coeff$ tend to zero. However, this limit is  {\sl very singular,} since this involves analyzing the convergence of a fourth-order system (with no definite type yet) to a system of second-order (hyperbolic-elliptic) PDE's. 


Before we can proceed further, we need to make an important observation concerning the modeling of the matter content of the spacetime. In the physics literature, the choice of the frame\footnote{From a mathematical standpoint, all frames are equivalent.} in which measurements are made is still somewhat controversial \cite{MagnanoSokolowski}. Yet, this issue is essential in order for properly formulating the coupling between the gravity equations and the matter fields. Two standpoints were proposed by physicists. In the so-called ``Jordan frame'', the original metric $g_{\alpha\beta}$ is considered to be the physically relevant metric, while in the ``Einstein frame'', the {\bf conformally-transformed metric} 
\be
\gd_{\alpha\beta}:= f'(R_g)g_{\alpha\beta}
\ee
 is
considered to be the physically relevant metric.  In the present work, these two approaches will be referred to as the ``Jordan coupling'' and ``Einstein coupling'' for the matter. Hence, the ``Jordan coupling'' refers to the minimal coupling of the matter field to the geometry of the spacetime (represented by the tensor $N_g$) described by the ``Jordan metric'' (i.e. the original metric) $g_{\alpha\beta}$. On the other hand, the ``Einstein coupling'' refers to the minimal coupling of the matter field to the geometry of the spacetime described by the metric $\gd_{\alpha\beta}$.

It is important to observe that different matter couplings lead to different physical theories, which may or may not be equivalent to each other. Of course, a given physical theory can also be expressed in various choices of metrics, that is, for the problem under consideration,  the ``Jordan coupling" could also be expressed with the ``Einstein metric'' $\gd_{\alpha\beta}$, while the ``Einstein coupling'' could also be stated in the ``Jordan metric'' $g_{\alpha\beta}$. 
A coupling which is minimal (in the sense that the action takes the decoupled form \eqref{eq:action}), in general, will no longer be minimal in another choice of metric. This, therefore, suggests that the Einstein metric is not the physical metric in the Jordan coupling theory, while the Jordan metric is not the physical metric in the Einstein coupling. 
This has apparently led to great confusion in the literature. In summary, observe that our notion of Einstein coupling is equivalent to the notion of Einstein frame adopted in \cite{MagnanoSokolowski}. 

In this work, we will treat the Jordan coupling but expressed in the (conformal) Einstein metric $\gd$. This coupling has the minimal form \eqref{eq:action}, but only in the original metric $g$. If one would insist on stating the problem in terms of the Einstein metric, then the coupling would not be minimal. This presentation appears to be optimal from the standpoint of establishing a well-posed theory for the initial value problem. 

Throughout this monograph, the matter model of interest is a {\bf real massless scalar field,} defined by its standard stress-energy tensor, and we consider the following two possible couplings:
\bel{eq-94}
\aligned
\TJ_{\alpha\beta}
& := \nabla_\alpha \phi \nabla_{\beta}\phi - \frac{1}{2} g_{\alpha\beta} g^{\delta\lambda} \nabla_\delta \phi \nabla_{\lambda}\phi,
\\
\TE_{\alpha\beta}
& := f'(R_g)\big(\nabla_\alpha \phi \nabla_{\beta}\phi - \frac{1}{2} g_{\alpha\beta} g^{\delta\lambda} \nabla_\delta \phi \nabla_{\lambda}\phi\big), 
\endaligned
\ee
and, for convenience, the Einstein coupling is stated in the Jordan metric. As should be expected from the above discussion, different choices of coupling lead to systems of PDE's of rather different nature. In fact, we will show that the Einstein coupling leads to an {\sl ill-defined Cauchy problem}. Therefore, in the rest of this section, we restrict attention to the Jordan coupling. 

We are now in a position to state our main result in a preliminary form. We recall that the initial value problem for the Einstein equations is classically formulated as follows. 
(We refer to the textbook by Choquet-Bruhat \cite{CB} for the terminology and historical references.) 
Given a Riemannian $3$--manifold $(\Mb,\gb)$ together with a $2$-covariant tensor field $K$
(plus suitable matter data) satisfying certain constraint equations, one seeks for a (globally hyperbolic)
 development of this so-called initial data set. By definition, such a development consists of a Lorentzian manifold $(M, g)$ satisfying the Einstein equations such that $\Mb$ is embedded in $M$ as a spacelike hypersurface with induced metric $\gb$ and second fundamental form $K$. The maximal (globally hyperbolic) development, by definition, is the unique development of the initial data set in which any such development can be  isometrically embedded.

In short, our formulation of the initial value problem for the theory of modified gravity is as follows. Since the field equations \eqref{Eq1-14} are fourth-order in the metric, additional initial data are required, which are denoted by $\Rh, \Sh$ and are specified on the initial slice $\Mb$: they represent the scalar curvature and the time derivative of the scalar curvature of the (to-be-constructed) spacetime. They must of course also satisfy certain Gauss-Coddazzi-type constraints. 
In addition, since the matter is modeled by a scalar field, say $\phi$, we also prescribe some initial data denoted by $\phib, \psib$, and representing the initial 
values of the scalar field and its time derivative, respectively. 
The prelimary statement above will be made more precise in the course of our analysis and all necessary 
terminology will be introduced. For definiteness, the results are stated with asymptotically flat data but this is unessential. 

\begin{theorem}[The Cauchy developments of modified gravity]
\label{thm main}
Consider the field equations \eqref{Eq1-14}  of modified gravity based on a function $f=f(R)$ satisfying
\eqref{Eq1-16derive} and assume that the matter is described by a scalar field with Jordan coupling \eqref{eq-94}.
Given an asymptotically flat initial data set\footnote{in the sense of Definition \ref{def 7-4-1}, below} 
 $(\Mb, \gb, K, \Rh,\Sh, \phib, \psib)$, there exists a unique  
maximal globally hyperbolic development $(M,g)$ of these data, which satisfies\footnote{in the sense of Definition~\ref{def:formu}, below} the modified gravity equations \eqref{Eq1-14} 
Furthermore, 
if an initial data set $(\Mb, \gb, K, \Rh,\Sh, \phib, \psib)$ for modified gravity is ``close'' (in a sense that will be made precise later on) to an initial data set $(\Mb,\gb',K',\phib', \psib')$ for the classical Einstein theory, 
 then the corresponding development of modified gravity is also close to the corresponding Einstein development. 
This statement is uniform in term of the gravity parameter $\kappa$ and modified gravity developments converge to Einstein developments when $\kappa \to 0$. 
\end{theorem}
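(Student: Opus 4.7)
The plan is to convert the fourth-order system \eqref{Eq1-14} into a hyperbolic system whose unknowns are differentiated at most twice, by promoting the scalar curvature to an independent variable. First I would introduce an auxiliary scalar $\rho$, to be identified with $R_g$, and work with the Einstein (conformally-transformed) metric $\gd_{\alpha\beta} := f'(\rho)\, g_{\alpha\beta}$ as a principal unknown, together with the matter field $\phi$. After dividing the trace equation \eqref{Eq1-15} by $3 f''(\rho)$ and using \eqref{Eq1-16derive}, it takes the form of a quasilinear Klein--Gordon equation for $\rho$ schematically written as
\begin{equation*}
\Box_{\gd}\, \rho \,-\, \frac{1}{3\kappa}\,\rho \,=\, -\frac{8\pi}{3\kappa}\,\tr(T) \,+\, \text{lower-order nonlinear terms},
\end{equation*}
whose mass-like coefficient is \emph{defocusing} precisely because $\kappa = f''(0) > 0$; the remaining components of \eqref{Eq1-14}, once rewritten in the Einstein metric, become second-order expressions in $\gd$ coupled to $\rho$ and $\phi$, while the matter field itself satisfies $\Box_g \phi = 0$ by \eqref{Eq1-15bis}. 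This is the augmented conformal formulation.

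Next I would fix the gauge by imposing the wave-coordinate condition relative to $\gd$, namely $\Box_{\gd} x^\alpha = 0$, which reduces the $\gd$-block to a quasilinear wave equation of the form $\Box_{\gd}\, \gd_{\alpha\beta} = F_{\alpha\beta}(\gd, \partial\gd, \rho, \partial\rho, \phi, \partial\phi)$, so that the full augmented system is a coupled nonlinear wave--Klein--Gordon system of standard hyperbolic type. From the initial data $(\Mb, \gb, K, \Rh, \Sh, \phib, \psib)$ I would construct Cauchy data on $\Mb$ for $(\gd, \partial_t \gd, \rho, \partial_t \rho, \phi, \partial_t \phi)$ compatible with the wave gauge, and invoke the standard local existence theory for such systems in weighted Sobolev spaces adapted to asymptotically flat data. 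Constraint and wave-gauge propagation are then verified by deriving a homogeneous second-order equation for the wave-gauge defect together with the Hamiltonian and momentum constraints, using the contracted Bianchi identity and the divergence-free property $\nabla^\alpha N_{\alpha\beta} = 0$. Local uniqueness combined with the Choquet-Bruhat--Geroch patching argument then yields the unique maximal globally hyperbolic development.

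For the closeness and limit assertions, the strategy is to derive energy estimates for the wave--Klein--Gordon system that are \emph{uniform} in $\kappa$, after rescaling so that the genuinely small quantity is $f'(\rho)-1 = \kappa\rho + \kappa\,\Ocal(\kappa \rho^2)$. I would then compare the modified-gravity development with the Einstein development generated by the nearby data $(\Mb, \gb', K', \phib', \psib')$ by subtracting the two systems and running a Gronwall argument on the difference, showing along the way that the residual $\rho - 8\pi\,\tr(T)$ is of order $\kappa$ in the working norms. The formal limit $\kappa \to 0$ of the augmented system is then exactly the Einstein system, since the defocusing mass term enforces the algebraic slaving relation $\rho = 8\pi\,\tr(T)$ in the limit.

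The principal obstacle is the \emph{singular} character of the limit $\kappa \to 0$: the mass-like coefficient in the Klein--Gordon equation for $\rho$ scales as $\kappa^{-1}$, so $\rho$ behaves as a stiff variable whose hyperbolic evolution degenerates into an algebraic constraint. Closing uniform estimates therefore hinges on exploiting the defocusing sign to control $\rho$ in $L^2$-based norms without losing powers of $\kappa$, on commuting derivatives and Killing-type vector fields with the stiff Klein--Gordon operator without spoiling the $\kappa$-uniformity, and on ensuring that the coupling back into the $\gd$-wave equation remains tame. This is the structural advantage of the augmented conformal formulation: it trades the loss of two orders of derivatives (from fourth to second) for a singular Klein--Gordon coupling whose defocusing sign makes uniform-in-$\kappa$ hyperbolic estimates possible.
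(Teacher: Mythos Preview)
Your proposal is essentially the paper's strategy: promote the scalar curvature to an independent unknown, pass to the Einstein conformal metric, impose wave coordinates relative to $\gd$, obtain a coupled wave--Klein--Gordon system with defocusing mass $\sim (3\kappa)^{-1}$, prove local existence with $\kappa$-uniform energy estimates, propagate the wave-gauge and constraint conditions via the Bianchi identity and $\nabla^\alpha N_{\alpha\beta}=0$, and compare with the Einstein development by subtracting the two systems and running Gronwall. The only noteworthy difference is your parameterization: you take the auxiliary scalar to be $R_g$ itself, which produces a Klein--Gordon equation whose right-hand side carries a large $\kappa^{-1}\tr(T)$ source, whereas the paper takes $\rho=\tfrac12\ln f'(R_g)$ (so $\gd_{\alpha\beta}=e^{2\rho}g_{\alpha\beta}$), for which the source is bounded and $\rho\to 0$ directly as $\kappa\to 0$; you correctly anticipate this by saying one must ``rescale so that the genuinely small quantity is $f'(\rho)-1$,'' which is exactly the paper's variable. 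Modulo this reparameterization and a couple of sign slips in the slaving relation (the limiting identity is $R=-8\pi\,\tr(T)$), your outline matches the paper's proof.
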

 
Our results provide the first mathematically rigorous proof that the theory of modified gravity admit a well-posed Cauchy formulation and, furthermore, can be regarded as an approximation of Einstein's classical theory of gravity, as anticipated by physicists.

A key contribution of the present work is a re-formulation of the field equations of modified gravity
 as a system of second-order hyperbolic equations and, more precisely, a coupled {\sl system of wave-Klein-Gordon equations}\footnote{Wave-Klein-Gordon systems have brought a lot of attention in mathematical analysis: see, for instance,  Bachelot \cite{Bachelot88,Bachelot94}, Delort et al. \cite{Delort01,Delort04}, Katayama \cite{Katayama}, Lannes \cite{Lannes13}, and LeFloch and Ma \cite{LeFlochMa1} and the references therein.}. For further results on the mathematical aspects of the $f(R)$ theory, we refer to LeFloch and Ma \cite{LeFlochMa2}. We advocate here the use of {\sl wave coordinates associated with the Einstein metric} and our formulation in such a gauge leads us to propose the following definition. Importantly, our formulation contains an {\sl augmented variable} denoted by $\rho$, which represents the scalar curvature of the spacetime\footnote{specifically $\rho = \frac{1}{2}\ln f'(R_g)$}.

\begin{definition}
The {\bf augmented conformal formulation} of  the field equations of modified gravity
 (with Jordan coupling and in wave coordinates associated with the  Einstein metric) reads:
\begin{equation}
\label{eq:500} 
\aligned
\gd^{\alpha'\beta'}\del_{\alpha'}\del_{\beta'}\gd_{\alpha\beta}
&=  F_{\alpha\beta}(\gd;\del \gd,\del \gd)  - 12\del_{\alpha}\rho\del_{\beta}\rho
+
V(\rho) 
\gd_{\alpha\beta}  -16\pi\del_{\alpha}\phi\del_{\beta}\phi,
\\
\gd^{\alpha'\beta'}\del_{\alpha'}\del_{\beta'}\phi
& = -2\gd^{\alpha\beta}\del_{\alpha}\phi\del_{\beta}\rho,
\\
\gd^{\alpha'\beta'}\del_{\alpha'}\del_{\beta'}\rho- {\rho \over 3 \kappa}
&= 
W(\rho)
- \frac{4\pi}{3e^{2\rho}}\gd^{\alpha\beta}\del_{\alpha}\phi\del_{\beta}\phi,
\\
\gd^{\alpha\beta}\Gammad_{\alpha\beta}^{\lambda}
&= 0, 
\endaligned
\end{equation}
in which $F_{\alpha\beta}(\gd;\del \gd,\del \gd)$ are quadratic expressions (defined in Section~3 below), 
$\del \gd$ is determined by the Ricci curvature, and the function $V=V(\rho)$ and $W=W(\rho)$ are of quadratic order as $\rho \to 0$.
\end{definition}

Clearly, we recover Einstein equations by letting $\kappa \to 0$ and thus $f(R) \to R$. Namely, we will show that $\rho \to 0$ so that \eqref{eq:500} reduces to the standard formulation in wave coordinates \cite{CB}. In particular, in this limit, we do recover the expression $R = 8\pi \, \nabla_\alpha \phi \nabla^\alpha \phi$ of the scalar curvature in terms of the norm of the scalar field.

An outline of the rest of this monograph is as follows. In Sections~2 and 3, we formulate the initial value problem first in the Jordan metric and then in the Einstein metric. We find that the second formulation is simpler, since the Hessian of the scalar curvature is eliminated by the conformal transformation.
Furthermore, we demonstrate that the Einstein coupling is ill-posed.
 The conformal formulation is analyzed in Section~4, where the wave gauge is introduced and the wave-Klein-Gordon structure of the field equations is exhibited. Section~5 contains one of our main result and proposes an augmented formulation of the conformal system of modified gravity. The local existence theory with bounds that are uniform in $\coeff$
is developed in Sections 6 to 8 and leads us in Section 9 to our main statement concerning the comparison between the modified and the classical theories.


\newpage 

\section{Formulation of the Cauchy problem in the Jordan metric}
\label{sec original}

\subsection{The $3+1$ decomposition of spacetimes}

In this section, we formulate the initial value problem for the modified gravity system, by prescribing suitable initial data on a spacelike hypersurface. Until further notice, our setup is as follows, in which we follow the textbook~\cite{CB} for classical gravity. We are thus interested in time-oriented spacetimes $(M,g)$ endowed with a Lorentzian metric $g$ with signature $(-, +, +, +)$, which are
homeomorphic to $[0,t_{\max}) \times M_t$ and admit a global foliation by spacelike hypersurfaces $M_t \simeq \{t\} \times \Mb$. The foliation is determined by a time function $t: M \to [0, t_{\max})$ and a three-dimensional manifold $\Mb$ and, throughout, we assume that
\bel{eq:globalhyper}
M \text{ is globally hyperbolic and every } M_t \text{ is a Cauchy surface,}
\ee
which, for instance, ensures that a wave equation posed on any such Cauchy surface enjoys the usual local existence and uniqueness property. (See \cite{CB,HE} for the definitions.) 

We introduce local coordinates adapted to the above product structure, that is, $(x^{\alpha}) = (x^0=t, x^i)$, and we choose the basis of vectors $(\del_i)$ as the `natural frame' of each slice $M_t$. This also defines the 'natural frame' $(\del_t, \del_i)$ on $M$. Then, by definition, the 'Cauchy adapted frame' is $e_i = \del_i$ and $e_0 = \del_t - \beta^i\del_i$, where $\beta = \beta^i\del_i$ a time-dependent field, tangent to $M_t$ and called the {\bf shift vector,} and we impose the restriction that $e_0$ is orthogonal to each $M_t$. 

Now we introduce the dual frame $(\theta^{\alpha})$ of the Cauchy adapted frame $(e_{\alpha})$ by setting 
\be
\theta^0 := dt, \qquad \theta^i := dx^i + \beta^i dt.
\ee
In particular, in our notation, the spacetime metric reads 
\be
g = -N^2 \theta^0\theta^0 + g_{ij}\theta^i\theta^j,
\ee
where the function $N>0$ is referred to as the {\bf lapse function} of the foliation. 

Next, the {\bf structure coefficients} of the Cauchy adapted frame are defined by
\be
C_{0j}^i = - C_{j0}^i = \del_j\beta^i
\ee
and the corresponding Levi-Civita connection associated with $g$ is represented by a family $\omega_{\gamma\alpha}^{\beta}$ defined by 
\be
\nabla e_{\alpha} = \omega_{\gamma\alpha}^{\beta}\theta^{\gamma}\otimes e_{\beta}
\ee
and, consequently, 
\be
\nabla \theta^\alpha = -\omega_{\gamma\beta}^{\alpha}\theta^{\gamma}\otimes\theta^{\beta}.
\ee

We denote by $\gb=\gb_t$ the induced Riemannian metric associated with the slices $M_t$ and by $\nablab$ the Levi-Civita connection of $\gb$, whose Christoffel symbols (in the natural frame) are denoted by $\Gammab_{ab}^c$. We can also introduce the second fundamental form $K = K_t$ defined by 
\be
K(X,Y) := g(\nabla_X n, Y) 
\ee
for all vectors $X,Y$ tangent to the slices $M_t$, where $n$ denotes the future-oriented, unit normal to the slices. In the Cauchy adapted frame, it reads
\be
K_{ij} = -\frac{1}{2N}\big(<e_0,g_{ij}> - g_{lj}\del_i\beta^l - g_{il}\del_j\beta^l \big).
\ee
Here, and throughout this monograph, we use the notation $<e_0,g_{ij}>$ for the action of the vector field $e_0$ on the function $g_{ij}$. Next, we define the {\bf time-operator} $\delb_0$ acting on a two-tensor defined on the slice $M_t$ by
\be
\delb_0 T_{ij} = <e_0,T_{ij}> - T_{lj}\del_i\beta^l - T_{il}\del_j\beta^l
\ee
which is again a two-tensor on $M_t$. Therefore, with this notation, we thus have 
\be
K = -\frac{1}{2N}\delb_0 \gb.
\ee

Standard calculations (Cf.~Section VI.3 in \cite{CB}) yield us the following formulas for the connection in terms of the $(3+1)$-decomposition:
\be
\aligned
& \omega_{00}^0 = N^{-1}<e_0,N>, 
\\
& \omega_{00}^i = Ng^{ij}\del_j N, \qquad \qquad \omega_{0i}^0 = \omega_{i0}^0 = N^{-1}\del_i N,
\\
& \omega_{ij}^0 = \frac{1}{2}N^{-2}\big(<e_0,g_{ij}> - g_{hj}\del_i\beta^h - g_{ih}\del_j\beta^k\big) = -N^{-1}K_{ij},
\\
&\omega_{0j}^i = -NK_j^i + \del_j\beta^i, \qquad \qquad  \omega_{j0}^i = -NK^i_j,
\\
& \omega_{jk}^i = \Gammab_{jk}^i.
\endaligned
\ee
Here, $\Gammab_{jk}^i$ denotes the Christoffel symbol of the connection $\nablab$ in the natural coordinates $\{x^i\}$.

It is also a standard matter to derive the Gauss--Codazzi equations for each slice:
\be
\label{GC-1}
\aligned
& R_{ij,kl} = \Rb_{ij,kl} + K_{ik} k_{lj} - K_{il} \, K_{kj},
\\
& R_{0i,jk} = N(\nablab_j K_{ki} - \nablab_k K_{ji}),
\\
& R_{0i,0j} = N(\delb_0K_{ij} + NK_{ik}K^k_j + \nablab_i\del_jN).
\endaligned
\ee
In addition by suitable contractions of these identities, we arrive at
\begin{subequations}\label{GC-2}
\begin{equation}\label{GC-2a}
R_{ij} = \Rb_{ij} - \frac{\delb_0K_{ij}}{N} + K_{ij}K^l_l - 2K_{il}K^{l}_j - \frac{\nablab_i\del_jN}{N},
\end{equation}
\begin{equation}\label{GC-2b}
R_{0j} =N\big( \del_jK_l^l - \nablab_l K^l_j\big),
\end{equation}
and, for the $(0,0)$-component of the Einstein curvature, 
\begin{equation}\label{GC-2c}
 G_{00} = \frac{N^2}{2} \big(\Rb - K_{ij}K^{ij} + (K_l^l)^2\big).
\end{equation}
\end{subequations}
These equations clarify the relations between the geometric objects in the spacetime $M$ and the ones of the slices $M_t$.

The equation \eqref{GC-2a} yields the evolution of the tensor $K$ and together with the definition $\del_0 \gb = -2NK$, we thus find the system for the metric and second fundamental form: 
\begin{equation}\label{GC-evolution}
\aligned
&\delb_0\gb_{ij} = -2NK_{ij}.
\\
&\delb_0 K_{ij} = N\big(\Rb_{ij} - R_{ij}\big) + NK_{ij}K_l^l - 2NK_{il}K^l_j - \nablab_i\del_jN. 
\endaligned
\end{equation} 


\subsection{Evolution and constraint equations}

Our objective now is to combine the equations \eqref{GC-2} and the field equations \eqref{Eq1-14} in order to derive the fundamental equations of modified gravity.

We recall first some elementary identities about the Hessian of a a function expressed in the Cauchy adapted frame. Given any smooth function $f: M \to 0$, we can write
$$
\nabla df = \nabla \big(<e_{\gamma},f>\theta^{\gamma}\big) = <e_{\beta},<e_{\alpha},f>>\theta^{\beta}\otimes \theta^{\alpha}- <e_{\gamma},f>\omega_{\beta\alpha}^{\gamma}\theta^{\beta}\otimes\theta^{\alpha}.
$$
First of all, we compute the components
\bel{eq:formHess}
\aligned
\nabla_i\nabla_j f
&= \del_i\del_j f - < e_\gamma, f > \omega_{ij}^{\gamma} = \del_i\del_jf - \Gammab_{ij}^k\del_k f - \omega_{ij}^0<e_0,f>
\\
& = \nablab_i\nablab_j f + K_{ij}N^{-1}(\del_t - \beta^l\del_l)f
\\
&= \nablab_i\nablab_j f + K_{ij}\mathcal{L}_nf,
\endaligned
\ee
where $\mathcal{L}_n$ is the Lie derivative associated with the normal unit vector of the slice $M_t$. Then, for the other components, we find
$$
\aligned
\nabla_j\nabla_0f = \nabla_0\nabla_jf  &= <e_0,\del_jf> - <e_0,f>\omega_{0j}^0 - \del_i f \omega_{0j}^i
\\
&=(\del_t - \beta^i\del_i)\del_jf - (\del_t-\beta^i\del_i)f\,N^{-1}\del_jN + N\del_i f K^i_j - \del_if\del_j\beta^i
\\
&=(\del_t - \beta^i\del_i)\del_jf - (\del_t-\beta^i\del_i)f\,\del_j\ln{N} + N\del_i f K^i_j - \del_if\del_j\beta^i
\\
&=\del_j(\del_t -\beta^i\del_i)f - (\del_t-\beta^i\del_i)f\,\del_j\ln{N} + N\del_i f K^i_j
\\
&=N\del_j\big(N^{-1}(\del_t - \beta^i\del_i)f\big) + N K^i_j\del_i f 
\endaligned
$$
and 
$$
\aligned
\nabla_0\nabla_0f &= (\del_t-\beta^i\del_i)(\del_t-\beta^i\del_i)f - (\del_t - \beta^i\del_i)f\, \omega_{00}^0 - \del_if \, \omega_{00}^i
\\
&=(\del_t-\beta^i\del_i)(\del_t-\beta^i\del_i)f - (\del_t - \beta^i\del_i)f\, (N^{-1}\del_j N)- \del_if \, (N^{-1}(\del_t - \beta^i\del_i)N)
\\
&=(\del_t-\beta^i\del_i)(\del_t-\beta^i\del_i)f - (\del_t - \beta^i\del_i)f\cdot \del_j \ln{N} -  \del_if \, (\del_t - \beta^i\del_i)\ln{N}.
\endaligned
$$

In particular, the trace of the Hessian of a function is the so-called D'Alembert operator, expressed in the Cauchy adapted frame as
$$
\aligned
\Box_g f &= g^{\alpha\beta}\nabla_{\alpha}\nabla_{\beta}f = -N^2\nabla_0\nabla_0f + g^{ij}\nabla_i\nabla_j f
\\
&=-N^{-2}\nabla_0\nabla_0f + \gb^{ij}\nablab_i\nablab_j f + \gb^{ij}K_{ij}N^{-1}(\del_t - \beta^l\del_l)f
\\
&=-N^{-2}\nabla_0\nabla_0f + \Delta_{\gb} f + \gb^{ij}K_{ij}\mathcal{L}_n f,
\endaligned
$$
where $\Delta_{\gb}f$ is the Laplace operator associated with the metric $\gb$.

To proceed wiht the formulation of the field equation \eqref{Eq1-14}, we need first to rewrite it in a slightly different form, by defining the tensor: 
\be
{E_g}_{\alpha\beta} := {N_g}_{\alpha\beta} - \frac{1}{2}\tr(N_g)g_{\alpha\beta},
\ee
where $\tr(\cdot)$ is the trace with respect to the metric $g$. Then, we have the following relation in terms of the Ricci tensor:  
\be
{E_g}_{\alpha\beta} = f'(R_g)R_{\alpha\beta} + \frac{1}{2}f'(R)W_1(R_g)g_{\alpha\beta} - \bigg(\frac{1}{2}g_{\alpha\beta}\Box_g + \nabla_{\alpha}\nabla_{\beta}\bigg)f'(R_g),
\ee
where we have introduced the {\bf function $W_1$} by 
\bel{def:Wone}
W_1(r) := \frac{f(r) - rf'(r)}{f'(r)}, \qquad r \in \RR.
\ee

In view of \eqref{Eq1-14}, we know that $E_g$ satisfies the field equations 
\begin{equation}
{E_g}_{\alpha\beta} = 8\pi \big(T_{\alpha\beta} - \frac{1}{2}\tr(T)g_{\alpha\beta}\big)
=: 8 \pi H_{\alpha\beta}, 
\end{equation} 
where we have introduce the new {\bf matter tensor} $H_{\alpha\beta}$. 
More precisely, it will be most convenient to introduce, for different components, a different form of the equations, that is, we write the field equations as: 
\begin{equation}\label{eq main reformed}
\aligned
&{E_g}_{ij} = 8\pi H_{ij},
\\
&{E_g}_{0j} = 8\pi H_{0j},
\\
&{N_g}_{00} = 8\pi T_{00},
\endaligned
\end{equation}
or, equivalently, 
\begin{subequations}\label{eq main reformed'}
\begin{equation}\label{eq main reformed'a}
\aligned
&f'(R_g)R_{ij} + \frac{1}{2}f'(R)W_1(R_g)g_{ij} 
 - \Big(\frac{1}{2}g_{ij}\Box_g + \nabla_i\nabla_j\Big)f'(R_g) 
\\
& =  8\pi \big(T_{ij} - \frac{1}{2}\tr(T)g_{ij}\big),
\endaligned
\end{equation}
\begin{equation}\label{eq main reformed'b}
f'(R_g)R_{0j}  - \nabla_0\nabla_jf'(R_g) =  8\pi T_{0j},
\end{equation}
\begin{equation}\label{eq main reformed'c}
f'(R_g) \, {G_g}_{00} - \frac{1}{2} f'(R_g)W_1(R_g) g_{00}
+  \big( g_{00} \, \Box_g   - \nabla_0 \nabla_0 \big)  \big( f'(R_g) \big) = 8 \pi T_{00}.
\end{equation} 
\end{subequations}

For completeness we check the following equivalence. 

\begin{lemma} If a metric $g_{\alpha\beta}$ and a matter tensor $T_{\alpha\beta}$ satisfy the field equations \eqref{Eq1-14}, then they also satisfy \eqref{eq main reformed}. The converse is also true. 
\end{lemma}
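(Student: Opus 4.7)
The strategy is purely algebraic: both systems are linear in the tensors $N_g$ and $T$, and the two sides of the equivalence differ only by the way the $(0,0)$-equation is written. The plan is to exploit that $E_g$ and $H$ are the ``trace-reversed'' versions of $N_g$ and $T$, and to show that after the spatial and mixed components are controlled, the $(0,0)$-equation in the $N$-form and in the $E$-form carry the same information.

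For the forward direction, assume $N_{\alpha\beta} = 8\pi T_{\alpha\beta}$. Taking the full trace immediately gives $\tr(N_g) = 8\pi\,\tr(T)$. Substituting back into the definitions of $E_g$ and $H$ yields $E_{\alpha\beta} = N_{\alpha\beta} - \tfrac12\tr(N_g)g_{\alpha\beta} = 8\pi\bigl(T_{\alpha\beta}-\tfrac12\tr(T)g_{\alpha\beta}\bigr) = 8\pi H_{\alpha\beta}$, so the first two lines of \eqref{eq main reformed} hold by restriction, and the third line $N_{00}=8\pi T_{00}$ is just a component of the hypothesis.

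For the converse, the key observation is that the Cauchy adapted frame has $g_{0i}=0$ and $g_{00}=-N^2$, so the inverse metric splits as $g^{00}=-N^{-2}$, $g^{0i}=0$, and thus
\begin{equation*}
\tr(E_g) - 8\pi\,\tr(H) = -N^{-2}\bigl(E_{00}-8\pi H_{00}\bigr) + g^{ij}\bigl(E_{ij}-8\pi H_{ij}\bigr).
\end{equation*}
Using the two assumed identities $E_{ij}=8\pi H_{ij}$ and $E_{0j}=8\pi H_{0j}$, the spatial term drops out. Then, inverting the trace-reversal relations $N_g = E_g - \tfrac12\tr(E_g) g$ and $T = H - \tfrac12\tr(H) g$ (which follow from the standard identity $\tr(E_g)=-\tr(N_g)$ in four dimensions, and likewise for $\tr(H)$), I would compute the $(0,0)$-component of the difference $N_g - 8\pi T$ and find a clean proportionality such as
\begin{equation*}
N_{00} - 8\pi T_{00} \;=\; \tfrac12\bigl(E_{00} - 8\pi H_{00}\bigr).
\end{equation*}
Since the hypothesis supplies $N_{00}=8\pi T_{00}$, this forces $E_{00}=8\pi H_{00}$, hence $E_g = 8\pi H$ as a full tensor equation, and applying trace-reversal once more recovers $N_g = 8\pi T$.

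I do not expect any genuine obstacle here; the only point requiring a touch of care is the $(0,0)$-component bookkeeping, i.e.\ tracking the signs and the factor $N^2$ arising from $g_{00}$ when comparing $N_{00}-8\pi T_{00}$ with $E_{00}-8\pi H_{00}$. Once that proportionality is written down, the equivalence is immediate, and the lemma is proved without appealing to the explicit form of $N_g$ in terms of $f(R_g)$, $\Box_g f'(R_g)$, or the scalar field---it is a formal consequence of the trace-reversal operation together with the adapted splitting of the metric.
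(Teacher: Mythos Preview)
Your argument is correct and is essentially the same as the paper's: both exploit the block-diagonal form of $g$ in the Cauchy adapted frame to reduce the converse direction to a single trace identity. The paper phrases it as $\tr(N_g-8\pi T)=\tfrac{3}{2}\,\tr(N_g-8\pi T)$ (hence the trace vanishes and all components follow), while you reach the equivalent conclusion via the proportionality $N_{00}-8\pi T_{00}=\tfrac12(E_{00}-8\pi H_{00})$; these are two presentations of the same computation.
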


\begin{proof} The equations \eqref{eq main reformed} are clearly equivalent to 
$$
\aligned
&\big(N_g - 8\pi T\big)_{ij} = -\frac{1}{2}g_{ij}\tr(8\pi T - N_g),
\\
&\big(N_g - 8\pi T\big)_{ij} = 0,
\\
&{N_g}_{00} - 8\pi T_{00} = 0.
\endaligned
$$
By taking the trace of the tensor $(N_g - 8\pi T)$, we find 
$
\tr(N_g - 8\pi T) = -\frac{3}{2}\tr(8\pi T - N_g)
$
and thus $\tr(8\pi T - N_g)=0$, which proves the result. 
\end{proof}

Hence, in view of \eqref{eq main reformed'a} and by using \eqref{eq:formHess}, we have arrived at the field equations of modified gravity in a preliminary form. First of all, we have 
\begin{subequations}\label{eq field 2}
\begin{equation}\label{eq field 2a}
\aligned
R_{ij}&= \frac{1}{f'(R_g)}\bigg({E_g}_{ij} - \frac{1}{2}f'(R_g)W_1(R_g)g_{ij} + \frac{1}{2}\big(g_{ij}\Box_g + 2\nabla_i\nabla_j \big)f'(R_g)\bigg)
\\
&= \frac{E_{ij}}{f'(R_g)} - \frac{1}{2}W_1(R_g)\gb_{ij}
 + \frac{\big(\gb_{ij}\Delta_{\gb} + 2\nablab_i\nablab_j\big)f'(R_g) }{2f'(R_g)}
\\
 &- \frac{\gb_{ij}\nabla_0\nabla_0f'(R_g)}{2N^2f'(R_g)} + \big(K_{ij} + \frac{1}{2}\gb_{ij}K\big)\mathcal{L}_n\ln(f'(R_g)),
\endaligned
\end{equation}
where $K := \gb^{ij}K_{ij}$ is the trace of $K$ with respect to $\gb$. We also have
\begin{equation}\label{eq field 2b}
\aligned
R_{0j} &= \frac{1}{f'(R_g)}\bigg({N_g}_{0j} + \nabla_0\nabla_jf'(R_g)\bigg)
\\
       &= \frac{{N_g}_{0j}}{f'(R_g)}
       + \frac{N\del_j\big(N^{-1}(\del_t-\beta^i\del_i)f'(R_g)\big)}{f'(R_g)}
       + NK_j^i\del_i\big(\ln(f'(R_g))\big)
\\
&= \frac{{N_g}_{0j}}{f'(R_g)} + \frac{N\del_j\big(f''(R_g)\mathcal{L}_nR_g\big)}{f'(R_g)} + NK_j^i\del_i\big(\ln(f'(R_g))\big),
\endaligned
\end{equation}
and, finally, 
\begin{equation}\label{eq field 2c}
\aligned
G_{00} &= \frac{1}{f'(R_g)}\big({N_g}_{00} + \frac{1}{2}f'(R_g)W_1(R_g)g_{00} - (g_{00}\Box_g - \nabla_0\nabla_0)f'(R_g)\big)
\\
       &= \frac{{N_g}_{00}}{f'(R_g)} + \frac{1}{2}g_{00}W_1(R_g) - \frac{g_{00}}{f'(R_g)}\big(\Delta_{\gb}
        + \gb^{ij}K_{ij}\mathcal{L}_n\big)f'(R_g).
\endaligned
\end{equation}
\end{subequations}

Next, by combining \eqref{eq field 2a} with \eqref{GC-2a}, \eqref{eq field 2b} with \eqref{GC-2b} and \eqref{eq field 2c} with \eqref{GC-2c}, the evolution equations and constraint equations for the system of
modified gravity are formulated as follows: 
\begin{equation}
\aligned
\delb_0 K_{ij}&= N\Rb_{ij}- NR_{ij} + NK_{ij}K_l^l - 2NK_{il}K^l_j - \nablab_i\del_jN
\\
&= N\Rb_{ij} + NK_{ij}K_l^l - 2NK_{il}K^l_j - \nablab_i\del_jN
\\
& \quad -\frac{NE_{ij}}{f'(R_g)} + \frac{N}{2}W_1(R_g)\gb_{ij}
 - \frac{N\big(\gb_{ij}\Delta_{\gb} + 2\nablab_i\nablab_j\big)f'(R_g) }{2f'(R_g)}
\\
 &+ \frac{N\gb_{ij}\nabla_0\nabla_0f'(R_g)}{2N^2f'(R_g)} - N\big(K_{ij} + \frac{1}{2}\gb_{ij}K\big)\mathcal{L}_n\ln(f'(R_g)),
\\
\\
\delb_0\gb_{ij} &= -2NK_{ij},
\endaligned
\end{equation} 
\begin{equation}
\aligned
\Rb - K_{ij}K^{ij}+(K_j^j)^2
 &=  \frac{2{N_g}_{00}}{N^2f'(R_g)} + \frac{2\Delta_{\gb}f'(R_g)}{f'(R_g)}
  \\
&  + 2\gb^{ij}K_{ij}\mathcal{L}_n\big(\ln f'(R_g)\big) - W_1(R_g),
\endaligned
\end{equation}
and 
\begin{equation}
\del_j K_i^i - \nablab_i K_j^i = \frac{{N_g}_{0j}}{Nf'(R_g)}
+ \frac{\del_j\big(f''(R_g)\mathcal{L}_nR_g\big)}{f'(R_g)} + K^i_j\del_i \big(\ln(f'(R_g))\big).
\end{equation}

It remains to consider the coupling with the matter field, described by the stress-energy tensor $T_{\alpha\beta}$. Recall that the equations read
${N_g}_{\alpha\beta} = 8\pi T_{\alpha\beta}$,
where $T_{\alpha\beta} = \TJ_{\alpha\beta}$ for the Jordan coupling
and $T_{\alpha\beta} = \TE_{\alpha\beta}$ for the Einstein coupling. We also define the mass density $\sigma$ and the momentum vector $J$ (measured by an observer moving orthogonally to the slices)
by the relations
\be
\sigma := N^{-2}T_{00}, \qquad J_j := - N^{-1}T_{0j}.
\ee
We can thus conclude this section and introduce a definition suitable for modified gravity.

\begin{propositiondefinition}
\label{2:defconstr}
The equations for modified gravity in the Cauchy adapted frame $\{e_0,e_1,e_2,e_3\}$
decompose as follows: 

\noindent 1. {\bf Evolution equations:}
\begin{equation}\label{eq evolution}
\aligned
\delb_0 K_{ij}&= N\Rb_{ij} + NK_{ij}K_l^l - 2NK_{il}K^l_j - \nablab\del_jN
\\
                  &-\frac{8\pi N\big (T_{ij} - \frac{1}{2}g_{ij}\tr(T)\big)}{f'(R_g)} + \frac{N}{2}W_1(R_g)\gb_{ij}
\\
&
 - \frac{N\big(\gb_{ij}\Delta_{\gb} + 2\nablab_i\nablab_j\big)f'(R_g) }{2f'(R_g)}
+ \frac{N\gb_{ij}\nabla_0\nabla_0f'(R_g)}{2N^2f'(R_g)} 
\\
 &
- N\big(K_{ij} + \frac{1}{2}\gb_{ij}K\big)\mathcal{L}_n\ln(f'(R_g)),
\\
\delb_0\gb_{ij} &= -2NK_{ij}.
\endaligned
\end{equation}

\noindent 2. {\bf Hamiltonian constraint:}
\begin{equation}\label{eq constraint H}
\aligned
\Rb - K_{ij}K^{ij}+(K_j^j)^2
&= \frac{16\pi\sigma}{f'(R_g)} + \frac{2\Delta_{\gb}f'(R_g)}{f'(R_g)}
\\
&
+ 2\gb^{ij}K_{ij}\mathcal{L}_n\big(\ln f'(R_g)\big) - W_1(R_g),
\endaligned
\end{equation}

\noindent 3. {\bf Momentum constraint:}
\begin{equation}\label{eq constraint M}
\aligned
\del_j K_i^i - \nablab_i K_j^i = - \frac{8\pi J_i}{f'(R_g)}
+ \frac{\del_j\big(f''(R_g)\mathcal{L}_nR_g\big)}{f'(R_g)} + K^i_j\del_i \big(\ln(f'(R_g))\big).
\endaligned
\end{equation}
\end{propositiondefinition}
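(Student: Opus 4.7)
The statement is essentially an algebraic assembly of two ingredients that are already in place just above it: the Gauss--Codazzi decompositions \eqref{GC-2a}--\eqref{GC-2c} of the spacetime Ricci/Einstein curvature in a Cauchy adapted frame, and the ``Ricci-solved" form of the modified field equations \eqref{eq field 2a}--\eqref{eq field 2c}, which isolate $R_{ij}$, $R_{0j}$ and $G_{00}$ in terms of the matter tensor and of the Hessian of $f'(R_g)$. The plan is therefore to match these two sets of expressions component by component.

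For the evolution equations, I would take \eqref{GC-2a} and solve it for $\delb_0 K_{ij}$, obtaining $\delb_0 K_{ij}=N\Rb_{ij}-NR_{ij}+NK_{ij}K^l_l-2NK_{il}K^l_j-\nablab_i\del_j N$, and then substitute \eqref{eq field 2a} for $R_{ij}$. Using the definition of $H_{\alpha\beta}$ and the field equation $E_{g,ij}=8\pi H_{ij}$, the ratio $E_{g,ij}/f'(R_g)$ becomes the matter term $8\pi\bigl(T_{ij}-\tfrac{1}{2}g_{ij}\tr(T)\bigr)/f'(R_g)$ displayed in \eqref{eq evolution}; the remaining $W_1$, $\Delta_{\gb}$, $\nablab_i\nablab_j$, $\nabla_0\nabla_0$ and $\mathcal{L}_n$ contributions carry through unchanged. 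The companion identity $\delb_0\gb_{ij}=-2NK_{ij}$ is just the defining relation of the second fundamental form in terms of the lapse.

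For the Hamiltonian constraint, I would start from \eqref{GC-2c} and substitute \eqref{eq field 2c} for $G_{00}$. Dividing by $N^2/2$, using $g_{00}=-N^2$, and replacing $N_{g,00}$ by $8\pi T_{00}=8\pi N^2\sigma$ converts the $-g_{00}(\Delta_{\gb}+\gb^{ij}K_{ij}\mathcal{L}_n)f'(R_g)/f'(R_g)$ term into the $+2\Delta_{\gb}f'(R_g)/f'(R_g)+2\gb^{ij}K_{ij}\mathcal{L}_n(\ln f'(R_g))$ form of \eqref{eq constraint H}, while the $\tfrac{1}{2}g_{00}W_1(R_g)$ piece becomes $-W_1(R_g)$. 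For the momentum constraint, I would substitute \eqref{eq field 2b} into \eqref{GC-2b}, divide through by $N$, and use $T_{0j}=-NJ_j$, so that $N_{g,0j}/(Nf'(R_g))=-8\pi J_j/f'(R_g)$ and the Lie-derivative term from \eqref{eq field 2b} gives the stated $\del_j\bigl(f''(R_g)\mathcal{L}_n R_g\bigr)/f'(R_g)$ expression.

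The only genuine bookkeeping obstacle is the expansion of the Hessian $\nabla_\alpha\nabla_\beta f'(R_g)$ in the Cauchy adapted frame via \eqref{eq:formHess}: one has to keep track of which $\mathcal{L}_n$ pieces pair with $K_{ij}$, which $\nablab$-Hessian pieces survive after tracing with $\gb$, and which $\nabla_0\nabla_0$ pieces are killed by the contractions defining the $(0j)$ and $(00)$ components. Once this expansion is tidied up, no further ideas are needed: matching the Gauss--Codazzi side against the field-equation side component by component yields \eqref{eq evolution}, \eqref{eq constraint H} and \eqref{eq constraint M}, proving the proposition.
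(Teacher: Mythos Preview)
Your proposal is correct and follows exactly the paper's own derivation: the text preceding the Proposition explicitly combines \eqref{eq field 2a} with \eqref{GC-2a}, \eqref{eq field 2b} with \eqref{GC-2b}, and \eqref{eq field 2c} with \eqref{GC-2c}, and then inserts the matter coupling $N_{g,\alpha\beta}=8\pi T_{\alpha\beta}$ together with the definitions $\sigma=N^{-2}T_{00}$, $J_j=-N^{-1}T_{0j}$. Your identification of the Hessian expansion \eqref{eq:formHess} as the only genuine bookkeeping point is also accurate.
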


Observe that, in the classical gravity theory, the factor $f'(R_g)$ is constant and equal to unit, so that the terms containing $f'(R_g)$ in the right-hand sides of the constraint equations \eqref{eq constraint H} and \eqref{eq constraint M} vanish identically; consequently, we can recover here the standard equations \eqref{GC-3} given below.

These new constraint equations are very involved compared with the classical ones: they contain fourth-order derivatives of the metric $\gb$ and, more precisely, second-order derivatives of the scalar curvature $R_g$. In particular, we can not recognize directly the elliptic nature of the classical constraint equations.

\begin{remark} 
Recall here the constraint equations for the {\sl classical} theory of general relativity, when the Einstein equations $G_{\alpha\beta} = 8 \pi T_{\alpha\beta}$ are imposed: the last two equations in \eqref{GC-2} yield
\be
\label{GC-3}
\aligned
&  \Rb + K_{ij} \, K^{ij} - (K_i^i)^2   = 16 \pi \sigma,
\\
& \nablab^i K_{ij} - \nablab_j K_l^l  = 8 \pi \, J_j,
\endaligned
\qquad \qquad \text{(when $f' \equiv 0$).}
\ee
\end{remark} 

\subsection{The divergence identity}

As in the classical gravity theory, we expect that the matter should be divergence-free $\nabla^{\alpha}T_{\alpha\beta} = 0$, which is now proven.

\begin{lemma}[The divergence identity in modified gravity]
\label{prop E_bianchi}
The contracted Bianchi identities
\bel{eq E-bianchi}
\nabla^\alpha R_{\alpha\beta} = \frac{1}{2} \nabla_\beta R
\ee
imply the divergence-free property for the modified gravity tensor 
\begin{equation}
\nabla^\alpha {N_g}_{\alpha\beta} = 0.
\end{equation}
\end{lemma}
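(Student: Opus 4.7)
The plan is a direct computation: apply $\nabla^\alpha$ term by term to each of the four pieces of
$N_g = f'(R_g) G_g - \tfrac12\bigl(f(R_g)-R_g f'(R_g)\bigr)g + \bigl(g\,\Box_g - \nabla d\bigr)(f'(R_g))$,
and show that all contributions cancel in pairs.

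First, I would handle the ``algebraic'' terms. For the Einstein piece, the product rule plus the contracted Bianchi identity \eqref{eq E-bianchi} (which is equivalent to $\nabla^\alpha G_{\alpha\beta}=0$) give
\[
\nabla^\alpha\!\bigl(f'(R_g)\,G_{\alpha\beta}\bigr)
= f''(R_g)\,G_{\alpha\beta}\nabla^\alpha R_g
= f''(R_g)\,R_{\alpha\beta}\nabla^\alpha R_g - \tfrac{1}{2}R_g f''(R_g)\,\nabla_\beta R_g.
\]
For the scalar piece, since $\nabla^\alpha g_{\alpha\beta}=0$, expanding $\nabla_\beta(f(R_g)-R_g f'(R_g))$ with the chain rule produces a cancellation leaving
\[
\nabla^\alpha\!\Bigl(-\tfrac12\bigl(f(R_g)-R_g f'(R_g)\bigr)g_{\alpha\beta}\Bigr)
=\tfrac12 R_g f''(R_g)\,\nabla_\beta R_g,
\]
which cancels exactly the second piece of the Einstein-term contribution above.

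Next, I would treat the two derivative terms together. The $g\,\Box_g$ piece contributes $\nabla_\beta \Box_g f'(R_g)$, and the Hessian piece contributes $-\nabla^\alpha\nabla_\alpha\nabla_\beta f'(R_g)$. Here I use the Ricci identity to commute the derivatives on the scalar quantity $f'(R_g)$:
\[
\nabla^\alpha\nabla_\alpha\nabla_\beta\phi
= \nabla_\beta\Box_g\phi + R_\beta{}^\sigma\,\nabla_\sigma\phi,
\qquad \phi := f'(R_g),
\]
which follows because $\nabla_\alpha\nabla_\beta\phi=\nabla_\beta\nabla_\alpha\phi$ on a scalar, so only a single Ricci-type commutator on $\nabla_\alpha\phi$ survives. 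The two $\nabla_\beta\Box_g f'(R_g)$ contributions cancel, leaving the single residual term $-R_\beta{}^\sigma f''(R_g)\,\nabla_\sigma R_g$.

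Finally, I would combine what remains: the residual from the first piece, $f''(R_g)\,R^\alpha{}_\beta\nabla_\alpha R_g$, and the residual from the derivative terms, $-f''(R_g)\,R_\beta{}^\sigma\nabla_\sigma R_g$. By symmetry of the Ricci tensor these are equal and opposite, so their sum vanishes and $\nabla^\alpha N_{\alpha\beta}=0$. The only delicate step is the commutator computation for the Hessian term; everything else is algebraic bookkeeping. Once the lemma is established, the matter evolution equation \eqref{Eq1-15bis} follows immediately from the field equation \eqref{Eq1-14}.
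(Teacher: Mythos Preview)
Your proof is correct and follows essentially the same route as the paper's. The only cosmetic difference is in the grouping of the algebraic part: you split it as $f'(R_g)G_{\alpha\beta}$ and $-\tfrac12(f-Rf')g_{\alpha\beta}$ and use $\nabla^\alpha G_{\alpha\beta}=0$ directly, whereas the paper writes the same two terms as $f'(R_g)R_{\alpha\beta}$ and $-\tfrac12 f(R_g)g_{\alpha\beta}$ and invokes \eqref{eq E-bianchi} in the form $\nabla^\alpha R_{\alpha\beta}=\tfrac12\nabla_\beta R$; the commutator computation on the Hessian term is identical in both.
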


\begin{proof}
The following calculation holds in an arbitrary (possibly only local) natural frame. We compute the three relevant terms:
\begin{align*}
\nabla^\alpha (\nabla_\alpha\nabla_\beta f'(R)-g_{\alpha\beta}\Box_g f'(R))
&=(\nabla^\alpha \nabla_\alpha\nabla_{\beta} - \nabla_{\beta}\nabla^{\lambda}\nabla_{\lambda})f'(R)
\\
&=(\nabla_\alpha\nabla_\beta\nabla^\alpha -\nabla_\beta\nabla_\alpha\nabla^\alpha )(f'(R))
\\
&=[\nabla_\alpha,\nabla_\beta](\nabla^\alpha (f'(R)))
=R_{\alpha\beta}\nabla^\alpha (f'(R)),
\end{align*}
then
\begin{align*}
\nabla^\alpha (f'(R) \,
R_{\alpha\beta})&=R_{\alpha\beta}\nabla^\alpha (f'(R))+f'(R)\,\nabla^\alpha R_{\alpha\beta}\\
&=R_{\alpha\beta}\nabla^\alpha (f'(R))+\frac{1}{2}f'(R)\,\nabla_{\beta}R,
\end{align*}
and, finally,
\begin{align*}
\nabla^\alpha \Big( \frac{1}{2} f(R) \, g_{\alpha\beta} \Big)
&=\frac{1}{2}\nabla_{\beta}(f(R))
=\frac{1}{2}f'(R)\,\nabla_{\beta}R
\end{align*}
Combining these three identities together yields us the desired identity.
\end{proof}

As a first application of Lemma~\ref{prop E_bianchi}, we now determine which coupling (formulated in the Jordan metric as far as this section is concerned) is mathematically sound. On one hand, consider first the Jordan coupling, corresponding to 
\bel{eq:Jcou}
 \TJ_{\alpha\beta}
= \del_{\alpha}\phi\del_{\beta}\phi -\frac{1}{2}g_{\alpha\beta}g^{\alpha'\beta'}\del_{\alpha'}\phi\del_{\beta'}\phi. 
\ee
By the field equation ${N_g}_{\alpha\beta} = 8\pi \TJ_{\alpha\beta}$ and \eqref{eq E-bianchi}, we find
$\nabla^{\alpha\beta}\TJ_{\alpha\beta} = 0$
and, after some calculations, 
\be
\del_{\beta}\phi \Box_g\phi = 0.
\ee
Consequently, if the scalar field $\phi$ satisfies the wave equation 
\be
\Box_g \phi = 0,
\ee
then the tensor $\TJ_{\alpha\beta}$ is divergence-free, as required.
We conclude that we need to solve a single scalar equation for the evolution of the matter.

On the other hand, if we assume the Einstein coupling
\be
T_{\alpha\beta}
= \TE_{\alpha\beta}
= f'(R_g)\big(\del_{\alpha}\phi\del_{\beta}\phi -\frac{1}{2}g_{\alpha\beta}g^{\alpha'\beta'}\del_{\alpha'}\phi\del_{\beta'}\phi\big),
\ee
then the field equation ${N_g}_{\alpha\beta} = \TE_{\alpha\beta}$ together with \eqref{eq E-bianchi} lead us to
$\nabla^{\alpha}\TE_{\alpha\beta} = 0$, 
which now reads 
$$
f''(R_g)T_{\alpha\beta}\nabla^{\alpha}R_g + f'(R_g)\del_{\beta}\phi\Box_g\phi = 0.
$$
This (vectorial) equation can be written as
$$
\del_{\beta}\phi\,\Box_g\phi = \frac{f''(R_g)}{f'(R_g)}\big(\del_{\alpha}\phi\del_{\beta}\phi - \frac{1}{2}g_{\alpha\beta}g^{\alpha'\beta'}\del_{\alpha'}\phi\del_{\beta'}\phi\big)\nabla^{\alpha}R_g
$$
or equivalently
\be
\bigg(\frac{f''(R_g)}{f'(R_g)}\del_{\alpha}\phi \nabla^{\alpha}R - \Box_g\phi\bigg)
\,
\nabla_\beta \phi
= \frac{f''(R_g)}{2f'(R_g)} \,\bigg(  g^{\alpha'\beta'}\del_{\alpha'}\phi\del_{\beta'}\phi \bigg)
\nabla_\beta R.
\ee
Now, for general initial data, this is an {\sl over-determined}\footnote{Unless we impose the unnatural restriction that $\nabla \phi$ and $\nabla R$ are co-linear}
 partial differential system (since the unknown of this vectorial system is a single scalar field): this strongly suggests that the Einstein coupling is not 
mathematically (nor physically) meaningful. 

Consequently, from now on, {\sl we focus our attention on the Jordan coupling.}


\subsection{The initial value problem for modified gravity}

Before we can formulate the Cauchy problem for the system \eqref{Eq1-14}, we need to specify the stress-energy tensor. In agreement with our discussion in the previous section, we assume a scalar field and the Jordan coupling \eqref{eq:Jcou}  
and the matter fields then read 
\be
\aligned
\sigma &= N^{-2}T_{00} = |\mathcal{L}_n\phi|^2 + \frac{1}{2}\big|\nabla\phi\big|_g^2 \
= \frac{1}{2}\big(|\mathcal{L}_n\phi|^2 + \gb^{ij}\del_i\phi\del_j\phi\big),
\\
J_i &= -\mathcal{L}_n\phi\,\del_i\phi.
\endaligned
\ee

\begin{definition}\label{def initial data}
An {\bf initial data set for the modified gravity theory} 
$$
(\Mb, \gb, K, \Rh, \Sh, \phib, \psib)
$$
consists of the following data:
\begin{itemize}

\item a $3$-dimensional manifold $\Mb$ endowed with a Riemannian metric $\gb$ and a symmetric $(0,2)$-tensor  field $K$,

\item two scalar fields denoted by $\Rh$ and $\Sh$ defined on $\Mb$ and representing the (to-be-constructed) spacetime curvature and its time derivative,

\item two scalar fieds $\phib$ and $\psib$ defined on $\Mb$.

\end{itemize}

\noindent These data are required to satisfy the {\bf Hamiltonian constraint of modified gravity}
\begin{equation}
\label{constraint Halmitonian-ini}
\aligned
\Rb - K_{ij}K^{ij}+(K_j^j)^2
 &= \frac{8\big(\psib^2 + \gb^{ij}\del_i\phib\del_j\phib\big)}{f'(\Rh)} + \frac{2\Delta_{\gb}f'(\Rh)}{f'(\Rh)}
\\
&  +2\gb^{ij}K_{ij}\mathcal{L}_n\frac{f''(\Rh)\Sh}{f'(\Rh)}- W_1(\Rh),
\endaligned
\end{equation}
and
the {\bf momentum constraint of modified gravity}
\begin{equation}
\label{constraint momentum-ini}
\aligned
\del_j K_i^i - \nablab_i K_j^i &= \frac{8\pi \psib\,\del_i\phib}{f'(\Rh)}
+ \frac{\del_j\big(f''(\Rh)\Sh\big)}{f'(\Rh)} + K^i_j\del_i \big(\ln(f'(\Rh))\big).
\endaligned
\end{equation}
\end{definition}

\begin{definition}
\label{def:formu}
Given an initial data set $(\Mb, \gb, K, \Rh, \Sh, \phib, \psib)$ as in Definition~\ref{def initial data},
the {\bf initial value problem for the modified gravity theory} consists of finding
a Lorentzian manifold $(M, g)$ and a matter field $\phi$ defined on $M$ such that the following properties hold:
\begin{itemize}

\item[1.] The field equations of modified gravity  \eqref{Eq1-14} are satisfied. 

\item[2.] There exists an embedding $i: \Mb \to M$ with pull back metric $\gb = i^* g$ and
second fundamental form $K$.

\item[3.] The field $\Rh$ coincides with the restriction of the spacetime scalar curvature $R$ on $\Mb$, while
$\Sh$ coincides with the Lie derivative $\Lcal_n R$ restricted to $\Mb$, where $n$ denotes the normal to $\Mb$.

\item[4.] The scalar fields $\phib,\, \psib$ coincides with the restriction of $\phi, \Lcal_n \phi$ on $\Mb$, respectively.
\end{itemize}
Such a solution to \eqref{Eq1-14} is referred to as a {\bf modified gravity development of the initial data set} $(\Mb, \gb, K, \Rh, \Sh, \phib, \psib)$.
\end{definition}

Similarly as in classical gravity, we can define~\cite{CB} the notion of {\bf maximal globally hyperbolic} development for the modified gravity theory. 

Observe that the initial value problem for modified gravity reduces to the classical formulation in the special case
of vacuum and vanishing geometric data $\phib = \psib =\Rh = \Sh \equiv 0$. 
For the modified gravity theory, we have just shown that, similarly as in classical gravity, these prescribed fields can not be fully arbitrary, and certain constraints (given above) must be assumed. 
 

\subsection{Preservation of the constraints}

We need to address the following issue: if the evolution equations are satisfied by symmetric two-tensors $(\gb, K)$, the stress-energy tensor $T$ is divergence-free, and furthermore the constraint equations are satisfied on some initial slice $M_0$, then are all of field equations satisfied? In other words, we want to establish the preservation of the constraint equations along a flow of solutions. 

This is the first instance where we establish a ``preservation property'' and, later in this text, other similar situations will occur. The common character of these results is as follows.
A differential system being given, the equations therein can be classified into two categories: one is easer to handle (the evolution equations in this example) while the other is more difficult (the constraint equations here).
Our strategy is to replace the most difficult equations by some equations which can be deduced form the original system (in this example, the trace-free equation of $T$) but are also easier to handle. This leads us to a new system to be studied first, and an essential task is to check the equivalence between the original system and the new system. 

Before we give a precise statement, we make the following observation. The evolution equations \eqref{eq evolution} are equivalent to \eqref{eq main reformed'a} and the constraint equations \eqref{eq constraint H} and \eqref{eq constraint M} are equivalent to \eqref{eq main reformed'b} and \eqref{eq main reformed'c}. So we suppose that \eqref{eq main reformed'a} together with the divergence condition $\nabla^{\alpha}T_{\alpha\beta} = 0$ are satisfied in the spacetime $M = \bigcup_{t\in [0,t_{\max})} M_t$, and the constraint equations \eqref{eq main reformed'b} and \eqref{eq main reformed'c} are satisfied on the initial slice. Then we will prove that the  equations \eqref{eq main reformed'b} and \eqref{eq main reformed'c} are satisfied in the whole spacetime.  More precisely, we have the following result. 

\begin{proposition}\label{prop 2-5-1}
With the notation above, let us suppose that the equations
\begin{equation}\label{eq 2-5-1}
{E_g}_{ij} - 8\pi H_{ij}  = 0  \qquad \text{ in the spacetime } \bigcup_{t \in [0, t_{\max})} M_t
\end{equation}
and
\begin{equation}\label{eq 2-5-2}
\nabla^{\alpha}T_{\alpha\beta} = 0  \qquad \text{ in the spacetime } \bigcup_{t \in [0, t_{\max})} M_t
\end{equation} 
 and, moreover, 
\begin{equation}\label{eq 2-5-3}
{E_g}_{0j} = 8\pi H_{0j},\quad {N_g}_{00} = 8\pi T_{00}
\qquad \text{ in the initial slice } M_0 = \{t=0\}.
\end{equation} 
Then, it follows that 
\begin{equation}\label{eq 2-5-4}
{E_g}_{0j} = 8\pi H_{0j},\quad {N_g}_{00} = 8\pi T_{00}\quad \text{ in the spacetime } \bigcup_{t \in [0, t_{\max})} M_t.
\end{equation}
\end{proposition}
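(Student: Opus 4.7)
The plan is to adapt the classical constraint-propagation argument of general relativity to the modified-gravity tensor $N_g$, using the divergence identity of Lemma~\ref{prop E_bianchi} in place of the twice-contracted Bianchi identity. I first introduce the defect tensor
\begin{equation*}
D_{\alpha\beta} := {N_g}_{\alpha\beta} - 8\pi T_{\alpha\beta},
\end{equation*}
which, thanks to Lemma~\ref{prop E_bianchi} together with hypothesis \eqref{eq 2-5-2}, satisfies $\nabla^\alpha D_{\alpha\beta} = 0$ throughout $\bigcup_{t\in[0,t_{\max})} M_t$. The equivalence established in the unlabeled lemma preceding \eqref{eq field 2} shows that \eqref{eq 2-5-1} amounts to $D_{ij} - \tfrac12 \tr(D)\, g_{ij} = 0$ on every slice, while \eqref{eq 2-5-3} is equivalent to $D_{00} = 0$ and $D_{0j} = 0$ on $M_0$, since $g_{0j}=0$ in the Cauchy adapted frame makes $E_g{}_{0j}=D_{0j}$ and $H_{0j}=T_{0j}$. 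The goal then reduces to showing that $D_{00}$ and $D_{0j}$ vanish on the entire spacetime.

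The next step is to eliminate the spatial components $D_{ij}$ algebraically. Contracting $D_{ij} = \tfrac12 \tr(D)\, \gb_{ij}$ with $\gb^{ij}$ yields $\gb^{ij} D_{ij} = \tfrac32 \tr(D)$, and combining with $\tr(D) = -N^{-2} D_{00} + \gb^{ij}D_{ij}$ I deduce
\begin{equation*}
\tr(D) = 2 N^{-2} D_{00},
\qquad
D_{ij} = N^{-2} D_{00}\, \gb_{ij}.
\end{equation*}
In particular, $D_{ij}$ is no longer an independent unknown: every appearance of $D_{ij}$ or $\tr(D)$ in subsequent formulas can be rewritten in terms of $D_{00}$ alone, with coefficients depending only on the already-determined background geometry.

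Substituting this into the four identities $\nabla^\alpha D_{\alpha\beta} = 0$ and expanding in the Cauchy adapted frame via the connection coefficients $\omega_{\gamma\alpha}^\beta$ recalled earlier, the principal part of each component is $-N^{-2}\nabla_0 D_{0\beta}$ complemented by $\gb^{ij}\nabla_i D_{j\beta}$; the latter contributes, after elimination of $D_{jk}$ in the case $\beta=k$, first-order spatial derivatives of $D_{00}$. The outcome is a closed linear first-order PDE system for $(D_{00}, D_{0j})$ with smooth coefficients and vanishing initial data on $M_0$. I then verify that this system is symmetric hyperbolic (the Lorentzian signature entering through the $-N^{-2}\nabla_0$ leading term provides the required positivity), so that the standard uniqueness theorem on the globally hyperbolic manifold (hypothesis \eqref{eq:globalhyper}) forces $D_{00}\equiv 0$ and $D_{0j}\equiv 0$ on $\bigcup_{t\in[0,t_{\max})} M_t$, which is exactly \eqref{eq 2-5-4}.

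The main obstacle is this last verification: one must check that the algebraic substitution $D_{ij} = N^{-2} D_{00}\gb_{ij}$ does not generate hidden time derivatives of $(D_{00},D_{0j})$ when differentiated (since $\nabla$ acts on $\gb$ and $N$, which themselves evolve), and that the resulting principal symbol is genuinely hyperbolic rather than merely a well-defined linear map on the unknowns. Aside from this bookkeeping, the argument is structurally identical to the Einstein case, with Lemma~\ref{prop E_bianchi} playing the role of the twice-contracted Bianchi identity; no new input from the fourth-order nature of $N_g$ is required, precisely because the divergence-freeness of $N_g$ is an intrinsic identity established independently of the order of the field equations.
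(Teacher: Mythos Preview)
Your proposal is correct and follows essentially the same approach as the paper: introduce the defect tensor $D_{\alpha\beta}={N_g}_{\alpha\beta}-8\pi T_{\alpha\beta}$, use the divergence identity together with \eqref{eq 2-5-2} to get $\nabla^\alpha D_{\alpha\beta}=0$, eliminate $D_{ij}$ algebraically via $D_{ij}=N^{-2}D_{00}\,\gb_{ij}$, and read $\nabla^\alpha D_{\alpha\beta}=0$ as a linear symmetric-hyperbolic system in $(D_{00},D_{0j})$ with zero initial data. The paper carries out explicitly the step you flag as the ``main obstacle'': it passes to the mixed components $\Sigma^\alpha_\beta$, writes the system in matrix form, and exhibits the symmetrizer $A_0=\mathrm{diag}(1,-g^{00}g_{ab})$, confirming that no hidden time derivatives appear and that the principal part is genuinely symmetric hyperbolic.
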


\begin{proof} The calculations are made in the Cauchy adapted frame and, for convenience, we introduce the notation
$$
\Sigma_{\alpha\beta} := {N_g}_{\alpha\beta} - 8\pi T_{\alpha\beta}. 
$$
We will prove that $\Sigma_{\alpha\beta} = 0$ which is equivalent to the desired result.

By the condition $\nabla_{\alpha}T_{\alpha\beta} = 0$ and the identity \eqref{eq E-bianchi}, we have 
\begin{equation}\label{eq pr 2-5-1}
\nabla^{\alpha}\Sigma_{\alpha\beta} = 0.
\end{equation}
By the definition of ${E_g}_{\alpha\beta}$ and $H_{\alpha\beta}$, the following identity holds:
\begin{equation}\label{eq pr 2-5-2}
\Sigma_{\alpha\beta} = {E_g}_{\alpha\beta} - H_{\alpha\beta} - \frac{1}{2}g_{\alpha\beta}\tr(E_g - H)
\end{equation}
and, in particular, 
\begin{equation}\label{eq pr 2-5-3}
\Sigma_{00} = {E_g}_{00} - H_{00} - \frac{1}{2}g_{00}\tr(E_g - H). 
\end{equation}

Now, due to the fact that $g^{0i} = g_{0i} = 0$ and \eqref{eq 2-5-1}, we have 
\begin{equation}\label{eq pr 2-5-4}
\tr(E_g - H) = g^{00}({E_g}_{00} - H_{00}).
\end{equation}
Combining \eqref{eq pr 2-5-3} and \eqref{eq pr 2-5-4} yields 
\begin{equation}\label{eq pr 2-5-5}
\tr(E_g - H) = 2g^{00}\Sigma_{00}
\end{equation}
and, by substituting this into \eqref{eq pr 2-5-2},
\begin{equation}\label{eq pr 2-5-6}
\Sigma_{\alpha\beta} = {E_g}_{\alpha\beta} - H_{\alpha\beta} - g_{\alpha\beta}g^{00}\Sigma_{00}.
\end{equation}
Here, we can compute more precisely the spatial components in view of \eqref{eq 2-5-1}
\begin{equation}\label{eq pr 2-5-7}
\Sigma_{ij} = -g_{ij} g^{00} \Sigma_{00}.
\end{equation}

Given this material, we are now in a position to calculate
$
\Sigma^\alpha_\beta = g^{\alpha\alpha'}\Sigma_{\alpha'\beta}.
$
When $\alpha = \beta = 0$, we find 
\begin{equation}\label{eq pr 2-5-8}
\Sigma_0^0 = \Sigma_{0\alpha}g^{\alpha 0} = g^{00}\Sigma_{00}.
\end{equation}
For $\beta = 0$, $1\leq \alpha\leq 3$, we set $a = \alpha$ and, by recalling that $g^{a0} = 0$, we obtain  
$$
\Sigma_0^a = g^{a\alpha'}\Sigma_{\alpha'0} = g^{aa'}\Sigma_{a'0}
$$
For $1\leq b\leq 3$, we have 
$$
\Sigma_b^0 = g_{b\beta}g^{0\alpha}\Sigma_{\alpha}^{\beta} = g^{00}g_{bc}\Sigma_0^c.
$$
For $1\leq b\leq 3$ and $1\leq a\leq 3$, by applying \eqref{eq pr 2-5-7} and \eqref{eq pr 2-5-8}, we obtain 
$$
\Sigma_b^a = g^{a\alpha}\Sigma_{b\alpha} = g^{aa'}\Sigma_{ba'} = -g^{aa'}g_{ba'}g^{00}\Sigma_{00} = -\delta_b^a\Sigma_0^0.
$$
Hence, we conclude with
\begin{equation}\label{eq pr 2-5-9}
\aligned
\Sigma_b^0 = g^{00}g_{bc}\Sigma_0^c,\quad \Sigma_b^a = -\delta_b^a\Sigma_0^0.
\endaligned
\end{equation}

Now recall that the identity \eqref{eq pr 2-5-1} can be written as
$$
\nabla_{\alpha}\Sigma_{\beta}^\alpha = 0, 
$$
which leads to
$$
<e_{\alpha},\Sigma_{\beta}^{\alpha}>
- \omega_{\alpha\beta}^{\delta}\Sigma_\delta^\alpha + \omega_{\alpha\delta}^{\alpha}\Sigma_\beta^\delta = 0.
$$
When $\beta = 0$, we have 
\begin{equation}\label{eq pr 2-5-10}
\textcolor{black}{<e_0, \Sigma_0^0>} + \del_a\Sigma^a_0
- \omega_{\alpha\beta}^{\delta}\Sigma_\delta^\alpha + \omega_{\alpha\delta}^{\alpha}\Sigma_\beta^\delta = 0.
\end{equation}

For $1\leq b\leq 3$, we can take the equation \eqref{eq pr 2-5-9} and write 
$$
\textcolor{black}{<e_0,g^{00}g_{bc}\Sigma_0^c>} + \del_a\big(-\delta_b^a\Sigma_0^0\big)
- \omega_{\alpha b}^\gamma\Sigma_{\gamma}^\alpha + \omega_{\alpha\gamma}^{\alpha}\Sigma_{b}^{\gamma}=0, 
$$
which leads us to
\begin{equation}\label{eq pr 2-5-11}
\textcolor{black}{<e_0,\Sigma_0^c>} - g_{00}\gb^{bc}\del_b\Sigma_0^0
- g_{00}\gb^{bc}\big( \omega_{\alpha b}^\gamma\Sigma_{\gamma}^\alpha - \omega_{\alpha\gamma}^{\alpha}\Sigma_{b}^{\gamma}\big)
+ g_{00}\gb^{bc}<e_0, g^{00}g_{bc'}>\Sigma_0^{c'}
=0.
\end{equation}

We now consider the equations \eqref{eq pr 2-5-10} and \eqref{eq pr 2-5-11} together, and we observe that, in view of \eqref{eq pr 2-5-9}, the lower-order terms are {\sl linear combinations} $\Sigma_0^{\alpha}$ with $0\leq \alpha\leq 3$. Hence, these equations form a first-order differential system with linear source-terms.
This system can also be written in a standard symmetric hyperbolic form.
Namely, by recalling the notation $\gb=\gb_t$ for the induced Riemannian metric on the slices $M_t$,
we introduce 
$$
V := (\Sigma_0^0, \Sigma_0^a)^T,\quad \rho^a := g_{00}(\gb^{a1},\gb^{a2},\gb^{a3})^T 
$$
and 
$$
\sigma_1 = (1,0,0), \quad \sigma_2 = (0,1,0),\quad \sigma_3 = (0,0,1).
$$
The principal part of the system defined by \eqref{eq pr 2-5-10} and \eqref{eq pr 2-5-11} can be put in the form
\begin{equation}\label{eq 2-5-system}
\textcolor{black}{<e_0, V>} + \sum_a A^a\del_a V = F, 
\end{equation}
where
$$
A^a =
\left(
\begin{array}{cc}
0 & \sigma_a
\\
-g_{00}\rho^a &0
\end{array}
\right )
$$
and $F$ is a linear form on $V$. By multiplying this equation by the matrix 
$$
A_0 :=
\left(
\begin{array}{cc}
1 &0
\\
0 &-g^{00}g_{ab}
\end{array}
\right )
= -g_{00}
\left(
\begin{array}{cc}
-g_{00} &0
\\
0 &g_{ab}
\end{array}
\right ), 
$$
we conclude that \eqref{eq 2-5-system} becomes
\be
A_0 \textcolor{black}{<e_0, V>} + \sum_{a}A_0 A^a\del_a V = A_0 F.
\ee
Note that $A_0A^a$ are symmetric:
$$
A_0A^a =
\left(
\begin{array}{cc}
0 &\sigma^a
\\
(\sigma^a)^T &0
\end{array}
\right).
$$
and the system \eqref{eq 2-5-system} is thus symmetrizable. Clearly, \eqref{eq 2-5-3} implies that $V = 0$ on the initial slice $\{t=0\}$. Thanks to our global hyperbolicity assumption \eqref{eq:globalhyper} and by a standard uniqueness argument, we therefore conclude that $V = 0$ in the whole spacetime.
\end{proof}


\newpage 

\section{Formulation of the Cauchy problem in the Einstein metric}\label{sec conformal}

\subsection{Conformal transformation}

In view of the derivation made in Section~2, it is clear that the evolution and the constraint equations of modified gravity are, both, very involved and do not have a standard (hyperbolic, elliptic) type within the general class of PDE's. The main difficulty comes from the fourth-order term 
$$
\nabla_{\alpha}\nabla_{\beta}f'(R_g). 
$$
As we will now show it, the conformal transformation 
\bel{eq:transf1}
\gd_{\alpha\beta} := e^{2\rho} \, g_{\alpha\beta},\qquad \quad \gd^{\alpha\beta} = e^{-2\rho}g^{\alpha\beta}
\ee
(which depends upon second-order derivatives of the metric solution) will overcome some of the difficulties: 
where the {\bf conformal factor} is defined by
\be
\rho := \frac{1}{2}\ln f'(R_g)
\ee
or, equivalently, $f'(R_g) = e^{2\rho}$. 
We now proceed by deriving several relevant expressions in the {\bf conformal metric} $\gd$ in order to derive a tractable formulation of the field equations. 

We begin by deriving an expression for the gravity tensor $N_g$ in terms of the Einstein metric.

\begin{lemma}
With the notation above, the following identity holds 
\begin{equation}
\label{main eq trans1}
e^{2\rho}\Rd_{\alpha\beta} - 6e^{2\rho}\del_{\alpha}\rho\del_{\beta}\rho + \frac{\gd_{\alpha\beta}}{2}W_2(\rho)
=
{N_g}_{\alpha\beta} - \frac{1}{2}g_{\alpha\beta}\tr(N_g),
\end{equation}
where the  {\bf function $W_2=W_2(\rho)$} is defined implicitly by 
\begin{equation}
W_2(s) = \frac{f(r) - f'(r)r}{f'(r)},\quad e^{2s} = f'(r), \qquad r \in \RR.
\end{equation}
\end{lemma}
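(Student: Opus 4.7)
The plan is to rewrite the left-hand side of the claimed identity as $E_g := N_g - \tfrac{1}{2}\tr(N_g)\,g$, since this is exactly the tensor that was already computed earlier in the section. Recall that
$$
{E_g}_{\alpha\beta} = f'(R_g)R_{\alpha\beta} + \frac{1}{2}f'(R_g)W_1(R_g)g_{\alpha\beta} - \Big(\frac{1}{2}g_{\alpha\beta}\Box_g + \nabla_\alpha\nabla_\beta\Big)f'(R_g),
$$
so the lemma reduces to showing that, after substituting $f'(R_g)=e^{2\rho}$ and trading Hessians of $e^{2\rho}$ for Hessians of $\rho$, the right-hand side of the displayed identity for $E_g$ coincides with $e^{2\rho}\Rd_{\alpha\beta} - 6e^{2\rho}\del_\alpha\rho\,\del_\beta\rho + \tfrac12\gd_{\alpha\beta}W_2(\rho)$.

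The first concrete step is an elementary computation of the derivatives of the conformal factor: since $f'(R_g)=e^{2\rho}$ one has
$$
\nabla_\alpha\nabla_\beta f'(R_g) = 2e^{2\rho}\nabla_\alpha\nabla_\beta \rho + 4e^{2\rho}\nabla_\alpha\rho\,\nabla_\beta\rho, \qquad
\Box_g f'(R_g) = 2e^{2\rho}\Box_g\rho + 4e^{2\rho}|\nabla\rho|_g^2.
$$
Inserting these into the formula for ${E_g}_{\alpha\beta}$ one obtains, after factoring $e^{2\rho}$,
$$
{E_g}_{\alpha\beta} = e^{2\rho}\Big[R_{\alpha\beta} - 2\nabla_\alpha\nabla_\beta\rho - 4\nabla_\alpha\rho\,\nabla_\beta\rho - (\Box_g\rho + 2|\nabla\rho|_g^2)\,g_{\alpha\beta}\Big] + \tfrac12 e^{2\rho} W_1(R_g)g_{\alpha\beta}.
$$

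The second step is to invoke the standard conformal transformation law for the Ricci tensor in dimension four under $\gd = e^{2\rho} g$:
$$
\Rd_{\alpha\beta} = R_{\alpha\beta} - 2\nabla_\alpha\nabla_\beta\rho + 2\nabla_\alpha\rho\,\nabla_\beta\rho - \big(\Box_g\rho + 2|\nabla\rho|_g^2\big)g_{\alpha\beta}.
$$
I would recall its derivation briefly from the transformation of the Christoffel symbols $\Gammad^\gamma_{\alpha\beta} - \Gamma^\gamma_{\alpha\beta} = \delta^\gamma_\alpha \nabla_\beta\rho + \delta^\gamma_\beta\nabla_\alpha\rho - g_{\alpha\beta}\nabla^\gamma\rho$ and the resulting formula for the Ricci tensor. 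Substituting this identity to eliminate the bracketed expression above then gives
$$
R_{\alpha\beta} - 2\nabla_\alpha\nabla_\beta\rho - (\Box_g\rho + 2|\nabla\rho|_g^2)g_{\alpha\beta} = \Rd_{\alpha\beta} - 2\nabla_\alpha\rho\,\nabla_\beta\rho,
$$
so that the whole bracket in the expression for $E_g$ collapses to $\Rd_{\alpha\beta} - 6\nabla_\alpha\rho\,\nabla_\beta\rho$, yielding
$$
{E_g}_{\alpha\beta} = e^{2\rho}\Rd_{\alpha\beta} - 6e^{2\rho}\nabla_\alpha\rho\,\nabla_\beta\rho + \tfrac{1}{2}e^{2\rho}W_1(R_g)g_{\alpha\beta}.
$$

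The last step is to identify the scalar coefficient. Since $\gd_{\alpha\beta} = e^{2\rho}g_{\alpha\beta}$, the final term reads $\tfrac12 \gd_{\alpha\beta} W_1(R_g)$; and by the very definition $W_2(s) = (f(r)-rf'(r))/f'(r)$ with $e^{2s} = f'(r)$, one simply has $W_2(\rho) = W_1(R_g)$ under our substitution $\rho = \tfrac12 \ln f'(R_g)$. This matches the right-hand side and completes the proof. The only real obstacle is the bookkeeping around the conformal law for Ricci: one has to keep the signs and the dimension-dependent coefficient $(n-2)=2$ straight, and check that the $\nabla_\alpha\rho\nabla_\beta\rho$ contributions from the Hessian of $e^{2\rho}$ and from the Ricci transformation combine into the precise coefficient $-6$ that appears in the statement.
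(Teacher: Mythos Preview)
Your proof is correct and uses the same two ingredients as the paper: the Hessian identities for $e^{2\rho}$ and the conformal transformation law for the Ricci tensor. The only organizational difference is that you work directly with the pre-computed tensor $E_g = N_g - \tfrac12\tr(N_g)g$ from Section~2, whereas the paper first rewrites $N_g$ itself in terms of $\Rd$ (introducing the auxiliary function $W_3(\rho)=f(r)$), then separately derives the trace equation, and finally combines the two at the end. Your route is slightly more economical since the $\Box_g\rho$ and $|\nabla\rho|_g^2$ terms that appear in $N_g$ individually cancel automatically in $E_g$, so you never need to track them or introduce $W_3$.
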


We also recall that the function $W_1$ is defined by \eqref{def:Wone} and it will be also convenient (in the proof below) to introduce the {\bf function $W_3=W_3(\rho)$} by 
\be
W_3(s):= f(r),\quad e^{2s} = f'(r), \qquad r \in \RR.  
\ee

\begin{proof} We need to analyze the tensor 
$$
{N_g}_{\alpha\beta} = f'(R_g)R_{\alpha\beta} - \frac{1}{2}f(R_g)g_{\alpha\beta}
+ \big(g_{\alpha\beta}\Box_g - \nabla_{\alpha}\nabla_{\beta}\big)f'(R_g)
$$
and its trace
$
\tr(N_g) = f'(R_g)R_g - 2f(R_g) + 3\Box_gf'(R_g).
$
Recall first the identities 
$$
\aligned
&\nabla_{\alpha}\nabla_{\beta}e^{2\rho} = 2e^{2\rho}\nabla_{\alpha}\nabla_{\beta}\rho + 4e^{2\rho}\nabla_{\alpha}\rho\nabla_{\beta}\rho,
\\
&\Box_ge^{2\rho} = 2e^{2\rho}\Box_g\rho + 4e^{2\rho}g(\nabla \rho,\nabla \rho), 
\endaligned
$$
which imply 
$$
{N_g}_{\alpha\beta} = e^{2\rho}R_{\alpha\beta} - \frac{1}{2}g_{\alpha\beta}W_3(\rho)
+ 2e^{2\rho}\big(g_{\alpha\beta}\Box_g - \nabla_{\alpha}\nabla_{\beta}\big)\rho + 4e^{2\rho}\big(g_{\alpha\beta}g(\del\rho,\del\rho) - \del_{\alpha}\rho\del_{\beta}\rho\big)
$$
and
$$
\Box_g \rho - \frac{W_3(\rho)}{6e^{2\rho}} + 2g(\del\rho,\del\rho) - \frac{1}{6}W_2(\rho) = \frac{\tr (N_g)}{6e^{2\rho}}.
$$
Moreover, we have the following relation between the Ricci curvature tensors of $g$ and $\gd$:
\begin{equation}
\label{conformal Ricci curvature}
\Rd_{\alpha\beta} = R_{\alpha\beta} - 2\big(\nabla_\alpha\nabla_{\beta}\rho
- \nabla_\alpha\rho\nabla_{\beta}\rho\big) - \big(\Box_g\rho+2g(\nabla \rho,\nabla \rho)\big)g_{\alpha\beta}
\end{equation}
and, therefore, we see that $N_g$ can be expressed as 
$$
{N_g}_{\alpha\beta} = e^{2\rho}\Rd_{\alpha\beta} - 6e^{2\rho}\nabla_{\alpha}\rho\nabla_{\beta}\rho + 6e^{2\rho}g_{\alpha\beta}g(\nabla\rho,\nabla\rho) + 3e^{2\rho}g_{\alpha\beta}\Box_g \rho
- \frac{1}{2}W_3(\rho)g_{\alpha\beta}.
$$
It remains to combine this result with the trace equation above.
\end{proof}

We are now in a position to state the field equations in the conformal metric. 
At this juncture, it is unclear how the scalar field $\rho$ should be recovered in term of the Einstein metric, and this is precisely the issue that we will address next, by deriving an evolution equation on $\rho$.

\begin{propositiondefinition}
The {\bf field equations of modified gravity in the Einstein metric} $\gd_{\alpha\beta} = e^{2\rho}g_{\alpha\beta}$
with $\rho= \frac{1}{2}\ln(f'(R_g))$
read
\begin{equation}\label{main eq trans}
 e^{2\rho}\Rd_{\alpha\beta} - 6e^{2\rho}\del_{\alpha}\rho\del_{\beta}\rho + \frac{\gd_{\alpha\beta}}{2}W_2(\rho)
=
8\pi \big(T_{\alpha\beta} - \frac{1}{2}\gd_{\alpha\beta}\gd^{\alpha'\beta'}T_{\alpha'\beta'}\big).
\end{equation}
\end{propositiondefinition}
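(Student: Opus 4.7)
The plan is to rely almost entirely on the identity established in the preceding Lemma, which already rewrites the trace-reversed modified gravity tensor ${N_g}_{\alpha\beta} - \tfrac{1}{2}g_{\alpha\beta}\tr(N_g)$ purely in terms of $\gd$, its Ricci tensor $\Rd$, the conformal scalar $\rho$, and the algebraic potential $W_2(\rho)$. Since the field equation \eqref{Eq1-14} asserts ${N_g}_{\alpha\beta} = 8\pi T_{\alpha\beta}$, the only task left is to rewrite the matter side after trace-reversal in a form that is natural with respect to $\gd$.

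Concretely, I would first take the $g$-trace of \eqref{Eq1-14} to obtain $\tr(N_g) = 8\pi\tr(T)$, then form the trace-reversed equation
$${N_g}_{\alpha\beta} - \tfrac{1}{2}g_{\alpha\beta}\tr(N_g) = 8\pi\bigl(T_{\alpha\beta} - \tfrac{1}{2}g_{\alpha\beta}g^{\alpha'\beta'}T_{\alpha'\beta'}\bigr),$$
and substitute the preceding Lemma into the left-hand side. The resulting expression coincides with the stated equation once one invokes the elementary algebraic identity
$$g_{\alpha\beta}\,g^{\alpha'\beta'} = \gd_{\alpha\beta}\,\gd^{\alpha'\beta'},$$
which holds because $\gd_{\alpha\beta} = e^{2\rho}g_{\alpha\beta}$ and $\gd^{\alpha'\beta'} = e^{-2\rho}g^{\alpha'\beta'}$, so that the two conformal factors cancel. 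This observation — that the trace-reversal contraction is conformally invariant — is what makes the right-hand side cleanly expressible in the Einstein metric with no residual $\rho$-dependent prefactor.

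Finally, to justify the equivalence with \eqref{Eq1-14} (which legitimizes the joint "Proposition and Definition" formulation), I would remark that in spacetime dimension four, trace-reversal is an involution on symmetric two-tensors: $\tr_g$ sends $S - \tfrac{1}{2}g\,\tr_g S$ to $-\tr_g S$, so applying the operation a second time recovers $S$. Hence no information is lost in passing to the trace-reversed form, and the new system is fully equivalent to \eqref{Eq1-14}. I anticipate no real obstacle here: all of the geometric substance — absorbing the fourth-order term $\nabla_\alpha\nabla_\beta f'(R_g)$ into $\Rd$ together with quadratic terms in $\partial\rho$, via the conformal transformation formula for the Ricci tensor — has already been performed in the preceding Lemma.
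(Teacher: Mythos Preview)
Your proposal is correct and matches the paper's approach exactly: the paper gives no separate proof of this Proposition-Definition, treating it as an immediate consequence of the preceding Lemma (equation \eqref{main eq trans1}) together with the field equations ${N_g}_{\alpha\beta}=8\pi T_{\alpha\beta}$. Your explicit identification of the conformal cancellation $g_{\alpha\beta}g^{\alpha'\beta'}=\gd_{\alpha\beta}\gd^{\alpha'\beta'}$ is precisely the missing one-line algebra needed to pass from the Lemma's right-hand side to the form stated here, and your remark on the involutive nature of trace-reversal cleanly justifies the equivalence claim.
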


\begin{remark} For any sufficiently regular function $w$, one alaso has 
\be
\label{conformal d'Alembertian}
\Box_{\gd}w = e^{-2\rho}\big(\Box_g w + 2g^{\alpha\beta}\del_\alpha\rho\del_{\beta}w \big) = e^{-2\rho}\Box_gw + 2\gd^{\alpha\beta}\del_{\alpha}\rho\del_{\beta}w, 
\ee
so that the trace equation transforms into
\begin{equation}\label{trace eq trans}
\Box_{\gd}\rho = \frac{W_2(\rho)}{6e^{2\rho}} + \frac{W_3(\rho)}{6e^{4\rho}} + \frac{1}{6e^{4\rho}}\tr(N_g).
\end{equation}
\end{remark}


\subsection{Evolution and constraint equations in the conformal metric}

As in the previous section, we can formulate the evolution equations and constraint equations associated with the conformal field equation \eqref{main eq trans1}. To do so, as before, we introduce a foliation of the spacetime $M = [0,+\infty)\times M_t$ and a Cauchy adapted frame $\{e_0,e_1,e_2,e_3\}$ associated with the transformed metric $\gd$. Then, by similar calculations as above, we find
\begin{subequations}\label{GC-2-trans}
\begin{equation}\label{GC-2-transa}
\Rd_{ij} = \Rb_{ij} - \frac{\delb_0\Kd_{ij}}{N} + \Kd_{ij}\Kd^l_l - 2\Kd_{il}\Kd^{l}_j - \frac{\nablabd_i\del_j\Nd}{\Nd},
\end{equation}
\begin{equation}\label{GC-2-transb}
\Rd_{0j} = \Nd\big( \del_j\Kd_l^l - \nablabd_l \Kd^l_j\big),
\end{equation}
\begin{equation}\label{GC-2transc}
\Gd_{00} = \frac{{\Nd}^2}{2} \big(\Rbd - \Kd_{ij}\Kd^{ij} + (\Kd_l^l)^2\big).
\end{equation}
\end{subequations}
Here, $\nablabd$ refers to the covariant derivative on the slice $M_t$ with respect to $\gd$, and we observe that \eqref{GC-2-transa} yields the evolution equations
\begin{equation}\label{GC-evolution-deux}
\aligned
&\delb_0 \Kd_{ij} = \Nd\big(\Rbd_{ij} - \Rd_{ij}\big) + \Nd\Kd_{ij}\Kd_l^l - 2\Nd\Kd_{il}\Kd^l_j - \nablabd_i\del_j\Nd,
\\
&\delb_0\gbd_{ij} = -2\Nd\Kd_{ij}.
\endaligned
\end{equation}

Moreover, the transformed field equations \eqref{main eq trans1} read 
$$
\Rd_{\alpha\beta} = e^{-2\rho}\big({N_g}_{\alpha\beta} - \frac{1}{2}g_{\alpha\beta}\tr(N_g)\big) + 6\del_{\alpha}\rho\del_{\beta}\rho - \frac{1}{2e^{2\rho}}\gd_{\alpha\beta}W_2(\rho)
$$
and, by taking the trace of this equation {\bf with respect to the metric $\gd$}, we have 
$$
\Rd = -e^{-4\rho}\tr(N_g) + 6\gd(\del\rho,\del\rho)g_{\alpha\beta} + e^{-2\rho}\gd_{\alpha\beta}W_2(\rho). 
$$
This leads us to
$$
\Gd_{\alpha\beta} = {N_g}_{\alpha\beta} + 6\nabla_{\alpha}\varrho\nabla_{\beta}\varrho - 3\gd(\del\rho,\del\rho)\gd_{\alpha\beta} + e^{-2\rho}\gd_{\alpha\beta}W_2(\rho),
$$

We have thus derived the evolution equations and constraint equations. 
The evolution equations read 
$$
\aligned
\delb_0 \Kd_{ij} &= \Nd\Rbd_{ij} + \Nd\Kd_{ij}\Kd_l^l - 2\Nd\Kd_{il}\Kd^l_j - \nablabd_i\del_j\Nd
\\
&-\Nd e^{-2\rho}\bigg({N_g}_{ij} - \frac{1}{2e^{2\rho}}\gd_{ij}\tr(N_g) + 6e^{2\rho}\del_i\rho\del_j\rho + \frac{1}{2}\gd_{ij}W_2(R_g)\bigg), 
\\
\delb_0\gbd_{ij} &= -2\Nd\Kd_{ij},
\endaligned
$$
while the Hamilton constraint equation reads 
$$
\Rbd - \Kd_{ij}\Kd^{ij} + (\Kd_l^l)^2 = \frac{2{N_g}_{00}}{e^{2\rho}\Nd^2} + 12|\mathcal{L}_{n^{\dag}}\rho|^2
+ 6\gd(\nablad \rho,\nablad\rho) - e^{-2\rho}W_2(\rho)
$$
and the momentum constraint equations read
$$
\del_j\Kd_l^l - \nablabd\Kd_j^l = \frac{{N_g}_{0j}}{e^{2\rho}\Nd} + 6\mathcal{L}_{n^{\dag}}\rho\,\del_j\rho.
$$
Here, $n^\dag$ denotes the normal unit vector of the slice $M_t$.

Finally, we consider the Jordan coupling with matter field (this choice of coupling being revisited in the next subsection):
\be
{N_g}_{\alpha\beta} = 8\pi T_{\alpha\beta} 
\ee
and, furthermore, we define the matter fields 
\be
\Jd_j := -\frac{T_{0j}}{\Nd}, \qquad \sigmad := \frac{T_{00}}{\Nd^2}.
\ee

\begin{definition}
In the Einstein metric,
the equations of modified gravity in a Cauchy adapted frame $\{e_0,e_1,e_2,e_3\}$
can be decomposed as follows:
\begin{itemize}

\item[1.] {\bf Evolution equations:}
$$
\aligned
\delb_0 \Kd_{ij} &= \Nd\Rbd_{ij} + \Nd\Kd_{ij}\Kd_l^l - 2\Nd\Kd_{il}\Kd^l_j - \nablabd_i\del_j\Nd
\\
&-\Nd e^{-2\rho}\Big(8\pi T_{ij} - 4\pi e^{-2\rho}\gd_{ij}\tr(T) + 6e^{2\rho}\del_i\rho\del_j\rho + \frac{1}{2}\gd_{ij}W_1(R_g)\Big),
\\
\delb_0\gbd_{ij} &= -2\Nd\Kd_{ij}.
\endaligned
$$

\item[2.] {\bf Hamiltonian constraint}:
\begin{equation}\label{eq constraint conformalT H}
\Rbd - \Kd_{ij}\Kd^{ij} + (\Kd_l^l)^2 = \frac{16\sigmad}{e^{2\rho}} + 12|\mathcal{L}_{n^{\dag}}\rho|^2
+ 6\gd(\nablad \rho,\nablad\rho) - e^{-2\rho}W_2(\rho)
\end{equation}

\item[3.] {\bf Momentum constraints}:
\begin{equation}\label{eq constraint conformalT M}
\del_j\Kd_l^l - \nablabd\Kd_j^l = -\frac{\Jd_j}{e^{2\rho}} + 6\mathcal{L}_{n^\dag}\rho\, \del_j\rho.
\end{equation}
\end{itemize}
\end{definition}

Let us again emphasize again that the constraint equations are equivalent to 
${N_g}_{00} = 8\pi T_{00}$, ${N_g}_{0a} = 8\pi T_{0a}$, while the evolution equations are equivalent to
$$
{N_g}_{ab} - \frac{1}{2}g_{ab}\tr(N_g) = 8\pi \big(T_{ab} - \frac{1}{2}g_{ab}\tr(T)\big).
$$


\subsection{The divergence identity}
\label{sec div conformal}

In order to derive an evolution equation for the matter field, we need the divergence of the tensor $N_g$ with respect to the conformal metric $\gd$. 

\begin{lemma}\label{lem divN conformal}
The modified gravity tensor in terms of the conformal metric satisfies the identity 
\begin{equation}\label{eq divN-bianchi convformal}
\nablad^{\alpha}{N_g}_{\alpha\beta}
 = e^{-2\rho}\big(2g^{\gamma\delta}\del_{\gamma}\rho{N_g}_{\delta\beta} - \tr(N_g)\del_{\beta}\rho\big).
\end{equation}
\end{lemma}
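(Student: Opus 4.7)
The plan is to exploit the already-established divergence identity $\nabla^\alpha {N_g}_{\alpha\beta}=0$ from Lemma~\ref{prop E_bianchi} and to compare $\nabla$ with $\nablad$ via the standard conformal transformation law for Christoffel symbols. Since $\gd_{\alpha\beta}=e^{2\rho}g_{\alpha\beta}$, a direct computation gives
\begin{equation*}
C^{\lambda}_{\gamma\alpha} := \Gammad^{\lambda}_{\gamma\alpha}-\Gamma^{\lambda}_{\gamma\alpha}
= \delta^{\lambda}_{\gamma}\del_{\alpha}\rho+\delta^{\lambda}_{\alpha}\del_{\gamma}\rho-g_{\gamma\alpha}g^{\lambda\mu}\del_{\mu}\rho,
\end{equation*}
so that for the $(0,2)$--tensor ${N_g}_{\alpha\beta}$ one has
\begin{equation*}
\nablad_{\gamma}{N_g}_{\alpha\beta}=\nabla_{\gamma}{N_g}_{\alpha\beta}-C^{\lambda}_{\gamma\alpha}{N_g}_{\lambda\beta}-C^{\lambda}_{\gamma\beta}{N_g}_{\alpha\lambda}.
\end{equation*}

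Next I would contract with $\gd^{\gamma\alpha}=e^{-2\rho}g^{\gamma\alpha}$. The term $\gd^{\gamma\alpha}\nabla_{\gamma}{N_g}_{\alpha\beta}=e^{-2\rho}\nabla^{\alpha}{N_g}_{\alpha\beta}$ vanishes thanks to Lemma~\ref{prop E_bianchi}, leaving two $C$-contractions to analyze. A straightforward trace computation in dimension $4$ gives
\begin{equation*}
g^{\gamma\alpha}C^{\lambda}_{\gamma\alpha}=g^{\lambda\alpha}\del_{\alpha}\rho+g^{\gamma\lambda}\del_{\gamma}\rho-4g^{\lambda\mu}\del_{\mu}\rho=-2g^{\lambda\mu}\del_{\mu}\rho,
\end{equation*}
which produces the contribution $2e^{-2\rho}g^{\gamma\delta}\del_{\gamma}\rho\,{N_g}_{\delta\beta}$.

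For the remaining piece $g^{\gamma\alpha}C^{\lambda}_{\gamma\beta}{N_g}_{\alpha\lambda}$, I would expand the three terms of $C^{\lambda}_{\gamma\beta}$ and use the symmetry of ${N_g}$ to identify cancellations: the first term yields $\del_{\beta}\rho\,\tr(N_g)$, while the second and third terms are each equal to $g^{\lambda\mu}\del_{\mu}\rho\,{N_g}_{\lambda\beta}$ and enter with opposite signs, so they cancel. Combining everything yields
\begin{equation*}
\nablad^{\alpha}{N_g}_{\alpha\beta}=e^{-2\rho}\bigl(2g^{\gamma\delta}\del_{\gamma}\rho\,{N_g}_{\delta\beta}-\tr(N_g)\del_{\beta}\rho\bigr),
\end{equation*}
as claimed.

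I do not expect a serious obstacle: the argument is purely algebraic, and the only mildly delicate point is keeping track of which metric is used to raise indices and to take the trace (everything on the right-hand side is expressed with $g$, not $\gd$, which is exactly what drops out of the computation). The symmetry of ${N_g}$ is used once, at the moment where the two mixed $C$-contractions collapse; without it the identity would carry an additional symmetrization term.
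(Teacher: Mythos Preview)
Your proof is correct and follows essentially the same approach as the paper: both use the conformal Christoffel-symbol relation $\Gammad^{\gamma}_{\alpha\beta}=\Gamma^{\gamma}_{\alpha\beta}+\delta^{\gamma}_{\alpha}\del_{\beta}\rho+\delta^{\gamma}_{\beta}\del_{\alpha}\rho-g_{\alpha\beta}\nabla^{\gamma}\rho$, expand $\nablad^{\alpha}{N_g}_{\alpha\beta}=e^{-2\rho}g^{\alpha\gamma}\nablad_{\gamma}{N_g}_{\alpha\beta}$, invoke Lemma~\ref{prop E_bianchi} to kill $\nabla^{\alpha}{N_g}_{\alpha\beta}$, and simplify the remaining correction terms. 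Your presentation is in fact slightly cleaner in separating the two $C$-contractions and identifying the cancellation via symmetry of $N_g$.
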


\begin{proof} We work in an arbitrary (possibly only locally defined) natural frame. The desired identity follows from 
$$
\Gammad^{\gamma}_{\alpha\beta} = \Gamma^{\gamma}_{\alpha\beta} + \delta_{\alpha}^{\gamma}\del_{\beta}\rho + \delta_{\beta}^{\gamma}\del_{\alpha}\rho - g_{\alpha\beta}\nabla^\gamma\rho.
$$
We have 
$$
\aligned
&\nablad^{\alpha}{N_g}_{\alpha\beta}
 = e^{-2\rho}g^{\alpha\gamma}\nablad_{\gamma}{N_g}_{\alpha\beta}
\\
&= e^{-2\rho}g^{\gamma\alpha}\big(\del_{\gamma}{N_g}_{\alpha\beta} - \Gamma_{\gamma\alpha}^{\delta}{N_g}_{\beta\delta} - \Gamma_{\gamma\beta}^{\delta}{N_g}_{\alpha\delta}\big)
\\
&\quad - e^{-2\rho}g^{\alpha\gamma}\big(\delta_{\gamma}^{\delta}\del_{\alpha}\rho + \delta_{\alpha}^{\delta}\del_{\gamma}\rho -
 g_{\gamma\alpha}\nabla^{\delta}\rho\big){N_g}_{\beta\delta}
\\
& \quad
  -e^{-2\rho}g^{\alpha\gamma}\big(\delta_{\gamma}^{\delta}\del_{\beta}\rho + \delta_{\beta}^{\delta}\del_{\gamma}\rho - g_{\gamma\beta}\nabla^{\delta}\rho\big){N_g}_{\alpha\delta}, 
\endaligned
$$ 
thus 
$$
\aligned
&\nablad^{\alpha}{N_g}_{\alpha\beta}
\\
&= e^{-2\rho}\nabla^{\alpha}{N_g}_{\alpha\beta}
\\
&\quad- e^{-2\rho}\big(\nabla^{\delta}\rho + \nabla^{\delta}\rho - 4\nabla^{\delta}\rho\big){N_g}_{\beta\delta}
- e^{-2\rho}\big(\del_{\beta}\rho\tr(N_g) + \nabla^{\alpha}\rho{N_g}_{\alpha\beta} - \nabla^{\alpha}\rho{N_g}_{\alpha\beta}\big)
\endaligned
$$
Recalling that $\nabla^{\alpha}{N_g}_{\alpha\beta} = 0$ by Lemma~\ref{prop E_bianchi}, we concude that 
$$
\aligned
\nablad^{\alpha}{N_g}_{\alpha\beta}
= e^{-2\rho}\big(2g^{\gamma\delta}\del_{\gamma}\rho{N_g}_{\delta\beta} - \tr(N_g)\del_{\beta}\rho\big).
\endaligned
$$
\end{proof}

With the Jordan coupling, the divergence of the stress-energy tensor is thus expressed as 
\be
\nablad^{\alpha}T_{\alpha\beta} = \big(2g^{\delta\gamma}\del_\delta\rho \, T_{\gamma\beta} - {\tr(T)\del_\beta}\rho  \big)e^{-2\rho}, 
\ee
which (together with an equation of state for the matter field) determine the evolution equation of the matter field.

\begin{remark} 
We conclude this section with a discussion of the Einstein coupling. We rely on \eqref{eq divN-bianchi convformal} and now show that the only meaningful choice of coupling (now viewed in the Einstein metric)
is the Jordan coupling.

Observe frst that the Jordan coupling 
$$
T_{\alpha\beta}  = \TJ_{\alpha\beta} = \del_{\alpha}\phi\del_{\beta}\phi - \frac{1}{2}g_{\alpha\beta}|\nabla \phi|^2_g 
$$
implies
$$
\aligned
\del_{\beta}\rho \,\tr(T) - 2g^{\delta\gamma}\del_\delta\rho \, T_{\gamma\beta}
&=-|\nabla\phi|_g^2 \del_{\beta}\rho - 2g(\del\rho,\del\phi)\del_{\beta}\phi + |\nabla\phi|^2_g\del_{\beta}\rho
\\
&= - 2g(\del\rho,\del\phi)\del_{\beta}\phi, 
\endaligned
$$
which leads us to
\begin{equation}\label{eq 3-3-1}
\aligned
\big(2g^{\delta\gamma}\del_\delta\rho T_{\gamma\beta} - \tr(T)\del_{\beta}\rho\big)e^{-2\rho} = 2\gd(\del\rho,\del\rho)\del_{\beta}\phi.
\endaligned
\end{equation}
From the identity 
$$
\nablad^{\alpha}T_{\alpha\beta} = \del_{\beta}\phi\,\Box_{\gd}\phi
$$
combined with \eqref{eq divN-bianchi convformal}, we have 
$$
\del_{\beta}\phi\,\Box_{\gd}\phi
= 2e^{-2\rho} g^{\alpha'\beta'}\del_{\alpha'}\phi \del_{\beta'}\rho \,\del_{\beta}\phi
$$
and this leads us to the {\bf wave equation for the matter field}
\begin{equation}
\label{eq conformal wave phi}
\Box_{\gd}\phi = 2\gd^{\alpha'\beta'}\del_{\alpha'}\phi \del_{\beta'}\rho.
\end{equation}

On the other hand, let us consider the Einstein coupling:
$$
T_{\alpha\beta} = \TE_{\alpha\beta} = e^{2\rho}\big(\del_{\alpha}\phi\del_{\beta}\phi - \frac{1}{2}g_{\alpha\beta}|\nabla \phi|^2_g\big),
$$
which gives 
$$
\aligned
\nablad^{\alpha}T_{\alpha\beta} &= e^{2\rho}\del_{\beta}\phi\Box_\gd\phi
+ 2e^{2\rho}\gd^{\alpha\alpha'}\del_{\alpha'}\rho \big(\del_{\alpha}\phi\del_{\beta}\phi - \frac{1}{2}g_{\alpha\beta}|\nabla \phi|^2_g\big)
\\
&= e^{2\rho}\del_{\beta}\phi\big(\Box_\gd\phi + 2\gd(\del\rho,\del\phi)\big) - \del_{\beta}\rho|\nabla\phi|^2_g.
\endaligned
$$
In combination with \eqref{eq 3-5-1}, we find 
$$
e^{2\rho}\del_{\beta}\phi\big(\Box_\gd\phi + 2\gd(\del\rho,\del\phi)\big) - \del_{\beta}\rho|\nabla\phi|^2_g
= 2\gd(\del\rho,\del\phi)\del_{\beta}\phi, 
$$
and therefore 
$$
e^{2\rho}\del_{\beta}\phi\big(\Box_\gd \phi + 2\gd(\del\rho,\del\phi) - 2e^{-2\rho}\gd(\nablad \rho,\nablad \phi)\big) =
\del_{\beta}\rho|\nabla\phi|^2_g. 
$$
In agreement with what we noticed with the Jordan metric, the Einstein coupling leads to an {\sl over-determined} partial differential system. This suggests again that the Einstein coupling may not lead to a well-posed initial value problem.
\end{remark} 


\subsection{The conformal version of the initial value problem}

We are now ready to formulate the notion of initial data set and the notion of Cachy development in terms of the conformal metric. In agreement to our discussion in the previous section, we work with the Jordan coupling and a real massless scalar field:
\be
T_{\alpha\beta} = \del_{\alpha}\phi\del_{\beta}\phi - \frac{1}{2}g_{\alpha\beta}|\nabla \phi|_g^2, 
\ee
and we set 
$$
\aligned
\sigmad &:= \frac{1}{2}\big(|\mathcal{L}_{n^\dag}\phi|^2 + \gb^{ij}\del_i\phi\del_j\phi\big),
\\
\Jd_j &:= -\mathcal{L}_{n^\dag}\phi\,\del_j\phi.
\endaligned
$$
 
\begin{definition}\label{def36}
An {\bf initial data set for the modified gravity theory in the Einstein metric} $(\Mb, \gbd, \Kd, \rhozerod, \rhooned, \phizerod, \phioned)$ consists of the following data:

\begin{itemize}

\item a $3$-dimensional manifold $\Mb$ endowed with a Riemannian metric $\gbd$ and a symmetric $(0,2)$-tensor field $\Kd$,

\item two scalar fields denoted by $\rhozerod$ and $\rhooned$ on $\Mb$ and representing the (to-be-constructed) conformal factor and its time derivative,

\item two scalar field $\phizerod$ and $\phioned$ defined on $\Mb$.

\end{itemize}

Furthermore, these data are required to satisfy the {\bf Hamiltonian constraint of modified gravity in the Einstein metric}
\begin{equation}
\label{constraint trans Halmitonian}
\aligned
\Rbd - \Kd_{ij}\Kd^{ij}+(\Kd_j^j)^2
=  \, 
& 8 \, e^{-2\rho} \, \Big((\phioned)^2
+ \gbd^{ij}\del_i\phizerod\del_j\phizerod\Big) + 6 (\rhooned)^2 
\\
& \quad + 6\gbd^{ij}\del_i\rhozerod\del_j\rhozerod
 - e^{-2\rhozerod}W_2(\rhozerod),
\endaligned
\end{equation}
and the {\bf momentum constraint of modified gravity in the Einstein metric}
\begin{equation}
\label{constraint trans momentum}
\aligned
\del_j \Kd_i^i - \nablabd_i \Kd_j^i
 &= \frac{\phioned\del_j\phizerod}{e^{2\rho}} + 6\rhooned\del_j\rhozerod.
\endaligned
\end{equation}
\end{definition}

\begin{definition}
\label{def cauchy conformal}
Given an initial data set $(\Mb, \gbd, \Kd, \rhozerod, \rhooned, \phizerod, \phioned)$ as in Definition~\ref{def36},
the {\bf initial value problem for the modified gravity theory in the Einstein metric}
consists of finding
a Lorentzian manifold $(M, g)$ and a two-tensor field $T_{\alpha\beta}$ on $M$
\begin{itemize}
\item[1.]The conformal metric $\gd$ is defined with the relation $\gd_{\alpha\beta} = e^{2\rho}g_{\alpha\beta}$ with the conformal factor $\rho = \frac{1}{2}\ln(f'(R_g))$ where $R_g$ is the scalar curvature of $g$.

\item[2.] The field equations of modified gravity  \eqref{main eq trans} are satisfied with $\rho = \frac{1}{2}\ln f'(R_g)$.

\item[3.] There exists an embedding $i: \Mb \to M$ with pull back metric $\gbd = i^* \gd$ and
second fundamental form $\Kd$.

\item[4.] The field $\rhozerod$ coincides with the restriction of the conformal factor $\rho$ on $\Mb$, while
$\rhooned$ coincides with the Lie derivative $\mathcal{L}_{n^\dag}\rho$ restricted to $\Mb$, where $n^\dag$ denotes the normal unit vector of $\Mb$.

\item[5.] The scalar fields $\phizerod,\, \phioned$ coincides with the restriction of $\phi,\, \Lcal_{n^\dag} \phi$ on $\Mb$.
\end{itemize}
Such a solution to \eqref{main eq trans} is referred to as a {\bf modified gravity development of the initial data set} $(\Mb, \gbd, \Kd, \rhozerod, \rhooned, \phizerod, \phioned)$.
\end{definition}

The notion of {\bf maximal globally hyperbolic development} is then defined along the same lines as in \cite{CB} for the classical gravity.  We observe that our formulation of the initial value problem for modified gravity reduces to the classical formulation in the special case
of vanishing geometric data $\phizerod=\phioned=\Rh = \Sh \equiv 0$.
On the other hand, without matter fields and for non-vanishing geometric data $\Rh$ and $\Sh$, the
spacetimes under consideration do not satisfy Einstein vacuum equations.
Similarly as in classical gravity, these fields can not be fully arbitrary prescribed but certain constraints (given above) must be assumed.


\subsection{Preservation of the constraints}

Next, we establish the preservation of the constraints, as follows.  

\begin{proposition}
Let $(\gbd, \Kd)$ be symmetric two-tensors defined in $M = \cup_{t\in[0,t_{\max})}M_t$. If the following equations hold in $M$
\begin{equation}\label{eq 3-5-1}
{N_g}_{ij} - \frac{1}{2}\tr(N_g)g_{ij} = 8\pi \big(T_{ij} - \frac{1}{2}\tr(N_g)g_{ij}\big),
\end{equation}
\begin{equation}\label{eq 3-5-1.5}
\nablad^{\alpha}T_{\alpha\beta} =  e^{-2\rho}\big(g^{\gamma\delta}\del_{\gamma}\rho T_{\gamma\beta} - \del_{\beta}\rho\tr(T)\big),
\end{equation}
and
\begin{equation}\label{eq 3-5-2}
{N_g}_{0\beta}= 8\pi T_{0\beta}.
\end{equation}
holds  on the initial slice $M_0$, then \eqref{eq 3-5-2} holds throughout the spacetime $M$.
\end{proposition}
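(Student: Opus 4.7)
The plan is to mirror the strategy of Proposition~\ref{prop 2-5-1} in the conformal setting: introduce the defect tensor $\Sigma_{\alpha\beta} := {N_g}_{\alpha\beta} - 8\pi T_{\alpha\beta}$, reduce the problem to showing $\Sigma_{0\beta}\equiv 0$ throughout $M$, and derive a first-order homogeneous linear system for these components which admits the vanishing initial data furnished by \eqref{eq 3-5-2} as its only Cauchy solution.

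The first step is a purely algebraic reduction. The evolution hypothesis \eqref{eq 3-5-1} reads $\Sigma_{ij} = \tfrac{1}{2}g_{ij}\tr(\Sigma)$, and since $g^{0i} = g_{0i}=0$ in the Cauchy adapted frame (a property preserved under the conformal rescaling $\gd = e^{2\rho}g$), taking the trace yields $\tr(\Sigma) = 2g^{00}\Sigma_{00}$ and hence $\Sigma_{ij} = g_{ij}g^{00}\Sigma_{00}$. As in the Jordan proof, all components of the mixed tensor $\Sigma^{\alpha}_{\beta}$ then become linear functions of the vector of unknowns $V := (\Sigma^0_0,\Sigma^1_0,\Sigma^2_0,\Sigma^3_0)^T$, which I will take as the dynamical variable.

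Next I would combine the Bianchi-type identity \eqref{eq divN-bianchi convformal} of Lemma~\ref{lem divN conformal} with the matter divergence hypothesis \eqref{eq 3-5-1.5}. After subtracting, one is left with an identity of the schematic form
\[
\nablad^{\alpha} \Sigma_{\alpha\beta} = e^{-2\rho}\bigl(2 g^{\gamma\delta}\del_{\gamma}\rho \, \Sigma_{\delta\beta} - \tr(\Sigma)\,\del_{\beta}\rho\bigr),
\]
whose right-hand side is linear in $\Sigma$ and, via the algebraic reduction above, linear in $V$ with coefficients depending only on the prescribed metric and on the conformal factor $\rho$. Expanding $\nablad_{\alpha} \Sigma^{\alpha}_{\beta}$ in the Cauchy adapted frame of $\gd$ and substituting these algebraic relations yields a first-order linear differential system for $V$. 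Its principal part is structurally identical to the one appearing in the proof of Proposition~\ref{prop 2-5-1}, so that multiplication by the same positive-definite matrix $A_0$ built from the induced metric symmetrises it. Global hyperbolicity \eqref{eq:globalhyper}, combined with the initial condition $V|_{M_0}=0$ supplied by \eqref{eq 3-5-2}, then forces $V\equiv 0$ via the standard uniqueness theorem for symmetric hyperbolic systems with linear source terms, which is exactly the claim.

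The genuinely new ingredient compared with the Jordan argument is the conformal divergence identity: the right-hand side of \eqref{eq divN-bianchi convformal} is no longer zero but involves $\rho$ and its first derivatives. The main obstacle to overcome is therefore to verify that these $\rho$-dependent contributions feed only into the zero-order terms of the resulting PDE system and do not perturb its principal symbol, so that the symmetric-hyperbolic reduction carries over verbatim. Since the extra terms are manifestly linear in $\Sigma$ with coefficients expressible through $\rho$ and the background geometry (both smooth at this stage of the analysis), this amounts to bookkeeping rather than to a conceptual difficulty, and no ideas beyond those of Proposition~\ref{prop 2-5-1} are needed.
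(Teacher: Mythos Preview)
Your proposal is correct and follows essentially the same route as the paper: define $\Sigma = N_g - 8\pi T$, use the spatial evolution equations to express $\Sigma_{ij}$ and hence all mixed components $\Sigma^\alpha_\beta$ linearly in $V=(\Sigma^0_0,\Sigma^a_0)$, combine Lemma~\ref{lem divN conformal} with \eqref{eq 3-5-1.5} to obtain a first-order linear system for $V$ whose $\rho$-dependent right-hand side contributes only zero-order terms, and invoke the symmetric-hyperbolic uniqueness argument of Proposition~\ref{prop 2-5-1}. One minor slip: the trace computation gives $\tr(\Sigma)=-2g^{00}\Sigma_{00}$ (and hence $\Sigma_{ij}=-g_{ij}g^{00}\Sigma_{00}$), not the sign you wrote, but this does not affect the structure of the argument.
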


\begin{proof} Recalling the notation $\Sigma_{\alpha\beta} = {N_g}_{\alpha\beta} - 8\pi T_{\alpha\beta}$, we are going to prove that $\sigma_{00}=0$.
We note that \eqref{eq 3-5-1} can be written as
\begin{equation}\label{eq 3-5-3}
{\Sigma}_{ij} - \frac{1}{2}\tr^{\dag} (\Sigma)\gd_{ij} = 0.
\end{equation}
By taking the trace of the tensor $\Sigma_{\alpha\beta} - \frac{1}{2}\tr{\Sigma}g_{\alpha\beta}$ {\bf with respect to $\gd$}, we find 
$$
\tr^{\dag}\Sigma - 2\tr^{\dag}\Sigma = -\Nd^{-2}\Sigma_{00} + \gd^{ij}\big({\Sigma}_{ij} - \frac{1}{2}\tr(\Sigma)g_{ij}\big). 
$$
Combining with \eqref{eq 3-5-3}, we thus have 
\begin{equation}\label{eq 3-5-4}
\tr^\dag \Sigma = -\Nd^{-2}\Sigma_{00}.
\end{equation}
Combining \eqref{eq 3-5-4} together with \eqref{eq 3-5-3}, we then obtain 
\begin{equation}\label{eq 3-5-5}
\Sigma_{ij} = -\frac{\Sigma_{00}}{2\Nd^2}\gd_{ij}.
\end{equation}

Along the same lines as in the proof of Proposition \ref{prop 2-5-1}, we have 
\begin{equation}\label{eq 3-5-6}
\Sigma_b^0 = \gd^{00}\gd_{bc}\Sigma_0^c,\qquad \Sigma_b^a = -\delta_b^a\Sigma_0^0.
\end{equation}
Let us consider the identity \eqref{eq divN-bianchi convformal} combined with \eqref{eq 3-5-1.5}, and note the identity
\begin{equation}\label{eq 3-5-7}
\nablad^{\alpha}\Sigma_{\alpha\beta}
= e^{-2\rho}\big(2\del_{\gamma}\rho\Sigma_{\beta}^{\gamma} - \del_{\beta}\rho \tr(\Sigma)\big). 
\end{equation}
We observe that by \eqref{eq 3-5-6}, the right-hand-side is a linear form of the function $\Sigma_0^{\beta}$
and, by definition,
\begin{equation}\label{eq 3-5-8}
\nablad_{\alpha}\Sigma_\beta^\alpha
= \textcolor{black}{<e_{\alpha},\Sigma_\beta^{\alpha}>} - {\omegad}_{\alpha\beta}^{\delta} \Sigma_\delta^\alpha + {\omegad}_{\alpha\delta}^{\alpha}\Sigma_{\beta}^{\delta}.
\end{equation}
By combining \eqref{eq 3-5-7} and \eqref{eq 3-5-8}, we arrive at the first-order linear differential system
$$
\textcolor{black}{<e_{\alpha},\Sigma_\beta^{\alpha}>} - {\omegad}_{\alpha\beta}^{\delta} \Sigma_\delta^\alpha + {\omegad}_{\alpha\delta}^{\alpha}\Sigma_{\beta}^{\delta} = e^{-2\rho}\big(2\del_{\gamma}\rho\Sigma_{\beta}^{\gamma} - \del_{\beta}\rho \tr(\Sigma)\big), 
$$
whose principal part is
\begin{equation}
\aligned
\textcolor{black}{<e_0,\Sigma_0^0>} + \del_a\Sigma_0^a &= \text {lower order terms},
\\
\textcolor{black}{<e_0,\gd^{00}\gd_{bc}\Sigma_0^c>} - \del_b\Sigma_0^0 &= \text{lower order terms}.
\endaligned
\end{equation}
can be symmetrized by the same procedure as we did for the system \eqref{eq pr 2-5-10} and \eqref{eq pr 2-5-11}. Recall also that by \eqref{eq 3-5-2}, this system has vanishing initial data and, therefore, in view of our global hyperbolicity assumption \eqref{eq:globalhyper}, the desired result is proven.
\end{proof}


\newpage 

\section{The conformal formulation in wave coordinates}

\subsection{The wave gauge}

We now turn our attention to solving the system \eqref{main eq trans} by developping approriate techniques of PDE's. Our first task is to express \eqref{main eq trans} in well-chosen coordinates. In view of the expression of the left-hand-side of \eqref{main eq trans}, we observe that if we remove the terms in $\rho$, the principal part (that is, the second-order terms in $\gd$) is determined by $\Rd_{\alpha\beta}$. In order to investigate its structure, we perform first some basic calculations, which are valid for general Lorentzian manifolds in arbitrary local coordinates.

Let $(M,g)$ be a Lorentzian manifold with metric $g$ of signature $(-,+,+,+)$ and consider any local coordinate system $\{x^0,x^1,x^2,x^3\}$. Let $\Gamma_{\alpha\beta}^\gamma$ be the associated Christoffel symbols, and consider the D'Alembert operator $\Box_g = \nabla^{\alpha}\nabla_{\alpha}$ associated with $g$. The following lemma follows from a straighforward but tedious calculation. 

\begin{lemma}[Ricci curvaturein general coordinates]\label{lem Ricci}
With the notation\footnote{Of course, $\Gamma_{\gamma}$ are coordinate-dependent functions and are not the components of a tensor field.}
\be
\Gamma^\lambda:
=g^{\alpha\beta}\Gamma_{\alpha\beta}^\lambda,
\qquad 
\Gamma_{\lambda} := g_{\lambda\beta}\Gamma^\beta,
\ee
one has 
\be
R_{\alpha\beta}=-\frac{1}{2}g^{\alpha'\beta'}\del_{\alpha'}\del_{\beta'}g_{\alpha\beta}+\frac{1}{2}(\del_\alpha\Gamma_\beta
+\del_\beta\Gamma_\alpha) + \frac{1}{2} F_{\alpha\beta}(g; \del g, \del g),
\ee
where
$F_{\alpha\beta}(g; \del g, \del g)$ are nonlinear functions in the metric coefficients and are quadratic in their first-order derivatives.
The D'Alembert operator and the reduced D'Alembert operator 
$\widetilde\Box_g u := g^{\alpha'\beta'}\del_{\alpha'}\del_{\beta'}$ satisfy the relation 
\be
\Box_g u = g^{\alpha'\beta'}\del_{\alpha'}\del_{\beta'} u + \Gamma^{\delta}\del_\delta u
= \widetilde\Box_g u + \Gamma^{\delta}\del_\delta u, 
\ee
and these two operators thus coincide whenever the coefficients $\Gamma^\lambda$ vanish identically. 
\end{lemma}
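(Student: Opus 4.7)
The plan is a direct coordinate computation: start from the definition of the Ricci tensor as the contraction $R_{\alpha\beta}=R^{\lambda}{}_{\alpha\lambda\beta}$ of the Riemann tensor, express Christoffel symbols in terms of $\partial g$, and then sort the resulting terms according to the number of second derivatives of $g$ they contain. Everything with strictly fewer than two derivatives on a single metric factor will be absorbed into the generic quadratic remainder $F_{\alpha\beta}(g;\partial g,\partial g)$, so the task really reduces to tracking the \emph{second-order} terms.

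First I would write
\[
R^{\lambda}{}_{\alpha\mu\beta}=\partial_{\mu}\Gamma^{\lambda}_{\alpha\beta}-\partial_{\beta}\Gamma^{\lambda}_{\alpha\mu}+\Gamma^{\lambda}_{\mu\sigma}\Gamma^{\sigma}_{\alpha\beta}-\Gamma^{\lambda}_{\beta\sigma}\Gamma^{\sigma}_{\alpha\mu},
\]
contract $\lambda=\mu$, and insert
\[
\Gamma^{\lambda}_{\alpha\beta}=\tfrac{1}{2}g^{\lambda\sigma}\bigl(\partial_{\alpha}g_{\beta\sigma}+\partial_{\beta}g_{\alpha\sigma}-\partial_{\sigma}g_{\alpha\beta}\bigr).
\]
The $\Gamma\cdot\Gamma$ products are manifestly quadratic in $\partial g$ and will contribute only to $F_{\alpha\beta}$. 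In the term $\partial_{\lambda}\Gamma^{\lambda}_{\alpha\beta}$, derivatives that fall on $g^{\lambda\sigma}$ are again quadratic in $\partial g$, while derivatives that fall on the parenthesis produce exactly $\tfrac{1}{2}g^{\lambda\sigma}\bigl(\partial_{\lambda}\partial_{\alpha}g_{\beta\sigma}+\partial_{\lambda}\partial_{\beta}g_{\alpha\sigma}-\partial_{\lambda}\partial_{\sigma}g_{\alpha\beta}\bigr)$. The last of these three pieces supplies the principal term $-\tfrac{1}{2}g^{\alpha'\beta'}\partial_{\alpha'}\partial_{\beta'}g_{\alpha\beta}$.

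The remaining second-derivative contributions, both from $\partial_{\lambda}\Gamma^{\lambda}_{\alpha\beta}$ and from $-\partial_{\beta}\Gamma^{\lambda}_{\alpha\lambda}$, must then be reorganized. Here I would use the standard observation that
\[
\Gamma_{\alpha}=g_{\alpha\lambda}\Gamma^{\lambda}=g_{\alpha\lambda}g^{\mu\nu}\Gamma^{\lambda}_{\mu\nu},
\]
so that $\partial_{\beta}\Gamma_{\alpha}$ produces a sum of second derivatives of the metric (from the $\partial g\cdot\partial g$-type factors it also generates quadratic remainders). The key combinatorial step is to verify that the symmetrized expression $\tfrac{1}{2}(\partial_{\alpha}\Gamma_{\beta}+\partial_{\beta}\Gamma_{\alpha})$ captures \emph{exactly} the leftover second-derivative terms, up to quantities admissible as part of $F_{\alpha\beta}$. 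This bookkeeping — matching the index patterns and making sure the symmetrization on $(\alpha,\beta)$ lines up with the symmetry of $R_{\alpha\beta}$ — is the one step that requires real care and is where I expect any sign or factor error to creep in.

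For the D'Alembertian identity I would simply compute
\[
\Box_{g}u=g^{\alpha\beta}\nabla_{\alpha}\nabla_{\beta}u=g^{\alpha\beta}\bigl(\partial_{\alpha}\partial_{\beta}u-\Gamma^{\delta}_{\alpha\beta}\partial_{\delta}u\bigr)=\widetilde{\Box}_{g}u-\Gamma^{\delta}\partial_{\delta}u,
\]
using $\nabla_{\beta}u=\partial_{\beta}u$ on a scalar and the standard formula for $\nabla_{\alpha}$ acting on a $1$-form. (The sign in the stated identity is then a matter of the convention chosen for $\Gamma^{\lambda}$ in the lemma.) The vanishing of $\Gamma^{\lambda}$ — the wave-gauge condition already introduced for the Einstein metric in Section~3 — then immediately reduces $\Box_{g}$ to $\widetilde{\Box}_{g}$ and, via the Ricci formula just established, reduces $R_{\alpha\beta}$ to $-\tfrac{1}{2}g^{\alpha'\beta'}\partial_{\alpha'}\partial_{\beta'}g_{\alpha\beta}+\tfrac{1}{2}F_{\alpha\beta}$, which is the input needed in the next section to exhibit the wave--Klein--Gordon structure.
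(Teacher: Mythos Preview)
Your approach is essentially identical to the paper's: both compute the second-derivative part of $\partial_\lambda\Gamma^\lambda_{\alpha\beta}-\partial_\alpha\Gamma^\lambda_{\beta\lambda}$ directly from the Christoffel formula, compute separately the second-derivative part of $\partial_\alpha\Gamma_\beta+\partial_\beta\Gamma_\alpha$, and then match the two lists of $\partial\partial g$ terms, absorbing all $\partial g\cdot\partial g$ contributions into $F_{\alpha\beta}$. Your remark on the D'Alembertian sign is well taken: the honest computation gives $\Box_g u=\widetilde{\Box}_g u-\Gamma^\delta\partial_\delta u$, and the paper itself uses this minus-sign form elsewhere (see the identities at the start of the proof of Lemma~\ref{lem preserve-wave-1}), so the plus sign in the lemma statement is a typo rather than a convention.
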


\begin{proof} We will use the definitions 
$$
R_{\alpha\beta} = \del_{\lambda}\Gamma_{\alpha\beta}^{\lambda} - \del_{\alpha}\Gamma_{\beta\lambda}^{\lambda} + \Gamma_{\alpha\beta}^{\lambda}\Gamma_{\lambda\delta}^{\delta} - \Gamma_{\alpha\delta}^{\lambda}\Gamma_{\beta\lambda}^{\delta}
$$
and 
$$
\Gamma_{\alpha\beta}^{\lambda} = \frac{1}{2}g^{\lambda\lambda'}\big(\del_{\alpha}g_{\beta\lambda'}
+ \del_{\beta}g_{\alpha\lambda'} - \del_{\lambda'}g_{\alpha\beta}\big).
$$
We compute the expression of Ricci tensor and focus on the first two terms: 
$$
\aligned
&\del_{\lambda}\Gamma_{\alpha\beta}^{\lambda} - \del_\alpha\Gamma_{\beta\lambda}^{\lambda}
\\
&=
\frac{1}{2}\del_{\lambda}\big(g^{\lambda\delta}(\del_\alpha g_{\beta\delta} + \del_{\beta}g_{\alpha\delta}
- \del_\delta g_{\alpha\beta})\big)
- \frac{1}{2}\big(\del_\alpha(g^{\lambda\delta}(\del_{\beta}g_{\lambda\delta} + \del_{\lambda}g_{\beta\delta} - \del_\delta g_{\beta\lambda})\big)
\\
&=-\frac{1}{2}\del_{\lambda}\big(g^{\lambda\delta}\del_\delta g_{\alpha\beta}\big)
+ \frac{1}{2}\del_{\lambda}\big(g^{\lambda\delta}(\del_\alpha g_{\beta\delta} + \del_{\beta}g_{\alpha\delta})\big)
-\frac{1}{2}\del_\alpha\big(g^{\lambda\delta}\del_{\beta}g_{\lambda\delta}\big),
\endaligned
$$
so that 
\begin{equation}\label{eq pr 4-1-1}
\aligned
\del_{\lambda}\Gamma_{\alpha\beta}^{\lambda} - \del_\alpha\Gamma_{\beta\lambda}^{\lambda}
= 
&  -\frac{1}{2}g^{\lambda\delta}\del_{\lambda}\del_\delta g_{\alpha\beta}
   + \frac{1}{2}g^{\lambda\delta}\del_\alpha\del_{\lambda}g_{\delta\beta}
 \\
&  +\frac{1}{2}g^{\lambda\delta}\del_{\beta}\del_{\lambda}g_{\delta\alpha}
   -\frac{1}{2}g^{\lambda\delta}\del_\alpha\del_{\beta}g_{\lambda\delta} + \text{l.o.t.}, 
\endaligned
\end{equation}
where \text{l.o.t.} are quadratic terms involving first- or zero-order derivatives.

We focus on the term $\del_{\alpha}\Gamma_{\beta} + \del_{\beta}\Gamma_{\alpha}$ and obtain 
$$
\aligned
\Gamma^{\gamma} = \Gamma_{\alpha\beta}^{\gamma}g^{\alpha\beta}
&= \frac{1}{2}g^{\alpha\beta}g^{\gamma\delta}\big(\del_\alpha g_{\beta\delta} + \del_{\beta}g_{\alpha\delta} - \del_\delta g_{\alpha\beta}\big)
\\
&=g^{\gamma\delta}g^{\alpha\beta}\del_\alpha g_{\beta\delta} - \frac{1}{2}g^{\alpha\beta}g^{\gamma\delta}\del_\delta g_{\alpha\beta}
\endaligned
$$
and
$$
\Gamma_{\lambda} = g_{\lambda\gamma}\Gamma^{\gamma} = g^{\alpha\beta}\del_\alpha g_{\beta\lambda}
- \frac{1}{2}g^{\alpha\beta}\del_{\lambda}g_{\alpha\beta}.
$$
So, we have 
$$
\del_\alpha\Gamma_{\beta}
= \del_\alpha\big(g^{\delta\lambda}\del_\delta g_{\lambda\beta}\big)
- \frac{1}{2}\del_\alpha\big(g^{\lambda\delta}\del_{\beta}g_{\lambda\delta}\big),
$$
and therefore 
\begin{equation}\label{eq pr 4-1-2}
\del_\alpha\Gamma_{\beta} + \del_{\beta}\Gamma_\alpha
= g^{\gamma\delta}\del_\alpha\del_{\lambda}g_{\delta\beta}
+ g^{\lambda\delta}\del_{\beta}\del_{\lambda}g_{\delta\alpha}
- g^{\lambda\delta}\del_{\alpha\beta}g_{\lambda\delta} + \text{l.o.t.}
\end{equation}
It remains to compare \eqref{eq pr 4-1-1} and \eqref{eq pr 4-1-2}. 
\end{proof}

Observe that the field equation \eqref{Eq1-14} or the conformally transformed field equations \eqref{main eq trans1}, both, contain linear terms in the Ricci curvature. In order to exhibit the hyperbolicity property for the linear part of these systems (at least for the second-order terms in \eqref{Eq1-14}), we now introduce the so-called wave coordinate conditions. Recall that local coordinates are called a {\bf wave coordinate system} if its Christoffel symbols satisfy 
\begin{equation}
\Gamma^{\lambda} = g^{\alpha\beta}\Gamma_{\alpha\beta}^{\gamma} = 0.
\end{equation}
In view of \eqref{main eq trans1}, the principal part (after removing the terms in $\rho$) of 
$$
{N_g}_{\alpha\beta} - \frac{1}{2}g_{\alpha\beta}\tr(N_g) - \frac{1}{2}e^{2\rho}\big(\del_{\alpha}\Gammad + \del_{\beta}\Gammad\big)
$$
is $\frac{1}{2}e^{2\rho}\gd^{\alpha\beta}\del_{\alpha'}\del_{\beta'}\gd_{\alpha\beta}$ ---which is a quasi-linear wave operator. From this observation, we see that the equations \eqref{main eq trans} in wave coordinates 
with 
\be
\Gammad_{\lambda} = 0 
\ee
can be reformulated as:  
\begin{equation}
{N_g}_{\alpha\beta} - \frac{1}{2}g_{\alpha\beta}\tr(N_g) - \frac{1}{2}e^{2\rho}\big(\del_{\alpha}\Gammad_{\beta} + \del_{\beta}\Gammad_{\alpha}\big) = 8\pi\big(T_{\alpha\beta} - \frac{1}{2}\tr(T)g_{\alpha\beta}\big),
\end{equation}
while the trace equation \eqref{trace eq trans} becomes
\begin{equation}
\Box_{\gd}\rho + \Gammad^{\delta}\del_\delta\rho = \frac{W_2(\rho)}{6e^{2\rho}} + \frac{W_3(\rho)}{6e^{4\rho}} + \frac{4\pi \tr(T)}{3e^{4\rho}}. 
\end{equation}
Hence, in view of Lemma~\ref{lem Ricci}, the above system can be written in terms of the metric $\gd$ and its derivatives.
We emphasize that the trace equation \eqref{eq conformal wave c} and the evolution equation of matter field \eqref{eq conformal wave d} below are genuinely coupled to the field equations.

\begin{lemma}[Conformal field equations in wave coordinates] 
\begin{subequations}\label{eq conformal wave}
The field equations \eqref{main eq trans} in wave coordinates take the form 
\begin{equation}\label{eq conformal wave a}
\aligned
\gd^{\alpha'\beta'}\del_{\alpha'}\del_{\beta'}\gd_{\alpha\beta} 
 &=  F_{\alpha\beta}(\gd;\del \gd,\del \gd)  - 12\del_{\alpha}\rho\del_{\beta}\rho
\\
& \quad + \frac{W_2(\rho)}{e^{2\rho}}\gd_{\alpha\beta}
- \frac{8\pi}{e^{2\rho}}\big(2T_{\alpha\beta} - \tr(T)g_{\alpha\beta}\big), 
\endaligned
\end{equation}
supplemented with a `constraint equation' derived from the wave coordinate condition: 
\begin{equation}\label{eq conformal wave b}
\gd^{\alpha\beta}\Gammad_{\alpha\beta}^{\gamma} = 0.
\end{equation}
This system must also be supplemented with the trace equation
\begin{equation}\label{eq conformal wave c}
\gd^{\alpha'\beta'}\del_{\alpha'}\del_{\beta'}\rho = \frac{W_2(\rho)}{6e^{2\rho}} + \frac{W_3(\rho)}{6e^{4\rho}} + \frac{4\pi \tr(T)}{3e^{4\rho}}
\end{equation}
with $\rho = \frac{1}{2}f'(R_g)$, as well as the evolution equation for the matter 
\begin{equation}\label{eq conformal wave d}
\nablad^\alpha T_{\alpha\beta} = \big(\tr(T)\del_{\beta}\rho - 2g^{\delta\gamma}\del_{\gamma}\rho T_{\gamma\beta}\big)e^{-2\rho}.
\end{equation}
\end{subequations}
\end{lemma}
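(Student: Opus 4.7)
The plan is a direct algebraic derivation: feed the coordinate expression for $\Rd_{\alpha\beta}$ supplied by Lemma~\ref{lem Ricci} into the conformal field equation \eqref{main eq trans}, and use the wave coordinate condition on $\gd$ to eliminate the Christoffel-type contraction terms. First, I would apply Lemma~\ref{lem Ricci} to the Einstein metric $\gd$, which gives $\Rd_{\alpha\beta}$ as $-\frac{1}{2}\gd^{\alpha'\beta'}\del_{\alpha'}\del_{\beta'}\gd_{\alpha\beta}$ plus a symmetric contribution $\frac{1}{2}(\del_\alpha\Gammad_\beta + \del_\beta\Gammad_\alpha)$ plus the quadratic nonlinearity $\frac{1}{2}F_{\alpha\beta}(\gd;\del\gd,\del\gd)$. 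Imposing the wave gauge $\Gammad^\lambda = \gd^{\alpha\beta}\Gammad_{\alpha\beta}^\lambda = 0$ (which is precisely \eqref{eq conformal wave b}) makes $\Gammad_\alpha = \gd_{\alpha\lambda}\Gammad^\lambda$ vanish identically, so the principal part of $\Rd_{\alpha\beta}$ reduces to the reduced quasi-linear wave operator acting on $\gd_{\alpha\beta}$ plus $\frac{1}{2}F_{\alpha\beta}$.

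Substituting this reduced expression into \eqref{main eq trans} and solving for $\gd^{\alpha'\beta'}\del_{\alpha'}\del_{\beta'}\gd_{\alpha\beta}$ produces \eqref{eq conformal wave a}. The only point requiring attention is the matter term on the right-hand side: the identity $\gd^{\alpha'\beta'}T_{\alpha'\beta'}\gd_{\alpha\beta} = g^{\alpha'\beta'}T_{\alpha'\beta'}g_{\alpha\beta} = \tr(T)\,g_{\alpha\beta}$ (the factors $e^{\pm 2\rho}$ coming from $\gd^{-1}$ and $\gd$ cancel) lets me exchange the $\gd$-trace appearing in \eqref{main eq trans} for the $g$-trace used in \eqref{eq conformal wave a}. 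Dividing the resulting identity through by $-\frac{1}{2}e^{2\rho}$ then yields the explicit coefficients $-12$ on $\del_\alpha\rho\del_\beta\rho$, $e^{-2\rho}$ on $W_2(\rho)\gd_{\alpha\beta}$, and $-8\pi e^{-2\rho}$ on $(2T_{\alpha\beta}-\tr(T)g_{\alpha\beta})$.

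For the trace equation \eqref{eq conformal wave c}, I would start from \eqref{trace eq trans}, apply the scalar identity $\Box_{\gd}u = \gd^{\alpha'\beta'}\del_{\alpha'}\del_{\beta'}u + \Gammad^\delta\del_\delta u$ (again a special case of Lemma~\ref{lem Ricci}, now applied to $\gd$), and drop the $\Gammad^\delta\del_\delta\rho$ term thanks to the wave gauge. Replacing $\tr(N_g)$ by $8\pi\,\tr(T)$ by means of the field equation converts the term $\tr(N_g)/(6e^{4\rho})$ in \eqref{trace eq trans} into $4\pi\,\tr(T)/(3e^{4\rho})$, as required. Finally, \eqref{eq conformal wave b} is nothing but the defining condition of the wave gauge itself, and \eqref{eq conformal wave d} is Lemma~\ref{lem divN conformal} rewritten after the field equation $N_g = 8\pi T$ is substituted on the left-hand side.

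No step presents a genuine obstacle; the derivation is essentially a chain of substitutions underpinned by Lemma~\ref{lem Ricci}, Lemma~\ref{lem divN conformal}, and the wave gauge condition. The only issue requiring care is keeping track of whether traces are taken with respect to $g$ or $\gd$ and verifying that the conformal factors $e^{\pm 2\rho}$ collapse consistently between the geometric terms and the matter contributions.
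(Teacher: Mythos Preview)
Your proposal is correct and follows essentially the same route as the paper: the lemma is stated there without a separate proof, as an immediate consequence of the discussion preceding it, which invokes Lemma~\ref{lem Ricci} for the decomposition of $\Rd_{\alpha\beta}$, the wave gauge $\Gammad^\lambda=0$ to reduce the principal part, the identity $\Box_{\gd}=\widetilde\Box_{\gd}+\Gammad^\delta\del_\delta$ for the trace equation, and Lemma~\ref{lem divN conformal} together with $N_g=8\pi T$ for the matter evolution. Your bookkeeping of the $e^{\pm2\rho}$ factors and the $g$-versus-$\gd$ traces is exactly the care the derivation requires.
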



\subsection{A nonlinear wave system for the modified gravity theory}

The aim of this subsection is to study the `essential system' consisting of \eqref{eq conformal wave a} and \eqref{eq conformal wave b}. If we remove the terms in $\rho$, this is  a quasi-linear wave system with constraints (see \eqref{eq conformal wave b}), whose structure is quite involved. The strategy we propose is to replace these constraints by another differential equation which will turn out to be simpler to handle. We will write a new system which may (a priori) not be equivalent to the original system \eqref{main eq trans}. This system is defined as follows.

\begin{definition} The {\bf wave-reduced system in geometric form} 
associated with \eqref{main eq trans} is, by definition, 
\begin{subequations}\label{eq conformal wave-reduced geo}
\begin{equation}\label{eq conformal wave-reduced geo a}
{N_g}_{\alpha\beta} - \frac{1}{2}g_{\alpha\beta}\tr(N_g) - \frac{1}{2}e^{2\rho}\big(\del_{\alpha}\Gammad_{\beta} + \del_{\beta}\Gammad_{\alpha}\big) = 8\pi\big(T_{\alpha\beta} - \frac{1}{2}\tr(T)g_{\alpha\beta}\big),
\end{equation}
\begin{equation}\label{eq conformal wave-reduced geo b}
\nablad^\alpha T_{\alpha\beta} = \big(\tr(T)\del_{\beta}\rho - 2g^{\delta\gamma}\del_{\gamma}\rho T_{\gamma\beta}\big)e^{-2\rho}, 
\end{equation}
\end{subequations}
where ${N_g}_{\alpha\beta}$ is defined by \eqref{main eq trans1} and $e^{2\rho} = f'(R_g)$.
\end{definition}

Thanks to \eqref{main eq trans1} and Lemma \ref{lem Ricci}, the above system reads also 
\begin{equation}\label{eq conformal wave-reduced}
\aligned
&\gd^{\alpha'\beta'}\del_{\alpha'}\del_{\beta'}\gd_{\alpha\beta} =  F_{\alpha\beta}(\gd;\del \gd,\del \gd)  - 12\del_{\alpha}\rho\del_{\beta}\rho
+ \frac{W_2(\rho)}{e^{2\rho}}\gd_{\alpha\beta} 
\\
& \hskip5.cm  - \frac{8\pi}{e^{2\rho}}\big(2T_{\alpha\beta} - \tr(T)g_{\alpha\beta}\big)
\\
&\nablad^\alpha T_{\alpha\beta} = \big(\tr(T)\del_{\beta}\rho - 2g^{\delta\gamma}\del_{\gamma}\rho T_{\gamma\beta}\big)e^{-2\rho}.
\\
&\rho = \frac{1}{2}\ln f'(R_g), 
\endaligned
\end{equation}
which is \eqref{eq conformal wave} without the constraint equations \eqref{eq conformal wave b} but includes the evolution equation \eqref{eq conformal wave d}. 

In the next subsection, we are going to establish the following result.

\begin{proposition}[Preservation of the wave coordinate conditions in modified gravity]
\label{prop presev 1} 
Consider a globally hyperbolic spacetime $M = \cup_{t\in[0,t_{\max})}M_t$ with metric $\gd$, together with a matter field $T$ defined on $M$. Suppose that the wave coordinate conditions
\begin{equation}
\label{eq preserve 1-1}
\Gammad^{\gamma} := \gd^{\alpha\beta}\Gammad_{\alpha\beta}^{\gamma} = 0
\qquad \text{ on the initial slice $M_0$,}
\end{equation}
together with the constraint equations \eqref{eq constraint conformalT H} and \eqref{eq constraint conformalT M}. Then, the wave conditions \eqref{eq preserve 1-1} are satisfied within the whole of $M$. 
\end{proposition}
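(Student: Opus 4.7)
The plan is to follow the classical Choquet--Bruhat strategy, adapted to the conformal/modified setting: derive a closed \emph{linear homogeneous} wave equation for the gauge one--form $\Gammad_\beta$ from the wave-reduced system \eqref{eq conformal wave-reduced geo} together with the conformal Bianchi identity of Lemma~\ref{lem divN conformal}, check that the Cauchy data for this equation vanish on $M_0$, and conclude by uniqueness. Throughout, let $\Sigma^\dag_{\alpha\beta} := {N_g}_{\alpha\beta} - 8\pi T_{\alpha\beta}$. Using $g_{\alpha\beta}=e^{-2\rho}\gd_{\alpha\beta}$, so that $g_{\alpha\beta}\tr_g(\Sigma^\dag) = \gd_{\alpha\beta}\tr_{\gd}(\Sigma^\dag)$, the equation \eqref{eq conformal wave-reduced geo a} is algebraically equivalent to
\[
\Sigma^\dag_{\alpha\beta}
\;=\;
\tfrac12 e^{2\rho}\bigl(\partial_\alpha \Gammad_\beta + \partial_\beta \Gammad_\alpha\bigr)
\;+\;
\tfrac12\,\gd_{\alpha\beta}\,\tr_{\gd}(\Sigma^\dag).
\]
Tracing with $\gd^{\alpha\beta}$ produces an identity I would use repeatedly below,
\[
\tr_{\gd}(\Sigma^\dag) \;=\; -\,e^{2\rho}\,\gd^{\mu\nu}\partial_\mu \Gammad_\nu,
\]
so that $\Sigma^\dag$ is expressible purely in terms of first derivatives of $\Gammad_\beta$.

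Next I would apply $\nablad^\alpha$ to the reduced equation. On the left-hand side, Lemma~\ref{lem divN conformal} together with the matter evolution \eqref{eq conformal wave-reduced geo b} gives
\[
\nablad^\alpha \Sigma^\dag_{\alpha\beta}
= e^{-2\rho}\bigl(2 g^{\gamma\delta}\partial_\gamma \rho\,\Sigma^\dag_{\delta\beta} - \tr_g(\Sigma^\dag)\,\partial_\beta\rho\bigr),
\]
which is first-order in $\Gammad$ by the algebraic identity above. On the right-hand side, since $\nablad \gd = 0$, the principal part reduces to
\[
\tfrac12 e^{2\rho}\bigl(\widetilde{\Box}_{\gd} \Gammad_\beta + \partial_\beta(\gd^{\alpha\alpha'}\partial_{\alpha'}\Gammad_\alpha)\bigr)
+ \tfrac12 \partial_\beta\,\tr_{\gd}(\Sigma^\dag) + \text{l.o.t.}
\]
Substituting the trace identity into the last summand produces an exact cancellation with the divergence term $\partial_\beta(\gd^{\alpha\alpha'}\partial_{\alpha'}\Gammad_\alpha)$, leaving
\[
\tfrac12 e^{2\rho}\,\widetilde{\Box}_{\gd}\,\Gammad_\beta
\;=\;
\mathcal{A}_\beta{}^{\gamma}\,\Gammad_\gamma
+
\mathcal{B}_\beta{}^{\gamma\mu}\,\partial_\mu \Gammad_\gamma
\]
for certain smooth coefficients $\mathcal{A}, \mathcal{B}$ built from $\gd$, $\rho$ and $T$. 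This is a closed, linear, homogeneous second-order hyperbolic system for the four unknowns $\Gammad_\beta$, with principal symbol $e^{2\rho}\gd^{\alpha\beta}$.

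It remains to verify vanishing Cauchy data on $M_0$. By hypothesis $\Gammad_\beta|_{M_0} \equiv 0$, hence all tangential derivatives $\partial_i \Gammad_\beta$ also vanish on $M_0$. The Hamiltonian and momentum constraints \eqref{eq constraint conformalT H}--\eqref{eq constraint conformalT M}, as was shown in Section~2 for their Jordan-frame counterparts, translate into $\Sigma^\dag_{0\beta}=0$ along $M_0$. Working in a Cauchy adapted frame where $\gd^{0i}=0$ and $\gd^{00}\gd_{00}=1$, the trace identity collapses on $M_0$ to $\tr_{\gd}(\Sigma^\dag) = -e^{2\rho}\gd^{00}\partial_0 \Gammad_0$. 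Evaluating the reduced equation at $(\alpha,\beta)=(0,i)$ then yields $\partial_0 \Gammad_i = 0$, and at $(0,0)$ yields $\partial_0 \Gammad_0 = 0$. Thus the full Cauchy data of the wave system vanish on $M_0$, and the standard uniqueness theorem for linear hyperbolic systems, together with the global hyperbolicity assumption \eqref{eq:globalhyper}, forces $\Gammad_\beta \equiv 0$ throughout $M$, equivalently $\Gammad^\gamma \equiv 0$.

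The delicate point, and the step I expect to require the most care, is the principal-order cancellation in the second paragraph: the term $\tfrac12 e^{2\rho}\partial_\beta(\gd^{\alpha\alpha'}\partial_{\alpha'}\Gammad_\alpha)$ coming from commuting $\nablad^\alpha$ with $\partial_\beta \Gammad_\alpha$ must cancel $\tfrac12 \partial_\beta\,\tr_{\gd}(\Sigma^\dag)$ exactly at the second-derivative level, and this relies precisely on the algebraic trace identity. A secondary bookkeeping burden is tracking the first-order terms generated by derivatives of $e^{2\rho}$ and $\rho$ in both the Bianchi identity and the conformal factor, and confirming they contribute only to the harmless lower-order part of the resulting wave equation for $\Gammad$.
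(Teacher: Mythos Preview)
Your proposal is correct and follows essentially the same route as the paper: the derivation of the homogeneous linear wave system for $\Gammad_\beta$ is the content of Lemma~\ref{lem pre 1} (with Lemma~\ref{lem preserve-wave-1} supplying exactly the principal-order cancellation you single out), and the verification that both $\Gammad_\beta$ and $\partial_t\Gammad_\beta$ vanish on $M_0$ is Lemma~\ref{lem pre 2}. The only place requiring more care than you indicate is that the constraints yield $n^{\dag\,\alpha}\Sigma^\dag_{\alpha\beta}=0$ rather than $\Sigma^\dag_{0\beta}=0$ in the natural coordinate frame, so the passage to a Cauchy adapted frame must be carried out explicitly (as the paper does), keeping track of the fact that the $\Gammad_\beta$ are coordinate-defined functions rather than tensor components.
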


In other words, if one wants to find a solution of \eqref{main eq trans}, what we need to do is to find first a solution of \eqref{eq conformal wave-reduced} with the constraint equations \eqref{eq preserve 1-1} satisfied on the initial slice, with four additional constraint equations to be required on the initial data set. Recall again that the interest of relying on \eqref{eq conformal wave-reduced geo} rather than on \eqref{main eq trans} is that the former one has a hyperbolic principal part (after removing the terms in $\rho$).


\subsection{Preservation of the wave coordinate conditions}

The key to Proposition~\ref{prop presev 1} is the contracted Bianchi identity and, amazingly, the precise form of the modified gravity tensor $N_g$ is not used at this juncture. We begin with some lemmas and the derivations of key identities.

\begin{lemma}\label{lem preserve-wave-1}
For any (pseudo-riemannian) manifold $(M,g)$, the following identity holds:
\begin{equation}\label{eq 2-4-0}
\aligned
\nabla^{\alpha}\big(\del_{\alpha}\Gamma_\beta + \del_{\beta}\Gamma_{\alpha} - g_{\alpha\beta}g^{\alpha'\beta'}\del_{\alpha'}\Gamma_{\beta'}\big)
&=g^{\alpha'\beta'}\del_{\alpha'}\del_{\beta'}\Gamma_{\beta}
- g^{\alpha\alpha'}\Gamma_{\alpha\alpha'}^{\gamma}\del_{\gamma}\Gamma_{\beta}
- g^{\alpha\alpha'}\Gamma^{\delta}_{\alpha'\beta}\del_\delta\Gamma_{\alpha}
\\
&- \del_{\beta}g^{\alpha'\beta'}\big(\del_{\alpha'}\Gamma_{\beta'}\big), 
\endaligned
\end{equation}
where $\Gamma_{\alpha\beta}^{\gamma}$ denote the Christoffel symbols and 
$\Gamma^{\gamma} := g^{\alpha\beta}\Gamma^{\gamma}_{\alpha\beta}$ and $\Gamma_{\gamma} = g_{\gamma'\gamma}\Gamma^{\gamma'}$.
\end{lemma}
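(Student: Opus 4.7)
The plan is to prove the identity by direct expansion, treating the parenthetical expression as a formal symmetric $(0,2)$-object and applying the usual covariant-derivative formula. Write $\Phi := g^{\alpha'\beta'}\partial_{\alpha'}\Gamma_{\beta'}$ and $Q_{\alpha\beta} := \partial_\alpha\Gamma_\beta + \partial_\beta\Gamma_\alpha - g_{\alpha\beta}\Phi$. Since the $\Gamma_\beta$ are coordinate-dependent functions rather than tensor components, $Q$ is not a tensor, but the left-hand side of the lemma is the formal expression
\[
\nabla^\alpha Q_{\alpha\beta} := g^{\alpha\gamma}\bigl(\partial_\gamma Q_{\alpha\beta} - \Gamma^\delta_{\gamma\alpha}Q_{\delta\beta} - \Gamma^\delta_{\gamma\beta}Q_{\alpha\delta}\bigr).
\]

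First I would expand the partial-derivative piece $g^{\alpha\gamma}\partial_\gamma Q_{\alpha\beta}$. Commuting partials, the summand $g^{\alpha\gamma}\partial_\gamma\partial_\alpha\Gamma_\beta$ gives the reduced d'Alembertian $g^{\alpha'\beta'}\partial_{\alpha'}\partial_{\beta'}\Gamma_\beta$ at once. The mixed summand $g^{\alpha\gamma}\partial_\gamma\partial_\beta\Gamma_\alpha$ is rewritten by pulling $\partial_\beta$ outside the contraction, producing $\partial_\beta\Phi$ plus the correction $-\partial_\beta g^{\alpha'\beta'}\,\partial_{\alpha'}\Gamma_{\beta'}$, which is the last term of the stated RHS. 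Finally the trace piece $-g^{\alpha\gamma}\partial_\gamma(g_{\alpha\beta}\Phi)$ contributes $-g^{\alpha\gamma}(\partial_\gamma g_{\alpha\beta})\Phi - \partial_\beta\Phi$, using $g^{\alpha\gamma}g_{\alpha\beta}=\delta^\gamma_\beta$; the two $\pm\partial_\beta\Phi$ pieces then cancel one another.

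Second, I would invoke metric compatibility $\partial_\gamma g_{\alpha\beta} = \Gamma^\lambda_{\gamma\alpha}g_{\lambda\beta} + \Gamma^\lambda_{\gamma\beta}g_{\lambda\alpha}$ to rewrite the remaining metric-derivative term as $g^{\alpha\gamma}\partial_\gamma g_{\alpha\beta} = \Gamma_\beta + \Gamma^\gamma_{\gamma\beta}$. In parallel I would expand the two genuine Christoffel pieces $-\Gamma^\delta Q_{\delta\beta}$ and $-g^{\alpha\gamma}\Gamma^\delta_{\gamma\beta}Q_{\alpha\delta}$ by substituting the definition of $Q$. The $\Phi$-proportional contributions from these expansions, namely $+\Gamma_\beta\Phi$ (from $\Gamma^\delta g_{\delta\beta}=\Gamma_\beta$) and $+\Gamma^\gamma_{\gamma\beta}\Phi$ (from $g^{\alpha\gamma}\Gamma^\delta_{\gamma\beta}g_{\alpha\delta}=\Gamma^\gamma_{\gamma\beta}$), cancel exactly the $-(\Gamma_\beta+\Gamma^\gamma_{\gamma\beta})\Phi$ produced in the first step. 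What survives are the first-order Christoffel terms, which after relabeling give $-\Gamma^\gamma\partial_\gamma\Gamma_\beta$ (the second term of the RHS) and $-g^{\alpha\alpha'}\Gamma^\delta_{\alpha'\beta}\partial_\delta\Gamma_\alpha$ (the third term).

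The main obstacle is the index bookkeeping: six dummy indices must be relabeled using the symmetry of $g^{\alpha\beta}$ and of the Christoffel symbols in their lower indices to steer each term to its proper slot. The essential observation is structural rather than computational, namely that the trace-adjusted combination $\partial_\alpha\Gamma_\beta + \partial_\beta\Gamma_\alpha - g_{\alpha\beta}\Phi$ is tuned precisely so that the $\Phi$-corrections generated by metric compatibility and by the two Christoffel contractions cancel against each other; without the last term in the definition of $Q$, neither the $\partial_\beta\Phi$ cancellation in step one nor the $(\Gamma_\beta+\Gamma^\gamma_{\gamma\beta})\Phi$ cancellation in step two would occur, and the right-hand side would acquire residual $\Phi$-terms spoiling the clean wave-operator structure needed later for Proposition~\ref{prop presev 1}.
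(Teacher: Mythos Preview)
Your overall strategy is sound, but the bookkeeping in your second step is incomplete, and as a result your argument does not actually produce the stated right-hand side under the two-tensor interpretation you have adopted.

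Concretely: when you expand $-\Gamma^\delta Q_{\delta\beta}$ you obtain three pieces,
\[
-\Gamma^\delta\partial_\delta\Gamma_\beta \;-\; \Gamma^\delta\partial_\beta\Gamma_\delta \;+\; \Gamma_\beta\Phi,
\]
and when you expand $-g^{\alpha\gamma}\Gamma^\delta_{\gamma\beta}Q_{\alpha\delta}$ you obtain three more,
\[
-g^{\alpha\gamma}\Gamma^\delta_{\gamma\beta}\partial_\alpha\Gamma_\delta \;-\; g^{\alpha\gamma}\Gamma^\delta_{\gamma\beta}\partial_\delta\Gamma_\alpha \;+\; \Gamma^\gamma_{\gamma\beta}\Phi.
\]
The two $\Phi$-pieces cancel against $-(\Gamma_\beta+\Gamma^\gamma_{\gamma\beta})\Phi$ exactly as you say. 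But that still leaves \emph{four} first-order terms, not two: besides $-\Gamma^\delta\partial_\delta\Gamma_\beta$ and $-g^{\alpha\gamma}\Gamma^\delta_{\gamma\beta}\partial_\delta\Gamma_\alpha$ (which match the lemma), you also get $-\Gamma^\delta\partial_\beta\Gamma_\delta$ and $-g^{\alpha\gamma}\Gamma^\delta_{\gamma\beta}\partial_\alpha\Gamma_\delta$, and nothing in your outline makes these vanish. So with the interpretation ``$\nabla^\alpha$ acts on $Q_{\alpha\beta}$ as on a genuine $(0,2)$-tensor'', the identity as written acquires these two extra summands.

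Compare with the paper's argument: the paper does \emph{not} treat $Q$ as a two-tensor. It regards each $\Gamma_\gamma$ as a scalar, applies the Hessian formula $\nabla^\alpha\partial_\beta u = g^{\alpha\alpha'}(\partial_{\alpha'}\partial_\beta u - \Gamma^\delta_{\alpha'\beta}\partial_\delta u)$ separately to $u=\Gamma_\beta$ and $u=\Gamma_\alpha$, and uses metric compatibility only on the $g_{\alpha\beta}\Phi$ piece. This componentwise-scalar interpretation suppresses precisely the two terms above, and that is why the paper's computation lands on the stated right-hand side. Either fix your bookkeeping to match that interpretation, or keep your (more natural) two-tensor reading but then record the two additional terms; for the application in Lemma~\ref{lem pre 1} this makes no difference, since those terms are again linear in $\partial\Gamma$ and are absorbed into $F_\beta$.
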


\begin{proof} From the standard identities
$$
\aligned
&\nabla^{\alpha}\del_{\beta}u = g^{\alpha\alpha'}\del_{\alpha'}\del_{\beta}u - \Gamma_{\alpha'\beta}^{\gamma}\del_{\gamma}u,
\\
&\Box_gu = g^{\alpha\beta}\del_{\alpha}\del_{\beta}u - g^{\alpha\beta}\Gamma_{\alpha\beta}^{\gamma}\del_{\gamma}u, 
\endaligned
$$
we find
$$
\aligned
&\nabla^{\alpha}\big(\del_{\alpha}\Gamma_\beta + \del_{\beta}\Gamma_{\alpha} - g_{\alpha\beta}g^{\alpha'\beta'}\del_{\alpha'}\Gamma_{\beta'}\big)
\\
&=
\nabla^\alpha\del_{\alpha}\Gamma_{\beta} + \nabla^{\alpha}\del_{\beta}\Gamma_{\alpha} - \del_{\beta}\big(g^{\alpha'\beta'}\del_{\alpha'}\Gamma_{\beta'}\big)
\\
&= \Box_{g}\Gamma_{\beta} + g^{\alpha\alpha'}\del_{\alpha'}\del_{\beta}\Gamma_{\alpha}
- g^{\alpha\alpha'}\Gamma^{\delta}_{\alpha'\beta}\del_\delta\Gamma_{\alpha}
- \del_{\beta}g^{\alpha'\beta'}\big(\del_{\alpha'}\Gamma_{\beta'}\big)
-  g^{\alpha'\beta'}\del_{\beta}\del_{\alpha'}\Gamma_{\beta'}
\endaligned
$$
thus
$$
\aligned
&\nabla^{\alpha}\big(\del_{\alpha}\Gamma_\beta + \del_{\beta}\Gamma_{\alpha} - g_{\alpha\beta}g^{\alpha'\beta'}\del_{\alpha'}\Gamma_{\beta'}\big)
\\
&=  \Box_{g}\Gamma_{\beta} - g^{\alpha\alpha'}\Gamma^{\delta}_{\alpha'\beta}\del_\delta\Gamma_{\alpha}
- \del_{\beta}g^{\alpha'\beta'}\big(\del_{\alpha'}\Gamma_{\beta'}\big)
\\
&= g^{\alpha'\beta'}\del_{\alpha'}\del_{\beta'}\Gamma_{\beta}
- g^{\alpha\alpha'}\Gamma_{\alpha\alpha'}^{\gamma}\del_{\gamma}\Gammad_{\beta}
- g^{\alpha\alpha'}\Gamma^{\delta}_{\alpha'\beta}\del_\delta\Gamma_{\alpha}
- \del_{\beta}g^{\alpha'\beta'}\big(\del_{\alpha'}\Gamma_{\beta'}\big).
\endaligned
$$
\end{proof}

Our next lemma establishes a relation between the wave condition and the evolution equation of the wave-reduced system \eqref{eq conformal wave-reduced}. Recall that sufficiently regular is assumed throughout so that all terms uder consideration are continuous functions at least. 

\begin{lemma}\label{lem pre 1}
Consider an arbitrary manifold $(M,\gd)$ and a (matter) tensor $T_{\alpha\beta}$. Then, if in some local coordinate system $\{x^0,x^1,x^2,x^3\}$, $\gd$, $T_{\alpha\beta}$ satisfy the system of equations \eqref{eq conformal wave-reduced geo}, then the following equations hold (in the domain described by the coordinates):
\begin{equation}
\aligned
\gd^{\alpha'\beta'}\del_{\alpha'}\del_{\beta'} \Gammad_{\beta} &= F_{\beta}(\rho,\gd;\Gammad_{\gamma},\del\Gammad_{\gamma}), 
\endaligned
\end{equation}
where $F_{\beta}(\rho,\gd;\cdot,\cdot)$ is a combination of linear and bilinear forms, and one recalls $\Gammad^{\gamma}_{\alpha\beta}$ are the Christoffel symbols with 
$\Gammad_{\beta} = g_{\beta\beta'}\Gammad^{\beta'} = g_{\beta\beta'}g^{\alpha\gamma}\Gammad_{\alpha\gamma}^{\beta'}$.
\end{lemma}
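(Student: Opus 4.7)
The plan is to mimic the classical reduction of the Einstein equations in harmonic coordinates, adapted to the augmented conformal setting: I would apply the $\gd$-divergence $\nablad^\alpha$ to both sides of \eqref{eq conformal wave-reduced geo a}, use the Bianchi-type identity of Lemma \ref{lem divN conformal} for $N_g$ together with the matter evolution equation \eqref{eq conformal wave-reduced geo b}, and invoke Lemma \ref{lem preserve-wave-1} (transported to the metric $\gd$) to read off the wave operator on $\Gammad_\beta$ from the wave-gauge correction $-\tfrac12 e^{2\rho}(\del_\alpha\Gammad_\beta+\del_\beta\Gammad_\alpha)$ already built into \eqref{eq conformal wave-reduced geo a}.

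\textbf{Main computation.} Two preliminary identities drive the proof. First, taking the $g$-trace of \eqref{eq conformal wave-reduced geo a} and using $g^{\alpha\beta}=e^{2\rho}\gd^{\alpha\beta}$ yields the purely \emph{algebraic} relation
\[
\tr(N_g)-8\pi\tr(T)\;=\;-\,e^{4\rho}\,\gd^{\alpha\gamma}\del_\alpha\Gammad_\gamma.
\]
Second, combining Lemma \ref{lem divN conformal} with $\nablad\gd=0$ and $g_{\alpha\beta}=e^{-2\rho}\gd_{\alpha\beta}$ gives
\[
\nablad^\alpha\!\Big(N_g{}_{\alpha\beta}-\tfrac12 g_{\alpha\beta}\tr(N_g)\Big)\;=\;2e^{-2\rho}g^{\gamma\delta}\del_\gamma\rho\,N_g{}_{\delta\beta}-\tfrac12 e^{-2\rho}\del_\beta\tr(N_g),
\]
with a strictly analogous formula on the $T$-side deduced from \eqref{eq conformal wave-reduced geo b}. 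Subtracting these two divergences, substituting \eqref{eq conformal wave-reduced geo a} into the bilinear factor $g^{\gamma\delta}\del_\gamma\rho\,(N_g{}_{\delta\beta}-8\pi T_{\delta\beta})$ to turn it into an expression bilinear in $\del\rho,\del\Gammad$, and using the trace identity to express $\del_\beta(\tr(N_g)-8\pi\tr(T))$ via derivatives of $\Gammad_\gamma$, reduces the claim to isolating the second-order $\Gammad$ terms arising from this manipulation and from $\nablad^\alpha[e^{2\rho}(\del_\alpha\Gammad_\beta+\del_\beta\Gammad_\alpha)]$.

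\textbf{Key cancellation and obstacle.} Lemma \ref{lem preserve-wave-1}, rewritten for the metric $\gd$, produces
\[
\nablad^\alpha(\del_\alpha\Gammad_\beta+\del_\beta\Gammad_\alpha)\;=\;\gd^{\alpha'\beta'}\del_{\alpha'}\del_{\beta'}\Gammad_\beta+\gd^{\alpha'\beta'}\del_\beta\del_{\alpha'}\Gammad_{\beta'}+(\text{bilinear in }\Gammad,\del\Gammad),
\]
so the wave-gauge correction carries, besides the desired wave term on $\Gammad_\beta$, an unwanted trace-like second-order piece $\gd^{\alpha'\beta'}\del_\beta\del_{\alpha'}\Gammad_{\beta'}$. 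Simultaneously, differentiating the algebraic trace identity contributes, through the prefactor $-\tfrac12 e^{-2\rho}$ in front of $\del_\beta\tr(N_g)$ above, a term $+\tfrac12 e^{2\rho}\gd^{\alpha'\beta'}\del_\beta\del_{\alpha'}\Gammad_{\beta'}$ on the other side of the equation. These two trace-like second-order contributions appear with opposite signs and cancel identically, leaving $\tfrac12 e^{2\rho}\gd^{\alpha'\beta'}\del_{\alpha'}\del_{\beta'}\Gammad_\beta$ as the sole principal term; every remaining term is linear or bilinear in $\Gammad_\gamma, \del\Gammad_\gamma$ with coefficients built from $\rho,\gd$ and their first derivatives (and the matter data). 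The main obstacle is precisely this bookkeeping: one must track \emph{all} second-order sources of $\Gammad$---from the adapted Lemma \ref{lem preserve-wave-1}, from the differentiated trace identity, and from $\nablad^\alpha$ falling on the conformal factor $e^{2\rho}$---and verify that their coefficients match exactly. The cancellation is ultimately the algebraic imprint of the contracted Bianchi identity and justifies the specific $-\tfrac12 e^{2\rho}(\del_\alpha\Gammad_\beta+\del_\beta\Gammad_\alpha)$ correction built into the wave-reduced system \eqref{eq conformal wave-reduced geo}.
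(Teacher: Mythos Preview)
Your proposal is correct and follows essentially the same route as the paper: take the $\gd$-divergence of the wave-reduced equation, use Lemma~\ref{lem divN conformal} together with the matter evolution \eqref{eq conformal wave-reduced geo b} for the $N_g$ and $T$ sides, and Lemma~\ref{lem preserve-wave-1} (transported to $\gd$) for the gauge-correction side. The only organizational difference is that the paper first combines \eqref{eq conformal wave-reduced geo a} with its trace to rewrite it as
\[
{N_g}_{\alpha\beta}-8\pi T_{\alpha\beta}=\tfrac12 e^{2\rho}\big(\del_\alpha\Gammad_\beta+\del_\beta\Gammad_\alpha-\gd_{\alpha\beta}\gd^{\alpha'\beta'}\del_{\alpha'}\Gammad_{\beta'}\big),
\]
so that the right-hand side is exactly the combination to which Lemma~\ref{lem preserve-wave-1} applies; this makes the cancellation of the trace-like second-order piece $\gd^{\alpha'\beta'}\del_\beta\del_{\alpha'}\Gammad_{\beta'}$ automatic rather than something you must verify by hand.
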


\begin{proof}
Taking the trace of \eqref{eq conformal wave-reduced geo a} with respect to the metric $g$,
\begin{equation}\label{eq 4-4-pr2-1}
\tr(N_g - 8\pi T) = -e^{2\rho}g^{\alpha\beta}\del_{\alpha}\Gammad_{\beta}
\end{equation}
and combining with \eqref{eq conformal wave-reduced geo a}, we obtain 
\begin{equation}\label{eq 4-4-pr2-2}
\aligned
{N_g}_{\alpha\beta} - 8\pi T_{\alpha\beta}
&= \frac{1}{2}g_{\alpha\beta}\tr(N_g - 8\pi T)
+ \frac{1}{2}e^{2\rho}\big(\del_{\alpha}\Gammad_\beta + \del_{\beta}\Gammad_{\alpha}\big)
\\
&= \frac{1}{2}e^{2\rho}\big(\del_{\alpha}\Gammad_\beta + \del_{\beta}\Gammad_{\alpha}
- \gd_{\alpha\beta}\gd^{\alpha'\beta'}\del_{\alpha'}\Gammad_{\beta'}\big)
\endaligned
\end{equation}
Taking the trace of \eqref{eq 4-4-pr2-2} {\bf with respect to $\gd$}, we obtain 
\begin{equation}\label{eq 4-4-pr2-3}
\frac{1}{2}\nablad^{\alpha}\big(e^{2\rho}\big(\del_{\alpha}\Gammad_\beta + \del_{\beta}\Gammad_{\alpha}
- \gd_{\alpha\beta}\gd^{\alpha'\beta'}\del_{\alpha'}\Gammad_{\beta'}\big)\big)
= \nablad^{\alpha}\big({N_g}_{\alpha\beta} - 8\pi T_{\alpha\beta}\big), 
\end{equation}
whose left-hand-side is evaluated by using \eqref{eq 2-4-0}: 
$$
\aligned
&\frac{1}{2}\nablad^{\alpha}\big(e^{2\rho}\big(\del_{\alpha}\Gammad_\beta + \del_{\beta}\Gammad_{\alpha}
- \gd_{\alpha\beta}\gd^{\alpha'\beta'}\del_{\alpha'}\Gammad_{\beta'}\big)\big)
\\
&=\frac{1}{2}e^{2\rho}\nablad^{\alpha}\big(\del_{\alpha}\Gammad_\beta + \del_{\beta}\Gammad_{\alpha}
- \gd_{\alpha\beta}\gd^{\alpha'\beta'}\del_{\alpha'}\Gammad_{\beta'}\big)
\\
&\quad+ e^{2\rho} \big(\del_{\alpha}\Gammad_\beta + \del_{\beta}\Gammad_{\alpha}
- \gd_{\alpha\beta}\gd^{\alpha'\beta'}\del_{\alpha'}\Gammad_{\beta'}\big)\nablad^{\alpha}\rho,
\endaligned
$$
thus 
$$
\aligned
&\frac{1}{2}\nablad^{\alpha}\big(e^{2\rho}\big(\del_{\alpha}\Gammad_\beta + \del_{\beta}\Gammad_{\alpha}
- \gd_{\alpha\beta}\gd^{\alpha'\beta'}\del_{\alpha'}\Gammad_{\beta'}\big)\big)
\\
&= \frac{1}{2}e^{2\rho}\Big(\gd^{\alpha'\beta'}\del_{\alpha'}\del_{\beta'}\Gammad_{\beta}
- \gd^{\alpha\alpha'}\Gammad_{\alpha\alpha'}^{\gamma}\del_{\gamma}\Gammad_{\beta}
- \gd^{\alpha\alpha'}\Gammad^{\delta}_{\alpha'\beta}\del_\delta\Gammad_{\alpha}
- \del_{\beta}\gd^{\alpha'\beta'}\big(\del_{\alpha'}\Gammad_{\beta'}\big)\Big)
\\
&\quad + e^{2\rho} \big(\del_{\alpha}\Gammad_\beta + \del_{\beta}\Gammad_{\alpha}
- \gd_{\alpha\beta}\gd^{\alpha'\beta'}\del_{\alpha'}\Gammad_{\beta'}\big)\nablad^{\alpha}\rho
\\
&=:\frac{1}{2}e^{2\rho}\gd^{\alpha'\beta'}\del_{\alpha'}\del_{\beta'}\Gammad_{\beta}
+ \frac{1}{2}e^{2\rho} \tilde{F}_{\beta}(\rho,\gd;\Gammad_{\gamma},\del \Gammad_{\gamma})
\endaligned
$$
where $F_{\beta}(\rho,\gd;\cdot)$ is a combination of linear and bilinear forms of the functions $\Gammad_{\gamma}$ and $\del \Gammad_{\gamma}$.

The right-hand-side of \eqref{eq 4-4-pr2-3} is computed by using the identity \eqref{eq divN-bianchi convformal} and \eqref{eq conformal wave-reduced geo b}, that is, 
$$
\nablad^{\alpha}\big({N_g}_{\alpha\beta} - 8\pi T_{\alpha\beta}\big)
= e^{-2\rho}\big(\tr(N_g - 8\pi T)\del_{\beta}\rho - 2g^{\gamma\delta}\del_{\gamma}\rho ({N_g}_{\delta\beta}-8\pi T_{\delta\beta})\big).
$$
Then, by \eqref{eq 4-4-pr2-1}, we obtain 
$$
\aligned
&\nablad^{\alpha}\big({N_g}_{\alpha\beta} - 8\pi T_{\alpha\beta}\big)
\\
&= e^{-2\rho}\Big(-e^{2\rho}g^{\alpha'\beta'}\del_{\alpha'}\Gammad_{\beta'}\del_{\beta}\rho - e^{2\rho}g^{\gamma\delta}\del_{\gamma}\rho\big(\del_{\alpha}\Gammad_{\beta} + \del_{\beta}\Gammad_{\alpha} - \gd_{\alpha\beta}\gd^{\alpha'\beta'}\del_{\alpha'}\Gammad_{\beta'}\big)\Big)
\\
&=-g^{\gamma\delta}\del_{\gamma}\rho\big(\del_{\gamma}\Gammad_{\beta}+\del_{\beta}\Gammad_\delta\big)
\endaligned
$$
and, by \eqref{eq 4-4-pr2-3},
\begin{equation}
\gd^{\alpha'\beta'}\del_{\alpha'}\del_{\beta'}\Gammad_{\beta}
= F_{\beta}(\rho,\gd;\Gammad_{\gamma}, \del \Gammad_{\gamma}),
\end{equation}
where
$$F_{\beta}(\rho,\gd;\Gammad_{\gamma}, \del \Gammad_{\gamma}) = -\tilde{F}_{\beta}(\rho,\gd;\Gammad_{\gamma}, \del \Gammad_{\gamma})
 - 2e^{-2\rho}g^{\gamma\delta}\del_{\gamma}\rho\big(\del_\delta\Gammad_{\beta} + \del_{\beta}\Gammad_\delta\big)
$$
is a combination of linear and bilinear forms in $\Gammad_{\gamma}$ and $\del \Gammad_{\gamma}$. 
\end{proof}

\begin{lemma}\label{lem pre 2}
Let $(M,\gd)$ be a globally hyperbolic Lorentzian manifold endowed with foliation $M = [0,t_{max})\times M_t$ (and signature $(-,+,+,+)$), together with a tensor field $T_{\alpha\beta}$. Suppose that the equation \eqref{eq conformal wave-reduced geo} holds on the initial slice $M_0$ and, furthermore,  the wave coordinate conditions and the constraint equations hold on the slice $M_0$: 
$$
\Gammad^{\gamma} := \gd^{\alpha\beta}\Gammad_{\alpha\beta}^{\gamma} = 0
$$
and (as stated in \eqref{constraint trans Halmitonian} and \eqref{constraint trans momentum})
$$
\aligned
&\Rbd - \Kd_{ij}\Kd^{ij} + (\Kd_l^l)^2 = \frac{16\sigmad}{e^{2\rho}} + \frac{12|<e_0,\rho>|^2}{\Nd^2}
+ 6\gd(\nabla \rho,\nabla\rho) - e^{-2\rho}W_2(\rho),
\\
&\del_j\Kd_l^l - \bar{\nablad}\Kd_j^l = -\frac{\Jd_j}{e^{2\rho}} + \frac{6<e_0,\rho>\del_j\rho}{\Nd}.
\endaligned
$$
Then, one has 
$$
\del_0\Gammad^{\lambda} = 0 \qquad \text{ in the spacetime } M.
$$
\end{lemma}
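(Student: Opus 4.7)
The plan is to first show that both $\Gammad^\lambda$ and $\del_0 \Gammad^\lambda$ vanish on the initial slice $M_0$, and then to invoke Lemma \ref{lem pre 1} together with the standard uniqueness theorem for linear wave equations on the globally hyperbolic spacetime $M$, in order to propagate $\Gammad^\lambda \equiv 0$ throughout $M$ and hence also $\del_0 \Gammad^\lambda \equiv 0$. The assertion $\Gammad^\lambda = 0$ on $M_0$ is part of the hypothesis, so the whole of the argument reduces to showing that $\del_0 \Gammad^\lambda = 0$ on $M_0$.

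To this end, I would introduce $\Sigma_{\alpha\beta} := {N_g}_{\alpha\beta} - 8\pi T_{\alpha\beta}$ and recall from the derivation of Lemma \ref{lem pre 1} that the wave-reduced equation \eqref{eq conformal wave-reduced geo a} is equivalent to
\[
\Sigma_{\alpha\beta} - \tfrac12 g_{\alpha\beta}\, \tr(\Sigma) \,=\, \tfrac12 e^{2\rho}\bigl(\del_\alpha \Gammad_\beta + \del_\beta \Gammad_\alpha\bigr),
\]
whose trace with respect to $g$ yields the scalar identity $\tr(\Sigma) = -e^{2\rho}\, g^{\mu\nu}\del_\mu \Gammad_\nu$. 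Just as in the proof of Proposition \ref{prop 2-5-1}, the Hamiltonian and momentum constraints stated in the hypotheses are equivalent on $M_0$ to $\Sigma_{0\beta} = 0$.

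Setting $\alpha = 0$ in the displayed identity and using $\Sigma_{0\beta}|_{M_0} = 0$ gives, on $M_0$,
\[
-g_{0\beta}\, \tr(\Sigma) \,=\, e^{2\rho}\bigl(\del_0 \Gammad_\beta + \del_\beta \Gammad_0\bigr).
\]
The wave coordinate hypothesis forces $\Gammad_\lambda = g_{\lambda\lambda'}\Gammad^{\lambda'}$ to vanish along the whole hypersurface $M_0$; consequently every tangential coordinate derivative $\del_i \Gammad_\lambda$ also vanishes there. Substituting this into both the scalar identity for $\tr(\Sigma)$ and the above relation produces a closed linear algebraic system for the four unknowns $\del_0 \Gammad_\beta$, with coefficients built from $g_{\alpha\beta}$ and $g^{\alpha\beta}$ evaluated on $M_0$. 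A direct calculation, cleanest in Cauchy-adapted coordinates where $g^{0i} = 0$ and $g^{00}g_{00} = 1$, shows that the system is nondegenerate and that its unique solution is $\del_0 \Gammad_\beta = 0$, equivalently $\del_0 \Gammad^\lambda = 0$, on $M_0$.

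It remains to propagate this information into the spacetime. By Lemma \ref{lem pre 1}, under the wave-reduced system the vector $\Gammad_\beta$ satisfies a homogeneous linear hyperbolic equation whose source $F_\beta(\rho, \gd; \Gammad_\gamma, \del\Gammad_\gamma)$ is a combination of linear and bilinear forms in the unknowns. Since both $\Gammad_\beta$ and $\del_0 \Gammad_\beta$ vanish on $M_0$, the global hyperbolicity assumption \eqref{eq:globalhyper} and the standard uniqueness theorem for linear wave systems force $\Gammad_\beta \equiv 0$ on all of $M$, so in particular $\del_0 \Gammad^\lambda \equiv 0$ in $M$. The main obstacle is the algebraic step of the preceding paragraph: one has to verify that the linear system in the $\del_0 \Gammad_\beta$ is invertible, a property that ultimately reflects the spacelike character of $M_0$ and is most transparent in coordinates adapted to the foliation.
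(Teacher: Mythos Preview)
Your approach is correct and essentially parallels the paper's. Both arguments use that the Hamiltonian and momentum constraints are equivalent to $\Sigma_{0\beta}=0$ on $M_0$, combine this with the identity $\Sigma_{\alpha\beta}-\tfrac12 g_{\alpha\beta}\tr(\Sigma)=\tfrac12 e^{2\rho}(\del_\alpha\Gammad_\beta+\del_\beta\Gammad_\alpha)$ and the tangential vanishing of $\Gammad_\lambda$, and then solve the resulting linear system for $\del_0\Gammad_\beta$.

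Two small points of comparison. First, the paper carries out the algebraic step by passing to the Cauchy adapted \emph{frame} $(e_0,e_i)$ via the transition matrix $\Phi$, rather than changing coordinates; since $\Gammad^\lambda$ is not a tensor, one cannot simply switch to ``Cauchy-adapted coordinates''. Working in the frame with $\underline{\gd}_{0j}=0$ cleanly yields $\langle e_0,\Gammad_b\rangle=0$, then $\tr(\Sigma)=0$ from the $(i,j)$ components, and finally $\langle e_0,\Gammad_0\rangle=0$. Your computation in the natural coordinates still goes through (the nondegeneracy reduces to $g^{00}g_{00}\neq 0$), but the frame version avoids worrying about whether $\del_t$ is timelike. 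Second, the paper's own proof establishes $\del_0\Gammad_\lambda=0$ \emph{only on $M_0$}; the propagation to all of $M$ is deferred to the proof of Proposition~\ref{prop presev 1}. You read the conclusion ``in the spacetime $M$'' literally and therefore included the propagation via Lemma~\ref{lem pre 1}, which is harmless and indeed matches the stated conclusion, but it duplicates what the paper does in the next proposition.
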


\begin{proof} We work in a Cauchy adapted frame $(e_0,e_1,e_2,e_3)$, that is, 
$$
\aligned
&e_0 = \del_0 - \beta^i\del_i, \qquad \beta^i = \frac{\gd_{0i}}{\gd_{ii}},
\qquad e_i = \del_i, 
\endaligned
$$
so that $\gd(e_0,e_i) = 0$. A tensor can be written in, both, the natural frame and the Cauchy adapted frame. We denote by an underlined letter the components in the Cauchy adapted frame. For example, $\underline{T}_{\alpha\beta}$ are the components of $T$ in the Cauchy frame.

Recall that the momentum constraint equations are equivalent to
\begin{equation}\label{eq pr 4-10-1}
\underline{N_g}_{0j} = 8\pi \underline{T}_{0j},
\end{equation}
the Hamiltonian constraint equation is equivalent to
\begin{equation}\label{eq pr 4-10-2}
\underline{N_g}_{00} = 8\pi \underline{T}_{00}.
\end{equation}
Recall also that the Cauchy adapted frame is expressed in the natural frame via
$e_\alpha = \Phi_{\alpha}^{\beta}\del_{\beta}$, 
where
$$
\big(\Phi_{\alpha}^{\beta}\big)_{\alpha\beta} =
\left(
\begin{array}{cccc}
1 &-\beta^1 &-\beta^2 &-\beta^3
\\
0 &1 &0 &0
\\
0 &0 &1 &0
\\
0 &0 &0 &1
\end{array}
\right)
$$
Then, we have 
$$
\underline{N_g}_{\alpha\beta} = {N_g}_{\alpha'\beta'}\Phi_{\alpha}^{\alpha'}\Phi_{\beta}^{\beta'},
\qquad
\underline{T}_{\alpha\beta} = T_{\alpha'\beta'}\Phi_{\alpha}^{\alpha'}\Phi_{\beta}^{\beta'}.
$$

Observe that the wave-reduced field equation \eqref{eq conformal wave-reduced geo a} can be rewritten in the Cauchy adapted frame as
$$
\underline{N_g}_{\alpha\beta} - \frac{1}{2}\tr{N_g}\underline{g}^{\dag}_{\alpha\beta}
- \frac{1}{2}e^{2\rho}\Phi_{\alpha}^{\alpha'}\Phi_{\beta}^{\beta'}\big(\del_{\alpha'}\Gammad_{\beta'} + \del_{\beta'}\Gammad_{\alpha'}\big)
 = 8\pi\big(\underline{T}_{\alpha\beta} - \frac{1}{2}\tr(T)\underline{g}^{\dag}_{\alpha\beta}\big), 
$$
which is
\begin{equation}\label{eq pr 4-10-3}
\aligned
& \underline{N_g}_{\alpha\beta} - \frac{1}{2}\tr{N_g}\underline{g}^{\dag}_{\alpha\beta}
- \frac{1}{2}e^{2\rho}\big(\Phi_{\beta}^{\beta'}<e_{\alpha},\Gammad_{\beta'}> + \Phi_{\alpha}^{\alpha'}<e_{\beta},\Gammad_{\alpha'}>\big)
\\
& = 8\pi\big(\underline{T}_{\alpha\beta} - \frac{1}{2}\tr(T)\underline{g}^{\dag}_{\alpha\beta}\big).
\endaligned
\end{equation}
Next, by combining \eqref{eq pr 4-10-3} with \eqref{eq pr 4-10-1} with $\alpha = 0$, $1\leq \beta = b\leq 3$ in \eqref{eq pr 4-10-3} and by observing that $\underline{\gd}_{0j} = 0$, we obtain 
$$
\Phi_b^{\beta'}<e_0,\Gammad_{\beta'}> + \Phi_0^{\alpha'}<e_b,\Gammad_{\alpha'}> = 0.
$$
We consider this equation on the initial slice $M_0$. 

Recall that $\Gammad^{\lambda} = 0$ so that $\del_b\Gamma_{\beta} = 0$ for any $1\leq b\leq 3$ and $0\leq \beta\leq 3$. Then observe that $\Phi_b^{\beta'} = \delta_b^{\beta'}$ and $<e_b,\Gammad_{\alpha'}> = \del_b\Gammad_{\alpha'} = 0$, so that
\begin{equation}\label{eq pr 4-10-4}
<e_0,\Gammad_b> = 0,
\end{equation}
which leads us to $\del_t \Gammad_b = 0$.

Now, we can combine \eqref{eq pr 4-10-3} with \eqref{eq pr 4-10-1} and \eqref{eq pr 4-10-2} with $\alpha = \beta = 0$ in \eqref{eq pr 4-10-3}:
$$
\Phi_0^{\beta'}<e_0,\Gammad_{\beta'}> + \Phi_0^{\alpha'}<e_0,\Gammad_{\alpha'}> = - \tr({N_g - 8\pi T})\gd_{00}.
$$
We recall \eqref{eq pr 4-10-1} and \eqref{eq pr 4-10-2} and the fact that $\gd_{0j} = 0$, so that 
\begin{equation}\label{eq pr 4-10-5}
\tr(N_g - 8\pi T)
 = \underline{g}^{\alpha\beta}\big(\underline{N_g}_{\alpha\beta} - 8\pi \underline{T}_{\alpha\beta}\big)
 = \underline{g}^{ij}\big(\underline{N_g}_{ij} - 8\pi \underline{T}_{ij}\big).
\end{equation}
We also recall \eqref{eq pr 4-10-3} with $1\leq \alpha=i\leq 3$ and $1\leq \beta = j\leq 3$, and we observe that $<e_i,\Gammad_{\alpha}> = 0$. This shows that 
$$
\underline{{N_g}}_{ij} - 8\pi \underline{T}_{ij} = \frac{1}{2}\tr(N_g - 8\pi T)\underline{g}_{ij}, 
$$
which leads us to
$$
\underline{g}^{ij}\big(\underline{{N_g}}_{ij} - 8\pi \underline{T}_{ij}\big) =  \frac{1}{2}\tr(N_g - 8\pi T)\underline{g}^{ij}\underline{g}_{ij}. 
$$
Therefore, by \eqref{eq pr 4-10-5}, we have 
$
\tr(N_g - 8\pi T) = \frac{3}{2}\tr(N_g - 8\pi T), 
$
and thus 
$
\tr(N_g - 8\pi T) = 0.
$
We substitute this conclusion in \eqref{eq pr 4-10-4} and obtain 
$$
\Phi_0^{\beta'}<e_0,\Gammad_{\beta'}> + \Phi_0^{\alpha'}<e_0,\Gammad_{\alpha'}> =0.
$$
We finally recall \eqref{eq pr 4-10-4} and get 
$$
\Phi_0^0<e_0,\Gammad_0> + \Phi_0^0<e_0,\Gammad_0> =0
$$
and the desired conclusion is reached. 
\end{proof}

\begin{proof}[Proof of Proposition \ref{prop presev 1}] 
In view of Lemma~\ref{lem pre 1} and Lemma~\ref{lem pre 2}, we see that $\Gammad_{\beta}$ satisfies the initial value problem
$$
\gd^{\alpha'\beta'}\del_{\alpha'}\del_{\beta'} \Gammad_{\beta} = F_{\beta}(\rho,\gd;\Gammad_{\gamma},\del\Gammad_{\gamma})
$$
with initial data
$$
\Gammad_{\beta}|_{x^0=0} = 0, \quad \del_0\Gammad_{\beta}|_{x^0 = 0} = 0.
$$
Since $x^0$ is the time-like direction and the symmetry of $\gd$ guarantees the hyperbolicity of $\gd^{\alpha'\beta'}\del_{\alpha'}\del_{\beta'}$. We also observe that $\Gammad=0$ is a solution of this initial value problem.Thanks to the global hyperbolicity of $\gd$, the desired uniqueness result holds within the domain of determinacy of the initial slice, that is, $M$ itself, thanks to our global hyperbolicity assumption.
\end{proof}


\newpage 

\section{The augmented conformal formulation}

\subsection{A novel formulation}

In this section we will re-formulate again our system and establish a local-in-time existence theory for the system \eqref{main eq trans}. Since $\rho$ is function of the scalar curvature, the syste under consideration now is still a {\sl third-order} system and does not enjoy any special structure as far as the principal part is concerned.
(The third-order terms are $\del \rho$, where $\rho$ is a function of the second-order derivatives of $\gd_{\alpha\beta}$.) To bypass this difficulty, we introduce still another transformation. 

In the present section, we propose, in the system \eqref{eq conformal wave-reduced geo}, to replace the constraint $e^{2\rho} = f'(R_g)$ by the trace equation leading to the evolution law for $\rho$. We then formulate an {\sl argumented formulation,} as we call it. 
For clarity in the presentation, we switch from the letter $\rho$ to the letter $\varrho$, in order the emphasize that the relation $e^{2\rho} = f'(R_g)$ is {\sl no longer imposed}.  

Let us now define the tensor ${N_g^{\ddag}}_{\alpha\beta}$ as 
\begin{equation}\label{eq conformal aug geo0}
{N_g^{\ddag}}_{\alpha\beta}  - \frac{1}{2}g_{\alpha\beta}\tr(N_g^\ddag)
= e^{2\varrho}\Rv_{\alpha\beta} - 6e^{2\varrho}\del_{\alpha}\varrho\del_{\beta}\varrho + \frac{1}{2}\gv_{\alpha\beta}W_2(\varrho).
\end{equation}
Here, $\varrho$ plays the role of $\rho = \frac{1}{2}\ln f'(R_g)$, which is no longer imposed, and we work in the {\bf metric of the augmented system}  
\be
g^{\ddag}_{\alpha\beta} = e^{2\varrho}g_{\alpha\beta}.
\ee 
We also use the notation $\nabla^\ddag$, $R^\ddag_{\alpha}$ and $\Rv$ for the connection, the Ricci curvature tensor and the scalar curvature of the metric $\gv$, respectively. Also, $\Gammav_{\alpha\beta}^{\gamma}$ denote the Christoffel symbols of $\gv$ in a natural frame, and we set $\Gammav^{\gamma} := \gv^{\alpha\beta}\Gammav_{\alpha\beta}^{\gamma}$ and $\Gammav_{\gamma} := \gv_{\gamma\gamma'}\Gammav^{\gamma}$.

\begin{definition}
The {\bf conformal augmented formulation of the modified gravity theory} is the following partial differential system:
\begin{subequations}\label{eq conformal aug geo0-deux}
\begin{equation}\label{eq conformal aug geo0 a}
{N_g^{\ddag}}_{\alpha\beta}  - \frac{1}{2}e^{2\varrho}\big(\del_{\alpha}\Gammav_{\beta} + \del_{\beta}\Gammav_{\alpha}\big)
= 8\pi \big(T_{\alpha\beta} - \frac{1}{2}g_{\alpha\beta}\tr(T)\big).
\end{equation} 
\begin{equation}\label{eq conformal aug geo0 c}
\Box_{\gv} \varrho = \frac{W_2(\varrho)}{6e^{2\varrho}} + \frac{f(\rhoR(\varrho))}{6e^{4\varrho}}
+ \frac{4\pi\tr(T)}{3e^{4\varrho}}\gv_{\alpha\beta}.
\end{equation}
\end{subequations}
\end{definition}

In the proposed standpoint, $\varrho$ becomes an {\sl independent unknown} function, which is no longer regarded as a function of the scalar curvature $R_g$. In this form, the system under consideration is {\sl second-order} and our first task is to compute the divergence of ${N_g^{\ddag}}$.

\begin{lemma}\label{lem 5-3-1-0}
When \eqref{eq conformal aug geo0 c} holds, the following identity also holds:
\begin{equation}\label{eq lem 5-3-1-0}
\nablav^{\alpha}{N_g^{\ddag}}_{\alpha\beta}
= e^{-2\varrho}\big(2g^{\alpha\alpha'}\del_{\alpha'}\varrho {N_g^{\ddag}}_{\alpha\beta} - \tr(8\pi T)\del_{\beta}\varrho\big).
\end{equation}
\end{lemma}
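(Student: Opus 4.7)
The plan is to compute $\nablav^\alpha {N_g^\ddag}_{\alpha\beta}$ by hand from the defining relation \eqref{eq conformal aug geo0 a} (or rather, more directly, from the identity \eqref{eq conformal aug geo0}) and then invoke the trace equation \eqref{eq conformal aug geo0 c} together with an algebraic identity linking $W_2$ and $W_3$. The strategy is parallel in spirit to Lemma~\ref{lem divN conformal}, but the contracted Bianchi identity \eqref{eq E-bianchi} is no longer available for the `$g$-divergence' of $N_g^\ddag$ since $\varrho$ is now decoupled from the scalar curvature.

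First, I would take the $g$-trace of \eqref{eq conformal aug geo0}, using $g^{\alpha\beta}=e^{2\varrho}\gv^{\alpha\beta}$ and $g^{\alpha\beta}\gv_{\alpha\beta}=4e^{2\varrho}$, to obtain $\tr_g(N_g^\ddag)=-e^{4\varrho}\Rv+6e^{4\varrho}|\nablav\varrho|^2_{\gv}-2e^{2\varrho}W_2(\varrho)$. Substituting this back into \eqref{eq conformal aug geo0} eliminates $\tr_g(N_g^\ddag)$ and rewrites
\[
{N_g^\ddag}_{\alpha\beta} \;=\; e^{2\varrho}G^\ddag_{\alpha\beta} \;+\; 3e^{2\varrho}\gv_{\alpha\beta}|\nablav\varrho|^2_\gv \;-\; 6e^{2\varrho}\del_\alpha\varrho\,\del_\beta\varrho \;-\; \tfrac12\gv_{\alpha\beta}W_2(\varrho),
\]
where $G^\ddag_{\alpha\beta}=\Rv_{\alpha\beta}-\tfrac12\gv_{\alpha\beta}\Rv$ is the Einstein tensor of $\gv$.

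Second, I would apply $\nablav^\alpha$ termwise. The contracted Bianchi identity gives $\nablav^\alpha G^\ddag_{\alpha\beta}=0$, so the leading piece reduces to $2e^{2\varrho}\nablav^\alpha\varrho\,G^\ddag_{\alpha\beta}$. The term $\nablav^\alpha(3e^{2\varrho}\gv_{\alpha\beta}|\nablav\varrho|^2)$ generates a Hessian contribution $6e^{2\varrho}\nablav^\alpha\varrho\,\nablav_\beta\nablav_\alpha\varrho$ which is precisely cancelled by the Hessian contribution $-6e^{2\varrho}\nablav^\alpha\varrho\,\nablav_\alpha\nablav_\beta\varrho$ coming from $\nablav^\alpha(-6e^{2\varrho}\del_\alpha\varrho\,\del_\beta\varrho)$ (using symmetry of the scalar Hessian). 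A short bookkeeping yields
\[
\nablav^\alpha {N_g^\ddag}_{\alpha\beta} \;=\; 2e^{2\varrho}\nablav^\alpha\varrho\,G^\ddag_{\alpha\beta} \;-\; 6e^{2\varrho}|\nablav\varrho|^2_\gv\,\del_\beta\varrho \;-\; 6e^{2\varrho}\Box_\gv\varrho\,\del_\beta\varrho \;-\; \tfrac12 W_2'(\varrho)\,\del_\beta\varrho.
\]

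Third, the right-hand side of \eqref{eq lem 5-3-1-0} is converted to $\gv$-quantities via $g^{\alpha\alpha'}\del_{\alpha'}\varrho=e^{2\varrho}\nablav^\alpha\varrho$, and the formula for $N_g^\ddag$ above is substituted into $2\nablav^\alpha\varrho\,{N_g^\ddag}_{\alpha\beta}$. The $G^\ddag$-term and the $|\nablav\varrho|^2\del_\beta\varrho$-term match the LHS computed above, and what remains to check is the scalar identity
\[
-6e^{2\varrho}\Box_\gv\varrho \;-\; \tfrac12 W_2'(\varrho) \;+\; W_2(\varrho) \;+\; 8\pi e^{-2\varrho}\tr(T) \;=\; 0.
\]
At this point I would plug in the hypothesis \eqref{eq conformal aug geo0 c}, which replaces $6e^{2\varrho}\Box_\gv\varrho$ by $W_2(\varrho)+e^{-2\varrho}W_3(\varrho)+8\pi e^{-2\varrho}\tr(T)$ (with the convention $W_3(\varrho)=f(\rhoR(\varrho))$). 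The $W_2$ and $\tr(T)$ contributions cancel, leaving $-e^{-2\varrho}W_3(\varrho)-\tfrac12 W_2'(\varrho)=0$, which is an algebraic identity: differentiating $W_2(s)=(f(r)-rf'(r))/f'(r)$ along $e^{2s}=f'(r)$ with $dr/ds=2e^{2s}/f''(r)$ yields $W_2'(s)=-2e^{-2s}f(r)=-2e^{-2s}W_3(s)$, so the identity holds. The main obstacle, which is essentially arithmetic, is the careful accounting of the Hessian and $|\nablav\varrho|^2$ terms in Step~2 and the verification that the specific coefficients in \eqref{eq conformal aug geo0 c} were tailored precisely so as to match this $W_2$–$W_3$ identity.
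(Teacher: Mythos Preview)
Your proof is correct and follows essentially the same approach as the paper: both begin by rewriting ${N_g^\ddag}_{\alpha\beta}$ in terms of the Einstein tensor $G^\ddag_{\alpha\beta}$ of $\gv$, exploit the contracted Bianchi identity $\nablav^\alpha G^\ddag_{\alpha\beta}=0$ together with $\nablav^\alpha\big(\del_\alpha\varrho\del_\beta\varrho-\tfrac12\gv_{\alpha\beta}|\nablav\varrho|^2\big)=\del_\beta\varrho\,\Box_{\gv}\varrho$, and close with the trace equation \eqref{eq conformal aug geo0 c} and the algebraic identity $W_2'(s)=-2e^{-2s}f(\theta(s))$. The only cosmetic difference is that the paper regroups the divergence directly into $2\nablav^\alpha\varrho\,{N_g^\ddag}_{\alpha\beta}$ plus a remainder, whereas you compute the two sides separately and match them; the underlying computation is identical.
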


\begin{proof} By \eqref{eq conformal aug geo0-deux}, we have 
$$
{N_g^{\ddag}}_{\alpha\beta}
= e^{2\varrho}G^{\ddag}_{\alpha\beta}
- 6e^{2\varrho}\big(\del_{\alpha}\varrho\del_{\beta}\varrho - \frac{1}{2}g_{\alpha\beta}|\nabla\varrho|^2_{\gv}\big)
- \frac{1}{2}\gv_{\alpha\beta}W_2(\varrho), 
$$
where
$\Gv_{\alpha\beta} := \Rv_{\alpha\beta} - \frac{1}{2}\gv_{\alpha\beta}\Rv$
is the Einstein curvature of $\gv$. We start from the identities
$$
\nablav^{\alpha\beta}G_{\alpha\beta}^{(\varrho)} = 0,
$$
and 
$$
\nablav^{\alpha\beta}\big(\del_{\alpha}\varrho\del_{\beta}\varrho - \frac{1}{2}g_{\alpha\beta}|\nabla\varrho|^2_{\gv}\big)
=\del_{\beta}\varrho\,\Box_{\gv}\varrho, 
$$
and we introduce the function 
\be
s(r) = \frac{1}{2}\ln f'(r), \qquad r \in \RR 
\ee
together with its (local) inverse (of near $0$) denotes by $\theta$. Then, we have 
$$
\aligned
\nablav^{\alpha\beta}\big(\gv_{\alpha\beta}W_2(\varrho)\big)
&= \del_{\beta}\big(W_2(\varrho)\big)
= \del_{\beta}\big(e^{-2\varrho}f(\rhoR(\varrho)) - \rhoR(\varrho)\big)
\\
&= -2e^{-2\varrho}\del_{\beta}\varrho f(\rhoR(\varrho)) + e^{-2\varrho}f'(\rhoR(\varrho))\rhoR'(\varrho)\del_{\beta}\varrho - \rhoR'(\varrho)\del_{\beta}\varrho
\\
&=-2e^{-2\varrho}\del_{\beta}\varrho f(\rhoR(\varrho)) + e^{-2\varrho}e^{2\varrho}\rhoR'(\varrho)\del_{\beta}\varrho - \rhoR'(\varrho)\del_{\beta}\varrho
\\
&= -2e^{-2\varrho}\del_{\beta}\varrho f(\rhoR(\varrho)).
\endaligned
$$
This allows us to compute the divergence of ${N_g^{\ddag}}_{\alpha\beta}$:
$$
\aligned
\nablav^{\alpha}{N_g^{\ddag}}_{\alpha\beta}
&= 2e^{2\varrho} G^{\varrho}_{\alpha\beta} \nablav^{\alpha}\varrho
  -12e^{2\varrho} \big(\del_{\alpha}\varrho\del_{\beta}\varrho - \frac{1}{2}g_{\alpha\beta}|\nabla\varrho|^2_{\gv}\big)\nablav^{\alpha}\varrho
\\
 & \quad -6e^{2\varrho}\del_{\beta}\varrho\Box_{\gv}\varrho + e^{2\varrho}\del_{\beta}f(\rhoR(\varrho))
\\
&= 2e^{2\varrho} G^{\varrho}_{\alpha\beta} \nablav^{\alpha}\varrho
  -12e^{2\varrho} \big(\del_{\alpha}\varrho\del_{\beta}\varrho - \frac{1}{2}g_{\alpha\beta}|\nabla\varrho|^2_{\gv}\big)\nablav^{\alpha}\varrho
  - \gv_{\alpha\beta}W_2(\varrho)\nablav^{\alpha}\varrho
\\
 & \quad -6e^{2\varrho}\del_{\beta}\varrho\Box_{\gv}\varrho + e^{-2\varrho}\del_{\beta}f(\rhoR(\varrho))
 + \gv_{\alpha\beta}W_2(\varrho)\nablav^{\alpha}\varrho
\\
&= 2{N_g^{\ddag}}_{\alpha\beta}  \nablav^{\alpha}\varrho
- 6e^{2\varrho}\del_{\beta}\varrho
\bigg(\Box_{\gv} - \frac{f(\rhoR(\varrho))}{6e^{4\varrho}} - \frac{W_2(\varrho)}{6e^{2\varrho}}\bigg).
\endaligned
$$
Then, by \eqref{eq conformal aug geo c}, we find 
$$
\Box_{\gv}\varrho
= \frac{f(\rhoR(\varrho))}{6e^{4\varrho}} + \frac{W_2(\varrho)}{6e^{2\varrho}} + \frac{4\pi \tr(T)}{3e^{4\varrho}} 
$$
and the desired conclusion follows. 
\end{proof}

By Lemma~\ref{lem 5-3-1-0}, we see that the evolution law for $T$ is 
\begin{equation}\label{eq conformal aug geo0 b}
\nablav^\alpha T_{\alpha\beta}
= \big(2g^{\delta\gamma}\del_{\gamma}\varrho T_{\gamma\beta} - \tr(T)\del_{\beta}\varrho\big)e^{-2\varrho}. 
\end{equation}
The following question arises at this juncture:  Will the relation $e^{2\varrho} = f'(R_g)$ (with $\gd_{\alpha\beta} = e^{2\varrho}g_{\alpha\beta}$) hold if we solely solve the equations \eqref{eq conformal aug geo0-deux}? The following subsections precisely provide a (positive) answer to this question.

First, in Section~\ref{section52}, we will re-formulate the initial value problem for the augmented conformal formulation, by building upon our previous formulations of the field equations in the Jordan and Einstein metrics.

Then, in Section~\ref{section53}, in order for the principal part of \eqref{eq conformal aug geo0-deux} to be hyperbolic, we will write our augmented system in wave coordinates. Finally, in Section~\ref{section54}, we will prove that once the wave constraint equations $\Gammav_\lambda = 0$ hold on the initial slice, then the augmentation relation $e^{2\varrho} = f'(R_g)$ is guaranteed by \eqref{eq conformal aug}.
 

\subsection{Initial value formulation for the augmented conformal system}
\label{section52}

In this section, we revisit Definitions \ref{def36} and \ref{def cauchy conformal}, which is is required for the clarity of the following discussion.
 
\begin{definition}\label{def36aug}
An {\bf initial data set for the augmented conformal formulation of modified gravity} $(\Mb, \gb^{\ddag}, \Kv, \varrhozero, \varrhoone, \phizerodd, \phionedd)$ consists of the following data:
\begin{itemize}
\item a $3$-dimensional manifold $\Mb$ endowed with a Riemannian metric $\gb^{\ddag}$ and a symmetric $(0,2)$-tensor field $\Kv$,
\\
\item two scalar fields denoted by $\varrhozero$ and $\varrhoone$ on $\Mb$ and representing the (to-be-constructed) conformal factor and its time derivative,
\\
\item and two scalar field $\phizerodd$ and $\phionedd$ defined on $\Mb$
\end{itemize}
Moreover, these data are required to satisfy the {\bf Hamiltonian constraint of modified gravity in the augmented conformal form}
\begin{equation}
\label{constraint trans-aug Hamiltonian}
\aligned
\Rb^{\ddag} - \Kv_{ij}\Kv^{ij}+(\Kv_j^j)^2 
= 
&8 \, e^{-2\rho} \, \big((\phionedd)^2
+ {\gb^{\ddag}}^{ij}\del_i\phizerodd\del_j\phizerodd\big) 
\\
&+ 6 (\varrhoone)^2 + 6\gbd^{ij}\del_i\varrhozero\del_j\varrhozero
 - e^{-2\varrhozero}W_2(\varrhozero)
\endaligned
\end{equation}
and the {\bf momentum constraint in the augmented conformal form}
\begin{equation}
\label{constraint trans-aug momentum}
\del_j \Kv_i^i - {\nablab^{\ddag}}_i \Kv_j^i = \frac{\phionedd\del_j\phizerodd}{e^{2\varrhozero}} + 6\varrhoone\del_j\varrhozero.
\end{equation}
Here, $\Rb^{\ddag}$ and ${\nablab^{\ddag}}$ is the scalar curvature and the connection of the metric $\gb^{\ddag}$, respectively.
\end{definition}

\begin{definition}\label{Cauchy conformal aug} Given an initial data set $(\Mb, \gb^{\ddag}, \Kv, \varrhozero, \varrhoone, \phizerodd, \phionedd)$ as in Definition~\ref{def36aug},
the {\bf initial value problem in modified gravity in the augmented conformal form} consists of finding
a Lorentzian manifold $(M, g)$ and a two-tensor field $T_{\alpha\beta}$ on $M$ such that: 
\begin{itemize}
\item[1.]The augmented conformal metric $\gv_{\alpha\beta} = e^{2\varrho}g_{\alpha\beta}$ with conformal factor $\varrho$ satisfies the {\bf evolution equation} \eqref{eq conformal aug geo c}.

\item[2.] The augmented conformal field equations  \eqref{eq conformal aug geo} are satisfied.

\item[3.] There exists an embedding $i: \Mb \to M$ with pull back metric $\gb^{\ddag} = i^* \gv$ and
second fundamental form $\Kv$.

\item[4.] The field $\varrhozero$ coincides with the restriction of the conformal factor $\varrho$ on $\Mb$, while
$\varrhoone$ coincides with the Lie derivative $\mathcal{L}_{n^\ddag}\varrho$ restricted to $\Mb$, where $n^\ddag$ denotes the normal unit vector of $\Mb$ (with respect to $\gv$).

\item[5.] The scalar fields $\phizerodd,\, \phionedd$ coincides with the restriction of $\phi,\, \Lcal_{n^\ddag} \phi$ on $\Mb$.
\end{itemize}
Such a solution to \eqref{main eq trans} is referred to as a {\bf modified gravity development of the initial data set} $(\Mb, \gb^{\ddag}, \Kv, \varrhozero, \varrhoone, \phizerodd, \phionedd)$.
\end{definition}

Note that, as in Section \ref{sec conformal}, the geometric form of the constraint equations is the {\bf Hamiltonian constraint} ${N_g}_{00} = 8\pi T_{00}$ and the {\bf momentum constraint}
${N_g}_{0a} = 8\pi T_{0a}$.


\subsection{Augmented conformal formulation in wave coordinates}
\label{section53}

We now reduce the conformal augmented system \eqref{eq conformal aug geo0-deux} in wave coordinates to a system with hyperbolic principal part. Indeed, we obtain the following formulation, where we replace the wave constraints by the evolution law of $T$ given by \eqref{eq lem 5-3-1-0}.

\begin{definition}
The {\bf conformal augmented formulation of modified gravity} is, by definition, the following partial differential system:
\begin{subequations}\label{eq conformal aug geo}
\begin{equation}\label{eq conformal aug geo a}
{N_g^{\ddag}}_{\alpha\beta} - \frac{1}{2}g_{\alpha\beta}\tr(N_g^{\ddag}) - \frac{1}{2}e^{2\varrho}\big(\del_{\alpha}\Gammav_{\beta} + \del_{\beta}\Gammav_{\alpha}\big)
= 8\pi \big(T_{\alpha\beta} - \frac{1}{2}g_{\alpha\beta}\tr(T)\big),
\end{equation}
\begin{equation}\label{eq conformal aug geo b}
\nablav^\alpha T_{\alpha\beta}
= \big(2g^{\delta\gamma}\del_{\gamma}\varrho T_{\gamma\beta} - \tr(T)\del_{\beta}\varrho\big)e^{-2\varrho},
\end{equation}
\begin{equation}\label{eq conformal aug geo c}
\Box_{\gv} \varrho + \Gammav^{\delta}\del_\delta\varrho = \frac{W_2(\varrho)}{6e^{2\varrho}} + \frac{f(\rhoR(\varrho))}{6e^{4\varrho}}
+ \frac{4\pi\tr(T)}{3e^{4\varrho}}\gv_{\alpha\beta}.
\end{equation}
\end{subequations}
\end{definition}

By Lemma~\ref{lem Ricci}, we then have the following expressions in coordinates, in which we emphasize that $\gv$ need not coincide with $\gd$. 

\begin{lemma}
The conformal augmented formulation of modified gravity theory in coordinates reads 
\begin{subequations}\label{eq conformal aug}
\begin{equation}\label{eq conformal aug a}
\aligned
\gv^{\alpha'\beta'}\del_{\alpha'}\del_{\beta'}\gv_{\alpha\beta}
=  F_{\alpha\beta}(\gv;\del \gv,\del \gv)  
& - 12\del_{\alpha}\varrho\del_{\beta}\varrho
+ \frac{W_2(\varrho)}{e^{2\varrho}}\gv_{\alpha\beta} 
\\
& -16\pi\big(T_{\alpha\beta} - \frac{1}{2}g_{\alpha\beta}\tr(T)\big),
\endaligned
\end{equation}
\begin{equation}\label{eq conformal aug b}
\nablav^\alpha T_{\alpha\beta}
= \big(2g^{\delta\gamma}\del_{\gamma}\varrho T_{\gamma\beta} - \tr(T)\del_{\beta}\varrho\big)e^{-2\varrho},
\end{equation}
\begin{equation}\label{eq conformal aug c}
\gv^{\alpha'\beta'}\del_{\alpha'}\del_{\beta'}\varrho = \frac{W_2(\varrho)}{6e^{2\varrho}} + \frac{f(\rhoR(\varrho))}{6e^{4\varrho}}
+ \frac{4\pi\tr(T)}{3e^{4\varrho}},
\end{equation}
\end{subequations}
with 
\be
\gv_{\alpha\beta} = e^{2\varrho}g_{\alpha\beta}. 
\ee
\end{lemma}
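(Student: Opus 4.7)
The lemma is an algebraic translation of the geometric equations \eqref{eq conformal aug geo0-deux} into coordinate form via Lemma~\ref{lem Ricci}. No analysis is involved; the content is a bookkeeping exercise together with one crucial cancellation of Christoffel-derivative terms that the wave-reduction correction in \eqref{eq conformal aug geo a} was designed to produce.

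The plan for the field equation is as follows. First, I would substitute the defining relation \eqref{eq conformal aug geo0} into \eqref{eq conformal aug geo a}, obtaining
\begin{equation*}
e^{2\varrho}\Rv_{\alpha\beta}
-6e^{2\varrho}\del_{\alpha}\varrho\del_{\beta}\varrho
+\tfrac{1}{2}\gv_{\alpha\beta}W_2(\varrho)
-\tfrac{1}{2}e^{2\varrho}\big(\del_{\alpha}\Gammav_{\beta}+\del_{\beta}\Gammav_{\alpha}\big)
=8\pi\big(T_{\alpha\beta}-\tfrac{1}{2}g_{\alpha\beta}\tr(T)\big).
\end{equation*}
Then I would apply Lemma~\ref{lem Ricci} in coordinates to the Lorentzian metric $\gv$:
\begin{equation*}
\Rv_{\alpha\beta}
=-\tfrac{1}{2}\gv^{\alpha'\beta'}\del_{\alpha'}\del_{\beta'}\gv_{\alpha\beta}
+\tfrac{1}{2}\big(\del_{\alpha}\Gammav_{\beta}+\del_{\beta}\Gammav_{\alpha}\big)
+\tfrac{1}{2}F_{\alpha\beta}(\gv;\del\gv,\del\gv).
\end{equation*}
Plugging this expansion into the previous identity, the term $+\tfrac{1}{2}e^{2\varrho}(\del_{\alpha}\Gammav_{\beta}+\del_{\beta}\Gammav_{\alpha})$ generated by the Ricci tensor cancels exactly the explicit correction already present; this is precisely the cancellation for which the wave-reduction in \eqref{eq conformal aug geo a} was engineered. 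Multiplying through by $-2e^{-2\varrho}$ and moving the remaining source terms to the right-hand side yields \eqref{eq conformal aug a}.

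For the scalar equation, I would use the coordinate expression for the d'Alembertian on $\gv$ supplied by Lemma~\ref{lem Ricci}, so that the combination $\Box_{\gv}\varrho + \Gammav^{\delta}\del_{\delta}\varrho$ appearing on the left-hand side of \eqref{eq conformal aug geo c} collapses to the reduced d'Alembertian $\gv^{\alpha'\beta'}\del_{\alpha'}\del_{\beta'}\varrho$. This immediately gives \eqref{eq conformal aug c}. The divergence equation \eqref{eq conformal aug b} is nothing but \eqref{eq conformal aug geo b} itself, as that identity is already written in a coordinate-friendly form.

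The only step requiring genuine care is the bookkeeping of conformal factors $e^{2\varrho}$ in the first reduction and the signs in the expansion of $\Box_{\gv}$. Conceptually the entire result is a change of notation: once Lemma~\ref{lem Ricci} is available, everything else is linear algebra in the coefficients, and the cancellation of the $\del\Gammav$ terms is precisely what the wave-reduction correction built into \eqref{eq conformal aug geo0-deux} was introduced to achieve.
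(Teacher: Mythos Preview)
Your proposal is correct and matches the paper's approach exactly: the paper states this lemma as an immediate consequence of Lemma~\ref{lem Ricci} without further proof, and you have simply spelled out the substitution of \eqref{eq conformal aug geo0} into \eqref{eq conformal aug geo a}, the cancellation of the $\del\Gammav$ terms against the wave-reduction correction, and the collapse of $\Box_{\gv}\varrho+\Gammav^{\delta}\del_{\delta}\varrho$ to the reduced d'Alembertian. This is precisely the intended derivation.
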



\subsection{Preservation of the constraints}
\label{section54}

Our first task is to address the problem of the preservation of the constraints.  

\begin{proposition}\label{prop aug 1}
Let $(M,\gv)$ be a globally hyperbolic Lorentzian manifold endowed with foliation $M = [0,t_{max})$ and with signature $(-,+,+,+)$. Let $T$ be a symmetric two-tensor (representing the matter content) and let $\varrho$ be a scalar field defined in $(-\eps,\eps)\times \mathbb{R}^3$. Furthermore, assume that $(\gv_{\alpha\beta}, \phi, \varrho)$ is a solution to the conformal field equations \eqref{eq conformal aug}. Let $g_{\alpha\beta} := e^{-2\varrho}\gv_{\alpha\beta}$ be the metric conformal to $\gv$. Then, provided the constraint equations \eqref{constraint trans-aug Hamiltonian} and \eqref{constraint trans-aug momentum} together with the constraint equations 
\begin{equation}\label{eq preserve aug 1-1}
\gv^{\alpha\beta}\Gammav_{\alpha\beta}^{\gamma} = 0
\end{equation}
are satisfied on the slice $\{x^0 = 0\}$ (where $R_g$ is the scalar curvature of $g_{\alpha\beta}$ and $\Gammav_{\alpha\beta}^{\gamma}$ are the Christoffel symbols of $\gv$), then \eqref{eq preserve aug 1-1} holds in the whole of $(-\eps,\eps)\times \mathbb{R}^3$. Furthermore, one has 
\begin{equation}\label{eq preserve aug 1-2}
e^{2\varrho} = e^{2\rho} =  \frac{1}{2}\ln(f'(R_g))
\end{equation}
in $(-\eps,\eps)\times \mathbb{R}^3$, so that $(\gv_{\alpha\beta},\phi,\varrho)$ is also a solution to \eqref{main eq trans}.
\end{proposition}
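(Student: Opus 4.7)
The plan is to treat the two conclusions of the proposition as separate steps. First I would establish the preservation of the wave--gauge condition \eqref{eq preserve aug 1-1} along the flow, and then, with the wave gauge in hand, derive the augmentation relation $e^{2\varrho}=f'(R_g)$ by combining the $g$-trace of \eqref{eq conformal aug geo0 a} with the scalar evolution equation \eqref{eq conformal aug geo0 c}. Once both facts are established, the final assertion that $(\gv_{\alpha\beta},\phi,\varrho)$ solves \eqref{main eq trans} is immediate, since \eqref{eq conformal aug geo0 a} with $\Gammav^\gamma\equiv 0$ and $\varrho=\tfrac12\ln f'(R_g)$ is exactly the field equation \eqref{main eq trans} written in wave coordinates.

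For the wave--gauge preservation I would mimic the proof of Proposition~\ref{prop presev 1} almost verbatim, with $\gv$, ${N_g^{\ddag}}$ and Lemma~\ref{lem 5-3-1-0} playing the roles of $\gd$, $N_g$ and Lemma~\ref{lem divN conformal}. Taking the $\gv$-divergence of \eqref{eq conformal aug geo0 a}, applying Lemma~\ref{lem preserve-wave-1} to rearrange the gauge term $\tfrac12 e^{2\varrho}\bigl(\del_\alpha\Gammav_\beta+\del_\beta\Gammav_\alpha-\gv_{\alpha\beta}\gv^{\alpha'\beta'}\del_{\alpha'}\Gammav_{\beta'}\bigr)$, and combining with \eqref{eq conformal aug geo b} and Lemma~\ref{lem 5-3-1-0}, I obtain (as in Lemma~\ref{lem pre 1}) a linear second-order hyperbolic system
\[
\gv^{\alpha'\beta'}\del_{\alpha'}\del_{\beta'}\Gammav_\beta=F_\beta(\varrho,\gv;\Gammav_\gamma,\del\Gammav_\gamma),
\]
with source linear and bilinear in the unknowns. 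The constraints \eqref{constraint trans-aug Hamiltonian}--\eqref{constraint trans-aug momentum} together with $\Gammav^\gamma|_{M_0}=0$ then feed into the manipulation of Lemma~\ref{lem pre 2} to produce $\del_0\Gammav_\beta|_{M_0}=0$, so the Cauchy data for $\Gammav_\beta$ vanish identically. Global hyperbolicity of $\gv$ and uniqueness for the linear hyperbolic system give $\Gammav^\gamma\equiv 0$ throughout the slab.

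The more delicate task is to recover the constitutive relation $e^{2\varrho}=f'(R_g)$. Once $\Gammav^\gamma\equiv 0$, the field equation reduces to the geometric identity ${N_g^{\ddag}}_{\alpha\beta}-\tfrac12 g_{\alpha\beta}\tr(N_g^\ddag)=8\pi(T_{\alpha\beta}-\tfrac12 g_{\alpha\beta}\tr T)$. Taking the $g$-trace of the defining identity \eqref{eq conformal aug geo0} and using $g^{\alpha\beta}\Rv_{\alpha\beta}=e^{2\varrho}\Rv$ and $g^{\alpha\beta}\gv_{\alpha\beta}=4e^{2\varrho}$, I get
\[
-8\pi\,\tr(T)=e^{4\varrho}\Rv-6e^{4\varrho}\gv^{\alpha\beta}\del_\alpha\varrho\del_\beta\varrho+2e^{2\varrho}W_2(\varrho).
\]
On the other hand, tracing the conformal Ricci identity \eqref{conformal Ricci curvature} and using \eqref{conformal d'Alembertian} with $w=\varrho$ yields the standard formula
\[
R_g=e^{2\varrho}\bigl(\Rv+6\Box_{\gv}\varrho-6\,\gv^{\alpha\beta}\del_\alpha\varrho\del_\beta\varrho\bigr).
\]
Substituting the first identity to eliminate $\Rv$ and then invoking \eqref{eq conformal aug geo0 c} to eliminate $\Box_{\gv}\varrho$, the $\tr(T)$ contributions cancel and one copy of $W_2(\varrho)$ is consumed, leaving
\[
R_g=-W_2(\varrho)+\frac{f(\rhoR(\varrho))}{e^{2\varrho}}.
\]
Unwinding the very definition $W_2(\varrho)=e^{-2\varrho}f(\rhoR(\varrho))-\rhoR(\varrho)$, this collapses to $R_g=\rhoR(\varrho)$, and inversion of $\rhoR$ gives $\varrho=\tfrac12\ln f'(R_g)$, as claimed.

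The main obstacle is exactly this algebraic collapse: a priori $\varrho$ is an independent unknown and there is no reason to expect the two evolution equations \eqref{eq conformal aug geo0 a} and \eqref{eq conformal aug geo0 c} to conspire to re-impose the constitutive relation $e^{2\varrho}=f'(R_g)$. The mechanism is an exact cancellation between the $W_2(\varrho)$-term arising from the $g$-trace of ${N_g^{\ddag}}$ and the $f(\rhoR(\varrho))$-term built into the right-hand side of the $\varrho$-evolution; it is this cancellation that motivates, and indeed dictates, the precise form of the source chosen in \eqref{eq conformal aug geo0 c}, and that is what makes the augmented formulation consistent with the original field equations.
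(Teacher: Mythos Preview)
Your proposal is correct and follows essentially the same two-step strategy as the paper: first show $\Gammav_\beta\equiv 0$ via a homogeneous linear hyperbolic system with vanishing Cauchy data, then recover $R_g=\rhoR(\varrho)$ by an algebraic comparison of traces. Your Part~2 is the same computation as the paper's Lemma~\ref{lem 5-3-2} and the final comparison in the proof, just organized differently (you eliminate $\Rv$ and $\Box_{\gv}\varrho$ directly rather than first packaging the trace identity as a second wave equation for $\varrho$).

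One small slip: for the system \eqref{eq conformal aug} the scalar equation is \eqref{eq conformal aug geo c}, i.e.\ $\Box_{\gv}\varrho+\Gammav^\delta\del_\delta\varrho=\cdots$, not \eqref{eq conformal aug geo0 c}. Consequently the relevant divergence identity is Lemma~\ref{lem 5-3-1}, not Lemma~\ref{lem 5-3-1-0}; the difference is the extra term $6e^{2\varrho}\del_\beta\varrho\,\Gammav^\delta\del_\delta\varrho$, which is linear in $\Gammav$ and simply gets absorbed into your source $F_\beta$. This does not affect the structure or validity of your argument, and once $\Gammav\equiv 0$ is established, your Part~2 references to \eqref{eq conformal aug geo0 c} and \eqref{eq conformal aug geo0 a} become legitimate since the two formulations coincide.
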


The rest of this section is devoted to the proof of this proposition. Recall that, throughout, we assume that the $\gv_{\alpha\beta}$, $\varrho$ and $T_{\alpha\beta}$ are sufficiently regular, so that all relevant derivatives are continuous at least. For the proof, we need some preliminary material and, first of all, we compute the divergence of ${N_g^{\ddag}}$. 

\begin{lemma}\label{lem 5-3-1}
When \eqref{eq conformal aug geo c} holds, the following identity holds:
\begin{equation}\label{eq lem 5-3-1}
\nablav^{\alpha}{N_g^{\ddag}}_{\alpha\beta}
= e^{-2\varrho}\big(2g^{\alpha\alpha'}\del_{\alpha'}\varrho {N_g^{\ddag}}_{\alpha\beta} - \tr(8\pi T)\del_{\beta}\varrho\big)
+ 6e^{2\varrho}\del_{\beta}\varrho\,\Gammav^{\delta}\del_\delta\varrho.
\end{equation}
\end{lemma}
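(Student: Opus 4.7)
The plan is to mimic closely the proof of Lemma~\ref{lem 5-3-1-0} and only modify the last step, where the trace equation is used. Indeed, the statement differs from Lemma~\ref{lem 5-3-1-0} only through the assumption: there we used the geometric trace equation \eqref{eq conformal aug geo0 c} ($\Box_{\gv}\varrho=\dots$), while here we use its wave-gauge reformulation \eqref{eq conformal aug geo c} ($\Box_{\gv}\varrho+\Gammav^{\delta}\del_{\delta}\varrho=\dots$), and the extra term on the right-hand side arises precisely from this difference.

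More concretely, the algebraic identity derived in Lemma~\ref{lem 5-3-1-0} does not rely on \eqref{eq conformal aug geo0 c}: a direct computation using $\nablav^{\alpha}G^{\ddag}_{\alpha\beta}=0$, the identity $\nablav^{\alpha}(\del_{\alpha}\varrho\del_{\beta}\varrho-\tfrac12\gv_{\alpha\beta}|\nabla\varrho|_{\gv}^2)=\del_{\beta}\varrho\,\Box_{\gv}\varrho$, and the explicit form of $W_{2}(\varrho)$ (together with $e^{2s}=f'(\rhoR(s))$) yields the purely formal identity
\begin{equation*}
\nablav^{\alpha}{N_g^{\ddag}}_{\alpha\beta}
= 2\,{N_g^{\ddag}}_{\alpha\beta}\,\nablav^{\alpha}\varrho
  - 6e^{2\varrho}\,\del_{\beta}\varrho
      \Big(\Box_{\gv}\varrho - \tfrac{f(\rhoR(\varrho))}{6e^{4\varrho}} - \tfrac{W_{2}(\varrho)}{6e^{2\varrho}}\Big).
\end{equation*}
This is what I would re-derive (or simply quote from the proof of Lemma~\ref{lem 5-3-1-0}) as the starting point.

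Next, I would substitute \eqref{eq conformal aug geo c} into the parenthesis. Rewriting \eqref{eq conformal aug geo c} as
\begin{equation*}
\Box_{\gv}\varrho - \tfrac{f(\rhoR(\varrho))}{6e^{4\varrho}} - \tfrac{W_{2}(\varrho)}{6e^{2\varrho}}
= \tfrac{4\pi\,\tr(T)}{3e^{4\varrho}} - \Gammav^{\delta}\del_{\delta}\varrho,
\end{equation*}
the right-hand side of the identity above becomes
\begin{equation*}
2\,{N_g^{\ddag}}_{\alpha\beta}\,\nablav^{\alpha}\varrho
- \tfrac{8\pi\,\tr(T)\,\del_{\beta}\varrho}{e^{2\varrho}}
+ 6e^{2\varrho}\,\del_{\beta}\varrho\,\Gammav^{\delta}\del_{\delta}\varrho.
\end{equation*}
Finally, using $\nablav^{\alpha}\varrho=\gv^{\alpha\alpha'}\del_{\alpha'}\varrho=e^{-2\varrho}g^{\alpha\alpha'}\del_{\alpha'}\varrho$ to rewrite the first term yields exactly \eqref{eq lem 5-3-1}.

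The proof is essentially a bookkeeping exercise: no new geometric ingredient is required beyond what was already used in Lemma~\ref{lem 5-3-1-0}. The only point requiring a bit of care is the correct sign of the $\Gammav^{\delta}\del_{\delta}\varrho$ correction coming from replacing $\Box_{\gv}\varrho$ by the reduced operator $\gv^{\alpha'\beta'}\del_{\alpha'}\del_{\beta'}\varrho$ via the relation $\Box_{\gv}u=\gv^{\alpha'\beta'}\del_{\alpha'}\del_{\beta'}u-\Gammav^{\delta}\del_{\delta}u$ recalled in Lemma~\ref{lem Ricci}, which is precisely what produces the extra term $6e^{2\varrho}\del_{\beta}\varrho\,\Gammav^{\delta}\del_{\delta}\varrho$ in the statement.
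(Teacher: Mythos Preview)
Your proposal is correct and follows essentially the same approach as the paper: the paper's proof of Lemma~\ref{lem 5-3-1} literally re-derives the algebraic identity you quote from Lemma~\ref{lem 5-3-1-0} (the divergence expressed in terms of $\Box_{\gv}\varrho - \tfrac{f(\rhoR(\varrho))}{6e^{4\varrho}} - \tfrac{W_2(\varrho)}{6e^{2\varrho}}$), and then substitutes \eqref{eq conformal aug geo c} in the rearranged form $\Box_{\gv}\varrho = -\Gammav^{\delta}\del_\delta\varrho + \tfrac{f(\rhoR(\varrho))}{6e^{4\varrho}} + \tfrac{W_2(\varrho)}{6e^{2\varrho}} + \tfrac{4\pi \tr(T)}{3e^{4\varrho}}$, exactly as you do. Your observation that one can simply quote the intermediate identity from Lemma~\ref{lem 5-3-1-0} rather than repeating the computation is a minor streamlining over the paper.
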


\begin{proof} First of all, in view of \eqref{eq conformal aug geo0-deux}, we have 
$$
{N_g^{\ddag}}_{\alpha\beta}
= e^{2\varrho}G^{\ddag}_{\alpha\beta}
- 6e^{2\varrho}\big(\del_{\alpha}\varrho\del_{\beta}\varrho - \frac{1}{2}g_{\alpha\beta}|\nabla\varrho|^2_{\gv}\big)
- \frac{1}{2}\gv_{\alpha\beta}W_2(\varrho),
$$
where  
$\Gv_{\alpha\beta} := \Rv_{\alpha\beta} - \frac{1}{2}\gv_{\alpha\beta}\Rv$
is the Einstein curvature of $\gv$. We have the identities
$$
\aligned
\nablav^{\alpha\beta}G_{\alpha\beta}^{(\varrho)} &= 0,
\\
\nablav^{\alpha\beta}\big(\del_{\alpha}\varrho\del_{\beta}\varrho - \frac{1}{2}g_{\alpha\beta}|\nabla\varrho|^2_{\gv}\big)
&=\del_{\beta}\varrho\,\Box_{\gv}\varrho.
\endaligned
$$
As before, by introducing the function 
\be
s(r) = \frac{1}{2}\ln f'(r), \qquad r \in \RR
\ee
with (local) inverse denoted by $\theta$ and defined near $0$ at least, we can write 
$$
\aligned
\nablav^{\alpha\beta}\big(\gv_{\alpha\beta}W_2(\varrho)\big)
&= \del_{\beta}\big(W_2(\varrho)\big)
 = \del_{\beta}\big(e^{-2\varrho}f(\rhoR(\varrho)) - \rhoR(\varrho)\big)
\\
&= -2e^{-2\varrho}\del_{\beta}\varrho f(\rhoR(\varrho)) + e^{-2\varrho}f'(\rhoR(\varrho))\rhoR'(\varrho)\del_{\beta}\varrho - \rhoR'(\varrho)\del_{\beta}\varrho
\\
&=-2e^{-2\varrho}\del_{\beta}\varrho f(\rhoR(\varrho)) + e^{-2\varrho}e^{2\varrho}\rhoR'(\varrho)\del_{\beta}\varrho - \rhoR'(\varrho)\del_{\beta}\varrho
\\
&= -2e^{-2\varrho}\del_{\beta}\varrho f(\rhoR(\varrho)).
\endaligned
$$

We are now in a position to compute the divergence  
$$
\aligned
\nablav^{\alpha}{N_g^{\ddag}}_{\alpha\beta}
&= 2e^{2\varrho} G^{\varrho}_{\alpha\beta} \nablav^{\alpha}\varrho
  -12e^{2\varrho} \big(\del_{\alpha}\varrho\del_{\beta}\varrho - \frac{1}{2}g_{\alpha\beta}|\nabla\varrho|^2_{\gv}\big)\nablav^{\alpha}\varrho
\\
 & \quad -6e^{2\varrho}\del_{\beta}\varrho\Box_{\gv}\varrho + e^{2\varrho}\del_{\beta}f(\rhoR(\varrho))
\\
&= 2e^{2\varrho} G^{\varrho}_{\alpha\beta} \nablav^{\alpha}\varrho
  -12e^{2\varrho} \big(\del_{\alpha}\varrho\del_{\beta}\varrho - \frac{1}{2}g_{\alpha\beta}|\nabla\varrho|^2_{\gv}\big)\nablav^{\alpha}\varrho
  - \gv_{\alpha\beta}W_2(\varrho)\nablav^{\alpha}\varrho
\\
 & \quad -6e^{2\varrho}\del_{\beta}\varrho\Box_{\gv}\varrho + e^{-2\varrho}\del_{\beta}f(\rhoR(\varrho))
 + \gv_{\alpha\beta}W_2(\varrho)\nablav^{\alpha}\varrho
\\
&= 2{N_g^{\ddag}}_{\alpha\beta}  \nablav^{\alpha}\varrho
- 6e^{2\varrho}\del_{\beta}\varrho
\bigg(\Box_{\gv} - \frac{f(\rhoR(\varrho))}{6e^{4\varrho}} - \frac{W_2(\varrho)}{6e^{2\varrho}}\bigg).
\endaligned
$$
In view of \eqref{eq conformal aug geo c}, we find 
$$
\Box_{\gv}\varrho
= - \Gammav^{\delta}\del_\delta\varrho + \frac{f(\rhoR(\varrho))}{6e^{4\varrho}} + \frac{W_2(\varrho)}{6e^{2\varrho}} + \frac{4\pi \tr(T)}{3e^{4\varrho}}
$$
and this yields the desired conclusion. 
\end{proof}

\begin{lemma}\label{lem 5-3-2}
The equation 
\eqref{eq conformal aug a} leads also to the wave equation for $\varrho$ 
\begin{equation}\label{eq lem 5-3-2}
\Box_{\gv}\varrho = \frac{R_g}{6e^{2\varrho}} + \frac{W_2(\varrho)}{3e^{2\varrho}} + \frac{4\pi \tr(T)}{3e^{4\varrho}} - \frac{g^{\alpha\beta}\del_{\alpha}\Gammav_{\beta}}{6e^{2\varrho}}.
\end{equation}
\end{lemma}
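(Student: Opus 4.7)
The identity \eqref{eq lem 5-3-2} follows from taking a scalar trace of the wave-reduced equation and combining the result with the standard conformal transformation law for the scalar curvature; the computation is essentially an exercise in trace identities, the only subtlety being the careful tracking of the two trace conventions and of the various powers of $e^{2\varrho}$.

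First, I would start from the geometric form \eqref{eq conformal aug geo a}, insert the identity \eqref{eq conformal aug geo0}, and contract with the inverse augmented metric $\gv^{\alpha\beta}=e^{-2\varrho}g^{\alpha\beta}$. Using the elementary bookkeeping $\gv^{\alpha\beta}\gv_{\alpha\beta}=4$, $\gv^{\alpha\beta}g_{\alpha\beta}=4e^{-2\varrho}$, and $\gv^{\alpha\beta}T_{\alpha\beta}=e^{-2\varrho}\tr(T)$, the contracted equation reduces to
\begin{equation*}
e^{2\varrho}\Rv - 6e^{2\varrho}|\nabla\varrho|_{\gv}^{2} + 2W_{2}(\varrho) - e^{2\varrho}\gv^{\alpha\beta}\del_{\alpha}\Gammav_{\beta} = -8\pi e^{-2\varrho}\tr(T).
\end{equation*}

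Next, I would invoke the conformal transformation formula for the scalar curvature. This is obtained by tracing \eqref{conformal Ricci curvature} (with $\rho$ replaced by $\varrho$) with $g^{\alpha\beta}$, which yields $e^{2\varrho}\Rv = R_{g} - 6\Box_{g}\varrho - 6|\nabla\varrho|_{g}^{2}$, and then using the identity $\Box_{\gv}\varrho = e^{-2\varrho}\Box_{g}\varrho + 2|\nabla\varrho|_{\gv}^{2}$ together with $|\nabla\varrho|_{g}^{2}=e^{2\varrho}|\nabla\varrho|_{\gv}^{2}$ to trade $\Box_{g}\varrho$ and $|\nabla\varrho|_{g}^{2}$ for their $\gv$-analogues. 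The result is $e^{2\varrho}\Rv = R_{g} - 6e^{2\varrho}\Box_{\gv}\varrho + 6e^{2\varrho}|\nabla\varrho|_{\gv}^{2}$.

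Finally, substituting this expression for $e^{2\varrho}\Rv$ into the traced equation of the first step makes the $6e^{2\varrho}|\nabla\varrho|_{\gv}^{2}$ contributions cancel. Solving the resulting scalar relation for $\Box_{\gv}\varrho$ and rewriting $e^{2\varrho}\gv^{\alpha\beta}\del_{\alpha}\Gammav_{\beta}=g^{\alpha\beta}\del_{\alpha}\Gammav_{\beta}$ then yields exactly \eqref{eq lem 5-3-2}. The whole derivation is mechanical; the only genuine obstacle is keeping careful track of which metric is used for each trace (in particular the convention $\tr(T)=g^{\alpha\beta}T_{\alpha\beta}$ used in the statement) and of the powers of $e^{\pm 2\varrho}$ injected at each step by the conformal factor.
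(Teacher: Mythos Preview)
Your proof is correct and follows essentially the same approach as the paper: trace the wave-reduced field equation and combine with the conformal transformation law for the curvature, then convert $\Box_g$ to $\Box_{\gv}$ via \eqref{conformal d'Alembertian}. The only cosmetic difference is the order of operations: the paper first substitutes the conformal Ricci identity \eqref{conformal Ricci curvature} at the tensorial level and then traces with $g^{\alpha\beta}$, whereas you trace first with $\gv^{\alpha\beta}$ and then invoke the scalar-curvature version of the conformal law; the resulting algebra is identical.
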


\begin{proof} We recall that \eqref{eq conformal aug a} can be written as
\begin{equation}\label{eq pr 5-4-1}
\aligned
&
e^{2\varrho}\Rv_{\alpha\beta} - 6e^{2\varrho}\del_{\alpha}\varrho\del_{\beta}\varrho + \frac{\gv_{\alpha\beta}}{2}W_2(\varrho)
-\frac{e^{2\varrho}}{2}\big(\del_{\alpha}\Gammav_{\beta} + \del_{\beta}\Gammav_{\alpha}\big)
\\
&= 8\pi \big(T_{\alpha\beta} - \frac{1}{2}g_{\alpha\beta}\tr(T)\big), 
\endaligned
\end{equation}
and that \eqref{conformal Ricci curvature} implies 
$$
\Rv_{\alpha\beta}
= R_{\alpha\beta} - 2\big(\nabla_{\alpha}\nabla_{\beta}\varrho - \del_{\alpha}\varrho\del_{\beta}\varrho\big)
-\big(\Box_g \varrho + 2g(\nabla\varrho,\nabla\varrho)\big)g_{\alpha\beta}. 
$$
We substitute this relation into \eqref{eq pr 5-4-1} and obtain 
$$
\aligned
&e^{2\varrho}R_{\alpha\beta} - 2e^{2\varrho}\nabla_{\alpha}\nabla_{\beta}\varrho
- 4e^{2\varrho}\del_{\alpha}\varrho\del_{\beta}\varrho - g_{\alpha\beta}e^{2\varrho}\Box_g\varrho
- 2e^{2\varrho}g_{\alpha\beta}g(\nabla\varrho,\nabla\varrho) + \frac{\gv_{\alpha\beta}}{2}W_2(\varrho)
\\
&= 8\pi \big(T_{\alpha\beta} - \frac{1}{2}\tr(T)g_{\alpha\beta}\big) + \frac{1}{2}e^{2\varrho}\big(\del_{\alpha}\Gammav_\beta + \del_{\beta}\Gammav_{\alpha}\big).
\endaligned
$$
By taking the trace of this equation {\bf with respect to $g$}, we then have 
$$
e^{2\varrho}R_g - 6e^{2\varrho}\Box_g\varrho - 12e^{2\varrho}g(\nabla\varrho,\nabla\varrho) + 2e^{2\varrho}W_2(\varrho)
= -8\pi \tr(T) + e^{2\varrho}g^{\alpha\beta}\del_{\alpha}\Gammav_{\beta}, 
$$
which can also be written as
$$
\Box_g\varrho + 2g(\nabla\varrho,\nabla\varrho)
 = \frac{R_g}{6} + \frac{W_2(\varrho)}{3} + \frac{4\pi tr(T)}{3e^{2\varrho}} - \frac{g^{\alpha\beta}}{6}\del_\alpha\Gammav_{\beta}.
$$
In view of the expression Recall \eqref{conformal d'Alembertian}, the desired result is proven.
\end{proof}
 
In the next lemma, we identify the geometric form of the system \eqref{eq conformal aug}.

\begin{lemma}\label{lem 5-3-3}
Let $(M,\gv)$ be a Lorentzian manifold together wiht a two-tensor $T_{\alpha\beta}$. If, in some local coordinates $\{x^0,x^1,x^2,x^3\}$, $(\gv,T_{\alpha\beta})$ satisfy \eqref{eq conformal aug}, then 
the following equation holds (in the domain of the coordinate system):
\begin{equation}
\aligned
\gv^{\alpha'\beta'}\del_{\alpha'}\del_{\beta'} \Gammav_{\beta} &= F_{\beta}(\varrho,\gv;\Gammav_{\gamma},\del\Gammav_{\gamma}),
\endaligned
\end{equation}
where $F_{\beta}(\varrho,\gv;\cdot,\cdot)$ is a combination of linear and bilinear forms and  $\Gammav^{\gamma}_{\alpha\beta}$ denote the associated Christoffel symbol with
$$
\Gammav_{\beta} = \gv_{\beta\beta'}\Gammav^{\beta'} = \gv_{\beta\beta'}\gv^{\alpha\gamma}\Gammav_{\alpha\gamma}^{\beta'}.
$$
\end{lemma}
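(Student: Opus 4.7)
The plan is to mimic the proof of Lemma~\ref{lem pre 1}, adapted to the augmented setting, with the important modification that the divergence of ${N_g^\ddag}$ now carries the extra contribution $6e^{2\varrho}\del_\beta \varrho\,\Gammav^\delta \del_\delta \varrho$ supplied by Lemma~\ref{lem 5-3-1}---an artifact of the fact that \eqref{eq conformal aug geo c} differs from the exact trace equation by a term proportional to $\Gammav^\delta \del_\delta \varrho$. As a first step I would take the trace of the geometric form \eqref{eq conformal aug geo a} with respect to $g$, which gives
\begin{equation*}
\tr({N_g^\ddag} - 8\pi T) = -e^{2\varrho}\, g^{\alpha\beta}\del_\alpha \Gammav_\beta,
\end{equation*}
and then substitute back into \eqref{eq conformal aug geo a} to obtain the trace-reversed identity
\begin{equation*}
{N_g^\ddag}_{\alpha\beta} - 8\pi T_{\alpha\beta} = \tfrac{1}{2}e^{2\varrho}\bigl( \del_\alpha \Gammav_\beta + \del_\beta \Gammav_\alpha - \gv_{\alpha\beta}\gv^{\alpha'\beta'}\del_{\alpha'}\Gammav_{\beta'} \bigr),
\end{equation*}
where I have used $g_{\alpha\beta}g^{\alpha'\beta'} = \gv_{\alpha\beta}\gv^{\alpha'\beta'}$ (this is the direct analogue of \eqref{eq 4-4-pr2-2}).

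Next I would apply $\nablav^\alpha$ to both sides. On the right-hand side, Lemma~\ref{lem preserve-wave-1} applied with the metric $\gv$, combined with the product rule for the $e^{2\varrho}$ factor, extracts the principal term $\tfrac{1}{2}e^{2\varrho}\gv^{\alpha'\beta'}\del_{\alpha'}\del_{\beta'}\Gammav_\beta$ and leaves only bilinear and linear combinations of $\Gammav_\gamma$ and $\del\Gammav_\gamma$ with smooth coefficients depending on $\varrho$ and $\gv$. On the left-hand side, Lemma~\ref{lem 5-3-1} and the matter evolution law \eqref{eq conformal aug geo b} together yield
\begin{equation*}
\nablav^\alpha({N_g^\ddag}_{\alpha\beta} - 8\pi T_{\alpha\beta}) = 2e^{-2\varrho}g^{\alpha\alpha'}\del_{\alpha'}\varrho\,({N_g^\ddag}_{\alpha\beta} - 8\pi T_{\alpha\beta}) + 6e^{2\varrho}\del_\beta \varrho\, \Gammav^\delta \del_\delta \varrho,
\end{equation*}
and I would then use the trace-reversed identity from the previous paragraph to rewrite the factor ${N_g^\ddag} - 8\pi T$ here as a linear form in $\del\Gammav_\gamma$.

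Equating the two computations and isolating the principal term $\gv^{\alpha'\beta'}\del_{\alpha'}\del_{\beta'}\Gammav_\beta$ delivers the claim. The main point to check, and the only substantive difference compared with Lemma~\ref{lem pre 1}, is that the extra contribution $6e^{2\varrho}\del_\beta\varrho\,\Gammav^\delta \del_\delta\varrho$ indeed fits the admissible structure for $F_\beta(\varrho,\gv;\Gammav_\gamma,\del\Gammav_\gamma)$: this is transparent once one writes $\Gammav^\delta = \gv^{\delta\beta'}\Gammav_{\beta'}$, which makes the term linear in $\Gammav_{\beta'}$ with smooth coefficient depending only on $\gv$ and $\del\varrho$. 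Everything else amounts to routine bookkeeping of lower-order terms.
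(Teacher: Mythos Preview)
Your proposal is correct and follows essentially the same route as the paper's proof: trace \eqref{eq conformal aug geo a} to obtain $\tr(N_g^{\ddag}-8\pi T)=-e^{2\varrho}g^{\alpha\beta}\del_\alpha\Gammav_\beta$, pass to the trace-reversed identity \eqref{eq pr 5-5-1}, apply $\nablav^\alpha$ and use Lemma~\ref{lem preserve-wave-1} on one side and Lemma~\ref{lem 5-3-1} together with \eqref{eq conformal aug geo b} on the other. Your observation that the $\tr(T)$ contributions from Lemma~\ref{lem 5-3-1} and from \eqref{eq conformal aug geo b} cancel, leaving only the $({N_g^{\ddag}}_{\alpha\beta}-8\pi T_{\alpha\beta})$ and $\Gammav^\delta\del_\delta\varrho$ pieces, is correct; the paper retains an extra $-e^{-2\varrho}\tr(N_g^{\ddag}-8\pi T)\del_\beta\varrho$ in its intermediate formula \eqref{eq pr 5-5-3}, but this is itself linear in $\del\Gammav$ by \eqref{eq pr 5-5-0} and does not affect the conclusion.
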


\begin{proof} By taking the trace of \eqref{eq conformal aug geo a} we have 
\begin{equation}\label{eq pr 5-5-0}
e^{2\varrho}g^{\alpha'\beta'}\del_{\alpha'}\Gammav_{\beta'} = - \tr(N_g^{\ddag} - 8\pi T).
\end{equation}
Combining this results with \eqref{eq conformal aug geo a}, we obtain 
\begin{equation}\label{eq pr 5-5-1}
\frac{e^{2\varrho}}{2}
\big(\del_{\alpha}\Gammav_{\beta} + \del_{\beta}\Gammav_{\alpha} - g_{\alpha\beta}g^{\alpha'\beta'}\del_{\alpha'}\Gammav_{\beta'}\big)
 = {N_g^{\ddag}}_{\alpha\beta} - 8\pi T_{\alpha\beta}.
\end{equation}
By computing the divergence of this equation (for the metric $\gv$) and evaluating the left-hand-side from \eqref{eq 2-4-0}, we get 
\begin{equation}\label{eq pr 5-5-2}
\aligned
&\frac{1}{2}\nablav^{\alpha}\Big(e^{2\varrho}\big(\del_{\alpha}\Gammav_{\beta} + \del_{\beta}\Gammav_{\alpha} - g_{\alpha\beta}g^{\alpha'\beta'}\del_{\alpha'}\Gammav_{\beta'}\big)\Big)
\\
&= \frac{1}{2}e^{2\varrho}\Big(
\gv^{\alpha'\beta'}\del_{\alpha'}\del_{\beta'}\Gammav_{\beta}
- \gv^{\alpha\alpha'}\Gammav_{\alpha\alpha'}^{\gamma}\del_{\gamma}\Gammav_{\beta}
- \gv^{\alpha\alpha'}\Gammav^{\delta}_{\alpha'\beta}\del_\delta\Gammav_{\alpha}
- \del_{\beta}\gv^{\alpha'\beta'}\big(\del_{\alpha'}\Gammav_{\beta'}\big)
\Big)
\\
&+e^{2\varrho}
\Big(e^{2\varrho}\big(\del_{\alpha}\Gammav_{\beta} + \del_{\beta}\Gammav_{\alpha} - g_{\alpha\beta}g^{\alpha'\beta'}\del_{\alpha'}\Gammav_{\beta'}\big)\Big)\nablav^{\alpha}\varrho
\\
&=:  \frac{1}{2}e^{2\varrho}g^{\alpha'\beta'}\del_{\alpha'}\del_{\beta'}\Gammav_{\beta}
+ \tilde{F}_{\beta}(\varrho,\gv;\Gammav_{\gamma},\del \Gammav_{\gamma}). 
\endaligned
\end{equation}
Here, $\tilde{F}$ is a combination of  linear and bilinear forms on $\Gammav_{\gamma}$ and $\del \Gammav_{\gamma}$ depending on $\varrho,\gv$ and their derivatives.

On the other hand, the right-hand-side is computed from \eqref{eq conformal aug geo b} and \eqref{eq lem 5-3-1}, as follows:
$$
\aligned
&\nablav^{\alpha}\big({N_g^{\ddag}}_{\alpha\beta} - 8\pi T_{\alpha\beta}\big)
\\
&= e^{-2\varrho}\big(2g^{\alpha\alpha'}\del_{\alpha'}\varrho {N_g^{\ddag}}_{\alpha\beta} - \tr(8\pi T)\del_{\beta}\varrho\big)
 + 6e^{2\varrho}\del_{\beta}\varrho\,\Gammav^{\delta}\del_\delta\varrho
\\
& \hskip3.cm 
- 8\pi e^{-2\varrho} \big(2g^{\delta\gamma}\del_{\gamma}\varrho T_{\gamma\beta} - \tr(T)\del_{\beta}\rho\big)
\\
&= e^{-2\varrho}\Big(2g^{\gamma\alpha}\del_{\gamma}\varrho\big({N_g^{\ddag}}_{\alpha\beta}-8\pi T_{\alpha\beta}\big) - \tr(N_g^{\ddag} - 8\pi T)\del_{\beta}\varrho\Big) + 6\del_{\beta}\rho\Gammav{\delta}\del_\delta\rho.
\endaligned
$$
Then, in view of \eqref{eq pr 5-5-0} and \eqref{eq pr 5-5-1}, we obtain 
\begin{equation}\label{eq pr 5-5-3}
\aligned
\nablav^{\alpha}\big({N_g^{\ddag}}_{\alpha\beta} - 8\pi T_{\alpha\beta}\big)
= g^{\gamma\alpha}\del_{\gamma}\varrho\big(\del_{\alpha}\Gammav_{\beta} + \del_{\beta}\Gammav_{\alpha}\big) + 6\del_{\beta}\rho\Gammav^{\delta}\del_\delta\rho, 
\endaligned
\end{equation}
which is a linear form in the functions $\Gammav_{\alpha}$ and $\del_{\gamma}\Gammav_{\alpha}$.
Finally, we arrive at the desired conclusion by combining \eqref{eq pr 5-5-2} and \eqref{eq pr 5-5-3} with \eqref{eq pr 5-5-1} together. 
\end{proof}

\begin{lemma}\label{lem 5-3-4}
Provided the equations \eqref{eq conformal aug} hold on the initial slice $\{t=0\}$ and the condition 
$$
\Gammav_{\lambda} = 0
$$
and the constraint equations \eqref{constraint trans-aug Hamiltonian} and \eqref{constraint trans-aug momentum} also hold on the initial slice $\{t=0\}$, then it follows that 
$$
\del_t\Gammav_{\lambda} = 0 
$$
on this initial slice.
\end{lemma}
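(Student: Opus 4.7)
The plan is to imitate, line for line, the argument used to prove Lemma~\ref{lem pre 2} in Section~4, substituting $N_g^\ddag$, $\gv$, $\Gammav_\lambda$, and the Cauchy adapted frame of $\gv$ for the tensor $N_g$, the Einstein metric $\gd$, the Christoffel defect $\Gammad_\lambda$, and the Cauchy adapted frame of $\gd$. The augmented field equation \eqref{eq conformal aug geo a} has exactly the same algebraic structure as \eqref{eq conformal wave-reduced geo a}, and the geometric form of the Hamiltonian and momentum constraints of Definition~\ref{def36aug} reads (as noted at the end of Section~\ref{section52}) $\underline{N_g^\ddag}_{00} = 8\pi \underline{T}_{00}$ and $\underline{N_g^\ddag}_{0b} = 8\pi \underline{T}_{0b}$ for $1\le b\le 3$, so the same bookkeeping will apply.

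The first step is to rewrite \eqref{eq conformal aug geo a} in the Cauchy adapted frame $(e_0,e_1,e_2,e_3)$ associated with $\gv$, via the transition matrix $\Phi_\alpha^\beta$ introduced in the proof of Lemma~\ref{lem pre 2}, to obtain the identity
$$
\underline{N_g^\ddag}_{\alpha\beta} - \tfrac{1}{2}\tr(N_g^\ddag)\, \underline{\gv}_{\alpha\beta} - \tfrac{1}{2} e^{2\varrho}\bigl(\Phi_\beta^{\beta'} \langle e_\alpha, \Gammav_{\beta'}\rangle + \Phi_\alpha^{\alpha'}\langle e_\beta, \Gammav_{\alpha'}\rangle\bigr) = 8\pi\bigl(\underline{T}_{\alpha\beta} - \tfrac{1}{2}\tr(T)\, \underline{\gv}_{\alpha\beta}\bigr).
$$
Setting $(\alpha,\beta)=(0,b)$ with $1\le b \le 3$, and using $\underline{\gv}_{0b} = 0$ together with the momentum constraint, leaves the identity
$$
\Phi_b^{\beta'}\langle e_0, \Gammav_{\beta'}\rangle + \Phi_0^{\alpha'}\langle e_b, \Gammav_{\alpha'}\rangle = 0
$$
on the initial slice. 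There, the hypothesis $\Gammav_\lambda \equiv 0$ gives $\langle e_b, \Gammav_\alpha\rangle = \del_b \Gammav_\alpha = 0$ for every spatial $b$, and since $\Phi_b^{\beta'} = \delta_b^{\beta'}$ one concludes $\langle e_0, \Gammav_b\rangle = 0$, whence $\del_t \Gammav_b = 0$ for $1\le b \le 3$.

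Next, I would show that $\tr(N_g^\ddag - 8\pi T) = 0$ on the initial slice. Plugging $(\alpha,\beta)=(i,j)$ with $1\le i,j\le 3$ into the displayed identity and exploiting that all spatial derivatives of $\Gammav_\alpha$ vanish on the slice produces $\underline{N_g^\ddag}_{ij} - 8\pi \underline{T}_{ij} = \tfrac{1}{2}\tr(N_g^\ddag - 8\pi T)\, \underline{\gv}_{ij}$. Contracting with $\underline{\gv}^{\alpha\beta}$, and using the Hamiltonian constraint and $\underline{\gv}^{0i} = 0$ to kill the $00$-contribution to the trace, yields $\tr(N_g^\ddag - 8\pi T) = \tfrac{3}{2}\tr(N_g^\ddag - 8\pi T)$, and hence $\tr(N_g^\ddag - 8\pi T) = 0$. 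Finally, setting $(\alpha,\beta)=(0,0)$ in the displayed identity, invoking the Hamiltonian constraint and the vanishing trace, and isolating the angle bracket using $\langle e_0, \Gammav_b\rangle = 0$, delivers $\langle e_0, \Gammav_0\rangle = 0$, and therefore $\del_t\Gammav_0 = 0$.

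The only real obstacle is bookkeeping: one must check that all the Cauchy adapted frame manipulations of Section~\ref{sec original} transcribe verbatim in the $\gv$-frame. This is routine because \eqref{eq conformal aug geo a} differs from \eqref{eq conformal wave-reduced geo a} only by the substitution of symbols; the augmentation relation $e^{2\varrho} = f'(R_g)$ plays no role at this step, which is precisely what makes the augmented formulation amenable to analysis prior to recovering the original conformal system.
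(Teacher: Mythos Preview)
Your proposal is correct and takes essentially the same approach as the paper: the paper's own proof simply observes that the constraint equations are equivalent to $\underline{N_g^\ddag}_{00}=8\pi\underline{T}_{00}$ and $\underline{N_g^\ddag}_{0j}=8\pi\underline{T}_{0j}$ and then refers back to the computation of Lemma~\ref{lem pre 2}, which is exactly what you reproduce in detail. Your remark that the augmentation relation $e^{2\varrho}=f'(R_g)$ plays no role here is precisely the point.
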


\begin{proof}
We observe that the constraint equations \eqref{constraint trans-aug Hamiltonian} and \eqref{constraint trans-aug momentum} are equivalent to
${N_g^{\ddag}}_{0j} = 8\pi T_{00}$
and
${N_g^{\ddag}}_{00} = 8\pi T_{00}$, respectively. 
Consequently, the proof of the lemma follows from the same calculation which was performes in the proof of Lemma~\ref{lem pre 2}. Therefore, we omit the details.
\end{proof}

\begin{proof}[Proof of Proposition \ref{prop aug 1}]
By Lemmas~\ref{lem 5-3-3} and \ref{lem 5-3-4} and by the global hyperbolicity of the metric $\gv$, we see that in the spacetime $M$,
\begin{equation}\label{eq pr 5-3-4-1}
\Gammav_{\beta} = 0.
\end{equation}

Recalling Lemma~\ref{lem 5-3-2} and combining \eqref{eq lem 5-3-2} with \eqref{eq pr 5-3-4-1}, we obtain 
\begin{equation}\label{eq pr 5-3-4-2}
\Box_{\gv}\varrho = \frac{R_g}{6e^{2\varrho}} + \frac{W_2(\varrho)}{3e^{2\varrho}} + \frac{4\pi \tr(T)}{3e^{4\varrho}}.
\end{equation}
In a similar way, \eqref{eq conformal aug geo c} becomes
\begin{equation}
\Box_{\gv} \varrho  = \frac{W_2(\varrho)}{6e^{2\varrho}} + \frac{f(\rhoR(\varrho))}{6e^{4\varrho}}
+ \frac{4\pi\tr(T)}{3e^{4\varrho}}.
\end{equation}
By comparing these two equations, we thus get
$$
R_g + W_2(\varrho) - f(\rhoR(\varrho)) = 0,
$$
which (by the definition of $W_2$) leads us to 
$\rhoR(\varrho) = R_g.$
It remains to recall the definition of $\rhoR$, $e^{2\varrho} = R_g$.
\end{proof}


\subsection{The local existence theory}

The standard theory of local-in-time existence for the initial value problem associated with hyperbolic problems  can now be applied to an arbitrary initial set in these sense of Definition \ref{def36}. 
Let us sketch the strategy of proof. For simplicity in this discussion and without genuine restriction, we can consider that the initial data set and, therefore, the solutions are close to data in Minkowski space.

First of all, we need to construct the (local-in-time) solution of the problem \eqref{eq conformal aug sec6} whose the initial data set must be expressed in wave coordinates, say $(\Mb, \gvzero_{\alpha\beta}, \gvone_{\alpha\beta},\varrho_0,\varrho_1, \phi_0,\phi_1)$. This PDE initial data set is determined from the geometric initial data set denoted by $(\Mb, \gb^{\ddag}, \Kv, \varrhozero, \varrhoone, \phizerodd, \phionedd)$. Without restriction, smallness (and regularity) assumptions are here made on the initial data set.

Second, we need to check that this (local-in-time) solution leads to a globally hyperbolic spacetime. Then, according to Proposition \ref{prop aug 1}, we can conclude that this solution preserves the constraints \eqref{eq preserve aug 1-1} and \eqref{eq preserve aug 1-2} and, consequently, is also a solution to the field equations \eqref{main eq trans} with $\gv = \gd$ and $\varrho = \rho = \frac{1}{2}\ln f'(R_g)$. We thus conclude that this solution solves the initial value problem in Definition \ref{def36}.

Third, we need to observe that the solution $(g,\phi)$ constructed from $g_{\alpha\beta}:=e^{-2\rho}\gd_{\alpha\beta}$ is a solution to the original initial value problem, stated in Definition \ref{def:formu} with the corresponding initial data set determined by  $(\Mb, \gb^{\ddag}, \Kv, \varrhozero, \varrhoone, \phizerodd, \phionedd)$ (via the conformal transformation).

We omit the details and refer to Choquet-Bruhat \cite{CB} for the existence and uniqueness statements in classical gravity, which based on our reformulation and discussion above can be extactly restated for the field equations of modified gravity. For the rest of this work, our objective is to revisit such a theory and, while re-proving this existence result, to establish that {\sl modified gravity developments} remain close to 
{\sl classical Einstein developments}. 


\newpage 

\section{Local existence theory. Formulation and main statement} 

\subsection{Construction of the PDE initial data set}

Our objective is thus to establish an existence theory for the Cauchy problem associated with the modified gravity field equations \eqref{eq conformal aug}, when the initial data are assumed to be asymptotic flat. For the sake of simplicity and without genuine loss of generality as far as our method of proof is concerned, we focus our presentation on quadratic functions $f(r) = r + \frac{\coeff}{2}r^2$. It is straighforward to modified our argument to cover more general functions $f$. 

We need first to introduce several notations, before we can state one of our main results in Theorem \ref{prop 7-10-1} below. Recall that the matter model we are considering is the real massless scalar field with Jordan coupling and that, in agreement with Section \ref{sec div conformal} (see~\eqref{eq conformal wave phi}),
the system \eqref{eq conformal aug} under consideration reads 
\begin{subequations}\label{eq conformal aug sec6}
\begin{equation}\label{eq conformal aug sec6 a}
\aligned
&\gv^{\alpha'\beta'}\del_{\alpha'}\del_{\beta'}\gv_{\alpha\beta}
\\
& =  F_{\alpha\beta}(\gv;\del \gv,\del \gv)  - 12\del_{\alpha}\varrho\del_{\beta}\varrho
+ \frac{W_2(\varrho)}{e^{2\varrho}}\gv_{\alpha\beta}  -16\pi\del_{\alpha}\phi\del_{\beta}\phi,
\endaligned
\end{equation}
\begin{equation}\label{eq conformal aug sec6 b}
\gv^{\alpha'\beta'}\del_{\alpha'}\del_{\beta'}\phi = 2\gv^{\alpha'\beta'}\del_{\alpha'}\phi \del_{\beta'}\varrho,
\end{equation}
\begin{equation}\label{eq conformal aug sec6 c}
\gv^{\alpha'\beta'}\del_{\alpha'}\del_{\beta'}\varrho
= \frac{W_2(\varrho)}{6e^{2\varrho}}
+ \frac{f(\rhoR(\varrho))}{6e^{4\varrho}}
- \frac{4\pi}{3e^{2\varrho}}\gv^{\alpha'\beta'}\del_{\alpha'}\phi\del_{\beta'}\phi.
\end{equation}
\end{subequations}
Clearly, this is quasi-linear system of wave equations in diagonalized form and, in order to formulate a well-posed problem, the initial data set should include  the functions
\be
\aligned
&\gv_{\alpha\beta}(0,x) = \gvzero_{\alpha\beta},\qquad \qquad 
&&\del_t \gv_{\alpha\beta}(0,x) = \gvone_{\alpha\beta},
\\
&\varrho(0,x) = \varrho_0,\quad 
&&\del_t\varrho(0,x) = \varrho_1,
\\
&\phi(0,x) = \phi_0 = \phizerodd,\quad 
&&\del_t\phi(0,x) = \phi_1 = \phionedd.
\endaligned
\ee
There are $24$ functions to be prescribed, but the geometrical initial data set 
$$
(\Mb, \gb^{\ddag}, \Kv, \varrhozero, \varrhoone, \phizerodd, \phionedd)
$$
contains $16$ functions only. In fact, in order to construct a solution of \eqref{eq conformal aug sec6} that also satisfies \eqref{main eq trans}, we see that, by Proposition \ref{prop aug 1}, the conditions \eqref{eq preserve aug 1-1}, \eqref{constraint trans-aug Hamiltonian} and \eqref{constraint trans-aug momentum} must hold on the initial hypersurface. These conditions form a nonlinear PDE's system of eight equations, and it is expected that the $8$ remaining initial data components could be determined from these $8$ constraint equations. This task, however, is not a trivial one and further investigation would be needed to fully clarify the set of initial data. 

From now on, we assume that this system of $8$ constraint equations together with the $16$ functions prescribed by the geometric initial data set uniquely determine our PDE's initial data set.
Throughout, we denote by $(\Mb, \gvzero_{\alpha\beta}, \gvone_{\alpha\beta},\varrho_0,\varrho_1, \phi_0,\phi_1)$ the PDE initial data determined by $(\Mb, \gb^{\ddag}, \Kv, \varrhozero, \varrhoone, \phizerodd, \phionedd)$ and the constraint equations \eqref{constraint trans-aug Hamiltonian}, \eqref{constraint trans-aug momentum} and \eqref{eq preserve aug 1-1}.


\subsection{Simplifying the field equations of $f(R)$ gravity}

For definiteness, we focus on the role of second-order terms in $f$ and assume that
\begin{equation}\label{eq 7-1-1}
f(r) := r + \frac{\coeff}{2}r^2, \qquad r \in \RR 
\end{equation}
for some $\coeff\geq 0$. We recall $e^{2\rho} = f'(R_g)$, so that 
$$
e^{2\rho} = 1 + \coeff R_g, \qquad R_g = \frac{e^{2\rho}-1}{\coeff},
$$
and
$$
\aligned
W_2(s) 
& = \frac{f(r) - rf'(r)}{f'(r)} 
\\
& = \frac{r + \frac{\coeff}{2}r^2 - r(1+\coeff r)}{1+\coeff r}
= -\frac{\coeff r^2}{2(1+\coeff r)},
\endaligned
$$
with 
$e^{2s} = f'(r) = 1+\coeff r$
so that 
\be
W_2(s) = - \frac{(e^{2s}-1)^2}{2\coeff e^{2s}}.
\ee

The spacetime $(M, \gv)$ under consideration is endowed with a foliation
$$
M = [0,T]\times M_t, \quad t\in [0,T]
$$
and we assume that, for each $t$, $M_t$ is diffeomorphic to $\RR^3$. The spacetime metric is supposed to be sufficiently close to Minkowski metric and, especially, is asymptotically flat, so that the following notation is convenient: 
\be
h_{\alpha\beta} := \gv_{\alpha\beta} - m_{\alpha\beta}
\ee
and we thus seek for unknowns triples $(h_{\alpha\beta},\varrho,\phi)$. Sometimes, we will write $(h_{\alpha\beta}, \varrho, \phi) = (h_{\alpha\beta}^{\coeff}, \varrho^{\coeff}, \phi^{\coeff})$ in orde to emphasize that the solutions of \eqref{eq conformal aug} depend on the coefficient $\coeff$.

With these notation the system \eqref{eq conformal aug} take in the alternative form:
\begin{subequations}\label{eq conformal aug sec7}
\begin{equation}\label{eq conformal aug sec7 a}
\aligned
&(m^{\alpha'\beta'} + H^{\alpha'\beta'}(h^{\coeff}))\del_{\alpha'}\del_{\beta'}h_{\alpha\beta}^\coeff
\\
&=  F_{\alpha\beta}(h^\coeff;\del h^\coeff,\del h^\coeff) - 16\pi\del_{\alpha}\phi^\coeff\del_{\beta}\phi^\coeff
 - 12\del_{\alpha}\varrho^\coeff\del_{\beta}\varrho^{\coeff}
\\
& \quad
-\frac{\big(e^{2\varrho^\coeff}-1\big)^2}{2\coeff e^{4\varrho^\coeff}}\big(m_{\alpha\beta}+h^{\coeff}_{\alpha\beta}\big),
\endaligned
\end{equation}
\begin{equation}\label{eq conformal aug sec7 b}
(m^{\alpha'\beta'} + H^{\alpha'\beta'}(h^{\coeff}))\del_{\alpha'}\del_{\beta'}\phi^\coeff
= 2(m^{\alpha'\beta'} + H^{\alpha'\beta'}(h^{\coeff}))\del_{\alpha'}\phi^{\coeff} \del_{\beta'}\varrho^\coeff,
\end{equation}
\begin{equation}\label{eq conformal aug sec7 c}
\aligned
& (m^{\alpha'\beta'} + H^{\alpha'\beta'}(h^{\coeff}))\del_{\alpha'}\del_{\beta'}\varrho^\coeff -\frac{e^{2\varrho^\coeff}-1}{6\coeff e^{4\varrho^\coeff}}
\\
& =
- \frac{4\pi}{3e^{2\varrho^\coeff}}(m^{\alpha'\beta'} + H^{\alpha'\beta'}(h^{\coeff}))\del_{\alpha'}\phi^\coeff\del_{\beta'}\phi^\coeff,
\endaligned
\end{equation}
\end{subequations}
where, from $h^{\coeff} = (h^{\coeff}_{\alpha\beta})$, we have determined 
\begin{equation}\label{eq def1 7-2}
\big( m_{\alpha\beta} + H^{\alpha\beta}(h^{\coeff}) \big)
\, \text{ as the inverse of }  \big(m_{\alpha\beta} + h^{\coeff}_{\alpha\beta} \big).
\end{equation}

With this notation, the PDE initial data denoted by 
$(\Mb, h_0, h_1, \varrho_0,\varrho_1,\phizerodd, \phionedd)$ 
is thus rewritten in terms of $h$, with
$$
h_{\alpha\beta}|_{t=0} = {h_0}_{\alpha\beta} := {\gv_0}_{\alpha\beta} - m_{\alpha\beta}, \quad
\qquad
\del_th_{\alpha\beta} = {h_1}_{\alpha\beta} := {\gv_1}_{\alpha\beta}.
$$
The system under consideration is composed of $12$ quasi-linear wave equations: $11$ of them (those on $h_{\alpha\beta}^{\coeff}$ and $\phi^\coeff$) are quasi-linear wave equations, while the equation on $\varrho^\coeff$ is a quasi-linear Klein-Gordon equation with defocusing potential. 

\begin{remark} The coefficients $H^{\alpha\beta}$ are clearly smooth functions of $h$, in a sufficiently small neighborhood of the origin, at least. Hence, we can find a positive constant $\eps_0$ such that if
$ 
|h| \leq \eps_0,
$
then for any integer $k$, the $k$-th order derivatives of $H^{\alpha\beta}$ with respect to $h$, say 
$D^kH^{\alpha\beta}$, are well-defined and  
$$
\sup_{|h| \leq \eps'}|D^k H^{\alpha\beta}(h)\big|\leq C(k,\eps_0)
$$
Standard linear algebra arguments show that when  
$|h| \leq \eps_0$ (with $\eps_0$ sufficiently small), then
\begin{equation}\label{eq 1 7-2}
H^{\alpha\beta}(h) = - h_{\alpha\beta} + Q^{\alpha\beta} \big( h,h)(1+R^{\alpha\beta}(h) \big), 
\end{equation}
where $Q^{\alpha\beta}$ is a quadratic form in its argument and
$
|R^{\alpha\beta}(h)|\leq C(\eps_0) \, |h|.
$
\end{remark}


\subsection{Vector fields and notation}

We introduce the three generators of the spatial rotations
\be
\Omega_{ij} := x_i\del_j - x_j\del_i = x^i\del_j - x^j\del_i,
\ee
which are known to commute with the wave operator, as well as with the Klein-Gordon operator. 
Here, the coordinate indices are raised and lowered with the Minkowski metric. We also write 
$$
\Omega_1 := \Omega_{12},\quad \Omega_2 := \Omega_{23}, \quad \Omega_3 := \Omega_{13}.
$$
Note that
\be
[\Omega_a,\Box] = 0,\quad [\Omega_a,\Box + 1] = 0. 
\ee

The following notation about multi-indices will be used.  Given a finite set 
$\mathscr{I} = \{\alpha_1,\alpha_2,\dots, \alpha_n\}$, 
we call $n$ the order of $\mathscr{I}$, denoted by $|\mathscr{I}| = n$. 
We introduce an ordering relation denoted by $\preceq$ on $\mathscr{I}$, defined by
$$
\alpha_i\preceq\alpha_j \quad \text{ if and only if}\quad i\leq j.
$$
The pair $(\mathscr{I},\preceq)$ is called a abstract multi-index. Obviously, a subset of $\mathscr{I}$ can also be regarded as a multi-index endowed with the same (restricted) order. The order $\preceq$ describes the location of each differential operator in a product.

A partition of an abstract index $\mathscr{I}$ is defined as follows. Let $\mathscr{J}_k$ be family of subsets of $\mathscr{I}$, with
$$
\bigcup_{k=1}^m \mathscr{J}_k = \mathscr{I}, 
\qquad
\quad \mathscr{J}_k \cap \mathscr{J}_{k'} = \varnothing.
$$
Then, we say that $\{\mathscr{J}_k\}$ is an $m$-partition of $\mathscr{I}$ and we write $\sum_{k=1}^m \mathscr{J}_k = \mathscr{I}$. We observe that each $\mathscr{J}_k$ can be regarded  as a multi-index and $\sum_{k=1}^m|\mathscr{J}_k| = |\mathscr{I}|$.

If for all $k=1,2,\dots m$, we have $\mathscr{J}_k\neq \varnothing$, then we say $\{\mathscr{J}_k\}$ is a proper $m$-partition of $\mathscr{I}$ and we write $\sum_{k=1}^m \mathscr{J}_k = \mathscr{I}, |\mathscr{J}_k|\geq1$.

Now, let us return to the case of multi-indices in the context of differential operators. Let $Z$ be a family of order one differential operator, say $Z = \{Z_1,Z_2,\dots Z_p\}$. A $n$-multi-index on the family $Z$ is a map
$$
\aligned
I : \mathscr{I} &\to  \{1,2,\dots n\},
\\
\alpha_i &\mapsto  I(\alpha_i)\in   \{1,2,\dots n\},
\endaligned
$$
and we write
$Z^I := Z_{I(\alpha_1)}\circ Z_{I(\alpha_2)}\circ\dots \circ Z_{I(\alpha_n)}$. With some abuse of notation, we often write $I = (\alpha_n,\alpha_{n-1},\dots,\alpha_1)$ with $\alpha_i\in \{1,2,\dots n\}$, where each $\alpha_i$ is replaced by $I(\alpha_i)$.

An $m$-partition of index $I$ is defined as follows. 
Let $\{\mathscr{J}_k\}$ be a (proper) $m$-partition of an abstract index $\mathscr{I}$. Then we restrict the map $I$ on each ordered set $\mathscr{J}_k$, and this yields us an $m$-multi-index, denoted by $\{J_k\} = I(\{\mathscr{J}_k\})$. Then, we call $J_k$ a (proper) $m$-partition of $I$ and we denote it by $I = \sum_{k=1}^m J_k, (|J_k|\geq 1)$.

We often consider the set $\mathscr{P}(\mathscr{I},m)$ composed by all possible $m$-partitions of $\mathscr{I}$. Then, each partition in $\mathscr{P}(\mathscr{I},m)$ can be associated with a partition of $I$. We observe that if $\prod_{k=1}^m u_k$ is a product of $m$ functions, then
$$
Z^I\bigg(\prod_{k=1}^mu_k\bigg) = \sum_{\{\mathscr{J}_k\}\in\mathscr{P}(\mathscr{I},m)}\prod_{i=1}^mZ^{J_k}u_k
$$
with $\{J_k\} = I(\{\mathscr{J}_k\})$. However, for the sake of simplicity in the notation and whenever there is no risk of confusion, we will often write 
$$
Z^I\bigg(\prod_{k=1}^mh_k\bigg) = \sum_{\sum_{k=1}^mJ_k = I}\prod_{k=1}^m Z^{J_k}u_k. 
$$ 


\subsection{Functional spaces of interest}
\label{subsec 7-fs}

We recall that in the classical case,  an initial data set of a Cauchy problem of the general relativity satisfies the constraint equations \eqref{GC-3}. This system leads to a nonlinear elliptic system and by the positive mass theorem, the non-trivial part of the metric $\bar{g}-\bar{m}$ decreases exactly like $r^{-1}$ at spatial infinity. If $\bar{g}-\bar{m}$ decreases faster than $r^{-1}$, then $\bar{g} = \bar{m}$.

In our case, the constraint equations \eqref{constraint trans-aug Hamiltonian} and \eqref{constraint trans-aug momentum} are much more complicated than the classical system, the parallel result to the positive mass theorem is not known. But as we focus our attention on the convergence result, where we take an ``Einsteinian initial data set" having $\varrho_0 = \varrho_1 \equiv 0$, which satisfies the classical constraint equations. So we have to handle the quasi-linear wave system with initial data decreasing as $r^{-1}$ at spatial infinity. These functions as (in general) not in $L^2(\RR^3)$. So we need to construct our local solution with the aid of the following functional space.

We need to introduce some norms about $C_c^{\infty}(\RR^3)$ functions, that is, smooth functions with compact support. A first norm to be introduced is
$$
\|\varrho\|_{X^d}
:= \sum_{|I_1|+|I_2|\leq d}\|\del_x^{I_1}\Omega^{I_2}\varrho\|_{L^2(\RR^3)}.
$$
The $L^2$ norm is with respect to the standard volume form, i.e. the Lebesgue measure.

The second norm defined for the $C_c^{\infty}(\RR^3)$ functions is
$$
\|\varrho\|_{X_P^{d+1}} := \sum_{|I_1|+|I_2|\leq d}\|\del_x\del_x^{I_1}\Omega^{I_2} \varrho\|_{L^2(\RR^3)},
$$
where $\del_x \phi$ refers to the spatial gradient of $\phi$.

The first functional space to be used in our analysis, the space $X_P^{d+1}$, is defined as the completion of the norm $\|\cdot\|_{X_P^{d+1}}$ on the $C_c^{\infty}$ functions.
We denote by 
\be
\|\cdot\|_{X_R^{d+1}} :=\|\cdot\|_{X^d} + \|\cdot\|_{X_P^{d+1}}.
\ee
 The second functional space, $X_R^{d+1}$, to be used in our analysis, is defined as the completion of the norm $\|\cdot\|_{X_R^{d+1}}$ on the $C_c^{\infty}$ functions.

W also define the weighted sup-norm
$$
\|f\|_{\mathcal{E}_{-1}} : = \sup_{r\geq 0}\{(1+r)|f|\}
$$
and 
$$
\|f\|_{X_H^{d+1}}
:= \|f\|_{\mathcal{E}_{-1}} + \|f\|_{X_P^{d+1}},
$$
so that the functional spaces $\mathcal{E}_{-1}$
and $X_H^{d+1}$ are obtained by completion from the $C_c(\RR^3)$ functions with respect of the norm under consideration.
 
The relations among these functional spaces are as follows: 
\be
X^{d+1} \subset X_R^{d+1}\subset X^d,\quad X_R^d \subset X_P^d,\quad X_H^d \subset X_P^d.
\ee
In the next section, when $d\geq 2$, by \eqref{eq 1 lem 7-6-1}, we will also see that 
\be
X^d\subset \mathcal{E}_{-1},\quad  X_R^d\subset X_H^d\subset X_P^d.
\ee

Finally, we define the norm of a triple $S_0 := ({h_0}_{\alpha\beta}, \varrho_0, \phi_0)$:
$$
\|S\|_{X_0^{d+1}} := \sum_{\alpha,\beta}\|{h_0}_{\alpha\beta}\|_{X_H^{d+1}} + \|\varrho_0\|_{X_R^{d+1}} + \|\phi_0\|_{X_P^{d+1}}
$$
and we set 
\be
X_0^{d+1} = X_H^{d+1}\times X_R^{d+1}\times X_P^{d+1}.
\ee
Similarly, for triples $S_1 = ({h_1}_{\alpha\beta},\phi_1,\varrho_1)$, we define
$$
\|S_1\|_{X_1^d} := \sum_{\alpha,\beta}\|{h_1}_{\alpha\beta}\|_{X^d} + \|\varrho_1\|_{X^d} + \|\phi_1\|_{X^d}
$$

We are now ready to discuss the notion of asymptotically flat PDE initial data.

\begin{definition}\label{def 7-4-1}
A {\bf PDE initial data set in wave coordinates}  for the initial value problem stated 
in Definition \ref{def36}, say $(\Mb, {h_0}_{\alpha\beta}, {h_1}_{\alpha\beta},\varrho_0,\varrho_1, \phi_0,\phi_1)$ , 
 is said to be {\bf asymptotically flat} if 
\begin{itemize}
\item the initial slice $M_0$ is diffeomorphic to $\mathbb{R}^3$ and in its canonical coordinate system, the initial data set satisfies the wave constraint equations.

\item in the canonical coordinate system $(x^1,x^2,x^3)$,
$$
\|{h_0}_{\alpha\beta}\|_{\mathcal{E}_{-1}}\leq \eps_0,
$$
where $\eps_0$ represents the ADM mass.

\item the $\mathcal{E}_{-1}(\RR^3)$ norm of $\varrho_0,\varrho_1,\phi_1$ are finite.
\end{itemize}
\end{definition}

Hence, the initial data behaves like $r^{-1}$ at spatial infinity. A geometrical initial data $(\Mb, \gb^{\ddag}, \Kv, \varrhozero, \varrhoone, \phizerodd, \phionedd)$ is called asymptotically flat, if it gives a asymptotically flat PDE initial data.

We recall that the components of the solution to the system \eqref{eq conformal aug sec6} are functions defined in $\RR^4$ with three spatial variables and one time variable. To study these functions, say $u=u(t, \cdot)$, we need to the following norms and corresponding spaces: 
$$
\aligned
\|u(t,\cdot)\|_{E^d} :&= \sum_{|I_1|+|I_2|\leq d}\|\del^{I_1}\Omega^{I_2}u(t,\cdot)\|_{L^2(\RR^3)}
\\
&= \sum_{k+|J_1|+|J_2|\leq d}\|\del_t^k\del_x^{J_1}\Omega^{J_2}u(t,\cdot)\| = \sum_{k+l\leq d}\|\del_t^k u(t,\cdot)\|_{X^l},
\endaligned
$$
$$
\aligned
\|u(t,\cdot)\|_{E_P^{d+1}}
:&= \sum_{\alpha\atop |I_1|+|I_2|\leq d}\|\del_{\alpha}\del^{I_1}\Omega_{I_2}u(t,\cdot)\|_{L^2(\RR^3)}
\\
&=\sum_{\alpha}\sum_{k+|J_1|+|J_2|\leq d}\|\del_{\alpha}\del_t^k\del_x^{J_1}\Omega^{J_2}u(t,\cdot)\|_{L^2(\RR^3)}
=\sum_{k+l\leq d}\|\del_t^ku(t,\cdot)\|_{X_P^l},
\endaligned
$$
and thn 
$$
\|u(t,\cdot)\|_{E_R^{d+1}} := \|u(t,\cdot)\|_{E^d} + \|u(t,\cdot)\|_{E_P^{d+1}},
$$
$$
\|u(t,\cdot)\|_{E_H^{d+1}} := \|u(t,\cdot)\|_{\mathcal{E}_{-1}} + \|u(t,\cdot)\|_{E_P^{d+1}}.
$$
We also define several norms on the time interval $[0,T]$ for $p\in[1,\infty]$:
$$
\aligned
\|u\|_{L^p([0,T];E^d)} :&=\big\| \|u(t,\cdot)\|_{E^d}\big\|_{L^p([0,T])},
\quad
\|u\|_{L^([0,T];E_P^d)} :=\big\| \|u(t,\cdot)\|_{E_P^d}\big\|_{L^p([0,T])},
\\
\|u\|_{L^p([0,T];E_R^d)} :&=\big\| \|u(t,\cdot)\|_{E_R^d}\big\|_{L^p([0,T])},
\quad
\|u\|_{L^p([0,T];E_H^d)} :=\big\| \|u(t,\cdot)\|_{E_H^d}\big\|_{L^p([0,T])}.
\endaligned
$$
Finally, for $p<\infty$, we have the functional spaces of interest 
$$
L^p([0,T];E^d), \qquad
L^p([0,T];E_P^d),
\qquad 
L^p([0,T];E_R^d), 
$$
and $L^p([0,T];E_H^d)$, defined by completion with respect to the corresponding norms on the $C_c^\infty(\RR^4)$ functions . This leads us to the definition of
 $C([0,T],;E^d)$, $C([0,T];E_P^d)$ and $C([0,T];E_H^d)$ by completion of $C_c^{\infty}$ functions with respect to the corresponding norms.


\subsection{Existence theorem for the nonlinear field equations}

We  introduce the following norm for the initial data $S_0:=(h_0,h_1,\phi_0,\phi_1,\varrho_0,\varrho_1)$:
$$
\aligned
\|S_0\|_{X_{\coeff}^{d+1}} := \max\{&\|{h_0}_{\alpha\beta}\|_{X_H^{d+1}},\,\|{h_1}_{\alpha\beta}\|_{X^d},\,\coeff^{-(1/2)[d/2]+1/4}\|\phi_0\|_{X_P^{d+1}},
\\
& \coeff^{-(1/2)[d/2]+1/4}\|\phi_1\|_{X^d},\,\coeff^{-[d/2]-1/2}\|\varrho_0\|_{X_P^{d+1}},\, \coeff^{-[d/2]-1/2}\|\varrho_1\|_{X^d}\}.
\endaligned
$$ 
We are ready to state one of our main results. 

\begin{theorem}[Local existence with uniform bounds]\label{prop 7-10-1} 
Given any integer $d\geq 4$, assume that
$$
({h_0}_{\alpha\beta},{h_1}_{\alpha\beta}) \in X_H^{d+1}\times X^d, \quad
(\phi_0,\phi_1)\in X_P^{d+1}\times X^d,\quad
(\varrho_0,\varrho_1)\in X_R^{d+1}\times X^d
$$
and suppose that 
$S_0:=(h_0,h_1,\phi_0,\phi_1,\varrho_0,\varrho_1)$ satisfies 
\be
\|S_0\|_{X_{\coeff}^{d+1}}\leq \eps\leq \eps_0
\ee
for some sufficiently small $\eps_0>0$.
Then, there exist constants $A, T^*>0$ which are {\bf independent} of $\coeff$ and such that, within the time interval $[0,T^*]$, the Cauchy problem \eqref{eq conformal aug sec7} (with $0< \coeff \leq 1$)  has a unique solution $(h_{\alpha\beta}^\coeff,\phi^\coeff,\varrho^\coeff)$ in the following functional space (with $0\leq k\leq d-1$): 
$$
\aligned
&\del_t^k h^{\coeff}_{\alpha\beta} \in C([0,T],X_H^{d-k})\cap C^1([0,T],X_H^{d-k-1}),\quad
\\
&
\del_t^k \phi^{\coeff} \in C([0,T],X_P^{d-k})\cap C^1([0,T],X_P^{d-k-1}),
\\
&\del_t^k \varrho^\coeff \in  C([0,T],X_R^{d-k})\cap C^1([0,T],X_R^{d-k-1}),
\endaligned
$$
Furthermore, the following estimates hold with constant {\bf independent} of $\coeff$:
\begin{subequations}\label{eq 1 prop 7-10-1}
\begin{equation}\label{eq 1 prop 7-10-1 a}
\|h_{\alpha\beta}^{\coeff}(t,\cdot)\|_{E_H^d} \leq A\eps, 
\end{equation}
\begin{equation}\label{eq 1 prop 7-10-1 b}
\|\phi^\coeff\|_{E_P^d} \leq A\eps, 
\end{equation}
\begin{equation}\label{eq 1 prop 7-10-1 c}
\|\varrho_n^\coeff\|_{E_P^d} + \coeff^{-1/2}\|\varrho_n^\coeff\|_{E^{d-1}}\leq A\eps. 
\end{equation} 
\end{subequations} 
\end{theorem}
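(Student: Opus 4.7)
The plan is to construct the solution by Picard iteration tailored to the wave--Klein-Gordon structure of \eqref{eq conformal aug sec7}, then to verify that the $\coeff$-scalings built into the norm $\|\cdot\|_{X_\coeff^{d+1}}$ produce energy estimates with constants uniform in $\coeff\in(0,1]$. The key mechanism is that the defocusing mass $-\varrho/(3\coeff)$ on the left-hand side of the $\varrho$-equation yields, via the mass-weighted Klein-Gordon energy
\[
\mathcal{E}_d^{\coeff}(\varrho)(t) := \sum_{|I_1|+|I_2|\leq d-1}\Bigl(\|\del\del^{I_1}\Omega^{I_2}\varrho(t,\cdot)\|_{L^2}^2 + \tfrac{1}{3\coeff}\|\del^{I_1}\Omega^{I_2}\varrho(t,\cdot)\|_{L^2}^2\Bigr),
\]
simultaneous control of $\|\varrho\|_{E_P^d}$ and $\coeff^{-1/2}\|\varrho\|_{E^{d-1}}$, i.e.\ exactly the left-hand side of \eqref{eq 1 prop 7-10-1 c}.

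Starting from $(h^{(0)},\phi^{(0)},\varrho^{(0)})\equiv 0$, at step $n+1$ I would solve the linear problem obtained from \eqref{eq conformal aug sec7} by freezing the quasi-linear coefficients $m^{\alpha'\beta'}+H^{\alpha'\beta'}(h^{(n)})$ and evaluating the source nonlinearities at the previous iterate, with the prescribed PDE initial data $S_0$. A crucial preliminary rearrangement of \eqref{eq conformal aug sec7 c} uses the Taylor expansion $(e^{2\varrho}-1)/(6\coeff e^{4\varrho}) = \varrho/(3\coeff) + O(\varrho^2/\coeff)$ to move the explicit mass $-\varrho^{(n+1)}/(3\coeff)$ onto the left-hand side, producing a genuine defocusing Klein-Gordon operator with mass $\coeff^{-1/2}$ and quadratic source. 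Commuting with the admissible vector fields $\{\del_\alpha,\Omega_a\}$ up to order $d$ (which is legitimate since $[\Omega_a,\Box]=[\Omega_a,\coeff^{-1}]=0$ on the flat background, the commutators with the quasi-linear piece $H(h)\del^2$ producing controlled lower-order terms for small $h$), the standard $L^2$-energy estimates yield
\[
\|h^{(n+1)}(t,\cdot)\|_{E_H^d} + \|\phi^{(n+1)}(t,\cdot)\|_{E_P^d} + \sqrt{\mathcal{E}_d^{\coeff}(\varrho^{(n+1)})(t)} \lesssim \|S_0\|_{X_0^{d+1}} + \int_0^t \mathcal{N}[h^{(n)},\phi^{(n)},\varrho^{(n)}](s)\,ds,
\]
where $\mathcal{N}$ gathers the appropriate $E^{d-1}$-norms of the three sources. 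The $\mathcal{E}_{-1}$ component of $E_H^d$ is propagated by the Sobolev embedding $X^d\subset\mathcal{E}_{-1}$ (valid for $d\geq 2$), supplemented by a finite-speed-of-propagation argument for the $r^{-1}$ tail of the asymptotically flat data.

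Preservation of the ball of radius $A\eps$ under the iteration reduces to checking that each nonlinear source respects the target scaling uniformly in $\coeff$. The quadratic $h$-sources $F_{\alpha\beta}(\gv;\del\gv,\del\gv)$, $-12\del_\alpha\varrho\del_\beta\varrho$, $-16\pi\del_\alpha\phi\del_\beta\phi$ and the potential $-(e^{2\varrho}-1)^2\gv_{\alpha\beta}/(2\coeff e^{4\varrho})$ are all of size $O(A^2\eps^2)$ after Leibniz expansion; in particular the troublesome $1/\coeff$ in the potential is defeated by $\|\varrho\|_{L^\infty}\|\varrho\|_{L^2}\lesssim(\coeff^{1/2}A\eps)^2=\coeff A^2\eps^2$, since each low-order $L^2$ factor of $\varrho$ gains a $\coeff^{1/2}$ from the Klein-Gordon mass. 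The matter source $-(4\pi/3e^{2\varrho})\gv^{\alpha\beta}\del_\alpha\phi\del_\beta\phi$ in the $\varrho$-equation is bounded by $\|\del\phi\|_{L^\infty}\|\del\phi\|_{L^2}\lesssim A^2\eps^2$, which after integration over $[0,T^*]$ fits within the Klein-Gordon budget with $T^*$ uniform in $\coeff$. The coupling $2\gv^{\alpha\beta}\del_\alpha\phi\del_\beta\varrho$ in the $\phi$-equation contributes $O(A^2\eps^2)$ via $\|\del\varrho\|_{L^\infty}\|\del\phi\|_{L^2}$. A contraction estimate in the analogous norm of one lower order $d-1$ then closes the fixed-point argument, and the higher-order bounds pass to the limit by weak-$\ast$ compactness and lower semicontinuity.

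The main obstacle is the Leibniz bookkeeping for the potential $W_2(\varrho)/(6\coeff e^{2\varrho})$: at each order up to $d-1$, the Leibniz expansion produces products of derivatives of $\varrho$ with an explicit $1/\coeff$, and one must check that every sub-term carries enough $\coeff^{1/2}$-gain from the mass-weighted energy to defeat this singularity. This succeeds uniformly in $\coeff$ precisely because the Klein-Gordon mass $\coeff^{-1/2}$ delivers one $\coeff^{1/2}$-factor per $L^2$-norm of $\varrho$ (but not of $\del\varrho$), so the initial smallness $\|\varrho_0\|_{X_R^{d+1}}\lesssim\coeff^{[d/2]+1/2}\eps$ translates exactly into $\sqrt{\mathcal{E}_d^{\coeff}(\varrho)(0)}\lesssim\coeff^{[d/2]}\eps\leq\eps$, and because the pointwise smallness $\|\varrho\|_{L^\infty}\lesssim\coeff^{1/2}A\eps\ll 1$ keeps all analytic functions $e^{\pm 2\varrho}$, $W_2(\varrho)/\varrho^2$ and their higher derivatives uniformly bounded in $\coeff$.
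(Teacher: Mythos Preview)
Your overall architecture matches the paper's proof in Section~8: Picard iteration with the quasi-linear coefficients frozen at the previous step, mass-weighted Klein-Gordon energy $E_{g,\coeff^{-1/2}}^{d}$ for $\varrho$ providing simultaneous control of $\|\varrho\|_{E_P^{d+1}}$ and $\coeff^{-1/2}\|\varrho\|_{E^d}$, nonlinear source estimates exploiting $\coeff^{-1/2}\|\varrho\|_{E^d}\leq A\eps$ to defeat the $1/\coeff$ in the potentials $V_h,V_\rho$, and contraction at one lower order. Your $\coeff$-bookkeeping in the last paragraph is essentially right.

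The genuine gap is your propagation of the $\mathcal{E}_{-1}$ component of $\|h\|_{E_H^{d+1}}$. The embedding $X^d\subset\mathcal{E}_{-1}$ (Lemma~\ref{lem 7-6-1}) requires $h\in L^2(\RR^3)$, which is false for asymptotically flat data with $h_0\sim r^{-1}$; that $r^{-1}$ tail is precisely what is \emph{not} in $L^2$ and is the reason the paper introduces the space $E_H$ separately from $E_R$ (see the discussion opening \S\ref{subsec 7-fs}). Finite speed of propagation does not supply the missing decay either: the tail is nonzero everywhere and persists for all time. The paper closes this by an explicit Kirchhoff/Duhamel estimate (Lemma~\ref{lem 7-7-1}): rewriting the equation as $\Box h_{\alpha\beta} = -H^{\alpha'\beta'}\del_{\alpha'}\del_{\beta'}h_{\alpha\beta} - F_H$ and integrating the spherical-means representation over the backward light cone yields
\[
\|h(t,\cdot)\|_{\mathcal{E}_{-1}} \leq Ct\int_0^t \|F_H + H\del^2 h\|_{\mathcal{E}_{-1}}(s)\,ds + C(1+t)\bigl(\|h_0\|_{\mathcal{E}_{-1}}+\|\nabla h(0,\cdot)\|_{\mathcal{E}_{-1}}\bigr),
\]
after which the \emph{source} $F_H+H\del^2 h$, being at least quadratic, genuinely lies in $X^2$ so that $\|\cdot\|_{\mathcal{E}_{-1}}\lesssim\|\cdot\|_{X^2}$ may be applied to it (this is how Lemmas~\ref{lem 7-7-3} and~\ref{lem 7-10-6} proceed). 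This Kirchhoff step is the missing ingredient; without it the $E_H$-bound on $h$ in your induction does not close, and neither does the $\mathcal{E}_{-1}$ part of the contraction estimate (Lemma~\ref{lem 7-12-6}).
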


Equipped with the above theorem, we are thus able to build the local solution of the original Cauchy problem stated in Definition \ref{Cauchy conformal aug}.

\begin{theorem}[Existence of modified gravity developments] Consider an initial data set $(\Mb, \gb^{\ddag}, \Kv, \varrhozero, \varrhoone, \phizerodd, \phionedd)$ for the Cauchy problem in Definition \ref{def36aug} and assume that its associated PDE initial data $S_0 = (h_0,h_1,\phi_0,\phi_1,\varrho_0,\varrho_1)$ is asymptotically flat and satisfies the conditions in Theorem \ref{prop 7-10-1}. Let $ (h_{\alpha\beta}^\coeff,\phi^\coeff,\varrho^\coeff)$ be the corresponding solution of \eqref{eq conformal aug} associated with $S_0$. Then the spacetime $([0,T]\times\RR^3,\gv)$ is a modified gravity development of the initial data  $(\Mb, \gb^{\ddag}, \Kv, \varrhozero, \varrhoone, \phizerodd, \phionedd)$.
\end{theorem}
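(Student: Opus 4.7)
The plan is to combine the local existence result of Theorem~\ref{prop 7-10-1} with the preservation result of Proposition~\ref{prop aug 1}, and then verify that the initial data are recovered in the sense of Definition~\ref{Cauchy conformal aug}. Since Theorem~\ref{prop 7-10-1} provides $(h_{\alpha\beta}^\coeff, \phi^\coeff, \varrho^\coeff)$ satisfying the augmented conformal system \eqref{eq conformal aug sec7} on $[0,T^*]\times\RR^3$, the first task is to argue that, thanks to the way the PDE initial data set $S_0$ was generated, the wave-gauge constraint $\gv^{\alpha\beta}\Gammav_{\alpha\beta}^{\gamma}=0$ as well as the Hamiltonian and momentum constraints \eqref{constraint trans-aug Hamiltonian}--\eqref{constraint trans-aug momentum} hold on the slice $\{t=0\}$. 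This is precisely what the construction of $S_0$ from $(\Mb,\gb^{\ddag},\Kv,\varrhozero,\varrhoone,\phizerodd,\phionedd)$ via the eight constraint equations (discussed after Definition~\ref{def 7-4-1}) enforces.

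Once this is verified, Proposition~\ref{prop aug 1} applies verbatim and yields, throughout $[0,T^*]\times\RR^3$, both the preservation $\Gammav_\lambda\equiv 0$ and the augmentation relation $e^{2\varrho}=f'(R_g)$, where $g=e^{-2\varrho}\gv$. As emphasized at the end of Section~5.4, this identification forces $\varrho=\rho$ and $\gv=\gd$, and the reduced system \eqref{eq conformal aug} coincides with \eqref{main eq trans} written in wave coordinates; thus the pair $(\gv,\phi)$ satisfies the field equations of modified gravity in the Einstein metric (in the sense of Definition~\ref{def cauchy conformal}, point~2). Since the matter equation \eqref{eq conformal wave phi} is already built into \eqref{eq conformal aug sec6 b}, the scalar field $\phi^\coeff$ provides the required divergence-free stress-energy tensor via the Jordan coupling.

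It then remains to verify the geometric initial conditions in points 3--5 of Definition~\ref{Cauchy conformal aug}. I would take the embedding $i:\Mb\hookrightarrow [0,T^*]\times\RR^3$ as the inclusion of $\{0\}\times\RR^3$; then $i^\ast \gv = \gb^{\ddag}$ by construction of $h_0$, and the second fundamental form of $i(\Mb)$ computed with respect to $\gv$ equals $\Kv$ by construction of $h_1$ (this is the standard translation between geometric and PDE data in wave coordinates). For the scalar curvature data, one uses $\rho=\frac{1}{2}\ln f'(R_g)$ together with the relations $\varrho(0,\cdot)=\varrho_0=\varrhozero$ and $\del_t\varrho(0,\cdot)=\varrho_1$, which combined with the lapse and shift read off from $\gb^\ddag$ and $\Kv$ translate $\del_t\varrho$ into the Lie derivative $\Lcal_{n^\ddag}\varrho$ restricted to $\Mb$, giving $\varrhoone$. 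The matter initial data are recovered in the same way. Finally, global hyperbolicity of $([0,T^*]\times\RR^3,\gv)$ for small-in-$\eps$ perturbations of Minkowski is ensured because the foliation by $\{t=\text{const}\}$ consists of spacelike Cauchy surfaces, thanks to the smallness of $h^\coeff$ in $E_H^d$ provided by \eqref{eq 1 prop 7-10-1 a}.

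The main delicate point in this plan is the first one: consistency between the geometric constraint equations and the PDE constraint equations assumed on $S_0$. Although Proposition~\ref{prop aug 1} is precisely the tool for propagating the constraints forward in time, it requires that all three constraints hold simultaneously at $t=0$, and the matching of the wave-gauge constraint $\Gammav_\lambda=0$ with the geometric data $(\gb^{\ddag},\Kv)$ is exactly where the eight components of $S_0$ not directly prescribed by the geometric data must be chosen. A clean way to handle this is to invoke the standard ADM-to-wave-gauge reduction (as in \cite{CB}): compute $\gvzero_{\alpha\beta}$ from $\gb^{\ddag}$ by selecting lapse $\Nd=1$ and shift $\beta^i=0$ at $t=0$, then determine $\gvone_{\alpha\beta}$ from $\Kv$, $\varrhoone$ and $\phionedd$ so that the wave-coordinate conditions $\Gammav_\lambda=0$ are satisfied on the initial slice; the Hamiltonian and momentum constraints, being intrinsic equations on $\Mb$, are preserved by this procedure. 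Once this reduction is in place, the argument closes by invoking uniqueness for the hyperbolic system in Theorem~\ref{prop 7-10-1}, which combined with the transformation $g_{\alpha\beta}=e^{-2\varrho}\gv_{\alpha\beta}$ yields the claimed development in the Jordan metric as well.
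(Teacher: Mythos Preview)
Your proposal is correct and follows essentially the same approach as the paper: use the smallness of $h^\coeff$ from Theorem~\ref{prop 7-10-1} to ensure global hyperbolicity of $([0,T^*]\times\RR^3,\gv)$, then invoke Proposition~\ref{prop aug 1} to propagate the wave-gauge constraints and the augmentation relation $e^{2\varrho}=f'(R_g)$, concluding that $(\gv,\phi)$ satisfies the conformal field equations~\eqref{main eq trans}. Your write-up is in fact more detailed than the paper's, as you explicitly verify the geometric initial conditions of Definition~\ref{Cauchy conformal aug} and spell out the ADM-to-wave-gauge reduction needed to ensure the constraints hold at $t=0$, points the paper takes for granted.
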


\begin{proof} 
We simply note that the local solution $(h_{\alpha\beta}^\coeff,\phi^\coeff,\varrho^\coeff)$ is sufficiently regular and that $h = \gv - m$ sufficiently small, which
guarantees that the metric $\gv$ is globally hyperbolic on $[0,T]\times \RR^3$. 
We can apply the result about the preservation of constraints in Proposition~\ref{prop aug 1}. Once the constraints
$$
\aligned
&e^{2\varrho} = \frac{1}{2}\ln f'(R_g),
\qquad\quad 
\Gammav^\lambda = 0
\endaligned
$$
hold, we see that the pair $(\gv, \phi)$ satisfies the conformal field equations \eqref{main eq trans}.
\end{proof}


\newpage 

\section{Technical tools for the local existence theory}

\subsection{Estimates on commutators}

From this subsection we will make some preparations in order to prove Theorem \ref{prop 7-10-1}. In this subsection we derive commutator estimates. First we point out the following commutation relations:
\begin{equation}\label{eq com 7-6}
[\del_a, \Omega_{bc}] = \delta_{ab}\del_c - \delta_{ac}\del_b,\quad [\del_t, \Omega_{ab}] = 0.
\end{equation}

\begin{lemma}\label{lem 7-6'-1}
If $u$ is a smooth  function defined on $[0,T]\times \RR^3$, then for any multi-index $I_1, I_2$ with $|I_1| + |I_2| = n$, the following estimates hold:
\begin{equation}\label{eq 1 lem 7-6'-1}
\aligned
&[\Omega^I,\del_\alpha]u = \sum_{|J|<|I|}\Theta^{Ic}_{\alpha J}\del_c\Omega^Ju,
\endaligned
\end{equation}
where $\Theta_{\alpha J}^{Ic}$ are constants. When $\alpha = 0$, one has $\Theta_{\alpha J}^{Ic} = 0$.
\end{lemma}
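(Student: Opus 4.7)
The plan is to proceed by induction on $n = |I|$, using the elementary commutation identities \eqref{eq com 7-6} as the base case and then iterating them through an arbitrary string of rotation vector fields.

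For $n = 1$, the identity $\Omega^I = \Omega_{bc}$ together with $[\del_a,\Omega_{bc}] = \delta_{ab}\del_c - \delta_{ac}\del_b$ yields directly
\[
[\Omega_{bc}, \del_a] u \;=\; -\delta_{ab}\del_c u + \delta_{ac}\del_b u,
\]
which is a linear combination of $\del_{c'} \Omega^J u$ with $|J| = 0 < 1 = |I|$, and hence has the claimed form. For $\alpha = 0$ the second relation in \eqref{eq com 7-6} gives $[\Omega_{bc}, \del_t] u = 0$, confirming the $\Theta^{Ic}_{0J} = 0$ statement.

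For the inductive step, suppose the identity holds for all multi-indices of length $\le n-1$, and write $\Omega^I = \Omega_k \, \Omega^{I'}$ with $|I'| = n-1$. Then
\[
[\Omega^I, \del_\alpha] u \;=\; \Omega_k \, [\Omega^{I'}, \del_\alpha] u \;+\; [\Omega_k, \del_\alpha] \, \Omega^{I'} u.
\]
The second term is, by the base case, a linear combination of $\del_{c'} \Omega^{I'} u$ with $|I'| < |I|$, so it already fits the required form. For the first term, the induction hypothesis produces a finite sum $\sum_{|J|<|I'|} \Theta^{I'c}_{\alpha J}\, \del_c \Omega^J u$; applying $\Omega_k$ and commuting once more via $\Omega_k \del_c = \del_c \Omega_k + [\Omega_k,\del_c]$ rewrites each summand as
\[
\Theta^{I'c}_{\alpha J}\bigl(\del_c \Omega_k \Omega^J u + [\Omega_k, \del_c]\Omega^J u\bigr),
\]
where the first piece has the form $\del_c \Omega^{\widetilde J} u$ with $|\widetilde J| = |J|+1 \le |I'| < |I|$, and the second piece is again of the base-case type, hence of the form $\del_{c'}\Omega^J u$ with $|J| \le |I'| < |I|$. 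Collecting these contributions yields the identity with constant coefficients $\Theta^{Ic}_{\alpha J}$.

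Finally, the $\alpha = 0$ assertion follows by the same induction: both $[\Omega_k, \del_0] = 0$ and (by induction) $[\Omega^{I'}, \del_0] u = 0$, so all contributions vanish. The only potential bookkeeping obstacle is tracking that the new coefficients remain numerical constants (independent of $t$ and $x$); this is automatic since the base commutators $[\del_a,\Omega_{bc}]$ have constant coefficients and the inductive manipulations produce only sums and products of such constants.
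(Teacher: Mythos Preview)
Your proof is correct and follows essentially the same route as the paper: induction on $|I|$, splitting $\Omega^I = \Omega_k\Omega^{I'}$, using the commutator identity $[\Omega^I,\del_\alpha] = \Omega_k[\Omega^{I'},\del_\alpha] + [\Omega_k,\del_\alpha]\Omega^{I'}$, and then commuting $\Omega_k$ past the spatial derivative that appears from the induction hypothesis. The only addition you make beyond the paper is the explicit remark that the resulting coefficients are genuine numerical constants, which is a worthwhile clarification.
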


\begin{proof} First, we observe that when $\alpha = 0$, $\del_0 = \del_t$ commutes with $\Omega^I$. When $\alpha>0$, this is proven by induction on the order of $|J|$. When $|J| = 1$, the result is proven by \eqref{eq com 7-6}. We denote by
$$
[\Omega_a,\del_\beta] = \theta_{a\beta}^c\del_c.
$$

Now assume that for $|J|\leq k$, \eqref{eq 1 lem 7-6'-1} is valid. For $|J| = k$, we find 
$$
\Omega^J = \Omega_{a_1}\Omega^{J_1},
$$
where $|J_1|=k-1$. Then, we have 
$$
\aligned
\,[\Omega^J, \del_a]u &= \Omega_{a_1}\Omega^{J_1}\del_\alpha u - \del_\alpha\Omega_{a_1}\Omega^{J_1}u
= \Omega_{a_1}\Omega^{J_1}\del_\alpha u - \Omega_{a_1}\del_\alpha\Omega^{J_1}u
+ \Omega_{a_1}\del_\alpha\Omega^{J_1}u - \del_\alpha\Omega_{a_1}\Omega^{J_1}u
\\
&= \Omega_{a_1}\big([\Omega^{J_1},\del_\alpha]u\big) + [\Omega_{a_1},\del_\alpha]\Omega^{J_1}u
\\
&= \Omega_{a_1}\bigg(\sum_{|K_1|<|J_1|}\Theta_{\alpha K_1}^{J_1 b}\del_b\Omega^{K_1} u\bigg) + \theta_{a_1 a}^b\del_b\Omega^{J_1}u
\\
&= \sum_{|K_1|<|J_1|}\Theta_{\alpha K_1}^{J_1 b} \Omega_{a_1} \del_b\Omega^{K_1} u + \theta_{a_1 a}^b\del_b\Omega^{J_1}u
\\
&=\sum_{|K_1|<|J_1|}\Theta_{\alpha K_1}^{J_1 b} \del_b\Omega_{a_1}\Omega^{K_1} u + \theta_{a_1 a}^b\del_b\Omega^{J_1}u
+\sum_{|K_1|<|J_1|}\Theta_{\alpha K_1}^{J_1 b} [\Omega_{a_1},\del_b]\Omega^{K_1} u
\\
&=\sum_{|K_1|<|J_1|}\Theta_{\alpha K_1}^{J_1 b} \del_b\Omega_{a_1}\Omega^{K_1} u + \theta_{a_1 a}^b\del_b\Omega^{J_1}u
+\sum_{|K_1|<|J_1|}\Theta_{\alpha K_1}^{J_1 b} \theta_{a_1 b}^c\del_c\Omega^{K_1} u.
\endaligned
$$ 
\end{proof}

\begin{lemma}\label{lem 7-6'-2}
Let u be a smooth function defined in $[0,T]\times \RR^3$. Then the following estimates hold for $|I_1|+|I_2|=d$:
\begin{subequations}\label{eq com lem 7-6'-1}
\begin{equation}\label{eq com lem 7-6'-1 d}
|\del_x^{I_1}\Omega^{I_2}\del_x u| \leq |\del_x\del_x^{I_1}\Omega^{I_2}u| + C(d)\sum_{|I_2'|< |I_2|}|\del_x \del_x^{I_1}\Omega^{I_2'} u|,
\end{equation}
\begin{equation}\label{eq com lem 7-6'-1 e}
|\del_x \del_x^{I_1}\Omega^{I_2} u|\leq |\del_x^{I_1}\Omega^{I_2}\del_x u| + C(d)\sum_{|I_1'|+|I_2'|< d}| \del_x^{I_1'}\Omega^{I_2'} \del_x u|,
\end{equation}
\begin{equation}\label{eq com lem 7-6'-1 a}
\big|[\del_x^{I_1}\Omega^{I_2},\del_{\alpha}\del_{\beta}]u\big|
\leq C(d)\sum_{a,\alpha\atop |J_2|<|I_2|}|\del_a\del_{\alpha}\del_x^{I_1}\Omega^{J_2}u|,
\end{equation}
\begin{equation}\label{eq com lem 7-6'-1 b}
|\del_r^k\Omega^{I_1} u (t,x)|\leq C(k)\sum_{|J_1|\leq k}|\del^{J_1}_x \Omega^{I_1} u(t,x)|, \quad \text{ for } |x|\geq 1,
\end{equation}
\begin{equation}\label{eq com lem 7-6'-1 c}
\aligned
|[\del_x^{I_1}\Omega^{I_2}, H^{\alpha\beta}\del_{\alpha}\del_{\beta}]u|
& \leq \sum_{J_1 + J_1' = I_1 \atop {J_2+J_2' = I_2 \atop |J_1'|+|J_2'|>0}}
|\del_x^{J_1'}\Omega^{J_2'}H^{\alpha\beta}|\,|\del_{\alpha}\del_{\beta}\del_x^{J_1}\Omega^{J_2}u|
\\
&+C(d) \sum_{J_1+J_1'=I_1\atop {|J_2|+|J_2'|<|I_2|\atop \alpha,\beta,\alpha',a}}
|\del_x^{J_1'}\Omega^{J_2'}H^{\alpha\beta}|\,|\del_{\alpha'}\del_a \del_x^{J_1}\Omega^{J_2}u|,
\endaligned
\end{equation}
where $H^{\alpha\beta}$ are smooth functions.
\end{subequations}
\end{lemma}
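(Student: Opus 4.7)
\medskip

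\noindent \textbf{Proof proposal.} All five estimates follow by systematic application of the basic commutation formula
\[
[\Omega^I,\del_\alpha] u = \sum_{|J|<|I|}\Theta^{Ic}_{\alpha J}\del_c\Omega^J u
\]
from Lemma~\ref{lem 7-6'-1}, together with the trivial observation that distinct partial derivatives $\del_\alpha, \del_\beta$ commute with each other. In particular, $\del_x^{I_1}$ commutes with every $\del_\alpha$ and every other $\del_x^{J_1}$, so every commutator below reduces to commutators involving the rotation operators $\Omega^{I_2}$ only.

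For \eqref{eq com lem 7-6'-1 d} and \eqref{eq com lem 7-6'-1 e}, the plan is to write
\[
\del_x^{I_1}\Omega^{I_2}\del_x u = \del_x\del_x^{I_1}\Omega^{I_2} u + \del_x^{I_1}[\Omega^{I_2},\del_x] u,
\]
and apply Lemma~\ref{lem 7-6'-1} to the commutator, which produces the sum $\sum_{|I_2'|<|I_2|}\Theta\,\del_c\del_x^{I_1}\Omega^{I_2'} u$. Taking absolute values and bounding the constants by $C(d)$ yields (a), and rearranging yields (b), provided we absorb all the lower-order $\Omega$-terms into the sum on the right-hand side.

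For \eqref{eq com lem 7-6'-1 a}, I would use the Leibniz identity for commutators,
\[
[\del_x^{I_1}\Omega^{I_2},\del_\alpha\del_\beta] = \del_x^{I_1}\bigl([\Omega^{I_2},\del_\alpha]\del_\beta + \del_\alpha[\Omega^{I_2},\del_\beta]\bigr),
\]
and then substitute the Lemma~\ref{lem 7-6'-1} formula in each of the two commutators. This replaces $\Omega^{I_2}$ with $\Omega^{J_2}$ of strictly lower order, at the price of one extra derivative factor $\del_c$; one then pushes that $\del_c$ through any remaining $\del_\alpha$ or $\del_\beta$ (they commute) to put everything in the form $|\del_a \del_{\alpha}\del_x^{I_1}\Omega^{J_2} u|$ with $|J_2|<|I_2|$. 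For \eqref{eq com lem 7-6'-1 b}, I would simply expand $\del_r = \sum_i (x^i/r)\,\del_i$; iterating $k$ times gives coefficients that are smooth homogeneous functions of $x/r$ of degree zero, hence bounded for $|x|\geq 1$, so the radial derivative $\del_r^k \Omega^{I_1} u$ becomes a finite linear combination of terms $\del_x^{J_1}\Omega^{I_1} u$ with $|J_1|\leq k$ and bounded coefficients; pulling absolute values inside yields the claim.

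For the final estimate \eqref{eq com lem 7-6'-1 c} on the commutator with the variable-coefficient operator $H^{\alpha\beta}\del_\alpha\del_\beta$, the plan is first to distribute the differential operator $\del_x^{I_1}\Omega^{I_2}$ via the Leibniz rule:
\[
\del_x^{I_1}\Omega^{I_2}\bigl(H^{\alpha\beta}\del_\alpha\del_\beta u\bigr) = \sum_{J_1+J_1'=I_1,\ J_2+J_2'=I_2} \bigl(\del_x^{J_1'}\Omega^{J_2'} H^{\alpha\beta}\bigr)\bigl(\del_x^{J_1}\Omega^{J_2}\del_\alpha\del_\beta u\bigr).
\]
The term with $|J_1'|+|J_2'|=0$ is $H^{\alpha\beta}\del_x^{I_1}\Omega^{I_2}\del_\alpha\del_\beta u$, which combines with $-H^{\alpha\beta}\del_\alpha\del_\beta\del_x^{I_1}\Omega^{I_2}u$ to give a pure commutator, handled by \eqref{eq com lem 7-6'-1 a}. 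The remaining terms with $|J_1'|+|J_2'|>0$ are exactly the first sum on the right-hand side of (e), after using (a) to rearrange the ordering of $\del_\alpha\del_\beta$ and $\del_x^{J_1}\Omega^{J_2}$ (which produces the second sum with $|J_2|+|J_2'|<|I_2|$). The main bookkeeping obstacle — and the only real content beyond Lemma~\ref{lem 7-6'-1} — is tracking the multi-index partitions so that the lower-order correction terms produced by each commutator swap are correctly absorbed into the stated right-hand sides; this is purely combinatorial and does not require any new analytic input.
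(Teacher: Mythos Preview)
Your proposal is correct and follows essentially the same route as the paper: each estimate reduces to Lemma~\ref{lem 7-6'-1} plus Leibniz-type bookkeeping, with \eqref{eq com lem 7-6'-1 b} handled via the homogeneity of the coefficients in the expansion of $\del_r^k$ in Cartesian derivatives. One small point the paper makes explicit that your sketch glosses over: the passage from \eqref{eq com lem 7-6'-1 d} to \eqref{eq com lem 7-6'-1 e} is not a pure rearrangement, because the lower-order correction terms produced by the commutator carry $\del_x$ on the \emph{left} of $\del_x^{I_1}\Omega^{I_2'}$ rather than on the right; the paper closes this by induction on $d$ (applying \eqref{eq com lem 7-6'-1 e} at lower order to each correction term), which is precisely what your phrase ``absorb all the lower-order $\Omega$-terms into the sum'' must mean.
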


\begin{proof} In view of \eqref{eq 1 lem 7-6'-1}, the following identity is immediate: 
$$
[\del_x^{I_1}\Omega^{I_2},\del_{\alpha}] = \sum_{|K|<|I_2|}\Theta_{\alpha K}^{I_2 b}\del_b\del_x^{I_1}\Omega^Ku.
$$

   To derive \eqref{eq com lem 7-6'-1 d},  we observe that
$$
[\del_x^{I_1}\Omega^{I_2},\del_{\alpha}]u
= \del_x^{I_1}\big([\Omega^{I_1},\del_\alpha]u\big)
= \sum_{c,|J_2|\leq |I_2|}\del_x^{I_1}\big(\Theta_{\alpha J_2}^{I_1 c}\del_{c}\Omega^{J_2}u\big)
= \sum_{c,|J_2|\leq |I_2|}\Theta_{\alpha J_2}^{I_1 c}\del_{c}\del_x^{I_1}\Omega^{J_2}u
$$
and
\begin{equation}\label{eq pr0 lem 7-6'-2}
\sum_{c,|J_2|\leq |I_2|}\big|\Theta_{\alpha J_2}^{I_1 c}\del_{c}\del_x^{I_1}\Omega^{J_2}u\big|
\leq C(d)\sum_{c,|I_2'|<|I_2|}|\del_c\del^{I_1}\Omega^{I_2'}u|.
\end{equation}
This establishes \eqref{eq com lem 7-6'-1 d}.

   The equation \eqref{eq com lem 7-6'-1 e} is derived by induction on $d$. Clearly, \eqref{eq com lem 7-6'-1 e} holds for $d=0$. If it holds for $d\leq k$, let us prove the case $d=k+1$:
$$
\big|\del_\alpha\del^{I_1}\Omega^{I_2} u\big|\leq \big|\del^{I_1}\Omega^{I_2}\del_\alpha u\big| + \big|[\del_\alpha,\del^{I_1}\Omega^{I_2}]u\big|.
$$
Then by \eqref{eq pr0 lem 7-6'-2},
$$
\aligned
\big|\del_\alpha\del^{I_1}\Omega^{I_2} u\big|
& \leq  \big|\del^{I_1}\Omega^{I_2}\del_\alpha u\big| + C(d)\sum_{c,|I_2'|<|I_2|}|\del_c\del^{I_1}\Omega^{I_2'}u|. 
\endaligned
$$
Note that $|I_2'|\leq |I_2|-1\leq k$. We apply the induction assumption on the second term in the right-hand-side and obtain 
$$
\aligned
|\del_c\del^{I_1}\Omega^{I_2'}u|\leq |\del^{I_1}\Omega^{I_2'}\del_c u| + C(d-1)\sum_{c',|I_2''|>|I_2'|}\big|\del^{I_1}\Omega^{I_2''}\del_{c'}u\big|, 
\endaligned
$$
which proves \eqref{eq com lem 7-6'-1 e}.

   The proof of \eqref{eq com lem 7-6'-1 a} is a direct application of \eqref{eq 1 lem 7-6'-1}.
\begin{equation}\label{eq pr1 lem 7-6'-2}
\aligned
\,[\del_x^{I_1}\Omega^{I_2},\del_{\alpha}\del_{\beta}]u
&= [\del_x^{I_1}\Omega^{I_2},\del_{\alpha}]\del_{\beta}u + \del_{\alpha}\big([\del_x^{I_1}\Omega^{I_2},\del_{\beta}]u\big)
\\
&=\sum_{|J_2|<|I_2|}\Theta_{\alpha J_2}^{I_2 c}\del_c\del_x^{I_1}\Omega^{J_2}\del_{\beta}u
 + \sum_{|J_2|<|I_2|}\Theta_{\beta J_2}^{I_2 c}\del_{\alpha}\del_c\del_x^{I_1}\Omega^{J_2}u
\\
&=\sum_{|J_2|<|I_2|}\Theta_{\alpha J_2}^{I_2 c}\del_c\del_x^{I_1}\del_{\beta}\Omega^{J_2}u
 + \sum_{|J_2|<|I_2|}\Theta_{\alpha J_2}^{I_2 c}\del_c\del_x^{I_1}[\Omega^{J_2},\del_\beta]u
\\
 &+ \sum_{|J_2|<|I_2|}\Theta_{\beta J_2}^{I_2 c}\del_{\alpha}\del_c\del_x^{I_1}\Omega^{J_2}u
\\
&=\sum_{|J_2|<|I_2|\atop |K_2|<|I_2|}
\Theta_{\alpha J_2}^{I_2 c}\Theta_{\beta K_2 }^{J_2c'}\del_c\del_{c'}\del_x^{I_1}\Omega^{K_2}u
\\
 &+ \sum_{|J_2|<|I_2|}\Theta_{\beta J_2}^{I_2 c}\del_{\alpha}\del_c\del_x^{I_1}\Omega^{J_2}u
 +\sum_{|J_2|<|I_2|}\Theta_{\alpha J_2}^{I_2 c}\del_c\del_x^{I_1}\del_{\beta}\Omega^{J_2}u
\endaligned
\end{equation}
which leads to \eqref{eq com lem 7-6'-1 a}.

   The proof of \eqref{eq com lem 7-6'-1 b} needs the notion of homogeneous functions. A smooth function $f$ defined in the pointed region $\RR^3\setminus\{0\}$ is said to be {\bf homogeneous of degree $i$} if
$$
f(rx) = r^if(x), \text{ for any } r>0 \text{ and } x\in \RR^3\setminus\{0\}.
$$
It is well-known that the partial derivatives of a homogeneous function of degree $i$ are also homogeneous and of degree $i-1$.

We denote by $\omega^a:=\frac{x^a}{r}$ and we note that they are homogeneous functions of degree $0$. And recall the definition of radial derivative $\del_r = \omega^a\del_a$. We will prove that
\begin{equation}\label{eq pr 1 lem 7-6-2}
\del_r^k = \sum_{a,\atop |I|\leq k} A_I^k\del^I_x,
\end{equation}
where $A_I^k$ is a homogeneous function of degree $-k+|I|$.
For $k=1$, this is guaranteed by the expression of $\del_r$. Assume that this  holds for the integer less than or equal to $k$, we will prove the case of $k+1$:
$$
\aligned
\del^{k+1}_r u &= \del_r\del^k_r u = \del_r \bigg(\sum_{I,a} A_I^k\del^I_x\bigg)
\\
&=\sum_{I,a}\omega^b\del_b\big(A_I^k\del^I_x\big) = \sum_{I,a}\omega^b\del_bA_I^k \del^I_x
+ \sum_{I,a}\omega^bA_I^k\del_b\del_x^I
\endaligned
$$
We observe that $\omega^b\del_b A_I^k$ is homogeneous of degree $-k-1+|I|$ and $\omega^b A_I^k$ is homogeneous of degree $-(k+1)+|I|+1$. This concludes \eqref{eq pr 1 lem 7-6-2}. Next,  we see that \eqref{eq com lem 7-6'-1 b} follows immediately from \eqref{eq pr 1 lem 7-6-2}.

   To prove \eqref{eq com lem 7-6'-1 c}, we perform the following calculation:
$$
\aligned
&\,[\del_x^{I_1}\Omega^{I_2},H^{\alpha\beta}\del_{\alpha}\del_{\beta}u]
\\
&= \sum_{J_1+J_1'=I_1,J_2+J_2'=I_2\atop |J_1'|+|J_2'|>0}
\del_x^{J_1'}\Omega^{J_2'}H^{\alpha\beta}\del_x^{J_1}\Omega^{J_2}\del_{\alpha}\del_{\beta}u
+H^{\alpha\beta}[\del_x^{I_1}\Omega^{I_2},\del_{\alpha}\del_{\beta}]u
\\
&= \sum_{J_1+J_1'=I_1,J_2+J_2'=I_2\atop |J_1'|+|J_2'|>0}\del_x^{J_1'}
\Omega^{J_2'}H^{\alpha\beta}\del_{\alpha}\del_{\beta}\del_x^{J_1}\Omega^{J_2}u
+ \sum_{J_1+J_1'=I_1\atop J_2+J_2'=I_2}\del_x^{J_1'}
\Omega^{J_2'}H^{\alpha\beta}[\del_x^{J_1}\Omega^{J_2},\del_{\alpha}\del_{\beta}]u
\\
&= \sum_{J_1+J_1'=I_1,J_2+J_2'=I_2\atop |J_1'|+|J_2'|>0}\del_x^{J_1'}
\Omega^{J_2'}H^{\alpha\beta}\del_{\alpha}\del_{\beta}\del_x^{J_1}\Omega^{J_2}u
\\
&+\sum_{J_1+J_1'=I_1\atop J_2+J_2'=I_2}\del_x^{J_1'}
\Omega^{J_2'}H^{\alpha\beta}
\bigg(
\sum_{|K_2|<|J_2|\atop|K_2'|<|K_2|}\Theta_{\alpha K_2}^{J_2 c}\Theta_{\beta K_2'}^{c'K_2}\del_c\del_{c'}\del_x^{J_1}\Omega^{K_2'}u
\\
& \hskip5.cm 
+\sum_{|K_2|<|J_2|}\Theta_{\beta K_2}^{J_2 c}\del_{\alpha}\del_c\del_x^{J_1}\Omega^{K_2}u
\bigg),
\endaligned
$$
where $\eqref{eq pr1 lem 7-6'-2}$ is applied. Then \eqref{eq com lem 7-6'-1 c} follows from this identity.
\end{proof}

We also need the commutator estimates on the product in the form $\del^{I_1}\Omega^{I_2}$.

\begin{lemma}\label{lem 7-6'-3}
Let u be a smooth function defined in $[0,T]\times \RR^3$. Then the following estimates hold for $|I_1|+|I_2|=d$:
\begin{subequations}\label{eq com' lem 7-6'-1}
\begin{equation}\label{eq com' lem 7-6'-1 d}
|\del^{I_1}\Omega^{I_2}\del_{\alpha} u| \leq |\del_\alpha\del^{I_1}\Omega^{I_2}u| + C(d)\sum_{a,|I_2'|< |I_2|}|\del_a \del^{I_1}\Omega^{I_2'} u|,
\end{equation}
\begin{equation}\label{eq com' lem 7-6'-1 e}
|\del_{\alpha} \del^{I_1}\Omega^{I_2} u|\leq |\del^{I_1}\Omega^{I_2}\del_{\alpha} u| + C(d)\sum_{\beta\atop |I_1'|+|I_2'|< d}| \del^{I_1'}\Omega^{I_2'} \del_{\beta} u|,
\end{equation}
\begin{equation}\label{eq com' lem 7-6'-1 a}
\big|[\del^{I_1}\Omega^{I_2},\del_{\alpha}\del_{\beta}]u\big|
\leq C(d)\sum_{a,\alpha\atop |J_2|<|I_2|}|\del_a\del_{\alpha}\del^{I_1}\Omega^{J_2}u|,
\end{equation} 
\begin{equation}\label{eq com' lem 7-6'-1 c}
\aligned
|[\del^{I_1}\Omega^{I_2}, H^{\alpha\beta}\del_{\alpha}\del_{\beta}]u|
& \leq \sum_{J_1 + J_1' = I_1 \atop {J_2+J_2' = I_2 \atop |J_1'|+|J_2'|>0}}
|\del^{J_1'}\Omega^{J_2'}H^{\alpha\beta}|\,|\del_{\alpha}\del_{\beta}\del^{J_1}\Omega^{J_2}u|
\\
&+C(d) \sum_{J_1+J_1'=I_1\atop {|J_2|+|J_2'|<|I_2|\atop \alpha,\beta,\alpha',a}}
|\del^{J_1'}\Omega^{J_2'}H^{\alpha\beta}|\,|\del_{\alpha'}\del_a \del^{J_1}\Omega^{J_2}u|. 
\endaligned
\end{equation}
\end{subequations}
\end{lemma}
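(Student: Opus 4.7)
The plan is to mirror the proof of Lemma~\ref{lem 7-6'-2} essentially verbatim, exploiting two features that carry over unchanged. First, all partial derivatives $\del_\alpha$ commute with one another, so the factors inside $\del^{I_1}$ may be reordered freely and $\del^{I_1}$ passes through any other $\del_\beta$ without producing commutator terms. Second, by the second relation in \eqref{eq com 7-6}, $\del_0$ commutes with every rotation $\Omega_{ab}$, so commuting $\Omega^{I_2}$ with any $\del_\alpha$ only brings in spatial derivatives. The single technical input needed beyond this is Lemma~\ref{lem 7-6'-1}, which supplies
\[
[\Omega^{I_2},\del_\alpha]u=\sum_{|J_2|<|I_2|}\Theta_{\alpha J_2}^{I_2 c}\,\del_c\Omega^{J_2}u,
\qquad \Theta_{0\,J_2}^{I_2c}=0.
\]

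For \eqref{eq com' lem 7-6'-1 d}, I compute
\[
\del^{I_1}\Omega^{I_2}\del_\alpha u = \del^{I_1}\del_\alpha\Omega^{I_2}u + \del^{I_1}[\Omega^{I_2},\del_\alpha]u = \del_\alpha\del^{I_1}\Omega^{I_2}u + \sum_{|J_2|<|I_2|}\Theta_{\alpha J_2}^{I_2c}\,\del_c\del^{I_1}\Omega^{J_2}u,
\]
using in the last step only that $\del^{I_1}$ commutes with $\del_\alpha$ and $\del_c$. The triangle inequality yields the claim. The reverse bound \eqref{eq com' lem 7-6'-1 e} follows by induction on $d$, exactly as in Lemma~\ref{lem 7-6'-2}: assuming the inequality for orders $\le k$, one writes $\del_\alpha \del^{I_1}\Omega^{I_2}u = \del^{I_1}\Omega^{I_2}\del_\alpha u - [\del^{I_1}\Omega^{I_2},\del_\alpha]u$, controls the commutator via \eqref{eq com' lem 7-6'-1 d}, and applies the induction hypothesis to each lower-order term that results.

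For \eqref{eq com' lem 7-6'-1 a}, I use the Leibniz-type decomposition
\[
[\del^{I_1}\Omega^{I_2},\del_\alpha\del_\beta]u = [\del^{I_1}\Omega^{I_2},\del_\alpha]\del_\beta u + \del_\alpha[\del^{I_1}\Omega^{I_2},\del_\beta]u,
\]
and then apply the commutator formula from Lemma~\ref{lem 7-6'-1} to each term, once more shifting $\del_\alpha$ through the resulting $\Omega^{J_2}$ to pick up a second commutator. This produces precisely the nested double sum of \eqref{eq pr1 lem 7-6'-2}, now with $\del^{I_1}$ in place of $\del_x^{I_1}$, and gives the bound involving $|\del_a\del_\alpha\del^{I_1}\Omega^{J_2}u|$ with $|J_2|<|I_2|$. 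Finally, \eqref{eq com' lem 7-6'-1 c} is obtained from the Leibniz expansion
\[
[\del^{I_1}\Omega^{I_2},\,H^{\alpha\beta}\del_\alpha\del_\beta]u = \sum_{\substack{J_1+J_1'=I_1,\,J_2+J_2'=I_2\\ |J_1'|+|J_2'|>0}}\bigl(\del^{J_1'}\Omega^{J_2'}H^{\alpha\beta}\bigr)\,\del^{J_1}\Omega^{J_2}\del_\alpha\del_\beta u + H^{\alpha\beta}\,[\del^{I_1}\Omega^{I_2},\del_\alpha\del_\beta]u,
\]
after which \eqref{eq com' lem 7-6'-1 a} handles the final term, while a further application of \eqref{eq com' lem 7-6'-1 a} inside each summand of the first sum moves $\del_\alpha\del_\beta$ to the outside, converting any derivative of order $2$ that lands on $H^{\alpha\beta}$ into derivatives of the specified form.

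No genuine obstacle arises: the reason the proof of Lemma~\ref{lem 7-6'-2} transfers without modification is precisely that $\del^{I_1}$ behaves like a ``scalar'' relative to every operator encountered here (it commutes with every $\del_\beta$, and all commutators with $\Omega^{I_2}$ produced by Lemma~\ref{lem 7-6'-1} involve only spatial $\del_c$, which again commute with $\del^{I_1}$). The only bookkeeping point is that when $\alpha=0$ the coefficient $\Theta_{0J_2}^{I_2c}$ vanishes, so the enlarged class of admissible derivatives in $\del^{I_1}$ does not enlarge the right-hand sides beyond the spatial-derivative sums already appearing in Lemma~\ref{lem 7-6'-2}.
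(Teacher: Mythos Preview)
Your proposal is correct and follows exactly the same approach as the paper: the paper's proof simply invokes the commutator identity from Lemma~\ref{lem 7-6'-1} and states that one repeats the calculation of Lemma~\ref{lem 7-6'-2} with $\del_x^{I_1}$ replaced by $\del^{I_1}$. Your write-up is in fact more detailed than the paper's, but the underlying argument is identical.
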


\begin{proof} From \eqref{eq 1 lem 7-6'-1}, the following identity is immediate:
$$
[\del^{I_1}\Omega^{I_2},\del_{\alpha}] = \sum_{|K|<|I_2|}\Theta_{\alpha K}^{I_2 b}\del_b\del_x^{I_1}\Omega^Ku.
$$
Then we perform exactly the same calculation as in the proof of Lemma~\ref{lem 7-6'-2} with $\del_x^{I_1}$ replaced by $\del^{I_1}$.
\end{proof}


\subsection{Global Sobolev inequalities and embedding properties}

For completeness, we re-derive a classical estimate (due to Klainerman).

\begin{lemma}\label{lem 7-6-1}
For all $u\in X^{d}$ with $d\geq 2$, one has 
\begin{equation}\label{eq 1 lem 7-6-1}
\|u\|_{\mathcal{E}_{-1}}\leq C\|u\|_{X^{d}}.
\end{equation}
\end{lemma}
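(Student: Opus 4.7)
My plan is as follows. By density on $C_c^\infty(\RR^3)$, it suffices to establish the pointwise bound $(1+r)|u(x)|\leq C\|u\|_{X^2}$, since $\|\cdot\|_{X^2}\leq \|\cdot\|_{X^d}$ for $d\geq 2$. I will treat the two regions $\{r\leq 1\}$ and $\{r\geq 1\}$ separately. On the first, the claim follows at once from the standard three-dimensional Sobolev embedding $H^2(B_2)\hookrightarrow L^\infty(B_1)$ together with $\|u\|_{H^2(B_2)}\leq \|u\|_{X^2}$, since $1+r\leq 2$ there. The non-trivial part is therefore to show $r|u(r\omega)|\leq C\|u\|_{X^2}$ uniformly for $r\geq 1$ and $\omega\in S^2$.

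For this I introduce the auxiliary function $v(r,\omega):=r\,u(r\omega)$ in spherical coordinates, which reduces the desired estimate to a uniform $L^\infty$ bound on $v$ over $\{r\geq 1\}\times S^2$. Fixing $(R,\omega_0)$ with $R\geq 1$ and a local chart $(\theta_1,\theta_2)$ on $S^2$ around $\omega_0$ (finitely many such charts suffice, with uniform constants), I apply the 3D Sobolev embedding $H^2(Q)\hookrightarrow L^\infty(Q)$ to $v$ on the unit-size box $Q=[R-1/2,R+1/2]\times V$ in $(r,\theta_1,\theta_2)$-coordinates, which yields a bound of the form $|v(R,\omega_0)|^2\leq C\sum_{k+|\ell|\leq 2}\|\del_r^k\del_\theta^\ell v\|_{L^2(Q,\,dr\,d\theta)}^2$ with a constant $C$ independent of $R$.

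It then remains to bound each of these $L^2$-norms by $C\|u\|_{X^2}^2$, using four ingredients: the measure relation $dr\,d\theta\simeq r^{-2}\,dx$, which contributes a factor $r^{-2}\leq 4$ on $Q$; the Leibniz expansions $\del_r v=u+r\del_r u$, $\del_r^2 v=2\del_r u+r\del_r^2 u$, $\del_{\theta_i}v=r\del_{\theta_i}u$, $\del_{\theta_i}\del_{\theta_j}v=r\del_{\theta_i}\del_{\theta_j}u$, $\del_r\del_{\theta_i}v=\del_{\theta_i}u+r\del_{\theta_i}\del_r u$, so that each derivative of $v$ produces terms of size at most $O(1)+O(r)$ times derivatives of $u$; the pointwise identities $\del_r=\omega^a\del_a$ and $\del_r^2 u=\omega^a\omega^b\del_a\del_b u+O(r^{-1})\del u$; and, most importantly, the fact that on each chart the coordinate vector fields $\del_{\theta_i}$ are $r$-independent bounded linear combinations of the rotations $\Omega_{ab}$, since the restrictions $\Omega_{ab}|_\omega$ span $T_\omega S^2$ at every $\omega\in S^2$ and $[\del_r,\Omega_{ab}]=0$ by direct computation. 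After these substitutions, every term reduces to integrals of the form $\int_{A_R}r^{2j-2}|\del^{I_1}\Omega^{I_2}u|^2\,dx$ with $j\in\{0,1\}$ and $|I_1|+|I_2|\leq 2$, bounded by $4\|u\|_{X^2}^2$ for $r\geq 1/2$.

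Collecting everything will give $|v(R,\omega_0)|^2\leq C\|u\|_{X^2}^2$, hence $R|u(R\omega_0)|\leq C\|u\|_{X^2}$ and $(1+R)|u|\leq 2C\|u\|_{X^2}$ for $R\geq 1$, completing the proof. The delicate point will be the weight bookkeeping in the third paragraph: the $r$-powers generated by $v=ru$ must be exactly absorbed by the $r^{-2}$ coming from the measure $dr\,d\theta$, so that at most two vector fields from $\{\del_a,\Omega_{ab}\}$ appear in the final bound. It is this cancellation that makes $d=2$ sufficient; the simpler approach of combining the 2D Sobolev embedding on each sphere with a radial trace inequality would lose one derivative and require $d\geq 3$.
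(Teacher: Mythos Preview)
Your proposal is correct and follows essentially the same route as the paper: reduce to $C_c^\infty$, treat $r\leq 1$ by the standard Sobolev embedding, and for $r\geq 1$ apply the three-dimensional Sobolev inequality on a unit-size box in spherical coordinates $(r,\theta,\varphi)$, converting angular derivatives into rotations $\Omega_{ab}$ and radial derivatives into $\omega^a\del_a$. The only cosmetic difference is that the paper applies Sobolev directly to $u$ and extracts the decay from the measure comparison $dr\,d\theta\,d\varphi\simeq r_0^{-2}\,dx$ on the box (using $r/r_0\in(1/2,3/2)$), whereas you build the weight into the function via $v=r\,u$ and then recover the same cancellation through the Leibniz expansions; the paper also works in a single global $(\theta,\varphi)$ chart and writes the explicit identities $\del_\varphi=\Omega_{12}$, $\del_\theta=\cos\varphi\,\Omega_{31}+\sin\varphi\,\Omega_{32}$, while you invoke a finite atlas and the spanning property of the $\Omega_{ab}$.
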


\begin{proof} We only prove this inequality for smooth functions since, by regularization, it then extends to the whole $X^d$. We consider $\RR^3$ equipped with the polar coordinates, i.e.:
$$
x^1 = r\sin\theta\cos\varphi,\quad x^2 = r\sin\theta\sin\varphi,\quad x^3 = r\cos\theta.
$$
Note that \eqref{eq 1 lem 7-6-1} is equivalent to the following inequality for all $x_0\in \RR^3$:
$$
|u(x_0)|(1+r_0)\leq C\|u\|_{X^{d}}
$$
with $x_0 = (x_0^1,x_0^2,x_0^3) = (r_0\sin\theta_0\cos\varphi_0, r_0\sin\theta_0\sin\varphi_0,r_0\cos\theta_0)$.

The case $r_0\leq 1$ is direct by classical Sobolev inequality. We thus focus on the case $r_0>1$ and we consider the estimate on the the following open subset of $\RR^3$ defined by
$$
V = (r_0-1/2,r_0+1/2)\times(\theta_0-1/2,\theta_0+1/2)\times(\varphi_0-1/2,\varphi_0+1/2)
$$

Now let $u$ be a smooth function and denote also by $u$ its restriction on $V$, with 
$$
v(r,\theta,\varphi) := u(r\sin\theta\cos\varphi, r\sin\theta\sin\varphi,r\cos\theta)
$$
with $r_0>1$. Then by the classical Sobolev inequality, we have 
$$
|u(x_0)|^2 = |v(r_0,\theta_0,\phi_0)|^2
\leq C\sum_{k_0+k_1+k_2\geq 2}\int_V|\del_r^{k_0}\del_{\theta}^{k_1}\del_{\varphi}^{k_2}v|^2drd\theta d\varphi.
$$
Note that in $V$, $r_0-1/2<r<r_0+1/2$, which leads to $1 - 1/(2r_0)< r/r_0 < 1+1/(2r_0)$. Recall that $r_0>1$, then
$$
\frac{1}{2}<\frac{r}{r_0}<\frac{3}{2}.
$$
Thus, we have 
$$
\aligned
\frac{1}{2}\int_V|\del_r^{k_0}\del_{\theta}^{k_1}\del_{\varphi}^{k_2}v|^2drd\theta d\varphi
& \leq \int_V|\del_r^{k_0}\del_{\theta}^{k_1}\del_{\varphi}^{k_2}v|^2\frac{r^2}{r_0^2}drd\theta d\varphi
\\
&= r_0^{-2} \int_V|\del_r^{k_0}\del_{\theta}^{k_1}\del_{\varphi}^{k_2}v|^2 r^2dr d\theta d\varphi
\\
&= r_0^{-2} \int_V|\del_r^{k_0}\del_{\theta}^{k_1}\del_{\varphi}^{k_2}v|^2dx
\\
& \leq r_0^{-2} \int_{\RR^3}|\del_r^{k_0}\del_{\theta}^{k_1}\del_{\varphi}^{k_2}v|^2dx,
\endaligned
$$
where $dx$ is the standard volume form of $\RR^3$.

Here, we observe that
$$
\del_\theta  v
=\cos\varphi \Omega_{31}u + \sin\varphi \Omega_{32}u
= \frac{x^1}{\big((x^1)^2+(x^2)^2\big)^{1/2}}\Omega_{31}u + \frac{x^2}{\big((x^1)^2+(x^2)^2\big)^{1/2}}\Omega_{32}u
$$
and
$$
\del_{\varphi}v = \Omega_{12}u, \quad \del_r v = \del_r u.
$$

Note that $\Omega^{J}\cos\varphi$, $\Omega^{J}\sin\varphi$ are homogeneous of degree $0$. So, by homogeneity, that for $r\geq 1/2$, we have 
$$
|\del_r^{k_0}\del_\theta^{k_1}\del_{\varphi}^{k_2}v|
=\big|\del_r^{k_0}\big(\cos\varphi \Omega_{31}+ \sin\varphi\Omega_{33}\big)^{k_1}\Omega_{12}^{k_2}u\big|
\leq C\sum_{|I|\leq k_1+k_2}|\del_r^{k_0}\Omega^Iu|, 
$$
which leads to
\begin{equation}
|u(x_0)|^2\leq Cr_0^{-2}\sum_{|I|+k_0\leq 2}\int_{\RR^3}|\del_r^{k_0}\Omega^Iu|^2dx. 
\end{equation}
Then by \eqref{eq com lem 7-6'-1 b}, the desired result is proven. 
\end{proof}

We will also need the following embedding result.

\begin{lemma}\label{lem 7-6-2}
Let $u$ be a function in $X_H^{d+2}$ and $v\in L^{\infty}([0,T];E_H^{d+2})$. Then for all pair of multi-indices $(I_1,I_2)$ with $|I_1|+|I_2|\leq d$, the following estimate holds:
\begin{subequations}
\begin{equation}\label{eq 1 lem 7-6-2}
\big{\|}\del_x^{I_1}\Omega^{I_2} u \big{\|}_{L^{\infty}(\RR^3)}\leq C\|u\|_{X_H^{d+2}}.
\end{equation}
\begin{equation}\label{eq 1' lem 7-6-2}
\big{\|}\del^{I_1}\Omega^{I_2} v(t,\cdot) \big{\|}_{L^{\infty}(\RR^3)}\leq C\|v(t,\cdot)\|_{E_H^{d+2}}\quad \text{ for } t\in[0,T].
\end{equation}
\end{subequations}
\end{lemma}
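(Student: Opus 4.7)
The plan is to reduce both estimates to the weighted Klainerman-type Sobolev inequality of Lemma~\ref{lem 7-6-1}, handling separately the subtle case where no spatial partial derivative stands outside of $u$. For the first inequality, I would split into two cases according to whether $|I_1|\geq 1$ or $|I_1|=0$.

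\textbf{Case A ($|I_1|\geq 1$).} Set $w := \del_x^{I_1}\Omega^{I_2} u$ and apply Lemma~\ref{lem 7-6-1} with $d=2$ to obtain the \emph{weighted} pointwise bound $(1+|x|)|w(x)|\leq C\|w\|_{X^2}$. To control $\|w\|_{X^2}$, I would expand each term $\|\del_x^{K_1}\Omega^{K_2}\del_x^{I_1}\Omega^{I_2}u\|_{L^2}$ with $|K_1|+|K_2|\leq 2$ and commute $\Omega^{K_2}$ past $\del_x^{I_1}$ using Lemma~\ref{lem 7-6'-2}, producing a finite sum of terms $\del_x^{L_1}\Omega^{L_2}u$ with $|L_1|=|K_1|+|I_1|\geq 1$ and $|L_2|\leq |K_2|+|I_2|$. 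Because $|L_1|\geq 1$, each such term factors as $\del_{x_c}(\del_x^{L_1-1}\Omega^{L_2}u)$, whose $L^2$ norm is controlled by $\|u\|_{X_P^{|L_1|+|L_2|}}\leq \|u\|_{X_P^{d+2}}$, since $|L_1|+|L_2|\leq d+2$. One concludes $(1+|x|)|\del_x^{I_1}\Omega^{I_2}u(x)|\leq C\|u\|_{X_H^{d+2}}$, which is stronger than the desired $L^\infty$ bound.

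\textbf{Case B ($|I_1|=0$).} The subcase $|I_2|=0$ is immediate from $\|u\|_{L^\infty}\leq\|u\|_{\mathcal{E}_{-1}}\leq\|u\|_{X_H^{d+2}}$. When $|I_2|\geq 1$, I would extract one rotation and use the algebraic identity
$$
\Omega^{I_2}u \,=\, \Omega_{ab}\bigl(\Omega^{I_2'}u\bigr) \,=\, x_a\,\del_b\bigl(\Omega^{I_2'}u\bigr)-x_b\,\del_a\bigl(\Omega^{I_2'}u\bigr),
$$
where $|I_2'|=|I_2|-1$. Case~A applied to the multi-indices of orders $|I_1|=1$ and $|I_2'|$ (whose total $|I_2|\leq d$) yields $(1+|x|)|\del_c\Omega^{I_2'}u(x)|\leq C\|u\|_{X_H^{d+2}}$ for $c\in\{a,b\}$; combined with the elementary bound $|x_c|\leq 1+|x|$, this gives $|\Omega^{I_2}u(x)|\leq C\|u\|_{X_H^{d+2}}$.

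The second inequality follows by the same argument applied pointwise in $t\in[0,T]$: decomposing $\del^{I_1}=\del_t^k\del_x^{J_1}$, every product produced by the commutations reduces to mixed space-time derivatives of $v$ of total order $\leq d+2$ carrying at least one derivative outside, and hence is controlled by the $E_P^{d+2}$ contribution to $\|v(t,\cdot)\|_{E_H^{d+2}}$; only the trivial case $|I_1|=|I_2|=0$ invokes the $\mathcal{E}_{-1}$ contribution. The main obstacle, and the only step that is not a routine commutator computation, is Case~B with $|I_2|\geq 1$: since $\Omega^{I_2}u$ is in general \emph{not} a member of $L^2(\RR^3)$ when $u$ only decays like $|x|^{-1}$ at infinity (in fact $\Omega^{I_2}u$ may grow polynomially in $|x|$), Lemma~\ref{lem 7-6-1} cannot be applied to $\Omega^{I_2}u$ itself. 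The identity above resolves this by trading the factor of $|x|$ produced by extracting one $\Omega$ against the factor $(1+|x|)^{-1}$ gained from the weighted form of the Klainerman-Sobolev estimate.
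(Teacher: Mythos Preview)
Your proof is correct and follows essentially the same three-case split as the paper (trivial case $I_1=I_2=0$, the case $|I_1|\geq 1$, and the case $|I_1|=0$ with $|I_2|\geq 1$), including the key step of extracting one rotation to trade the factor $|x|$ against the $(1+|x|)^{-1}$ decay from Lemma~\ref{lem 7-6-1}. The only cosmetic difference is that for $|I_1|\geq 1$ the paper uses the ordinary Sobolev inequality rather than the weighted Lemma~\ref{lem 7-6-1}, and then re-invokes Lemma~\ref{lem 7-6-1} separately in the rotation-only case; your version slightly streamlines this by obtaining the weighted pointwise bound already in Case~A and recycling it in Case~B.
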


\begin{proof}
We only prove this inequality when $u\in C_c^{\infty}(\RR^3)$ and $v\in C_c^{\infty}(\RR^4)$. Then by  regularization, it extends on $X_H^{d+2}$ and $L^{\infty}([0,T];E_H^{d+2})$.

   We begin with \eqref{eq 1 lem 7-6-2}, and the proof is decomposed as follows. 

\

\noindent {\bf Case 1: $I_1 = I_2 = 0$.} The left-hand-side of \eqref{eq 1 lem 7-6-2} is controlled by its $\mathcal{E}_{-1}$ norm so \eqref{eq 1 lem 7-6-2} holds.

\

\noindent {\bf Case 2: $|I_1|>0$.} In this case we suppose that $I_1 = (a_1,a_2,\cdots,a_n)$ and denote by $I_1' = (a_2,\cdots,a_n)$. Then $I_1'$ is of order $n-1\geq 0$ and, by classical Sobolev's inequality,
$$
\big{\|}\del_x^{I_1}\Omega^{I_2}u\big{\|}_{L^{\infty}(\RR^3)} = \big{\|}\del_{a_1}\del_x^{I_1'}\Omega^{I_2}u\big{\|}_{L^{\infty}(\RR^3)}
\leq C\sum_{|J_1|\leq 2}\big{\|}\del_x^{J_1}\del_{a_1}\del_x^{I_1'}\Omega^{I_2}u\big{\|}_{L^2(\RR^3)}\leq C\|u\|_{X_P^{d+2}}.
$$

\

\noindent {\bf Case 3: $|I_2| = 0, |I_2|>0$.} In this case we suppose that $I_2 = (b_1,b_2,\cdots,b_n)$ and denote by $I_b' = (b_2,\cdots,b_n)$. Then $I_2'$ is of order $n-1\geq 0$, and 
$$
\big{\|}\del_x^{I_1}\Omega^{I_2}u\big{\|}_{L^{\infty}(\RR^3)} = \big{\|}\Omega_{b_1}\Omega^{I'_2}u \big{\|}_{L^{\infty}(\RR^3)}
\leq C\sum_b \big{\|}(1+r)\del_b\Omega^{I'_2}u\big{\|}_{L^{\infty}(\RR^3)} = C\sum_b\big{\|}\del_b\Omega^{I'_2}u\big{\|}_{\mathcal{E}_{-1}}. 
$$
Then by \eqref{eq 1 lem 7-6-1}, we have 
$$
\aligned
\big{\|}\del_x^{I_1}\Omega^{I_2}u\big{\|}_{L^{\infty}(\RR^3)}
& \leq C\sum_b\big{\|}\del_b\del_x^{I_1}\Omega^{I'_2}u\big{\|}_{X^2}
= C\sum_{|J_1|+|J_2|\leq 2}\sum_b\big{\|}\del_x^{J_1}\Omega^{J_2}\del_b\del_x^{I_1}\Omega^{I'_2}u\big{\|}_{L^2(\RR^3)}
\\
& \leq C\sum_{|J_1|+|J'_2|\leq 2\atop b'}\big{\|}\del_{b'}\del_x^{I_1'}\del_x^{J_1}\Omega^{J_2'}\Omega^{I'_2}u\big{\|}_{L^2(\RR^3)} \leq C\|u\|_{X_P^{d+2}},
\endaligned
$$
where the commutator estimate \eqref{eq com lem 7-6'-1 d} is used. 

By combining these three cases together, \eqref{eq 1 lem 7-6-2} is proven.

   We then prove \eqref{eq 1' lem 7-6-2}. The proof is similar and we also discuss three different cases. 

\

\noindent {\bf Case 1: $I_1 = I_2 = 0$.} The left-hand-side of \eqref{eq 1' lem 7-6-2} is controlled by its $\mathcal{E}_{-1}$ norm so \eqref{eq 1' lem 7-6-2} holds.

\

\noindent {\bf Case 2: $I_1>0$.} In this case we also suppose that $I_1 = (\alpha_1,\alpha_2,\dots,\alpha_n)$ and denote by $I'_1 = (\alpha_2,\dots,\alpha_n)$. Then $|I'_1|\geq 0$, and also by the classical Sobolev's inequality, 
$$
\aligned
\|\del^{I_1}\Omega^{I_2}v (t,\cdot)\|_{L^{\infty}}
& \leq C\sum_{|J_1|\leq 2}\|\del_x^{J_1}\del_{\alpha_1}\del^{I'_1}\Omega^{I_2}v(t,\cdot)\|_{L^2(\RR^3)}
\\
=& C\sum_{|J_1|\leq 2}\|\del_{\alpha_1}\del_x^{J_1}\del^{I'_1}\Omega^{I_2}v(t,\cdot)\|_{L^2(\RR^3)}
\leq C\|v(t,\cdot)\|_{E_P^{d+2}}.
\endaligned
$$

\

\noindent {\bf Case 3: $|I_1|=0,|I_2|>0$.} We suppose that $I_2= (b_1,b_2,\dots, a_n)$ and denote by $I_2' = (b_2,\dots,b_n)$. Then, we have $|I_2'|\geq 0$, and 
$$
\aligned
\|\del^{I_1}\Omega^{I_2}v(t,\cdot)\|_{L^{\infty}(\RR^3)} &= \|\Omega_{b_1}\Omega^{I_2'}v(t,\cdot)\|_{L^{\infty}}
\\
&\leq C\sum_{b}\|(1+r)\del_b\Omega^{I_2'}v(t,\cdot)\|_{L^{\infty}(\RR^3)}
= C\sum_{b}\|\Omega^{I_2'}v(t,\cdot)\|_{\mathcal{E}_{-1}}
\\
& \leq C\sum_{b}\|\del_b\Omega^{I_2'}v(t,\cdot)\|_{X^2}
=  C\sum_{b\atop |J_1|+|J_2|\leq 2}\|\del_x^{J_1}\Omega^{J_2}\del_b\Omega^{I_2'}v(t,\cdot)\|_{X^2}
\\
& \leq  C\sum_{b'\atop |J_2|+|J_2'|\leq 2}\|\del_x^{J_1}\del_{b'}\Omega^{J_2'}\Omega^{I_2'}v(t,\cdot)\|_{X^2}
\leq  C\|v(t,\cdot)\|_{E_P^{d+2}}.
\endaligned
$$
By combining these three cases, \eqref{eq 1' lem 7-6-2} is established.
\end{proof}

\begin{lemma}\label{lem 7-6-3}
Let $u\in X_H^d$ and $v\in L^{\infty}([0,T];E_P^d)$, then the following estimate holds for all pair of multi-index $(I_1,I_2)$ with $1\leq |I_1|+|I_2|\leq d$:
\begin{subequations}
\begin{equation}\label{eq 1 lem 7-6-3}
\|(1+r)^{-1}\del_x^{I_1}\Omega^{I_2}u\|_{L^2(\RR^3)}\leq C\|u\|_{X_P^d}.
\end{equation}
\begin{equation}\label{eq 1' lem 7-6-3}
\|(1+r)^{-1}\del^{I_1}\Omega^{I_2}v(t,\cdot)\|_{L^2(\RR^3)}\leq C\|v(t,\cdot)\|_{E_P^d}, \quad \text{for } t\in[0,T]. 
\end{equation}
\end{subequations}
\end{lemma}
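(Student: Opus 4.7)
My plan is to prove both inequalities simultaneously by a two-case analysis based on whether the derivation multi-index $I_1$ is empty. First, by density it suffices to establish the bounds for test functions $u\in C_c^\infty(\RR^3)$ and $v\in C_c^\infty([0,T]\times\RR^3)$; the inequalities then extend to $u\in X_P^d$ and $v\in L^\infty([0,T];E_P^d)$ by completion.

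\textbf{Case 1: $|I_1|\geq 1$.} In this case no Hardy-type argument is needed. Since $(1+r)^{-1}\leq 1$ pointwise,
$$
\|(1+r)^{-1}\del_x^{I_1}\Omega^{I_2}u\|_{L^2(\RR^3)} \leq \|\del_x^{I_1}\Omega^{I_2}u\|_{L^2(\RR^3)}.
$$
Factoring $\del_x^{I_1}=\del_a\del_x^{I_1'}$ with $|I_1'|+|I_2|\leq d-1$, the right-hand side is exactly one of the summands that enter the definition of $\|u\|_{X_P^d}$, hence the bound is immediate. The spacetime version is identical: factor $\del^{I_1}=\del_\alpha\del^{I_1'}$ and observe that the corresponding term appears in $\|v(t,\cdot)\|_{E_P^d}$.

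\textbf{Case 2: $|I_1|=0$ and $|I_2|\geq 1$.} This is the genuine case: the weight $(1+r)^{-1}$ must be exchanged for a spatial derivative, which is possible because each rotation $\Omega_{ab}=x^a\del_b-x^b\del_a$ carries a factor of $r$. Writing $\Omega^{I_2}=\Omega_c\Omega^{I_2'}$ with $|I_2'|=|I_2|-1$, I would use the pointwise estimate
$$
|\Omega^{I_2}u(x)| \leq C\, r\, |\nabla_x \Omega^{I_2'}u(x)|,
$$
and combine with $r/(1+r)\leq 1$ to obtain
$$
\|(1+r)^{-1}\Omega^{I_2}u\|_{L^2(\RR^3)} \leq C\,\|\nabla_x \Omega^{I_2'}u\|_{L^2(\RR^3)}.
$$
The final step is to commute $\nabla_x$ through $\Omega^{I_2'}$ by means of the commutator identity \eqref{eq 1 lem 7-6'-1}, which turns the right-hand side into a linear combination of terms $\del_b\Omega^J u$ with $|J|\leq|I_2'|\leq d-1$; each such term is among the summands defining $\|u\|_{X_P^d}$. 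The spacetime inequality \eqref{eq 1' lem 7-6-3} follows by exactly the same computation, the commuted terms $\del_b\Omega^J v(t,\cdot)$ belonging to $\|v(t,\cdot)\|_{E_P^d}$.

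\textbf{Main obstacle.} There is no serious analytic obstacle here: the estimate is essentially a localized Hardy inequality, and the Hardy mechanism is built into the rotation fields themselves through $\Omega_{ab}=x^a\del_b-x^b\del_a$. The only point requiring care is the bookkeeping of the commutator remainders in Case 2, i.e.\ controlling the lower-order rotations that arise when pulling $\nabla_x$ past $\Omega^{I_2'}$; this is precisely handled by the commutator estimates proved just above, so the argument closes after summing over the finitely many index configurations.
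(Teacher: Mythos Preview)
Your proof is correct and follows essentially the same two-case split as the paper. One small simplification: the commutation you plan at the end of Case~2 is unnecessary, since the norm $\|u\|_{X_P^d}=\sum_{|I_1|+|I_2|\leq d-1}\|\del_x\del_x^{I_1}\Omega^{I_2}u\|_{L^2}$ already places the spatial derivative on the far left, so the terms $\|\del_b\Omega^{I_2'}u\|_{L^2}$ with $|I_2'|\leq d-1$ are literally summands of $\|u\|_{X_P^d}$ (take $I_1=0$, $I_2=I_2'$) and no further rearrangement is needed.
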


\begin{proof}
   The proof of \eqref{eq 1 lem 7-6-3} is decomposed into several cases, as follows. 

\

\noindent {\bf Case 1: $|I_1|>0$.}  In this case we suppose that $I_1 = (a_1,a_2,\cdots,a_n)$ and denote by $I_1' = (a_2,\cdots,a_n)$. Then $I_1'$ is of order $n-1\geq 0$, and we obtain 
$$
\big{\|}\del_x^{I_1}\Omega^{I_2}u\big{\|}_{L^2(\RR^3)} = \big{\|}\del_{a_1}\del_x^{I_1'}\Omega^{I_2}u\big{\|}_{L^2(\RR^3)}
\leq \big{\|}u\big{\|}_{X_P^d}.
$$

\

\noindent {\bf Case 2: $|I_1|=0, |I_2|> 0$.} In this case we suppose that $I_2 = (b_1,b_2,\cdots,b_n)$ and denote by $I_b' = (b_2,\cdots,b_n)$. Then $I_2'$ is of order $n-1\geq 0$ and we have 
$$
\aligned
\big{\|}(1+r)^{-1}\del_x^{I_1}\Omega^{I_2}u\big{\|}_{L^2(\RR^3)} 
& = \big{\|}(1+r)^{-1}\Omega_{b_1}\Omega^{I'_2}u \big{\|}_{L^2(\RR^3)}
\\
& \leq C\sum_b \big{\|}\del_b\Omega^{I'_2}u\big{\|}_{L^2(\RR^3)} \leq C\|u\|_{X_P^d}.
\endaligned
$$
By combining these two cases, \eqref{eq 1 lem 7-6-3} is proven.

   The proof of \eqref{eq 1' lem 7-6-3} is exactly the same if we replace $\del_x$ by $\del$ in the above proof.
\end{proof}

\begin{lemma}\label{lem 7-6-4}
For all function $u$ of class $X^2$, one has 
\begin{equation}
\|u\|_{L^{\infty}(\RR^3)}\leq C\sum_{|I_1|+|I_2|\leq 2}\|(1+|x|)^{-1}\del_x^{I_1}\Omega^{I_2}u\|_{L^2}.
\end{equation}
\end{lemma}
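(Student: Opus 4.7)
The plan is as follows. By a standard density argument it suffices to prove the inequality for $u \in C_c^\infty(\RR^3)$. Fix $x_0 \in \RR^3$, set $r_0 = |x_0|$, and split the argument into the near and far regions.

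For $r_0 \le 1$, the classical $3$-D Sobolev embedding on the ball $B = B(x_0, 1/2)$ gives
$$
|u(x_0)| \leq C \sum_{|I_1|\le 2} \|\partial_x^{I_1} u\|_{L^2(B)}.
$$
Since $|x| \le 3/2$ throughout $B$, one has $(1+|x|)^{-1} \ge 2/5$ on $B$, and the weight may be inserted freely into the right-hand side, yielding a bound by a constant multiple of $\sum_{|I_1|\le 2}\|(1+|x|)^{-1}\partial_x^{I_1}u\|_{L^2(\RR^3)}$, which is part of the desired right-hand side.

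For $r_0 > 1$, I would follow the strategy of Lemma~\ref{lem 7-6-1}. In polar coordinates $(r,\theta,\varphi)$, set $v(r,\theta,\varphi) := u(r\sin\theta\cos\varphi,\, r\sin\theta\sin\varphi,\, r\cos\theta)$, and consider the box $V = (r_0-1/2, r_0+1/2) \times (\theta_0 - 1/2, \theta_0+1/2) \times (\varphi_0 - 1/2, \varphi_0+1/2)$. The classical Sobolev inequality in the three variables $(r,\theta,\varphi)$ provides
$$
|u(x_0)|^2 \leq C \sum_{k_0+k_1+k_2 \le 2} \int_V |\partial_r^{k_0} \partial_\theta^{k_1} \partial_\varphi^{k_2} v|^2 \, dr\, d\theta\, d\varphi.
$$
Since $r_0 \ge 1$ and $r \in (r_0-1/2, r_0+1/2)$, one has $r/r_0 \in (1/2, 3/2)$, hence the uniform equivalence $(1+r_0)^{-2} \sim r_0^{-2} \sim (1+r)^{-2}$ on $V$; in particular $dr\,d\theta\,d\varphi \le C (1+r)^{-2} r^2\, dr\, d\theta\, d\varphi = C(1+r)^{-2}\, dx$. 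Substituting $\partial_\theta$ and $\partial_\varphi$ by the rotation generators $\Omega_{ab}$ (with coefficients homogeneous of degree zero, hence bounded), exactly as in the proof of Lemma~\ref{lem 7-6-1}, gives
$$
|u(x_0)|^2 \leq C \sum_{k_0+|I_2|\le 2} \|(1+r)^{-1}\partial_r^{k_0}\Omega^{I_2}u\|_{L^2(\RR^3)}^2.
$$

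The last step is to translate the radial derivatives $\partial_r^{k_0}$ into Cartesian derivatives $\partial_x^{I_1}$: by the identity \eqref{eq pr 1 lem 7-6-2} established in the proof of \eqref{eq com lem 7-6'-1 b}, one has $\partial_r^{k_0} = \sum_{|I_1|\le k_0} A_{I_1}^{k_0} \partial_x^{I_1}$ with $A_{I_1}^{k_0}$ homogeneous of degree $-k_0+|I_1| \le 0$, hence bounded on $\{|x|\ge 1\}$. This converts the previous bound into $C\sum_{|I_1|+|I_2|\le 2}\|(1+r)^{-1}\partial_x^{I_1}\Omega^{I_2}u\|_{L^2(\RR^3)}$, completing the far-region case. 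Combining with the near-region bound yields the inequality. I do not expect a genuine obstacle: the proof is essentially a refinement of Lemma~\ref{lem 7-6-1} in which the weight $(1+r_0)^{-1}$ that would ordinarily be pulled outside the integral is instead absorbed into the integrand by exploiting the uniform equivalence $1+r \sim 1+r_0$ on the localization box $V$.
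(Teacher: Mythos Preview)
Your argument is correct. Both the near-region treatment and the far-region Sobolev-on-a-box argument go through as you describe.

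The paper's own proof differs in the far region: rather than re-running the polar-coordinate Sobolev argument with the weight $(1+r)^{-1}$ absorbed into the measure, it invokes Lemma~\ref{lem 7-6-1} as a black box. Concretely, for $|x|\ge 1/2$ the paper writes $u = (1-\chi(2|\cdot|))u$, observes
\[
|u(x)| \le \big\|(1+|\cdot|)^{-1}(1-\chi(2|\cdot|))u\big\|_{\mathcal{E}_{-1}},
\]
and then applies the bound $\|\cdot\|_{\mathcal{E}_{-1}}\le C\|\cdot\|_{X^2}$ from Lemma~\ref{lem 7-6-1} to the function $(1+|\cdot|)^{-1}(1-\chi(2|\cdot|))u$. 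Distributing $\del_x^{I_1}\Omega^{I_2}$ across this product, using that the radial factor $(1+|\cdot|)^{-1}(1-\chi(2|\cdot|))$ is annihilated by the rotations and has Cartesian derivatives bounded by $C(1+|x|)^{-1}$ on its support, yields the claim. Your approach is more direct and self-contained---you inline the mechanism of Lemma~\ref{lem 7-6-1} and observe that the factor $r_0^{-2}$ it produces can instead stay inside the integral as $(1+r)^{-2}$. The paper's approach is more modular, reusing Lemma~\ref{lem 7-6-1} at the cost of the cutoff-and-product-rule manipulation. Both yield the same estimate with comparable effort.
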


\begin{proof}
This inequality is equivalent to
\begin{equation}\label{eq pr1 lem 7-6-4}
|u(x)|\leq C \sum_{|I_1|+|I_2|\leq 2}\|(1+|x|)^{-1}\del_x^{I_1}\Omega^{I_2}u\|_{L^2}, \quad x\in \RR^3
\end{equation}
for all $u\in C_c^{\infty}(\RR^3)$. Then by regularization, this inequality is hold for all $u\in X^2$. This is proven by distinguish different $x$.

Let $\chi(\cdot)$ a $C^{\infty}$ function defined on $[0,\infty)$ with
$$
\chi(s) =
\left\{
\aligned
&1, \quad  s\leq 1/2,
\\
&0, \quad 1 \leq s < \infty.
\endaligned
\right.
$$
Then $\chi(|\cdot|)$ is a $C_c^{\infty}(\RR^3)$ function.

Now we consider  the case $|x|\leq 1/2$. We consider the function
$$
f(x) := \chi(|x|)u(x)
$$
which is in class $C_c^{\infty}(\{|x|<1\})$. Then we apply the classical Sobolev's inequality:
$$
\aligned
|u(x)| &= |f(x)|\leq C\sum_{|I|\leq 2}\|\del_x^If\|_{L^2(\{|x|<1\})} =  C\sum_{|I|\leq 2}\big\|\del_x^I\big(\chi(|x|)u(x)\big)\big\|_{L^2(\{|x|<1\})}
\\
&= C\sum_{|I_1|+|I_2|\leq |I|,\atop |I|\leq 2}\big\|\del_x^{I_1}\chi(|x|)\del_x^{I_2}u(x)\big\|_{L^2(\{|x|<1\})}\leq C\sum_{|I_2|\leq 2}\|\del_x^{I_2}u(x)\|_{L^2(\{|x|<1\})}
\\
& \leq C\sum_{|I_2|\leq 2}\big\|(1+|x|)^{-1}\del_x^{I_2}u(x)\big\|_{L^2(\{|x|<1\})}
\leq C\sum_{|I_2|\leq 2}\big\|(1+|x|)^{-1}\del_x^{I_2}u(x)\big\|_{L^2(\RR^3)},
\endaligned
$$
where we used that $\del_x^{I_1}\chi(|x|)$ are bounded.

When $|x|\geq 1/2$, we have 
$$
\aligned
|u(x)| &= \big|\big(1-\chi(2|x|)\big)u(x)\big| = \big\|(1+|x|)\cdot(1+|x|)^{-1}\big(1-\chi(2|x|)\big)u(x)\big\|_{L^\infty(\RR^3)}
\\
& \leq \big\|(1+|x|)^{-1}\big(1-\chi(2|x|)\big)u(x)\big\|_{\mathcal{E}_{-1}}
\endaligned
$$
and the by \eqref{eq 1 lem 7-6-1},
$$
\aligned
|u(x)|& \leq \big\|(1+|x|)^{-1}\big(1-\chi(2|x|)\big)u(x)\big\|_{\mathcal{E}_{-1}}
\\
& \leq C\sum_{|I_1|+|I_2|\leq 2}\big\|\del_x^{I_1}\Omega^{I_2}\big((1+|x|)^{-1}\big(1-\chi(2|x|)\big)u(x)\big)\big\|_{L^2(\RR^3)}
\\
&= C\sum_{|J_1|+|J_1'|=I_1\atop |I_1|+|I_2|\leq 2}
\big\|\del_x^{J_1'}\big((1+|x|)^{-1}\big(1-\chi(2|x|)\big)\big)\,
\\
& \hskip3.cm \del_x^{J_1}\Omega^{I_2}\big((1+|x|)^{-1}\big(1-\chi(2|x|)\big)u(x)\big)\big\|_{L^2(\RR^3)},
\endaligned
$$
where for the last equality we have used the fact that
$$
\Omega_a  \big((1+|x|)^{-1}\big(1-\chi(2|x|)\big)\big) = 0, 
$$
since both factors are radial symmetric. Then we will prove that
$$
\del^{J_1'}\big((1+|x|)^{-1}\big(1-\chi(2|x|)\big)\big)\leq C(1+|x|)^{-1}
$$
for $|J_1'|\leq 2$. This is check directly by calculating and the fact that $\big(1-\chi(2|x|)\big)$ and its derivatives are supported out of the ball $\{|x|<1/4\}$. Then the desired result is established.
\end{proof}
 

\subsection{Linear estimates}

We begin with the linear theory of wave equation with the initial data given in $X^{d+1}_H(\RR^3)\times X^d(\RR^3)$. For the simplicity of proof, we introduce the energy functional with respect to a metric $g^{\alpha\beta}$ defined in $\RR^4$ as follows:
\begin{equation}
E_g(t,u) := \bigg(\int_{\RR^3}\big(-g^{00}|\del_tu|^2 + g^{ab}\del_au\del_bu\big)(t,\cdot) dx\bigg)^{1/2}
\end{equation}
and
\begin{equation}
E_{g,c}(t,u) := \bigg(\int_{\RR^3}\big(-g^{00}|\del_tu|^2 + g^{ab}\del_au\del_bu + c^2v^2\big)(t,\cdot)  dx\bigg)^{1/2}. 
\end{equation}
A metric $g^{\alpha\beta}$ is said to be {\bf coercive with constant $C>0$} if
\begin{equation}\label{eq coe 7-7}
C^{-1}\|\nabla u\|_{L^2(\RR^3)}\leq E_g(t,u) \leq C\|\nabla u\|_{L^2(\RR^3)},
\end{equation}
where $\nabla u$ refers to the spacetime gradient of $u$. At this juncture, let us introduce a 
notation for the $C_c^{\infty}$ functions defined in the region $[0,T]\times \RR^3$:
$$
\|u(t,\cdot)\|_{X_E^{d+1}} := \sum_{|I_1|+|I_2|\leq d}\|\nabla \del_x^{I_1}\Omega^{I_2}u(t,\cdot)\|_{L^2(\RR^3)}.
$$
By \eqref{eq com lem 7-6'-1 d} and \eqref{eq com lem 7-6'-1 e}, the norm $\|\cdot\|_{X_E^{d+1}}$ and $\|\nabla(\cdot)\|_{X^d}$ are equivalent.

We also introduce 
$$
E_g^d(s,u): = \sum_{|I_1|+|I_2|\leq d}E_g(s,\del^{I_1}\Omega^{I_2}u),\quad
E_{g,c}^d(s,u): = \sum_{|I_1|+|I_2|\leq d}E_{g,c}(s,\del^{I_1}\Omega^{I_2}u).
$$
Then when the coercivity condition \eqref{eq coe 7-7} is assumed,
\begin{equation}\label{eq coek0 7-7}
C^{-1}\|u(t,\cdot)\|_{E_P^{d+1}}\leq E_g^k(s,u)\leq C\|u(t,\cdot)\|_{E_P^{d+1}}.
\end{equation}
 
The existence result in the next section is based on the following linear estimate.

\begin{lemma}[$L^\infty$ type estimate for wave equation]\label{lem 7-7-1}
Let $u$ be a smooth function defined in the region $[0,T]$ and let $F = -\Box u$, then for any $0\leq  t\leq T$
\begin{equation}\label{eq 1 lem 7-7-1}
\|u(t,\cdot)\|_{\mathcal{E}_{-1}(\RR^3)}
\leq Ct\int_0^t\|F(s,\cdot)\|_{\mathcal{E}_{-1}(\RR^3)}ds
+ C\big(\|{u(0,x)}\|_{\mathcal{E}_{-1}(\RR^3)} + t \|\nabla u(0,x)\|_{\mathcal{E}_{-1}(\RR^3)}\big),
\end{equation}
where $\nabla u$ refers to the spacetime gradient of $u$.
\end{lemma}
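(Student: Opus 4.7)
\smallskip

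\textbf{Proof plan.} The plan is to represent $u$ via Kirchhoff's formula for the $3$D wave equation and then perform explicit spherical integration, converting the weight $(1+|y|)^{-1}$ against $\|F(s,\cdot)\|_{\mathcal E_{-1}}$ into a tractable radial integral. Writing $u=u_h+u_p$, where $u_h$ solves the homogeneous problem with data $(u_0,u_1):=(u(0,\cdot),\del_t u(0,\cdot))$ and $u_p$ is the Duhamel term driven by $F=-\Box u$, Kirchhoff gives
\begin{equation*}
u_h(t,x)=\del_t M[u_0](t,x)+M[u_1](t,x),
\qquad
u_p(t,x)=\frac{1}{4\pi}\int_0^t\frac{1}{t-s}\int_{\del B(x,t-s)}F(s,y)\,dS(y)\,ds,
\end{equation*}
with $M[f](t,x):=\frac{1}{4\pi t}\int_{\del B(x,t)}f(y)\,dS(y)$.

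\smallskip

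The technical core is a weighted spherical integral: for $x\in\RR^3$ with $|x|=\rho$ and $R\geq 0$,
\begin{equation*}
\int_{\del B(x,R)}\frac{dS(y)}{1+|y|}\leq 4\pi R\,\min\bigl(1,R/\rho\bigr).
\end{equation*}
I would prove this by parametrising $y=x+R\omega$ with $\omega\in S^2$, using the colatitude $\theta$ between $\omega$ and $x$, then switching to $\xi=|y|$ via $\sin\theta\,d\theta=-(\xi/(R\rho))\,d\xi$, which collapses the surface measure to $(R\xi/\rho)\,d\xi\,d\phi$ on $\xi\in[|\rho-R|,\rho+R]$. The integral becomes $\frac{2\pi R}{\rho}\int_{|\rho-R|}^{\rho+R}\frac{\xi}{1+\xi}d\xi$, and using $\frac{\xi}{1+\xi}\leq 1$ on one hand and $\frac{\xi}{1+\xi}\leq \xi$ on the other, the length integral is bounded by $2\min(\rho,R)$.

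\smallskip

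Combining this with the pointwise bound $|F(s,y)|\leq \|F(s,\cdot)\|_{\mathcal E_{-1}}/(1+|y|)$ yields
\begin{equation*}
(1+|x|)\,|u_p(t,x)|\leq \int_0^t\|F(s,\cdot)\|_{\mathcal E_{-1}}\,(1+\rho)\min\bigl(1,(t-s)/\rho\bigr)\,ds,
\end{equation*}
and the supremum over $x$ of the factor $(1+\rho)\min(1,(t-s)/\rho)$, split into the regimes $\rho\geq t-s$ (giving $R+R/\rho\leq C(1+R)$) and $\rho<t-s$ (giving $1+\rho\leq 1+R$), is bounded by $C(1+t)$; in the regime of $t$ relevant to our application this absorbs into the $Ct$ prefactor of the statement. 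For the homogeneous piece, the same weighted sphere bound with $R=t$ gives $(1+|x|)|M[u_1](t,x)|\leq C\,t\,\|u_1\|_{\mathcal E_{-1}}$, while the time derivative in $\del_t M[u_0]$ is handled by the classical identity $\del_t M[u_0](t,x)=\frac{1}{4\pi t}\int_{\del B(x,t)}\nabla u_0(y)\cdot\nu\,dS(y)+\frac{1}{4\pi t^2}\int_{\del B(x,t)}u_0(y)\,dS(y)$, producing respectively a $t\|\nabla u_0\|_{\mathcal E_{-1}}$ contribution and a $\|u_0\|_{\mathcal E_{-1}}$ contribution.

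\smallskip

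The main obstacle is the book-keeping in the regime where $\rho$ and $t-s$ are comparable, in which the sphere $\del B(x,t-s)$ sweeps close to the origin and the weight $(1+|y|)^{-1}$ is not uniformly small on the sphere. The two-case split $\rho\gtrless t-s$ precisely captures this, and the change of variable $\theta\mapsto\xi=|y|$ is the computation that makes it work. Once this sphere estimate is secured, the remaining steps are routine substitutions into the Kirchhoff formula and collecting terms into the three pieces on the right-hand side of \eqref{eq 1 lem 7-7-1}.
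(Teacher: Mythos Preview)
Your approach is essentially the same as the paper's: represent $u$ by Kirchhoff's formula, bound the source and data terms separately, and reduce the weighted spherical integral to the radial integral $\frac{2\pi R}{\rho}\int_{|\rho-R|}^{\rho+R}\frac{\xi}{1+\xi}\,d\xi$ via the change of variable $\theta\mapsto |y|$. The paper arrives at exactly the same integral (with $\tau$ in place of your $\xi$) and then splits into cases according to the sign of $|x|-t$.

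There is one small gap. As you wrote it, your spherical bound $\int_{\partial B(x,R)}\frac{dS}{1+|y|}\le 4\pi R\min(1,R/\rho)$ leads only to $(1+\rho)\min(1,R/\rho)\le C(1+t)$, and the hand-wave ``absorbs into the $Ct$ prefactor'' does not prove the lemma as stated for small $t$. The fix is already implicit in your own remark: the bound $\xi/(1+\xi)\le \xi$ gives the \emph{additional} estimate $\int_{|\rho-R|}^{\rho+R}\frac{\xi}{1+\xi}\,d\xi\le 2\rho R$, hence $\int_{\partial B(x,R)}\frac{dS}{1+|y|}\le 4\pi R^2$ as well. Using this second bound for $\rho\le 1$ and your first bound for $\rho>1$ yields $(1+\rho)\cdot(\text{sphere integral})/(4\pi R)\le 2R=2(t-s)$ in every case, which recovers the stated $Ct$ factor. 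The paper achieves the same thing by treating $|x|\le 1$ separately via the crude $L^\infty$ bound $|F|\le\|F\|_{\mathcal E_{-1}}$, which directly produces a factor $(t-s)$ from the sphere area; either route closes the argument.
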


\begin{proof}
This estimate based on the explicit expression of the linear wave equation. We consider the Cauchy problem:
\begin{equation}
\aligned
&\Box u = -F, 
\\
&u(0,x) = -f(x),\quad \del_t u(0,x) = -g(x). 
\endaligned
\end{equation}
Then, $u$ can be expressed by
$$
\aligned
u(t,x) &= \frac{1}{4\pi}\int_0^t \frac{1}{t-s}\int_{|y| = t-s}F(s,x-y)d\sigma(y)ds
\\
&+ \frac{1}{4\pi t}\int_{|y| = t}g(x-y)d\sigma(y)
+ \frac{1}{4\pi t^2}\int_{|y| = t}\big(f(x-y) - <\del_xf (x-y),y>\big) d\sigma(y)
\\
:&= u_1(t,x) + u_2(t,x) + u_3(t,x).
\endaligned
$$
Here, $d\sigma(y)$ refers to the standard Lebesgue measure on the sphere $|y| = t-s$ or $|y| = t$, and $<\cdot,\cdot>$ refers to the standard scalar product on $\RR^3$. The notation $\del_xf$ stands for the (spacial) gradient of $f$. The remained work is to estimate $u_i$ with $i=1,2,3$.

When $|x|\leq 1$, we make use of the fact that $\|\cdot\|_{L^{\infty}(\RR^3)}\leq \|\cdot\|_{\mathcal{E}_{-1}}$. Then, we have 
$$
\aligned
|u_1(t,x)|
& \leq \frac{1}{4\pi}\int_0^t \frac{\|F(s,\cdot)\|_{\mathcal{E}_{-1}}}{t-s}\int_{|y| = t-s}d\sigma(y)ds
\\
& \leq \int_0^t \|F(s,\cdot)\|_{\mathcal{E}_{-1}}(t-s)ds \leq t\int_0^t \|F(s,\cdot)\|_{\mathcal{E}_{-1}}.
\endaligned
$$
The other terms $u_2$ and $u_3$ are estimated similarly. Then \eqref{eq 1 lem 7-7-1} is proven in this case.

When $|x|>1$, we need to establish the decay estimate of the solution at spatial infinity. We begin with $u_1$.
$$
\aligned
|u_1(t,x)| & \leq \frac{1}{4\pi}\int_0^t\frac{1}{t-s}\int_{|y| = t-s}\frac{\|F(s,\cdot)\|_{\mathcal{E}_{-1}}}{1+|x-y|}d\sigma(y)ds
\\
&= \frac{1}{4\pi}\int_0^t\frac{\|F(s,\cdot)\|_{\mathcal{E}_{-1}}}{t-s}\int_{|y| = t-s}(1+|x-y|)^{-1}d\sigma(y)ds. 
\endaligned
$$
We focus on the expression $\int_{|y| = t-s}(1+|x-y|)^{-1}d\sigma(y)$. We make the following parametrization of the sphere $\{|y| = t-s\}$. Let $\theta$ be the angle from the vector $-x$ to the vector $-y$, and $\varphi$ refers the angle from the plan determined by pair of vector $(x,y)$ to a fixed plan containing $x$ (for example the plan determined by $x$ and $(0,0,1)$). With this parametrization, the volume form $\sigma(y)$ has the following expression:
$$
d\sigma(y) = (t-s)^2\sin\theta d\theta d\varphi. 
$$
Also,  by the classical trigonometrical theorem ``Law of cosines'',
$$
|x-y|^2 = |x|^2 + |t-s|^2 - 2 |x|(t-s)\cos\theta.
$$
Then, we obtain 
$$
\aligned
\int_{|y| = t-s}(1+|x-y|)^{-1}d\sigma(y)
&=  \int_0^{2\pi} \int_0^\pi \frac{(t-s)^2\sin\theta d\theta d\varphi}{1+\big(|x|^2 + |t-s|^2 - 2|x|(t-s)\cos \theta\big)^{1/2}}
\\
&= 2\pi\int_{-1}^1\frac{(t-s)^2d\gamma}{1+\big(|x|^2 + |t-s|^2 - 2|x|(t-s)\gamma\big)^{1/2}},
\endaligned
$$
where $\gamma := \cos\theta$. Then, we have 
$$
\aligned
\int_{|y| = t-s}(1+|x-y|)^{-1}d\sigma(y)
&=\frac{2\pi(t-s)}{|x|}\int_{-1}^1\frac{|x|(t-s)d\gamma}{1+\big(|x|^2 + |t-s|^2 - 2|x|(t-s)\gamma\big)^{1/2}}
\\
&=\frac{2\pi(t-s)}{|x|}\int_{||x| - (t-s)|}^{|x|+(t-s)}\frac{\tau d\tau}{1+ \tau},
\endaligned
$$
where $\tau := \big(|x|^2 + |t-s|^2 - 2|x|(t-s)\gamma\big)^{1/2}$. Then, we obtain 
$$
\aligned
|u_1(t,x)|& \leq \frac{1}{4\pi}\int_0^t\frac{\|F(s,\cdot)\|_{\mathcal{B}^-1}}{t-s}\int_{|y| = t-s}(1+|x-y|)^{-1}d\sigma(y)ds
\\
& \leq \frac{1}{2|x|}\int_0^t\|F(s,\cdot)\|_{\mathcal{E}_{-1}}\int_{||x| - (t-s)|}^{|x|+(t-s)}\frac{\tau d\tau}{1+ \tau}ds.
\endaligned
$$
Now the discussion should distinguish between two cases. 

\

\noindent {\bf Case 1: $|x|>t$.} In this case $|x|>t-s$ always holds, and we find 
$$
\aligned
|u_1(t,x)|& \leq \frac{1}{2|x|}\int_0^t\|F(s,\cdot)\|_{\mathcal{E}_{-1}}\int_{||x| - (t-s)|}^{|x|+(t-s)}\frac{\tau d\tau}{1+ \tau}ds
\\
&= \frac{1}{2|x|}\int_0^t\|F(s,\cdot)\|_{\mathcal{E}_{-1}}\int_{|x| - (t-s)}^{|x|+(t-s)}\frac{\tau d\tau}{1+ \tau}ds
\\
& \leq \frac{1}{2|x|}\int_0^t\|F(s,\cdot)\|_{\mathcal{E}_{-1}}\int_{|x| - (t-s)}^{|x|+(t-s)}d\tau ds
\\
&=\frac{1}{|x|}\int_0^t(t-s)\|F(s,\cdot)\|_{\mathcal{E}_{-1}}ds \leq \frac{t}{|x|}\int_0^t\|F(s,\cdot)\|_{\mathcal{E}_{-1}}ds, 
\endaligned
$$
which leads to
$$
|x||u(t,x)|\leq t\int_0^t\|F(s,\cdot)\|_{\mathcal{E}_{-1}}ds.
$$

\

\noindent {\bf Case 2: $|x|\leq t$.} In this case we need a more precise calculation: 
$$
\aligned
|u_1(t,x)|& \leq \frac{1}{2|x|}\int_0^t\|F(s,\cdot)\|_{\mathcal{E}_{-1}}\int_{||x| - (t-s)|}^{|x|+(t-s)}\frac{\tau d\tau}{1+ \tau}ds
\\
&= \frac{1}{2|x|}\int_0^{t-|x|}\|F(s,\cdot)\|_{\mathcal{E}_{-1}}\int_{(t-s)-|x|}^{|x|+(t-s)}\frac{\tau d\tau}{1+ \tau}ds
\\
& \qquad 
+ \frac{1}{2|x|}\int_{t-|x|}^t\|F(s,\cdot)\|_{\mathcal{E}_{-1}}\int_{|x|-(t-s)}^{|x|+(t-s)}\frac{\tau d\tau}{1+ \tau}ds
\\
& \leq \frac{1}{2|x|}\int_0^{t-|x|}2|x|\|F(s,\cdot)\|_{\mathcal{E}_{-1}}ds
    + \frac{1}{2|x|}\int_{t-|x|}^t2(t-s)\|F(s,\cdot)\|_{\mathcal{E}_{-1}}ds
\\
& \leq \int_0^{t-|x|}\|F(s,\cdot)\|_{\mathcal{E}_{-1}}ds + \int_{t-|x|}^t\|F(s,\cdot)\|_{\mathcal{E}_{-1}}ds
= \int_0^t\|F(s,\cdot)\|_{\mathcal{E}_{-1}}ds. 
\endaligned
$$
Recalling that $|x|\leq t\leq T$, we find 
$$
|u_1(t,x)|\leq \int_0^t\|F(s,\cdot)\|_{\mathcal{E}_{-1}}ds \leq \frac{t}{|x|}\int_0^t\|F(s,\cdot)\|_{\mathcal{E}_{-1}}ds, 
$$
so that
$$
|x||u_1(t,x)|\leq t\int_0^t\|F(s,\cdot)\|_{\mathcal{E}_{-1}}ds.
$$

The estimate of $u_2$ and $u_3$ are similar: 
$$
\aligned
|u_2(t,x)|& \leq \frac{1}{4\pi t}\int_{|y| = t}|g(x-y)|d\sigma(y)
\\
& \leq  \frac{\|g\|_{\mathcal{E}_{-1}}}{4\pi t}\int_{|y| = t}(|x-y|+1)^{-1}d\sigma(y). 
\endaligned
$$
By the same parametrization made in the estimate of $u_1$ and similar calculation, we can conclude with
$$
|x||u_2(t,x)|\leq t\|g\|_{\mathcal{E}_{-1}}.
$$

In the same way, we have 
$$
\aligned
|u_3(t,x)| & \leq \frac{1}{4\pi t^2}\int_{|y|=t}\frac{\|f\|_{\mathcal{E}_{-1}}d\sigma(y)}{|x-y|+1}
 + \frac{1}{4\pi t}\int_{|y|=t}\frac{\|\del_x f\|_{\mathcal{E}_{-1}}d\sigma(y)}{|x-y|+1}
\\
& \leq  \frac{1}{|x|}\|f\|_{\mathcal{E}_{-1}} + \frac{t}{|x|}\|\del_x f\|_{\mathcal{E}_{-1}}.
\endaligned
$$
By combining the estimates made in $|x|\leq 1$ and $|x|>1$, the desired result is established.
\end{proof}

\begin{lemma}[$L^2$ type estimate for wave equation]\label{lem 7-7-2}
Let $u$ be a smooth function defined in the region $[0, T]\times \RR^3$ and $g^{\alpha\beta} = m^{\alpha\beta} + H^{\alpha\beta}$ be a smooth metric, where $m^{\alpha\beta}$ is the Minkowski metric with signature $(-,+,+,+)$. Assume that $g^{\alpha\beta}$ is coercive with constant $C$. Let $f = - g^{\alpha\beta}\del_{\alpha}\del_{\beta} u$, then for any $0\leq t\leq T$,
\begin{equation}\label{eq 1 lem 7-7-2}
\frac{d}{dt}E_g(t,u) \leq C\|f(t,\cdot)\|_{L^2(\RR^3)} + C\sum_{\alpha,\beta}\|\nabla g^{\alpha\beta}(t,\cdot)\|_{L^{\infty}}E_g(t,u),
\end{equation}
where $\nabla u$ refers to the spacetime gradient of $u$.
\end{lemma}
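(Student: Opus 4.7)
The plan is a classical energy/multiplier argument: differentiate $E_g(t,u)^2$ in $t$, use the wave equation to substitute for the principal second-order term, and move spatial derivatives via integration by parts so that every surviving factor is either the source $f$, the energy itself, or a first-order derivative of the metric coefficients.

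Concretely, I would first observe that $E_g^2(t,u)=\int_{\RR^3}\bigl(-g^{00}(\del_tu)^2+g^{ab}\del_au\del_bu\bigr)\,dx$ depends smoothly on $t$ under the standing regularity assumptions, and compute
\[
\tfrac{d}{dt}E_g^2
=\int_{\RR^3}\bigl(-\del_tg^{00}(\del_tu)^2+\del_tg^{ab}\del_au\del_bu\bigr)\,dx
-2\int_{\RR^3}g^{00}\del_tu\,\del_t^2u\,dx
+2\int_{\RR^3}g^{ab}\del_au\,\del_t\del_bu\,dx .
\]
In the last integral I would integrate by parts in $x^b$ to obtain $-2\int g^{ab}\del_a\del_bu\,\del_tu\,dx-2\int\del_bg^{ab}\,\del_au\,\del_tu\,dx$, so that $g^{00}\del_t^2u+g^{ab}\del_a\del_bu$ appears and can be replaced using
\[
g^{\alpha\beta}\del_\alpha\del_\beta u=-f,\qquad\text{i.e.}\qquad g^{00}\del_t^2u+g^{ab}\del_a\del_bu=-f-2g^{0a}\del_t\del_au.
\]

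The resulting $g^{0a}$ term $4\int g^{0a}\del_au\cdot\del_t u$ is handled by the pointwise identity $\del_t\del_au\cdot\del_tu=\tfrac12\del_a((\del_tu)^2)$ followed by one more integration by parts in $x^a$; it becomes $-2\int\del_ag^{0a}(\del_tu)^2\,dx$. Collecting everything, I arrive at
\[
\tfrac{d}{dt}E_g^2=2\int_{\RR^3}f\,\del_tu\,dx
+\int_{\RR^3}R(\nabla g,\nabla u)\,dx,
\]
where $R$ is a pointwise bilinear form in first-order derivatives of $u$ whose coefficients are first-order derivatives of the metric components $g^{\alpha\beta}$. Each term in $R$ is estimated by $C\sum_{\alpha,\beta}\|\nabla g^{\alpha\beta}(t,\cdot)\|_{L^\infty}$ times a product of two factors of the form $|\del_\gamma u|$, and coercivity \eqref{eq coe 7-7} bounds $\int|\nabla u|^2\,dx$ by $C\,E_g^2$.

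Cauchy--Schwarz on the source integral gives $\bigl|2\int f\,\del_tu\,dx\bigr|\le 2\|f\|_{L^2}\|\del_tu\|_{L^2}\le C\|f\|_{L^2}E_g$, again using coercivity. Putting these bounds together yields
\[
\tfrac{d}{dt}E_g^2\le C\|f(t,\cdot)\|_{L^2}E_g(t,u)+C\sum_{\alpha,\beta}\|\nabla g^{\alpha\beta}(t,\cdot)\|_{L^\infty}E_g^2(t,u),
\]
and dividing by $2E_g$ (interpreted in the usual $E_g+\varepsilon$ regularization sense, or by noting $\tfrac{d}{dt}E_g^2=2E_g\tfrac{d}{dt}E_g$ wherever $E_g>0$) gives the announced inequality. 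The only mildly delicate step is the treatment of the cross terms $g^{0a}$, which do not appear in the definition of $E_g$ itself but do appear in $g^{\alpha\beta}\del_\alpha\del_\beta u$; the above computation shows that they produce only an admissible $\|\nabla g\|_{L^\infty}E_g^2$ contribution rather than spoiling the energy identity.
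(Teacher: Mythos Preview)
Your argument is correct and is essentially the same classical multiplier computation as the paper's: the paper multiplies the equation by $\del_t u$ and uses the pointwise identity
\[
\del_t u\, g^{\alpha\beta}\del_\alpha\del_\beta u
=\tfrac12\del_t\bigl(g^{00}(\del_t u)^2-g^{ab}\del_a u\,\del_b u\bigr)
+\del_a\bigl(g^{a\beta}\del_t u\,\del_\beta u\bigr)
+\tfrac12\del_t g^{\alpha\beta}\del_\alpha u\,\del_\beta u
-\del_\alpha g^{\alpha\beta}\del_t u\,\del_\beta u
\]
before integrating, whereas you differentiate $E_g^2$ directly and integrate by parts; the two routes produce the same terms and the same final estimate. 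One small typo: in ``The resulting $g^{0a}$ term $4\int g^{0a}\del_au\cdot\del_t u$'' you mean $4\int g^{0a}\,\del_t\del_a u\cdot\del_t u\,dx$, as your next sentence makes clear.
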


\begin{proof}
This is a standard calculation and we write 
$$
\aligned
\del_t u g^{\alpha\beta}\del_{\alpha}\del_{\beta} u
&=\frac{1}{2}\del_0\big(g^{00}(\del_0 u)^2 - g^{ab}\del_a u\del_b u\big) + \del_a\big(g^{a\beta}\del_t u \del_{\beta} u\big)
\\
&\quad 
+ \frac{1}{2}\del_tg^{\alpha\beta}\del_{\alpha}u\del_{\beta}u - \del_{\alpha}g^{\alpha\beta}\del_t u \del_{\beta}u. 
\endaligned
$$
Integrating on the slice $\{t=\tau\}$ and applying Stokes' formula, we find 
$$
\aligned
&\frac{1}{2}\int_{\RR^3}\del_0\big(-g^{00}(\del_t u)^2 + g^{ab}\del_au\del_bu \big)dx
\\
& = \int_{\RR^3}\del_tufdx
+ \frac{1}{2}\int_{\RR^3}\big(\del_tg^{\alpha\beta}\del_{\alpha}u\del_{\beta}u - 2\del_{\alpha}g^{\alpha\beta}\del_t u \del_{\beta}u\big)dx, 
\endaligned
$$
which leads to (by the coercivity condition)
$$
\aligned
\frac{1}{2}\frac{d}{dt}\big(E_g(t,u)\big)^2
= \int_{\RR^3}\del_tu fdx + \frac{1}{2}\int_{\RR^3}\big(\del_tg^{\alpha\beta}\del_{\alpha}u\del_{\beta}u - 2\del_{\alpha}g^{\alpha\beta}\del_t u \del_{\beta}u\big)dx, 
\endaligned
$$
which is
$$
\aligned
E_g(t,u)\frac{d}{dt}E_g(t,u)
&= \int_{\RR^3}\del_tufdx
+ \frac{1}{2}\int_{\RR^3}\big(\del_tg^{\alpha\beta}\del_{\alpha}u\del_{\beta}u - 2\del_{\alpha}g^{\alpha\beta}\del_t u \del_{\beta}u\big)dx.
\endaligned
$$
So we have 
$$
\aligned
E_g(t,u)\frac{d}{dt}E_g(t,u)
\leq \|\del_t u\|_{L^2(\RR^3)}\|f\|_{L^2(\RR^3)} + C\|\nabla g^{\alpha\beta}\|_{L^{\infty(\RR^3)}}\|\nabla u\|^2_{L^2(\RR^3)}
\endaligned
$$
and by recalling \eqref{eq coe 7-7}
$$
\frac{d}{dt}E_g(t,u)
\leq C\|f\|_{L^2(\RR^3)} + C\|\nabla g^{\alpha\beta}\|_{L^{\infty(\RR^3)}}E_g(t,u).
$$
\end{proof}

Now we combine Lemmas~\ref{lem 7-7-1} and \ref{lem 7-7-2} with the Sobolev estimate \eqref{eq 1 lem 7-6-1} in order to get a estimate on the $E^{d+1}_H$ norm.   

\begin{lemma}[$E_H^{d}$ norm estimate on wave equation]\label{lem 7-7-3}
Let $u$ be a smooth function defined in the region $[0,T]\times \RR^3$. Let $g^{\alpha\beta}$ be a smooth metric, $g^{\alpha\beta} = m^{\alpha\beta} + H^{\alpha\beta}$ with $m^{\alpha\beta}$ the standard Minkowski metric. Assume that $g$ satisfies the following coercivity condition with a constant $C>0$ and $|H^{00}|\leq 1/2$. 
Let $g^{\alpha\beta}\del_{\alpha\beta} u = -F$. Then the following estimates hold for $d\geq 3$:
\begin{subequations}\label{eq 1 lem 7-7-3}
\begin{equation}\label{eq 1 lem 7-7-3 a}
\aligned
\|u(t,\cdot)\|_{E_P^{d+1}} & \leq C\|u(0,\cdot)\|_{E_P^{d+1}} e^{C\int_0^tD_d(\tau)d\tau} + C\int_0^t\|F(\tau,\cdot)\|_{E^d}\,e^{C\int_\tau^tD_d(s)ds} d\tau,
\endaligned
\end{equation}
\end{subequations}
\begin{equation}\label{eq 2 lem 7-7-3}
\aligned
\|u(t,\cdot)\|_{\mathcal{E}_{-1}}
& \leq Ct\int_0^t\|F(s,\cdot)\|_{\mathcal{E}_{-1}}ds
+ CtE^3_g(0,u)\sum_{\alpha,\beta}\int_0^t \|H^{\alpha\beta}(s,\cdot)\|_{L^{\infty}}e^{C\int_0^sD_3(\tau)d\tau}ds
\\
&+Ct\sum_{\alpha,\beta}\int_0^t\|H^{\alpha\beta}(s,\cdot)\|_{L^{\infty}}\int_0^s\|F(\tau,\cdot)\|_{E^3}
e^{C\int_\tau^sD_3(\lambda)d\lambda}ds + CtE_g^3(0,u),
\endaligned
\end{equation}
where
$$
D_k(t):= \max_{\alpha,\beta}\|H^{\alpha\beta}(t,\cdot)\|_{E_H^{k+1}}.
$$
\end{lemma}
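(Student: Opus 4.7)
The plan is to reduce \eqref{eq 1 lem 7-7-3 a} to the $L^2$ energy estimate of Lemma~\ref{lem 7-7-2} applied to each commuted equation $g^{\alpha\beta}\del_\alpha\del_\beta(Z^I u) = -Z^I F - [Z^I, H^{\alpha\beta}\del_\alpha\del_\beta]u$ (where $Z^I = \del^{I_1}\Omega^{I_2}$), sum over $|I_1|+|I_2|\leq d$, and close via Gronwall's lemma. The $\mathcal{E}_{-1}$ estimate \eqref{eq 2 lem 7-7-3} will then follow by rewriting the equation as $\Box u = -F + H^{\alpha\beta}\del_\alpha\del_\beta u$, applying the pointwise representation of Lemma~\ref{lem 7-7-1}, and controlling the nonlinear source by the $E_P^{d+1}$ bound from the first step together with the global Sobolev inequality of Lemma~\ref{lem 7-6-1}.

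For the high-order $L^2$ estimate, after applying $Z^I$ to the equation, Lemma~\ref{lem 7-6'-3}(c) decomposes the commutator into bilinear sums of the form $\del^{J_1'}\Omega^{J_2'}H^{\alpha\beta}\cdot\del_\alpha\del_\beta\del^{J_1}\Omega^{J_2}u$ with $|J|+|J'|\leq d$, $|J'|\geq 1$. Each such term will be estimated in $L^2$ by an $L^\infty$/$L^2$ split: when $|J'|\leq [d/2]$, Lemma~\ref{lem 7-6-2} places $Z^{J'}H^{\alpha\beta}$ into $L^\infty$ controlled by $D_d(t)=\max_{\alpha,\beta}\|H^{\alpha\beta}(t,\cdot)\|_{E_H^{d+1}}$, while $\del_\alpha\del_\beta Z^J u$ remains in $L^2$ controlled by $E_g^d(t,u)$ via \eqref{eq coek0 7-7}; in the opposite range $|J'|>[d/2]$ the roles of the two factors are reversed, again via Lemma~\ref{lem 7-6-2}. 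Summing the resulting inequalities from Lemma~\ref{lem 7-7-2} yields
$$
\tfrac{d}{dt}E_g^d(t,u)\leq C\|F(t,\cdot)\|_{E^d} + C\,D_d(t)\,E_g^d(t,u),
$$
and Gronwall, combined with the norm equivalence $E_g^d(t,u)\simeq\|u(t,\cdot)\|_{E_P^{d+1}}$ coming from the coercivity \eqref{eq coe 7-7} (which is guaranteed by the hypothesis $|H^{00}|\leq 1/2$), produces \eqref{eq 1 lem 7-7-3 a}.

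For the $\mathcal{E}_{-1}$ bound, I will rewrite the equation as $\Box u = -F + H^{\alpha\beta}\del_\alpha\del_\beta u$ and apply Lemma~\ref{lem 7-7-1} with source $-F + H^{\alpha\beta}\del_\alpha\del_\beta u$. The new piece will be controlled pointwise by $\|H^{\alpha\beta}\del_\alpha\del_\beta u\|_{\mathcal{E}_{-1}}\leq\|H^{\alpha\beta}\|_{L^\infty}\|\del\del u\|_{\mathcal{E}_{-1}}$, and then Lemma~\ref{lem 7-6-1} gives $\|\del\del u(s,\cdot)\|_{\mathcal{E}_{-1}}\leq C\|\del\del u(s,\cdot)\|_{X^2}$, which after commuting derivatives via Lemma~\ref{lem 7-6'-2} is dominated by $C\|u(s,\cdot)\|_{E_P^{4}}$. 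The case $d=3$ of the already-established estimate \eqref{eq 1 lem 7-7-3 a} controls this last quantity by $E_g^3(0,u)\,e^{C\int_0^s D_3}+\int_0^s\|F(\tau,\cdot)\|_{E^3}\,e^{C\int_\tau^s D_3}d\tau$; substituting this back into the pointwise formula from Lemma~\ref{lem 7-7-1} produces the four summands on the right-hand side of \eqref{eq 2 lem 7-7-3}, with the initial-data term $\|\nabla u(0,\cdot)\|_{\mathcal{E}_{-1}}$ absorbed into $E_g^3(0,u)$ via one further application of Lemma~\ref{lem 7-6-1}.

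The main obstacle will be the bookkeeping in Step~2: the commutator expansion of Lemma~\ref{lem 7-6'-3}(c) produces a large family of bilinear error terms with mixed derivative distribution, and the $L^\infty$/$L^2$ split has to be verified for every admissible $(|J|,|J'|)$ pair, including the crossover case $|J'|\sim [d/2]$. The restriction $d\geq 3$ enters at precisely this borderline, so that Lemma~\ref{lem 7-6-2} still has two derivatives to spare for whichever factor is placed in $L^\infty$; everything else (the coercivity, the Gronwall closure, and the reduction to Lemma~\ref{lem 7-7-1} for the weighted sup estimate) is essentially bookkeeping once this split is organized correctly.
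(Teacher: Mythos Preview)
Your overall architecture matches the paper's proof: commute with $\del^{I_1}\Omega^{I_2}$, apply Lemma~\ref{lem 7-7-2}, close a differential inequality for $E_g^d(t,u)$ by Gronwall, and then feed the resulting $E_P^4$ control into Lemma~\ref{lem 7-7-1} for the $\mathcal{E}_{-1}$ bound. However, there is a genuine gap in your treatment of the commutator term.

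Your symmetric split at $|J'|=[d/2]$ and the plan to ``reverse roles via Lemma~\ref{lem 7-6-2}'' when $|J'|>[d/2]$ does not work as stated, for two reasons. First, the metric perturbation $H^{\alpha\beta}$ is only assumed to lie in $E_H^{d+1}=\mathcal{E}_{-1}\cap E_P^{d+1}$; it is \emph{not} in $L^2$, so $\|\del^{J_1'}\Omega^{J_2'}H^{\alpha\beta}\|_{L^2}$ is not controlled by $D_d(t)$ when $J_1'$ is empty (i.e.\ when $Z^{J'}$ consists only of rotations). Second, putting $\del_{\alpha}\del_{\beta}\del^{J_1}\Omega^{J_2}u$ into $L^\infty$ via Sobolev costs $|J|+4$ derivatives, hence requires $|J|\le d-3$; at the borderline $d=3$ this forces $|J|=0$, whereas your split allows $|J|=1$ (e.g.\ $|J'|=2$, $|J|=1$). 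The coercivity hypothesis is also separate from $|H^{00}|\le 1/2$; the latter is only used in the $\mathcal{E}_{-1}$ step.

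The paper resolves this differently. It splits the commutator at $|J'|\le d-1$ versus $|J'|=d$ (recall $|J'|\ge 1$ always). In the first range Lemma~\ref{lem 7-6-2} places $Z^{J'}H$ into $L^\infty$ and $\del\del Z^{J}u$ stays in $L^2\subset E_P^{d+1}$. In the single extreme case $|J'|=d$, $J=0$, one cannot put $Z^{J'}H$ into $L^2$; instead the paper uses the \emph{weighted} pairing
\[
\|(1+r)^{-1}\del^{J_1'}\Omega^{J_2'}H^{\alpha\beta}\|_{L^2}\,\|(1+r)\,\del_{\alpha}\del_{\beta}u\|_{L^\infty},
\]
bounding the first factor by $C\|H\|_{E_H^{d+1}}$ via Lemma~\ref{lem 7-6-3} and the second by $C\|\del\del u\|_{X^2}\le C\|u\|_{E_P^{4}}$ via the global Sobolev inequality Lemma~\ref{lem 7-6-1}. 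This weighted split is the key device that your proposal is missing; once you insert it, the rest of your argument goes through essentially as written.
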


\begin{proof}
We begin with the estimate on $E^{d}$ norm of $\nabla u$. We derive the equation with respect to $\del^{I_1}\Omega^{I_2}$, where $|I_1|+|I_2|=l $. Remember that this product of operator commute with the linear wave operator, Then
$$
-\del^{I_1}\Omega^{I_2} F = \Box\del^{I_1}\Omega^{I_2} u + H^{\alpha\beta}\del_{\alpha}\del_{\beta}\del^{I_1}\Omega^{I_2}u + [\del^{I_1}\Omega^{I_2},H^{\alpha\beta}\del_{\alpha}\del_{\beta}]u
$$
which is
$$
g^{\alpha\beta}\del_{\alpha}\del_{\beta}\del^{I_1}\Omega^{I_2}u = -F - [\del^{I_1}\Omega^{I_2},H^{\alpha\beta}\del_{\alpha}\del_{\beta}]u.
$$
Apply \eqref{eq 1 lem 7-7-2},
\begin{equation}\label{eq pr2 lem 7-7-3}
\aligned
\frac{d}{dt}E_g(t,\del^{I_1}\Omega^{I_2}u)
& \leq C\|\del^{I_1}\Omega^{I_2}F(t,\cdot)\|_{L^2(\RR^3)}
+ C\|[\del^{I_1}\Omega^{I_2},H^{\alpha\beta}\del_{\alpha}\del_{\beta}]u(t,\cdot)\|_{L^2(\RR^3)}
\\
&+ C\sum_{\alpha,\beta}\|\nabla H^{\alpha\beta}(t,\cdot)\|_{L^{\infty}(\RR^3)}E_g(t,\del^{I_1}\Omega^{I_2}u).
\endaligned
\end{equation}
We should estimate the second term in right-hand-side. By \eqref{eq com lem 7-6'-1 c},
\begin{equation}\label{eq pr3 lem 7-7-3}
\aligned
\big{\|}[\del^{I_1}\Omega^{I_2},H^{\alpha\beta}\del_{\alpha}\del_{\beta}]u\big{\|}_{L^2(\RR^3)}
& \leq \sum_{{J_1+J_1'=I_1\atop J_2+J_2'=I_2}\atop |J_1'|+|J_2'|>0}
\|\del^{J_1'}\Omega^{J_2'}H^{\alpha\beta}\, \del_{\alpha}\del_{\beta}\del^{J_1}\Omega^{J_2}u\|_{L^2(\RR^3)}
\\
&+\sum_{J_1+J_1'=I_1\atop {|J_2|+|J_2'|<|I_2|\atop \alpha,\beta,\alpha',a}}
\|\del^{J_1'}\Omega^{J_2'}H^{\alpha\beta}\,\del_{\alpha'}\del_a\del^{J_1}\Omega^{J_2}u\|_{L^2(\RR^3)}
\\
=:& T_1 + T_2.
\endaligned
\end{equation}
Here, we make the convention that when $l\leq 0$, $[\del^{I_1}\Omega^{I_1},\del_{\alpha}\del_{\beta}] = 0$. We see that both terms can be bounded by $CD_d\,E^d(s,u)$:
For $T_1$, when $|J_1'|+|J_2'|\leq d-1$, Then by \eqref{eq 1' lem 7-6-2},
$$
\aligned
\|\del^{J_1'}\Omega^{J_2'}H^{\alpha\beta}\, \del_{\alpha}\del_{\beta}\del^{J_1}\Omega^{J_2}u\|_{L^2(\RR^3)}
& \leq \|\del^{J_1'}\Omega^{J_2'}H^{\alpha\beta}\|_{L^\infty(\RR^3)}
\|\del_{\alpha}\del_{\beta}\del^{J_1}\Omega^{J_2}u\|_{L^2(\RR^3)}
\\
& \leq  C\|H^{\alpha\beta}\|_{E_H^{d+1}}\|u\|_{E_P^{d+1}}.
\endaligned
$$
\\
When $|J_1'|+|J_2'|\geq d$, then $J_1 = J_2 = 0$. Then by \eqref{eq 1' lem 7-6-3} and \eqref{eq 1 lem 7-6-1},
$$
\aligned
\|\del^{J_1'}\Omega^{J_2'}H^{\alpha\beta}\, \del_{\alpha}\del_{\beta}\del^{J_1}\Omega^{J_2}u\|_{L^2(\RR^3)}
& \leq \|(1+r)^{-1}\del^{J_1'}\Omega^{J_2'}H^{\alpha\beta}\|_{L^2(\RR^3)}
\|(1+r)\del_{\alpha}\del_{\beta}u\|_{L^\infty(\RR^3)}
\\
& \leq C\|H^{\alpha\beta}\|_{E_H^{d+1}} \|\del_{\alpha}\del_{\beta}u\|_{E^2}\leq C\|H^{\alpha\beta}\|_{E_H^{d+1}}\|u\|_{E_P^{4}}.
\endaligned
$$
The term $T_2$ is bounded in the same manner and we omit the details.

Combining this with \eqref{eq pr2 lem 7-7-3}, we find 
\begin{equation}\label{eq pr4 lem 7-7-3}
\aligned
\frac{d}{dt}E_g(t,\del^{I_1}\Omega^{I_2}u)
& \leq C\|\del^{I_1}\Omega^{I_2}F(t,\cdot)\|_{L^2(\RR^3)}
+ CD_d E^d_g(t,u)
\\
&+ C\sum_{\alpha,\beta}\|\nabla H^{\alpha\beta}(t,\cdot)\|_{L^{\infty}(\RR^3)}E_g(t,\del^{I_1}\Omega^{I_2}u)
\endaligned
\end{equation}
Taking the sum with respect to $(I_1,I_2)$ for $|I_1|+|I_2|\leq d$, then
\begin{equation}\label{eq pr5 lem 7-7-3}
\frac{d}{dt}E_g^d(t,u)\leq C\|F(t,\cdot)\|_{E^d} + CD_d E^d_g(t,u).
\end{equation}
Integrating \eqref{eq pr5 lem 7-7-3}, we obtain 
\begin{equation}\label{eq pr6 lem 7-7-3}
E_g^d(t,u) \leq E_g^d(0,u)e^{C\int_0^tD_d(\tau)d\tau} + C\int_0^t \|F(\tau,\cdot)\|_{E^d}\, e^{C\int_\tau^tD_d(s)ds}d\tau,
\end{equation}
which leads to (by \eqref{eq coek0 7-7}) \eqref{eq 1 lem 7-7-3 a}.

We now turns to estimate the $\mathcal{E}_{-1}$ norm. We can easily deduce that
\begin{equation}\label{eq pr7 lem 7-7-3}
\Box u = -(1-H^{00})^{-1}\big(H^{ab}+H^{00}m^{ab}\big)\del_a\del_b u - 2(1-H^{00})^{-1}H^{a0}\del_t\del_a u - (1-H^{00})^{-1}F. 
\end{equation}
Here, we use the assumption that $H^{00}\leq 1/2$ to make sure that $(1-H^{00})^{-1}$ is well defined. By Lemma~\ref{lem 7-7-1}, we have 
$$
\aligned
\|u\|_{\mathcal{E}_{-1}}
& \leq Ct\int_0^t\|(1-H^{00})^{-1}F(s,\cdot)\|_{\mathcal{E}_{-1}}ds
\\
&+ Ct\int_0^t\|(1-H^{00})^{-1}H^{ab}\del_{a}\del_{b}u\|_{\mathcal{E}_{-1}}ds
+ Ct\int_0^t\|(1-H^{00})^{-1}H^{a0}\del_{a}\del_t u\|_{\mathcal{E}_{-1}}ds
\\
&+C\big(\|{u(0,x)}\|_{\mathcal{E}_{-1}(\RR^3)} + t \|\nabla u(0,x)\|_{\mathcal{E}_{-1}(\RR^3)}\big)
\\
& \leq Ct\int_0^t\|F(s,\cdot)\|_{\mathcal{E}_{-1}}ds +  Ct\int_0^t\|H^{ab}\del_{a}\del_{b}u\|_{\mathcal{E}_{-1}}ds
+ Ct\int_0^t\|H^{a0}\del_{a}\del_t u\|_{\mathcal{E}_{-1}}ds
\\
&+C\big(\|{u(0,x)}\|_{\mathcal{E}_{-1}(\RR^3)} + t \|\nabla u(0,x)\|_{\mathcal{E}_{-1}(\RR^3)}\big). 
\endaligned
$$
We observe that the following estimates are guaranteed by \eqref{eq 1 lem 7-6-1}: 
$$
\aligned
\|H^{a\beta}\del_a\del_{\beta}u\|_{\mathcal{E}_{-1}}
&\leq \|H^{a\beta}\|_{L^{\infty}}\, \|\del_a\del_{\beta}u\|_{\mathcal{E}_{-1}}
\\
&\leq \|H^{\alpha\beta}\|_{L^{\infty}}\,\|\del_a\del_{\beta}u\|_{E^2}
\leq C\sum_{a,\beta}\|H^{a\beta}\|_{L^{\infty}}E_g^3(s,u)
\endaligned
$$
and
$$
\|\nabla u(0,\cdot)\|_{\mathcal{E}_{-1}}\leq C\|\nabla u(0,\cdot)\|_{E^2}. 
$$
By combining these two estimate, we get 
\begin{equation}\label{eq pr1 lem 7-7-3}
\aligned
\|u(t,\cdot)\|_{\mathcal{E}_{-1}}
& \leq  Ct\int_0^t\|F(s,\cdot)\|_{\mathcal{E}_{-1}}ds
+ Ct\sum_{\alpha,\beta}\int_0^t\|H^{\alpha\beta}(s,\cdot)\|_{L^{\infty}}E_g^3(s,u)ds
+CtE_g^2(0,u).
\endaligned
\end{equation}

By combining \eqref{eq pr6 lem 7-7-3} and \eqref{eq pr1 lem 7-7-3}, we obtain 
\begin{equation}\label{eq pr7 lem 7-7-3-deux}
\aligned
\|u(t,\cdot)\|_{\mathcal{E}_{-1}}
& \leq Ct\int_0^t\|F(s,\cdot)\|_{\mathcal{E}_{-1}}ds
+ CtE^3_g(0,u)\sum_{a,\beta}\int_0^t \|H^{a\beta}(s,\cdot)\|_{L^{\infty}}e^{C\int_0^sD_3(\tau)d\tau}ds
\\
&\quad +Ct\sum_{a,\beta}\int_0^t\|H^{a\beta}(s,\cdot)\|_{L^{\infty}}\int_0^s\|F(\tau,\cdot)\|_{E_R^3}
e^{C\int_\tau^sD_3(\lambda)d\lambda}ds + CtE_g^3(0,u).
\endaligned
\end{equation}
\end{proof}

Furthermore, the following $L^2$ estimate for Klein-Gordon equations is essential in our analysis. 

\begin{lemma}[$L^2$-type estimate for KG equations]\label{lem 7-7-4}
Let $v$ be in a smooth function defined $[0,T]\times \RR^3$ and let $F = c^2v - g^{\alpha\beta}\del_{\alpha}\del_{\beta} v$, $c>0$. Suppose that 
$g^{\alpha\beta} = m^{\alpha\beta} + H^{\alpha\beta}$ satisfies the coercivity condition with a constant $C$, i.e. \eqref{eq coe 7-7}. Then, the following estimate holds for $0\leq t<T$:
\begin{subequations}\label{eq 1 lem 7-7-4}
\begin{equation}\label{eq 1 lem 7-7-4 a}
\aligned
c\|v(t,\cdot)\|_{E^d} + \|v(t,\cdot)\|_{E_P^{d+1}}
& \leq C\big(\|v(0,\cdot)\|_{E_P^{d+1}} + c\|v(0,\cdot)\|_{E^d}\big)e^{C\int_0^tD_d(\tau)d\tau}
\\
&+ C\int_0^t\|F(s,\cdot)\|_{E^d}\, e^{C\int_s^t D_d(\tau)}d\tau ds.
\endaligned
\end{equation} 
\end{subequations}
\end{lemma}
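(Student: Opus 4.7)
The plan is to mimic the proof of Lemma~\ref{lem 7-7-3} but with the Klein--Gordon energy $E_{g,c}$ in place of $E_g$, so that the mass term $c^2 v^2$ in the energy density directly furnishes the $L^2$ control of $v$ needed to obtain the $c\|v\|_{E^d}$ piece of the left-hand side.

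First I would establish the base energy identity at order zero: multiply $g^{\alpha\beta}\del_{\alpha}\del_{\beta}v - c^2 v = -F$ by $\del_t v$ and integrate on the slice $\{t=\tau\}$. The only new contribution compared with Lemma~\ref{lem 7-7-2} is $\del_t v \cdot c^2 v = \tfrac{1}{2}\del_t(c^2 v^2)$, which enters the time derivative and produces exactly the $c^2 v^2$ term inside $E_{g,c}(t,v)^2$. Stokes' formula together with the coercivity assumption \eqref{eq coe 7-7} then yields, by the same manipulations as in Lemma~\ref{lem 7-7-2},
\begin{equation*}
\frac{d}{dt}E_{g,c}(t,v)\leq C\|F(t,\cdot)\|_{L^2(\RR^3)} + C\sum_{\alpha,\beta}\|\nabla H^{\alpha\beta}(t,\cdot)\|_{L^{\infty}} E_{g,c}(t,v).
\end{equation*}

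Second, I would differentiate the equation by $\del^{I_1}\Omega^{I_2}$ for $|I_1|+|I_2|\leq d$. Since these operators commute with $\Box + c^2$ and since $c$ is a constant (so it commutes with every differential operator), one obtains
\begin{equation*}
g^{\alpha\beta}\del_{\alpha}\del_{\beta}\bigl(\del^{I_1}\Omega^{I_2}v\bigr) - c^2\,\del^{I_1}\Omega^{I_2}v = -\del^{I_1}\Omega^{I_2}F - \bigl[\del^{I_1}\Omega^{I_2},H^{\alpha\beta}\del_{\alpha}\del_{\beta}\bigr]v.
\end{equation*}
Applying the zero-order energy inequality just derived to $\del^{I_1}\Omega^{I_2}v$ and summing over multi-indices, I obtain
\begin{equation*}
\frac{d}{dt}E_{g,c}^d(t,v) \leq C\|F(t,\cdot)\|_{E^d} + C\sum_{|I_1|+|I_2|\leq d}\bigl\|\bigl[\del^{I_1}\Omega^{I_2},H^{\alpha\beta}\del_{\alpha}\del_{\beta}\bigr]v\bigr\|_{L^2} + C D_d(t)\,E_{g,c}^d(t,v).
\end{equation*}
The commutator is controlled exactly as in the proof of Lemma~\ref{lem 7-7-3}: apply the commutator identity \eqref{eq com' lem 7-6'-1 c}, then split into the case $|J_1'|+|J_2'|\leq d-1$ (where Lemma~\ref{lem 7-6-2} gives an $L^\infty$ bound on $\del^{J_1'}\Omega^{J_2'}H^{\alpha\beta}$ and the $L^2$ factor is absorbed into $E_{g,c}^d$) and the complementary case $|J_1'|+|J_2'|=d$ (where Lemma~\ref{lem 7-6-3} provides an $L^2$ bound on $(1+r)^{-1}\del^{J_1'}\Omega^{J_2'}H^{\alpha\beta}$ and Lemma~\ref{lem 7-6-2} an $L^\infty$ bound on $(1+r)\del_\alpha\del_\beta v$ via the embedding $X^d \subset \mathcal{E}_{-1}$). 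This bounds the commutator by $C D_d(t)\,E_{g,c}^d(t,v)$.

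Third, I would apply Grönwall's inequality to the resulting scalar differential inequality, obtaining
\begin{equation*}
E_{g,c}^d(t,v) \leq E_{g,c}^d(0,v)\,e^{C\int_0^t D_d(\tau)d\tau} + C\int_0^t \|F(s,\cdot)\|_{E^d}\,e^{C\int_s^t D_d(\tau)d\tau}\,ds.
\end{equation*}
Finally, the coercivity of $g$ together with the term $c^2 v^2$ inside $E_{g,c}$ gives the two-sided equivalence
\begin{equation*}
C^{-1}\bigl(c\,\|v(t,\cdot)\|_{E^d} + \|v(t,\cdot)\|_{E_P^{d+1}}\bigr) \leq E_{g,c}^d(t,v) \leq C\bigl(c\,\|v(t,\cdot)\|_{E^d} + \|v(t,\cdot)\|_{E_P^{d+1}}\bigr),
\end{equation*}
which translates the above into the desired estimate \eqref{eq 1 lem 7-7-4 a}.

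The main obstacle is not the presence of the mass term---it is harmless because $c$ is constant, so commutators with $\del^{I_1}\Omega^{I_2}$ vanish and the extra positive contribution to the energy only helps. The delicate point is the same as in Lemma~\ref{lem 7-7-3}: tracking uniformity of constants in $c$, which is automatic since every step above produces constants independent of $c$, and handling the top-order commutator term when all derivatives fall on $H^{\alpha\beta}$, which requires the weighted $L^2$--$L^\infty$ trade via Lemmas~\ref{lem 7-6-2} and \ref{lem 7-6-3} and ultimately the global Sobolev inequality \eqref{eq 1 lem 7-6-1}.
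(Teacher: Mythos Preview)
Your proposal is correct and follows essentially the same approach as the paper: derive the zero-order energy inequality for $E_{g,c}$ by adding the $\tfrac12\del_t(c^2v^2)$ contribution to the argument of Lemma~\ref{lem 7-7-2}, then differentiate by $\del^{I_1}\Omega^{I_2}$ and estimate the commutator exactly as in Lemma~\ref{lem 7-7-3}, apply Gr\"onwall, and finally invoke coercivity plus the $c^2v^2$ term inside $E_{g,c}$ to recover the norms on the left-hand side. Your write-up is in fact more detailed than the paper's, which simply points to those two earlier lemmas and records the modified energy identity.
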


\begin{proof}
The proof is essentially the same to that of Lemma~\ref{lem 7-7-1}. The only difference comes from the potential term:
$$
\aligned
&\del_t v \big(g^{\alpha\beta}\del_{\alpha}\del_{\beta}v - c^2v\big)
\\
&= \frac{1}{2}\del_0\big(g^{00}(\del_0 v)^2 - g^{ab}\del_a v\del_b v\big) - \frac{1}{2}(c\del_0v)^2 + \del_a\big(g^{a\beta}\del_t v\del_{\beta} v\big)
+ \frac{1}{2}\del_tg^{\alpha\beta}\del_{\alpha}v\del_{\beta}v - \del_{\alpha}g^{\alpha\beta}\del_t v \del_{\beta}v.
\endaligned
$$
Then the same calculation of the proof in \eqref{lem 7-7-1} leads to
\begin{equation}\label{eq pr1 lem 7-7-4}
\frac{d}{dt}E_{g,c}(t,v)
 \leq C\|F(t,\cdot)\|_{L^2(\RR^3)}
+ C\sum_{\alpha,\beta}\|\nabla H^{\alpha\beta}(t,\cdot)\|_{L^{\infty}(\RR^3)}\,E_{g,c}(t,v).
\end{equation}

Now we derive the equation with respect to $\del^{I_1}\Omega^{I_2}$, and perform the same calculation as we done in the proof of Lemma~\ref{lem 7-7-3}, the we arrive at:
\begin{equation}
E_{g,c}^d(t,u) \leq E_{g,c}^d(0,u)e^{C\int_0^tD_d(\tau)d\tau} + C\int_0^t \|F(\tau,\cdot)\|_{E^d}\, e^{C\int_\tau^tD_d(s)ds}d\tau.
\end{equation}
Then combined with the expression of $E_{g,c}$ and the coercivity condition \eqref{eq coe 7-7}, the desired result is proven.
\end{proof}

At the end of this subsection, we establish the following estimate on second order time derivative of the solution.

\begin{lemma}\label{lem 7-7-5}
Let $u$ be a smooth function defined in $\RR^4$ and suppose that $u$ satisfies the following wave/Klein-Gordon equation:
$$
g^{\alpha\beta}\del_{\alpha}\del_{\beta}u - c^2 u = - F,
$$
where $c\geq 0$. Suppose that $g^{\alpha\beta} = m^{\alpha\beta} + H^{\alpha\beta}$ with $\big|H^{00}\big|\leq 1/2$. Then the following estimate hold for all pair of multi-index $(I_1,I_2)$
\begin{equation}\label{eq 1 lem 7-7-5}
\aligned
&\big\|\del_x^{I_1}\Omega^{I_2}\del_t\del_t u\big\|_{L^2(\RR^3)}
\\
& \leq
\big\|\del_x^{I_1}\Omega^{I_2}\big((1-H^{00})^{-1}(m^{ab}+H^{ab})\del_a\del_b u\big)\big\|_{L^2(\RR^3)}
+c^2\big\|\del_x^{I_1}\Omega^{I_2}\big((1-H^{00})^{-1}u\big)\big\|_{L^2(\RR^3)}
\\
&+ 2\big\|\del_x^{I_1}\Omega^{I_2}\big((1-H^{00})^{-1}H^{0a}\del_t\del_a u\big)\big\|_{L^2(\RR^3)}
+\big\|\del_x^{I_1}\Omega^{I_2}\big((1-H^{00})^{-1}F\big)\big\|_{L^2(\RR^3)}.
\endaligned
\end{equation}
\end{lemma}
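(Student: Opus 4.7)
The statement is essentially an algebraic manipulation of the PDE followed by the triangle inequality, so the proof plan is short.

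The first step is to isolate $\del_t\del_t u$ from the equation. Writing out the symbol of $g^{\alpha\beta}\del_\alpha\del_\beta u$ in the splitting $g^{\alpha\beta}=m^{\alpha\beta}+H^{\alpha\beta}$, and using $m^{00}=-1$, $m^{0a}=0$, one has
\[
g^{\alpha\beta}\del_\alpha\del_\beta u
= -(1-H^{00})\del_t\del_t u + 2H^{0a}\del_t\del_a u + (m^{ab}+H^{ab})\del_a\del_b u.
\]
Substituting this into $g^{\alpha\beta}\del_\alpha\del_\beta u - c^2 u = -F$ and solving for $\del_t\del_t u$, the hypothesis $|H^{00}|\le 1/2$ guarantees that $(1-H^{00})^{-1}$ is a well-defined, bounded smooth function, so
\[
\del_t\del_t u = (1-H^{00})^{-1}\bigl[(m^{ab}+H^{ab})\del_a\del_b u\bigr] + c^2(1-H^{00})^{-1}u
+ 2(1-H^{00})^{-1}H^{0a}\del_t\del_a u + (1-H^{00})^{-1}F.
\]

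The second and last step is to apply the differential operator $\del_x^{I_1}\Omega^{I_2}$ to both sides of this pointwise identity, take $L^2(\mathbb{R}^3)$ norms at a fixed time, and invoke the triangle inequality. Since there are exactly four summands on the right-hand side, this yields precisely the four terms displayed in the conclusion, with the factor $2$ inherited from the coefficient of the mixed spacetime term $H^{0a}\del_t\del_a u$.

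There is no real obstacle here: the only thing to check is that one is allowed to divide by $1-H^{00}$, which is immediate from the assumption $|H^{00}|\le 1/2$ (so $1-H^{00}\ge 1/2>0$). The estimate is stated with the differential operator applied \emph{inside} the $L^2$ norms of each term, so no product-rule expansion or commutator estimate is needed at this stage; such expansions (which would invoke Lemma~\ref{lem 7-6'-2} and the Sobolev embeddings of Section~\ref{subsec 7-fs}) are deferred to the place where this lemma is used to control $\|\del_t\del_t u\|$ by the $E_H^{d+1}$-norm of $u$.
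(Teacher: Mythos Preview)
Your proposal is correct and follows exactly the same approach as the paper: decompose $g^{\alpha\beta}\del_\alpha\del_\beta u$, solve algebraically for $\del_t\del_t u$ using $|H^{00}|\le 1/2$, then apply $\del_x^{I_1}\Omega^{I_2}$ and the triangle inequality. (There is a harmless sign slip in your displayed identity---the $c^2$ term should carry a minus sign---but this is irrelevant once norms are taken.)
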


\begin{proof} By decomposing the wave operator, we have 
$$
g^{\alpha\beta}\del_{\alpha}\del_{\beta} u = \big(-1+H^{00}\big)\del_t\del_t u + 2H^{0a}\del_t\del_a u + \big(m^{ab}+H^{ab}\big)\del_a\del_b u
$$
and thanks to the equation
$$
\big(-1+H^{00}\big)\del_t\del_t u + 2H^{0a}\del_t\del_a u + \big(m^{ab}+H^{ab}\big)\del_a\del_b u - c^2u = -F,
$$
we have
$$
\del_t\del_t u
= -\frac{c^2u}{1-H^{00}} + \frac{2H^{0a}\del_t\del_a u}{1-H^{00}} + \frac{\big(m^{ab}+H^{ab}\big)\del_a\del_b u}{1-H^{00}}
+ \frac{F}{1-H^{00}}. 
$$ 
\end{proof}


\subsection{Existence results for linear equations}

We now establish the existence theory for linear wave and Klein-Gordon equations with initial data in the corresponding functional spaces defined in subsection \ref{subsec 7-fs}. We begin with the wave equation.

\begin{proposition}[Existence of linear wave equation in $E_H^{d+1}$]\label{prop 7-7-1}
Let $d\geq 3$ be an integer. Assume that $F\in L^1([0,T];E^d)$, $(u_0,u_1)\in E_H^{d+1}\times E^d$. Assume that $g^{\alpha\beta}$ is a smooth metric defined on $[0,T]\times \RR^3$ and coercive with constant $C>0$,  and $H^{\alpha\beta} = g^{\alpha\beta} - m^{\alpha\beta}$ is in the class $C([0,T];E_H^{d+1})$ with $|H^{00}|<1/2$. Then the following Cauchy problem
\begin{equation}\label{eq 7-7-cauchy wave}
\aligned
&g^{\alpha\beta}\del_{\alpha}\del_{\beta} u = -F,
\\
& u(0,x) = u_0(x),\quad \del_t u(0,x) = u_1(x)
\endaligned
\end{equation}
has a unique solution in class $C([0,T];E^{d+1}_H)$ with $\del_t^k u\in C([0,T];E^{d+1-k})$, $1\leq k\leq d$. Furthermore, this solution satisfies the following estimate:
\begin{subequations}\label{eq 1 prop 7-7-1}
\begin{equation}\label{eq 1 prop 7-7-1 b}
\|u(t,\cdot)\|_{E_P^{d+1}}  \leq C\|\nabla u(0,\cdot)\|_{E^d} e^{C\int_0^tD_d(\tau)d\tau}
 + C\int_0^t\|F(\tau,\cdot)\|_{E^d}\,e^{C\int_\tau^tD_d(s)ds} d\tau,
\end{equation}
\end{subequations}
\begin{equation}\label{eq 2 prop 7-7-1}
\aligned
\|u(t,\cdot)\|_{\mathcal{E}_{-1}}
& \leq Ct\int_0^t\|F(s,\cdot)\|_{\mathcal{E}_{-1}}ds
+ CtE^3_g(0,u)\sum_{\alpha,\beta}\int_0^t \|H^{\alpha\beta}(s,\cdot)\|_{L^{\infty}}e^{\int_0^sD_3(\tau)d\tau}ds
\\
&+Ct\sum_{\alpha,\beta}\int_0^t\|H^{\alpha\beta}(s,\cdot)\|_{L^{\infty}}\int_0^s\|F(\tau,\cdot)\|_{E^3}
e^{\int_\tau^sD_3(\lambda)d\lambda}ds + CtE_g^3(0,u),
\endaligned
\end{equation}
where
$$
D_k(t):= \max_{\alpha,\beta}\|H^{\alpha\beta}(t,\cdot)\|_{E_H^{k+1}}.
$$
\end{proposition}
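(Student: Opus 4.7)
The estimates \eqref{eq 1 prop 7-7-1 b} and \eqref{eq 2 prop 7-7-1} are literally the a~priori bounds already established in Lemma~\ref{lem 7-7-3}, so the entire proposition reduces to proving \textbf{existence} and \textbf{uniqueness} in the claimed regularity class. \textbf{Uniqueness} is immediate: given two solutions $u,\tilde u$ of \eqref{eq 7-7-cauchy wave} with identical data, the difference $w=u-\tilde u$ satisfies $g^{\alpha\beta}\del_\alpha\del_\beta w=0$ with vanishing Cauchy data, so the energy identity of Lemma~\ref{lem 7-7-2} combined with Gr\"onwall's inequality forces $E_g(t,w)\equiv 0$, hence $w\equiv 0$.

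For \textbf{existence}, the plan is a regularization plus Cauchy-sequence argument. First, approximate $(u_0,u_1,F)$ by smooth compactly supported data $(u_0^n,u_1^n,F^n)$ converging in $X_H^{d+1}\times X^d\times L^1([0,T];E^d)$, and if desired mollify $H^{\alpha\beta}$ spatially in $E_H^{d+1}$ while preserving the coercivity constant and the bound $|H^{00}|\le 1/2$. Standard linear hyperbolic theory (see \cite{CB}) then provides a unique smooth solution $u^n$ of the corresponding Cauchy problem, having compact spatial support on each slice $\{t\}\times\RR^3$ by finite propagation speed. Next, apply the a~priori estimates \eqref{eq 1 lem 7-7-3 a} and \eqref{eq 2 lem 7-7-3} to the difference $u^n-u^m$, which solves the same equation with source $F^n-F^m$ and data $(u_0^n-u_0^m,u_1^n-u_1^m)$; the uniform bounds on $H^{\alpha\beta}$ guarantee that $\{u^n\}$ is Cauchy both in $C([0,T];X_P^{d+1})$ and in $C([0,T];\mathcal{E}_{-1})$. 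The limit $u$ therefore lies in $C([0,T];E_H^{d+1})$ and inherits \eqref{eq 1 prop 7-7-1 b}--\eqref{eq 2 prop 7-7-1}. Passing to the limit in the equation componentwise (each factor converges in a topology strong enough to make the product converge in distributions) shows $u$ solves \eqref{eq 7-7-cauchy wave}.

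To recover the additional regularity $\del_t^k u\in C([0,T];E^{d+1-k})$ for $1\le k\le d$, I would invoke Lemma~\ref{lem 7-7-5} iteratively: it expresses $\del_t\del_t u$ as a sum of products of $(1-H^{00})^{-1}$ with spatial second derivatives of $u$, mixed derivatives $\del_t\del_a u$ (already controlled at one order lower via the energy identity), $u$ itself, and $F$. Differentiating this identity in $t$ repeatedly and using the Banach-algebra structure of $E_H^{d+1}$ --- together with the hypothesis $H^{\alpha\beta}\in C([0,T];E_H^{d+1})$ and $|H^{00}|\le 1/2$ which makes $(1-H^{00})^{-1}$ a smooth bounded multiplier --- converts the $(k-1)$-st time derivative regularity into the $k$-th, at the cost of losing one spatial order at each step. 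Continuity in $t$ follows from the continuity of $H^{\alpha\beta}$ in $E_H^{d+1}$ and of $u$ in the already-established topologies.

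The \textbf{main obstacle} is the second step, specifically the Cauchy convergence in the pointwise-weighted norm $\mathcal{E}_{-1}$. Unlike the $L^2$-based bounds, the estimate \eqref{eq 2 lem 7-7-3} couples the sup-norm control of $u$ to an $L^\infty$-in-space, $L^1$-in-time control of the coefficients $H^{\alpha\beta}$ through the Kirchhoff representation of Lemma~\ref{lem 7-7-1}, and its right-hand side carries a factor of $t\int_0^t\|H^{\alpha\beta}\|_{L^\infty}$. Checking that this remains uniformly bounded along the approximating sequence requires that the regularization of $H^{\alpha\beta}$ be controlled in $E_H^{d+1}\hookrightarrow L^\infty$ via Lemma~\ref{lem 7-6-2}, which is precisely why the hypothesis $H^{\alpha\beta}\in C([0,T];E_H^{d+1})$ is imposed rather than just coercivity. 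All other steps are routine given the a~priori estimates already proven.
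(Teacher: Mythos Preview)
Your overall strategy---regularize the data, invoke the a~priori bounds of Lemma~\ref{lem 7-7-3}, run a Cauchy-sequence argument, and recover higher time regularity via Lemma~\ref{lem 7-7-5}---matches the paper's. The uniqueness step and the $\mathcal{E}_{-1}$ Cauchy argument are the same as in the paper.

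The point of departure is how convergence in the top energy norm is obtained. The paper does \emph{not} attempt to show that $\{u^n\}$ is Cauchy in $E_P^{d+1}$. Instead it proves only boundedness there via \eqref{eq 1 lem 7-7-3 a}, proves Cauchy in $L^\infty([0,T];\mathcal{E}_{-1})$ from the difference equation (which, since the paper also regularizes the metric, picks up the extra term $(g_{n-1}^{\alpha\beta}-g_n^{\alpha\beta})\del_\alpha\del_\beta u^{n-1}$), and then argues via equicontinuity of $t\mapsto E_g^d(t,u^n)$ that a subsequence converges in $L^\infty([0,T];E_P^{d+1})$. Your direct Cauchy route is more elegant when it works, but there is a subtlety you have glossed over: the right-hand side of \eqref{eq 1 lem 7-7-3 a} carries the initial term $\|u(0,\cdot)\|_{E_P^{d+1}}\sim E_g^d(0,u)$, which by definition includes \emph{time} derivatives of $u$ at $t=0$ up to order $d{+}1$. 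For the difference $u^n-u^m$ these are computed from the equation and therefore involve $(F^n-F^m)(0,\cdot)$ and its time derivatives at $t=0$---quantities not controlled by convergence in $L^1([0,T];E^d)$ alone. The paper's indirect route only needs $E_g^d(0,u^n)$ to be \emph{bounded}, not Cauchy, which is why it sidesteps this.

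Your argument is easily repaired: either choose the mollification of $F$ so that it also converges at $t=0$ with enough time-regularity, or---more in keeping with your own final paragraph---run the energy estimate commuting only with spatial operators $\del_x^{I_1}\Omega^{I_2}$, so that the initial term sees only $(u_0^n-u_0^m,\,u_1^n-u_1^m)$, obtain Cauchy in $C([0,T];X_P^{d+1})$ that way, and then recover the remaining time derivatives through Lemma~\ref{lem 7-7-5} exactly as you already outline.
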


\begin{proof}[Proof of Proposition \ref{prop 7-7-1}]
The uniqueness is direct by applying Lemma~\ref{lem 7-7-2}.

The existence is based on the regularization and the estimate proved in Lemma~\ref{lem 7-7-3}.
We proceed by make a series of triple $(u_0^n,u_1^n,F^n)$ which converges to $(u_0,u_1,F)$ in the following sense:
$$
\aligned
&\lim_{n\rightarrow \infty}\|u_0^n -u_0\|_{E^{d+1}_H} = 0,\quad \lim_{n\rightarrow \infty}\|u_1^n-u_0\|_{E^d} = 0
\\
&\lim_{n \rightarrow\infty}\|F^n - F\|_{L^1([0,T];E^{d})} =0,\quad \lim_{n \rightarrow\infty}\|H^{\alpha\beta}_n - H^{\alpha\beta}\|_{L^\infty([0,T];E^{d})} =0,
\endaligned
$$
where $u^n_0,u^n_1$ are $C^\infty_c(\RR^3)$ functions and for all $t\in [0,T]$, $F^n(t,\cdot)\in C^\infty_c(\RR^3)$.

By classical existence theorem of linear wave equation (see for example \cite{Taylor11}), fix the time interval $[0,T]$, each triple $(u_0^n,u_1^n,F^n)$ determines a unique smooth solution by \eqref{eq 7-7-cauchy wave}. These solution, denoted by $u^n$, formes a series.

Now we take the difference of the equation satisfied by $u^n$ and $u^{n-1}$:
$$
g_n^{\alpha\beta}\del_{\alpha}\del_{\beta}(u^n - u^{n-1}) = (g^{\alpha\beta}_{n-1} - g^{\alpha\beta}_n)\del_{\alpha}\del_{\beta}u^{n-1} + (F^n - F^{n-1}).
$$
The apply to this equation the estimate \eqref{eq 2 lem 7-7-3}, we see that the sequence $\{u^n\}$ converges with respect the norm $L^\infty([0,T];\mathcal{E}_{-1})$.

By estimate \eqref{eq 1 lem 7-7-3 a}, $\{u^n\}$ is bounded in $L^{\infty}([0,T],E_P^{d+1})$. We recall the estimate \eqref{eq pr6 lem 7-7-3} and apply this on the time interval $[t',t'']\subset [0,T]$, we get
$$
E^d_g(t'',u^n) - E_g^d(t',u^n)\leq E_g^d\big(e^{C\int_{t'}^{t''}D^n_d(\tau)d\tau}-1\big)
+ C\int_{t'}^{t''}\|F^n(\tau,\cdot)\|_{E^d}e^{C\int_\tau^{t''}D^n_d(s)}d\tau,
$$
where
$$
D_k(t):= \max_{\alpha,\beta}\|H^{\alpha\beta}(t,\cdot)\|_{E_H^{k+1}}.
$$
Recall that $D_d^n(\tau)$ and $\|F(\tau,\cdot)\|_{E^d}$ are uniformly (with respect to $n$) bound. This implies that $\{u^n\}$ is equicontinuous with respect to the norm $L^{\infty}([0,T], E_P^{d+1})$. Then there is a sub-sequence of $\{u^n\}$ converges in the sense of $L^{\infty}([0,T], E_P^{d+1})$. We denote it again by $u^n$. Then we see that $\{u^n\}$ converges in $L^{\infty}([0,T], E_H^{d+1})$. We denote by $u$ its limit.

When $d\geq 3$, \eqref{eq 2 lem 7-7-3} shows that $\{u^n\}$ is a Cauchy sequence in $L^{\infty}([0,T],\mathcal{E}_{-1})$. So that $\{u^n\}$ converges in $C([0,T],E_H^{d+1})$ ($u^n$ are $C_c^{\infty}$ functions so $u^n\in C([0,T];\mathcal{E}_{-1})$). Furthermore, since $u^n$ are $C_c^{\infty}$ functions so they are in $C([0,T],E_H^{d+1})$ which is a closed subspace of $L^{\infty}([0,T],E_H^{d+1})$. Then $\{u^n\}$ converges in $C([0,T],E_H^{d+1})$. We denote by $u$ the $C([0,T],E_H^{d+1})$-limit of $\{u^n\}$. Then we see that $u\in C([0,T];E_H^d)$

We apply the same argument on $\{\del^k_t u\}$ and get the desired regularity. The estimate on $u$ is gained by taking limit of the estimate on $u^n$. 
\end{proof}

If we analyse carefully the proof of Proposition \ref{prop 7-7-1}, we can conclude that if the triple $(F,u_0,u_1)$ is only supposed to be in $L^1([0,T],E^d)\times E_P^{d+1}\times E^d$, the Cauchy problem \eqref{eq 7-7-cauchy wave} determines also a unique solution in $C([0,T],E_P^{d+1})$. We prefer to state this result separately in the following proposition:
\begin{proposition}\label{prop 7-7-2}
Let $d\geq 3$ and assume that the triple $(F,u_0,u_1)$ is only supposed to be in $L^1([0,T],E^d)\times E_P^{d+1}\times E^d$. And assume that $g^{\alpha\beta}$ is a $C^{\infty}$ metric defined on $[0,T]\times \RR^3$ and coercive with constant $C>0$,  and $H^{\alpha\beta} = g^{\alpha\beta} - m^{\alpha\beta}$ is in the class $C([0,T];E_H^{d+1})$. Then the Cauchy problem \eqref{eq 7-7-cauchy wave} has a unique solution $u$ in $C([0,T],E_P^{d+1})$ and $\del_t^k u\in C([0,T];E^{d+1-k})$ for $0\leq k\leq d$. Furthermore, it satisfies the following estimate: 
\begin{subequations}\label{eq 1 prop 7-7-2} 
\begin{equation}\label{eq 1 prop 7-7-2 b}
\| u(t,\cdot)\|_{E_P^{d+1}} \leq C\|\nabla u(0,\cdot)\|_{E^d} e^{\int_0^tCD_d(\tau)d\tau}
 + C\int_0^t\|F(\tau,\cdot)\|_{E^d}\,e^{\int_\tau^tD_d(s)ds} d\tau,
\end{equation}
\end{subequations}
where
$$
D_k(t):= \max_{\alpha,\beta}\|H^{\alpha\beta}(t,\cdot)\|_{E_H^{k+1}}.
$$
\end{proposition}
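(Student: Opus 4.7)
The plan is to mirror the proof of Proposition \ref{prop 7-7-1}, while dropping all use of the weighted pointwise norm $\mathcal{E}_{-1}$. The key observation is that the first estimate \eqref{eq 1 lem 7-7-3 a} in Lemma~\ref{lem 7-7-3} controls $\|u(t,\cdot)\|_{E_P^{d+1}}$ purely in terms of $\|\nabla u(0,\cdot)\|_{E^d}$ and $\|F\|_{L^1([0,t];E^d)}$, with no appeal to the $\mathcal{E}_{-1}$ bound on the initial data. Hence one can carry out the entire existence argument inside the single space $E_P^{d+1}$, which is precisely the norm the weaker hypothesis $(u_0,u_1)\in E_P^{d+1}\times E^d$ gives us access to. Uniqueness is immediate from Lemma~\ref{lem 7-7-2} applied to the difference of two solutions.

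For existence I would regularize, choosing $u_0^n,u_1^n\in C_c^\infty(\mathbb{R}^3)$ and $F^n(t,\cdot)\in C_c^\infty(\mathbb{R}^3)$ such that $u_0^n\to u_0$ in $E_P^{d+1}$, $u_1^n\to u_1$ in $E^d$, and $F^n\to F$ in $L^1([0,T];E^d)$. By classical linear wave theory (e.g.~\cite{Taylor11}) the Cauchy problem with smooth data and smooth metric admits a smooth solution $u^n$ on $[0,T]\times\mathbb{R}^3$. Applying \eqref{eq 1 lem 7-7-3 a} to the difference $u^n-u^m$, which satisfies the same equation with source $-(F^n-F^m)$ and initial data $(u_0^n-u_0^m,u_1^n-u_1^m)$, one obtains
\begin{equation*}
\|u^n(t,\cdot)-u^m(t,\cdot)\|_{E_P^{d+1}}
\le C\bigl(\|\nabla(u_0^n-u_0^m)\|_{E^d} + \|F^n-F^m\|_{L^1([0,t];E^d)}\bigr)e^{C\int_0^tD_d(\tau)d\tau},
\end{equation*}
with $D_d(\tau)$ fixed and uniformly bounded in $n,m$ since $H^{\alpha\beta}\in C([0,T];E_H^{d+1})$ is given. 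This shows $\{u^n\}$ is Cauchy in $L^\infty([0,T];E_P^{d+1})$, hence converges to some $u$ in that space.

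To upgrade the regularity, one notes that each $u^n$ is continuous in $t$ with values in $E_P^{d+1}$, and uniform convergence preserves continuity, so $u\in C([0,T];E_P^{d+1})$. The time derivatives are then handled by differentiating the equation and using Lemma~\ref{lem 7-7-5}: solving algebraically for $\partial_t^2 u$ in terms of first-order time derivatives, spatial second derivatives, and $F$ gives $\partial_t^2u\in C([0,T];E^{d-1})$, and iterating yields $\partial_t^k u\in C([0,T];E^{d+1-k})$ for $1\le k\le d$. The estimate \eqref{eq 1 prop 7-7-2 b} is obtained by passing to the limit in the corresponding bound for $u^n$.

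I expect essentially no serious obstacle: the main point is simply to recognize that the $E_P^{d+1}$ branch of Lemma~\ref{lem 7-7-3} is self-contained and does not drag in any $\mathcal{E}_{-1}$ control, which is exactly why the proof of Proposition~\ref{prop 7-7-1} (where $\mathcal{E}_{-1}$-Cauchyness had to be established separately through Lemma~\ref{lem 7-7-1}) simplifies here. The only point to check with care is that the approximation $H^{\alpha\beta}_n\to H^{\alpha\beta}$ is not actually needed — since the metric in the statement is already assumed smooth and $D_d$-bounded, one may keep the same metric $g$ in every approximating problem, which avoids the additional commutator terms that appeared in the proof of Proposition~\ref{prop 7-7-1}.
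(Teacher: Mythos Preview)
Your proposal is correct and follows exactly the route the paper indicates: it gives no separate proof of Proposition~\ref{prop 7-7-2}, only the remark that ``if we analyse carefully the proof of Proposition~\ref{prop 7-7-1}'' the result follows, and you have carried out precisely that analysis. Your direct Cauchy-in-$E_P^{d+1}$ argument---applying \eqref{eq 1 lem 7-7-3 a} to $u^n-u^m$ with the fixed smooth metric---is in fact slightly cleaner than the boundedness-plus-equicontinuity subsequence extraction used in the paper's proof of Proposition~\ref{prop 7-7-1}; keeping the metric unregularized, as you note, removes the cross-term $(g_{n-1}^{\alpha\beta}-g_n^{\alpha\beta})\del_{\alpha}\del_{\beta}u^{n-1}$ that forced the paper to pass through the $\mathcal{E}_{-1}$ estimate there.
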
 

Apply Lemma~\ref{lem 7-7-4} and taking the same regularization argument as in the proof of Proposition \ref{prop 7-7-1}, the following existence result for linear Klein-Gordon equation holds.

\begin{proposition}[Existence for KG equation]\label{prop 7-7-3}
Let $d\geq 3$ and the triple $(v_0,v_1,F)$ be in class $E_R^{d+1}\times E_R^d\times L^1([0,T],E_R^d)$. Assume that $g^{\alpha\beta}$ is a $C^{\infty}$ metric defined on $[0,T]\times \RR^3$ and coercive with constant $C>0$,  and $H^{\alpha\beta} = g^{\alpha\beta} - m^{\alpha\beta}$ is in the class $C[0,T];E_H^{d+1})$. Then the following Cauchy problem
\begin{equation}\label{eq 7-7-cauchy KG}
\aligned
&g^{\alpha\beta}\del_{\alpha}\del_{\beta} v - c^2v = F, \quad  c>0,
\\
& v(0,x) = v_0(x),\quad \del_t v(0,x) = v_1(x)
\endaligned
\end{equation}
has a unique solution in class $C([0,T];E^{d+1}_R)\cap C^1([0,T];E^d_R)$. Furthermore, it satisfies the following estimate
\begin{subequations}
\begin{equation}\label{eq 1 prop 7-7-3 a}
\aligned
&\|v(t,\cdot)\|_{E_P^{d+1}} + \|\del_t v(t,\cdot)\|_{E^d} + c\|v(t, \cdot)\|_{E^d} 
\\
& \leq
 C\big(\|\nabla u(0,\cdot)\|_{E^d} + c\|v(0,\cdot)\|_{E^d}\big)e^{C\int_0^tD_d(\tau,\cdot)d\tau}
\\
&\quad + C\int_0^t\|F(s,\cdot)\|_{E^d}\, e^{C\int_s^t D_d(\tau,\cdot)}d\tau ds,
\endaligned
\end{equation}
\begin{equation}\label{eq 1 prop 7-7-3 b}
\aligned
\|v(t,\cdot)\|_{E_P^{d+1}} + c\|v(t, \cdot)\|_{E^d} & \leq
 C\big(\|\nabla u(0,\cdot)\|_{E^d} + c\|v(0,\cdot)\|_{E^d}\big)e^{C\int_0^tD_d(\tau,\cdot)d\tau}
\\
&+ C\int_0^t\|F(s,\cdot)\|_{E^d}\, e^{C\int_s^t D_d(\tau,\cdot)}d\tau ds,
\endaligned
\end{equation}
\end{subequations}
where
$$
D_k(t):= \max_{|I_1|+|I_2|\leq k\atop \alpha,\beta}\|\del^{I_1}\Omega^{I_2}H^{\alpha\beta}(t,\cdot)\|_{L^{\infty}(\RR^3)}.
$$
\end{proposition}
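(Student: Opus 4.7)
The plan is to follow the template already established for the wave equation in Proposition~\ref{prop 7-7-1}, adapted to incorporate the mass term $c^2 v$. The crucial inputs are already in place: the a priori $L^2$ energy estimate of Lemma~\ref{lem 7-7-4}, which controls $\|v(t,\cdot)\|_{E_P^{d+1}} + c\|v(t,\cdot)\|_{E^d}$ (and hence also $\|\del_t v(t,\cdot)\|_{E^d}$ thanks to the definition of $E_{g,c}$ and coercivity), and the commutator estimates of Lemma~\ref{lem 7-6'-3}. The uniqueness portion is immediate: if $v^{(1)}, v^{(2)}$ are two solutions with the same data, the difference $w = v^{(1)} - v^{(2)}$ satisfies the homogeneous Cauchy problem, and \eqref{eq 1 lem 7-7-4 a} (applied with $F = 0$ and vanishing initial data) forces $w \equiv 0$ on $[0,T]$.

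For existence, I would regularize the data. Choose sequences $\{v_0^n\}, \{v_1^n\} \subset C_c^\infty(\RR^3)$, $\{F^n\} \subset C^\infty([0,T] \times \RR^3)$ with $F^n(t,\cdot) \in C_c^\infty(\RR^3)$, and smooth $\{H_n^{\alpha\beta}\}$ such that
\begin{equation*}
v_0^n \to v_0 \text{ in } E_R^{d+1}, \quad v_1^n \to v_1 \text{ in } E_R^d, \quad F^n \to F \text{ in } L^1([0,T], E_R^d), \quad H_n^{\alpha\beta} \to H^{\alpha\beta} \text{ in } L^\infty([0,T], E_H^{d+1}).
\end{equation*}
Standard theory for linear Klein--Gordon equations with smooth coefficients (see, e.g., Taylor's textbook used above) provides a unique smooth solution $v^n$ for each $n$. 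Applying \eqref{eq 1 lem 7-7-4 a} to the equation satisfied by the difference $v^n - v^{n-1}$ (whose source gathers $(H_{n-1}^{\alpha\beta} - H_n^{\alpha\beta})\del_\alpha\del_\beta v^{n-1}$ plus $F^n - F^{n-1}$, both controlled uniformly in view of the bound on $v^{n-1}$), we see that $\{v^n\}$ is Cauchy in $L^\infty([0,T], E_R^{d+1})$; a continuous limit $v$ exists, and by equicontinuity in $t$ (controlled via the same energy estimate on a sub-interval $[t', t'']$) we upgrade this to convergence in $C([0,T], E_R^{d+1})$. The limit $v$ solves the equation in the distributional sense, then in the classical sense by the regularity bootstrap.

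Higher time regularity is obtained by differentiating the equation in $t$: writing
\begin{equation*}
\del_t \del_t v = (1 - H^{00})^{-1}\bigl[(m^{ab} + H^{ab})\del_a\del_b v + 2 H^{0a}\del_t\del_a v - c^2 v + F\bigr]
\end{equation*}
(as in Lemma~\ref{lem 7-7-5}, admissible since the proof of Lemma~\ref{lem 7-7-4} requires coercivity and hence a hypothesis of the form $|H^{00}| < 1/2$ implicit in the coercivity constant) shows $\del_t^2 v \in C([0,T], E_R^{d-1})$, and iterating yields $\del_t^k v \in C([0,T], E_R^{d+1-k})$ for $0 \le k \le d$. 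Finally, the energy estimates \eqref{eq 1 prop 7-7-3 a}--\eqref{eq 1 prop 7-7-3 b} are obtained by taking limits in the corresponding bounds satisfied at the level of the approximate solutions, using lower semicontinuity of the norms with respect to weak-$\ast$ (respectively strong) convergence.

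The main obstacle, as in the wave-equation case, is organizing the approximation so that the commutator terms appearing after applying $\del^{I_1}\Omega^{I_2}$ can be absorbed uniformly in $n$: one must verify that the bilinear terms arising from $[\del^{I_1}\Omega^{I_2}, H_n^{\alpha\beta} \del_\alpha\del_\beta] v^n$ are bounded by $D_d^n(t) \cdot E_{g_n,c}^d(t, v^n)$ with $D_d^n$ uniformly bounded, so that Grönwall's inequality gives a uniform-in-$n$ control. This is exactly the content of the argument used in \eqref{eq pr3 lem 7-7-3}--\eqref{eq pr5 lem 7-7-3}, where the Sobolev embeddings of Lemmas~\ref{lem 7-6-2} and~\ref{lem 7-6-3} split low- and high-order factors; the same splitting applies here because the mass term $c^2 v$ commutes with $\del^{I_1}\Omega^{I_2}$ and contributes no commutator at all. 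Once this uniformity is secured, the remainder of the argument is routine functional analysis.
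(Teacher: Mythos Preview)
Your proposal is correct and follows precisely the approach the paper indicates: the paper's own proof is a single sentence stating that one should ``apply Lemma~\ref{lem 7-7-4} and tak[e] the same regularization argument as in the proof of Proposition~\ref{prop 7-7-1}'', which is exactly the template you have written out in detail. Your observation that the mass term $c^2 v$ commutes with $\del^{I_1}\Omega^{I_2}$ and hence contributes no commutator is the only point that distinguishes the Klein--Gordon case from the wave case, and you have identified it correctly.
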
 


\subsection{Nonlinear estimates}

To estimate the solution of quasi-linear system, we will need the following estimates on nonlinear terms.

\begin{lemma}\label{lem 7-10-0}
Let $F$ be a $C^{\infty}$ function from $\RR^m$ to $\RR$ and $u$ a $C^{\infty}$ application form $\RR^4$ to $\RR^m$ with components denoted by $u = (u_1,u_2,\cdots, u_m)$. Let $Z$ be a family of one order linear differential operate $Z = \{Z_{\alpha}\}$ with $\alpha\in\Lambda$, where $\Lambda$ is a subset of $\mathbb{N}^*$. Then the following identity holds for all multi-index $I = (\alpha_1,\alpha_2,\cdots, \alpha_{|I|})$ with $|I|\geq 1$:
\begin{equation}\label{eq lem 7-10-0}
Z^I\big (F(u)\big) = \sum_{1\leq|L|\leq |I|}P^LF(u)\sum_{\sum_{ji}K_{ji}=I}\prod_{j=1}^m\prod_{i=1}^{l_j}Z^{K_{ji}}u_j.
\end{equation}
Here
$$
P^L = \prod_{i=1}^m\del_i^{l_i}
$$
is a product of partial differential operator with $L=(l_1,\cdots l_m)$ and the convention:
$$
Z^Iu = 1, \quad \text{ if }\quad |I| = 0
$$
is applied. Furthermore, in a product if the set of index is empty, this product is regarded as $1$. For example,
$$
\prod_{i=1}^{l_j} Z^{K_{ji}}u_j = 1, \quad \text{ if } l_j=0.
$$
\end{lemma}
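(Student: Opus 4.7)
The plan is to prove the formula by induction on the order $|I|$ of the multi-index, using the chain rule and Leibniz product rule at each step. This is essentially a multivariable Fa\`a di Bruno formula for an arbitrary family of first-order linear differential operators.

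For the base case $|I|=1$, we have $I=(\alpha_1)$ and $Z^I=Z_{\alpha_1}$. Since $Z_{\alpha_1}$ is a first-order linear differential operator, the classical chain rule gives
\begin{equation*}
Z_{\alpha_1}\bigl(F(u)\bigr)=\sum_{j=1}^m\del_jF(u)\,Z_{\alpha_1}u_j,
\end{equation*}
which matches the right-hand side of \eqref{eq lem 7-10-0} with $|L|=1$, $L=(0,\dots,1,\dots,0)$ (the $1$ in slot $j$), $P^L=\del_j$, and the unique partition $K_{j1}=I$.

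For the inductive step, assume \eqref{eq lem 7-10-0} holds for every multi-index of order at most $n$, and let $I$ with $|I|=n+1$. Decompose $I=(\alpha_1,\alpha_2,\dots,\alpha_{n+1})$ and set $I'=(\alpha_2,\dots,\alpha_{n+1})$, so that $Z^I=Z_{\alpha_1}\circ Z^{I'}$ and $|I'|=n$. Applying the induction hypothesis to $Z^{I'}(F(u))$, then applying $Z_{\alpha_1}$, the Leibniz rule produces two types of contributions: first, $Z_{\alpha_1}$ can fall on the factor $P^LF(u)$, which by the base-case chain rule produces $\sum_{j}\del_j P^L F(u)\,Z_{\alpha_1}u_j$, thus raising $L$ to $L+e_j$ and adjoining a new singleton block $\{\alpha_1\}$ of the partition under index $(j,l_j+1)$; second, $Z_{\alpha_1}$ can fall on one of the factors $Z^{K_{ji}}u_j$, replacing it by $Z^{K_{ji}\cup\{\alpha_1\}}u_j$, that is, enlarging an existing block of the partition by $\alpha_1$ while keeping $L$ unchanged.

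Every partition $\{K_{ji}'\}$ of $I$ with weights $L'$ arises in precisely one of these two ways from a partition of $I'$: either the block containing $\alpha_1$ is the singleton $\{\alpha_1\}$ (first case, with $L'=L+e_j$ for the $j$ indexing that singleton), or the block containing $\alpha_1$ has at least two elements, so removing $\alpha_1$ yields a partition of $I'$ with the same $L'=L$ (second case). Hence, after relabelling, the combined sum reproduces exactly the right-hand side of \eqref{eq lem 7-10-0} for $I$ in place of $I'$, with $|L|$ now ranging up to $n+1$.

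The only real obstacle is the bookkeeping: one must verify that summing over all partitions of $I'$ and all choices of how $Z_{\alpha_1}$ is distributed gives each partition of $I$ exactly once with the correct coefficient. This is handled by the bijection described above (insertion of the new element $\alpha_1$ either as a new singleton block or into an existing block), and no extra combinatorial multiplicities arise because the partition indexing $(j,i)$ with $1\le i\le l_j$ is already fully ordered. The conventions stated in the lemma (empty products equal to $1$, and $Z^I u=1$ when $|I|=0$) ensure the formula reads correctly when some $l_j=0$, so no separate treatment of degenerate cases is needed.
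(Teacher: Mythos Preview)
Your proof is correct and follows essentially the same route as the paper: induction on $|I|$, peeling off one operator, splitting via the chain and Leibniz rules into ``$Z$ hits $P^LF(u)$'' versus ``$Z$ hits a factor $Z^{K_{ji}}u_j$'', and then matching the resulting terms bijectively with partitions of $I$ according to whether the block containing the new element is a singleton or not. The only cosmetic difference is that you peel off the leftmost operator $Z_{\alpha_1}$ while the paper peels off the last one, which is immaterial.
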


\begin{proof}  We observe that in the right-hand-side, for a fixed $L$, the sum is taken over all the proper $|L|$-partition of index $I$. That is, over all the proper $L$-partition of abstract index $\mathscr{I}$ with $|\mathscr{I}| = I$. We denote by
$$
\mathscr{K}_{ji} = \mathscr{I} \quad\text{ with }\quad \{K_{ji}\} = I(\{\mathscr{K}_{ji}\})
$$
and we denote by $\mathscr{P}_p(\mathscr{I},|L|)$ the set of all proper $|L|$-partition of $\mathscr{I}$.  Then \eqref{eq lem 7-10-0} can be written as
$$
Z^I\big (F(u)\big) = \sum_{1\leq|L|\leq |I|}P^LF(u)\sum_{\{\mathscr{K}_{ji}\}\in \mathscr{P}_p(\mathscr{I},|L|)}\prod_{j=1}^m\prod_{i=1}^{l_j}Z^{K_{ji}}u_j, \quad \{K_{ji}\} = I(\{\mathscr{K}_{ji}\}).
$$
Now we associate each term in the right-hand-side to a pair $(L,\{\mathscr{K}_{ij}\})$: in the sum, each term
$P^LF(u)\prod_{j=1}^m\prod_{i=1}^{l_j}Z^{K_{ji}}u_j$ corresponds to an operator $P^L$.
The quantity $\{K_{ji}\}$ is a partition of $I$ which is a restriction of $I$ on a abstract partition $\{\mathscr{K}_{ji}\}$. Note that for fixed $L$ the sum is taken on $\mathscr{P}_p(\mathscr{I},|L|)$, so we have constructed a bijection from the terms in right hand side to the following set
$$
\mathscr{Q}(\mathscr{I}) = \{(L,\{\mathscr{K}_{ji}\})|\mathscr{K}_{ji}\in \mathscr{P}_P(\mathscr{I},|L|),1\leq |L|\leq |\mathscr{I}|\}
$$

We will prove \eqref{lem 7-10-0} by induction on the order of $|I|$ with associated abstract multi-index $\mathscr{I} = \{\alpha_1,\alpha_2,\dots, \alpha_n\}$. We check by direct calculation that this identity is valid for $|I| =1$.  Suppose that it holds for $|I|\leq n$, we consider $|I'| = n+1$. Let $\mathscr{I}'$ be the associated $n+1$ order abstract multi-index composed by $\{\alpha_1,\alpha_2\dots,\alpha_n,\alpha_{n+1}\}$ and the restriction of $I'$ on $\mathscr{I}$ is coincide with $I$.

Then
$$
\aligned
&Z^{I'}\big(F(u)\big)
\\
&=Z_{I'(\alpha_{n+1})}\bigg(\sum_{1\leq|L|\leq |I|}P^LF(u)\sum_{\{\mathscr{K}_{ji}\}\in \mathscr{P}_p(\mathscr{I},|L|)}\prod_{j=1}^m\prod_{i=1}^{l_j}Z^{K_{ji}}u_j\bigg)
\\
&=\sum_{1\leq|L|\leq |I|}Z_{I'(\alpha_{n+1})}\big(P^LF(u)\big)
\sum_{\{\mathscr{K}_{ji}\}\in \mathscr{P}_p(\mathscr{I},|L|)}\prod_{j=1}^m\prod_{i=1}^{l_j}Z^{K_{ji}}u_j
\\
&+\sum_{1\leq|L|\leq |I|}P^LF(u)\sum_{\{\mathscr{K}_{ji}\}\in \mathscr{P}_p(\mathscr{I},|L|)}Z_{I'(\alpha_{n+1})}\prod_{j=1}^m\prod_{i=1}^{l_j}Z^{K_{ji}}u_j
\\
&=: T_1 + T_2.
\endaligned
$$
For $T_1$, we observe that
$$
\aligned
T_1 &= \sum_{1\leq|L|\leq |I|}\sum_{k=1}^m P^{L_k'}F(u)Z_{I'(\alpha_{n+1})}u_k
\sum_{\{\mathscr{K}_{ji}\}\in \mathscr{P}_p(\mathscr{I},|L|)}\prod_{j=1}^m\prod_{i=1}^{l_j}Z^{K_{ji}}u_j
\\
&=\sum_{1\leq|L|\leq |I|}\sum_{k=1}^m P^{L_k'}F(u)
\sum_{\{\mathscr{K}_{ji}\}\in \mathscr{P}_p(\mathscr{I},|L|)}\prod_{j=1}^m\prod_{i=1}^{l'_j}Z^{K'_{ji}}u_j
\endaligned
$$
with $L_k' = (l'_1,l'_2,\dots l'_k,\dots,l'_m)$ with $l'_j = l_j$ for $j\neq k$ and $l'_k = l_k+1$ and $\mathscr{K}'_{ji} = \mathscr{K}_{ji}$ with $(j,i)\neq (k,l'_k)$ and $\mathscr{K}'_{kl'_k} = \alpha_{n+1}$. Here, $K_{ij}$ is the restriction of $I$ on $\mathscr{K}_{ji}$ while $K'_{ji}$ is the restriction of $I'$ on $\mathscr{K}'_{ji}$.

For $T_2$,
$$
\aligned
T_2 &= \sum_{1\leq|L|\leq |I|}P^LF(u)\sum_{\{\mathscr{K}_{ji}\}\in \mathscr{P}_p(\mathscr{I},|L|)}Z_{I'(\alpha_{n+1})}\prod_{j=1}^m\prod_{i=1}^{l_j}Z^{K_{ji}}u_j
\\
& =\sum_{1\leq|L|\leq |I|}P^LF(u)\sum_{\{\mathscr{K}_{ji}\}\in \mathscr{P}_p(\mathscr{I},|L|)}
\sum_{1\leq j_0\leq m\atop 1\leq i_0\leq l_j}\prod_{j=1}^m\prod_{i=1}^{l_j}Z^{K'_{ji}}u_j,
\endaligned
$$
where  $\mathscr{K'}_{ji} = \mathscr{K}_{ji}$ when $(j,i)\neq (j_0,i_0)$ and $\mathscr{K'}_{j_0i_0} = \mathscr{K}_{j_0i_0}\cup\{\alpha_{n+1}\}$.

Now, we associate to each term in $T_1$ and $T_2$ a pair o$(L',\{\mathscr{K}_{ji}\})$ in the same manner. This defines an injection from the terms contained in $T_1$ and $T_2$ to the set
$$
\mathscr{Q}(\mathscr{I}') = \{(L',\{\mathscr{K}_{ji}\})|1\leq |L'|\leq |\mathscr{I}'|,\{\mathscr{K}_{ji}\}\in \mathscr{P}_P(|L|,\mathscr{I}')\}.
$$
The injectivity porperty is by checked from the fact that for two terms if $L'= \tilde{L}'$, the different terms correspond to a different partition (by our definition of sum over partitions).

Denote by $\mathscr{A}$ the image of the terms in $T_1$ and $T_2$ under this injection. This is a subset of $\mathscr{Q}(\mathscr{I}')$. We will prove that $\mathscr{A} = \mathscr{Q}(\mathscr{I}')$ which leads to the equality in the case $|I'| = n+1$. To do so, let
$$
(L',\{\mathscr{K}'_{ji}\})\in \mathscr{Q}(\mathscr{I}'), \quad L' = (l_1',l_2'\dots, l_m')
$$
Then we see that as $\alpha_{n+1}\in \mathscr{I}'$ there is one and only one $(j_0,i_0)$ such that $\alpha_{n+1}\in\mathscr{K}'_{j_0i_0}$ since $\bigcup_{(j,i)}\mathscr{K}'_{ji} = \mathscr{I}'$ and they are disjoint to each other. We will prove that $(L',\{\mathscr{K}'_{ji}\})\in \mathscr{A}$.

We define $L = (l_1,l_2,\dots,l_m)$ with $l_j = l'_j$ for $j\neq j_0$ and $l_{j_0} = l'_{j_0}-1$. We construct $\{\mathscr{K}_{ji}\}$ as follows:
$\mathscr{K}_{ji} = \mathscr{K}'_{ji}$ if $(j,i)\neq (j_0,i_0)$ and $\mathscr{K}_{j_0i_0} = \mathscr{K}'_{j_0i_0} \cap \{\alpha_{n+1}\}^c$.
Such constructed pair $(L,\mathscr{K}_{ji})$ to be in $\mathscr{Q}(\mathscr{I})$.

When $\mathscr{K}_{j_0i_0} = \varnothing$, we see that $(L',\{\mathscr{K}'_{ji}\})$ corresponds to a term in $T_1$. More precisely in the following term:
$$
\aligned
&Z_{I'(\alpha_{n+1})}\big(P^LF(u)\big)\sum_{\{\mathscr{K}_{ji}\}\in \mathscr{P}_p(\mathscr{I},|L|)}\prod_{j=1}^m\prod_{i=1}^{l_j}Z^{K_{ji}}u_j
\\
&=P^{L'_k}F(u)\sum_{k=1}^mZ_{\alpha_{n+1}}u_k\sum_{\{\mathscr{K}_{ji}\}\in \mathscr{P}_p(\mathscr{I},|L|)}\prod_{j=1}^m\prod_{i=1}^{l_j}Z^{K_{ji}}u_j,
\endaligned
$$
where we can see it by fixing $k = j_0$ and $\{\mathscr{K}_{ji}\}$ in the sum.

When $\mathscr{K}_{j_0i_0} \neq \varnothing$, $(L',\{\mathscr{K}'_{ji}\})$ corresponds to a term in $T_2$:
$$
\aligned
&P^LF(u)\sum_{\{\mathscr{K}_{ji}\}\in \mathscr{P}_p(\mathscr{I},|L|)}Z_{I'(\alpha_{n+1})}\prod_{j=1}^m\prod_{i=1}^{l_j}Z^{K_{ji}}u_j
\\
&=P^LF(u)\sum_{\{\mathscr{K}_{ji}\}\in \mathscr{P}_p(\mathscr{I},|L|)}
\sum_{1\leq j_0\leq m\atop 1\leq i_0\leq l_j}\prod_{j=1}^m\prod_{i=1}^{l_j}Z^{K'_{ji}}u_j
\endaligned
$$
we see it by fixing  $(i_i,j_0)$ and $\{\mathscr{K}_{ji}\}$ in the sum.
\end{proof}

\begin{lemma}\label{lem 7-10-1}
Let $F$ be a $C^{\infty}$ function from $\RR^m$ to $\RR$ and $u$ a $C^{\infty}$ application form $\RR^4$ to $\RR^m$. Then the following identity holds for any multi-index $I_1, I_2$ with $|I_1|+|I_2|\geq 1$:
\begin{equation}\label{eq lem 7-10-1}
\del^{I_1}\Omega^{I_2}\big(F(u)\big) = \sum_{l=1}^{|I_1|+|I_2|}\sum_{\sum_{j=1}^m l_j = l}P^LF(u)\sum_{\sum_{j,i}K_{1ji} =I_1\atop \sum_{j,i}K_{2ji}=I_2}
\prod_{j=1}^m\prod_{i=1}^{l_j}\del^{K_{1ji}}\Omega^{K_{2ji}}u_j,
\end{equation}
where $L = (l_1,l_2,\cdots, l_m)$ is a $m$-dimensional vector with its components taking value in $\mathbb{N}$ and $P^L$ the partial differential operator
$$
P^L = \prod_{i=1}^m\del_i^{l_i}
$$
and the convention:
$$
\del^{I_1}\Omega^{I_2}u = 1, \quad \text{ if } |I_1| = |I_2| =0
$$
is applied.
\end{lemma}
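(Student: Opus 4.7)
The plan is to deduce Lemma~7-10-1 as a direct specialization of the already-established Lemma~7-10-0. Apply that lemma to the family of first-order operators
\[
Z = \{\partial_0,\partial_1,\partial_2,\partial_3,\Omega_1,\Omega_2,\Omega_3\},
\]
and view the composite $\partial^{I_1}\Omega^{I_2}$ as a single monomial $Z^I$ in these operators. Concretely, associate to the pair $(I_1,I_2)$ the concatenated abstract index set $\mathscr{I} = \mathscr{I}_1 \sqcup \mathscr{I}_2$, equipped with the order that places all elements of $\mathscr{I}_1$ before those of $\mathscr{I}_2$, and define the map $I:\mathscr{I}\to\{1,\dots,7\}$ so that it coincides with $I_1$ on $\mathscr{I}_1$ (with values in $\{1,2,3,4\}$, encoding the $\partial_\alpha$) and with $I_2$ on $\mathscr{I}_2$ (with values in $\{5,6,7\}$, encoding the $\Omega_a$). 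Then by construction $Z^I = \partial^{I_1}\Omega^{I_2}$.

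Next, invoke Lemma~7-10-0 to obtain
\[
\partial^{I_1}\Omega^{I_2}\bigl(F(u)\bigr)
= \sum_{1\le|L|\le|I|} P^L F(u)\sum_{\sum_{j,i}\mathscr{K}_{ji}=\mathscr{I}}\,\prod_{j=1}^m\prod_{i=1}^{l_j} Z^{K_{ji}} u_j,
\]
where $K_{ji}$ denotes the restriction of the map $I$ to the block $\mathscr{K}_{ji}$. The key reorganization is the following: each partition $\{\mathscr{K}_{ji}\}$ of $\mathscr{I}$ restricts to a pair of partitions
\[
\mathscr{K}_{1,ji} := \mathscr{K}_{ji}\cap\mathscr{I}_1,\qquad \mathscr{K}_{2,ji} := \mathscr{K}_{ji}\cap\mathscr{I}_2,
\]
of $\mathscr{I}_1$ and $\mathscr{I}_2$ respectively, and conversely any such pair of partitions assembles back into a unique partition of $\mathscr{I}$ respecting the chosen order. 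Writing $K_{1,ji}$ and $K_{2,ji}$ for the restrictions of $I_1$ and $I_2$, the sum $\sum_{\mathscr{K}_{ji}=\mathscr{I}}$ rewrites as the double sum $\sum_{K_{1,ji}=I_1,\,K_{2,ji}=I_2}$ appearing in the statement.

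The last step is to identify $Z^{K_{ji}}$ with $\partial^{K_{1,ji}}\Omega^{K_{2,ji}}$. Because the ordering on $\mathscr{I}$ was chosen to place all derivative-type generators before the rotation-type ones, the restriction of the word $Z^I$ to the block $\mathscr{K}_{ji}$ is the word obtained by first listing the $\partial$'s corresponding to $\mathscr{K}_{1,ji}$ and then the $\Omega$'s corresponding to $\mathscr{K}_{2,ji}$; this is exactly $\partial^{K_{1,ji}}\Omega^{K_{2,ji}}$ under the convention of the lemma. Substituting this identification into the formula yields \eqref{eq lem 7-10-1}.

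There is no genuine obstacle here beyond careful bookkeeping with the abstract multi-index formalism introduced in Section~7; the only point requiring attention is the coherence between the ordering chosen on $\mathscr{I}$ and the convention encoded in the notation $\partial^{K_{1,ji}}\Omega^{K_{2,ji}}$, which is precisely why we insist on concatenating $\mathscr{I}_1$ before $\mathscr{I}_2$. Once this is fixed, the result is a transcription of Lemma~7-10-0 and no induction needs to be redone.
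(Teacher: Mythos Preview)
Your proof is correct and follows essentially the same approach as the paper: both encode $\partial^{I_1}\Omega^{I_2}$ as a single word $Z^I$ in the combined family $\{\partial_\alpha,\Omega_a\}$, apply Lemma~\ref{lem 7-10-0}, and then split each partition block $\mathscr{K}_{ji}$ into its $\partial$-part and $\Omega$-part to recover the stated formula. Your write-up is slightly more explicit about why the ordering on $\mathscr{I}=\mathscr{I}_1\sqcup\mathscr{I}_2$ guarantees $Z^{K_{ji}}=\partial^{K_{1,ji}}\Omega^{K_{2,ji}}$, which the paper simply asserts.
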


\begin{proof} 
The proof is an application of \eqref{eq lem 7-10-0}. Let $D^I = \del^{I_1}\Omega^{I_2}$ with $D = \{\del_\alpha, \Omega_a\}$. We denote by
$$
D_\alpha = \del_\alpha \text{ for } \alpha = 0, 1,2,3,\quad D_\alpha = \Omega_{\alpha-3}, \text{ for } \alpha=4,5,6.
$$
We denote the components of $I_1$ and $I_2$ by
$$
I_1 = (\beta_1,\beta_2,\cdots \beta_{n_1}),\quad I_2 = (\gamma_1,\gamma_2,\cdots,\gamma_{n_2})
$$
Then $I$ is determined by
$$
I = (\alpha_1,\alpha_2,\cdots \alpha_{n_1},\alpha_{n_1+1}\cdots,\alpha_{n_1+n_2})
$$
with $\alpha_i = \beta_i$ for $i=1,2,n_1$ and $\alpha_{i+n_1} = \gamma_i+3$ for $i=1,2,n_2$.

Remake that $D$ is a family of first-order linear differential operator. Then by \eqref{eq lem 7-10-0}
\begin{equation}
\del^{I_1}\Omega^{I_2}\big(F(u)\big)
 = \sum_{1\leq|L|\leq |I|}\sum_{\sum_{ji}K_{ji} = I}P^L F(u)\prod_{j=1}^{m}\prod_{i=1}^{l_j}Z^{K_{ji}}u_j.
\end{equation}
Then, since $\sum_{ji}K_{ji}=I$ is a partition of $I$. Then
$$
D^{K_{ji}} = \del^{K_{1ji}}\Omega^{K_{2ji}'}
$$
with $\sum_{ji}K_{1ji}=I_1$ a partition of $I_1$ and $\sum_{ji}K_{2ji}=I_2$ a partition of $I_2$. This gives
$$
\del^{I_1}\Omega^{I_2}\big(F(u)\big) = \sum_{1\leq|L|\leq |I|}\sum_{\sum_{ji}K_{1ji} = I_1\atop \sum_{ji}K_{2ji} = I_2}P^L F(u)\prod_{j=1}^{m}\prod_{i=1}^{l_j}\del^{K_{1ji}}\Omega^{K_{2ji}}u_j.
$$ 
Then the desired result is proven.
\end{proof}

The result of Lemma~\ref{lem 7-10-1} will be applied in the following case where $F(\cdot)$ is supposed to vanish at $0$ in second order.

\begin{lemma}\label{lem 7-10-2}
Let $F$ be a $C^{\infty}$ function defined in a compact neighborhood $V$ of $0$ in $\RR^{m}$ and $F(0) = \nabla F(0)= 0$. Let $d\geq 3$ and suppose that $u$ map from $\RR^4$ to $V$ with its components $u_j$ in $L^{\infty}([0,T];E^d)$. Then the following estimates hold for any couple of index $(I_1,I_2)$ with $|I_1|+|I_2|\leq d$:
\begin{equation}\label{eq 1 lem 7-10-2}
\big\|\del^{I_1}\Omega^{I_2}\big(F(u)\big)(t,\cdot)\big\|_{L^2(\RR^3)}\leq C(F,V,d) \sum_{k=2}^{|I_1|+|I_2|} \|u(t,\cdot)\|^k_{E^d},\quad \text{ for } t\in[0,T],
\end{equation}
where $C(F,V,d)$ is a constant determined by $F$, $V$ and $d$, and $\|u(t,\cdot)\|_{E^d}: = \max_{j}\|u_j(t,\cdot)\|_{E^d}$.
\end{lemma}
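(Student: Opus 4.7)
The plan is to combine the Faà di Bruno--type chain rule of Lemma~\ref{lem 7-10-1} with Hölder's inequality and the classical Sobolev embedding $H^2(\RR^3)\hookrightarrow L^\infty(\RR^3)$. Applying Lemma~\ref{lem 7-10-1} writes $\del^{I_1}\Omega^{I_2}\bigl(F(u)\bigr)$ as a finite sum, indexed by $l=|L|$ from $1$ to $|I_1|+|I_2|$, of terms of the form $P^L F(u)\cdot \prod_{j,i}\del^{K_{1ji}}\Omega^{K_{2ji}}u_j$, where the $l$ factors in the product have derivative orders $n_{ji}:=|K_{1ji}|+|K_{2ji}|$ summing to exactly $|I_1|+|I_2|\le d$. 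So it suffices to estimate each such term in $L^2(\RR^3)$. Because $u$ takes values in the compact set $V$ on which $F$ is smooth, each coefficient $P^L F(u)$ is pointwise bounded by a constant $C(F,V,d)$; moreover, since $\nabla F(0)=0$, Taylor's formula upgrades this to $|P^L F(u)|\le C(F,V)\,|u|$ when $|L|=1$.

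For the core product estimate I would distinguish the cases $l\ge 2$ and $l=1$. When $l\ge 2$, let $n_\star$ denote the largest of the $n_{ji}$'s. I would put the corresponding factor in $L^2(\RR^3)$, bounded by $\|u\|_{E^d}$, and the remaining $l-1$ factors in $L^\infty(\RR^3)$. For each such non-maximal factor, the classical Sobolev embedding $H^2(\RR^3)\hookrightarrow L^\infty(\RR^3)$ gives
\begin{equation*}
\bigl\|\del^{K_{1ji}}\Omega^{K_{2ji}}u_j\bigr\|_{L^\infty(\RR^3)}\le C\sum_{|J|\le 2}\bigl\|\del_x^J\del^{K_{1ji}}\Omega^{K_{2ji}}u_j\bigr\|_{L^2(\RR^3)}\le C\|u\|_{E^{n_{ji}+2}},
\end{equation*}
which is bounded by $C\|u\|_{E^d}$ provided $n_{ji}+2\le d$; the commutator identities of Lemma~\ref{lem 7-6'-3} are used to move the extra $\del_x^J$ past the $\Omega^{K_{2ji}}$ factors at the cost of harmless lower-order terms still controlled by $\|u\|_{E^d}$. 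A short combinatorial check confirms that this Sobolev condition is always met: if $n_\star\ge 2$ then any other order $n_{ji}$ satisfies $n_{ji}\le d-n_\star\le d-2$, while if $n_\star\le 1$ then $n_{ji}\le 1\le d-2$ since $d\ge 3$. Hölder's inequality then yields the bound $C\|u\|_{E^d}^{\,l}$ for the summand, contributing to the $k=l$ term in \eqref{eq 1 lem 7-10-2}.

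The borderline case $l=1$ uses the vanishing $\nabla F(0)=0$ instead: here the single product factor is $\del^{I_1}\Omega^{I_2}u_j$ of order up to $d$, which I place in $L^2$ with bound $\|u\|_{E^d}$, while the coefficient $P^L F(u)=\partial_j F(u)$ is estimated by $C\|u\|_{L^\infty}\le C\|u\|_{E^d}$, again by classical Sobolev. The resulting quadratic bound $C\|u\|_{E^d}^{\,2}$ fits exactly into the $k=2$ summand on the right-hand side of \eqref{eq 1 lem 7-10-2}. Summing the contributions from $l=1,\ldots,|I_1|+|I_2|$ yields the claimed inequality.

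The main obstacle is the bookkeeping in the borderline dimension $d=3$, where the Sobolev threshold $n_{ji}+2\le d$ collapses to $n_{ji}\le 1$ and one must rule out having two factors each of order $\ge 2$; this is immediate since their combined order would then exceed $d=3$. Once this elementary ordering argument is in place, everything else is a routine application of Hölder's inequality, and the constant $C(F,V,d)$ absorbs the finite sum over partitions and the uniform bound on the derivatives of $F$ on $V$.
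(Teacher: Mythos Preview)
Your argument is correct and follows essentially the same route as the paper's proof: apply the Fa\`a di Bruno expansion of Lemma~\ref{lem 7-10-1}, treat the case $|L|=1$ using $\nabla F(0)=0$, and for $|L|\ge 2$ place the highest-order factor in $L^2$ and the remaining ones in $L^\infty$ via Sobolev. Your pigeonhole bound $n_{ji}\le d-n_\star\le d-2$ is a slight rephrasing of the paper's observation that at most one factor has order exceeding $d/2$ (hence all others are $\le [d/2]$, with $2+[d/2]\le d$); the two formulations are equivalent. The only omission is the trivial case $|I_1|+|I_2|=0$, where Lemma~\ref{lem 7-10-1} does not apply and one instead uses $F(0)=0$ together with $\nabla F(0)=0$ to get $|F(u)|\le C|u|^2$ directly.
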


\begin{proof}
When $|I_1|+|I_2|=0$, by the condition $F(0) = \nabla F(0) = 0$,
$$
|F(u)|\leq C(F,V)\sum_{j=1}^m|u_j|^2
$$
which leads to the desired result.

For $|I_1|+|I_2|\geq 1$, proof is based on Lemma~\ref{lem 7-10-1}. We take the expression:
$$
\del^{I_1}\Omega^{I_2}\big(F(u)\big) = \sum_{l=1}^{|I_1|+|I_2|}\sum_{\sum_{j=1}^m l_j = l}P^LF(u)\sum_{\sum_{j,i}K_{1ji} =I_1\atop \sum_{j,i}K_{2ji}=I_2}
\prod_{j=1}^m\prod_{i=1}^{l_j}\del^{K_{1ji}}\Omega^{K_{2ji}}u_j
$$
and observe that for $|L| = \sum_{j=1}^m l_j = 1$, $P^L F(0)=0$. Then we have, in the compact neighborhood V of $0$,
$$
|\del_jF(u)|\leq C(F,V)|u|,
$$
where $C(F,V)$ is determined by $V$ and $F$ and $|u|:=\max_j|u_j|$. Then,
\begin{equation}\label{eq pr1 lem 7-10-2}
\aligned
\big|\del^{I_1}\Omega^{I_2}\big(F(u)\big)\big|
& \leq \sum_{j=1}^m|\del_jF(u)|\big|\del^{I_1}\Omega^{I_2}u_j\big|
\\
+&\sum_{l=2}^{|I_1|+|I_2|}\sum_{\sum_{j=1}^m l_j = l}|P^LF(u)|\sum_{\sum_{j,i}K_{1ji} =I_1\atop \sum_{j,i}K_{2ji}=I_2}
\prod_{j=1}^m\prod_{i=1}^{l_j}\big|\del^{K_{1ji}}\Omega^{K_{2ji}}u_j\big|
\\
& \leq C(F,V)|u|\,\big|\del^{I_1}\Omega^{I_2}u\big|
\\
&  + \sum_{l=2}^{|I_1|+|I_2|}\sum_{\sum_{j=1}^m l_j = l}|P^LF(u)|\sum_{\sum_{j,i}K_{1ji} =I_1\atop \sum_{j,i}K_{2ji}=I_2}
\prod_{j=1}^m\prod_{i=1}^{l_j}\big|\del^{K_{1ji}}\Omega^{K_{2ji}}u_j\big|.
\endaligned
\end{equation}

The first term in right-hand-side is estimated as follows:
$$
\aligned
\big{\|}u(t,\cdot)\,\del^{I_1}\Omega^{I_2}u(t,\cdot)\big{\|}_{L^2(\RR^3)}
& \leq \|u(t,\cdot)\|_{L^{\infty}(\RR^3)}\|\del^{I_1}\Omega^{I_2}u(t,\cdot)\|_{L^2(\RR^3)}
\\
& \leq C\|u(t,\cdot)\|_{E^2} \|u(t,\cdot)\|_{X^d}\leq C\|u(t,\cdot)\|^2_{E^d},
\endaligned
$$
where the Sobolev's inequality is applied.

The second term in right-hand-side of \eqref{eq pr1 lem 7-10-2} is estimated as follows: we observe the term:
$$
\sum_{\sum_{j,i}K_{1ji} =I_1\atop \sum_{j,i}K_{2ji}=I_2}
\prod_{j=1}^m\prod_{i=1}^{l_j}\big|\del^{K_{1ji}}\Omega^{K_{2ji}}u_j\big|
$$
Recall that $|L| = \sum_jl_j\geq 2$, then we see that in the product there are at least two factors and
$$
\sum_{ji}K_{1ji} = I_1,\quad \sum_{ji}K_{2ji}  = I_2
$$
is a partition of $(I_1,I_2)$ in $|L| = \sum_jl_j$ pieces with $|L|\geq 2$. Note that
$$
\sum_{ji}|K_{1ji}|  = |I_1|,\quad \sum_{ji}|K_{2ji}| = |I_2|.
$$
We observe that there among the index $K_{1ji}$ there is at most one, denoted by $K_{1i_0j_0}$, is of order higher than $|I_1|/2$. In an other world,
$$
|K_{1ji}|\leq \big[|I_2|/2\big], \quad \text{if } i\neq i_0,j\neq j_0,
$$
where $[x]$ denotes the biggest integer less than or equal to $x$. The same result holds for the index $K_{2ji}$. Then we conclude that in the decomposition of $(I_1,I_2)$ there is at most one pair of index, denoted by $(K_{1i_0j_0}, K_{2i_0j_0})$, is of order higher than $d/2$. In an other world,
$$
|K_{1ji}| + |K_{2ji}| \leq [d/2],\quad \text{if } i\neq i_0,j\neq j_0.
$$
Then if we take the $L^2$ norm, we will find that
$$
\aligned
&\bigg{\|}\prod_{j=1}^m\prod_{i=1}^{l_j}\del^{K_{1ji}}\Omega^{K_{2ji}}u_j(t,\cdot)\bigg{\|}_{L^2(\RR^3)}
\\
& \leq \prod_{j=1\atop j\neq j_0}^m\prod_{i=1\atop i\neq i_0}^{l_j}\big{\|}\del^{K_{1ji}}\Omega^{K_{2ji}}u_j(t,\cdot)\big{\|}_{L^{\infty}(\RR^3)}\cdot \big{\|}\del^{K_{1i_0j_0}}\Omega^{K_{2i_0j_0}}u_{j_0}(t,\cdot)\big{\|}_{L^2(\RR^3)}
\\
& \leq \prod_{j=1\atop j\neq j_0}^m\prod_{i=1\atop i\neq i_0}^{l_j}\big{\|}\del^{K_{1ji}}\Omega^{K_{2ji}}u_j(t,\cdot)\big{\|}_{E^2} \cdot
\|u_{j_0}(t,\cdot)\|_{E^d}
\\
& \leq \prod_{j=1\atop j\neq j_0}^m\prod_{i=1\atop i\neq i_0}^{l_j}\|u(t,\cdot)\|_{E^{2+[d/2]}}\|u(t,\cdot)\|_{E^d}.
\endaligned
$$
Here, we have applied \eqref{eq com lem 7-6'-1 d}. Now recall $d\geq 3$, then $2+[d/2]\leq d$. Then, we get 
$$
\bigg{\|}\prod_{j=1}^m\prod_{i=1}^{l_j}\del^{K_{1ji}}\Omega^{K_{2ji}}u(t,\cdot)\bigg{\|}_{L^2(\RR^3)} \leq \|u\|^{|L|}_{E^d}.
$$

Also, we observe that in the compact neighborhood $V$, $\sup_{x\in V}|P^LF(u)|\leq C(F,V)$ with $C(F,V,|L|)$ a constant determined by $F$ and $V$ and $|L|$. Then the desired result is proven.
\end{proof}

Now, we  combine Lemma~\ref{lem 7-10-2} with the global Sobolev inequality \eqref{eq 1 lem 7-6-1}.

\begin{lemma}\label{lem 7-10-2.1}
Based on the same assumptions on $F$ and $u$ as in Lemma~\ref{lem 7-10-2}, for $|I_1|+|I_2|\leq d-2$, the following estimate holds:
\begin{equation}\label{eq 1 lem 7-10-2.1}
\|\del^{I_1}\Omega^{I_2}\big(F(u)\big)(t,\cdot)\|_{\mathcal{E}_{-1}}\leq C(F,V,d) \sum_{k=2}^{|I_1|+|I_2|+2} \|u(t,\cdot)\|^k_{E^d}.
\end{equation}
\end{lemma}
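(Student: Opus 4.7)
The plan is to obtain the $\mathcal{E}_{-1}$ bound by combining the $L^2$-type nonlinear estimate of Lemma~\ref{lem 7-10-2} with the global Sobolev embedding $X^d \hookrightarrow \mathcal{E}_{-1}$ for $d\geq 2$ provided by Lemma~\ref{lem 7-6-1}. Since the target norm $\mathcal{E}_{-1}$ is a weighted $L^\infty$ norm acting on functions of $x$ at fixed time $t$, the whole argument is carried out slice by slice, treating $\del^{I_1}\Omega^{I_2}(F(u))(t,\cdot)$ as an element of $X^2(\RR^3)$.

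First I would apply Lemma~\ref{lem 7-6-1} directly to the function $v(x) := \del^{I_1}\Omega^{I_2}(F(u))(t,x)$, which yields
\[
\|\del^{I_1}\Omega^{I_2}(F(u))(t,\cdot)\|_{\mathcal{E}_{-1}}
\;\leq\; C\,\|\del^{I_1}\Omega^{I_2}(F(u))(t,\cdot)\|_{X^{2}}
\;=\; C\!\!\sum_{|J_1|+|J_2|\leq 2}\!\!\big\|\del_x^{J_1}\Omega^{J_2}\del^{I_1}\Omega^{I_2}(F(u))(t,\cdot)\big\|_{L^2(\RR^3)}.
\]
Next, I would rewrite each operator $\del_x^{J_1}\Omega^{J_2}\del^{I_1}\Omega^{I_2}$ as a composition of at most $|I_1|+|I_2|+2 \leq d$ first-order vector fields from the family $\{\del_\alpha,\Omega_a\}$. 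Using the commutator identities collected in Lemma~\ref{lem 7-6'-3} (more precisely \eqref{eq com' lem 7-6'-1 e} iterated a bounded number of times), each such composition is expressed as a finite linear combination of operators of the form $\del^{K_1}\Omega^{K_2}$ with $|K_1|+|K_2|\leq |I_1|+|I_2|+2\leq d$. No new ingredient beyond these commutator identities is required, and the bookkeeping produces only a combinatorial constant $C(d)$.

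Then I would apply Lemma~\ref{lem 7-10-2} to each resulting term $\del^{K_1}\Omega^{K_2}(F(u))(t,\cdot)$, noting that the hypothesis $|K_1|+|K_2|\leq d$ is precisely what that lemma requires. This produces
\[
\big\|\del^{K_1}\Omega^{K_2}(F(u))(t,\cdot)\big\|_{L^2(\RR^3)}
\;\leq\; C(F,V,d)\sum_{k=2}^{|K_1|+|K_2|}\|u(t,\cdot)\|_{E^d}^{k}
\;\leq\; C(F,V,d)\sum_{k=2}^{|I_1|+|I_2|+2}\|u(t,\cdot)\|_{E^d}^{k}.
\]
Summing over the finitely many $(J_1,J_2)$ with $|J_1|+|J_2|\leq 2$ and the finitely many terms generated by the commutator expansion absorbs all multiplicities into a single constant depending only on $F$, $V$, and $d$, which yields \eqref{eq 1 lem 7-10-2.1}.

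There is essentially no conceptual obstacle; the argument is a pure combination of two prior lemmas and the commutation of $\del_x^{J_1}\Omega^{J_2}$ with $\del^{I_1}\Omega^{I_2}$. The only minor point to watch is the loss of two derivatives in passing from $\mathcal{E}_{-1}$ to $X^2$, which is exactly why the hypothesis is restricted to $|I_1|+|I_2|\leq d-2$: without this restriction the application of Lemma~\ref{lem 7-10-2} would require a norm stronger than $E^d$. Once this bookkeeping is observed, the proof consists only of routine term-counting.
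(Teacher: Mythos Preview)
Your proposal is correct and follows exactly the approach indicated in the paper, which simply states that the result follows by combining Lemma~\ref{lem 7-10-2} with the global Sobolev inequality \eqref{eq 1 lem 7-6-1}. You have spelled out the details the paper omits, including the commutator bookkeeping needed to reduce $\del_x^{J_1}\Omega^{J_2}\del^{I_1}\Omega^{I_2}$ to operators of the form $\del^{K_1}\Omega^{K_2}$ with total order at most $d$, and correctly identified why the hypothesis $|I_1|+|I_2|\leq d-2$ is needed.
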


We also need the following estimate in the following discussion.

\begin{lemma}\label{lem 7-10-2.2}
Let $d\geq 3$ and assume that $H(\cdot)$ be a $C^{\infty}$ function defined in a compact neighborhood $V$ of $0$ in $\RR^m$ and assume that $u$ is a map from $\RR^3$ to $V$ with its components $u_j$ in class $L^\infty([0,T];E_H^d)$. Then the following estimate holds for $d\geq |I_1|+|I_2|\geq 1$:
\begin{equation}\label{eq 1 lem 7-10-2.2}
\big\|(1+r)^{-1}\del^{I_1}\Omega^{I_2}\big(H(u)\big)(t,\cdot)\big\|_{L^2(\RR^3)}
\leq C(H,V,d)\sum_{k=1}^{|I_1|+|I_2|}\|u(t,\cdot)\|^k_{E_H^d},\quad t\in [0,T].
\end{equation}
\end{lemma}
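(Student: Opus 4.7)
The plan is to combine the chain-rule expansion of Lemma~\ref{lem 7-10-1} with the two Sobolev-type inequalities established earlier (Lemma~\ref{lem 7-6-2} and Lemma~\ref{lem 7-6-3}), mimicking the partition argument from Lemma~\ref{lem 7-10-2}, but replacing the $L^2$ bound on one factor by the weighted bound involving $(1+r)^{-1}$. Since now $H(0)$ and $\nabla H(0)$ need not vanish, the resulting estimate starts at order $k=1$ rather than $k=2$, which is precisely why the assumption $|I_1|+|I_2|\geq 1$ is imposed.

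First, I would apply Lemma~\ref{lem 7-10-1} to write
$$
\del^{I_1}\Omega^{I_2}\big(H(u)\big)
= \sum_{l=1}^{|I_1|+|I_2|}\sum_{|L|=l} P^L H(u) \sum_{\sum_{ji}K_{1ji}=I_1,\ \sum_{ji}K_{2ji}=I_2}\,\prod_{j=1}^m\prod_{i=1}^{l_j}\del^{K_{1ji}}\Omega^{K_{2ji}}u_j,
$$
noting that the hypothesis $|I_1|+|I_2|\geq 1$ removes the $|L|=0$ contribution (so the undifferentiated $H(u)$ term does not appear). Each factor $P^L H(u)$ is bounded in $L^\infty$ by a constant depending only on $H$, $V$ and $|L|$, because $u$ takes values in the compact neighborhood $V$ and $H$ is smooth. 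The sums over proper partitions of $(I_1,I_2)$ into $|L|$ nonempty pieces then leave a product of at most $|I_1|+|I_2|$ differentiated copies of components of $u$.

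The decisive step is the following dispatch within the product. Among the $|L|$ multi-indices $(K_{1ji},K_{2ji})$, at most one — say $(K_{1j_0i_0},K_{2j_0i_0})$ — has order strictly larger than $[d/2]$, since all pieces are nonempty and their orders sum to $|I_1|+|I_2|\leq d$. I attach the weight $(1+r)^{-1}$ to this distinguished factor and bound it in $L^2$ via \eqref{eq 1' lem 7-6-3}:
$$
\big\|(1+r)^{-1}\del^{K_{1j_0i_0}}\Omega^{K_{2j_0i_0}}u_{j_0}(t,\cdot)\big\|_{L^2(\RR^3)}
\leq C\|u_{j_0}(t,\cdot)\|_{E_P^d}\leq C\|u(t,\cdot)\|_{E_H^d},
$$
which is applicable precisely because the distinguished piece is nonempty. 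Every remaining factor has order $\leq [d/2]\leq d-2$ (this inequality holds for all $d\geq 3$, which is where the hypothesis $d\geq 3$ is used), so \eqref{eq 1' lem 7-6-2} gives
$$
\big\|\del^{K_{1ji}}\Omega^{K_{2ji}}u_j(t,\cdot)\big\|_{L^\infty(\RR^3)}\leq C\|u(t,\cdot)\|_{E_H^d}.
$$
Multiplying these bounds by Hölder's inequality and the $L^\infty$ bound on $P^L H(u)$ yields a $C(H,V,d)\|u(t,\cdot)\|_{E_H^d}^{|L|}$ control of each term, and summing over $1\leq |L|\leq |I_1|+|I_2|$ produces the claimed inequality \eqref{eq 1 lem 7-10-2.2}.

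The only quasi-delicate point is the verification that $[d/2]+2\leq d$, which ensures Lemma~\ref{lem 7-6-2} is invoked with an admissible order; since this already holds at $d=3$ (where $[d/2]+2=3$) the argument is uniform in $d\geq 3$. No genuine obstacle should appear: once the partition bookkeeping is set up, the proof is a routine interpolation between the weighted $L^2$ bound \eqref{eq 1' lem 7-6-3} and the $L^\infty$ embedding \eqref{eq 1' lem 7-6-2}, and the absence of the vanishing assumption on $H$ at the origin only shifts the lower summation bound from $k=2$ to $k=1$.
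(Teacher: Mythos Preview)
Your proposal is correct and follows essentially the same route as the paper: expand via Lemma~\ref{lem 7-10-1}, bound $P^LH(u)$ in $L^\infty$ on the compact set $V$, single out the factor of highest order (which is nonempty since the partition is proper) to carry the weight $(1+r)^{-1}$ via \eqref{eq 1' lem 7-6-3}, and control the remaining factors in $L^\infty$ by \eqref{eq 1' lem 7-6-2} using $[d/2]\le d-2$. The only cosmetic difference is that the paper simply designates the distinguished piece as the one of maximal order, which automatically covers the case where no piece exceeds $[d/2]$; your phrasing ``at most one has order strictly larger than $[d/2]$'' leaves that case implicit, but since every piece in a proper partition is nonempty the argument goes through unchanged.
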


\begin{proof}
We apply the expression \eqref{eq lem 7-10-1}:
$$
\del^{I_1}\Omega^{I_2}\big(H(u)\big) = \sum_{l=1}^{|I_1|+|I_2|}\sum_{\sum_{j=1}^ml_j=l}P^LH(u)\sum_{\sum_{ji}K_{1ji}=I_1\atop \sum_{ji}K_{2ji}=I_2}\prod_{j=1}^m\prod_{i=1}^{l_j}\del^{K_{1ji}}\Omega^{K_{2ji}}u_j
$$
Note that $P^LH(u)$ is bounded by a constant $C(H,V,|L|)$ determined by $V$. The estimate of $\big\|\del^{I_1}\Omega^{I_2}\big(H(u)\big)(t,\cdot)\big\|_{L^2(\RR^3)}$ reduced into the estimate of
$$
\bigg\|(1+r)^{-1}\prod_{j=1}^m\prod_{i=1}^{l_j}\del^{K_{1ji}}\Omega^{K_{2ji}}u_j(t,\cdot)\bigg\|_{L^2(\RR^3)},
$$
where $K_{1ji}$ and $K_{2ji}$ is a possible partition of $(I_1,I_2)$. We take the same argument to that the proof of Lemma~\ref{lem 7-10-2}. Suppose that $|K_{1j_0i_0}| + |K_{2j_0i_0}|\geq |K_{1ji}| + |K_{2ji}|$ for all pares $(j,i)$, i.e. $(K_{1j_0i_0},K_{2j_0i_0})$ is the pair of index with highest order. Then, we find 
$$
|K_{1ji}|+|K_{2ji}|\leq [d/2], \quad \text{ if } i\neq i_0, j\neq j_0
$$
and
$$
|K_{1j_0i_0}| + |K_{2j_0i_0}| \geq 1, 
$$
and we also have 
$$
\aligned
&\bigg\|(1+r)^{-1}\prod_{j=1}^m\prod_{i=1}^{l_j}\del^{K_{1ji}}\Omega^{K_{2ji}}u_j(t,\cdot)\bigg\|_{L^2(\RR^3)}
\\
& \leq  \prod_{j=1,\atop j\neq j_0}^m \prod_{i=1,\atop i\neq i_0}^{l_j}
\big\|\del^{K_{1ji}}\Omega^{K_{2ji}}u_j(t,\cdot)\big\|_{L^{\infty}(\RR^3)} \big\|(1+r)^{-1}\del^{K_{1j_0i_0}}\Omega^{K_{2i_0j_0}}u_j(t,\cdot)\big\|_{L^2(\RR^3)}. 
\endaligned
$$
In view of \eqref{eq 1' lem 7-6-2} and \eqref{eq 1' lem 7-6-3}, the desired result is proven.
\end{proof}

\begin{lemma}\label{lem 7-10-2.3}
By taking the same assumption on $H$ and $u$ as in Lemma~\ref{lem 7-10-2.2}, and assuming furthermore that $H(0) = 0$. Then for all $|I_1|+|I_2|\leq d-2$, the following estimate holds:
\begin{equation}\label{eq 1 lem 7-10-2.3}
\big\|\del^{I_1}\Omega^{I_2}\big(H(u)\big)(t,\cdot)\big\|_{L^\infty(\RR^3)}\leq C(H,V,d)\sum_{k=1}^{|I_1|+|I_2|+2} \|u(t,\cdot)\|_{E^d_H}^k.
\end{equation}
\end{lemma}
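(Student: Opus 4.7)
The plan is to reduce the estimate to a direct application of the Faà di Bruno--type expansion in Lemma~\ref{lem 7-10-1} combined with the sup-norm Sobolev embedding in Lemma~\ref{lem 7-6-2}. The role of the hypothesis $H(0)=0$ is only to handle the ``constant'' term of the chain rule expansion; elsewhere, the hypothesis that $u$ takes values in the compact $V$ is what makes the coefficients $P^L H(u)$ uniformly bounded.

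I would first handle the base case $|I_1|+|I_2|=0$ separately. Since $H(0)=0$ and $H$ is smooth on the compact set $V$, the mean value theorem gives a pointwise bound $|H(u(t,x))|\leq C(H,V)|u(t,x)|$. Applying the inequality \eqref{eq 1' lem 7-6-2} with $I_1=I_2=0$ (so the requirement becomes $|I_1|+|I_2|+2\leq d$, i.e.\ $d\geq 2$, which holds since $d\geq 3$) yields
\[
\|H(u)(t,\cdot)\|_{L^\infty(\RR^3)}\leq C(H,V)\|u(t,\cdot)\|_{L^\infty(\RR^3)}\leq C(H,V,d)\,\|u(t,\cdot)\|_{E_H^d},
\]
which is the $k=1$ contribution on the right-hand side.

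Next, for $1\leq |I_1|+|I_2|\leq d-2$, I would apply the expansion \eqref{eq lem 7-10-1}:
\[
\del^{I_1}\Omega^{I_2}\big(H(u)\big) = \sum_{l=1}^{|I_1|+|I_2|}\sum_{\sum_jl_j=l}P^LH(u)\sum_{\sum_{ji}K_{1ji}=I_1,\;\sum_{ji}K_{2ji}=I_2}\prod_{j=1}^m\prod_{i=1}^{l_j}\del^{K_{1ji}}\Omega^{K_{2ji}}u_j.
\]
Take the $L^\infty$ norm of both sides. Since $u$ takes values in the compact neighborhood $V$ and $H\in C^\infty$, every derivative $P^LH$ is uniformly bounded on $V$; hence $\|P^LH(u)\|_{L^\infty(\RR^3)}\leq C(H,V,|L|)$. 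For each factor in the product one has the order bound $|K_{1ji}|+|K_{2ji}|\leq |I_1|+|I_2|\leq d-2$, so $|K_{1ji}|+|K_{2ji}|+2\leq d$, and the Sobolev embedding \eqref{eq 1' lem 7-6-2} gives
\[
\big\|\del^{K_{1ji}}\Omega^{K_{2ji}}u_j(t,\cdot)\big\|_{L^\infty(\RR^3)}\leq C\,\|u(t,\cdot)\|_{E_H^d}.
\]
Since each product contains exactly $|L|=l$ factors with $1\leq l\leq |I_1|+|I_2|$, its $L^\infty$ norm is bounded by $C\,\|u(t,\cdot)\|_{E_H^d}^{l}$. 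Summing over the finitely many partitions and over $1\leq l\leq |I_1|+|I_2|$ yields the announced bound, with the sum in $k$ running from $1$ up to $|I_1|+|I_2|$ (hence a fortiori up to $|I_1|+|I_2|+2$ as stated).

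I do not anticipate any genuine obstacle: the estimate is purely a pointwise chain-rule computation closed by the sup-norm Sobolev embedding of Lemma~\ref{lem 7-6-2}. The only point worth verifying carefully is the index bookkeeping, namely that the restriction $|I_1|+|I_2|\leq d-2$ is exactly what allows us to absorb the two extra derivatives required by \eqref{eq 1' lem 7-6-2} and still land in $E_H^d$; this is why the statement of the lemma reduces the range of indices by two compared to Lemma~\ref{lem 7-10-2.2}.
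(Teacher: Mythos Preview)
Your argument is correct. The base case is handled the same way as in the paper (mean value theorem plus $H(0)=0$). For $|I_1|+|I_2|\geq 1$, however, you take a different and more direct route than the paper: you apply the Fa\`a di Bruno expansion \eqref{eq lem 7-10-1} and bound each factor $\del^{K_{1ji}}\Omega^{K_{2ji}}u_j$ in $L^\infty$ via the Sobolev embedding \eqref{eq 1' lem 7-6-2}, which is legitimate precisely because $|K_{1ji}|+|K_{2ji}|\leq d-2$. The paper instead passes through the weighted $L^2$ machinery: it first uses Lemma~\ref{lem 7-6-4} to bound $\|\del^{I_1}\Omega^{I_2}(H(u))\|_{L^\infty}$ by a sum of $\|(1+r)^{-1}\del_x^{J_1}\Omega^{J_2}\del^{I_1}\Omega^{I_2}(H(u))\|_{L^2}$ with $|J_1|+|J_2|\leq 2$, and then invokes the already-proved Lemma~\ref{lem 7-10-2.2} on each term. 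Your approach is self-contained and yields a slightly sharper bound (the sum in $k$ only needs to run to $|I_1|+|I_2|$, as you note); the paper's approach has the advantage of reusing Lemma~\ref{lem 7-10-2.2} as a black box and naturally explains the exponent $|I_1|+|I_2|+2$ in the statement, which comes from the two extra derivatives in Lemma~\ref{lem 7-6-4}.
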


\begin{proof}
When $I_1=I_2=0$, we recall the that the condition $H(0)=0$ implies
$$
\|H(u)\|_{L^\infty(\RR^3)}  \leq C(H,V)\|u\|_{L^\infty(\RR^3)} \leq C(H,V)\|u\|_{\mathcal{E}_{-1}}.
$$
When $|I_1|+I_2|\geq 1$, we apply \eqref{eq 1 lem 7-10-2.2} combined with Lemma~\ref{lem 7-6-4}:
$$
\aligned
\|\big\|\del^{I_1}\Omega^{I_2}\big(H(u)\big)(t,\cdot)\big\|_{L^\infty(\RR^3)}
& \leq C\sum_{|J_1|+|J_2|\leq 2}\|\del_x^{J_1}\Omega^{J_2}\del^{I_1}\Omega^{I_2}\big(H(u)\big)(t,\cdot)\|_{L^2(\RR^3)}
\\
& \leq C\sum_{|J_1|+|J_2'|\leq 2\atop |I_1'|\leq|I_1|}\|\del_x^{J_1}\del^{I_1'}\Omega^{J_2'}\Omega^{I_2}\big(H(u)\big)(t,\cdot)\|_{L^2(\RR^3)}. 
\endaligned
$$

Here, we observe that when $|I_1|\geq 1$,  by applying \eqref{eq com' lem 7-6'-1 d} successively ($|I_1|$ times), we see that $|I'_1|\geq 1$, this leads to the fact that $|J_1|+|I_1'|+|J_2|+|I_2'|\geq 1$. Then we apply \eqref{eq 1 lem 7-10-2.2}.

When $|I_1|=0$, then $|I_2|\geq 1$ then we can also apply \eqref{eq 1 lem 7-10-2.2}.
\end{proof}

\begin{lemma}\label{lem 7-10-2.4}
Let $F$ be a $C^\infty$ function defined in a compact neighborhood $V_1$ of $0$ in $\RR^m$ and $H$ be a $C^{\infty}$ function defined in a compact neighborhood $V_2$ of $0$ in $\RR^n$. Assume that $F(0) = 0$ and $\nabla(F) = 0$. Let $u$ be a map from $\RR^4$ to $V_1$ with its components $u_j$ in class $L^\infty([0,T];E_H^d)$ and $v$ be a map form $\RR^4$ to $V_2$ with its components $v_l$ in class $L^\infty([0,T];E^d)$. Then the following estimate holds for $|I_1|+|I_2|\leq d$ with $d\geq 3$:
\begin{equation}\label{eq 1 lem 7-10-2.4}
\aligned
&
\big\|\del^{I_1}\Omega^{I_2}\big(H(u)F(v)\big)(t,\cdot)\big\|_{L^2(\RR^3)}
\\&
\leq C(H, F,V,d)\bigg(\sum_{k=0}^d\|u(t,\cdot)\|^k_{E_H^d}\bigg) \bigg(\sum_{k=2}^d\|v(t,\cdot)\|^k_{E^d}\bigg),\quad t\in[0,T].
\endaligned
\end{equation}
\end{lemma}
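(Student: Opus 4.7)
The plan is to combine the Leibniz rule with the composition estimates of Lemmas~\ref{lem 7-10-2}, \ref{lem 7-10-2.1}, \ref{lem 7-10-2.2}, \ref{lem 7-10-2.3} just established. Expanding
$$
\del^{I_1}\Omega^{I_2}\big(H(u)F(v)\big) = \sum_{\substack{J_1+K_1=I_1\\J_2+K_2=I_2}} c_{JK}\, \del^{J_1}\Omega^{J_2}\big(H(u)\big)\cdot \del^{K_1}\Omega^{K_2}\big(F(v)\big),
$$
it will suffice to bound each summand in $L^2(\RR^3)$. Since $d\geq 3$ and $|I_1|+|I_2|\leq d$, every partition satisfies $\min\{|J_1|+|J_2|,\,|K_1|+|K_2|\}\leq d-2$, which allows me to distinguish two cases according to which factor carries the fewer derivatives.

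In Case A, where $|J_1|+|J_2|\leq d-2$, I would place $\del^{J_1}\Omega^{J_2}H(u)$ in $L^\infty(\RR^3)$ and $\del^{K_1}\Omega^{K_2}F(v)$ in $L^2(\RR^3)$. For the $L^\infty$-factor: when $|J_1|+|J_2|=0$ the mere compactness of $V_1$ yields $\|H(u)\|_{L^\infty}\leq \sup_{V_1}|H|$, a constant; when $|J_1|+|J_2|\geq 1$, Lemma~\ref{lem 7-10-2.3} (whose proof in this range does not require the assumption $H(0)=0$) provides a bound $C\sum_{k=1}^{|J_1|+|J_2|+2}\|u\|^k_{E_H^d}$. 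The companion $L^2$-factor is controlled by Lemma~\ref{lem 7-10-2} (or, when $|K_1|+|K_2|=0$, directly from $|F(v)|\leq C|v|^2$ combined with Sobolev's inequality), producing a sum of powers of $\|v\|_{E^d}$ starting at $k=2$ owing to the second-order vanishing of $F$ at the origin. In Case B, where $|K_1|+|K_2|\leq d-2$, I swap the roles: $\del^{K_1}\Omega^{K_2}F(v)$ is bounded in $\mathcal{E}_{-1}\hookrightarrow L^\infty$ via Lemma~\ref{lem 7-10-2.1}, while $\del^{J_1}\Omega^{J_2}H(u)$ is bounded in the weighted norm $\|(1+r)^{-1}\,\cdot\,\|_{L^2}$ via Lemma~\ref{lem 7-10-2.2}. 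These two weighted bounds combine through the elementary inequality $\|fg\|_{L^2}\leq \|(1+r)^{-1}f\|_{L^2}\cdot\|(1+r)g\|_{L^\infty}$, together with $\|(1+r)g\|_{L^\infty}=\|g\|_{\mathcal{E}_{-1}}$, to yield the required pure $L^2(\RR^3)$ estimate.

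Summing over all partitions $(J_1,J_2,K_1,K_2)$ and collecting the resulting bounds produces the claimed inequality. The principal technical point---and the hardest one to verify cleanly---is Case B: the pure $L^2$ control of $\del^{J_1}\Omega^{J_2}H(u)$ is genuinely unavailable here because $u$ is only bounded in the decay space $\mathcal{E}_{-1}$ rather than in $L^2$, so one is forced to distribute the weight $(1+r)^{-1}$ onto the $H(u)$-factor and absorb the compensating $(1+r)$ into the $\mathcal{E}_{-1}$-norm of the $F(v)$-factor. Once this weighted $L^2$--$L^\infty$ duality is set up, the structural reason why the $v$-side sum starts at $k=2$ (the vanishing $F(0)=\nabla F(0)=0$) and the $u$-side sum starts at $k=0$ (no vanishing hypothesis on $H$; the $k=0$ term is exactly the contribution of the partition with $J_1=J_2=0$) becomes manifest, and no further obstacle arises.
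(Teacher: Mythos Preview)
Your argument is correct and follows essentially the same route as the paper: Leibniz expansion, a case split on which factor carries few derivatives, then $L^\infty\times L^2$ in one case and weighted $(1+r)^{-1}L^2\times \mathcal{E}_{-1}$ in the other, invoking Lemmas~\ref{lem 7-10-2}, \ref{lem 7-10-2.1}, \ref{lem 7-10-2.2}, \ref{lem 7-10-2.3}. The only cosmetic difference is that the paper first writes $H(u)=(H(u)-H(0))+H(0)$ so as to apply Lemma~\ref{lem 7-10-2.3} verbatim to $H(u)-H(0)$ and handle the constant $H(0)$ separately, whereas you instead observe (correctly) that the $|J_1|+|J_2|\geq 1$ part of that lemma's proof does not use the hypothesis $H(0)=0$; both devices serve the same purpose and the resulting bounds are identical.
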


\begin{proof}
The proof is based on Lemmas~\ref{lem 7-10-2}, \ref{lem 7-10-2.1}, \ref{lem 7-10-2.2}, and \ref{lem 7-10-2.3}: 
$$
\del^{I_1}\Omega^{I_2}\big(H(u)F(v)\big)
 =  \sum_{J_1+J_1' = I_1\atop J_2+J_2'=I_2}\del^{J_1}\Omega^{J_2}\big(H(u)-H(0)\big) \del^{J_1'}\Omega^{J_2'}\big(F(v)\big)
 + H(0)\del^{I_1}\Omega^{I_2}\big(F(v)\big).
$$
When $d\geq |J_1'|+|J_2'|\geq d/2$, $|J_1|+|J_2|\leq [d/2]\leq d-2$:
$$
\aligned
&\big\|\del^{J_1}\Omega^{J_2}\big(H(u)-H(0)\big)(t,\cdot) \del^{J_1'}\Omega^{J_2'}\big(F(v)\big)(t,\cdot)\big\|_{L^2(\RR^3)}
\\
& \leq \big\|\del^{J_1}\Omega^{J_2}\big(H(u)-H(0)\big)(t,\cdot)\big\|_{L^\infty(\RR^3)}
      \big\|\del^{J_1'}\Omega^{J_2'}\big(F(v)\big)(t,\cdot)\big\|_{L^2(\RR^3)}
\\
& \leq C(H,F,V,d)\sum_{k=1}^{|J_1|+|J_2|+2}\|u(t,\cdot)\|^k_{E_H^d}\sum_{k=2}^{|J_1'|+|J_2'|}\|v(t,\cdot)\|^k_{E^d},
\endaligned
$$
where Lemma~\ref{lem 7-10-2.3} and Lemma~\ref{lem 7-10-2.1} are applied (on function $H(u)-H(0)$ and $F(v)$).
The term $H(0)\del^{I_1}\Omega^{I_2}\big(F(v)\big)$ is estimated by \eqref{eq 1 lem 7-10-2}.

When $|J_1|+|J_2|\geq d/2>1$, $|J_1'+|J_2'|\leq [d/2]\leq d-2$:
$$
\aligned
&\big\|\del^{J_1}\Omega^{J_2}\big(H(u)\big)(t,\cdot) \del^{J_1'}\Omega^{J_2'}\big(F(u)\big)(t,\cdot) \big\|_{L^2(\RR^3)}
\\
& \leq \big\|(1+r)^{-1}\del^{J_1}\Omega^{J_2}\big(H(u)\big)(t,\cdot) \big\|_{L^2(\RR^3)}
      \big\|(1+r)\del^{J_1'}\Omega^{J_2'}\big(F(u)\big)(t,\cdot) \big\|_{L^\infty(\RR^3)}
\\
&=\big\|(1+r)^{-1}\del^{J_1}\Omega^{J_2}\big(H(u)\big)(t,\cdot) \big\|_{L^2(\RR^3)}
      \big\|\del^{J_1'}\Omega^{J_2'}\big(F(u)\big)(t,\cdot) \big\|_{\mathcal{E}_{-1}}
\\
& \leq  C(H,F,V,d)\bigg(\sum_{k=1}^{|J_1|+|J_2|}\|u(t,\cdot)\|^k_{E_H^d}\bigg) \bigg(\sum_{k=2}^{|J_1'|+|J_2'|+2}\|v(t,\cdot)\|^k_{E^d}\bigg),
\endaligned
$$
where Lemma~\ref{lem 7-10-2.2} and Lemma~\ref{lem 7-10-2.1} are applied.
\end{proof}
 
We will need the following estimate on multi-linear functions.

\begin{lemma}\label{lem 7-10-2.45}
Let $d\geq 3$ be an integer and $u_i, v_i$, $i=1,2,\cdots m$ be functions in class $L^{\infty}([0,T];E^d)$. Then the following estimates hold for $m\geq 2$ and $|I_1|+|I_2|\leq d$:
\begin{equation}\label{eq 1 lem 7-10-2.45}
\aligned
&
\bigg\|\del^{I_1}\Omega^{I_2}\bigg(\prod_{i=1}^{m}u_i - \prod_{i=1}^mv_i\bigg)(t,\cdot)\bigg\|_{L^2(\RR^3)}
\\
&\leq C(V,d)\|u(t,\cdot)-v(t,\cdot)\|_{E^d}\sum_{k=1}^{m-1}\sum_{k_1+k_2=k}\|u(t,\cdot)\|_{E^d}^{k_1}\|v(t,\cdot)\|_{E^d}^{k_2}, 
\endaligned
\end{equation}
where $\|u\|_{X^d} := \max_i\|u_i\|_{X^d}$ and $\|v\|_{X^d}:=\max_i\|v_i\|_{X^d}$.
\end{lemma}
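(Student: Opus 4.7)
The plan is to reduce the difference of products to a sum of products each containing a single ``difference factor'' $u_{j_0}-v_{j_0}$ via the standard telescoping identity, and then apply the Leibniz rule together with the high--low decomposition that was already used in the proof of Lemma \ref{lem 7-10-2}.

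First, I would write the telescoping identity
\begin{equation*}
\prod_{i=1}^{m}u_i - \prod_{i=1}^{m}v_i
 = \sum_{j_0=1}^{m} \bigg(\prod_{i<j_0} u_i\bigg) (u_{j_0}-v_{j_0}) \bigg(\prod_{i>j_0} v_i\bigg),
\end{equation*}
so that it suffices, after fixing $j_0$, to estimate the $L^2$ norm of $\del^{I_1}\Omega^{I_2}$ applied to one such $m$-fold product. Next I would apply the Leibniz rule, which expresses this quantity as a sum (over partitions $\sum K_{1i} = I_1$, $\sum K_{2i} = I_2$) of $m$-fold products of the form
\begin{equation*}
\bigg(\prod_{i<j_0} \del^{K_{1i}}\Omega^{K_{2i}} u_i\bigg)\, \del^{K_{1j_0}}\Omega^{K_{2j_0}}(u_{j_0}-v_{j_0})\, \bigg(\prod_{i>j_0} \del^{K_{1i}}\Omega^{K_{2i}} v_i\bigg).
\end{equation*}

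Third, I would apply the now-familiar high--low argument: among the $m$ multi-indices $(K_{1i},K_{2i})$, at most one, say $(K_{1i_*},K_{2i_*})$, can be of order strictly greater than $[d/2]$, so all others satisfy $|K_{1i}|+|K_{2i}|\le [d/2]\le d-2$ (using $d\ge 3$). I would put the factor with the top-order derivative in $L^2(\RR^3)$ and estimate all the remaining factors in $L^{\infty}(\RR^3)$ via the global Sobolev embedding $E^{[d/2]+2}\hookrightarrow L^{\infty}$ established in Lemma \ref{lem 7-6-1}, \eqref{eq com' lem 7-6'-1 d}, and the classical Sobolev inequality as in the proof of Lemma \ref{lem 7-10-2}. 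Two cases arise: if $i_*=j_0$, the $L^2$ factor is $\|\del^{K_{1j_0}}\Omega^{K_{2j_0}}(u_{j_0}-v_{j_0})\|_{L^2}\le \|u-v\|_{E^d}$; if $i_*\ne j_0$, the $L^\infty$ factor coming from $u_{j_0}-v_{j_0}$ is $\lesssim \|u-v\|_{E^{[d/2]+2}}\le \|u-v\|_{E^d}$. In either case one $\|u-v\|_{E^d}$ comes out, and the remaining $m-1$ factors contribute a product of $\|u\|_{E^d}$'s and $\|v\|_{E^d}$'s whose total degree is at most $m-1$; relabelling $k_1$ (number of $u$ factors) and $k_2$ (number of $v$ factors) with $k=k_1+k_2\le m-1$ yields exactly the bound stated.

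The argument is almost entirely bookkeeping and the only mild point of care is the case where the top derivative falls on a factor different from $u_{j_0}-v_{j_0}$: here one must use that $\|u-v\|_{E^{[d/2]+2}}$ is controlled by $\|u-v\|_{E^d}$, which is immediate from $d\ge 3$. No genuinely new ingredient is needed beyond Lemmas \ref{lem 7-6-1}--\ref{lem 7-6-3} and the Leibniz/partition formalism already developed.
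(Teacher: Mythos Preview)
Your proposal is correct and follows essentially the same approach as the paper's proof: the telescoping identity to isolate a single difference factor, the Leibniz rule, and the high--low decomposition putting the top-order factor in $L^2$ and the others in $L^\infty$ via Sobolev. Your explicit case split on whether the top-order index $(K_{1i_*},K_{2i_*})$ hits the difference factor or not is in fact more detailed than the paper, which only states that ``there is at most one pair of multi index of order bigger than $d/2$'' and then invokes the classical Sobolev inequality without further comment.
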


\begin{proof}
We observe the following indentity
$$
\prod_{i=1}^{m}u_i - \prod_{i=1}^mv_i = \sum_{k=1}^m(u_k-v_k)\prod_{j<k}u^j\prod_{j>k}v^j.
$$
Then, for each term, we have 
$$
\aligned
&\del^{I_1}\Omega^{I_2}\bigg((u_k-v_k)\prod_{j<k}u_j\prod_{j>k}v_j\bigg)
\\
&= \sum_{\sum_iK_{1i}=I_1\atop \sum_iK_{2i}=I_2}
\del^{K_{1k}}\Omega^{K_{2k}}(u_k-v_k)\prod_{j>k}\del^{K_{1j}}\Omega^{K_{2j}}u_j\prod_{j<k}\del^{K_{1j}}\Omega^{K_{2j}}v_j.
\endaligned
$$
To estimate the product
$$
\del^{K_{1k}}\Omega^{K_{2k}}(u_k-v_k)\prod_{j>k}\del^{K_{1j}}\Omega^{K_{2j}}u_j\prod_{j<k}\del^{K_{1j}}\Omega^{K_{2j}}v_j,
$$
we apply the same reasoning as in the proof of Lemma~\ref{lem 7-10-2}, there is at most one pair of multi index of order bigger that $d/2$. Then \eqref{eq 1 lem 7-10-2.45} is proven by applying the classical Sobolev's inequality.
\end{proof}

\begin{lemma}\label{lem 7-10-2.45'}
Let $d\geq 3$ be an integer and $u_i, v_i$, $i=1,2,\cdots m$ be functions in class $L^{\infty}([0,T];E_H^d)$. Then the following estimates hold for $m\geq 2$ for $1\leq |I_1|+|I_2|\leq d$:
\begin{equation}\label{eq 1 lem 7-10-2.45'}
\aligned
&\bigg\|(1+r)^{-1}\del^{I_1}\Omega^{I_2}\bigg(\prod_{i=1}^{m}u_i - \prod_{i=1}^mv_i\bigg)(t,\cdot)\bigg\|_{L^2(\RR^3)}
\\
& \leq C(V,d)\|u(t,\cdot)-v(t,\cdot)\|_{E_H^d}
\sum_{k=1}^{m-1}\sum_{k_1+k_2=k}\|u(t,\cdot)\|_{E_H^d}^{k_1}\|v(t,\cdot)\|_{E^d_H}^{k_2}, \quad t\in [0,T],
\endaligned
\end{equation}
where $\|u\|_{E_H^d} := \max_i\|u_i\|_{E_H^d}$ and $\|v\|_{X^d}:=\max_i\|v_i\|_{E_H^d}$.
\end{lemma}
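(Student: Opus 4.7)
The plan is to adapt the proof of Lemma~\ref{lem 7-10-2.45}, distributing the weight $(1+r)^{-1}$ onto a single ``principal'' factor in each product term generated by Leibniz expansion. The key observation is that $|I_1|+|I_2|\geq 1$ guarantees that in every partition at least one factor carries a derivative, and this factor is the one onto which we may place the weight and apply the weighted Hardy-type inequality \eqref{eq 1' lem 7-6-3}.

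First, I would invoke the telescoping identity
$$
\prod_{i=1}^m u_i - \prod_{i=1}^m v_i = \sum_{k=1}^m (u_k - v_k)\prod_{j<k}u_j\prod_{j>k}v_j
$$
and apply Leibniz's rule to each summand, obtaining a finite sum, indexed by $m$-partitions $\sum_{j}K_{1j}=I_1$, $\sum_{j}K_{2j}=I_2$, of products of the form
$$
\mathcal{P} := \del^{K_{1k}}\Omega^{K_{2k}}(u_k-v_k)\prod_{j<k}\del^{K_{1j}}\Omega^{K_{2j}}u_j\prod_{j>k}\del^{K_{1j}}\Omega^{K_{2j}}v_j.
$$
Next, for each such product I would single out a principal index $j_0$. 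Since $\sum_{j}(|K_{1j}|+|K_{2j}|)=|I_1|+|I_2|\leq d$, pigeonholing shows that at most one pair has order strictly greater than $[d/2]$; and since $|I_1|+|I_2|\geq 1$, at least one pair is nonzero. I select $j_0$ to be a pair of order $>[d/2]$ if it exists, otherwise any pair of order $\geq 1$. In both subcases the factor indexed by $j_0$ carries at least one derivative, while every other factor has multi-index of order at most $[d/2]\leq d-2$ (valid for $d\geq 3$).

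Then I bound $\|(1+r)^{-1}\mathcal{P}\|_{L^2(\RR^3)}$ by placing the weight on the principal factor and estimating it via \eqref{eq 1' lem 7-6-3} by $C\|\cdot\|_{E_P^d}\leq C\|\cdot\|_{E_H^d}$, while every other factor is controlled in $L^\infty(\RR^3)$ by \eqref{eq 1' lem 7-6-2} (or trivially by $\|\cdot\|_{\mathcal{E}_{-1}}\leq\|\cdot\|_{E_H^d}$ when its multi-index is zero). Summing over the partitions and over $k=1,\ldots,m$ produces in each term exactly one factor $\|u-v\|_{E_H^d}$ together with $k_1+k_2=k\leq m-1$ factors taken from $\|u\|_{E_H^d}$ and $\|v\|_{E_H^d}$, which is precisely the structure announced in \eqref{eq 1 lem 7-10-2.45'}.

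The mildly delicate point, and the only real novelty compared to Lemma~\ref{lem 7-10-2.45}, is the bookkeeping between the subcase where the principal factor coincides with $u_k-v_k$ and the subcase where it is one of the $u_j$ or $v_j$ with $j\neq k$. In the former, $u-v$ receives the Hardy bound and each of the other $m-1$ factors receives an $L^\infty$ bound; in the latter, the difference $u_k-v_k$ may have zero derivatives, and is absorbed through its $\mathcal{E}_{-1}$-part of the $E_H^d$ norm, which is precisely where the hypothesis $u_i,v_i\in L^\infty([0,T];E_H^d)$ (and not merely $E^d$) enters in an essential way.
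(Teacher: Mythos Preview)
Your proposal is correct and follows essentially the same approach as the paper: telescoping identity, Leibniz expansion, selection of a principal factor carrying at least one derivative (on which the weight $(1+r)^{-1}$ is placed via \eqref{eq 1' lem 7-6-3}), and $L^\infty$ control of the remaining factors via \eqref{eq 1' lem 7-6-2}. The paper organizes the argument into three explicit cases according to whether the factor of highest order is the difference $u_k-v_k$, some $u_{j_0}$ with $j_0<k$, or some $v_{j_0}$ with $j_0>k$, but your unified ``principal index'' formulation is equivalent and arguably cleaner.
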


\begin{proof}
We observe the following indentity
$$
\prod_{i=1}^{m}u_i - \prod_{i=1}^mv_i = \sum_{k=1}^m(u_k-v_k)\prod_{j<k}u_j\prod_{j>k}v_j.
$$
Then, for each term, we have 
$$
\aligned
&\del^{I_1}\Omega^{I_2}\bigg((u_k-v_k)\prod_{j<k}u_j\prod_{j>k}v_j\bigg)
\\
&= \sum_{\sum_iK_{1i}=I_1\atop \sum_iK_{2i}=I_2}
\del^{K_{1k}}\Omega^{K_{2k}}(u_k-v_k)\prod_{j>k}\del^{K_{1j}}\Omega^{K_{2j}}u_j\prod_{j<k}\del^{K_{1j}}\Omega^{K_{2j}}v_j.
\endaligned
$$
To estimate the product
$$
\del^{K_{1k}}\Omega^{K_{2k}}(u_k-v_k)\prod_{j>k}\del^{K_{1j}}\Omega^{K_{2j}}u_j\prod_{j<k}\del^{K_{1j}}\Omega^{K_{2j}}v_j,
$$
we also apply the same reasoning as in the proof of Lemma~\ref{lem 7-10-2}, there is at most one pair of multi index of order bigger that $d/2$. We discuss three cases.

\

\noindent {\bf Case 1: $|K_{1k}| + |K_{2k}|\geq |K_{1j}| + |K_{2j}|$ for $j=1,2,\cdots m$}. Hence,   $(K_{1k},K_{2k})$ takes the highest order. For $j\neq k$, $|K_{1j}| + |K_{2j}|\leq [d/2]\leq d-2$ and $|K_{1k}| + |K_{2k}|\geq 1$, and we have 
$$
\aligned
&\bigg\|(1+r)^{-1}\del^{K_{1k}}\Omega^{K_{2k}}(u_k-v_k)\prod_{j>k}\del^{K_{1j}}\Omega^{K_{2j}}u_j\prod_{j<k}\del^{K_{1j}}\Omega^{K_{2j}}v_j
\bigg\|_{L^2(\RR^3)}
\\
& \leq 
\big\|(1+r)^{-1}\del^{K_{1k}}\Omega^{K_{2k}}(u_k-v_k)\big\|_{L^2(\RR^3)}
\bigg\|\prod_{j>k}\del^{K_{1j}}\Omega^{K_{2j}}u_j
\prod_{j<k}\del^{K_{1j}}\Omega^{K_{2j}}v_j\bigg\|_{L^{\infty}(\RR^3)}
\\
& \leq  C\|u_k-v_k\|_{E_P^d} \bigg\|\prod_{j>k}\del^{K_{1j}}\Omega^{K_{2j}}u_j
\prod_{j<k}\del^{K_{1j}}\Omega^{K_{2j}}v_j\bigg\|_{L^\infty(\RR^3)}. 
\endaligned
$$
Here, we used \eqref{eq 1 lem 7-6-2} on the first factor. Note that $m\geq 2$ then there exists a $j_0\neq k$. So the second factor  is bounded by applying \eqref{eq 1 lem 7-10-2.3} and the fact that $|K_{1j}| + |K_{2j}|\leq d-2$.

\

\noindent {\bf Case 2: $|K_{1k}| + |K_{2k}|\leq [d/2]\leq d-2$ and there exists $j_0<k$ such that $|K_{1j_0}| + |K_{2j_0}|\geq |K_{1j}| + |K_{2j}|$.}
Then, we have 
$$
\aligned
&\bigg\|(1+r)^{-1}\del^{K_{1k}}\Omega^{K_{2k}}(u_k-v_k)
\prod_{j>k}\del^{K_{1j}}\Omega^{K_{2j}}u_j
\prod_{j<k}\del^{K_{1j}}\Omega^{K_{2j}}v_j
\bigg\|_{L^2(\RR^3)}
\\
& \leq  \big\|\del^{K_{1k}}\Omega^{K_{2k}}\big(u_k-v_k\big)\big\|_{L^{\infty}(\RR^3)}
\prod_{j>k}\big\|\del^{K_{1j}}\Omega^{K_{2j}}u_j\big\|_{L^\infty(\RR^3)}
\prod_{j<k\atop j\neq j_0}\big\|\del^{K_{1j}}\Omega^{K_{2j}}v_j\big\|_{L^\infty(\RR^3)}
\\
&\times \big\|(1+r)^{-1}\del^{K_{1j_0}}\Omega^{K^{2j_0}} u^{j_0}\big\|_{L^2(\RR^3)}
\\
& \leq  C\|u-v\|_{E_H^d} \prod_{j>k}\big\|u_j\big\|_{E_H^d} \prod_{j<k\atop j\neq j_0}\big\|v_j\big\|_{E_H^d} \|u^{j_0}\|_{E_H^d},
\endaligned
$$
where \eqref{eq 1' lem 7-6-2} and \eqref{eq 1' lem 7-6-3} are applied.

\

\noindent {\bf Case 3: $|K_{1k}| + |K_{2k}|\leq [d/2]\leq d-2$ and there exists $j_0>k$ such that $|K_{1j_0}| + |K_{2j_0}|\geq |K_{1j}| + |K_{2j}|$.} The proof is exactly the same as in the last case provided we exchange the roles of $u_j$ and $v_j$.
\end{proof}

Finally, we are able to estimate the difference of two quadratic functions. 

\begin{lemma}\label{lem 7-10-2.46}
Let $d\geq 3$ be an integer. Let $F$ be a $C^{\infty}$ function defined in a compact neighborhood $V$ of $0$ in $\RR^{m}$ with $F(0) = \nabla F(0) = 0$. Assume that $u$ and $v$ are maps from $\RR^4$ to $V$ with their components in $L^\infty([0,T];E^d)$. Then the following estimate hold for all pair of index $(I_1,I_2)$ with $|I_1|+|I_2|\leq d$:
\begin{equation}\label{eq 1 lem 7-10-2.46}
\big\|\del_x^{I_1}\Omega^{I_2}\big(F(u) - F(v)\big)\big\|_{L^2(\RR^3)}\leq C(F,V,d)\|u-v\|_{X^d}\sum_{k=1}^{|I_1|+|I_2|-1}\sum_{k_1+k_2=k}\|u\|_{X^d}^{k_1}\|v\|_{X^d}^{k_2}.
\end{equation}
\end{lemma}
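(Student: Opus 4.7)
The strategy is to reduce the nonlinear difference to an explicit product, and then invoke the machinery already established in Lemmas~\ref{lem 7-10-1} and~\ref{lem 7-10-2}. Since $F$ is smooth near $0$ and $\nabla F(0)=0$, the fundamental theorem of calculus gives
\[
F(u)-F(v) \;=\; \sum_{j=1}^m (u_j-v_j)\, H_j(u,v), \qquad H_j(u,v):=\int_0^1 (\del_j F)\bigl(v+s(u-v)\bigr)\,ds.
\]
Each $H_j$ is smooth in $(u,v)$ on a neighborhood of the origin of $\RR^{2m}$ and, crucially, $H_j(0,0)=(\del_j F)(0)=0$ by hypothesis. Thus the nonlinear difference has been replaced by an explicit linear factor $u_j-v_j$ times a smooth function that vanishes at the origin.

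Expanding $\del_x^{I_1}\Omega^{I_2}$ by the Leibniz rule yields
\[
\del_x^{I_1}\Omega^{I_2}\bigl(F(u)-F(v)\bigr) \;=\; \sum_j\, \sum_{\substack{J_1+J_1'=I_1 \\ J_2+J_2'=I_2}} c_{J,J'}\,\del_x^{J_1}\Omega^{J_2}(u_j-v_j)\cdot \del_x^{J_1'}\Omega^{J_2'}H_j(u,v).
\]
For each resulting bilinear term I split into two cases according to whether $|J_1|+|J_2|\leq [d/2]$ or $|J_1'|+|J_2'|\leq [d/2]$. In the first case, $\del_x^{J_1}\Omega^{J_2}(u_j-v_j)$ is placed in $L^\infty$ by the classical Sobolev embedding $X^d\hookrightarrow L^\infty$ (valid for $d\geq 3$, exactly as used in Lemmas~\ref{lem 7-6-2} and~\ref{lem 7-6-4}), producing the factor $\|u-v\|_{X^d}$, while $\del_x^{J_1'}\Omega^{J_2'}H_j(u,v)$ is estimated in $L^2$; in the second case the roles are reversed.

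To estimate $\del_x^{J_1'}\Omega^{J_2'}H_j(u,v)$ in $L^2$, view $H_j$ as a smooth function of the concatenated variable $w=(u,v)\in\RR^{2m}$ with $H_j(0)=0$, and apply the Fa\`a di Bruno formula of Lemma~\ref{lem 7-10-1}. When $|J_1'|+|J_2'|\geq 1$, every resulting term carries $|L|\geq 1$ derivative factors of components of $u$ or $v$; one bounds these exactly as in the proofs of Lemmas~\ref{lem 7-10-2} and~\ref{lem 7-10-2.2}, by placing the top-order factor in $L^2$ and the remaining ones in $L^\infty$, while using that each $P^L H_j$ is uniformly bounded on $V\times V$. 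The subcase $|J_1'|+|J_2'|=0$ is handled directly from $|H_j(u,v)|\leq C(|u|+|v|)$, which produces one power of $\|u\|_{X^d}+\|v\|_{X^d}$. Assembling the contributions yields the announced sum over powers $k_1+k_2=k$ of $\|u\|_{X^d}^{k_1}\|v\|_{X^d}^{k_2}$ multiplied by $\|u-v\|_{X^d}$.

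The principal obstacle is the combinatorial bookkeeping required to identify the precise range of the exponent $k$ claimed in the statement; this parallels the bookkeeping carried out in the proofs of Lemmas~\ref{lem 7-10-2.45} and~\ref{lem 7-10-2.45'}. A streamlining that shortcuts the counting is to apply Hadamard's lemma once more to $H_j$, writing
\[
H_j(u,v) \;=\; \sum_i u_i\, A_{ji}(u,v) + \sum_i v_i\, B_{ji}(u,v)
\]
with smooth $A_{ji}, B_{ji}$ on $V\times V$. This reduces $F(u)-F(v)$ to a finite sum of three-factor products $(u_j-v_j)\,u_i\,A_{ji}(u,v)$ and $(u_j-v_j)\,v_i\,B_{ji}(u,v)$, at which point the announced estimate follows from a minor adaptation of Lemma~\ref{lem 7-10-2.45} (the product estimate) combined with one further Fa\`a di Bruno step to handle the smooth cofactor $A_{ji}$ or $B_{ji}$.
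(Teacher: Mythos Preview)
Your proof is correct but follows a genuinely different route from the paper. The paper never factors out $(u-v)$; instead it applies the Fa\`a di Bruno expansion \eqref{eq lem 7-10-1} separately to $F(u)$ and to $F(v)$ and then subtracts, reorganizing the difference into three pieces: a first piece $T_1=\sum_j\del_jF(u)\,\del^{I_1}\Omega^{I_2}(u_j-v_j)$ exploiting $\nabla F(0)=0$ directly, a second piece $T_2$ consisting of differences of $|L|$-fold products with $|L|\geq 2$ (handled by Lemma~\ref{lem 7-10-2.45}), and a third piece $T_3$ coming from $P^LF(u)-P^LF(v)$. Your Hadamard-type factorization $F(u)-F(v)=\sum_j(u_j-v_j)H_j(u,v)$ with $H_j(0,0)=0$ absorbs the vanishing condition $\nabla F(0)=0$ into the cofactor once and for all, so that after Leibniz you only face a single family of product terms; the paper's three-way split is then replaced by the usual high/low dichotomy on the Leibniz indices plus one Fa\`a di Bruno step on $H_j$. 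What you gain is a cleaner structure (the difference factor is visible from the start); what the paper's route gains is that it plugs directly into the already-proved multilinear lemma~\ref{lem 7-10-2.45} without introducing the auxiliary function $H_j$. Your second reduction via a further Hadamard step on $H_j$ is unnecessary: the first factorization already suffices.
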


\begin{proof}
When $I_1 = I_2 = 0$, we apply the mean value theorem: there is a $\theta\in[0,1]$ such that
$$
|F(u) - F(v)| \leq \big|\nabla F\big(\theta u+(1-\theta) v\big)\big| u-v\big|.
$$
Then by the condition $\nabla F(0) = 0$,
$$
|F(u) - F(v)| \leq \big|\nabla F\big(\theta u+(1-\theta) v\big)|u-v|\leq C(F,V)|\theta u+ (1-\theta)v||u-v|.
$$
Then
$$
\aligned
\|F(u)(t,\cdot) - F(v)(t,\cdot)\|_{L^2(\RR^3)} & \leq C(F,V)\|u-v\|_{L^2(\RR^3)}\|\theta u+ (1-\theta)v\|_{L^{\infty}(\RR^3)}
\\
& \leq C(V)\|u(t,\cdot)-v(t,\cdot)\|_{E^d}(\|u(t,\cdot)\|_{E^d} + \|v(t,\cdot)\|_{E^d}).
\endaligned
$$

For the case for $|I_1|+|I_2|\geq 1$, we recall the expression \eqref{eq lem 7-10-1} :
$$
\aligned
&\del^{I_1}\Omega^{I_2}\big(F(u)-F(v)\big)
\\
&= \sum_{l=1}^{|I_1|+|I_2|}\sum_{\sum_jl_j = l}
\bigg(P^LF(u)\sum_{\sum{j,i}K_{1ji} = I_1\atop \sum_{j,i}K_{2ji}=I_2}
\prod_{j=1}^m\prod_{i=1}^{l_j} \del^{K_{1ji}}\Omega^{K_{2ji}}u_j 
\\
&\hskip3.cm - P^LF(v)\sum_{\sum{j,i}K_{1ji} = I_1\atop \sum_{j,i}K_{2ji}=I_2}
\prod_{j=1}^m\prod_{i=1}^{l_j} \del^{K_{1ji}}\Omega^{K_{2ji}}v_j\bigg)
\\
&= \sum_j \del_jF(u)\big(\del^{I_1}\Omega^{I_2}u_j - \del^{I_1}\Omega^{I_2}v_j\big)
\\
 &+ \sum_{l=2}^{|I_1|+|I_2|}\sum_{\sum_jl_j = l}P^LF(u)\sum_{\sum{j,i}K_{1ji} = I_1\atop \sum_{j,i}K_{2ji}=I_2}
\bigg(\prod_{j=1}^m\prod_{i=1}^{l_j}\del^{K_{1ji}}\Omega^{K_{2ji}}u_j - \prod_{j=1}^m\prod_{i=1}^{l_j}\del^{K_{1ji}}\Omega^{K_{2ji}}v_j\bigg)
\\
&+ \sum_{l=1}^{|I_1|+|I_2|}\sum_{\sum_jl_j = l}\big(P^LF(u) - P^LF(v)\big)\sum_{\sum{j,i}K_{1ji} = I_1\atop \sum_{j,i}K_{2ji}=I_2}
\prod_{j=1}^m\prod_{i=1}^{l_j} \del^{K_{1ji}}\Omega^{K_{2ji}}v_j
\\
\\
&=: T_1 + T_2 + T_3.
\endaligned
$$

To estimate $T_1$, we take into consideration of the condition $\nabla F(0)=0$ with leads to
$$
\big|\del_jF(u)(t,\cdot)\big|\leq C(V)|u|.
$$
So $\|\del_j F(u)(t,\cdot)\|_{L^{\infty}(\RR^3)}\leq C(V)\|u(t,\cdot)\|_{L^{\infty}(\RR^3)}\leq C(F,V)\|u(t,\cdot)\|_{X^d}$.
Then
$$
\|T_1(t,\cdot)\|_{L^2(\RR^3)}\leq C(F,V)\|u(t,\cdot)-v(t,\cdot)\|_{X^d}\|u(t,\cdot)\|_{X^d}.
$$

To estimate $T_2$, we need to apply Lemma~\ref{lem 7-10-2.45}. To do so, we observe the following relation:
\begin{equation}
\del^{I_1}\Omega^{I_2}\bigg(\prod_{i=1}^m u_j^{l_j} - \prod_{i=1}^mv_j^{l_j}\bigg) =  \sum_{\sum{j,i}K_{1ji} = I_1\atop \sum_{j,i}K_{2ji}=I_2}
\bigg(\prod_{j=1}^m\prod_{i=1}^{l_j}\del^{K_{1ji}}\Omega^{K_{2ji}}u_j - \prod_{j=1}^m\prod_{i=1}^{l_j}\del^{K_{1ji}}\Omega^{K_{2ji}}v_j\bigg)
\end{equation}
Then
$$
T_2 = \sum_{l=2}^{|I_1|+|I_2|}\sum_{\sum_jl_j = l}P^LF(u)\del^{I_1}\Omega^{I_2}\bigg(\prod_{j=1}^m u_j^{l_j} - \prod_{j=1}^mv_j^{l_j}\bigg)
$$
Recall that $|P^LF(u)|$ is bounded by a constant $C(F,V,|L|)$ determined by the neighborhood $V$, the function $F$ and the order $|L|$. Then we apply \eqref{eq 1 lem 7-10-2.45}.

The estimate of $T_3$ is as follows:
\begin{equation}
\|T_3\|_{L^2(\RR^3)} \leq \sum_{l=1}^{|I_1|+|I_2|}\sum_{\sum_jl_j = l}\big\|P^LF(u) - P^LF(v)\big\|_{L^{\infty}(\RR^3)}\sum_{\sum{j,i}K_{1ji} = I_1\atop \sum_{j,i}K_{2ji}=I_2}
\big\|\prod_{j=1}^m\prod_{i=1}^{l_j} \del_x^{K_{1ji}}\Omega^{K_{2ji}}v_j\big\|_{L^2(\RR^3)}
\end{equation}
As in the estimate on $T_2$, we see that
$$
\sum_{\sum{j,i}K_{1ji} = I_1\atop \sum_{j,i}K_{2ji}=I_2}
\big\|\prod_{j=1}^m\prod_{i=1}^{l_j} \del_x^{K_{1ji}}\Omega^{K_{2ji}}v(t,\cdot)\big\|_{L^2(\RR^3)}\leq \sum_{l=1}^{|I_1|+|I_2|}\|v(t,\cdot)\|^l_{X^d}.
$$
and
$$
\aligned
\big\|P^LF(u)(t,\cdot) - P^LF(v)(t,\cdot)\big\|_{L^{\infty}(\RR^3)} 
& \leq C(F,V,|L|)\|u(t,\cdot)-v(t,\cdot)\|_{L^{\infty}}
\\
& \leq C(V,|L|)\|u(t,\cdot)-v(t,\cdot)\|_{E^d} 
\endaligned
$$
and then
$$
\|T_3(t,\cdot)\|_{L^2(\RR^3)}\leq C(V)\|u(t,\cdot)-v(t,\cdot)\|_{E^d}\sum_{k=1}^{d-1}\|v\|_{E^d}^k
$$
which leads us to the conclusion \eqref{eq 1 lem 7-10-2.46}.
\end{proof}

The following $L^{\infty}$ type estimate is a direct application of Lemma~\ref{lem 7-10-2.46} and the global Sobolev inequality \eqref{eq 1 lem 7-6-1}:
\begin{lemma}\label{lem 7-10-2.460}
Let $F$, $G$, $u$ and $v$ take the assumption as in Lemma~\ref{lem 7-10-2.46}, then for any $|I_1|+|I_2|\leq d-2$, the following estimate holds:
\begin{equation}
\aligned
&\big\|\del^{I_1}\Omega^{I_2}\big(F(u)(t,\cdot)-F(v)(t,\cdot)\big)(t,\cdot)\big\|_{\mathcal{E}_{-1}(\RR^3)}
\\
& \leq C(F,V,d)\|u(t,\cdot)-v(t,\cdot)\|_{E^d}\sum_{k=1}^{d-1}\sum_{k_1+k_2=k}\|u(t,\cdot)\|_{E^d}^{k_1}\|v(t,\cdot)\|_{E^d}^{k_2}.
\endaligned
\end{equation}
\end{lemma}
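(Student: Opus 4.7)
The plan is to deduce the weighted $L^{\infty}$ estimate directly from the $L^{2}$ estimate of Lemma~\ref{lem 7-10-2.46} by inserting the global Sobolev embedding of Lemma~\ref{lem 7-6-1}, exactly as the authors indicate. Fix $t\in[0,T]$ and set $g(x) := \del^{I_1}\Omega^{I_2}\bigl(F(u)-F(v)\bigr)(t,x)$ with $|I_1|+|I_2|\le d-2$. Since the hypothesis $d\geq 3$ gives in particular $d-|I_1|-|I_2|\ge 2$, we may apply \eqref{eq 1 lem 7-6-1} at the fixed time slice to obtain
\begin{equation*}
\|g\|_{\mathcal{E}_{-1}(\RR^3)} \;\le\; C\,\|g\|_{X^{2}}
\;=\; C\sum_{|J_1|+|J_2|\le 2}\bigl\|\del_x^{J_1}\Omega^{J_2}\,\del^{I_1}\Omega^{I_2}\bigl(F(u)-F(v)\bigr)(t,\cdot)\bigr\|_{L^2(\RR^3)}.
\end{equation*}

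Next I would commute the outer spatial derivatives and rotations past $\del^{I_1}\Omega^{I_2}$. Using the commutator identities \eqref{eq com' lem 7-6'-1 d}--\eqref{eq com' lem 7-6'-1 e} of Lemma~\ref{lem 7-6'-3} (applied repeatedly, together with $[\del_\alpha,\del_\beta]=0$), each summand above is dominated by a finite combination of terms of the form $\|\del^{K_1}\Omega^{K_2}\bigl(F(u)-F(v)\bigr)(t,\cdot)\|_{L^2}$ with $|K_1|+|K_2|\le |I_1|+|I_2|+2 \le d$. This is the point where the hypothesis $|I_1|+|I_2|\le d-2$ is essential: it guarantees that the total order remains within the range permitted by Lemma~\ref{lem 7-10-2.46}.

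Then I would invoke Lemma~\ref{lem 7-10-2.46} on each such $L^{2}$ term, which yields
\begin{equation*}
\bigl\|\del^{K_1}\Omega^{K_2}\bigl(F(u)-F(v)\bigr)(t,\cdot)\bigr\|_{L^2}
\;\le\; C(F,V,d)\,\|u(t,\cdot)-v(t,\cdot)\|_{E^d}\sum_{k=1}^{d-1}\sum_{k_1+k_2=k}\|u(t,\cdot)\|_{E^d}^{k_1}\|v(t,\cdot)\|_{E^d}^{k_2}.
\end{equation*}
(Strictly speaking, Lemma~\ref{lem 7-10-2.46} is stated with $\del_x^{I_1}$ rather than the full spacetime $\del^{I_1}$; however its proof uses only the product/chain expansion \eqref{eq lem 7-10-1} and the Sobolev/product inequalities, which apply verbatim to $\del^{K_1}\Omega^{K_2}$ on the slice $\{t\}\times\RR^3$ via the equivalence of norms induced by \eqref{eq com' lem 7-6'-1 d}.) Summing the finitely many contributions from the commutator expansion absorbs all constants into $C(F,V,d)$ and produces the claimed bound.

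There is essentially no obstacle: the only mild technical point is keeping track of the fact that the norms $X^{2}$ and $E^{d}$ involve different differential operators ($\del_x,\Omega$ versus $\del,\Omega$), so the commutator rearrangement must be done carefully to ensure that every derivative that appears after reordering can be bounded by $\|\cdot\|_{E^d}$. Because $d\ge 3$ and $|I_1|+|I_2|+2\le d$, no derivative of order higher than $d$ is ever produced, and the proof closes.
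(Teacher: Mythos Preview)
Your proposal is correct and follows exactly the route the paper indicates: apply the global Sobolev inequality \eqref{eq 1 lem 7-6-1} to pass from $\mathcal{E}_{-1}$ to $X^2$, then invoke Lemma~\ref{lem 7-10-2.46} on the resulting $L^2$ terms. The only addition over the paper's one-line justification is your explicit bookkeeping of the commutators and the $\del_x$ versus $\del$ distinction, which is handled correctly.
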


\begin{lemma}\label{lem 7-10-2.461}
Let $d\geq 3$ be an integer. Let $H$ be a $C^{\infty}$ function which is defined in a compact neighborhood $V$ of $0$ in $\RR^m$ and let $u,v$ be maps from $\RR^4$ to $V$ with their components in class $L^\infty([0,T],E_H^d)$. Then the following estimate holds for $1\leq |I_1|+|I_2|\leq d$:
\begin{equation}\label{eq 1 lem 7-10-2.461}
\aligned
&\big\|(1+r)^{-1}\del^{I_1}\Omega^{I_2}\big(H(u) - H(v)\big)(t,\cdot)\big\|_{L^2(\RR^3)}
\\
& \leq  C(V)\|u(t,\cdot)-v(t,\cdot)\|_{E_H^d}\sum_{k=0}^{|I_1|+|I_2|}\sum_{k_1+k_2=k}
\|u(t,\cdot)\|_{E_H^d}^{k_1}\|v(t,\cdot)\|_{E_H^d}^{k_2}.
\endaligned
\end{equation}
\end{lemma}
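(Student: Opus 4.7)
\medskip

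\noindent\textbf{Proof proposal for Lemma~\ref{lem 7-10-2.461}.} The plan is to mimic the pattern used in the proofs of Lemma~\ref{lem 7-10-2.45'} and Lemma~\ref{lem 7-10-2.46}, adapted to the weighted $L^2$ norm with the factor $(1+r)^{-1}$. First I would apply the Faà di Bruno formula of Lemma~\ref{lem 7-10-1} to both $\del^{I_1}\Omega^{I_2}(H(u))$ and $\del^{I_1}\Omega^{I_2}(H(v))$ and write
$$
\del^{I_1}\Omega^{I_2}\bigl(H(u)-H(v)\bigr) \;=\; A \;+\; B,
$$
where
$$
A \;=\; \sum_{1\leq |L|\leq |I_1|+|I_2|}\bigl(P^LH(u)-P^LH(v)\bigr)\sum_{\sum K_{1ji}=I_1,\,\sum K_{2ji}=I_2}\prod_{j,i}\del^{K_{1ji}}\Omega^{K_{2ji}}u_j,
$$
$$
B \;=\; \sum_{1\leq |L|\leq |I_1|+|I_2|}P^LH(v)\sum_{\sum K_{1ji}=I_1,\,\sum K_{2ji}=I_2}\Bigl(\prod_{j,i}\del^{K_{1ji}}\Omega^{K_{2ji}}u_j\;-\;\prod_{j,i}\del^{K_{1ji}}\Omega^{K_{2ji}}v_j\Bigr).
$$

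For the piece $A$, I would use the mean value theorem to obtain $|P^LH(u)-P^LH(v)|\leq C(V,|L|)|u-v|$, which when controlled in $L^\infty$ by Lemma~\ref{lem 7-6-2} (and the embedding $\mathcal{E}_{-1}\hookrightarrow L^\infty$) yields $\|P^LH(u)-P^LH(v)\|_{L^\infty}\leq C(V)\|u-v\|_{E_H^d}$. Since $|L|\geq 1$, at least one factor $\del^{K_{1j_0i_0}}\Omega^{K_{2j_0i_0}}u_{j_0}$ in the product carries a nontrivial multi-index; placing the weight $(1+r)^{-1}$ on a highest-order such factor (via Lemma~\ref{lem 7-6-3}) and the remaining factors in $L^\infty$ (via Lemma~\ref{lem 7-6-2}, which applies because the remaining multi-indices have order $\leq [d/2]\leq d-2$) then gives a bound of the form $\|u-v\|_{E_H^d}\|u\|_{E_H^d}^{|L|}$.

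For the piece $B$, when the number of factors $|L|\geq 2$ I would invoke Lemma~\ref{lem 7-10-2.45'} directly, since $|P^LH(v)|\leq C(V,|L|)$ is a pointwise bounded scalar; this produces the contribution $C(V)\|u-v\|_{E_H^d}\sum_{k_1+k_2\leq |L|-1}\|u\|_{E_H^d}^{k_1}\|v\|_{E_H^d}^{k_2}$. The residual case $|L|=1$ (a single factor in the product, so Lemma~\ref{lem 7-10-2.45'} does not apply) must be handled by hand: here the difference reduces to $\del_jH(v)\,\del^{I_1}\Omega^{I_2}(u_j-v_j)$, bounded by $C(V)\|(1+r)^{-1}\del^{I_1}\Omega^{I_2}(u_j-v_j)\|_{L^2}\leq C(V)\|u-v\|_{E_P^d}\leq C(V)\|u-v\|_{E_H^d}$ thanks to Lemma~\ref{lem 7-6-3}. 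Summing the contributions from $A$ and $B$ over all admissible $|L|$ gives the desired bound.

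The only mildly delicate step is the distribution of the weight $(1+r)^{-1}$ in $A$: one must verify that in every multilinear product produced by the Faà di Bruno expansion there is always a factor with a nonzero multi-index on which to absorb $(1+r)^{-1}$ (guaranteed by $|L|\geq 1$), and that the remaining factors admit an $L^\infty$ bound through the $E_H^d$ norm. This is the same combinatorial bookkeeping used in Lemmas~\ref{lem 7-10-2.2}--\ref{lem 7-10-2.45'}, so it is the main (but routine) obstacle; once it is carried out the estimate assembles exactly as in those earlier lemmas.
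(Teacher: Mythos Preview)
Your proposal is correct and follows essentially the same approach as the paper. The paper uses the identical decomposition (with the roles of $u$ and $v$ swapped in each piece, which is immaterial), citing Lemma~\ref{lem 7-10-2.45'} for the product-difference term and Lemma~\ref{lem 7-10-2.2} for the weighted product term; your treatment is slightly more explicit in separating out the $|L|=1$ case of $B$, which the paper glosses over but which is indeed needed since Lemma~\ref{lem 7-10-2.45'} only applies for $m\geq 2$.
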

\begin{proof}
The proof is quite similar to that of Lemma~\ref{lem 7-10-2.46}.

When $|I_1|+|I_2|\geq 1$, we apply the same calculation:
$$
\aligned
&\del^{I_1}\Omega^{I_2}\big(H(u)-H(v)\big)
\\
&= \sum_{l=1}^{|I_1|+|I_2|}\sum_{\sum_jl_j = l}
\bigg(P^LH(u)\sum_{\sum{j,i}K_{1ji} = I_1\atop \sum_{j,i}K_{2ji}=I_2}
\prod_{j=1}^m\prod_{i=1}^{l_j} \del^{K_{1ji}}\Omega^{K_{2ji}}u_j 
\\
& \hskip3.cm -
P^LH(v)\sum_{\sum{j,i}K_{1ji} = I_1\atop \sum_{j,i}K_{2ji}=I_2}
\prod_{j=1}^m\prod_{i=1}^{l_j} \del^{K_{1ji}}\Omega^{K_{2ji}}v_j\bigg)
\\
&= \sum_{l=1}^{|I_1|+|I_2|}\sum_{\sum_jl_j = l}P^LH(u)\sum_{\sum{j,i}K_{1ji} = I_1\atop \sum_{j,i}K_{2ji}=I_2}
\bigg(\prod_{j=1}^m\prod_{i=1}^{l_j}\del^{K_{1ji}}\Omega^{K_{2ji}}u_j - \prod_{j=1}^m\prod_{i=1}^{l_j}\del^{K_{1ji}}\Omega^{K_{2ji}}v_j\bigg)
\\
&+ \sum_{l=1}^{|I_1|+|I_2|}\sum_{\sum_jl_j = l}\big(P^LH(u) - P^LH(v)\big)\sum_{\sum{j,i}K_{1ji} = I_1\atop \sum_{j,i}K_{2ji}=I_2}
\prod_{j=1}^m\prod_{i=1}^{l_j} \del^{K_{1ji}}\Omega^{K_{2ji}}v_j
\\
&=: T_1 + T_2.
\endaligned
$$
The estimate of $T_1$ and $T_2$ are similar to the estimate made in the proof of Lemma~\ref{lem 7-10-2.46}. $T_1$ is estimated by apply Lemma~\ref{lem 7-10-2.45'} and $T_2$ is by applying the following estimate
$$
\big\|P^LH(u) - P^LH(v)\big\|_{L^{\infty}(\RR^3)} \leq C(H,V,|L|)\|u-v\|_{L^{\infty}}\leq C(V,|L|)\|u-v\|_{X^d}
$$
and the fact that (by applying \eqref{eq 1 lem 7-10-2.2})
$$
\bigg\|(1+r)^{-1}\prod_{j=1}^m\prod_{i=1}^{l_j} \del_x^{K_{1ji}}\Omega^{K_{2ji}}v_j\bigg\|_{L^2(\RR^3)}\leq C(V)\|v\|^{|L|}_{X_H^d}.
$$
\end{proof}

We also need a $L^{\infty}$ estimate on $H(u) - H(v)$:
\begin{lemma}\label{lem 7-10-2.470}
Let $d\geq 3$.
Assume that $H$ be a $C^{\infty}$ function defined in a compact neighborhood $V$ of $0$ in $\RR^m$ and assume that $u,v$ are maps from $\RR^4$ to $V$ with their components in the class $L^\infty([0,T];E_H^d)$. Then the following estimate holds for $|I_1|+|I_2|\leq d-2$:
\begin{equation}\label{eq 1 lem 7-10-2.470}
\aligned
&\|\del^{I_1}\Omega^{I_2}\big(H(u) - H(v)\big)(t,\cdot)\|_{L^{\infty}(\RR^3)}
\\
& \leq C(H,V,d)\|u(t,\cdot)-v(t,\cdot)\|_{E_H^d}\sum_{k=0}^{|I_1|+|I_2|+1}\sum_{k_1+k_2=k}
\|u(t,\cdot)\|_{E_H^d}^{k_1}\|v(t,\cdot)\|_{E_H^d}^{k_2}
\endaligned
\end{equation}
\end{lemma}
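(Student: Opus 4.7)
The plan is to mimic the proof of Lemma~\ref{lem 7-10-2.3} (which handles $H(u)$ alone), replacing the use of Lemma~\ref{lem 7-10-2.2} by its difference-analogue Lemma~\ref{lem 7-10-2.461}, and treating the borderline case $|I_1|+|I_2|=0$ separately by a direct pointwise mean value argument. The argument relies only on the global Sobolev inequality on $\RR^3$ already proven, the commutator identities of Lemmas~\ref{lem 7-6'-1} and \ref{lem 7-6'-3}, and the companion weighted $L^2$ estimate for $H(u)-H(v)$.

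For the case $I_1=I_2=0$: since $V$ is a compact (and, without loss of generality, convex) neighborhood of the origin and $H\in C^\infty(V)$, the pointwise mean value theorem applied to the straight segment from $v(t,x)$ to $u(t,x)$ gives
$$
\bigl|H(u(t,x))-H(v(t,x))\bigr|\leq C(H,V)\,\bigl|u(t,x)-v(t,x)\bigr|,\qquad (t,x)\in[0,T]\times\RR^3.
$$
Taking the $L^\infty_x$ norm and using that $\|\cdot\|_{L^\infty(\RR^3)}\leq\|\cdot\|_{\mathcal{E}_{-1}}\leq\|\cdot\|_{E_H^d}$ (the first by definition of the $\mathcal{E}_{-1}$-norm, the second because $\mathcal{E}_{-1}$ is one of the constituents of the $E_H^d$-norm), one obtains
$$
\|(H(u)-H(v))(t,\cdot)\|_{L^\infty(\RR^3)}\leq C(H,V)\,\|u(t,\cdot)-v(t,\cdot)\|_{E_H^d},
$$
which is exactly the $k=0$ contribution of the right-hand side of \eqref{eq 1 lem 7-10-2.470}.

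For $1\leq |I_1|+|I_2|\leq d-2$, I would apply the weighted Sobolev inequality of Lemma~\ref{lem 7-6-4} to $w:=\del^{I_1}\Omega^{I_2}\bigl(H(u)-H(v)\bigr)$, obtaining
$$
\|w(t,\cdot)\|_{L^\infty(\RR^3)}\leq C\!\sum_{|J_1|+|J_2|\leq 2}\|(1+r)^{-1}\del_x^{J_1}\Omega^{J_2}w(t,\cdot)\|_{L^2(\RR^3)}.
$$
Using the commutator identities of Lemmas~\ref{lem 7-6'-1} and \ref{lem 7-6'-3} (in particular \eqref{eq com' lem 7-6'-1 d}) together with the trivial inclusion $\del_x\subset\del$, each composition $\del_x^{J_1}\Omega^{J_2}\del^{I_1}\Omega^{I_2}$ can be rewritten as a finite linear combination of operators of the form $\del^{K_1}\Omega^{K_2}$ with $1\leq |K_1|+|K_2|\leq |I_1|+|I_2|+|J_1|+|J_2|\leq d$; the lower bound holds because $|I_1|+|I_2|\geq 1$ in this case and the commutators preserve total differential order. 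Lemma~\ref{lem 7-10-2.461} then applies to each resulting weighted $L^2$-norm and gives the announced estimate after summing the finitely many resulting contributions. The main obstacle is purely combinatorial: one has to verify that the commutator rearrangement keeps every emerging term inside the applicability range $1\leq|K_1|+|K_2|\leq d$ of Lemma~\ref{lem 7-10-2.461} and that the weight $(1+r)^{-1}$ is preserved throughout; the analytic content is already entirely in place in the previous lemmas.
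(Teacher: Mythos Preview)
Your proposal is correct and follows exactly the paper's own approach: the paper's proof handles $I_1=I_2=0$ by the pointwise mean value inequality and, for $|I_1|+|I_2|\geq 1$, simply cites Lemma~\ref{lem 7-6-4} combined with \eqref{eq 1 lem 7-10-2.461}. Your write-up is in fact more detailed than the paper's, since you spell out the commutator rearrangement needed to bring $\del_x^{J_1}\Omega^{J_2}\del^{I_1}\Omega^{I_2}$ into the form $\del^{K_1}\Omega^{K_2}$ with $1\leq|K_1|+|K_2|\leq d$ required by Lemma~\ref{lem 7-10-2.461}; the paper leaves this implicit (the analogous verification appears in the proof of Lemma~\ref{lem 7-10-2.3}).
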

\begin{proof}
When $I_1=I_2=0$, then there exists $\theta \in [0,1]$ such that
$$
|H(u)-H(v)|\leq |\nabla H(\theta u + (1-\theta)v)||u-v|\leq C(H,V)|u-v|.
$$
which proves \eqref{eq 1 lem 7-10-2.470}.

For $|I_1|+|I_2|\geq 1$, we apply Lemma~\ref{lem 7-6-4} combined with \eqref{eq 1 lem 7-10-2.461}.
\end{proof}

\begin{lemma}\label{lem 7-10-2.47}
Let $F$ be a $C^{\infty}$ function defined in a compact neighborhood $V_1$ of $0$ in $\RR^m$ and $H$ be a $C^{\infty}$ function defined in a neighborhood $V_2$ of $0$ in $\RR^n$. Let $d$ be an integer and $d\geq 3$. Assume that $F(0) = 0, \nabla F(0)=0$. Let $u_1$ and $u_2$ be maps from $\RR^4$ to $V_1$ with their components in $L^\infty([0,T];E^d_H)$ and $v_1$, $v_2$ be maps from $\RR^4$ to $V_2$ with their components in class $L^\infty([0,T];E^d)$. Then the following estimate holds for $|I_1|+|I_2|\leq d$ and $d\geq 3$. Then, we have 
\begin{equation}\label{eq 1 lem 7-10-2.47}
\aligned
&\big\|\del^{I_1}\Omega^{I_2}\big(H(u_1)F(v_1) - H(u_2)F(v_2)\big)(t,\cdot)\big\|_{L^2(\RR^3)}
\\
& \leq C(V)\big(\|u_1(t,\cdot)-u_2(t,\cdot)\|_{E^d_H} + \|v_1(t,\cdot)-v_2(t,\cdot)\|_{E^d}\big)\sum_{k=1}^{d-1}M^k(t),
\endaligned
\end{equation}
where
$$
M(t):= \max\{\|{u_1}_j(t,\cdot)\|_{E_H^d},\|{u_2}_j(t,\cdot)\|_{E_H^d},\|{v_1}_j(t,\cdot)\|_{E^d},\|{v_2}_j(t,\cdot)\|_{E^d}\}.
$$
\end{lemma}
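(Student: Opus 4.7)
The strategy is the standard ``add and subtract'' decomposition reducing differences of products to products containing a single difference, followed by a Leibniz expansion and a high/low frequency splitting exactly parallel to the one used in the proofs of Lemmas~\ref{lem 7-10-2.4}, \ref{lem 7-10-2.45}, and \ref{lem 7-10-2.46}. Concretely, I would write
\begin{equation*}
H(u_1)F(v_1) - H(u_2)F(v_2)
= \bigl(H(u_1) - H(u_2)\bigr)F(v_1) \,+\, H(u_2)\bigl(F(v_1) - F(v_2)\bigr),
\end{equation*}
which already isolates one ``difference'' factor in each term, so that the desired Lipschitz dependence on $u_1-u_2$ and $v_1-v_2$ is visible at the outset.

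Next, I would apply $\del^{I_1}\Omega^{I_2}$ and use the general Leibniz formula to obtain a sum of terms of the form
\begin{equation*}
\del^{J_1}\Omega^{J_2}\bigl(H(u_1)-H(u_2)\bigr)\,\del^{J_1'}\Omega^{J_2'}\bigl(F(v_1)\bigr)
\end{equation*}
and
\begin{equation*}
\del^{J_1}\Omega^{J_2}\bigl(H(u_2)\bigr)\,\del^{J_1'}\Omega^{J_2'}\bigl(F(v_1)-F(v_2)\bigr),
\end{equation*}
with $J_1+J_1' = I_1$ and $J_2+J_2'=I_2$. For each such term I would split according to whether the ``$H$-factor'' or the ``$F$-factor'' carries the order higher than $[d/2]$. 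When the $H$-factor is of low order ($|J_1|+|J_2|\le[d/2]\le d-2$) I estimate it in $L^\infty$ via Lemma~\ref{lem 7-10-2.3} (or Lemma~\ref{lem 7-10-2.470} when a difference is involved) and pair it in $L^2$ with the $F$-factor estimated by Lemma~\ref{lem 7-10-2} (or~\ref{lem 7-10-2.46}). When the $H$-factor is of high order I instead pair the weighted $L^2$ bound $(1+r)^{-1}\del^{J_1}\Omega^{J_2}(\cdot)$ from Lemma~\ref{lem 7-10-2.2} or Lemma~\ref{lem 7-10-2.461} with the $L^\infty$-bound $(1+r)\del^{J_1'}\Omega^{J_2'}(\cdot) = \|\cdot\|_{\mathcal E_{-1}}$ on the $F$-factor supplied by Lemma~\ref{lem 7-10-2.1} or Lemma~\ref{lem 7-10-2.460}. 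In both regimes the assumption $F(0)=\nabla F(0)=0$ is what provides the extra power of the norm needed to match the power counting in \eqref{eq 1 lem 7-10-2.47}.

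The main technical obstacle is bookkeeping rather than analysis: one must verify that in every one of the sub-cases above (low/high order, $H$-term vs.\ $F$-term, $|I_1|+|I_2|=0$ vs.\ $\ge 1$) the total power of $M(t)$ collected is at most $d-1$ and that exactly one factor $\|u_1-u_2\|_{E_H^d}$ or $\|v_1-v_2\|_{E^d}$ is produced. The $|I_1|+|I_2|=0$ case is handled separately by the mean-value theorem, giving $|H(u_1)F(v_1)-H(u_2)F(v_2)|\le C(V)\bigl(|u_1-u_2|\,|v_1|^2+(|u_1|+|u_2|)|v_1-v_2|\bigr)$ (using $F(0)=\nabla F(0)=0$), from which the $L^2$ estimate follows by the classical Sobolev inequality combined with \eqref{eq 1 lem 7-6-1}. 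Collecting all bounds and taking the maximum over the indices yields the right-hand side of \eqref{eq 1 lem 7-10-2.47} with the stated power $M(t)^k$ for $1\le k\le d-1$.
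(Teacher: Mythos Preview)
Your proposal is correct and follows essentially the same route as the paper's proof: the same add-and-subtract decomposition (the paper writes it as $H(u_1)(F(v_1)-F(v_2)) + F(v_2)(H(u_1)-H(u_2))$, a harmless variant of yours), the same Leibniz expansion, the same high/low splitting at $[d/2]$, and exactly the same auxiliary lemmas (\ref{lem 7-10-2}, \ref{lem 7-10-2.1}, \ref{lem 7-10-2.2}, \ref{lem 7-10-2.3}, \ref{lem 7-10-2.46}, \ref{lem 7-10-2.460}, \ref{lem 7-10-2.461}, \ref{lem 7-10-2.470}) in the corresponding places. Your explicit treatment of the $|I_1|+|I_2|=0$ case via the mean-value theorem is a small clarification the paper leaves implicit.
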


\begin{proof} We have 
$$
\aligned
&\del^{I_1}\Omega^{I_2}\big(H(u_1)F(v_1) - H(u_2)F(v_2)\big)
\\
&=\del^{I_1}\Omega^{I_2}\big(H(u_1)F(v_1) - H(u_1)F(v_2) + H(u_1)F(v_2) - H(u_2)F(v_2)\big)
\\
&= \del^{I_1}\Omega^{I_2}\big(H(u_1)(F(v_1)-F(v_2))\big) +  \del^{I_1}\Omega^{I_2}\big(F(v_2)(H(u_1) - H(u_2))\big)
\\
&=: T_1 + T_2. 
\endaligned
$$
The term $T_1$ is estimated as follows:
$$
\aligned
T_1 =  \del^{I_1}\Omega^{I_2}\big(H(u_1)(F(v_1)-F(v_2))\big)
= \sum_{J_1+J_2=I_1\atop J_1'+J_2'=I_2}\del^{J_1}\Omega^{J_2}\big(H(u_1)\big)\del^{J_1'}\Omega^{J_2'}(F(v_1)-F(v_2)).
\endaligned
$$
When $|J_1|+|J_2|\leq [d/2]\leq d-2$, by applying \eqref{eq 1 lem 7-10-2.3} and \eqref{eq 1 lem 7-10-2.46}
$$
\aligned
&\big\|\del^{J_1}\Omega^{J_2}\big(H(u_1)\big)\del^{J_1'}\Omega^{J_2'}(F(v_1)-F(v_2))(t,\cdot)\big\|_{L^2(\RR^3)}
\\
&=\big\|\del^{J_1}\Omega^{J_2}\big(H(u_1)\big)(t,\cdot)\big\|_{L^{\infty}(\RR^3)}
\big\|\del^{J_1'}\Omega^{J_2'}(F(v_1)-F(v_2))(t,\cdot)\big\|_{L^2(\RR^3)}
\\
& \leq  C(H,F,V,d)\bigg(\sum_{k=0}^{|J_1|+|J_2|+2}\|u_1(t,\cdot)\|_{E_H^d}\bigg)
\\
& \hskip2.cm
\bigg(\|v_1(t,\cdot)-v_2(t,\cdot)\|_{E^d}\sum_{k=1}^{|J_1'|+|J_2'|-1}\sum_{k_1+k_2=k}\|v_1(t,\cdot)\|_{E^d}^{k_1}\|v_2(t,\cdot)\|_{E^d}^{k_2}\bigg)
\endaligned
$$
When $|J_1|+|J_2|\geq d/2>1$, $|J_1'|+|J_2'|\leq [d/2]$, we apply \eqref{eq 1 lem 7-10-2.2}:
$$
\aligned
&\big\|\del^{J_1}\Omega^{J_2}\big(H(u_1)\big)\del^{J_1'}\Omega^{J_2'}(F(v_1)-F(v_2))(t,\cdot)\big\|_{L^2(\RR^3)}
\\
& \leq \|(1+r)^{-1}\del^{J_1}\Omega^{J_2}\big(H(u_1)\big)(t,\cdot)\|_{L^2(\RR^3)}
\|\del^{J_1'}\Omega^{J_2'}(F(v_1)-F(v_2))(t,\cdot)\|_{\mathcal{E}_{-1}}
\\
& \leq  C(V)\sum_{k=1}^{|J_1|+|J_2|}\|u\|_{E_H^d}^k \cdot C(H,F,V,d)\|v_1(t,\cdot)-v_2(t,\cdot)\|_{E^d}\sum_{k=1}^{|J_1'|+|J_2'|+1}\sum_{k_1+k_2=k}\|v_1(t,\cdot)\|_{E^d}^{k_1}\|v_2(t,\cdot)\|_{E^d}^{k_2}
\\
& \leq  C(H,F,V,d)\|v_1(t,\cdot)-v_2(t,\cdot)\|_{E^d}\sum_{k=1}^{|I_1|+|I_2|+2}
\sum_{k_1+k_2+k_3=k}\|u_1(t,\cdot)\|_{E_H^d}^{k_1}\|v_1(t,\cdot)\|_{E^d}^{k_2}\|v_2(t,\cdot)\|_{E^d}^{k_3}
\endaligned
$$

The term $T_2$ is estimated similarly by applying \eqref{eq 1 lem 7-10-2}, \eqref{eq 1 lem 7-10-2.461}, \eqref{eq 1 lem 7-10-2.1} and \eqref{eq 1 lem 7-10-2.470}. We omit the details, but we write out the estimate
$$
\|T_2(t,\cdot)\|_{L^2(\RR^3)} \leq \|u_1(t,\cdot)-u_2(t,\cdot)\|_{E_H^d}\sum_{k=2}^{|I_1|+|I_2|+1}\sum_{k_1+k_2+k_3=k}\|u_1\|_{E_H^d}^{k_1}\|u_2\|_{E_H^d}^{k_2}\|v_2\|_{E^d}^{k_3}.
$$
\end{proof}


\newpage 

\section{Proof of the local existence}

\subsection{Iteration and uniform bound}

In this section we will begin the proof of Theorem \ref{prop 7-10-1}. The proof of this theorem will occupy the following two subsections and follows a classical iteration procedure:
\begin{subequations}\label{eq conformal aug sec7-9}
\begin{equation}\label{eq conformal aug sec7-9 a}
\aligned
(m^{\alpha'\beta'} + H^{\alpha'\beta'}(h^{\coeff}_n))\del_{\alpha'}\del_{\beta'}h_{\alpha\beta}^{\coeff,n+1}
&=  F_{\alpha\beta}(h^\coeff_n;\del h^\coeff_n,\del h^\coeff_n) - 16\pi\del_{\alpha}\phi^\coeff_n\del_{\beta}\phi^\coeff_n
\\
& \quad - 12\del_{\alpha}\varrho^\coeff_n\del_{\beta}\varrho^{\coeff}_n
-\coeff^{-1}V_h(\varrho_n^{\coeff})\big(m_{\alpha\beta}+h^{\coeff,n}_{\alpha\beta}\big),
\endaligned
\end{equation}
\begin{equation}\label{eq conformal aug sec7-9 b}
(m^{\alpha'\beta'} + H^{\alpha'\beta'}(h^{\coeff}_n))\del_{\alpha'}\del_{\beta'}\phi^\coeff_{n+1}
= 2(m^{\alpha'\beta'} + H^{\alpha'\beta'}(h^{\coeff}_n))\del_{\alpha'}\phi^{\coeff}_n \del_{\beta'}\varrho^\coeff_n,
\end{equation}
\begin{equation}\label{eq conformal aug sec7-9 c}
\aligned
& (m^{\alpha'\beta'} + H^{\alpha'\beta'}(h^{\coeff}_n))\del_{\alpha'}\del_{\beta'}\varrho^{\coeff}_{n+1} -\frac{\varrho^\coeff_{n+1}}{3\coeff}
\\
&=
\coeff^{-1}V_\rho(\varrho^{\coeff}_n)
- \frac{4\pi}{3e^{2\varrho^{\coeff}_n}}(m^{\alpha'\beta'} + H^{\alpha'\beta'}(h^{\coeff}_n))\del_{\alpha'}\phi^\coeff_n\del_{\beta'}\phi^\coeff_n,
\endaligned
\end{equation}
where
$$
\coeff^{-1}V_h(s) := \frac{\big(e^{2s}-1\big)^2}{2\coeff e^{4s}},\quad
\coeff^{-1}V_\rho(s) :=\frac{e^{2s}-1}{6\coeff e^{4s}} - \frac{s}{3\coeff}
$$
\end{subequations}
and with initial data
$$
\aligned
&h^{\coeff,n}_{\alpha\beta}(0,x) = {h_0}_{\alpha\beta},\quad
\phi^{\coeff}_n(0,x) = \phi_0(x),\quad \varrho^{\coeff}_n(0,x) =\varrho_0(x),
\\
&\del_t h^{\coeff,n}_{\alpha\beta}(0,x) = {h_1}_{\alpha\beta},\quad
\del_t\phi^{\coeff}_n(0,x) = \phi_1(x)\quad
\del_t\varrho^\coeff_n(0,x) = \varrho_1(x).
\endaligned
$$

Recall the function $H^{\alpha\beta}(h)$ are defined in \eqref{eq def1 7-2} and the associated estimates are in \eqref{eq 1 7-2}.
We take $(h_{\alpha\beta}^{\coeff,0})$ and set 
$$
S_n^{\coeff} : = (h_{\alpha\beta}^{\coeff, n}, \phi^{\coeff}_n, \varrho^{\coeff}_n).
$$
We also denote by $F_H(S_n^{\coeff}), F_P(S_n^\coeff)$ and $F_R(S_n^\coeff)$ the terms in right-hand-side of \eqref{eq conformal aug sec7-9 a}, \eqref{eq conformal aug sec7-9 b} and \eqref{eq conformal aug sec7-9 c}.

We take $S_0^{\coeff} = (h_{\alpha\beta}^{\coeff, 0}, \phi^{\coeff}_0, \varrho^{\coeff}_0)$ as the solution of the following homogeneous Cauchy problem:
$$
\aligned
&\Box h_{\alpha\beta} = 0, \quad \Box \phi = 0,\quad \Box \varrho - \frac{\varrho}{3\coeff} = 0,
\\
&h^{\coeff,0}_{\alpha\beta}(0,x) = {h_0}_{\alpha\beta},\quad
\phi^{\coeff}_0(0,x) = \phi_0(x),\quad \varrho^{\coeff}_0(0,x) = \varrho_0(x),
\\
&\del_t h^{\coeff,0}_{\alpha\beta}(0,x) = {h_1}_{\alpha\beta},\quad
\del_t\phi^{\coeff}_0(0,x) = \phi_1(x)\quad
\del_t\varrho^\coeff_0(0,x) = \varrho_1(x).
\endaligned
$$

We see that the source terms and metric coefficients in \eqref{eq conformal aug sec7-9} are sufficiently regular and the initial data are in the corresponding class required in Propositions \ref{prop 7-7-1}, \ref{prop 7-7-2} and \ref{prop 7-7-3}. Then, by the theory of local existence for linear equations, this iteration procedure is well defined in a fixed time interval $[0,T]$, where the metric coefficients and source terms are in the corresponding class and $|H(h^\coeff_n)^{00}|\leq 1/2$. We see that is iteration defines a sequence of triple $S_n^{\coeff} := (h_{\alpha\beta}^{\coeff, n}, \phi^{\coeff}_n, \varrho^{\coeff}_n)$. In order to get the local existence, we will prove that $S_n^{\coeff}$ converges in the following norm: 
$$
\aligned
&\|S(t,\cdot)^{\coeff}_n\|_{E_\coeff^{d+1}}
\\
: &= \max \{ \|h_{\alpha\beta}^{\coeff,n}(t,\cdot)\|_{E_H^{d+1}},\,
\|\phi^\coeff_n(t,\cdot)\|_{E_P^{d+1}},\,
\|\varrho^\coeff_n(t,\cdot)\|_{E_P^{d+1}},\,  \coeff^{-1/2}\|\varrho_n^{\coeff}(t,\cdot)\|_{E^d}\},
\endaligned
$$
for all $t \in [0, T^*]$, in which $T^*>0$ will be defined. 

We suppose that for certain $d\geq 3$, for all $k\leq n$ and all $t\in [0,T]$,
\begin{equation}\label{eq 1 7-10}
\|S_k^\coeff (t,\cdot)\|_{E_\coeff^{d+1}}\leq A\eps,
\end{equation}
we will show that $\|S_{n+1}^\coeff(t,\cdot)\|_{X_\coeff^{d+1}}\leq A\eps$ with $T$ and $A$ well chosen.

First, we observe that when \eqref{eq 1 7-10} holds with $A\eps\leq \eps_0$ with $\eps_0$ small enough, the metric $g^{\coeff,k}_{\alpha\beta} := m^{\alpha\beta} + h_{\alpha\beta}^{\coeff,k}$ are coercive with constant $C(\eps_0)$, where $C(\eps_0)$ is determined by $\eps_0$.

Now, by combining \eqref{eq 1 7-10} with Lemma~\ref{lem 7-10-2}, the following estimates on the source terms follow.

\begin{lemma}\label{lem 7-10-2.5}
Assume that \eqref{eq 1 7-10} holds with $d\geq 3$. Suppose that $0\leq A\eps\leq \eps_0$, where $\eps_0$ is a constant sufficiently small. Then the following estimates holds for $k\leq n$:
\begin{subequations}\label{eq 1 lem 7-10-2.5}
\begin{equation}\label{eq 1 lem 7-10-2.5 a}
\|F_{\alpha\beta}(h_k^\coeff,\del h_k^{\coeff},\del h_k^{\coeff})(t,\cdot)\|_{E^d}
+\|\del_{\alpha}\phi_k^{\coeff}\del_{\beta}\phi^\coeff_k(t,\cdot)\|_{E^d}
+\|\del_{\alpha}\varrho_k^{\coeff}\del_{\beta}\varrho_k^{\coeff}(t,\cdot)\|_{E^d}
\leq C(\eps_0,d)(A\eps)^2,
\end{equation}
\begin{equation}\label{eq 1 lem 7-10-2.5 b}
\|(m^{\alpha'\beta'} + H^{\alpha'\beta'}(h_k^{\coeff}))\del_{\alpha'}\phi^{\coeff}_k\del_{\beta'}\phi^{\coeff}_k(t,\cdot)\|_{E^d}\leq C(\eps_0,d)(A\eps)^2,
\end{equation}
\begin{equation}\label{eq 1 lem 7-10-2.5 c}
\|e^{-2\varrho_k^{\coeff}}(m^{\alpha'\beta'}+H^{\alpha'\beta'}(h_n^\coeff))\del_{\alpha'}\phi_k^{\coeff}\del_{\beta'}\phi_k^{\coeff}(t,\cdot)\|_{E^d}\leq C(\eps_0,d)(A\eps)^2,
\end{equation}
\begin{equation}\label{eq 1 lem 7-10-2.5 d}
\|\coeff^{-1}V_\rho(\varrho_k^\coeff)(t,\cdot)\|_{E^d}\leq  C(\eps_0,d) (A\eps)^2,\quad \|\coeff^{-1}V_h(\varrho_k^\coeff)(m_{\alpha\beta}+h_{\alpha\beta}^{\coeff,k})(t,\cdot)\|_{E^d}\leq  C(\eps_0,d)(A\eps)^2.
\end{equation}
\end{subequations}
\end{lemma}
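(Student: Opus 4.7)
The plan is to reduce each source term to a direct application of the nonlinear composition estimates established earlier in Section~7, in particular Lemma~\ref{lem 7-10-2} and Lemma~\ref{lem 7-10-2.4}, while carefully accounting for the factors of $\coeff^{-1}$ hidden in the potentials $V_h$ and $V_\rho$.

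For the three quadratic source terms appearing in (a), I first observe that $F_{\alpha\beta}(h_k^\coeff;\del h_k^\coeff,\del h_k^\coeff)$ can be written as $H(h_k^\coeff)\,Q(\del h_k^\coeff)$, where $H$ is a smooth matrix-valued function of its argument and $Q$ is a quadratic form in $\del h$ satisfying $Q(0)=\nabla Q(0)=0$. Lemma~\ref{lem 7-10-2.4}, applied with $u=h_k^\coeff\in L^\infty([0,T];E_H^d)$ (so that $\|u\|_{E_H^d}\le A\eps$) and $v=\del h_k^\coeff\in L^\infty([0,T];E^d)$ (so that $\|v\|_{E^d}\le\|h_k^\coeff\|_{E_P^{d+1}}\le A\eps$), yields the bound $C\,(1+(A\eps)^d)\,(A\eps)^2\le C(\eps_0,d)(A\eps)^2$. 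The two remaining terms $\del_\alpha\phi_k^\coeff\,\del_\beta\phi_k^\coeff$ and $\del_\alpha\varrho_k^\coeff\,\del_\beta\varrho_k^\coeff$ fall under the same scheme (with $H\equiv 1$) using $\|\phi_k^\coeff\|_{E_P^{d+1}},\|\varrho_k^\coeff\|_{E_P^{d+1}}\le A\eps$. For estimates (b) and (c), the additional coefficient $m^{\alpha'\beta'}+H^{\alpha'\beta'}(h_k^\coeff)$ (respectively $e^{-2\varrho_k^\coeff}$) is a smooth function of the small quantity $h_k^\coeff$ (respectively $\varrho_k^\coeff$), so Lemma~\ref{lem 7-10-2.4} again applies after rewriting the expression as a product $H(u)\,Q(v)$ with $Q$ quadratic satisfying $Q(0)=\nabla Q(0)=0$.

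The main technical obstacle is estimate (d), which requires extracting a factor of $\coeff$ from the potentials so as to absorb the $\coeff^{-1}$ prefactor. A direct Taylor expansion at $s=0$ yields
\begin{equation*}
V_\rho(s) = -s^2 + \Ocal(s^3), \qquad V_h(s) = 2s^2 + \Ocal(s^3),
\end{equation*}
so both $V_h$ and $V_\rho$ are smooth functions vanishing at the origin together with their first derivatives. Lemma~\ref{lem 7-10-2} then gives
\begin{equation*}
\|V_\rho(\varrho_k^\coeff)(t,\cdot)\|_{E^d} \le C(d) \sum_{j=2}^{d} \|\varrho_k^\coeff(t,\cdot)\|_{E^d}^j,
\end{equation*}
and the crucial input is the $\coeff^{1/2}$-scaled bound $\|\varrho_k^\coeff\|_{E^d}\le\coeff^{1/2} A\eps$ that is built into the norm $\|\cdot\|_{E_\coeff^{d+1}}$. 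The leading quadratic term thus contributes at most $C\coeff(A\eps)^2$, with all higher-order terms bounded by the same quantity since $A\eps\le\eps_0\ll 1$, so multiplication by $\coeff^{-1}$ yields the desired bound. For the second estimate in (d), I write $V_h(\varrho_k^\coeff)(m_{\alpha\beta}+h_{\alpha\beta}^{\coeff,k})$ as a product $H(h_k^\coeff)\,F(\varrho_k^\coeff)$ with $H(u)=m_{\alpha\beta}+u_{\alpha\beta}$ and $F(v)=V_h(v)$ satisfying $F(0)=\nabla F(0)=0$, then apply Lemma~\ref{lem 7-10-2.4}; the $\coeff$-factor arising from the quadratic vanishing of $V_h$ once more absorbs the $\coeff^{-1}$ prefactor.

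The overarching conceptual point is that $\coeff$-uniformity is not automatic from the iteration structure: it is enforced precisely by the combination of the $\coeff^{-1/2}$-weight on the $E^d$-norm of $\varrho_k^\coeff$ in the definition of $\|\cdot\|_{E_\coeff^{d+1}}$, together with the second-order vanishing of $V_h$ and $V_\rho$ at $0$. These two features---one built into the functional framework, the other into the structure of the modified gravity Lagrangian---conspire to cancel exactly the $\coeff^{-1}$ factor present in the field equations.
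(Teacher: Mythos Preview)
Your proof is correct and follows essentially the same approach as the paper: applying Lemmas~\ref{lem 7-10-2} and~\ref{lem 7-10-2.4} to the various quadratic-type source terms, with the key observation for part~(d) being that the second-order vanishing of $V_h,V_\rho$ at the origin combines with the weighted bound $\coeff^{-1/2}\|\varrho_k^\coeff\|_{E^d}\le A\eps$ to absorb the explicit $\coeff^{-1}$. Your write-up in fact supplies more detail than the paper's own proof, which merely cites the relevant lemmas and singles out the $\coeff$-cancellation in~(d) without spelling out the Taylor expansions.
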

These are classical estimate when we establish the local theory of existence for quasi-linear wave equations with quadratic nonlinearity. The only thing important is \eqref{eq 1 lem 7-10-2.5 d}, where the $\coeff$ appears in the left-hand-side in order to get estimates independent of $\coeff$.
\begin{proof}
These estimates are applications of \eqref{eq 1 lem 7-10-2} and \eqref{eq 1 lem 7-10-2.4} combined with \eqref{eq 1 7-10}. The only thing we need to be pay attention is that to guarantee the $C^{\infty}$ regularity of the function $H^{\alpha\beta}(\cdot)$, we need to restrict its defined in a compact neighborhood $V$ of $0$ in $\RR^{10}$ with
$$
V = \{\max_{1\leq1\leq10}|x^i|\leq \eps_0\}
$$
with $\eps_0$ sufficiently small. This can be guaranteed by taking $\eps\leq \eps_0$ in \eqref{eq 1 7-10}.

We observe that $F_{\alpha\beta}(h_n^\coeff, \del h_n^\coeff,\del h_n^\coeff)$ is quadratic with respect to $\del h_n^\coeff$ and 
$C^{\infty}$ with respect to $h_n^\coeff$. Then we apply \eqref{eq 1 lem 7-10-2.4}. The estimate on the term
$$
(m^{\alpha'\beta'} + H^{\alpha'\beta'}(h_k^{\coeff}))\del_{\alpha'}\phi^{\coeff}_k\del_{\beta'}\phi^{\coeff}_k
$$
is established in the same manner.

The estimate of \eqref{eq 1 lem 7-10-2.5 d} is checked by using the estimate
$$
\coeff^{-1/2}\|\varrho_k^\coeff\|_{E^d}\leq A\eps,
$$
which follows from \eqref{eq 1 7-10}.
\end{proof}

Now we begin the discussion of the commutators such as $[\del_x^{I_1}\Omega^{I_2},H^{\alpha'\beta'}(h^{\coeff}_n)\del_{\alpha'}\del_{\beta'}]h^{\coeff,n}_{\alpha\beta}$ which appears in the estimates of $\|h_{\alpha\beta}^{\coeff,n+1}\|_{X_E^d}$.

\begin{lemma}\label{lem 7-10-4}
There exists a positive constant $\eps_0$ such that if \eqref{eq 1 7-10} holds with $d\geq 3$, $A\eps\leq \eps_0\leq 1$, then the following estimates hold for all couple of index $(I_1,I_2)$ with $1\leq |I_1|+|I_2|\leq d$:
\begin{subequations}\label{eq 1 lem 7-10-4}
\begin{equation}\label{eq 1 lem 7-10-4a}
\big{\|}[\del^{I_1}\Omega^{I_2},H^{\alpha'\beta'}(h^{\coeff}_n)\del_{\alpha'}\del_{\beta'}]h^{\coeff,n+1}_{\alpha\beta}(t,\cdot)\big{\|}_{E^d}
\leq C(\eps_0,d)A\eps \|h_{\alpha\beta}^{\coeff,n+1}(t,\cdot)\|_{E_P^{d+1}},
\end{equation}
\begin{equation}\label{eq 1 lem 7-10-4b}
\big{\|}[\del^{I_1}\Omega^{I_2},H^{\alpha'\beta'}(h^{\coeff}_n)\del_{\alpha'}\del_{\beta'}]\phi^\coeff_{n+1}(t,\cdot)\big{\|}_{E^d}
\leq C(\eps_0,d)A\eps \|\phi_{n+1}^{\coeff}(t,\cdot)\|_{E_P^{d+1}},
\end{equation}
\begin{equation}\label{eq 1 lem 7-10-4c}
\big{\|}[\del^{I_1}\Omega^{I_2},H^{\alpha'\beta'}(h^{\coeff}_n)\del_{\alpha'}\del_{\beta'}]\varrho^{\coeff}_{n+1}(t,\cdot)\big{\|}_{E^d}
\leq C(\eps_0,d)A\eps \|\varrho_{n+1}^{\coeff}(t,\cdot)\|_{E_P^{d+1}}.
\end{equation}
\end{subequations}
\end{lemma}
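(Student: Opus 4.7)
The plan is to reduce the three estimates to a single calculation that is essentially identical to the one already carried out in the proof of Lemma~\ref{lem 7-7-3}, the only new ingredient being the control of $\del^{J_1'}\Omega^{J_2'}H^{\alpha'\beta'}(h_n^\coeff)$ through the composition estimates of Section~7.4. Since the commutator on the left-hand side of \eqref{eq 1 lem 7-10-4a}--\eqref{eq 1 lem 7-10-4c} involves only the principal part $H^{\alpha'\beta'}(h_n^\coeff)\del_{\alpha'}\del_{\beta'}$, it has exactly the same algebraic structure in all three cases; the target function $u$ will simply be taken to be $h_{\alpha\beta}^{\coeff,n+1}$, $\phi_{n+1}^\coeff$, or $\varrho_{n+1}^\coeff$ in turn, and the Klein--Gordon mass term in \eqref{eq conformal aug sec7-9 c} plays no role.

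The first step is to invoke the pointwise commutator identity \eqref{eq com' lem 7-6'-1 c}, which expresses the commutator as a finite sum of terms of the form
\[
|\del^{J_1'}\Omega^{J_2'}H^{\alpha'\beta'}(h_n^\coeff)|\,|\del_{\alpha'}\del_{\beta'}\del^{J_1}\Omega^{J_2}u|,\qquad J_1+J_1'=I_1,\ J_2+J_2'=I_2,\ |J_1'|+|J_2'|\geq 1,
\]
plus strictly lower-order corrections of the same shape with $\del_{\alpha'}\del_{\beta'}$ replaced by $\del_{\alpha'}\del_a$ on multi-indices of smaller rotational order. Both shapes are handled identically, so the plan is written only for the leading type.

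For each such product I split the sum into two regimes according to $|J_1'|+|J_2'|$, exactly as in the proof of Lemma~\ref{lem 7-7-3}. In the regime $|J_1'|+|J_2'|\leq d-1$, I would place the $H$-factor in $L^\infty$: by \eqref{eq 1 lem 7-10-2.3} applied to the smooth map $h\mapsto H^{\alpha'\beta'}(h)$, which vanishes at the origin by \eqref{eq 1 7-2}, together with the inductive bound \eqref{eq 1 7-10},
\begin{equation*}
\|\del^{J_1'}\Omega^{J_2'}H^{\alpha'\beta'}(h_n^\coeff)(t,\cdot)\|_{L^\infty(\RR^3)}\leq C(\eps_0,d)\sum_{k=1}^{|J_1'|+|J_2'|+2}\|h_n^\coeff(t,\cdot)\|_{E_H^d}^{k}\leq C(\eps_0,d)\,A\eps,
\end{equation*}
and the remaining $L^2$ factor is controlled by $\|u(t,\cdot)\|_{E_P^{d+1}}$ after one application of \eqref{eq com' lem 7-6'-1 e} to rearrange $\del_{\alpha'}\del_{\beta'}$ past $\del^{J_1}\Omega^{J_2}$. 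In the complementary regime $|J_1'|+|J_2'|=d$ (which forces $J_1=J_2=0$), I instead bound the $H$-factor in weighted $L^2$ by \eqref{eq 1 lem 7-10-2.2}, which again gives the smallness $C(\eps_0,d)\,A\eps$, and I compensate the $(1+r)^{-1}$ weight by rewriting $\|(1+r)\del_{\alpha'}\del_{\beta'}u\|_{L^\infty}=\|\del_{\alpha'}\del_{\beta'}u\|_{\mathcal{E}_{-1}}$ and invoking the global Sobolev inequality \eqref{eq 1 lem 7-6-1} to dominate this by $C\|\del_{\alpha'}\del_{\beta'}u(t,\cdot)\|_{X^2}\leq C\|u(t,\cdot)\|_{E_P^{d+1}}$ (valid because $d\geq 3$).

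Summing the finitely many terms, absorbing the factors $(A\eps)^k$ with $k\geq 2$ coming from the higher-order pieces of the composition bound through $A\eps\leq\eps_0\leq 1$, and summing over $\alpha',\beta'$ yields the three estimates \eqref{eq 1 lem 7-10-4a}--\eqref{eq 1 lem 7-10-4c}. I anticipate that the only real technical point is verifying the hypotheses of the composition lemmas of Section~7.4, in particular that $h_n^\coeff(t,\cdot)$ takes values in a fixed compact neighborhood of $0\in\RR^{10}$ on which $h\mapsto H^{\alpha\beta}(h)$ is smooth; this is precisely guaranteed by choosing $\eps_0>0$ small enough at the outset so that $\|h_n^\coeff\|_{E_H^{d+1}}\leq A\eps\leq\eps_0$ implies $\|h_n^\coeff\|_{L^\infty}\leq \eps_0$ via the embedding $E_H^{d+1}\hookrightarrow L^\infty$.
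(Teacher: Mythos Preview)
Your proposal is correct and follows essentially the same route as the paper: invoke the pointwise commutator estimate \eqref{eq com' lem 7-6'-1 c}, split according to whether $|J_1'|+|J_2'|\le d-1$ or $|J_1'|+|J_2'|=d$, and in the two regimes place the $H$-factor in $L^\infty$ via Lemma~\ref{lem 7-10-2.3} or in weighted $L^2$ via Lemma~\ref{lem 7-10-2.2}, pairing with the $E_P^{d+1}$ norm of $u$ in the first case and with the $\mathcal E_{-1}$ norm of $\del\del u$ (controlled through \eqref{eq 1 lem 7-6-1}) in the second. One small correction: in the $L^\infty$ regime the composition bound should be stated with $\|h_n^\coeff\|_{E_H^{d+1}}$ rather than $\|h_n^\coeff\|_{E_H^{d}}$, since you need Lemma~\ref{lem 7-10-2.3} at level $d+1$ to cover $|J_1'|+|J_2'|$ up to $d-1$; this is available from \eqref{eq 1 7-10}.
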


\begin{proof}
The estimate of these three commutators are similar, and we only prove the first statement.
Let $(I_1,I_2)$ be a pair of multi-indices, $|I_1|+|I_2|\leq d$. Recall the estimate of commutator \eqref{eq com lem 7-6'-1 c}:
$$
\aligned
\big|[\del^{I_1}\Omega^{I_2}, H^{\alpha'\beta'}\del_{\alpha'}\del_{\beta'}]h_{\alpha'\beta'}^{\coeff,n+1}\big|
& \leq \sum_{J_1+J_1'=I_1\atop {|J_2|+|J_2'|<|I_2|\atop \alpha',\beta',\alpha'',a}}
|\del^{J_1'}\Omega^{J_2'}H^{\alpha'\beta'}|\,|\del_{\alpha''}\del_a \del^{J_1}\Omega^{J_2}h_{\alpha\beta}^{\coeff,n+1}|
\\
&+\sum_{J_1 + J_1' = I_1 \atop {J_2+J_2' = I_2 \atop |J_1'|+|J_2'|>0}}
|\del^{J_1'}\Omega^{J_2'}H^{\alpha'\beta'}|\,|\del_{\alpha'}\del_{\beta'}\del^{J_1}\Omega^{J_2}h_{\alpha\beta}^{\coeff,n+1}|
\\
=:& T_1 + T_2.
\endaligned
$$

   We begin with $T_2$ and distinguish between two cases.

\

\noindent {\bf Case 1: $1\leq |J_1'|+|J_2'|\leq d-1$, $1\leq |J_1|+|J_2|\leq d-1$.} In this case, we have 
$$
\aligned
&\big{\|}\del^{J_1'}\Omega^{J_2'}H^{\alpha'\beta'}(h_n^\coeff)
\del_{\alpha'}\del_{\beta'}\del^{J_1}\Omega^{J_2}h_{\alpha\beta}^{\coeff,n+1}\big{\|}_{L^2(\RR^3)}
\\
& \leq \|\del^{J_1'}\Omega^{J_2'}H^{\alpha'\beta'}(h_n^\coeff)\|_{L^{\infty}(\RR^3)}
\|\del_{\alpha'}\del_{\beta'}\del^{J_1}\Omega^{J_2}h_{\alpha\beta}^{\coeff,n+1}\big{\|}_{L^2(\RR^3)}
\\
& \leq C(\eps_0,d)A\eps \|h^\coeff_n(t,\cdot)\|_{E_P^{d+1}},
\endaligned
$$
where we used $A\eps\leq 1$ and \eqref{eq 1 lem 7-10-2.3}.

\

\noindent {\bf Case 2: $J'_1 = I_1,J'_2=I_2$, $J_1=J_2=0$.} Recall that $d\geq 3$ then $|J_1|+|J_2|\leq d-3$:
$$
\aligned
&\big{\|}\del_x^{J_1'}\Omega^{J_2'}H^{\alpha'\beta'}(h_n^\coeff)
\del_{\alpha'}\del_{\beta'}\del_x^{J_1}\Omega^{J_2}h_{\alpha\beta}^{\coeff,n+1}\big{\|}_{L^2(\RR^3)}
\\
& \leq 
\big{\|}(1+r)^{-1}\del_x^{I_1}\Omega^{I_2}H^{\alpha'\beta'}(h_n^\coeff)\big{\|}_{L^2(\RR^3)}
\big{\|}\del_{\alpha'}\del_{\beta'}h_{\alpha\beta}^{\coeff,n+1}\big{\|}_{\mathcal{E}_{-1}}
\\
& \leq  C(\eps_0,d)A\eps \|h_{\alpha\beta}^{\coeff,n+1}\|_{E_P^{d+1}},
\endaligned
$$
where we used $A\eps \leq 1$ and \eqref{eq 1 lem 7-10-2.2}. 

The estimate of term $T_1$ is quite simpler. Recall that in the expression of $T_1$, the sum is taken over the index satisfying the following conditions:
$$
J_1+J_1'=I_1,\quad |J_2| + |J_2'|\leq |I_2|-1
$$
So
$$
(|J_1| + |J_1'|) + (|J_2|+|J_2'|)\leq d-1
$$
which leads to
$$
|J_1'| + |J_2'|\leq d-1.
$$
So
$$
\big{\|}\del_x^{J_1'}\Omega^{J_2'}H^{\alpha'\beta'}
\del_{\alpha''}\del_a \del_x^{I_1}\Omega^{J_2}h_{\alpha\beta}^{\coeff,n+1}\big{\|}_{L^2(\RR^3)}
\leq \big{\|}\del_x^{J_1'}\Omega^{J_2'}H^{\alpha'\beta'}\big{\|}_{L^{\infty}}
\big{\|}\del_{\alpha''}\del_a \del_x^{I_1}\Omega^{J_2}h_{\alpha\beta}^{\coeff,n+1}\big{\|}_{L^2(\RR^3)}
$$

As in the estimate of $T_2$, we see that with $|J_1'|+|J_2'|\leq d-1$,
$$
 \big{\|}\del_x^{J_1'}\Omega^{J_2'}H^{\alpha'\beta'}\big{\|}_{L^{\infty}(\RR^3)}\leq C(\eps_0,d)A\eps.
$$
The second factor on $h_{\alpha\beta}^{\coeff,n+1}$ is bounded directly by $\|h_{\alpha\beta}^{\coeff,n+1}\|_{X_E^{d+1}}$. So we conclude with \eqref{eq 1 lem 7-10-4a}.
\end{proof}

Now we need to discuss the bound of the initial data $E^d_{g_n}(0,h_{\alpha\beta}^{\coeff,n+1}), E^d_{g_n}(0,\phi^\coeff_{n+1})$ and $E^d_{g_n,c}(0,\varrho_{n+1}^\coeff)$. We will see that these norms are controlled by $\|S_0\|_{X_\coeff^{d+1}}$:
\begin{lemma}\label{lem 7-10-4.5}
When $\|S_0\|_{X_\coeff^{d+1}}$ is supposed to be bounded by $\eps\leq \eps_0$ for $\eps_0$ sufficiently small, there exists a positive constant determined by $\eps_0$ and $d$ such that
$$
E_{g_n}^d(0,h^\coeff_{n+1}) + E_{g_{n+1}}^d(0,\phi^\coeff) + E_{g_{n+1},\coeff^{-1/2}}^d(0,\varrho^\coeff)\leq C(\eps_0,d)\eps.
$$
\end{lemma}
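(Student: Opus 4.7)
The first observation is that, by the coercivity of $g_n(0,\cdot) = m + h_0$ (guaranteed once $\eps_0$ is small enough) together with \eqref{eq coek0 7-7}, each of the energies $E_{g_n}^d(0,\cdot)$ and $E_{g_n,\coeff^{-1/2}}^d(0,\cdot)$ is controlled by the $E_P^{d+1}$ norm of the function, augmented in the Klein--Gordon case by $\coeff^{-1/2}\|\cdot\|_{E^d}$. Consequently the task reduces to estimating, at $t=0$, all mixed space--time derivatives $\del_t^k\del_x^{J_1}\Omega^{J_2}u$ with $k+|J_1|+|J_2|\le d+1$ (and an analogous zero-derivative term weighted by $\coeff^{-1/2}$ for $u=\varrho^\coeff_{n+1}$) in terms of the PDE initial data $S_0=(h_0,h_1,\phi_0,\phi_1,\varrho_0,\varrho_1)$.

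The direct data only furnish the cases $k\in\{0,1\}$: for these, the required bounds follow at once from the definition of $\|S_0\|_{X_\coeff^{d+1}}$ together with the embeddings $X_H^{d+1}\subset X_P^{d+1}$ and $X_R^{d+1}\subset X_P^{d+1}$. For $k\ge 2$ I will proceed exactly as in Lemma~\ref{lem 7-7-5}: the iteration equations \eqref{eq conformal aug sec7-9} are solved algebraically for $\del_t^2 u$, writing
$$
\del_t^2 u = \frac{1}{1-H^{00}(h^\coeff_n)} \Big( (m^{ab}+H^{ab}(h^\coeff_n))\del_a\del_b u + 2 H^{0a}(h^\coeff_n)\del_t\del_a u - c^2 u + F\Big),
$$
with $c^2 = 0$ for $u\in\{h_{\alpha\beta}^{\coeff,n+1},\phi^\coeff_{n+1}\}$ and $c^2 = 1/(3\coeff)$ for $u=\varrho^\coeff_{n+1}$, and with $F$ the corresponding right-hand side of \eqref{eq conformal aug sec7-9}. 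Applying $\del_t^{k-2}\del_x^{J_1}\Omega^{J_2}$ to this identity and evaluating at $t=0$ expresses $\del_t^k\del_x^{J_1}\Omega^{J_2}u(0,\cdot)$ as a polynomial (or, in the denominator $1-H^{00}$, a smooth rational) function of $S_0$ and its first two spatial derivatives together with already-computed lower-order time derivatives. Iterating from $k=2$ up to $k=d+1$ reduces everything to a finite algebraic combination in $S_0$.

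The resulting algebraic expressions are then estimated by the nonlinear toolbox of Section~8. The quadratic blocks from $F_H, F_P, F_R$ are bounded exactly as in Lemma~\ref{lem 7-10-2.5} using \eqref{eq 1 lem 7-10-2.4}, the smooth factors $(1-H^{00})^{-1}$ and $H^{\alpha\beta}(h_0)$ are controlled via \eqref{eq 1 lem 7-10-2.3} (which applies since $\|h_0\|_{\mathcal E_{-1}}\le\eps_0$ is small), and products are handled via \eqref{eq 1 lem 7-10-2}. This yields, after summing over all multi-indices with $|J_1|+|J_2|+k\le d+1$, a bound of the form $C(\eps_0,d)\bigl(\|S_0\|_{X_\coeff^{d+1}} + \|S_0\|_{X_\coeff^{d+1}}^2 + \cdots\bigr) \le C(\eps_0,d)\eps$, once $\eps\le\eps_0$ is small enough for the higher powers to be absorbed.

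The only subtle point, which is the main obstacle, is the bookkeeping of the $\coeff$-weights for $u=\varrho^\coeff_{n+1}$. Each time one solves for an extra time derivative via the Klein--Gordon equation, the mass term contributes a factor $c^2 = (3\coeff)^{-1}$ acting on $\varrho$ (or on its lower-order time derivatives). After $k$ such reductions, the worst term therefore carries a factor of $\coeff^{-k}$ times a norm of $\varrho_0$ or $\varrho_1$. The definition of $\|S_0\|_{X_\coeff^{d+1}}$ is precisely tailored for this: the weights $\coeff^{-[d/2]-1/2}$ on $\|\varrho_0\|_{X_P^{d+1}}$ and $\|\varrho_1\|_{X^d}$ (and similarly the weights on $\phi_0,\phi_1$ that arise from the $\phi$-$\varrho$ coupling in $F_H$ and in \eqref{eq conformal aug sec7-9 c}) absorb exactly the powers of $\coeff^{-1}$ produced by the recursion, up to $k\le d+1$. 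One must check this exponent-counting term by term; once that is done the final sum indeed reconstructs $\coeff^{-1/2}\|\varrho^\coeff_{n+1}(0,\cdot)\|_{E^d} + \|\varrho^\coeff_{n+1}(0,\cdot)\|_{E_P^{d+1}} \le C(\eps_0,d)\eps$, which together with the analogous bounds for $h^\coeff_{n+1}$ and $\phi^\coeff_{n+1}$ concludes the proof.
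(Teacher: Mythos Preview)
Your proposal is correct and follows essentially the same route as the paper. Both argue by induction on the number of time derivatives: the cases $k\in\{0,1\}$ are read off directly from the data, and for $k\ge 2$ one solves the equations algebraically for $\del_t^2 u$ (exactly the content of Lemma~\ref{lem 7-7-5}) and iterates. The paper makes the $\coeff$-bookkeeping explicit by recording the induction hypothesis with concrete exponents, namely
\[
\|\del_t^{k+2}\del_x^{I_1}\Omega^{I_2}\varrho^\coeff(0,\cdot)\|_{L^2}\le C(\eps_0,d)\,\coeff^{[d/2]+1/2-[k/2]}\eps,
\qquad
\|\del_t^{k+2}\del_x^{I_1}\Omega^{I_2}\phi^\coeff(0,\cdot)\|_{L^2}\le C(\eps_0,d)\,\coeff^{(1/2)[d/2]-1/4}\eps,
\]
which is precisely the quantitative version of what you describe as ``checking the exponent-counting term by term''.
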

\begin{proof}
We recall that
$$
\aligned
\|S_0\|_{X_{\coeff}^{d+1}} := \max\{&\|{h_0}_{\alpha\beta}\|_{X_H^{d+1}},\,\|{h_1}_{\alpha\beta}\|_{X^d},\,\coeff^{-(1/2)[d/2]+1/4}\|\phi_0\|_{X_P^{d+1}},
\\
& \coeff^{-(1/2)[d/2]+1/4}\|\phi_1\|_{X^d},\,\coeff^{-[d/2]-1/2}\|\varrho_0\|_{X_P^{d+1}},\, \coeff^{-[d/2]-1/2}\|\varrho_1\|_{X^d}\}.
\endaligned
$$

We observe that when $0\leq k\leq 1$ the norm $\|\del_t^k\del_x^{I_1}\Omega^{I_2}\varrho^\coeff(0,\cdot)\|_{L^2(\RR^3)}$ is determined directly by $\varrho_0$ and $\varrho_1$ thus, bounded by $\coeff^{-[d/2]-1/2}\leq \coeff^{-1/2}$.

When $2\leq k\leq d$, we need to use the equation.

We will prove that for $0\leq |k|\leq d-2$ and $|I_1|+|I_2|\leq d-1-k$
\begin{equation}\label{eq pr1 lem 7-10-4.5}
\aligned
&\|\del_t\del_t\del_t^k\del_x^{I_1}\Omega^{I_2}h_{\alpha\beta}^\coeff(0,\cdot)\|_{L^2(\RR^3)}\leq C(\eps_0,d)\eps,
\\
&\|\del_t\del_t\del_t^k\del_x^{I_1}\Omega^{I_2} \varrho^{\coeff}(0,\cdot)\|_{L^2(\RR^3)}
\leq C(\eps_0,d)\coeff^{[d/2]+1/2-[k/2]}\eps,
\\
&\|\del_t\del_t\del_t^k \del_x^{I_1}\Omega^{I_2}\phi^{\coeff}(0,\cdot)\|_{L^2(\RR^3)}\leq C(\eps_0,d)\coeff^{1/2[d/2]-1/4}\eps
\endaligned
\end{equation}

This is proven by induction on $k$. We see that for $k=0,1$, the estimates hold by direct verification. Suppose that \eqref{eq pr1 lem 7-10-4.5} holds for $(k-1,k)$ we will prove the case $k+1$.

The estimate of $\|\del_t\del_t\del_t^{k+1}\del_x^{I_1}\Omega^{I_2}\phi^{\coeff}\|_{L^2(\RR^3)}$, is a bit complicated. We see that by Lemma~\ref{lem 7-7-5},
$$
\aligned
\|\del_t\del_t\del_t^{k+1}\del_x^{I_1}\Omega^{I_2} \varrho^{\coeff}\|_{L^2(\RR^3)}
& \leq \|\del_t^{k+1}\del_x^{I_1}\Omega^{I_2}\big((1-H^{00})^{-1}\big(m^{ab}+H^{ab}\big)\del_a\del_b\varrho^\coeff\big)\|_{L^2(\RR^3)}
\\
&+ 2\|\del_t^{k+1}\del_x^{I_1}\Omega^{I_2}\big((1-H^{00})^{-1}H^{0a}\del_t\del_a\varrho^\coeff\big)\|_{L^2(\RR^3)}
\\
&+ \frac{3}{\coeff}\|\del_t^{k+1}\del_x^{I_1}\Omega^{I_2}\varrho^\coeff\|_{L^2(\RR^3)}
\\
&+ \|\del_t^{k+1}\del_x^{I_1}\Omega^{I_2}\big((1-H^{00})F_R\big)\|_{L^2(\RR^3)}
\endaligned
$$
Then we see that
$$
\|\del_t\del_t\del_t^k\del_x^{I_1}\Omega^{I_2} \varrho^{\coeff}\|_{L^2(\RR^3)}\leq C(\eps_0,d)\coeff^{[d/2]+1/2-[d/2]}\eps.
$$

We observe that by Lemma~\ref{lem 7-7-5},
$$
\aligned
\|\del_t\del_t\del_t^k \del_x^{I_1}\Omega^{I_2} h_{\alpha\beta}^{\coeff}\|_{L^2(\RR^3)}
& \leq \|\del_t^k \del_x^{I_1}\Omega^{I_2}\big((1-H^{00})^{-1}\big(m^{ab}+H^{ab}\big)\del_a\del_b h_{\alpha\beta}^\coeff\big)\|_{L^2(\RR^3)}
\\
&+ 2\|\del_t^k \del_x^{I_1}\Omega^{I_2}\big((1-H^{00})^{-1}H^{0a}\del_t\del_ah_{\alpha\beta}^\coeff\big)\|_{L^2(\RR^3)}
\\
&+ \|\del_t^k \del_x^{I_1}\Omega^{I_2}\big((1-H^{00})F_H\big)\|_{L^2(\RR^3)}
\endaligned
$$
Then by the bounds prescribed by $\|S_0\|_{\coeff}^{d+1}$ and \eqref{eq pr1 lem 7-10-4.5}, we see that $\|\del_t\del_t\del_x^{I_1}\Omega^{I_2} h_{\alpha\beta}^{\coeff}\|_{L^2(\RR^3)}$ is bounded by $C(\eps_0,d)\eps$.

In the same manner, we see that for $0\leq k\leq d-2$
$$
\|\del_t\del_t\del_t^k \del_x^{I_1}\Omega^{I_2}\phi^{\coeff}\|_{L^2(\RR^3)}\leq C(\eps_0,d)\coeff^{-(1/2)[d/2]+1/4}\eps.
$$
\end{proof}

Now we are ready to estimate the $L^2$ type norm of $S_n^{\coeff}$.

\begin{lemma}\label{lem 7-10-5}
There exists a positive constant $\eps_0$ such that if \eqref{eq 1 7-10} holds for $A\eps\leq \eps_0\leq 1$ and $d\geq 3$, then
\begin{subequations}\label{eq lem 7-10-5}
\begin{equation}\label{eq lem 7-10-5 a}
\|h^{\coeff,n+1}_{\alpha\beta}(t,\cdot)\|_{E_P^{d+1}} + \|\del_t h_{\alpha\beta}^{\coeff,n+1}(t,\cdot)\|_{E^d}
\leq C(\eps_0,d)\big(\eps e^{C(\eps_0,d)A\eps t}
+ A\eps\big(e^{C(\eps_0,d)A\eps t} - 1\big)\big)
\end{equation}
\begin{equation}\label{eq lem 7-10-5 b}
\|\phi_{n+1}^\coeff(t,\cdot)\|_{E_P^{d+1}} + \|\del_t \phi_{n+1}^{\coeff}(t,\cdot)\|_{E^d}
\leq C(\eps_0,d)\big(\eps e^{C(\eps_0,d)A\eps t}
+ A\eps\big(e^{C(\eps_0,d)A\eps t} - 1\big)\big)
\end{equation}
\begin{equation}\label{eq lem 7-10-5 c}
\aligned
\|\varrho_{n+1}^\coeff(t,\cdot)\|_{E_P^{d+1}}
+ \|\del_t \varrho_{n+1}^{\coeff}(t,\cdot)\|_{E^d} &+ \coeff^{-1/2}\|\varrho_{n+1}^\coeff(t,\cdot)\|_{E^d}
\\
& \leq C(\eps_0,d)\big(\eps e^{C(\eps_0,d)A\eps t}
+ A\eps\big(e^{C(\eps_0,d)A\eps t} - 1\big)\big),
\endaligned
\end{equation}
\end{subequations}
where $C(\eps_0,d)$ is a positive constant determined by $\eps_0$ and $d$.
\end{lemma}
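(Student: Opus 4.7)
\textbf{Proof plan for Lemma~\ref{lem 7-10-5}.} The strategy is to read each equation of the iteration \eqref{eq conformal aug sec7-9} as a linear wave, respectively Klein--Gordon, equation for the unknown $(h^{\coeff,n+1}_{\alpha\beta},\phi^\coeff_{n+1},\varrho^\coeff_{n+1})$ driven by coefficients and sources built from $S^\coeff_n=(h^\coeff_n,\phi^\coeff_n,\varrho^\coeff_n)$, and then to apply Propositions \ref{prop 7-7-2} and \ref{prop 7-7-3}. The very first step is to verify the hypotheses of these propositions with constants \emph{independent} of $\coeff$: choosing $\eps_0$ small enough, the expansion \eqref{eq 1 7-2} together with \eqref{eq 1 7-10} and Lemma~\ref{lem 7-10-2.3} give $\|H^{\alpha\beta}(h^\coeff_n)(t,\cdot)\|_{L^\infty}\le C(\eps_0)A\eps$, hence $g_n^{\alpha\beta}:=m^{\alpha\beta}+H^{\alpha\beta}(h^\coeff_n)$ is coercive with a constant depending only on $\eps_0$ and $|H^{00}(h^\coeff_n)|\le 1/2$. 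The same reasoning using Lemma~\ref{lem 7-10-2.3} yields the crucial bound
\[
D_d(t):=\max_{\alpha,\beta}\|H^{\alpha\beta}(h^\coeff_n)(t,\cdot)\|_{E_H^{d+1}}\le C(\eps_0,d)\,A\eps.
\]

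Next I would bound the source terms $F_H(S^\coeff_n), F_P(S^\coeff_n), F_R(S^\coeff_n)$ in $E^d$ by $C(\eps_0,d)(A\eps)^2$. The quadratic curvature term $F_{\alpha\beta}$, the products $\del\phi^\coeff_n\del\phi^\coeff_n,\,\del\varrho^\coeff_n\del\varrho^\coeff_n$, and the terms $(m^{\alpha'\beta'}+H^{\alpha'\beta'})\del\phi\del\phi$ are estimated directly by Lemma~\ref{lem 7-10-2.5}. The only delicate sources are $\coeff^{-1}V_h(\varrho^\coeff_n)(m_{\alpha\beta}+h^{\coeff,n}_{\alpha\beta})$ and $\coeff^{-1}V_\rho(\varrho^\coeff_n)$: the explicit forms of $V_h$ and $V_\rho$ show $V_h(s), V_\rho(s)=O(s^2)$ near $0$, so Lemma~\ref{lem 7-10-2} combined with the induction hypothesis $\|\varrho^\coeff_n\|_{E^d}\le\coeff^{1/2}A\eps$ yields $\|\coeff^{-1}V_{h,\rho}(\varrho^\coeff_n)(t,\cdot)\|_{E^d}\le C(\eps_0,d)\coeff^{-1}\|\varrho^\coeff_n\|_{E^d}^2\cdot(\dots)\le C(\eps_0,d)(A\eps)^2$ uniformly in $\coeff$. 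This uniform compensation between the factor $\coeff^{-1}$ and the $\coeff^{1/2}$-smallness of $\varrho^\coeff_n$ is the heart of the argument.

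I would then apply Proposition~\ref{prop 7-7-2} to each of the $11$ wave equations for $h^{\coeff,n+1}_{\alpha\beta}$ and $\phi^\coeff_{n+1}$, and Proposition~\ref{prop 7-7-3} to the Klein--Gordon equation for $\varrho^\coeff_{n+1}$ with mass $c=(3\coeff)^{-1/2}$. Combined with the source bounds and $D_d(t)\le C(\eps_0,d)A\eps$, each estimate takes the form
\begin{equation*}
N(t)\;\le\; C(\eps_0,d)\,N(0)\,e^{C(\eps_0,d)A\eps\,t}\;+\;C(\eps_0,d)(A\eps)^2\!\int_0^t e^{C(\eps_0,d)A\eps(t-s)}ds,
\end{equation*}
where $N(t)$ stands, respectively, for $\|h^{\coeff,n+1}_{\alpha\beta}(t,\cdot)\|_{E_P^{d+1}}$, $\|\phi^\coeff_{n+1}(t,\cdot)\|_{E_P^{d+1}}$, and $\|\varrho^\coeff_{n+1}(t,\cdot)\|_{E_P^{d+1}}+\coeff^{-1/2}\|\varrho^\coeff_{n+1}(t,\cdot)\|_{E^d}$. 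The last integral evaluates to $(C(\eps_0,d)A\eps)^{-1}\bigl(e^{C(\eps_0,d)A\eps\,t}-1\bigr)$, producing exactly the right-hand side of \eqref{eq lem 7-10-5 a}--\eqref{eq lem 7-10-5 c}. The initial-value contribution $N(0)$ is dominated by $C(\eps_0,d)\eps$ thanks to Lemma~\ref{lem 7-10-4.5} and the definition of $\|S_0\|_{X_\coeff^{d+1}}$; in particular the factor $c=(3\coeff)^{-1/2}$ multiplying the initial data of $\varrho$ is absorbed by the extra $\coeff^{[d/2]+1/2}$-smallness built into the norm.

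Finally, the bounds on $\|\del_t h^{\coeff,n+1}_{\alpha\beta}\|_{E^d}$, $\|\del_t\phi^\coeff_{n+1}\|_{E^d}$ and $\|\del_t\varrho^\coeff_{n+1}\|_{E^d}$ either come for free from the Klein--Gordon proposition or are obtained by solving for $\del_t^2 u$ in each equation via Lemma~\ref{lem 7-7-5} and bounding the resulting products by the spatial-derivative norms that have just been controlled, combined with the source estimates. The commutator estimates of Lemma~\ref{lem 7-10-4} ensure that the differentiations $\del^{I_1}\Omega^{I_2}$ under which the linear estimates are iterated produce only remainders of the schematic form $C(\eps_0,d)A\eps\,N(t)$, which are absorbed into the Gronwall factor. \textbf{The main obstacle} is genuinely this uniform-in-$\coeff$ bookkeeping: one must check that (i) $V_h$ and $V_\rho$ vanish to at least second order at $0$ so that $\coeff^{-1}V_{h,\rho}(\varrho^\coeff_n)$ is an $O((A\eps)^2)$ quantity in $E^d$, and (ii) the Klein--Gordon estimate of Proposition~\ref{prop 7-7-3} produces the balanced quantity $\coeff^{-1/2}\|\varrho^\coeff_{n+1}\|_{E^d}+\|\varrho^\coeff_{n+1}\|_{E_P^{d+1}}$ on the left and needs only the matching balanced norm of the initial data on the right, which is precisely the non-standard rescaling encoded in $\|S_0\|_{X_\coeff^{d+1}}$.
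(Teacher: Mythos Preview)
Your plan is correct and follows essentially the same route as the paper. The only cosmetic difference is that the paper differentiates the equation by $\del^{I_1}\Omega^{I_2}$, applies the differential energy inequality of Lemma~\ref{lem 7-7-2} (respectively~\ref{lem 7-7-4}) together with the commutator bound of Lemma~\ref{lem 7-10-4}, sums over $(I_1,I_2)$, and then integrates the resulting Gronwall inequality, whereas you invoke the already-integrated linear Propositions~\ref{prop 7-7-2} and~\ref{prop 7-7-3}; these are the same computation. Your identification of the crucial $\coeff$-uniform point---that $V_h,V_\rho$ vanish to second order so that $\coeff^{-1}V_{h,\rho}(\varrho^\coeff_n)$ is controlled by $\coeff^{-1}\|\varrho^\coeff_n\|_{E^d}^2\le(A\eps)^2$---and of the role of Lemma~\ref{lem 7-10-4.5} for the initial energies is exactly what the paper uses.
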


\begin{proof} This is an application of the $L^2$ estimate \eqref{eq 1 lem 7-7-2}. We consider the estimate for $h_{\alpha\beta}^{\coeff,n+1}$. To do so, we derive the equation \eqref{eq conformal aug sec7-9 a} with respect to a product $\del^{I_1}\Omega^{I_2}$ with $|I_1|+|I_2|\leq d$:
$$
\big(m^{\alpha'\beta'} + H^{\alpha'\beta'}(h_n^{\coeff})\big)\del_{\alpha'}\del_{\beta'}\del^{I_1}\Omega^{I_2}h_{\alpha\beta}^{\coeff,n+1}
=\del^{I_1}\Omega^{I_2}F_H(h_n^{\coeff},\phi_n^{\coeff},\varrho_n^{\coeff}) - [\del^{I_1}\Omega^{I_2},H^{\alpha'\beta'}\del_{\alpha'}\del_{\beta'}]h_{\alpha\beta}^{\coeff,n+1},
$$
where $F_H$ denotes the terms in right-hand-side of \eqref{eq conformal aug sec7-9 a}, which is
$$
\aligned
F_H(h_n^{\coeff},\phi_n^{\coeff},\varrho_n^{\coeff}) &= F_{\alpha\beta}(h^\coeff_n;\del h^\coeff_n,\del h^\coeff_n) - 16\pi\del_{\alpha}\phi^\coeff_n\del_{\beta}\phi^\coeff_n
 - 12\del_{\alpha}\varrho^\coeff_n\del_{\beta}\varrho^{\coeff}_n
-\coeff^{-1}V_h(\varrho_n^{\coeff})\big(m_{\alpha\beta}+h^{\coeff,n}_{\alpha\beta}\big).
\endaligned
$$
Then by \eqref{eq 1 lem 7-7-2},
$$
\aligned
\frac{d}{dt}E_{g_n}(t,\del^{I_1}\Omega^{I_2}h_{\alpha\beta}^{\coeff,n+1}) & \leq C\|\del^{I_1}\Omega^{I_2}F_H\|_{L^3(\RR^3)} + C\big{\|}[\del^{I_1}\Omega^{I_2},H^{\alpha'\beta'}\del_{\alpha'}\del_{\beta'}]h_{\alpha\beta}^{\coeff,n+1}\big{\|}_{L^2(\RR^3)}
\\
&+ C\sum_{\alpha\beta}\big{\|}\nabla H^{\alpha'\beta'}(h_n^{\coeff})\big{\|}_{L^{\infty}(\RR^3)}E_{g_n}(t,\del^{I_1}\Omega^{I_2}h_{\alpha\beta}^{\coeff,n+1}).
\endaligned
$$
By Lemma~\ref{lem 7-10-2.5} and \eqref{lem 7-10-4}, and the (equi-)coercivity of $g_n$ guaranteed by $A\eps\leq \eps_0$,
$$
\aligned
\frac{d}{dt}E_{g_n}(t,\del^{I_1}\Omega^{I_2}h_{\alpha\beta}^{\coeff,n+1})
 & \leq C(\eps_0,d)(A\eps)^2 + C(\eps_0,d)A\eps \sum_{\alpha\beta}E^d_g(t,h_{\alpha\beta}^{\coeff,n+1})
\\
&+ C(\eps_0,d)A\eps E_{g_n}(t,\del^{I_1}\Omega^{I_2}h_{\alpha\beta}^{\coeff,n+1}).
\endaligned
$$
Taking the sum over the index $(I_1,I_2)$ with $|I_1|+|I_2|\leq d$ and $\alpha,\beta$:
$$
\aligned
\frac{d}{dt}\sum_{\alpha\beta}E_{g_n}^d(t,h_{\alpha\beta}^{\coeff,n+1})
 & \leq C(\eps_0,d)(A\eps)^2 + C(\eps_0,d)A\eps \sum_{\alpha\beta}E^d_{g_n}(t,h_{\alpha\beta}^{\coeff,n+1}),
\endaligned
$$
which leads to
\begin{equation}
\sum_{\alpha\beta}E_{g_n}^d(t,h_{\alpha\beta}^{\coeff,n+1})
\leq \sum_{\alpha\beta}E_{g_n}^d(0,h_{\alpha\beta}^{n+1}) e^{C(\eps_0,d)A\eps t}
+ A\eps\big(e^{C(\eps_0,d)A\eps t} - 1\big)
\end{equation}
Note that by Lemma~\ref{lem 7-10-4.5}, $E_{g_n}^d(0,h_{\alpha,\beta})$ is controlled by $\|h_0\|_{X_P^{d+1}}$  and $\|h_1\|_{X^d}$, so it can be controlled by $C(\eps_0,d)\eps$, where $C(\eps_0,d)$ is a constant depending only on $d$ and $\eps_0$.

The estimate of $E_{g_n}^d(t,\phi_{n+1}^{\coeff})$ is exactly the same, and we omit the details.

The estimate of on $\varrho_{n+1}^{\coeff}$ is similar. By the same energy estimate, we arrive at the following estimate:
\begin{equation}
\aligned
\frac{d}{dt}E_{g_n,\coeff^{-1/2}}(t,\del^{I_1}\Omega^{I_2}\varrho_{n+1}^\coeff)
& \leq C\|\del^{I_1}\Omega^{I_2}F_R\|_{L^3(\RR^3)} + C\big{\|}[\del^{I_1}\Omega^{I_2},H^{\alpha'\beta'}\del_{\alpha'}\del_{\beta'}]\varrho_{n+1}^{\coeff}\big{\|}_{L^2(\RR^3)}
\\
&+ C\sum_{\alpha\beta}\big{\|}\nabla H^{\alpha'\beta'}(h_n^{\coeff})\big{\|}_{L^{\infty}(\RR^3)}E_{g_n,\coeff^{-1/2}}(t,\del^{I_1}\Omega^{I_2}\varrho_{n+1}^\coeff),
\endaligned
\end{equation}
where $F_R$ denotes the right-hand-side of \eqref{eq conformal aug sec7-9 c}. Then, also by Lemma~\ref{lem 7-10-2.5} and \eqref{eq 1 lem 7-10-4c} and the same calculation in the estimate of  $h_{\alpha\beta}^{\coeff,n+1}$,
$$
E_{g_n,\coeff^{-1/2}}^d(t,\varrho_{n+1}^\coeff)\leq E_{g_n,\coeff^{-1/2}}^d(0,\varrho_{n+1}^{\coeff}) e^{C(\eps_0,d)A\eps t}
+ A\eps\big(e^{C(\eps_0,d)A\eps t} - 1\big).
$$ 
\end{proof}

Now we begin to make the choice of the couple $(A_0,T_0)$ such that when $A\leq A_0$, $T\leq T_0$, \eqref{eq lem 7-10-5} implies
\begin{equation}\label{eq 2 7-10}
\aligned
&\|h^{\coeff,n+1}_{\alpha\beta}(t,\cdot)\|_{E_P^{d+1}} + \|\del_t h_{\alpha\beta}^{\coeff,n+1}(t,\cdot)\|_{E^d}\leq A\eps,
\\
&\|\phi_{n+1}^\coeff(t,\cdot)\|_{E_P^{d+1}} + \|\del_t \phi_{n+1}^{\coeff}(t,\cdot)\|_{E^d}\leq A\eps,
\\
&\|\varrho_{n+1}^\coeff(t,\cdot)\|_{E_P^{d+1}} + \|\del_t \varrho_{n+1}^{\coeff}(t,\cdot)\|_{E^d} + \coeff^{-1/2}\|\varrho_{n+1}^\coeff(t,\cdot)\|_{E^d}\leq A\eps
\endaligned
\end{equation}
on the time interval $[0,T]$.

\begin{lemma}\label{lem 7-10-5.5}
There exists a couple of positive constants $(\eps_0,A_0(\eps_0,d))$, where $\eps_0$ is a universal constant and $A_0(\eps_0,d)$ is determined by $d$ and $\eps_0$ such that when $\eqref{eq 1 7-10}$ is valid with $A\geq A_0$ and $A\eps\leq \eps_0\leq 1$ on the time interval $[0,T]$ with
$$
T \leq T_0:= \frac{\ln\big(1+(2C(\eps_0,d))^{-1}\big)}{C(\eps_0,d)A\eps},
$$
where $C(\eps_0,d)$ is a constant determined by $\eps_0$ and $d$. Therefore, \eqref{eq 2 7-10} hold.
\end{lemma}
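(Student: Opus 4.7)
The plan is a direct bookkeeping exercise: we simply have to choose $A$ large (and $\eps_0$ small) so that the right-hand sides provided by Lemma~\ref{lem 7-10-5} are majorized by $A\eps$ on the time interval $[0,T_0]$. No new analysis is involved beyond the estimates already established.

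First, fix $\eps_0 \in (0,1]$ small enough that all the estimates quoted above (coercivity of $g_n^{\ddag}$, validity of the smooth dependence of $H^{\alpha\beta}(\cdot)$ on its argument, and the bound $|H^{00}|\le 1/2$) apply whenever $|h|\le \eps_0$. Let $K := C(\eps_0,d)$ denote (a common upper bound for) the constants appearing on the right-hand sides in \eqref{eq lem 7-10-5 a}, \eqref{eq lem 7-10-5 b}, \eqref{eq lem 7-10-5 c}. Writing $X(t) := e^{KA\eps t}$, the three estimates from Lemma~\ref{lem 7-10-5} have the common form
\begin{equation*}
\text{l.h.s.}\; \le \; K\big(\eps\,X(t) + A\eps\,(X(t)-1)\big)
\;=\; K(1+A)\eps\,X(t) - KA\eps.
\end{equation*}
Thus the target inequality $\text{l.h.s.}\le A\eps$ is equivalent to
\begin{equation*}
X(t)\;\le\;\frac{A(1+K)}{K(1+A)},
\end{equation*}
and a short algebraic manipulation shows that this bound is $\ge 1+\frac{1}{2K}$ as soon as $A\ge 2K+1$.

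Accordingly, set $A_0 := 2K+1 = 2C(\eps_0,d)+1$ and assume $A\ge A_0$. Then the condition $X(t)\le 1+\tfrac{1}{2K}$ is sufficient, and this reduces to
\begin{equation*}
KA\eps\,t \;\le\;\ln\!\Big(1+\tfrac{1}{2K}\Big),
\qquad \text{i.e.}\qquad
t\;\le\;T_0 \;=\;\frac{\ln\big(1+(2C(\eps_0,d))^{-1}\big)}{C(\eps_0,d)A\eps}.
\end{equation*}
Inserting this into \eqref{eq lem 7-10-5 a}--\eqref{eq lem 7-10-5 c} yields exactly the three inequalities in \eqref{eq 2 7-10}. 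Finally, the smallness hypothesis $A\eps\le \eps_0$ is compatible with $A\ge A_0$ provided $\eps$ is chosen accordingly; this is permissible since the statement of the lemma allows $\eps$ to be taken as small as needed once $A$ is fixed.

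The only genuine point to verify carefully is that the constant $C(\eps_0,d)$ appearing in the three conclusions of Lemma~\ref{lem 7-10-5} can indeed be taken to be the \emph{same} $K$ in the definition of $T_0$ (otherwise one would obtain three different time thresholds and would have to take their minimum). This is harmless: it suffices to replace each of the three constants by their maximum, which then plays the role of $K = C(\eps_0,d)$ throughout. No further difficulty arises, so I do not expect a significant obstacle in this lemma; it is the clean closing of the induction opened by~\eqref{eq 1 7-10}.
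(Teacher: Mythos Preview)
Your proposal is correct and follows essentially the same approach as the paper: both reduce the problem to ensuring $C(\eps_0,d)\big(\eps\,e^{C(\eps_0,d)A\eps t}+A\eps(e^{C(\eps_0,d)A\eps t}-1)\big)\le A\eps$, and both arrive at the same choices $A_0=2C(\eps_0,d)+1$ and $T_0=\ln\!\big(1+(2C(\eps_0,d))^{-1}\big)/\big(C(\eps_0,d)A\eps\big)$. The only cosmetic difference is that the paper splits the sufficient condition into two separate inequalities (one bounding $e^{KA\eps T_0}-1$ by $1/(2K)$ and one bounding $e^{KA\eps T_0}$ by $A/(2K)$), whereas you combine them into a single algebraic condition on $X(t)$; the resulting constants are identical.
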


\begin{proof}
By Lemma~\ref{lem 7-10-5}, we chose $A_0(\eps_0,d)$ and $T$ such that when $A\geq A_0(\eps_0,d)$ and $t\leq T$
$$
 C(\eps_0,d)\big(\eps e^{C(\eps_0,d)A\eps t}
+ A\eps\big(e^{C(\eps_0,d)A\eps t} - 1\big)\big)\leq A\eps.
$$
This can be guaranteed by
$$
\left\{
\aligned
&e^{C(\eps_0,d)A\eps T_0}-1\leq \frac{1}{2C(\eps_0,d)},
\\
&e^{C(\eps_0,d)A\eps T_0} \leq \frac{A}{2C(\eps_0,d)}
\endaligned
\right.
$$
which is equivalent to
$$
T\leq \frac{\ln\big(1+(2C(\eps_0,d))^{-1}\big)}{C(\eps_0,d)A\eps}\, \quad A\geq 2C(\eps_0,d) +1.
$$
Then we can take $A_0 = 2C(\eps_0,d) +1$ and $T_0 = \frac{\ln\big(1+(2C(\eps_0,d))^{-1}\big)}{C(\eps_0,d)A\eps}$.
\end{proof} 

Then we are about to estimate the $\mathcal{E}_{-1}$ norm of $h_{\alpha\beta}^{\coeff,n+1}$.

\begin{lemma}\label{lem 7-10-6}
There exists a positive constant $\eps_0$ such that if \eqref{eq 1 7-10} holds with $d\geq 3$, $A\geq A_0$, $A\eps\leq \eps_0$ on the time interval $[0,T]$, $T\leq T_0$, where $(A_0,T_0)$ are constants determined in Lemma~\ref{lem 7-10-5.5}. Then the following estimate holds: 
\begin{equation}\label{eq 1 lem 7-10-6}
\|h_{\alpha\beta}^{\coeff,n+1}(t,\cdot)\|_{\mathcal{E}_{-1}}\leq C(\eps_0)T^2(A\eps)^2 + C(T+1)\eps.
\end{equation}
\end{lemma}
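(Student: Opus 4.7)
The plan is to apply the $\mathcal{E}_{-1}$ wave estimate \eqref{eq 2 lem 7-7-3} from Lemma \ref{lem 7-7-3} directly to the equation \eqref{eq conformal aug sec7-9 a} satisfied by $h^{\coeff,n+1}_{\alpha\beta}$. Setting $u = h^{\coeff,n+1}_{\alpha\beta}$, $H^{\alpha\beta} = H^{\alpha\beta}(h^{\coeff}_n)$ and $F = F_H(S^{\coeff}_n)$, the four quantities that must be controlled on $[0,T]$ are $\|F(s,\cdot)\|_{\mathcal{E}_{-1}}$, $\|F(s,\cdot)\|_{E^3}$, $\|H^{\alpha\beta}(s,\cdot)\|_{L^\infty}$, and the initial-data energy $E_g^3(0,h^{\coeff,n+1}_{\alpha\beta})$. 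Since Lemma \ref{lem 7-10-5.5} is in force, the bootstrap bound \eqref{eq 2 7-10} (one level down the iteration) holds, and in particular $A\eps\leq \eps_0$ and $A\eps T\leq 1$.

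First I would estimate the source term. The four pieces of $F_H(S^{\coeff}_n)$ are: the quasi-linear quadratic $F_{\alpha\beta}(h^{\coeff}_n;\del h^{\coeff}_n,\del h^{\coeff}_n)$, the quadratic gradient terms $\del_\alpha\phi^{\coeff}_n\del_\beta\phi^{\coeff}_n$ and $\del_\alpha\varrho^{\coeff}_n\del_\beta\varrho^{\coeff}_n$, and the potential term $\coeff^{-1}V_h(\varrho^{\coeff}_n)(m_{\alpha\beta}+h^{\coeff,n}_{\alpha\beta})$. The first three are at least quadratic vanishing at the origin and depend smoothly on the triple, so Lemmas \ref{lem 7-10-2} and \ref{lem 7-10-2.1}  yield both $\|F_H(s,\cdot)\|_{E^d}\leq C(\eps_0,d)(A\eps)^2$ and $\|F_H(s,\cdot)\|_{\mathcal{E}_{-1}}\leq C(\eps_0,d)(A\eps)^2$. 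For the potential piece one uses $V_h(s)=\tfrac{(e^{2s}-1)^2}{2 e^{4s}}=O(s^2)$ uniformly as $s\to 0$, so $\coeff^{-1}|V_h(\varrho^{\coeff}_n)|\leq C\coeff^{-1}|\varrho^{\coeff}_n|^2$; combined with $\coeff^{-1/2}\|\varrho^{\coeff}_n\|_{E^d}\leq A\eps$ from \eqref{eq 2 7-10}, this factor also contributes at most $C(\eps_0,d)(A\eps)^2$ in both norms (this is exactly in the spirit of Lemma \ref{lem 7-10-2.5}\eqref{eq 1 lem 7-10-2.5 d}).

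Next, $H^{\alpha\beta}(h^{\coeff}_n)$ vanishes at $h=0$ and is smooth, so Lemma \ref{lem 7-10-2.3} gives $\|H^{\alpha\beta}(s,\cdot)\|_{L^\infty}\leq C(\eps_0) A\eps$ and $D_3(s)\leq C(\eps_0)A\eps$; consequently the exponential factors $e^{C\int_0^sD_3}$ and $e^{C\int_\tau^s D_3}$ are bounded by $e^{C(\eps_0)A\eps T}\leq e^{C(\eps_0)}$, i.e.\ by a universal constant under the smallness of $A\eps T$. The initial-data term $E_g^3(0,h^{\coeff,n+1}_{\alpha\beta})$ reduces, thanks to the (equi-)coercivity of $g_n$ and Lemma \ref{lem 7-10-4.5}, to the norm of $({h_0}_{\alpha\beta}, {h_1}_{\alpha\beta})$ and is thus bounded by $C(\eps_0,d)\eps$.

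Putting everything into \eqref{eq 2 lem 7-7-3} and majorizing each of the four contributions in turn gives
\begin{equation*}
\aligned
\|h^{\coeff,n+1}_{\alpha\beta}(t,\cdot)\|_{\mathcal{E}_{-1}}
&\leq C t\!\int_0^t C(\eps_0)(A\eps)^2\,ds
+ C t\cdot C(\eps_0)\eps\cdot t\cdot C(\eps_0)A\eps\cdot e^{C(\eps_0)A\eps T}\\
&\quad + C t \cdot t\cdot C(\eps_0)A\eps\cdot t\cdot C(\eps_0)(A\eps)^2 \cdot e^{C(\eps_0)A\eps T}
+ C t\cdot C(\eps_0)\eps,
\endaligned
\end{equation*}
and using $A\eps T\leq 1$ to absorb the higher-order factor of $A\eps t$ in the second term and the $t(A\eps)$ in the third term, the four pieces collapse to at most
\[
C(\eps_0) T^2(A\eps)^2 + C(T+1)\eps,
\]
which is the desired bound. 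The main technical care — and the only point that is not entirely routine — is the uniform-in-$\coeff$ control of the potential contribution $\coeff^{-1}V_h(\varrho^{\coeff}_n)$ in the $\mathcal{E}_{-1}$ norm; it is precisely the quadratic vanishing of $V_h$ at $0$, together with the $\coeff^{-1/2}$-weighted bound on $\varrho^{\coeff}_n$ provided by \eqref{eq 2 7-10}, that allows the factor of $\coeff^{-1}$ to be absorbed and yields the $\coeff$-independent final estimate.
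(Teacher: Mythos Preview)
Your approach is correct and essentially the same as the paper's: the paper applies Lemma~\ref{lem 7-7-1} directly to $\Box h^{\coeff,n+1}_{\alpha\beta} = F_H - H^{\alpha'\beta'}(h^{\coeff}_n)\del_{\alpha'}\del_{\beta'}h^{\coeff,n+1}_{\alpha\beta}$ and then bounds the curved piece using $\|H\|_{L^\infty}\|\del\del h^{\coeff,n+1}\|_{X^2}\leq C(\eps_0)(A\eps)^2$ via Lemma~\ref{lem 7-10-5.5}, whereas you invoke the pre-packaged estimate \eqref{eq 2 lem 7-7-3} (whose proof is precisely Lemma~\ref{lem 7-7-1} together with the $E_g^3$ energy bound). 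Two minor remarks: the bound coming from Lemma~\ref{lem 7-10-5.5} is $A\eps T\leq C(\eps_0,d)$ rather than $\leq 1$, which is harmless since all constants are $C(\eps_0,d)$ anyway; and \eqref{eq 2 lem 7-7-3} as stated omits the contribution $C\|u(0,\cdot)\|_{\mathcal{E}_{-1}}$ from Lemma~\ref{lem 7-7-1}, but this is $\leq \|{h_0}_{\alpha\beta}\|_{\mathcal{E}_{-1}}\leq \eps$ and fits into the $C(T+1)\eps$ term.
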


\begin{proof}
We will apply \eqref{eq 1 lem 7-7-1}, and we note that 
$$
\Box h_{\alpha\beta}^{\coeff,n+1} = F_H  - H^{\alpha'\beta'}\del_{\alpha'}\del_{\beta'}h_{\alpha\beta}^{\coeff,n+1}.
$$ 
Then by \eqref{eq 1 lem 7-7-1}, we have 
$$
\aligned
\|h_{\alpha\beta}^{\coeff,n+1}(t,\cdot)\|_{\mathcal{E}_{-1}}
& \leq Ct\int_0^t\|F_H(\tau,\cdot)\|_{\mathcal{E}_{-1}}d\tau
+ Ct\sum_{\alpha,\beta}\int_0^t\|H^{\alpha'\beta'}\del_{\alpha'}\del_{\beta'}h_{\alpha\beta}^{\coeff,n+1}(\tau,\cdot)\|_{\mathcal{E}_{-1}}d\tau
\\
&+ C\big(\|h_{\alpha\beta}^{\coeff,n+1}(0,\cdot)\|_{\mathcal{E}_{-1}} + t\|\nabla h_{\alpha\beta}^{\coeff,n+1}(0,\cdot)\|_{\mathcal{E}_{-1}}\big)
\endaligned
$$
Then we can apply on each term the global Sobolev inequality \eqref{eq 1 lem 7-6-1} to get estimates on the $\mathcal{E}_{-1}$ norms:
$$
\aligned
\|F_H\|_{\mathcal{E}_{-1}}\leq C\|F_H\|_{X^2}\leq C\|F_H\|_{E^2}\leq C(\eps_0)(A\eps)^2,
\endaligned
$$
where we used \eqref{eq 1 lem 7-10-2.5 a} and \eqref{eq 1 lem 7-6-1}.
We have 
$$
\aligned
\|H^{\alpha'\beta'}\del_{\alpha'}\del_{\beta'} h_{\alpha\beta}^{\coeff,n+1}(t,\cdot)\|_{\mathcal{E}_{-1}}
& \leq \|H^{\alpha'\beta'}\|_{L^{\infty}(\RR^3)}\| \del_{\alpha'}\del_{\beta'}h_{\alpha\beta}^{\coeff,n+1}\|_{X^2} 
\\
& \leq C\|H^{\alpha\beta}\|_{E_H^2}\|h_{\alpha\beta}^{\coeff,n+1}\|_{E_P^4}
\\
& \leq C(\eps_0)(A\eps)^2.
\endaligned
$$
Here, we used Lemma~\ref{lem 7-10-5.5}.

The initial terms $\|h_{\alpha\beta}^{\coeff,n+1}(0,x)\|_{\mathcal{E}_{-1}}$ and $\|\nabla h_{\alpha\beta}^{\coeff,n+1}\|_{\mathcal{B}{-1}}$ are determined by the initial data $h_0$ and $h_1$, hence, can be controlled by $C\eps$, where $C$ is a universal constant. So we conclude with the desired result.
\end{proof}

Now we can conclude that, with suitable choice of $A$ and $T$ and sufficient small $\eps$, the sequence $\{S_n\}$ is bounded with respect to the norm $\|\cdot\|_{X_S^{d+1}}$. More rigorously, the following proposition.
\begin{proposition}\label{prop 7-10-2}
There exists a couple of positive constant $(A,T)$ depends only on $\eps_0$, $\eps$ and $d$ such that if \eqref{eq 1 7-10} holds on $[0,T]$, then
\begin{equation}\label{eq 1 prop 7-10-2}
\|S^\coeff_n(t,\cdot)\|_{E_\coeff^{d+1}}\leq A\eps\leq \eps_0\leq 1,
\end{equation}
which means that the sequence of triple $\{S^{\coeff}_n\}$ is bounded in the Banach space $E_{\coeff}^{d+1}$. Furthermore, if $\eps\rightarrow 0^+$,
$$
T\rightarrow \infty.
$$
\end{proposition}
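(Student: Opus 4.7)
The plan is a standard induction on $n$, combining the linear existence results of Propositions 7-7-1 and 7-7-3 with the uniform-in-$\coeff$ a priori estimates of Lemmas 7-10-5.5 and 7-10-6. For the base case $n=0$, the triple $S_0^\coeff$ solves three decoupled linear homogeneous equations with Minkowski metric coefficients and zero source. Propositions 7-7-1 and 7-7-3, together with Lemma 7-10-4.5 to control the higher-order time derivatives at $t=0$, bound the $E_P^{d+1}$, $E^d$, and $\coeff^{-1/2}E^d$ components of $\|S_0^\coeff(t,\cdot)\|_{E_\coeff^{d+1}}$ by a universal constant times $\eps$, while Lemma 7-7-1 applied with $F\equiv 0$ and $H^{\alpha\beta}\equiv 0$ bounds the $\mathcal{E}_{-1}$-norm of $h_{\alpha\beta}^{\coeff,0}$ by $C(1+t)\eps$.

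For the inductive step, assume \eqref{eq 1 7-10} holds for all $k\le n$ on $[0,T]$. Lemma 7-10-5.5 directly closes the $E_P^{d+1}$, $E^d$ and $\coeff^{-1/2}E^d$ bounds for the next iterate $(h_{\alpha\beta}^{\coeff,n+1},\phi_{n+1}^\coeff,\varrho_{n+1}^\coeff)$, provided $A\ge A_0(\eps_0,d)$ and $T\le T_0 := \ln(1+(2C(\eps_0,d))^{-1})/(C(\eps_0,d)A\eps)$. The only remaining ingredient is the $\mathcal{E}_{-1}$-bound on $h_{\alpha\beta}^{\coeff,n+1}$, for which Lemma 7-10-6 supplies
\[
\|h_{\alpha\beta}^{\coeff,n+1}(t,\cdot)\|_{\mathcal{E}_{-1}}\le C_1(\eps_0)\,T^2 (A\eps)^2 + C_1(T+1)\eps.
\]
Closing the induction therefore reduces to choosing $(A,T)$ so that the four constraints
\[
A\eps\le\eps_0,\qquad T\le T_0,\qquad C_1(\eps_0)\,T^2 A\eps\le \tfrac12,\qquad 2C_1(T+1)\le A
\]
hold simultaneously.

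The main technical point is the tension between the last two constraints: the third bounds $T\le(C_1(\eps_0)A\eps)^{-1/2}$ from above, whereas the fourth bounds $A\ge 2C_1(T+1)$ from below, linearly in $T$. Since the $\mathcal{E}_{-1}$-estimate of Lemma 7-7-1 is inherently non-dissipative (it grows linearly in $t$ even for the free wave), there is no way to avoid this coupling; it is resolved by letting $A$ and $T$ both depend on $\eps$ and diverge as $\eps\to 0^+$. A concrete admissible choice is $A:=K\eps^{-1/3}$ and $T:=K\eps^{-1/3}$ for a sufficiently large constant $K=K(\eps_0,d)$: then $A\eps\sim\eps^{2/3}\to 0$ (so $A\eps\le\eps_0$), $T^2 A\eps\sim\eps^{1/3}\to 0$ (so the third constraint holds), $T_0\sim\eps^{-2/3}$ dominates $T\sim\eps^{-1/3}$ (so the second constraint holds), and $A\sim T\ge 2C_1(T+1)$ once $K$ is large and $\eps$ is small relative to $(\eps_0,d)$. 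This yields \eqref{eq 1 prop 7-10-2} together with the asymptotic $T\to\infty$ as $\eps\to 0^+$.
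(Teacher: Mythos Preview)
Your approach is exactly the paper's: close the induction via Lemma~7-10-5.5 for the energy norms and Lemma~7-10-6 for the $\mathcal{E}_{-1}$ norm, and then choose $(A,T)$ with the scaling $A,T\sim\eps^{-1/3}$ so that all four constraints hold. The paper makes the same choice, $A=\eps_0\eps^{-1/3}$ and $T=\min\{A/(2C)-1,\;(2C(\eps_0)A\eps)^{-1/2},\;T_0\}$, and concludes $T\ge C'(\eps_0,d)\eps^{-1/3}$.

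There is, however, an arithmetic slip in your final paragraph. With $A=K\eps^{-1/3}$ and $T=K\eps^{-1/3}$ one has
\[
T^{2}A\eps \;=\; K^{2}\eps^{-2/3}\cdot K\eps^{2/3}\;=\;K^{3},
\]
not $\sim\eps^{1/3}$; so constraint~3 does \emph{not} become vacuous as $\eps\to0$, and taking $K$ ``sufficiently large'' actually violates it. Likewise, with $A=T$ constraint~4 reads $2C_1(K\eps^{-1/3}+1)\le K\eps^{-1/3}$, which for small $\eps$ forces $2C_1\le1$ regardless of $K$. The resolution is to use \emph{different} constants: write $A=K_1\eps^{-1/3}$, $T=K_2\eps^{-1/3}$, take $K_2$ small enough that $C_1(\eps_0)K_1K_2^{2}\le\tfrac12$, and then $K_1\ge 2C_1K_2$ (the paper achieves this by defining $T$ as a minimum). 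With this correction your argument goes through and coincides with the paper's.
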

Note that the choice of $(A,T)$ are {\bf independent} of $\coeff$.
\begin{proof}
By Lemma~\ref{lem 7-10-5.5}, we take already $A\geq A_0(\eps_0,d)$ and $T\leq T_0$ such that \eqref{eq 2 7-10} holds. In order to prove \eqref{eq 1 prop 7-10-2}, we need only to guarantee, by \eqref{eq 1 lem 7-10-6}, the following inequality:
$$
 C(\eps_0)T^2(A\eps)^2 + C(T+1)\eps \leq A\eps.
$$
This can be guaranteed by
$$
\left\{
\aligned
&T\leq \frac{A}{2C} - 1,
\\
&T^2\leq \frac{1}{2C(\eps_0)A\eps}.
\endaligned
\right.
$$
So we require that $\frac{A}{2C} - 1 > 0$. Taking into consideration of the conditions in Lemma~\ref{lem 7-10-5.5}:
$$
\left\{
\aligned
&A\geq A_0(\eps_0,d) = 2C(\eps_0,d)+1
\\
&T\leq T_0 = \frac{\ln\big(1+(2C(\eps_0,d))^{-1}\big)}{C(\eps_0,d)A\eps}
\endaligned
\right.
$$
together withe the condition $A\eps\leq \eps_0$. So we see that when $\eps$ sufficiently small such that
$$
A_0(\eps_0,d)\leq \eps_0\eps^{-1/3},
$$
we can take $A = \eps_0\eps^{-1/3}$ and $T =\min\{A(2C)^{-1}-1,(2C(\eps_0)A\eps)^{-1/2}, T_0\}$. Then  there exist a constant $C'(\eps_0,d)$ such that $T \geq C'(\eps_0,d) \eps^{-1/3}$. This proves the desired result.
\end{proof}


\subsection{Contraction property and local existence}

To establish theorem \ref{prop 7-10-1}, we need to prove that the sequence $\{S_n^{\coeff}\}$ is contracting. 

\begin{proposition}\label{prop 7-12-1}
Let \eqref{eq 1 prop 7-10-2} holds with $(A,T)$ determined in by Proposition \ref{prop 7-10-2}. Assume that $d\geq 4$. Then there exist a time interval $[0, T^*]$ determined by $\eps_0, \eps$ and $d$ such that the sequence $\{S_n\}$ is contracting in the following sense:
\begin{equation}\label{eq 1 prop 7-12-1}
\|S^\coeff_{n+1} - S^\coeff_n\|_{L^{\infty}([0,T^*];X_\coeff^{d})} \leq \lambda \|S^\coeff_n(t,\cdot) - S^\coeff_{n-1}(t,\cdot)\|_{L^{\infty}([0,T^*];X_\coeff^{d})}.
\end{equation}
with a fixed $0<\lambda<1$. Furthermore, when $\eps\rightarrow 0^+$, we can take $T^*$ such that $T^*(\eps)\rightarrow +\infty$.
\end{proposition}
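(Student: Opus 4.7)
The plan is to write an evolution system for the consecutive differences $\delta S^{n+1} := S_{n+1}^\coeff - S_n^\coeff$ and apply the linear theory of Section 7 at one order of regularity below the uniform-bound estimate, which is the standard loss-of-one-derivative contraction scheme for quasilinear wave systems. Setting $\delta h_{\alpha\beta}^{n+1} := h_{\alpha\beta}^{\coeff,n+1} - h_{\alpha\beta}^{\coeff,n}$ and analogously for $\delta\phi^{n+1}$ and $\delta\varrho^{n+1}$, I would subtract the iteration equations \eqref{eq conformal aug sec7-9 a}--\eqref{eq conformal aug sec7-9 c} at consecutive levels. For the metric component this gives
\begin{equation*}
\big(m^{\alpha'\beta'}+H^{\alpha'\beta'}(h_n^\coeff)\big)\del_{\alpha'}\del_{\beta'}\delta h_{\alpha\beta}^{n+1}
= \big(F_H(S_n^\coeff)-F_H(S_{n-1}^\coeff)\big)
- \big(H^{\alpha'\beta'}(h_n^\coeff)-H^{\alpha'\beta'}(h_{n-1}^\coeff)\big)\del_{\alpha'}\del_{\beta'}h^{\coeff,n}_{\alpha\beta},
\end{equation*}
with analogous identities for $\delta\phi^{n+1}$ (a wave equation) and $\delta\varrho^{n+1}$ (a Klein-Gordon equation retaining the mass term $-\delta\varrho^{n+1}/(3\coeff)$). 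A crucial simplification is that all iterates carry identical initial data, so the differences $\delta h^{n+1}$, $\delta\phi^{n+1}$, $\delta\varrho^{n+1}$ have vanishing Cauchy data.

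Next I would apply the $L^2$ linear estimates of Lemmas~\ref{lem 7-7-2} and \ref{lem 7-7-4} at regularity $E^{d-1}$ to these equations, using that $g_n^\coeff = m + h_n^\coeff$ is uniformly coercive and that $D_{d-1}(\tau)\leq C(\eps_0,d)A\eps$ thanks to Proposition \ref{prop 7-10-2}. The hypothesis $d\geq 4$ guarantees $d-1\geq 3$, which is the threshold required by the Sobolev embeddings of Section 7.6 and by the product/composition estimates. The right-hand sides are then controlled via Lemmas~\ref{lem 7-10-2.45}, \ref{lem 7-10-2.46}, \ref{lem 7-10-2.461}, and \ref{lem 7-10-2.47}: each difference of nonlinear expressions, including $F_H(S_n^\coeff)-F_H(S_{n-1}^\coeff)$, the discrepancy term $(H(h_n^\coeff)-H(h_{n-1}^\coeff))\del\del h^{\coeff,n}$, and the analogous contributions in the $\delta\phi$ and $\delta\varrho$ equations, is bounded in $E^{d-1}$ by $C(\eps_0,d)(A\eps)\,\|\delta S^n\|_{E_\coeff^d}$, where the extra $(A\eps)$ factor comes from the uniform $E_\coeff^{d+1}$ bound on the iterates.

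The main obstacle will be the potential terms carrying the singular prefactor $\coeff^{-1}$, namely $\coeff^{-1}V_h(\varrho_n^\coeff)(m+h^{\coeff,n})-\coeff^{-1}V_h(\varrho_{n-1}^\coeff)(m+h^{\coeff,n-1})$ in the $\delta h$ equation and the analogous $V_\rho$-difference in the $\delta\varrho$ equation. Because $V_h$ and $V_\rho$ vanish quadratically at the origin, each such difference can be written schematically as $\coeff^{-1}\,O(|\varrho_n^\coeff|+|\varrho_{n-1}^\coeff|)\,|\delta\varrho^n|$ up to harmless factors; the weighted bound $\|\varrho_n^\coeff\|_{E^{d-1}}\leq A\eps\,\coeff^{1/2}$ from Proposition~\ref{prop 7-10-2} converts one power of $\coeff^{-1/2}$ into a factor $A\eps$ uniformly in $\coeff$, while the remaining $\coeff^{-1/2}$ is absorbed into the norm $\|\delta S^n\|_{E_\coeff^d}$ by its very definition. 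This is precisely why the $\coeff$-weighted norm was introduced in Section 7.5, and it is the step where the uniformity with respect to $\coeff\in(0,1]$ is preserved.

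Combining these ingredients yields a Gronwall-type inequality of the form
\begin{equation*}
\|\delta S^{n+1}(t,\cdot)\|_{E_\coeff^d}
\leq C(\eps_0,d)\,A\eps\,t\,e^{C(\eps_0,d)A\eps\, t}\;\|\delta S^n\|_{L^\infty([0,t];E_\coeff^d)},
\end{equation*}
since the initial data vanish. Choosing $T^*\leq T$ small enough that $C(\eps_0,d)A\eps\,T^*\,e^{C(\eps_0,d)A\eps T^*}\leq \lambda<1$ delivers the contraction \eqref{eq 1 prop 7-12-1}; and because $T$ itself satisfies $T(\eps)\to+\infty$ as $\eps\to 0^+$ (Proposition~\ref{prop 7-10-2}), the same monotonicity transfers to $T^*$. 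Passage to the limit in $n$ of the bounded, contracting sequence $\{S_n^\coeff\}$ in $C([0,T^*];E_\coeff^d)$ then produces the unique local solution claimed in Theorem~\ref{prop 7-10-1}, with the higher regularity $E_\coeff^{d+1}$ recovered by weak-star convergence from the uniform bounds.
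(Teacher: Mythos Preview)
Your approach coincides with the paper's: form the difference system \eqref{eq difference sec7-10}, apply the energy estimates at regularity $d-1$ with zero initial data, handle the $\coeff^{-1}$ potential terms via the weighted norm exactly as you describe, and integrate to obtain the contraction factor $e^{C(\eps_0,d)A\eps T^*}-1<1$ (Lemmas~\ref{lem 7-12-1}--\ref{lem 7-12-5} in the paper implement precisely these steps).

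The one piece you omit is the $\mathcal{E}_{-1}$ control of $\delta h^{n+1}_{\alpha\beta}$. The contraction norm $E_\coeff^d$ contains $\|h^{\coeff,n+1}_{\alpha\beta}-h^{\coeff,n}_{\alpha\beta}\|_{E_H^d}$, and $E_H^d$ includes the weighted sup-norm $\|\cdot\|_{\mathcal{E}_{-1}}$ in addition to $E_P^d$; the $L^2$ estimates of Lemmas~\ref{lem 7-7-2} and~\ref{lem 7-7-4} alone do not bound this part. The paper closes this via Lemma~\ref{lem 7-12-6}, which applies the explicit-formula estimate Lemma~\ref{lem 7-7-1} to the difference equation and yields $\|\delta h^{n+1}_{\alpha\beta}\|_{\mathcal{E}_{-1}}\leq C(\eps_0)t^2A\eps\,\|S_n^\coeff-S_{n-1}^\coeff\|_{E_\coeff^d}$. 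This step is not cosmetic: the $E_H^d$ norm of $h_n^\coeff-h_{n-1}^\coeff$ is genuinely needed on the right-hand side of the contraction, since the source-term bound in Lemma~\ref{lem 7-12-1} invokes it through Lemma~\ref{lem 7-10-2.461} when estimating $(H^{\alpha'\beta'}(h_n^\coeff)-H^{\alpha'\beta'}(h_{n-1}^\coeff))\del_{\alpha'}\del_{\beta'}h^{\coeff,n}_{\alpha\beta}$. With that addition your outline is complete.
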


We emphasize that here the lower bound of the life-span-time $T^*$ given here {\bf does not} depend on the coefficient $\coeff$.

The rest of this section is mainly devoted to the proof of this proposition. To do so, we start by taking the difference of between the iteration relation for the pair $(S_{n+1}^{\coeff},S_n^{\coeff})$ and that of $(S_n^\coeff,S_{n-1}^\coeff)$. This leads to the following differential system
\begin{subequations}\label{eq difference sec7-10}
\begin{equation}\label{eq difference sec7-10 a}
\aligned
\big(m^{\alpha'\beta'} + H^{\alpha'\beta'}(h_n^\coeff)\big)\del_{\alpha'}\del_{\beta'}\big(h_{\alpha\beta}^{\coeff,n+1} - h_{\alpha\beta}^{\coeff,n}\big)
&= \big(H^{\alpha'\beta'}(h_{n-1}^\coeff) - H^{\alpha'\beta'}(h^\coeff_n)\big)\del_{\alpha'}\del_{\beta'}h_{\alpha\beta}^{\coeff,n}
\\
&+ F_H(S_n^{\coeff}) - F_H(S_{n-1}^\coeff),
\endaligned
\end{equation}
\begin{equation}\label{eq difference sec7-10 b}
\aligned
\big(m^{\alpha'\beta'} + H^{\alpha'\beta'}(h_n^\coeff)\big)\del_{\alpha'}\del_{\beta'}\big(\phi_{n+1}^\coeff - \phi_n^\coeff \big)
&= \big(H^{\alpha'\beta'}(h_{n-1}^\coeff) - H^{\alpha'\beta'}(h^\coeff_n)\big)\del_{\alpha'}\del_{\beta'}\phi_n^\coeff
\\
&+ F_P(S_n^{\coeff}) - F_P(S_{n-1}^\coeff),
\endaligned
\end{equation}
\begin{equation}\label{eq difference sec7-10 c}
\aligned
\big(m^{\alpha'\beta'} + H^{\alpha'\beta'}(h_n^\coeff)\big)
\del_{\alpha'}\del_{\beta'}\big(\varrho_{n+1}^\coeff - \varrho_n^\coeff \big)
&= \big(H^{\alpha'\beta'}(h_{n-1}^\coeff) - H^{\alpha'\beta'}(h^\coeff_n)\big)\del_{\alpha'}\del_{\beta'}\varrho_n^\coeff
\\
&+ F_R(S_n^{\coeff}) - F_R(S_{n-1}^\coeff)
\endaligned
\end{equation}
with zero initial data
$$
\aligned
&h_{\alpha\beta}^{\coeff,n+1}(0,x) - h_{\alpha\beta}^{\coeff,n}(0,x) = 0,\quad
\del_t\big(h_{\alpha\beta}^{\coeff,n+1} - h_{\alpha\beta}^{\coeff,n}\big)(0,x) = 0,
\\
&\phi_{n+1}^\coeff(0,x) - \phi_n^\coeff (0,x) = 0,\quad \del_t\big(\phi_{n+1}^\coeff-\phi_n^\coeff\big) = 0,
\\
&\varrho_{n+1}^\coeff(0,x) - \varrho_n^\coeff(0,x) = 0,\quad \del_t\big(\varrho_{n+1} - \varrho_n\big)(0,x) = 0
\endaligned
$$
\end{subequations}
For simplicity of expression, we denote by $D_H(S_n,S_{n-1})$ the right-hand-side of \eqref{eq difference sec7-10 a}, by $D_P(S_n,S_{n-1})$ the right-hand-side of \eqref{eq difference sec7-10 b} and by $D_R(S_n,S_{n-1})$ the right-hand-side of \eqref{eq difference sec7-10 c}. We need to estimate
\begin{equation}\label{eq 1 7-12}
\aligned
&\big\|h_{\alpha\beta}^{\coeff,n+1} - h_{\alpha\beta}^{\coeff,n}\big\|_{X_E^{d}},\quad
\big\|\phi_{n+1}^\coeff - \phi_n^\coeff\big\|_{X_E^{d}},\quad
\big\|\varrho_{n+1}^\coeff - \varrho_n^\coeff \big\|_{X_E^{d}},
\\
&\big\|h_{\alpha\beta}^{\coeff,n+1} - h_{\alpha\beta}^{\coeff,n}\big\|_{\mathcal{E}_{-1}}.
\endaligned
\end{equation}

First we recall the uniform bound of the sequence constructed in the last subsection:
\begin{equation}\label{eq pr1 lem 7-10-7}
\|S_n^\coeff(t,\cdot)\|_{E^{d+1}_{\coeff}}\leq A\eps\leq \eps_0\leq 1
\end{equation}
with $d\geq 3$ for $0\leq t\leq T$. We observe that this condition is equivalent to \eqref{eq 2 7-10} for all $n\in \mathbb{N}^*$.

Now we will make a series of estimates to bound the norm listed in \eqref{eq 1 7-12}.
\begin{lemma}\label{lem 7-12-1}
Let $\{S_n^\coeff\}$ be the sequence constructed by \eqref{eq conformal aug sec7-9} which satisfies the uniform bound condition \eqref{eq pr1 lem 7-10-7} with $d\geq 4$. Then the following estimate holds for $|I_1|+|I_2|\leq d-1$:
\begin{equation}\label{eq 1 lem 7-12-1}
\big\|\del^{I_1}\Omega^{I_2}D_H(S^\coeff_{n+1},S_n^\coeff)(t,\cdot)\big\|_{L^2(\RR^3)}\leq C(\eps_0,d)A\eps\|S_n^\coeff(t,\cdot) - S_{n-1}^\coeff(t,\cdot)\|_{E_\coeff^d}. 
\end{equation}
\end{lemma}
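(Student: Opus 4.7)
The plan is to decompose $D_H(S_n^\coeff, S_{n-1}^\coeff)$ (modulo what appears to be an index shift in the statement) into two groups: the quasilinear coefficient piece
\[
Q_H := \bigl(H^{\alpha'\beta'}(h_{n-1}^\coeff) - H^{\alpha'\beta'}(h_n^\coeff)\bigr)\del_{\alpha'}\del_{\beta'}h_{\alpha\beta}^{\coeff,n},
\]
and the pure source difference $F_H(S_n^\coeff) - F_H(S_{n-1}^\coeff)$, where
\[
F_H(S) = F_{\alpha\beta}(h;\del h,\del h) - 16\pi\,\del_\alpha\phi\,\del_\beta\phi - 12\,\del_\alpha\varrho\,\del_\beta\varrho - \coeff^{-1}V_h(\varrho)\bigl(m_{\alpha\beta}+h_{\alpha\beta}\bigr).
\]
I would estimate each term separately by applying $\del^{I_1}\Omega^{I_2}$ with $|I_1|+|I_2|\leq d-1$ and taking $L^2(\RR^3)$ norms. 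Throughout, the uniform bound \eqref{eq pr1 lem 7-10-7} on $\{S_n^\coeff\}$ in $E_\coeff^{d+1}$ is the supply of a priori control that feeds every nonlinear estimate from Section~8.

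For $Q_H$, I would write it as a product of the form $H(u_1)F(v_1) - H(u_2)F(v_2)$ with $u_j$ ranging over $h^{\coeff,\bullet}_{\alpha\beta}$ and $v_j$ a second-order derivative of $h_{\alpha\beta}^{\coeff,n}$, then invoke Lemma~\ref{lem 7-10-2.47}: the point is that $H^{\alpha'\beta'}(h_{n-1})-H^{\alpha'\beta'}(h_n) = O(|h_n-h_{n-1}|)$ near $0$ by the Taylor expansion \eqref{eq 1 7-2}, so the result is bounded by $C(\eps_0,d)\,A\eps\,\|h_n^\coeff - h_{n-1}^\coeff\|_{E_H^d}$ after absorbing a factor $\|h_n^\coeff\|_{E_P^{d+1}}\lesssim A\eps$ and the extra uniform factors coming from the Sobolev-type placement. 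For the quadratic pieces $\del\phi\,\del\phi$, $\del\varrho\,\del\varrho$ and $F_{\alpha\beta}(h;\del h,\del h)$ in $F_H(S_n)-F_H(S_{n-1})$, I would apply Lemma~\ref{lem 7-10-2.46} (resp.\ Lemma~\ref{lem 7-10-2.47} when the metric-dependent coefficients from $F_{\alpha\beta}$ are present) and bound each by $C(\eps_0,d)\,A\eps\,\|S_n^\coeff-S_{n-1}^\coeff\|_{E_\coeff^d}$, using the uniform bound to dispose of the remaining $A\eps$-factor.

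The one genuinely delicate term is $\coeff^{-1}V_h(\varrho)(m_{\alpha\beta}+h_{\alpha\beta})$, because of the singular prefactor $\coeff^{-1}$. The main obstacle is to show that the implicit $\coeff^{-1}$ is absorbed by the smallness of $\varrho_n^\coeff$ enforced by the norm $E_\coeff^{d+1}$, which gives $\coeff^{-1/2}\|\varrho_n^\coeff\|_{E^{d-1}}\leq A\eps$. My plan is to write
\[
\coeff^{-1}\bigl(V_h(\varrho_n^\coeff)-V_h(\varrho_{n-1}^\coeff)\bigr) = \coeff^{-1}\bigl(\varrho_n^\coeff - \varrho_{n-1}^\coeff\bigr)\int_0^1 V_h'\bigl(\theta\varrho_n^\coeff + (1-\theta)\varrho_{n-1}^\coeff\bigr)\,d\theta,
\]
and to use $V_h(0)=V_h'(0)=0$ to obtain $|V_h'(s)|\leq C|s|$ uniformly for $|s|\leq \eps_0$. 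Combining the two factors of $\varrho$ produces $\coeff^{-1}\,O(|\varrho|)\,O(|\varrho_n-\varrho_{n-1}|)$, and since $\|\varrho\|_{E^{d-1}}\lesssim \coeff^{1/2}\,A\eps$ (by the weighted norm in $E_\coeff^{d+1}$), one factor of $\coeff^{1/2}$ cancels $\coeff^{-1}$ at the cost of a $\coeff^{-1/2}$, which is in turn absorbed by $\coeff^{-1/2}\|\varrho_n-\varrho_{n-1}\|_{E^{d-1}}\leq \|S_n-S_{n-1}\|_{E_\coeff^d}$. The derivative-counting is straightforward since $|I_1|+|I_2|\leq d-1$ leaves a spare derivative to apply the Sobolev inequalities of Section~7. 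The product $(m_{\alpha\beta}+h_{\alpha\beta}^{\coeff,n})$ is then handled by the composite estimate of Lemma~\ref{lem 7-10-2.4}, exploiting the factor $\sum_k\|h\|_{E_H^d}^k$ already controlled by $A\eps$. Collecting all pieces gives the bound \eqref{eq 1 lem 7-12-1} with a constant depending only on $\eps_0$ and $d$.
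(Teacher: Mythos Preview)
Your plan follows the same decomposition as the paper's proof: split $D_H$ into the quasilinear piece $Q_H=T_1$, the quadratic differences $T_2,T_3$ from $F_{\alpha\beta}$, $\del\phi\,\del\phi$, $\del\varrho\,\del\varrho$, and the potential term $T_4=\coeff^{-1}\bigl(V_h(\varrho_n)(m+h_n)-V_h(\varrho_{n-1})(m+h_{n-1})\bigr)$, then estimate each via the nonlinear product lemmas of Section~7. Two remarks on the execution.

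For $Q_H$, your appeal to Lemma~\ref{lem 7-10-2.47} is not quite on target: that lemma treats $H(u_1)F(v_1)-H(u_2)F(v_2)$ with $F$ vanishing to second order, whereas here the second factor $\del_{\alpha'}\del_{\beta'}h^{\coeff,n}_{\alpha\beta}$ is the \emph{same} in both terms and is linear, not quadratic. The paper instead expands $\del^{I_1}\Omega^{I_2}\bigl((H(h_n)-H(h_{n-1}))\,\del\del h_n\bigr)$ by Leibniz and splits cases on whether $|J_1'|+|J_2'|\le d-3$ or $\ge d-2$, applying Lemma~\ref{lem 7-10-2.470} (pointwise bound on $\del^{J}\Omega^{J'}(H(h_n)-H(h_{n-1}))$) in the first case and Lemma~\ref{lem 7-10-2.461} (weighted $L^2$ bound) together with $\|\del\del h_n\|_{\mathcal E_{-1}}\le C\|h_n\|_{E_P^{d+1}}$ in the second. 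Your heuristic (``one factor of $\|h_n-h_{n-1}\|_{E_H^d}$ from the $H$-difference, one factor of $A\eps$ from $\|h_n\|_{E_P^{d+1}}$'') is exactly what this case analysis delivers, so the argument is correct once the right lemmas are named.

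For $T_4$, your treatment via the mean-value representation $\coeff^{-1}(V_h(\varrho_n)-V_h(\varrho_{n-1}))=\coeff^{-1}(\varrho_n-\varrho_{n-1})\int_0^1 V_h'(\cdot)\,d\theta$ with $|V_h'(s)|\le C|s|$ is in fact \emph{more} explicit than the paper's. The paper only records the bound $C(\eps_0,d)(A\eps)^2$ and cites Lemma~\ref{lem 7-10-2}, which as written does not extract the difference factor $\|S_n-S_{n-1}\|_{E_\coeff^d}$ needed for the lemma's conclusion; your balancing $\coeff^{-1}=\coeff^{-1/2}\cdot\coeff^{-1/2}$ against $\coeff^{-1/2}\|\varrho_k\|_{E^{d-1}}\le A\eps$ and $\coeff^{-1/2}\|\varrho_n-\varrho_{n-1}\|_{E^{d-1}}\le\|S_n-S_{n-1}\|_{E_\coeff^d}$ is precisely the mechanism that makes the contraction step go through.
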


\begin{proof}
This is guaranteed by \eqref{eq 1 lem 7-10-2.47} and \eqref{eq 1 7-10}. Recall that
$$
\aligned
D_H(S_n, S_{n-1})&= -\big(H^{\alpha'\beta'}(h_{n}^\coeff) - H^{\alpha'\beta'}(h_{n-1}^\coeff)\big)\del_{\alpha'}\del_{\beta'}h_{\alpha\beta}^{\coeff,n}
\\
&+\big(F_{\alpha\beta}(h^\coeff_n,\del h_n^\coeff,\del h_n^\coeff)
 - F_{\alpha\beta}(h_{n-1},\del h_{n-1}^\coeff,\del h_{n-1}^\coeff)\big)
\\
&-16\pi \big(\del_{\alpha}\phi_n^\coeff\del_{\beta}\phi_n^\coeff - \del_{\alpha}\phi_{n-1}^\coeff\del_{\beta}\phi_{n-1}^\coeff\big)
 - 12\big(\del_{\alpha}\varrho_n^\coeff\del_{\beta}\varrho_n^\coeff - \del_{\alpha}\varrho_{n-1}^\coeff\del_{\beta}\varrho_{n-1}^\coeff\big)
\\
&- \coeff^{-1}\Big(V_h(\varrho^\coeff_n)\big(m_{\alpha\beta} + h_{\alpha\beta}^{\coeff,n}\big)
 - V_h(\varrho_{n-1}^\coeff)\big(m_{\alpha\beta} + h_{\alpha\beta}^{\coeff,n-1}\big)\Big)
\\
&=:T_1 + T_2 +T_3 + T_4.
\endaligned
$$

   We observe that $\|T_1\|_{E^d}$ is bounded by $C(\eps_0,d)A\eps \|h_n^\coeff - h_{n-1}^\coeff\|_{X_H^d}$.
$$
\aligned
\|\del^{I_1}\Omega^{I_2}T_1\|_{L^2(\RR^3)}
& \leq \sum_{J_1+J_1'=I_1\atop J_2+J_2'=I_2}\big\|\del^{J_1}\Omega^{J_2}\big(H^{\alpha'\beta'}(h_{n}^\coeff) - H^{\alpha'\beta'}(h_{n-1}^\coeff)\big)\del^{J_1'}\Omega^{J_2'}\del_{\alpha'}\del_{\beta'}h_{\alpha\beta}^{\coeff,n}\big\|_{L^2(\RR^3)}.
\endaligned
$$
When $|J_1|+|J_2|\leq d-3$, $d-1\geq |J_1'|+|J_2'|\geq 2$,
$$
\aligned
&\big\|\del^{J_1}\Omega^{J_2}\big(H^{\alpha'\beta'}(h_{n}^\coeff) - H^{\alpha'\beta'}(h_{n-1}^\coeff)\big)\del^{J_1'}\Omega^{J_2'}\del_{\alpha'}\del_{\beta'}h_{\alpha\beta}^{\coeff,n}\big\|_{L^2(\RR^3)}
\\
& \leq \big\|\del^{J_1}\Omega^{J_2}\big(H^{\alpha'\beta'}(h_{n}^\coeff) -H^{\alpha'\beta'}(h_{n-1}^\coeff)\big)\big\|_{L^{\infty}(\RR^3)}
\big\|\del^{J_1'}\Omega^{J_2'}\del_{\alpha'}\del_{\beta'}h_{\alpha\beta}^{\coeff,n}\big\|_{L^2(\RR^3)}
\\
& \leq C(\eps_0,d)\|h_n^{\coeff} - h_{n-1}^{\coeff}\|_{E_H^{d}}\|h_{\alpha\beta}^{\coeff,n}\|_{E_P^{d+1}}
\\
& \leq C(\eps_0,d)A\eps \|h_n^{\coeff} - h_{n-1}^{\coeff}\|_{E_H^{d}}
\\
& \leq C(\eps_0,d)A\eps \|S_n^\coeff - S_{n-1}^{\coeff}\|_{E_\coeff^{d}},
\endaligned
$$
Where \eqref{eq 1 lem 7-10-2.470} is applied.
\\
When $d-1\geq |J_1|+|J_2|\geq d-2$, we have $0\leq|J_1'| + |J_2'|\leq 1$. Then recall that $d\geq 4$, $|J_1'|+|J_2'|+1\leq d-2$.
Then
$$
\aligned
&\big\|\del^{J_1}\Omega^{J_2}\big(H^{\alpha'\beta'}(h_{n}^\coeff) - H^{\alpha'\beta'}(h_{n-1}^\coeff)\big)\del^{J_1'}\Omega^{J_2'}\del_{\alpha'}\del_{\beta'}h_{\alpha\beta}^{\coeff,n}\big\|_{L^2(\RR^3)}
\\
& \leq \big\|(1+r)^{-1}\del^{J_1}\Omega^{J_2}\big(H^{\alpha'\beta'}(h_{n}^\coeff) -H^{\alpha'\beta'}(h_{n-1}^\coeff)\big)\big\|_{L^2(\RR^3)}
\big\|\del^{J_1'}\Omega^{J_2'}\del_{\alpha'}\del_{\beta'}h_{\alpha\beta}^{\coeff,n}\big\|_{\mathcal{E}_{-1}}
\\
& \leq C(\eps_0,d) \|h_n^{\coeff} - h_{n-1}^{\coeff}\|_{E_H^d}
\big\|\del^{J_1'}\Omega^{J_2'}\del_{\alpha'}\del_{\beta'}h_{\alpha\beta}^{\coeff,n}\big\|_{X^2}
\\
& \leq C(\eps_0,d)\|S_n^{\coeff} - S_{n-1}^{\coeff}\|_{E_\coeff^d}\|h_{\alpha\beta}^{\coeff,n}\|_{E_P^d}
\\
& \leq C(\eps_0,d)A\eps \|S_n^{\coeff} - S_{n-1}^{\coeff}\|_{E_\coeff^d},
\endaligned
$$
where \eqref{eq 1 lem 7-6-1}, \eqref{eq 1 lem 7-10-2.461} and \eqref{eq com' lem 7-6'-1 d} are applied. Note that since of the term with second order derivative $\del_{\alpha'}\del_{\beta'}h_{\alpha\beta}^{\coeff,n}$, we can only bound the $E^d$ norm, i.e. one order of regularity is lost.

   The $E^d$ term $T_2$ and $T_3$ are bounded by \eqref{eq 1 lem 7-10-2.46} and \eqref{eq pr1 lem 7-10-7}.

   We should pay additional attention to the term $T_4$:
$$
\coeff^{-1}\Big(V_h(\varrho^\coeff_n)\big(m_{\alpha\beta} + h_{\alpha\beta}^{\coeff,n}\big)
 - V_h(\varrho_{n-1}^\coeff)\big(m_{\alpha\beta} + h_{\alpha\beta}^{\coeff,n-1}\big)\Big)
$$
The $E^d$ norm of this term can be bounded by $C(\eps_0,d)(A\eps)^2$. This is garanteed by \eqref{eq 1 lem 7-10-2} and the assumption
$$
\coeff^{-1/2}\|\varrho_k^\coeff\|_{X^d}\leq C(\eps_0,d)A\eps
$$
deduced from \eqref{eq pr1 lem 7-10-7}.
\end{proof}

\begin{lemma}\label{lem 7-12-3}
Let $\{S_n^\coeff\}$ be the sequence constructed by \eqref{eq conformal aug sec7-9} which satisfies the uniform bound condition \eqref{eq pr1 lem 7-10-7}. Then the following estimate holds for $|I_1|+|I_2|\leq d-1$:
\begin{equation}\label{eq 1 lem 7-12-3}
\big\|[\del^{I_1}\Omega^{I_2},H^{\alpha'\beta'}(h_n^\coeff)\del_{\alpha'}\del_{\beta'}]\big(h_{\alpha\beta}^{\coeff,n+1} - h_{\alpha\beta}^{\coeff,n}\big)\big\|_{L^2(\RR^3)}\leq C(\eps_0)A\eps\|h_{\alpha\beta}^{\coeff,n+1}-h_{\alpha\beta}^{\coeff,n}\|_{E_P^d}. 
\end{equation}
\end{lemma}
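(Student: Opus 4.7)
The plan is to mirror the argument of Lemma~\ref{lem 7-10-4}, now applied to the difference $w_{\alpha\beta} := h_{\alpha\beta}^{\coeff,n+1} - h_{\alpha\beta}^{\coeff,n}$. First I would apply the commutator identity \eqref{eq com' lem 7-6'-1 c} of Lemma~\ref{lem 7-6'-3} with $H^{\alpha'\beta'} = H^{\alpha'\beta'}(h_n^{\coeff})$ and $u = w_{\alpha\beta}$. This bounds the commutator pointwise by a finite sum of terms of the form
$$
|\del^{J_1'}\Omega^{J_2'} H^{\alpha'\beta'}(h_n^{\coeff})|\,|\del_{\alpha'}\del_{\beta'}\del^{J_1}\Omega^{J_2}w_{\alpha\beta}|,
$$
together with analogous lower-order terms arising from commuting $\Omega$ with $\del$. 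In each term one has $J_1+J_1'=I_1$, and either $(J_2,J_2')$ partitions $I_2$ with $|J_1'|+|J_2'|\geq 1$, or else $|J_2|+|J_2'|<|I_2|$. In both regimes $|J_1|+|J_2|\leq d-2$, so the second factor is controlled in $L^2$ by $\|w_{\alpha\beta}\|_{E_P^d}$.

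The core of the argument is a dichotomy on the size of $|J_1'|+|J_2'|$. When $1\leq |J_1'|+|J_2'|\leq d-2$, I would use \eqref{eq 1 lem 7-10-2.3} (applied to the smooth function $H^{\alpha\beta}(\cdot)$, which vanishes at $0$ thanks to \eqref{eq 1 7-2}) together with the iteration bound $\|h_n^{\coeff}\|_{E_H^d}\leq A\eps$ from \eqref{eq pr1 lem 7-10-7} to obtain
$$
\|\del^{J_1'}\Omega^{J_2'}H^{\alpha'\beta'}(h_n^{\coeff})\|_{L^\infty(\RR^3)}\leq C(\eps_0,d)A\eps,
$$
and the product is then closed by a straight $L^\infty$–$L^2$ pairing.

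In the borderline case $|J_1'|+|J_2'|=d-1$ (which forces $|J_1|=|J_2|=0$), I would swap the roles. Lemma~\ref{lem 7-10-2.2} yields
$$
\|(1+r)^{-1}\del^{J_1'}\Omega^{J_2'}H^{\alpha'\beta'}(h_n^{\coeff})\|_{L^2(\RR^3)}\leq C(\eps_0,d)A\eps,
$$
while the Klainerman-type embedding \eqref{eq 1 lem 7-6-1} gives
$$
\|(1+r)\,\del_{\alpha'}\del_{\beta'}w_{\alpha\beta}\|_{L^\infty(\RR^3)}
= \|\del_{\alpha'}\del_{\beta'}w_{\alpha\beta}\|_{\mathcal{E}_{-1}}
\leq C\|\del_{\alpha'}\del_{\beta'}w_{\alpha\beta}\|_{X^2}\leq C\|w_{\alpha\beta}\|_{E_P^4}\leq C\|w_{\alpha\beta}\|_{E_P^d},
$$
where the last inequality uses the standing hypothesis $d\geq 4$. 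Summing over the finitely many partitions yields \eqref{eq 1 lem 7-12-3}.

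The main obstacle is purely one of index bookkeeping: one must verify that every multi-index configuration produced by \eqref{eq com' lem 7-6'-1 c} falls within one of the two regimes above, and must take care that the factor extracted from $\del^{J_1'}\Omega^{J_2'}H^{\alpha'\beta'}(h_n^{\coeff})$ is genuinely of size $A\eps$ rather than merely $\eps_0$. The latter point is exactly where the expansion $H^{\alpha\beta}(h) = -h_{\alpha\beta} + Q^{\alpha\beta}(h,h)(1+R^{\alpha\beta}(h))$ of \eqref{eq 1 7-2} enters: it guarantees $H^{\alpha\beta}(0)=0$, so that the composition estimates \eqref{eq 1 lem 7-10-2.2}–\eqref{eq 1 lem 7-10-2.3} produce at least one power of $\|h_n^{\coeff}\|_{E_H^d}\leq A\eps$ in each term.
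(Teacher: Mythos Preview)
Your proof is correct and follows the same overall strategy as the paper: apply the commutator identity \eqref{eq com' lem 7-6'-1 c}, then close each term by an $L^\infty$--$L^2$ (or weighted) pairing. The only difference is in how the top-order case $|J_1'|+|J_2'|=d-1$ is handled. You use only the $E_H^d$ bound on $h_n^\coeff$ and therefore fall back on the weighted pairing $(1+r)^{-1}L^2 \times \mathcal{E}_{-1}$, which costs you the hypothesis $d\geq 4$ to embed $\|\del\del w\|_{\mathcal{E}_{-1}}$ into $\|w\|_{E_P^d}$. The paper instead exploits the full $E_H^{d+1}$ bound on $h_n^\coeff$ contained in \eqref{eq pr1 lem 7-10-7}: applying Lemma~\ref{lem 7-10-2.3} at regularity $d+1$ gives $\|\del^{J_1'}\Omega^{J_2'}H^{\alpha'\beta'}(h_n^\coeff)\|_{L^\infty}\leq C(\eps_0,d)A\eps$ for all $|J_1'|+|J_2'|\leq d-1$, so the straight $L^\infty$--$L^2$ pairing works throughout and no weighted borderline case is needed. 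Since $d\geq 4$ is already the standing assumption in this section, both arguments are valid here; the paper's is just slightly shorter.
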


\begin{proof}
We perform the same calculation as in the proof of Lemma~\ref{lem 7-10-4}:
$$
\aligned
&\big|[\del^{I_1}\Omega^{I_2}, H^{\alpha'\beta'}\del_{\alpha'}\del_{\beta'}]\big(h_{\alpha\beta}^{\coeff,n+1} - h_{\alpha\beta}^{\coeff,n}\big)\big|
\\
& \leq \sum_{J_1+J_1'=I_1\atop {|J_2|+|J_2'|<|I_2|\atop \alpha',\beta',\alpha'',a}}
|\del^{J_1'}\Omega^{J_2'}H^{\alpha'\beta'}|\,|\del_{\alpha''}\del_a \del^{J_1}\Omega^{J_2}\big(h_{\alpha\beta}^{\coeff,n+1} - h_{\alpha\beta}^{\coeff,n}\big)|
\\
&+\sum_{J_1 + J_1' = I_1 \atop {J_2+J_2' = I_2 \atop |J_1'|+|J_2'|>0}}
|\del^{J_1'}\Omega^{J_2'}H^{\alpha'\beta'}|\,|\del_{\alpha'}\del_{\beta'}\del^{J_1}\Omega^{J_2}\big(h_{\alpha\beta}^{\coeff,n+1} - h_{\alpha\beta}^{\coeff,n}\big)|
\\
=:& T_1 + T_2.
\endaligned
$$

To estimate $T_1$, we observe that since $|J_1'|+|J_2'|+|J_2|+|J_2'|\leq d-2$, by \eqref{eq 1 lem 7-10-2.3} and \eqref{eq pr1 lem 7-10-7},
$$
\|\del^{J_1'}\Omega^{J_2'}H^{\alpha'\beta'}(h_{\alpha\beta}^{\coeff,n})\|_{L^{\infty}(\RR^3)}\leq C(\eps_0,d)A\eps
$$
and also since $|J_1|+|J_2|\leq d-2$:
$$
\big\|\del_{\alpha'}\del_a \del^{I_1}\Omega^{J_2}\big(h_{\alpha\beta}^{\coeff,n+1} - h_{\alpha\beta}^{\coeff,n}\big)\big\|_{L^2(\RR^3)}
\leq C\big\|h_{\alpha\beta}^{\coeff,n+1} - h_{\alpha\beta}^{\coeff,n}\big\|_{E_P^{d}}
$$
Then we see that
$$
\|T_1\|_{L^2(\RR^3)}\leq C(\eps_0)A\eps \big\|h_{\alpha\beta}^{\coeff,n+1} - h_{\alpha\beta}^{\coeff,n}\big\|_{E_P^d}.
$$

The estimate on $T_2$ is established in a bit complicated. We see that in $T_2$, $|J_1|+|J_2|\leq d-2$ so
$$
\big\|\del_{\alpha'}\del_a \del^{I_1}\Omega^{J_2}\big(h_{\alpha\beta}^{\coeff,n+1} - h_{\alpha\beta}^{\coeff,n}\big)\big\|_{L^2(\RR^3)}
\leq C\big\|h_{\alpha\beta}^{\coeff,n+1} - h_{\alpha\beta}^{\coeff,n}\big\|_{E_P^{d}}
$$
When $1\leq |J_1'|+|J_2'|\leq d-1$
$$
\|\del^{J_1'}\Omega^{J_2'}H^{\alpha'\beta'}(h_{\alpha\beta}^{\coeff,n})\|_{L^{\infty}(\RR^3)}\leq C(\eps_0,d)A\eps, 
$$
where we used \eqref{eq 1 lem 7-10-2.3} combined with \eqref{eq pr1 lem 7-10-7}.
\end{proof}

Now we are ready to estimate the term $\big\|h_{\alpha\beta}^{\coeff,n+1} - h_{\alpha\beta}^{\coeff,n}\big\|_{E_P^{d}}$.

\begin{lemma}\label{lem 7-12-4}
Let $\{S_n^\coeff\}$ be the sequence constructed by \eqref{eq conformal aug sec7-9} which satisfies the uniform bound condition \eqref{eq pr1 lem 7-10-7} with $d\geq 4$. Then the following estimate holds:
\begin{equation}\label{eq 1 lem 7-12-4}
\aligned
\frac{d}{dt}E_{g_n}^{d-1}(t,(h_{\alpha,\beta}^{\coeff,n+1} - h_{\alpha\beta}^{\coeff,n}))
& \leq C(\eps_0,d)A\eps\big\|S_n(t,\cdot) - S_{n-1}(t,\cdot)\big\|_{E_\coeff^d}
\\
&+ C(\eps_0,d)A\eps E_{g_n}^{d-1}(t,(h_{\alpha,\beta}^{\coeff,n+1} - h_{\alpha\beta}^{\coeff,n})).
\endaligned
\end{equation}
\end{lemma}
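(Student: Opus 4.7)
The plan is to derive a standard energy identity for the wave operator $g_n^{\alpha'\beta'}\del_{\alpha'}\del_{\beta'}$ applied to the difference $w_{\alpha\beta} := h_{\alpha\beta}^{\coeff,n+1} - h_{\alpha\beta}^{\coeff,n}$, after commuting with all products $\del^{I_1}\Omega^{I_2}$ of order $|I_1|+|I_2|\leq d-1$, and then to assemble the resulting three error contributions using the estimates already proven in Lemmas~\ref{lem 7-12-1} and \ref{lem 7-12-3}.

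First, I would apply $\del^{I_1}\Omega^{I_2}$ to the evolution equation \eqref{eq difference sec7-10 a}, which yields
\[
g_n^{\alpha'\beta'}\del_{\alpha'}\del_{\beta'}\,\del^{I_1}\Omega^{I_2}w_{\alpha\beta}
 = \del^{I_1}\Omega^{I_2} D_H(S_n^\coeff,S_{n-1}^\coeff)
 - [\del^{I_1}\Omega^{I_2},\, H^{\alpha'\beta'}(h_n^\coeff)\del_{\alpha'}\del_{\beta'}]\,w_{\alpha\beta}.
\]
Next, since the uniform bound \eqref{eq pr1 lem 7-10-7} guarantees that $g_n$ is coercive with a constant depending only on $\eps_0$, I can apply the basic $L^2$ estimate \eqref{eq 1 lem 7-7-2} from Lemma~\ref{lem 7-7-2} to each $\del^{I_1}\Omega^{I_2}w_{\alpha\beta}$. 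This gives
\[
\tfrac{d}{dt}E_{g_n}(t,\del^{I_1}\Omega^{I_2}w_{\alpha\beta})
 \leq C\|\del^{I_1}\Omega^{I_2}D_H(S_n^\coeff,S_{n-1}^\coeff)\|_{L^2}
 + C\|[\del^{I_1}\Omega^{I_2},H^{\alpha'\beta'}\del_{\alpha'}\del_{\beta'}]w_{\alpha\beta}\|_{L^2}
 + C\sum_{\alpha',\beta'}\|\nabla H^{\alpha'\beta'}(h_n^\coeff)\|_{L^\infty}\,E_{g_n}(t,\del^{I_1}\Omega^{I_2}w_{\alpha\beta}).
\]

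For the first term on the right-hand side, Lemma~\ref{lem 7-12-1} immediately yields the bound $C(\eps_0,d)A\eps\|S_n^\coeff - S_{n-1}^\coeff\|_{E_\coeff^d}$, which matches the first term in the claim. For the second term, Lemma~\ref{lem 7-12-3} gives $C(\eps_0,d)A\eps\|w_{\alpha\beta}\|_{E_P^d}$. For the third term, the derivative $\nabla H^{\alpha'\beta'}(h_n^\coeff)$ is bounded in $L^\infty$ by $C(\eps_0,d)A\eps$ via \eqref{eq 1 lem 7-10-2.3} applied to $H^{\alpha'\beta'}(\cdot)-H^{\alpha'\beta'}(0)$ (using $d\geq 4$ so that $|\nabla H^{\alpha'\beta'}(h_n^\coeff)|\leq C\|h_n^\coeff\|_{E_H^d}\leq C(\eps_0,d)A\eps$), combined with the uniform bound \eqref{eq pr1 lem 7-10-7}.

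It remains to sum over $(I_1,I_2)$ with $|I_1|+|I_2|\leq d-1$ and over $\alpha,\beta$, and to observe that, since $g_n$ is coercive, the norm $\|w_{\alpha\beta}\|_{E_P^d}$ is controlled, via \eqref{eq coek0 7-7} and the commutator estimates \eqref{eq com' lem 7-6'-1 d}--\eqref{eq com' lem 7-6'-1 e}, by $C(\eps_0,d)\sum_{\alpha,\beta}E_{g_n}^{d-1}(t,w_{\alpha\beta})$. Substituting these three bounds produces exactly the desired inequality. The main delicate point, which is already handled in Lemma~\ref{lem 7-12-1}, is the treatment of the potential difference $\coeff^{-1}\bigl(V_h(\varrho_n^\coeff)(m+h^{\coeff,n}) - V_h(\varrho_{n-1}^\coeff)(m+h^{\coeff,n-1})\bigr)$: the factor $\coeff^{-1}$ is absorbed thanks to the uniform bound $\coeff^{-1/2}\|\varrho_k^\coeff\|_{E^d}\leq A\eps$ coming from \eqref{eq 2 7-10}, which is precisely why we lose one order of regularity in the contraction argument and work with $E_{g_n}^{d-1}$ rather than $E_{g_n}^d$.
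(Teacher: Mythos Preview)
Your proof is correct and follows essentially the same route as the paper: differentiate \eqref{eq difference sec7-10 a} by $\del^{I_1}\Omega^{I_2}$, apply the basic energy inequality \eqref{eq 1 lem 7-7-2}, invoke Lemmas~\ref{lem 7-12-1} and \ref{lem 7-12-3} for the source and commutator, bound $\|\nabla H^{\alpha'\beta'}(h_n^\coeff)\|_{L^\infty}$ via \eqref{eq 1 lem 7-10-2.3}, and sum using coercivity to replace $\|w_{\alpha\beta}\|_{E_P^d}$ by $E_{g_n}^{d-1}$. One small remark on your closing comment: the loss of one order of regularity in the contraction is actually forced by the term $(H^{\alpha'\beta'}(h_{n-1}^\coeff)-H^{\alpha'\beta'}(h_n^\coeff))\del_{\alpha'}\del_{\beta'}h_{\alpha\beta}^{\coeff,n}$ in $D_H$ (the second-order derivative of $h^{\coeff,n}$ is only controlled in $E_P^{d+1}$, so the product sits in $E^{d-1}$), not by the potential difference, which is harmless at top order once the factor $\coeff^{-1/2}\|\varrho_k^\coeff\|_{E^{d-1}}\le A\eps$ is used.
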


\begin{proof}
We derive the equation \eqref{eq difference sec7-10 a} with respect to a product $\del^{I_1}\Omega^{I_2}$ with $|I_1|+|I_2|\leq d-1$. Recall the relation of commutation, we get
$$
\aligned
\big(m^{\alpha'\beta'} + H^{\alpha'\beta'}(h_n^\coeff)\big)\del_{\alpha'}\del_{\beta'}\del^{I_1}\Omega^{I_2}\big(h_{\alpha\beta}^{\coeff,n+1} - h_{\alpha\beta}^{\coeff,n}\big)
&= \del^{I_2}\Omega^{I_2}\Big(\big(H^{\alpha'\beta'}(h_{n-1}^\coeff) - H^{\alpha'\beta'}(h^\coeff_n)\big)\del_{\alpha'}\del_{\beta'}h_{\alpha\beta}^{\coeff,n}\Big)
\\
&+\del^{I_1}\Omega^{I_2}\big( F_H(S_n^{\coeff}) - F_H(S_{n-1}^\coeff)\big)
\\
&- [\del^{I_1}\Omega^{I_2},H^{\alpha'\beta'}\del_{\alpha'}\del_{\beta'}]\big(h_{\alpha\beta}^{\coeff,n+1} - h_{\alpha\beta}^{\coeff,n}\big).
\endaligned
$$
Then we apply \eqref{eq 1 lem 7-7-2},
$$
\aligned
&\frac{d}{dt}E_{g_n}(t,\del^{I_1}\Omega^{I_2}\big(h_{\alpha\beta}^{\coeff,n+1} - h_{\alpha\beta}^{\coeff,n}\big))
\\
& \leq C\big\|\del^{I_1}\Omega^{I_2}D_H(S_n^\coeff,S_{n-1}^\coeff)\big\|_{L^2(\RR^3)}
 + \big\|[\del^{I_1}\Omega^{I_2},H^{\alpha'\beta'}(h_n^\coeff)\del_{\alpha'}\del_{\beta'}]\big(h_{\alpha\beta}^{\coeff,n+1} - h_{\alpha\beta}^{\coeff,n}\big)\big\|_{L^2(\RR^3)}
\\
&+ \sum_{\alpha',\beta'}\big\|\nabla H^{\alpha'\beta'}(h_n^\coeff)\big\|_{L^\infty(\RR^3)}E_{g_n}(t,\del^{I_1}\Omega^{I_2}\big(h_{\alpha\beta}^{\coeff,n+1} - h_{\alpha\beta}^{\coeff,n}\big)).
\endaligned
$$
Note that by \eqref{eq 1 lem 7-10-2.3} and \eqref{eq pr1 lem 7-10-7},
$$
\big\|\nabla H^{\alpha'\beta'}(h_n^\coeff)\big\|_{L^\infty(\RR^3)}\leq C(\eps_0,d)A\eps.
$$
Then by Lemma~\ref{lem 7-12-1} and \ref{lem 7-12-3},
$$
\aligned
\frac{d}{dt}E_{g_n}(t,\del^{I_1}\Omega^{I_2}\big(h_{\alpha\beta}^{\coeff,n+1} - h_{\alpha\beta}^{\coeff,n}\big))
& \leq C(\eps_0,d)A\eps \big\|S_n(t,\cdot) - S_{n-1}(t,\cdot)\big\|_{E_\coeff^d}
\\
&+ C(\eps_0,d)A\eps\big\|h_{\alpha,\beta}^{\coeff,n+1} - h_{\alpha\beta}^{\coeff,n}\big\|_{E_P^d}
\\
&+  C(\eps_0)A\eps E_{g_n}(t,\del_x^{I_1}\Omega^{I_2}(h_{\alpha,\beta}^{\coeff,n+1} - h_{\alpha\beta}^{\coeff,n})).
\endaligned
$$
Then by taking the sum over all the pair of multi-index $(I_1,I_2)$ with $|I_1|+|I_2|\leq d-1$, and observe that (by \eqref{eq pr1 lem 7-10-7}):
$$
\|h_{\alpha,\beta}^{\coeff,n+1} - h_{\alpha\beta}^{\coeff,n}\big\|_{E_P^d}
\leq C(\eps_0,d)E_{g_n}^{d-1}(t,(h_{\alpha,\beta}^{\coeff,n+1} - h_{\alpha\beta}^{\coeff,n})),
$$
and the desired result is proven.
\end{proof}

The estimates on $\big(\phi_{n+1}^\coeff - \phi_n^\coeff\big)$ and $\big(\varrho_{n+1}^\coeff - \varrho_n^\coeff\big)$ are established in the same manner.

\begin{lemma}\label{lem 7-12-5}
Let $\{S_n^\coeff\}$ be the sequence constructed by \eqref{eq conformal aug sec7-9} which satisfies the uniform bound condition \eqref{eq pr1 lem 7-10-7} with $d\geq 4$. Then the following estimates hold:
\begin{subequations}\label{eq 1 lem 7-12-5}
\begin{equation}\label{eq 1 lem 7-12-5 a}
\frac{d}{dt}E_{g_n}^{d-1}(t,(\phi_{n+1}^\coeff - \phi_n^\coeff))
\leq C(\eps_0,d)A\eps\big\|S_n(t,\cdot) - S_{n-1}(t,\cdot)\big\|_{E_\coeff^d}
+ C(\eps_0,d)A\eps E_{g_n}^{d-1}(t,(\phi_{n+1}^\coeff - \phi_n^\coeff)),
\end{equation}
\begin{equation}\label{eq 1 lem 7-12-5 b}
\frac{d}{dt}E_{g_n,\coeff^{-1/2}}^{d-1}(t,(\varrho_{n+1}^\coeff - \varrho_n^\coeff))
\leq C(\eps_0,d)A\eps\big\|S_n(t,\cdot) - S_{n-1}(t,\cdot)\big\|_{E_\coeff^d}
+ C(\eps_0,d)A\eps E_{g_n,\coeff^{-1/2}}^{d-1}(t,(\varrho_{n+1}^\coeff - \varrho_n^\coeff)).
\end{equation}
\end{subequations}
\end{lemma}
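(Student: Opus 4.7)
The plan is to mimic step-by-step the argument of Lemma~\ref{lem 7-12-4}, substituting the wave or Klein-Gordon $L^2$ estimate as appropriate. The structure is dictated by the differenced equations \eqref{eq difference sec7-10 b} and \eqref{eq difference sec7-10 c}.

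For \eqref{eq 1 lem 7-12-5 a}, I would first apply $\del^{I_1}\Omega^{I_2}$ with $|I_1|+|I_2|\leq d-1$ to \eqref{eq difference sec7-10 b} and move the commutator to the right-hand side, obtaining
$$
g_n^{\alpha'\beta'}\del_{\alpha'}\del_{\beta'}\del^{I_1}\Omega^{I_2}(\phi_{n+1}^\coeff-\phi_n^\coeff) = \del^{I_1}\Omega^{I_2}D_P(S_n^\coeff,S_{n-1}^\coeff) - [\del^{I_1}\Omega^{I_2},H^{\alpha'\beta'}(h_n^\coeff)\del_{\alpha'}\del_{\beta'}](\phi_{n+1}^\coeff-\phi_n^\coeff),
$$
and then invoke Lemma~\ref{lem 7-7-2} (coercivity of $g_n$ is ensured by $A\eps\leq\eps_0$ together with \eqref{eq 1 7-2}). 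The commutator is controlled exactly as in Lemma~\ref{lem 7-12-3}, substituting $\phi_{n+1}^\coeff-\phi_n^\coeff$ for $h_{\alpha\beta}^{\coeff,n+1}-h_{\alpha\beta}^{\coeff,n}$, yielding the bound $C(\eps_0,d)A\eps\,\|\phi_{n+1}^\coeff-\phi_n^\coeff\|_{E_P^d}$. The source $D_P(S_n^\coeff,S_{n-1}^\coeff)$ consists of $(H^{\alpha'\beta'}(h_{n-1}^\coeff)-H^{\alpha'\beta'}(h_n^\coeff))\del_{\alpha'}\del_{\beta'}\phi_n^\coeff$ together with $g_n^{\alpha'\beta'}\del_{\alpha'}\phi_n^\coeff\del_{\beta'}\varrho_n^\coeff - g_{n-1}^{\alpha'\beta'}\del_{\alpha'}\phi_{n-1}^\coeff\del_{\beta'}\varrho_{n-1}^\coeff$; these are handled by Lemma~\ref{lem 7-10-2.47} and Lemma~\ref{lem 7-10-2.46}, using the same low-order/high-order split (on $(|J_1|+|J_2|)$ versus $(|J_1'|+|J_2'|)$) as in the proof of Lemma~\ref{lem 7-12-1}, and combined with the uniform bound \eqref{eq pr1 lem 7-10-7} to produce $C(\eps_0,d)A\eps\,\|S_n^\coeff-S_{n-1}^\coeff\|_{E_\coeff^d}$. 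Summing over $|I_1|+|I_2|\leq d-1$ and using the equivalence of $\|\cdot\|_{E_P^d}$ with $E_{g_n}^{d-1}(t,\cdot)$ yields \eqref{eq 1 lem 7-12-5 a}.

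For \eqref{eq 1 lem 7-12-5 b} the strategy is identical, but the differenced equation inherits the defocusing mass term $-(\varrho_{n+1}^\coeff-\varrho_n^\coeff)/(3\coeff)$ from \eqref{eq conformal aug sec7-9 c}, so I would invoke Lemma~\ref{lem 7-7-4} with $c=(3\coeff)^{-1/2}$ and the Klein-Gordon energy $E_{g_n,\coeff^{-1/2}}$. The commutator and the $\phi$-quadratic part of $D_R$ are handled verbatim. The genuinely new ingredient, and what I expect to be the main obstacle, is estimating $\coeff^{-1}[V_\rho(\varrho_n^\coeff)-V_\rho(\varrho_{n-1}^\coeff)]$ uniformly as $\coeff\to 0$. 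The key observation is that $V_\rho(s)=O(s^2)$ near the origin, so that writing
$$
\coeff^{-1}\bigl[V_\rho(\varrho_n^\coeff)-V_\rho(\varrho_{n-1}^\coeff)\bigr] = (\varrho_n^\coeff-\varrho_{n-1}^\coeff)\int_0^1\coeff^{-1}V_\rho'\bigl(\theta\varrho_n^\coeff+(1-\theta)\varrho_{n-1}^\coeff\bigr)\,d\theta
$$
permits splitting the singular prefactor as $\coeff^{-1/2}\cdot\coeff^{-1/2}$: one factor is absorbed by the rescaled $\varrho$-component of $\|S_n^\coeff-S_{n-1}^\coeff\|_{E_\coeff^d}$, while the other is absorbed by the uniform control $\coeff^{-1/2}\|\varrho_k^\coeff\|_{E^d}\leq A\eps$ from \eqref{eq pr1 lem 7-10-7} applied inside $V_\rho'$. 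Distributing derivatives by Lemma~\ref{lem 7-10-1} and applying the nonlinear product estimates of Lemmas~\ref{lem 7-10-2.47} and~\ref{lem 7-10-2.46} then produces the expected source bound $C(\eps_0,d)A\eps\,\|S_n^\coeff-S_{n-1}^\coeff\|_{E_\coeff^d}$, uniformly in $\coeff\in(0,1]$. Summation over $|I_1|+|I_2|\leq d-1$ followed by Lemma~\ref{lem 7-7-4} yields \eqref{eq 1 lem 7-12-5 b}.

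Once \eqref{eq 1 lem 7-12-5 a}--\eqref{eq 1 lem 7-12-5 b} are established (together with the analogous bound for $h_{\alpha\beta}^{\coeff,n+1}-h_{\alpha\beta}^{\coeff,n}$ already provided by Lemma~\ref{lem 7-12-4}), a Gronwall argument on the resulting system of three coupled differential inequalities on $[0,T]$ will give the contraction estimate \eqref{eq 1 prop 7-12-1} of Proposition~\ref{prop 7-12-1} on a sub-interval $[0,T^*]$ with $T^*$ independent of $\coeff$, closing the fixed-point argument.
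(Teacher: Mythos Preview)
Your proposal is correct and follows exactly the approach the paper intends: the paper gives no separate proof of Lemma~\ref{lem 7-12-5} beyond the sentence ``The estimates on $(\phi_{n+1}^\coeff - \phi_n^\coeff)$ and $(\varrho_{n+1}^\coeff - \varrho_n^\coeff)$ are established in the same manner,'' and your write-up is precisely that transcription of the proof of Lemma~\ref{lem 7-12-4}. In particular, your treatment of the singular term $\coeff^{-1}\bigl[V_\rho(\varrho_n^\coeff)-V_\rho(\varrho_{n-1}^\coeff)\bigr]$ via the $\coeff^{-1/2}\cdot\coeff^{-1/2}$ splitting is the right mechanism and is actually more explicit than what the paper sketches for the analogous $V_h$ term in Lemma~\ref{lem 7-12-1}.
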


At this juncture, we can finally estimate the $\mathcal{E}_{-1}$ norm of $\big(h_{\alpha\beta}^{\coeff,n+1} - h_{\alpha\beta}^{\coeff,n}\big)$.

\begin{lemma}\label{lem 7-12-6}
Let $\{S_n^\coeff\}$ be the sequence constructed by \eqref{eq conformal aug sec7-9} which satisfies the uniform bound condition \eqref{eq pr1 lem 7-10-7} with $d\geq 4$. Then the following estimate holds:
\begin{equation}\label{eq 1 lem 7-12-6}
\big\|h_{\alpha\beta}^{\coeff,n+1} - h_{\alpha\beta}^{\coeff,n}\big\|_{\mathcal{E}_{-1}}
\leq C(\eps_0)t^2A\eps\big\|S_n^\coeff - S_{n-1}^\coeff\big\|_{E_\coeff^d}.
\end{equation}
\end{lemma}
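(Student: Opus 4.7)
The plan is to apply the pointwise wave-equation estimate of Lemma~\ref{lem 7-7-1} to the difference equation \eqref{eq difference sec7-10 a}, after moving the perturbation term $H^{\alpha'\beta'}(h_n^\coeff)\del_{\alpha'}\del_{\beta'}$ to the right-hand side. Writing $w_{\alpha\beta}:=h_{\alpha\beta}^{\coeff,n+1}-h_{\alpha\beta}^{\coeff,n}$, this recasts \eqref{eq difference sec7-10 a} as
$$
\Box w_{\alpha\beta}=D_H(S_n^\coeff,S_{n-1}^\coeff)-H^{\alpha'\beta'}(h_n^\coeff)\,\del_{\alpha'}\del_{\beta'}w_{\alpha\beta},
$$
with vanishing initial data $w_{\alpha\beta}(0,\cdot)=\del_t w_{\alpha\beta}(0,\cdot)=0$. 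Consequently Lemma~\ref{lem 7-7-1} yields
$$
\|w_{\alpha\beta}(t,\cdot)\|_{\mathcal{E}_{-1}}\leq Ct\int_0^t\Big(\|D_H(S_n^\coeff,S_{n-1}^\coeff)(s,\cdot)\|_{\mathcal{E}_{-1}}+\|H^{\alpha'\beta'}(h_n^\coeff)\del_{\alpha'}\del_{\beta'}w_{\alpha\beta}(s,\cdot)\|_{\mathcal{E}_{-1}}\Big)\,ds,
$$
since the initial boundary terms vanish.

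First I would dispatch the source term $D_H$. Since $d\geq 4$, the global Sobolev inequality \eqref{eq 1 lem 7-6-1} gives $\|D_H(s,\cdot)\|_{\mathcal{E}_{-1}}\leq C\|D_H(s,\cdot)\|_{E^2}$, and applying Lemma~\ref{lem 7-12-1} (valid for multi-indices of order $\leq d-1\geq 3$, hence in particular for those of order $\leq 2$) one obtains
$$
\|D_H(S_n^\coeff,S_{n-1}^\coeff)(s,\cdot)\|_{\mathcal{E}_{-1}}\leq C(\eps_0,d)A\eps\,\|S_n^\coeff(s,\cdot)-S_{n-1}^\coeff(s,\cdot)\|_{E_\coeff^d}.
$$
For the quasi-linear piece, I write $\|H^{\alpha'\beta'}(h_n^\coeff)\del_{\alpha'}\del_{\beta'}w_{\alpha\beta}\|_{\mathcal{E}_{-1}}\leq \|H^{\alpha'\beta'}(h_n^\coeff)\|_{L^\infty}\|\del_{\alpha'}\del_{\beta'}w_{\alpha\beta}\|_{\mathcal{E}_{-1}}$, apply \eqref{eq 1 lem 7-10-2.3} together with the uniform bound \eqref{eq pr1 lem 7-10-7} to get $\|H^{\alpha'\beta'}(h_n^\coeff)\|_{L^\infty}\leq C(\eps_0,d)A\eps$, and apply \eqref{eq 1 lem 7-6-1} once more to obtain $\|\del_{\alpha'}\del_{\beta'}w_{\alpha\beta}\|_{\mathcal{E}_{-1}}\leq C\|w_{\alpha\beta}\|_{E_P^4}\leq C\|w_{\alpha\beta}\|_{E_P^d}$, using $d\geq 4$.

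The remaining ingredient is a bound on $\|w_{\alpha\beta}(s,\cdot)\|_{E_P^d}$ in terms of $\|S_n-S_{n-1}\|_{L^\infty_s E_\coeff^d}$. Exactly this is furnished by Lemma~\ref{lem 7-12-4}: since $w_{\alpha\beta}$ has vanishing initial data, Gronwall's inequality applied to \eqref{eq 1 lem 7-12-4} yields
$$
E_{g_n}^{d-1}(t,w_{\alpha\beta})\leq C(\eps_0,d)A\eps\int_0^t\|S_n^\coeff(s,\cdot)-S_{n-1}^\coeff(s,\cdot)\|_{E_\coeff^d}\,e^{C(\eps_0,d)A\eps(t-s)}\,ds,
$$
and since $A\eps t\leq A\eps T$ is kept bounded by the choices of Section 8.1, the exponential is absorbed into the constant. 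Recalling \eqref{eq coek0 7-7}, this gives $\|w_{\alpha\beta}(s,\cdot)\|_{E_P^d}\leq C(\eps_0,d)A\eps\,s\,\|S_n^\coeff-S_{n-1}^\coeff\|_{L^\infty([0,s];E_\coeff^d)}$.

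Plugging everything back produces
$$
\|w_{\alpha\beta}(t,\cdot)\|_{\mathcal{E}_{-1}}\leq Ct\int_0^t\Big(C(\eps_0)A\eps+C(\eps_0)(A\eps)^2 s\Big)\|S_n^\coeff-S_{n-1}^\coeff\|_{L^\infty([0,s];E_\coeff^d)}\,ds\leq C(\eps_0)t^2 A\eps\,\|S_n^\coeff-S_{n-1}^\coeff\|_{E_\coeff^d},
$$
where the cubic-in-$t$ contribution is absorbed into the quadratic one under the smallness condition $A\eps T\leq \eps_0\leq 1$. The main obstacle is purely bookkeeping: one must verify that $d\geq 4$ is enough to let the $\mathcal{E}_{-1}$ norms of the source be controlled by Sobolev embedding from $E^2$ and $E_P^4$, and to invoke Lemma~\ref{lem 7-12-1} at order $2\leq d-1$; the actual analytic content has already been distilled into Lemmas~\ref{lem 7-7-1}, \ref{lem 7-12-1}, and~\ref{lem 7-12-4}.
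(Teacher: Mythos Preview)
Your proof is correct and follows essentially the same route as the paper: rewrite \eqref{eq difference sec7-10 a} as $\Box w_{\alpha\beta}=D_H-H^{\alpha'\beta'}(h_n^\coeff)\del_{\alpha'}\del_{\beta'}w_{\alpha\beta}$, apply Lemma~\ref{lem 7-7-1} with vanishing initial data, and control the two pieces of the source in $\mathcal{E}_{-1}$ via the global Sobolev inequality \eqref{eq 1 lem 7-6-1} together with Lemma~\ref{lem 7-12-1}. The only cosmetic difference is in the product estimate for $H\,\del\del w$: the paper places the $\mathcal{E}_{-1}$ weight on $H^{\alpha'\beta'}(h_n^\coeff)$ and the $L^\infty$ bound on $\del_{\alpha'}\del_{\beta'}w_{\alpha\beta}$, whereas you do the reverse; both are valid and lead to the same $C(\eps_0)A\eps\,\|w\|_{E_P^4}$ control. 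Your argument is in fact more explicit than the paper's in that you spell out how $\|w\|_{E_P^d}$ is bounded via Lemma~\ref{lem 7-12-4} and Gronwall, and how the resulting $t^3(A\eps)^2$ term is absorbed into $t^2A\eps$ under $A\eps T\leq 1$.
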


\begin{proof} We are going to apply \eqref{eq 1 lem 7-7-1}. To do so we need to establish the following estimates:
\begin{equation}\label{eq pr1 lem 7-12-6}
\aligned
\big\|H^{\alpha'\beta'}(h_n^\coeff)\del_{\alpha'}\del_{\beta'}\big(h_{\alpha\beta}^{\coeff,n+1} - h_{\alpha\beta}^{\coeff,n}\big)\big\|_{\mathcal{E}_{-1}}
\leq C(\eps_0)A\eps \|h_{\alpha\beta}^{\coeff,n+1} - h_{\alpha\beta}^{\coeff,n}\|_{E_H^d},
\endaligned
\end{equation}
\begin{equation}\label{eq pr1 lem 7-12-6}
\big\|D_H(S_n^\coeff,S_{n-1}^\coeff)\big\|_{\mathcal{E}_{-1}}\leq C(\eps_0)A\eps \big\|S_{n}^\coeff - S_{n-1}^\coeff\big\|_{E_\coeff^d}.
\end{equation}
We see that \eqref{eq pr1 lem 7-12-6} follows from \eqref{eq 1 lem 7-12-1} and \eqref{eq 1 lem 7-6-1}.
To establish \eqref{eq pr1 lem 7-12-6}, we see that
$$
\aligned
\big\|H^{\alpha'\beta'}(h_n^\coeff)\del_{\alpha'}\del_{\beta'}\big(h_{\alpha\beta}^{\coeff,n+1} - h_{\alpha\beta}^{\coeff,n}\big)\big\|_{\mathcal{E}_{-1}}
& \leq \big\|H^{\alpha'\beta'}(h_n^\coeff)\big\|_{\mathcal{E}_{-1}} \big\|\del_{\alpha'}\del_{\beta'}\big(h_{\alpha\beta}^{\coeff,n+1} - h_{\alpha\beta}^{\coeff,n}\big)\big\|_{L^{\infty}(\RR^3)}
\\
& \leq C(\eps_0)A\eps \|h_{\alpha\beta}^{\coeff,n+1} - h_{\alpha\beta}^{\coeff,n}\|_{E_P^4},
\endaligned
$$
where \eqref{eq com' lem 7-6'-1} is applied.
\end{proof}

\begin{proof}[Proof of Proposition \ref{prop 7-12-1}]
We integrate \eqref{eq 1 lem 7-12-4}, \eqref{eq 1 lem 7-12-5 a} and \eqref{eq 1 lem 7-12-5 b} and get the following estimates:
\begin{subequations}\label{eq pr1 prop 7-12-1}
\begin{equation}\label{eq pr1 prop 7-12-1 a}
E_{g_n}^{d-1}(t,(h_{\alpha,\beta}^{\coeff,n+1} - h_{\alpha\beta}^{\coeff,n}))
\leq \big(e^{C(\eps_0,d)A\eps t} - 1\big) \big\|S_n^\coeff -S_{n-1}^\coeff\big\|_{L^{\infty}([0,T^*];E_\coeff^{d})}
\end{equation}
\begin{equation}\label{eq pr1 prop 7-12-1 b}
E_{g_n}^{d-1}(t,(\phi_{n+1}^\coeff - \phi_n^\coeff))
\leq \big(e^{C(\eps_0,d)A\eps t} - 1\big) \big\|S_n^\coeff -S_{n-1}^\coeff\big\|_{L^{\infty}([0,T^*];E_\coeff^{d})}
\end{equation}
\begin{equation}\label{eq pr1 prop 7-12-1 c}
E_{g_n,\coeff^{-1/2}}^{d-1}(t,(\varrho_{n+1}^\coeff -\varrho_n^\coeff))
\leq \big(e^{C(\eps_0,d)A\eps t} - 1\big) \big\|S_n^\coeff -S_{n-1}^\coeff\big\|_{L^{\infty}([0,T^*];E_\coeff^{d})}
\end{equation}
\end{subequations}

Recall that the metric $g_n$ is coercive with constant $C(\eps_0)$. We have 
$$
\aligned
&\|S_{n+1}^\coeff(t,\cdot) -S_n^\coeff(t,\cdot)\|_{E_\coeff^d}
\\
& \leq C(\eps_0,d)
\max\{E_{g_n}^{d-1}(t,(h_{\alpha,\beta}^{\coeff,n+1} - h_{\alpha\beta}^{\coeff,n})), E_{g_n}^{d-1}(t,(\phi_{n+1}^\coeff - \phi_n^\coeff)), E_{g_n,\coeff^{-1/2}}^{d-1}(t,(\varrho_{n+1}^\coeff -\varrho_n^\coeff))\}.
\endaligned
$$
Then we conclude with 
\begin{equation}
\|S_{n+1}^\coeff -S_n^\coeff\|_{L^{\infty}([0,T^*];E_\coeff^{d})}
\leq C(\eps_0,d)
\big(e^{C(\eps_0,d)A\eps T^*} - 1\big) \big\|S_n^\coeff - S_{n-1}^\coeff\big\|_{L^{\infty}([0,T^*];E_\coeff^{d})}.
\end{equation}
Then if we choose
$$
T^*= \frac{\ln \big(1+(2C(\eps_0))^{-1}\big)}{C(\eps_0)A\eps},
$$
then
$$
\lambda := e^{C(\eps_0)A\eps t} - 1 = 1/2<1, 
$$
which satisfies the contraction condition. Furthermore, recall that in Proposition \ref{prop 7-10-2} we can take $A = \eps_0\eps^{-1/3}$ and $T \geq C'(\eps_0,d)\eps^{-1/3}$ when $\eps$ sufficiently small. So here we can also take $T^* = C''(\eps_0,d)\eps^{-1/3}$ for $\eps$ sufficiently small. This leads to the limit of $T^*(\eps)$ when $\eps\rightarrow 0^+$.
\end{proof}
 
Now we apply the fixed point theorem of Banach and see that $\{S_n^\coeff\}$ converges to a triple $S^{\coeff} := (h^{\coeff},\phi^{\coeff},\varrho^{\coeff})$ in the sense of $L^{\infty}([0,T^*],E_\coeff^d)$. Then we will prove that $S^{\coeff}$ is a solution of \eqref{eq conformal aug sec7-9}.

\begin{proposition}\label{prop 7-10-3}
When $d\geq 4$,
the $S^{\coeff}$ constructed above is a solution of \eqref{eq conformal aug sec7-9} in the sense of distribution. Furthermore,
$$
\aligned
&h^{\coeff}_{\alpha\beta} \in C([0,T_0],E_H^d),\quad \del_t^k h^{\coeff}_{\alpha\beta} \in C([0,T_0],E_P^{d-k}), k\leq d
\\
&\phi^{\coeff} \in C([0,T_0],E_P^d),\quad \del_t^k h^{\coeff}_{\alpha\beta} \in C([0,T_0],E_P^{d-k}), k\leq d
\\
&\varrho^{\coeff} \in C([0,T_0],E_R^d),\quad \del_t^k h^{\coeff}_{\alpha\beta} \in C([0,T_0],E_R^{d-k}), k\leq d.
\endaligned
$$
Furthermore, $\|S^{\coeff}(t,\cdot)\|_{E_\coeff^d}\leq A\eps$ with $0\leq t\leq T^*$.
\end{proposition}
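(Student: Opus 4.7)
The plan is to combine the uniform bound of Proposition~\ref{prop 7-10-2} with the contraction of Proposition~\ref{prop 7-12-1} to produce a limit, then recover the higher regularity by a standard weak-compactness plus energy-continuity argument.

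First, Proposition~\ref{prop 7-12-1} shows that $\{S_n^{\coeff}\}$ is a Cauchy sequence in the Banach space $L^\infty([0,T^*]; E_{\coeff}^d)$ (with $d\geq 4$), hence converges to a limit $S^{\coeff}=(h^{\coeff}_{\alpha\beta},\phi^{\coeff},\varrho^{\coeff})$ in this norm. By Proposition~\ref{prop 7-10-2} each $S_n^{\coeff}$ lies in the bounded ball $\{\|\cdot\|_{E_{\coeff}^{d+1}}\leq A\eps\}$ uniformly in $n$ and $t$; since the norms $\|\cdot\|_{E_H^{d+1}}$, $\|\cdot\|_{E_P^{d+1}}$, $\|\cdot\|_{E_R^{d+1}}$ are all lower semicontinuous under the $E^d_H$, $E^d_P$, $E^d_R$ convergence (a consequence of the Banach--Alaoglu theorem applied to each spatial slice together with Fatou's lemma in $t$), we obtain the uniform bound $\|S^{\coeff}(t,\cdot)\|_{E_{\coeff}^{d+1}}\leq A\eps$ for a.e.\ $t\in[0,T^*]$; in particular the announced bound $\|S^{\coeff}(t,\cdot)\|_{E_{\coeff}^{d}}\leq A\eps$ holds.

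Next, I would pass to the limit in the iteration scheme \eqref{eq conformal aug sec7-9} term by term. Convergence of $h_n^{\coeff}\to h^{\coeff}$ in $L^\infty([0,T^*];E^d_H)$ with $d\geq 4$ is strong enough (via Lemma~\ref{lem 7-10-2.470}) that $H^{\alpha'\beta'}(h_n^{\coeff})\to H^{\alpha'\beta'}(h^{\coeff})$ uniformly and $\del_{\alpha'}\del_{\beta'}h_{\alpha\beta}^{\coeff,n+1}\to \del_{\alpha'}\del_{\beta'}h_{\alpha\beta}^{\coeff}$ in the sense of distributions, so the quasi-linear principal part converges. The right-hand sides $F_H, F_P, F_R$ involve at most first-order derivatives of $S_n^{\coeff}$, so by the difference estimates already used in Lemma~\ref{lem 7-12-1} (and its analogues for $F_P, F_R$) they converge in $L^\infty([0,T^*];E^{d-1})$. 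Hence $S^{\coeff}$ satisfies \eqref{eq conformal aug sec7-9} in the sense of distributions, and the initial conditions are preserved because pointwise restriction to $t=0$ is continuous on $E^d$.

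The remaining and main obstacle is to upgrade the $L^\infty_tE^{d+1}_x$ bound to genuine continuity in time at the top regularity, namely $h^{\coeff}_{\alpha\beta}\in C([0,T^*];E_H^d)$, $\phi^{\coeff}\in C([0,T^*];E_P^d)$, $\varrho^{\coeff}\in C([0,T^*];E_R^d)$, and the corresponding statements for $\del_t^k$ with $1\leq k\leq d$. Weak-$*$ limits need not be strongly continuous, so the standard remedy is to run the energy estimates of Lemmas~\ref{lem 7-7-3} and \ref{lem 7-7-4} \emph{directly on $S^{\coeff}$} now that it solves a genuine linear wave/Klein--Gordon equation with coefficients $g^{\coeff,\alpha\beta}$ of class $L^\infty_tE^{d+1}_x$ (with $|H^{00}|<1/2$ for $\eps_0$ small) and sources in $L^\infty_tE^d_x$. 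Arguing as in the proof of Proposition~\ref{prop 7-7-1}, approximating the coefficients, source, and initial data by smooth objects and using the uniform energy estimate yields strong continuity at the top level $E^{d+1}_P$; combined with the $\mathcal{E}_{-1}$ estimate of Lemma~\ref{lem 7-7-1} this upgrades $h^{\coeff}_{\alpha\beta}$ to $C([0,T^*];E_H^d)$, and Proposition~\ref{prop 7-7-3} does the analogous job for $\varrho^{\coeff}\in C([0,T^*];E_R^d)$. Finally, the regularity of the time derivatives $\del_t^k$ for $1\leq k\leq d$ is obtained inductively by differentiating the equations in time, solving algebraically for $\del_t^2$ as in Lemma~\ref{lem 7-7-5}, and propagating the lower-order continuity through the nonlinear estimates of Section~7; each successive time derivative loses one order of spatial regularity, which matches the statement of the proposition.
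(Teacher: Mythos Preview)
Your proposal is correct, and in fact more detailed than the paper's own proof, but you have made the argument harder than it needs to be by misidentifying where the ``main obstacle'' lies.

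The paper's proof is short: it simply observes that the strong convergence of $\{S_n^{\coeff}\}$ in $L^\infty([0,T^*];E_\coeff^d)$ (from the contraction of Proposition~\ref{prop 7-12-1}) is enough to pass to the limit on both sides of \eqref{eq conformal aug sec7-9}. The second-order derivatives $\del_{\alpha'}\del_{\beta'}S_{n+1}^{\coeff}$ appear \emph{linearly} in the iteration, and the strong $E_\coeff^d$-convergence already controls them in $E^{d-2}$ (hence in $L^\infty$ by Sobolev, since $d-2\geq 2$), so the product with the converging coefficients $H^{\alpha'\beta'}(h_n^{\coeff})$ converges. No weak compactness is invoked.

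The point you should reconsider is your ``main obstacle'': you worry that weak-$*$ limits need not be strongly continuous in time and therefore propose to re-run the linear energy estimates on the limit. But this is unnecessary. The contraction is in the \emph{sup-in-time} norm $\|\cdot\|_{L^\infty([0,T^*];E_\coeff^d)}$, i.e.\ it is \emph{strong} convergence, and by the linear theory (Propositions~\ref{prop 7-7-1}--\ref{prop 7-7-3} applied at level $d+1$) each iterate $S_n^{\coeff}$ already lies in $C([0,T^*];E_\coeff^d)$. A uniform limit of continuous functions is continuous, so $S^{\coeff}\in C([0,T^*];E_\coeff^d)$ is immediate; this is exactly the regularity claimed in the proposition (note the proposition asserts level-$d$ regularity, not $d+1$). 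Your Banach--Alaoglu step recovering the $E_\coeff^{d+1}$ bound on the limit is valid but not needed for what is claimed, and your re-running of Lemmas~\ref{lem 7-7-3}--\ref{lem 7-7-4} on $S^{\coeff}$ is a correct alternative route but redundant here. The inductive treatment of $\del_t^k$ via Lemma~\ref{lem 7-7-5} that you sketch is fine and matches the paper's (implicit) strategy.
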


\begin{proof}
The proof is based on taking the limit in both side of \eqref{eq conformal aug sec7-9}. The convergence of $\{S_n^{\coeff}\}$ in sense of $L^{\infty}([0,T^*];E_\coeff^d)$ can guarantee the convergence of both sides of \eqref{eq conformal aug sec7-9}.
Recall that the sequence $\{\del_t\del_t h_{\alpha\beta}^{\coeff,n}\}$ also converges in the sense $L^{\infty}([0,T^*];E^{d-2})$ and so does $\{\del_t\del_t\phi_n^\coeff\}$ and $\{\del_t\del_t\varrho_n^\coeff\}$.

The convergence of $\{S_n^\coeff\}$ in $E_\coeff^d$ guarantees the following convergence (remark that $\coeff\leq 1$:
\begin{equation}
\aligned
&h_{\alpha\beta}^{\coeff,n}\rightarrow h_{\alpha\beta}^\coeff \text{ in } L^{\infty}([0,T^*]; E_H^d),\quad
\nabla h_{\alpha\beta}^{\coeff,n}\rightarrow \nabla h_{\alpha\beta}^\coeff \text{ in } L^{\infty}([0,T^*]; E^{d-1}),
\\
&\phi_n^\coeff\rightarrow \phi^\coeff \text{ in } L^{\infty}([0,T^*]; E_P^d),\quad
\nabla \phi_n^\coeff\rightarrow \nabla \phi^\coeff \text{ in } L^{\infty}([0,T^*]; E^{d-1}),
\\
&\varrho_n^\coeff\rightarrow \varrho^\coeff \text{ in } L^{\infty}([0,T^*]; E_P^d)\cap L^{\infty}([0,T^*]; E^{d-1}),\quad
\nabla \varrho_n^\coeff\rightarrow \nabla \varrho^\coeff \text{ in } L^{\infty}([0,T^*]; E^{d-1}).
\endaligned
\end{equation}
Here, $\nabla$ denotes the spacetime divergence. By Sobolev embedding ($d-1\geq 2$), $\{h_n^\coeff\}$, $\{\nabla\phi_n^\coeff\}$ and $\{\varrho^\coeff_n\}$ converges in $L^{\infty}([0,T^*]\times\RR^3)$.
Furthermore, we have
\begin{equation}
\aligned
&\del_t\del_x h_{\alpha\beta}^{\coeff,n}\rightarrow \del_t\del_x h_{\alpha\beta}^\coeff \text{ in } L^{\infty}([0,T^*]; E^{d-2}),
\quad \del_x\del_x h_{\alpha\beta}^{\coeff,n}\rightarrow \del_x\del_x h_{\alpha\beta}^\coeff \text{ in } L^{\infty}([0,T^*]; E^{d-2})
\\
&\del_t\del_x \phi_n^\coeff \rightarrow \del_t\del_x \phi^\coeff \text{ in } L^{\infty}([0,T^*]; E^{d-2}),
\quad \del_x\del_x \phi_n^\coeff \rightarrow \del_x\del_x \phi^\coeff \text{ in } L^{\infty}([0,T^*]; E^{d-2})
\\
&\del_t\del_x h_{\alpha\beta}^{\coeff,n}\rightarrow \del_t\del_x h_{\alpha\beta}^\coeff \text{ in } L^{\infty}([0,T^*]; E^{d-2}),
\quad \del_x\del_x h_{\alpha\beta}^{\coeff,n}\rightarrow \del_x\del_x h_{\alpha\beta}^\coeff \text{ in } L^{\infty}([0,T^*]; E^{d-2}).
\endaligned
\end{equation}
These convergence properties are sufficient to guarantee the convergence of both side of \eqref{eq conformal aug sec7-9} since both side depend linearly the terms with second order derivatives. And the lower order terms converge in $L^\infty$ sense.
\end{proof}

\begin{proof}[Proof of Theorem \ref{prop 7-10-1}] We have checked that the triple $S^\coeff$ is a local solution of \eqref{eq conformal aug sec7}. Furthermore, we notice that the lower bound of life-span-time $T^*$ constructed in Proposition \ref{prop 7-10-3} does not depend on $\coeff$. The estimates are established by taking the limit of the \eqref{eq pr1 lem 7-10-7}.
\end{proof}


\newpage 

\section{Comparing the $f(R)$ theory to the classical theory}
\label{sec convergence}

\subsection{Statement of the main estimate}

In this section, we compare the solutions given by the $f(R)$ theory with the solutions of the classical Einstein theory. We denote by $S^0 := (h_{\alpha\beta},\phi)$ the triple determined by the following Cauchy problem: 
\begin{subequations}\label{eq Einstein-wave}
\begin{equation}\label{eq Einstein-wave a}
\big(m^{\alpha'\beta'}+H^{\alpha'\beta'}(h)\big)\del_{\alpha'}\del_{\beta'}h_{\alpha\beta} = F_{\alpha\beta}(h,\del h,\del h) -16\pi\del_{\alpha}\phi\del_\beta\phi,
\end{equation}
\begin{equation}\label{eq Einstein-wave b}
\big(m^{\alpha'\beta'}+H^{\alpha'\beta'}(h)\big)\del_{\alpha'}\del_{\beta'}\phi = 0
\end{equation}
\end{subequations}
with initial data
$$
\aligned
&h_{\alpha\beta}(0,x) = {h_0}_{\alpha\beta},\quad &\del_th_{\alpha\beta}(0,x) = {h_1}_{\alpha\beta},
\\
&\phi(0,x) = \phi_0, \quad &\del_t\phi(0,x) = \phi_1.
\endaligned
$$
This imiting problem is defined by replacing $\rho^\coeff$ by $0$ in our formulation (6.3). 
As before, if the initial data satisfies the corresponding constraint conditions, 
then $g_{\alpha\beta} = m_{\alpha\beta} + h_{\alpha\beta}$ and $\phi$ satisfy the classical Einstein's field equation coupled with the real masse less scalar field $\phi$. For the convenience of discussion we introduce the norm
$$
\big\|S_0\big\|_{X_0^{d+1}} := \max\{\|{h_0}_{\alpha\beta}\|_{X_H^{d+1}},\|{h_1}_{\alpha\beta}\|_{X^d},\,\|\phi_0\|_{X_P^{d+1}},\|\phi_1\|_{X^d}\}.
$$

\begin{proposition}[Local existence theory for the classical gravity system]
\label{thm 7-13-0}
Suppose that $({h_0}_{\alpha\beta},{h_1}_{\alpha\beta})\in X_H^{d+1}\times X^{d}$ and $(\phi_0,\phi_1)\in X_P^{d+1}\times X^d$ and with $d\geq 4$. Denote by $S_0 = (h_0,h_1,\phi_0,\phi_1)$ and assume that
$$
\big\|S_0\big\|_{X_0^{d+1}}\leq \eps\leq \eps_0\leq 1
$$
with a sufficiently small $\eps_0$. Then there exist positive constants $A, T^*$ determined from $\eps_0$, $\eps$ and $d$ such that the Cauchy problem \eqref{eq Einstein-wave} with initial data $S_0$ has a unique solution (in sense of distribution) $(h_{\alpha\beta},\phi)$ in the time interval $[0,T^*]$. Here
$$
\aligned
h_{\alpha\beta}\in C([0,T^*];E_H^d),\quad \phi\in C([0,T^*];E_P^d).
\endaligned
$$
When $\eps\rightarrow 0^+$, we can take
$$
\lim_{\eps\rightarrow 0^+}T^* = +\infty.
$$
Furthermore, the local solution satisfies the following estimates in the time interval $[0, T^*]$:
\begin{equation}\label{eq 1 thm 7-13-0}
\aligned
\big\|h_{\alpha\beta} \big\|_{E_H^d}+\big\|\phi\big\|_{E_P^d}  \leq A\eps.
\endaligned
\end{equation}
\end{proposition}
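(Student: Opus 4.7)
The strategy is a direct simplification of the proof of Theorem \ref{prop 7-10-1} carried out in Section~8. Since the classical Einstein--scalar system \eqref{eq Einstein-wave} is precisely the system \eqref{eq conformal aug sec7} with the auxiliary curvature variable $\rho^{\coeff}$ removed (equivalently, the formal limit $\rho^\coeff\equiv 0$), no new analytic ingredient is required: all linear wave estimates (Propositions \ref{prop 7-7-1}, \ref{prop 7-7-2}), the commutator estimates of Section~7.1, the Sobolev embeddings $X^d\subset \mathcal{E}_{-1}$ and the product/composition estimates of Section~7.4 apply verbatim and, in fact, with fewer terms to track.

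I would first set up the standard Picard iteration: let $(h^0_{\alpha\beta},\phi^0)$ solve the free Cauchy problem $\Box h^0_{\alpha\beta}=0$, $\Box\phi^0=0$ with the prescribed data, and inductively define $(h^{n+1}_{\alpha\beta},\phi^{n+1})$ as the unique solution (given by Propositions \ref{prop 7-7-1} and \ref{prop 7-7-2}) of the two linear Cauchy problems
\[
\bigl(m^{\alpha'\beta'}+H^{\alpha'\beta'}(h^n)\bigr)\del_{\alpha'}\del_{\beta'}h^{n+1}_{\alpha\beta}=F_{\alpha\beta}(h^n;\del h^n,\del h^n)-16\pi\del_\alpha\phi^n\del_\beta\phi^n,
\]
\[
\bigl(m^{\alpha'\beta'}+H^{\alpha'\beta'}(h^n)\bigr)\del_{\alpha'}\del_{\beta'}\phi^{n+1}=0,
\]
with the common initial data $(h_0,h_1,\phi_0,\phi_1)$. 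For $\eps_0$ sufficiently small, the smallness of $\|h^n_{\alpha\beta}\|_{E_H^d}\le A\eps$ guarantees coercivity of the metric $m+H(h^n)$ with a uniform constant and that $|H^{00}(h^n)|<1/2$, so the linear theory is applicable on a common time interval.

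Next I would establish the uniform bound $\|h^n_{\alpha\beta}(t,\cdot)\|_{E_H^d}+\|\phi^n(t,\cdot)\|_{E_P^d}\le A\eps$ on $[0,T^*]$ by induction on~$n$, exactly as in Lemma \ref{lem 7-10-5} and Lemma \ref{lem 7-10-6}. The two ingredients are (i) the $L^2$ energy inequality obtained by differentiating with $\del^{I_1}\Omega^{I_2}$ for $|I_1|+|I_2|\le d$, combined with the commutator estimate \eqref{eq 1 lem 7-10-4a} and the nonlinear product estimates \eqref{eq 1 lem 7-10-2.5 a}--\eqref{eq 1 lem 7-10-2.5 b}; and (ii) the $\mathcal{E}_{-1}$ pointwise estimate \eqref{eq 1 lem 7-7-1} applied to $\Box h^{n+1}_{\alpha\beta}$ after transferring $H^{\alpha'\beta'}\del_{\alpha'}\del_{\beta'}h^{n+1}_{\alpha\beta}$ to the right-hand side. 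Choosing $A=\eps_0\eps^{-1/3}$ and $T^*\asymp \eps^{-1/3}$ as in Proposition \ref{prop 7-10-2} closes the bootstrap, and clearly $T^*\to\infty$ as $\eps\to 0^+$.

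The contraction step follows Proposition \ref{prop 7-12-1}: writing the equations satisfied by $h^{n+1}-h^n$ and $\phi^{n+1}-\phi^n$, the right-hand sides take the form $\bigl(H(h^{n-1})-H(h^n)\bigr)\del\del h^n+\bigl(F(h^n,\del h^n,\del h^n)-F(h^{n-1},\del h^{n-1},\del h^{n-1})\bigr)+\cdots$, and the nonlinear difference estimates (Lemmas \ref{lem 7-10-2.45}, \ref{lem 7-10-2.46}, \ref{lem 7-10-2.47}) give, for $d\ge 4$,
\[
\|S_{n+1}-S_n\|_{L^\infty([0,T^*];E_H^{d-1}\times E_P^{d-1})}\le \tfrac12\|S_n-S_{n-1}\|_{L^\infty([0,T^*];E_H^{d-1}\times E_P^{d-1})},
\]
after shrinking $T^*$ by a fixed factor. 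Banach's fixed point theorem supplies a limit $S=(h_{\alpha\beta},\phi)$ in $L^\infty([0,T^*];E_H^{d-1}\times E_P^{d-1})$, and passing to the limit in the nonlinear equation (which is possible because second derivatives converge in the lower-regularity $E^{d-2}$ norm while coefficients converge in $L^\infty$) shows that $S$ solves \eqref{eq Einstein-wave} distributionally. The uniform bound \eqref{eq 1 thm 7-13-0} is inherited from the iterates by weak-$*$ lower semicontinuity, and standard regularity upgrades, together with uniqueness from Lemma \ref{lem 7-7-2}, yield the stated continuity $h_{\alpha\beta}\in C([0,T^*];E_H^d)$ and $\phi\in C([0,T^*];E_P^d)$.

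The only step that requires genuine care, rather than mere transcription, is the treatment of the asymptotic flatness: since the data decay only like $r^{-1}$, $h_0$ is not in $L^2(\RR^3)$, so the $L^2$ energy identities must be applied to $\del^{I_1}\Omega^{I_2}h$ with $|I_1|+|I_2|\ge 1$ and the $\mathcal{E}_{-1}$ bound for $h$ itself must be recovered from the pointwise representation formula, as in Lemma \ref{lem 7-7-1}. This is exactly the purpose of the hybrid norm $E_H^d=\mathcal{E}_{-1}+E_P^d$ designed in Section~7.4, and all the embeddings needed to close the estimates are already available. Since this difficulty is already overcome in the more delicate modified-gravity case, no additional argument is needed here.
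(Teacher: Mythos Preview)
Your proposal is correct and follows exactly the approach the paper indicates: the paper's own proof simply states that the argument is similar to that of Theorem \ref{prop 7-10-1}, via iteration, uniform bounds, and contraction, and omits the details. Your write-up faithfully specializes that machinery to the simpler system \eqref{eq Einstein-wave}, including the correct handling of the $\mathcal{E}_{-1}$ component via Lemma \ref{lem 7-7-1} and the same choice of $A$ and $T^*$ as in Proposition \ref{prop 7-10-2}.
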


The proof is similar to that of Theorem \ref{prop 7-10-1}: we make an iteration and estimate the sequence constructed by this iteration and we prove that with suitable choice of $(A, T^*)$, this sequence is contracting. The details of the argument are omitted. 

Let $S_0 = ({h^0_0}_{\alpha\beta},{h^0_1}_{\alpha\beta},\phi^0_0,\phi^0_1)$ be an initial data which satisfies the Einstein's constraint equation \eqref{GC-3} and $S_1=({h_0}_{\alpha\beta},{h_1}_{\alpha\beta},\phi_0,\phi_1,\varrho_0,\varrho_1)$ be an initial data which satisfies the nonlinear constraint equations \eqref{constraint trans-aug Hamiltonian} and \eqref{constraint trans-aug momentum}. Define the following function $\mathcal{D}_{\coeff}(S_0,S_1)$:
$$
\aligned
\mathcal{D}_{\coeff}^d(S_0,S_1):= \max\{& \big\|{h^0_0}_{\alpha\beta} - {h_0}_{\alpha\beta}\big\|_{X_H^{d+1}},\,
\big\|{h^0_1}_{\alpha\beta} - {h_1}_{\alpha\beta}\big\|_{X^d},\,\big\|\phi^0_0 - \phi_0\big\|_{X_P^{d+1}},\,
\\
&\big\|\phi^0_1 - \phi_1\big\|_{X_P^{d+1}},\big\|\varrho_0\big\|_{X_P^{d+1}},\big\|\varrho_1\big\|_{X^d}, \coeff^{-1/2}\big\|\varrho_0\big\|_{X^d}\}.
\endaligned
$$
Denote by
$$
S^0(t) = (h^0_{\alpha\beta}(t),\phi^0(t))\in  C([0,T^*];X_H^d)\cap C^1([0,T];X_H^{d-1})\times C([0,T^*];X_P^d)\cap C^1([0,T];X_P^{d-1}).
$$
 the local solution of Cauchy problem \eqref{eq Einstein-wave} with initial data $S^0(0) = S_0$, and
$$
\aligned
S^\coeff(t)
&= (h^\coeff_{\alpha\beta},\phi^\coeff,\varrho^\coeff)\in C([0,T^*];X_H^d)\cap C^1([0,T];X_H^{d-1})\times C([0,T^*];X_P^d)\cap C^1([0,T];X_P^{d-1})
\\
&\times C([0,T^*];X_P^d)\cap C^1([0,T];X_P^{d-1})\cap C([0,T^*];X_R^{d-1}).
\endaligned
$$
We introduce the ``distance'' form $S^0$ to $S^\coeff$:
$$
\mathcal {D}^{d}(S^0,S^\coeff)(t)
:= \sum_{\alpha\beta}\|h^0_{\alpha\beta} - h^\coeff_{\alpha\beta}\|_{E_H^d} + \|\phi^0-\phi^\coeff\|_{E_P^d}
$$
and we are ready to state the key estimate derived in the present work.

\begin{theorem}[Comparison estimate]\label{thm 7-13-1}
There exists a positive constant $\eps_0$ such that if
$$
\max\{\|S_1\|_{X_\coeff^{d+1}},\,\|S_0\|_{X_0^{d+1}}\} \leq \eps\leq \eps_0\leq 1
$$
(with $d\geq 4$), then in the common interval of existence $[0,T^*]$ (which depends only on $\eps, \eps_0$ and $d$), the following estimates hold:
\begin{equation}\label{eq 1 thm 7-13-1}
\mathcal{D}^{d-1}(S^0,S^\coeff)(t)
\leq C(\eps_0,d)\big(\mathcal{D}^{d-1}(S^0,S^\coeff)(0)
+\big( E_{g,\coeff^{-1/2}}^{d-2}(0,\varrho^\coeff)\big)^2 + \coeff^{1/2}(A\eps)^3\big).
\end{equation}
\end{theorem}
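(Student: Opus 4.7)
The plan is to analyze $(\delta h,\delta\phi):=(h^0-h^\coeff,\phi^0-\phi^\coeff)$ by viewing the classical system \eqref{eq Einstein-wave} as the formal $\varrho\equiv 0$ limit of the augmented conformal system \eqref{eq conformal aug sec7}. Subtracting the two gives a quasi-linear wave system of the schematic form
\begin{equation*}
(m+H(h^\coeff))^{\alpha'\beta'}\del_{\alpha'}\del_{\beta'}\delta h_{\alpha\beta}=L_h(\delta h,\delta\phi)+Q_h(\varrho^\coeff,h^\coeff),\qquad
(m+H(h^\coeff))^{\alpha'\beta'}\del_{\alpha'}\del_{\beta'}\delta\phi=L_\phi(\delta h,\delta\phi)+Q_\phi(\varrho^\coeff,\phi^\coeff),
\end{equation*}
where $L_h,L_\phi$ collect the linear-in-differences source terms arising from $(H(h^\coeff)-H(h^0))\del\del h^0$, $F(h^0;\cdot,\cdot)-F(h^\coeff;\cdot,\cdot)$, and the $\phi$-quadratic mismatch, while $Q_h=-12\del\varrho^\coeff\del\varrho^\coeff-\coeff^{-1}V_h(\varrho^\coeff)(m+h^\coeff)$ and $Q_\phi=-2(m+H(h^\coeff))\del\phi^\coeff\del\varrho^\coeff$ are the ``$\coeff$-driven'' forcings which would vanish in the Einsteinian limit $\varrho^\coeff\equiv 0$. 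I would apply the energy estimate of Proposition~\ref{prop 7-7-1} with background metric $g^\coeff$ at regularity level $d-1$ (one derivative less than the uniform estimate, forced by the factor $\del\del h^0$ in $L_h$), and invoke the nonlinear composition bound Lemma~\ref{lem 7-10-2.47} together with the uniform estimates $\|S^\coeff\|_{E_\coeff^d}+\|S^0\|_{E^d}\leq CA\eps$ supplied by Theorem~\ref{prop 7-10-1} and Proposition~\ref{thm 7-13-0}; this yields the absorbable control $\|L_h\|_{E^{d-1}}+\|L_\phi\|_{E^{d-1}}\leq CA\eps\,\mathcal{D}^{d-1}(t)$.

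The serious task is to estimate the $\varrho$-forcings. Since $V_h(s)=O(s^2)$ near $0$, Lemma~\ref{lem 7-10-2} gives the pointwise-type bound $\|\coeff^{-1}V_h(\varrho^\coeff)(m+h^\coeff)\|_{E^{d-1}}\leq C\coeff^{-1}\|\varrho^\coeff\|_{L^\infty}\|\varrho^\coeff\|_{E^{d-1}}$, so the dangerous $\coeff^{-1}$ must be compensated by extracting a factor $\coeff$ from $\|\varrho^\coeff\|_{E^{d-1}}^2$. This compensation comes from propagating the Klein--Gordon equation \eqref{eq conformal aug sec7 c} for $\varrho^\coeff$ via Proposition~\ref{prop 7-7-3} at level $d-2$: since the source $\coeff^{-1}V_\rho(\varrho^\coeff)-\frac{4\pi}{3e^{2\varrho^\coeff}}\gv^{\alpha'\beta'}\del_{\alpha'}\phi^\coeff\del_{\beta'}\phi^\coeff$ of that KG equation is bounded in $E^{d-2}$ by $C(A\eps)^2$ (using $V_\rho=O(s^2)$ and $\|\varrho^\coeff\|_{L^\infty}\leq C\coeff^{1/2}A\eps$), the mass-$\coeff^{-1/2}$ KG-energy propagation produces
\begin{equation*}
\|\varrho^\coeff(t,\cdot)\|_{E_P^{d-1}}+\coeff^{-1/2}\|\varrho^\coeff(t,\cdot)\|_{E^{d-2}}\leq C\bigl(E^{d-2}_{g,\coeff^{-1/2}}(0,\varrho^\coeff)+\coeff^{1/2}(A\eps)^2\bigr)
\end{equation*}
on $[0,T^*]$, the extra $\coeff^{1/2}$ being precisely the mass trade that converts a KG-energy bound into a bound on $\|\varrho^\coeff\|_{E^{d-2}}$ itself. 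Squaring and inserting into the estimate for $Q_h$ yields, after time integration,
\begin{equation*}
\int_0^t\bigl(\|Q_h\|_{E^{d-1}}+\|Q_\phi\|_{E^{d-1}}\bigr)ds\leq C\bigl(E^{d-2}_{g,\coeff^{-1/2}}(0,\varrho^\coeff)\bigr)^2+C\coeff^{1/2}(A\eps)^3.
\end{equation*}

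Combining the linear and $\varrho$-driven contributions, I arrive at the Gr\"onwall-type inequality
\begin{equation*}
\mathcal{D}^{d-1}(t)\leq C\mathcal{D}^{d-1}(0)+C\bigl(E^{d-2}_{g,\coeff^{-1/2}}(0,\varrho^\coeff)\bigr)^2+C\coeff^{1/2}(A\eps)^3+CA\eps\int_0^t\mathcal{D}^{d-1}(s)\,ds,
\end{equation*}
and Gr\"onwall with small coefficient $A\eps$ on the finite common interval $[0,T^*]$ delivers \eqref{eq 1 thm 7-13-1}. The main obstacle throughout is the uniform-in-$\coeff$ control: every occurrence of the dangerous factor $\coeff^{-1}$ in the defocusing potential $\coeff^{-1}V_h(\varrho^\coeff)$ must be absorbed by a squared $\coeff^{1/2}$-gain coming from the KG propagation of $\varrho^\coeff$, and the nonlinear estimates of Section~8 must be invoked in their $\coeff$-sensitive form so that every constant above is genuinely independent of $\coeff\in(0,1]$. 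This is precisely the mechanism by which modified-gravity developments converge to Einstein developments as $\coeff\to 0^+$.
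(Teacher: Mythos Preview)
Your overall strategy---subtracting the two systems, applying energy estimates at one derivative below the uniform level, and isolating the $\varrho^\coeff$-driven forcing---matches the paper's. The gap is in your refined estimate on $\varrho^\coeff$. You claim that Proposition~\ref{prop 7-7-3} applied at level $d-2$ yields
\[
\|\varrho^\coeff\|_{E_P^{d-1}}+\coeff^{-1/2}\|\varrho^\coeff\|_{E^{d-2}}\leq C\bigl(E^{d-2}_{g,\coeff^{-1/2}}(0,\varrho^\coeff)+\coeff^{1/2}(A\eps)^2\bigr),
\]
attributing the extra $\coeff^{1/2}$ to a ``mass trade''. This is not what the standard Klein--Gordon energy estimate gives: the source $F_R$ of \eqref{eq conformal aug sec7 c} satisfies only $\|F_R\|_{E^{d-2}}\leq C(A\eps)^2$ with \emph{no} $\coeff$-gain (the $\phi$-quadratic term carries none), so Proposition~\ref{prop 7-7-3} produces $E^{d-2}_{g,\coeff^{-1/2}}(t,\varrho^\coeff)\leq C\bigl(E(0)+t(A\eps)^2\bigr)$. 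Squaring and inserting into $\coeff^{-1}V_h(\varrho^\coeff)$ then leaves a residual term $C(A\eps)^4$ that is \emph{independent of $\coeff$}, so the resulting bound on $\mathcal{D}^{d-1}(t)$ does not vanish as $\coeff\to 0$ and the theorem as stated is not recovered.

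The paper closes this gap not with the energy \emph{inequality} but with the energy \emph{identity} (Lemma~\ref{lem 7-13-2}): for $v=\del^{I_1}\Omega^{I_2}\varrho^\coeff$ with $|I_1|+|I_2|\leq d-2$, the cross term $\int\del_t v\cdot\del^{I_1}\Omega^{I_2}F_R$ is bounded using the fact that $\del_t v$ sits at derivative level $d-1$, where the uniform estimate \eqref{eq 1 prop 7-10-1 c} already supplies $\|\del_t v\|_{L^2}\leq C\coeff^{1/2}A\eps$ (Lemma~\ref{lem 7-13-1}). This converts $\int\|F_R\|\sim t(A\eps)^2$ into $\int\|\del_t v\|\,\|F_R\|\sim t\coeff^{1/2}(A\eps)^3$, yielding $\coeff^{-1}\|\varrho^\coeff\|_{E^{d-2}}^2\leq C(E(0))^2+Ct\coeff^{1/2}(A\eps)^3$ and hence the $\coeff^{1/2}(A\eps)^3$ in \eqref{eq 1 thm 7-13-1}. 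A secondary omission: $\mathcal{D}^{d-1}$ includes the $\mathcal{E}_{-1}$ norm of $h^0-h^\coeff$ (through $E_H^{d-1}$), which you do not address; the paper handles this separately in Step~II via Lemma~\ref{lem 7-7-1}.
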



\subsection{Proof of the comparison estimate}

The proof of Theorem \ref{thm 7-13-1} requires better estimates on $\|\varrho_{n+1}^{\coeff}\|_{X^{d-1}}$.  
First we establish an improved bound on the $L^2$ norm of $\del_t\del^{I_1}\Omega^{I_2}\varrho^\coeff$. The following lemma is immediate from \eqref{eq 1 prop 7-10-1 c}.

\begin{lemma}\label{lem 7-13-1}
Let $S^{\coeff} = (h_{\alpha\beta}^\coeff, \phi^\coeff, \varrho^\coeff)$ be the solution of Cauchy problem \eqref{eq conformal aug sec7} with $d\geq 4$. Then the following estimate holds for all $|I_1|+|I_2|\leq d-2$:
\begin{equation}\label{eq 1 lem 7-13-1}
\coeff^{-1/2}\|\del_t\del^{I_1}\Omega^{I_2}\varrho^\coeff\|_{L^2(\RR^3)} \leq C(\eps_0,d)A\eps.
\end{equation}
\end{lemma}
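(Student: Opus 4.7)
The plan is to simply unpack the definition of the norm $\|\cdot\|_{E^{d-1}}$ and invoke the uniform bound on $\varrho^\coeff$ already established in Theorem~\ref{prop 7-10-1}.

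Recall that by definition
\[
\|\varrho^\coeff(t,\cdot)\|_{E^{d-1}} = \sum_{|J_1|+|J_2|\leq d-1}\|\del^{J_1}\Omega^{J_2}\varrho^\coeff(t,\cdot)\|_{L^2(\RR^3)}.
\]
Given any pair $(I_1,I_2)$ with $|I_1|+|I_2|\leq d-2$, the operator $\del_t\del^{I_1}\Omega^{I_2}$ is of the form $\del^{J_1}\Omega^{I_2}$ with $J_1 = (0,I_1)$ (prepending a time derivative to the list $I_1$), so that $|J_1|+|I_2| = |I_1|+|I_2|+1 \leq d-1$. Consequently
\[
\|\del_t\del^{I_1}\Omega^{I_2}\varrho^\coeff(t,\cdot)\|_{L^2(\RR^3)}
\leq \|\varrho^\coeff(t,\cdot)\|_{E^{d-1}}.
\]

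Multiplying by $\coeff^{-1/2}$ and applying the uniform bound \eqref{eq 1 prop 7-10-1 c}, namely
\[
\coeff^{-1/2}\|\varrho^\coeff(t,\cdot)\|_{E^{d-1}}\leq A\eps,
\]
valid throughout the common existence interval $[0,T^*]$, immediately yields
\[
\coeff^{-1/2}\|\del_t\del^{I_1}\Omega^{I_2}\varrho^\coeff(t,\cdot)\|_{L^2(\RR^3)} \leq A\eps \leq C(\eps_0,d) A\eps,
\]
as claimed. There is no genuine obstacle here: the lemma is an extraction of a single summand from the uniform estimate, isolated for convenient reference in the comparison argument of the next subsection, where the factor $\coeff^{-1/2}$ on the time derivative of $\varrho^\coeff$ will be used to absorb the singular prefactor $\coeff^{-1/2}$ that appears when comparing the augmented system to the classical Einstein-scalar field system.
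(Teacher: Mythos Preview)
Your proof is correct and matches the paper's approach exactly: the paper simply states that the lemma ``is immediate from \eqref{eq 1 prop 7-10-1 c}'' without further argument, and you have spelled out precisely why it is immediate---namely, that $\del_t\del^{I_1}\Omega^{I_2}$ with $|I_1|+|I_2|\le d-2$ is one of the summands in the $E^{d-1}$ norm.
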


\begin{lemma}\label{lem 7-13-2}
There exists a positive constant $\eps_0$ such that if \eqref{eq 1 7-10} holds for $d\geq 4$ and $A\eps\leq \eps_0$, then
\begin{equation}\label{eq 1 lem 7-13-2}
\|\varrho^\coeff(t,\cdot)\|^2_{E^{d-2}}
\leq \coeff^{3/2}C(\eps_0, d)t(A\eps)^3 + C(\eps_0,d)\coeff \big(E_{g,\coeff^{-1/2}}^{d-2}(0, \varrho^\coeff)\big)^2.
\end{equation}
\end{lemma}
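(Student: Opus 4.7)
The plan is to run a Klein–Gordon energy estimate on the equation satisfied by $\varrho^\coeff$ (that is, the limit as $n\to\infty$ of \eqref{eq conformal aug sec7-9 c}), extracting from the energy identity two \emph{separate} factors whose product is small in $\coeff$: an $L^2$ estimate on the source term that is uniform in $\coeff$, and an $L^2$ estimate on $\del_t\varrho^\coeff$ that supplies the crucial factor $\coeff^{1/2}$ provided by Lemma~\ref{lem 7-13-1}. Writing the equation as
$$
\big(m^{\alpha'\beta'}+H^{\alpha'\beta'}(h^\coeff)\big)\del_{\alpha'}\del_{\beta'}\varrho^\coeff-\tfrac{1}{3\coeff}\varrho^\coeff=-F^\coeff,
\qquad c:=\coeff^{-1/2},
$$
with source $F^\coeff:=-\coeff^{-1}V_\rho(\varrho^\coeff)+\frac{4\pi}{3e^{2\varrho^\coeff}}\big(m^{\alpha'\beta'}+H^{\alpha'\beta'}(h^\coeff)\big)\del_{\alpha'}\phi^\coeff\del_{\beta'}\phi^\coeff$, I apply $\del^{I_1}\Omega^{I_2}$ for each $|I_1|+|I_2|\leq d-2$.

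The critical maneuver is to reproduce the energy identity from the proof of Lemma~\ref{lem 7-7-4} but to keep $E^2$ rather than $E$, so as to preserve the $\|\del_t\varrho^\coeff\|_{L^2}$ factor on the right-hand side:
$$
\tfrac{1}{2}\tfrac{d}{dt}\big(E_{\gv,c}(t,\del^{I_1}\Omega^{I_2}\varrho^\coeff)\big)^2
\leq \|\del_t\del^{I_1}\Omega^{I_2}\varrho^\coeff\|_{L^2}\big(\|\del^{I_1}\Omega^{I_2}F^\coeff\|_{L^2}+\|[\del^{I_1}\Omega^{I_2},H^{\alpha\beta}\del_\alpha\del_\beta]\varrho^\coeff\|_{L^2}\big)+C\|\nabla h^\coeff\|_{L^\infty}\big(E_{\gv,c}\big)^2.
$$
Note that had I used the $dE/dt$ form instead, dividing by $E$ would have absorbed the $\|\del_t\varrho^\coeff\|_{L^2}$ factor (via $\|\del_t\varrho^\coeff\|_{L^2}\lesssim E$) and killed the smallness.

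The two quantitative inputs are then: (i) Lemma~\ref{lem 7-13-1} applied termwise gives $\|\del_t\del^{I_1}\Omega^{I_2}\varrho^\coeff\|_{L^2}\leq C(\eps_0,d)\,\coeff^{1/2}A\eps$; (ii) the source $\|F^\coeff\|_{E^{d-2}}$ is bounded by $C(\eps_0,d)(A\eps)^2$ \emph{uniformly} in $\coeff$: the quadratic vanishing $V_\rho(s)=-s^2+O(s^3)$ combined with Lemma~\ref{lem 7-10-2} and the uniform bound $\|\varrho^\coeff\|_{E^{d-1}}\leq \coeff^{1/2}A\eps$ yields $\|\coeff^{-1}V_\rho(\varrho^\coeff)\|_{E^{d-2}}\leq C\coeff^{-1}\|\varrho^\coeff\|_{E^{d-2}}^2\leq C(A\eps)^2$, while the $\phi^\coeff$ quadratic contribution is estimated by Lemma~\ref{lem 7-10-2.4}; the commutator term is of order $\coeff^{1/2}(A\eps)^2$ by Lemma~\ref{lem 7-10-4} and is therefore negligible. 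Combining (i) and (ii) and summing over $(I_1,I_2)$ yields
$$
\tfrac{d}{dt}\big(E_{\gv,c}^{d-2}(t,\varrho^\coeff)\big)^2\leq C(\eps_0,d)\,\coeff^{1/2}(A\eps)^3+C(\eps_0,d)A\eps\,\big(E_{\gv,c}^{d-2}(t,\varrho^\coeff)\big)^2.
$$

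Integrating by Gronwall on $[0,T^*]$ and observing that by the choice of constants in Proposition~\ref{prop 7-10-2} the product $A\eps\,T^*$ is bounded by a constant depending only on $\eps_0$ and $d$, I obtain $(E_{\gv,c}^{d-2}(t,\varrho^\coeff))^2\leq C(E_{\gv,c}^{d-2}(0,\varrho^\coeff))^2+C\coeff^{1/2}(A\eps)^3\,t$. The conclusion \eqref{eq 1 lem 7-13-2} then follows by the coercivity of $\gv$ applied to the mass part of the energy, which gives $\|\varrho^\coeff\|_{E^{d-2}}^2\leq C\coeff\,(E_{\gv,c}^{d-2}(t,\varrho^\coeff))^2$ (the factor $\coeff$ arising from $c^{-2}=\coeff$). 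The main obstacle I anticipate is item~(ii): one must genuinely exploit the quadratic vanishing of $V_\rho$ so that the prefactor $\coeff^{-1}$ is absorbed by the $\coeff^{1/2}$-smallness of $\varrho^\coeff$, producing a uniform-in-$\coeff$ source estimate rather than one that diverges as $\coeff\to 0$; a second, purely bookkeeping issue is managing the difference between the $E_{\gv,c}^{d-2}$ norm (a sum of energies) and the $\|\cdot\|_{E^{d-2}}$ norm (a sum of $L^2$ norms squared) in the final step, which only costs a combinatorial constant absorbed into $C(\eps_0,d)$.
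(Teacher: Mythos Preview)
Your approach is correct and essentially the same as the paper's: both run the squared Klein--Gordon energy identity at level $d-2$, pair the $\coeff^{1/2}$-smallness of $\del_t\del^{I_1}\Omega^{I_2}\varrho^\coeff$ from Lemma~\ref{lem 7-13-1} with the uniform $(A\eps)^2$ bound on the source, and then read off $\|\varrho^\coeff\|_{E^{d-2}}^2\leq C\coeff\,(E_{g,\coeff^{-1/2}}^{d-2})^2$ from the mass term of the energy. The only minor difference is that the paper bounds the metric-derivative contribution $\int\del g^{\alpha\beta}\,\del_\alpha v\,\del_\beta v$ directly as $C t\,\coeff(A\eps)^3$ (using $\|\nabla v\|_{L^2}\leq\|\varrho^\coeff\|_{E^{d-1}}\leq\coeff^{1/2}A\eps$) and so dispenses with Gronwall altogether, whereas you absorb that term via Gronwall together with the boundedness of $A\eps\,T^*$.
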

\begin{proof}
This is a modified energy estimate. We derive the equation \eqref{eq conformal aug sec7 c} with respect to $\del^{I_1}\Omega^{I_2}$ with $|I_1|+|I_2|\leq d-2$. 
As in the proof of Lemma~\ref{lem 7-7-4}:
$$
\aligned
&\del_t\del^{I_1}\Omega^{I_2}\varrho^\coeff \big(g^{\alpha\beta}\del_{\alpha}\del_{\beta}\del^{I_1}\Omega^{I_2}\varrho^\coeff - 3\coeff^{-1}\del^{I_1}\Omega^{I_2}\varrho^\coeff\big)
\\
&= \frac{1}{2}\del_0\big(g^{00}(\del_0 \del^{I_1}\Omega^{I_2}\varrho^\coeff)^2 - g^{ab}\del_a \del^{I_1}\Omega^{I_2}\varrho^\coeff\del_b \del^{I_1}\Omega^{I_2}\varrho^\coeff\big) - \frac{1}{2}\del_0\big((3\coeff)^{-1/2}\del^{I_1}\Omega^{I_2}\varrho^\coeff\big)^2
\\
& \quad + \del_a\big(g^{a\beta}\del^{I_1}\Omega^{I_2}\varrho^\coeff \del_{\beta} \del^{I_1}\Omega^{I_2}\varrho^\coeff\big)
+ \frac{1}{2}\del_tg^{\alpha\beta}\del_{\alpha}\del^{I_1}\Omega^{I_2}\varrho^\coeff\del_{\beta}\del^{I_1}\Omega^{I_2}\varrho^\coeff
\\
&\quad - \del_{\alpha}g^{\alpha\beta}\del_0\del^{I_1}\Omega^{I_2}\varrho^\coeff \del_{\beta}\del^{I_1}\Omega^{I_2}\varrho^\coeff.
\endaligned
$$
For simplicity,  we set $v = \del^{I_1}\Omega^{I_2}\varrho^\coeff$ and obtain 
$$
\aligned
&\del_tv \big(\del^{I_1}\Omega^{I_2}F_R(h^\coeff, \phi^\coeff, \varrho^\coeff)\big)
\\
&= \frac{1}{2}\del_t\big(g^{00}(\del_t v)^2 - g^{ab}\del_a v\del_b v\big) - \frac{1}{2}\del_t\big((3\coeff)^{-1/2}v\big)^2
+ \del_a\big(g^{a\beta}\del_t v \del_{\beta}v\big)
\\
&\quad
+ \frac{1}{2}\del_tg^{\alpha\beta}\del_{\alpha}v\del_{\beta}v - \del_{\alpha}g^{\alpha\beta}\del_tv \del_{\beta}v. 
\endaligned
$$
Integrating this equation in the region $[0,t]\times \RR^3$ and using Stokes' formula, we obtain 
\begin{equation}\label{eq pr1 lem 7-13-1}
\aligned
&\int_0^t\int_{\RR^3}\del_t v \big(\del^{I_1}\Omega^{I_2}F_R(h^\coeff, \phi^\coeff, \varrho^\coeff)\big)dx dt
\\
&=\frac{1}{2}\big(E_{g,\coeff^{-1/2}}(t,\del^{I_1}\Omega^{I_2} \varrho^\coeff)\big)^2 - \frac{1}{2}\big(E_{g,\coeff^{-1/2}}(0,\del^{I_1}\Omega^{I_2} \varrho^\coeff)\big)^2
\\
 &\quad + \int_0^t\int_{\RR^3}\big(\frac{1}{2}\del_tg^{\alpha\beta}\del_{\alpha}v\del_{\beta}v  - \del_{\alpha}g^{\alpha\beta}\del_tv \del_{\beta}v\big)dxdt
\endaligned
\end{equation}
Then by \eqref{eq 1 lem 7-10-2}, \eqref{eq 1 lem 7-10-2.4}, \eqref{eq 1 lem 7-13-1} and \eqref{eq 1 prop 7-10-1}, we see that
$$
\bigg|\int_0^t\int_{\RR^3}\del_t v \big(\del^{I_1}\Omega^{I_2}F_R(h^\coeff, \phi^\coeff, \varrho^\coeff)\big)dx dt\bigg|
 \leq C(\eps_0, d)t(A\eps)^3\coeff^{1/2}.
$$
Also by \eqref{eq 1 prop 7-10-1} and the fact that $d\geq 4$
$$
\big|\del_\alpha g^{\alpha\beta}\big|_{L^\infty(\RR^3)}\leq C(\eps_0)\|h_{\alpha\beta}\|_{E_P^{d-1}}\leq C(\eps_0)A\eps,
$$
then
$$
\bigg|\int_0^t\int_{\RR^3}\big(\frac{1}{2}\del_tg^{\alpha\beta}\del_{\alpha}v\del_{\beta}v  - \del_{\alpha}g^{\alpha\beta}\del_tv \del_{\beta}v\big)dxdt\bigg| \leq C(\eps_0,d)t(A\eps)^3\coeff.
$$
Combining these two estimate with \eqref{eq pr1 lem 7-13-1}, we see that (with $0<\coeff\leq 1$)
$$
\big(E_{g,\coeff^{-1/2}}(t,\del^{I_1}\Omega^{I_2} \varrho^\coeff)\big)^2
\leq \big(E_{g,\coeff^{-1/2}}(0,\del^{I_1}\Omega^{I_2} \varrho^\coeff)\big)^2 + C(\eps_0,d)t(A\eps)^3\coeff^{1/2}.
$$
In view of the definition of $(E_{g,\coeff^{-1/2}}(t,u)$, this leads us to 
$$
\coeff^{-1}\|\del^{I_1}\Omega^{I_2}\varrho^{\coeff}(t,\cdot)\|^2_{L^2(\RR^3)}\leq C(\eps_0,d)\big(E_{g,\coeff^{-1/2}}(0,\del^{I_1}\Omega^{I_2} \varrho^\coeff)\big)^2 + C(\eps_0,d)t(A\eps)^3\coeff^{1/2}.
$$
\end{proof}

\begin{proof}[Proof of Theorem \ref{thm 7-13-1}]
By taking the difference of \eqref{eq conformal aug sec7 a} and \eqref{eq Einstein-wave a}, and the difference of \eqref{eq conformal aug sec7 b} and \eqref{eq Einstein-wave b}, we obtain the following two equations:
\begin{subequations}\label{eq pr1 thm 7-13-1}
\begin{equation}\label{eq pr1 thm 7-13-1 a}
\aligned
&\big(m^{\alpha'\beta'} + H^{\alpha'\beta'}(h^0)\big)\del_{\alpha'}\del_{\beta'}\big((h^0_{\alpha\beta} - h^\coeff_{\alpha\beta}\big)
\\
&= -\big(H^{\alpha'\beta'}(h^0) - H^{\alpha'\beta'}(h^\coeff)\big)\del_{\alpha'}\del_{\beta'}h^\coeff_{\alpha\beta}
 + \big(F_{\alpha\beta}(h^0,\del h^0,\del h^0) - F_{\alpha\beta}(h^\coeff,\del h^\coeff,\del h^\coeff)\big)
\\
 & \quad -16\pi\big(\del_{\alpha}\phi^0\del_{\beta}\phi^0 - \del_{\alpha}\phi^\coeff\del_{\beta}\phi^\coeff\big)
 +12 \del_{\alpha}\varrho^{\coeff}\del_\beta\varrho^\coeff + \coeff^{-1}V_h(\varrho^\coeff)\big(m_{\alpha\beta}+h_{\alpha\beta}^\coeff\big),
\endaligned
\end{equation}
\begin{equation}\label{eq pr1 thm 7-13-1 b}
\aligned
\big(m^{\alpha'\beta'} + H^{\alpha'\beta'}(h^0)\big)\del_{\alpha'}\del_{\beta'}\big(\phi^0 - \phi^\coeff\big)
&= -\big(H^{\alpha'\beta'}(h^0) - H^{\alpha'\beta'}(h^\coeff)\big)\del_{\alpha'}\del_{\beta'}\phi^\coeff
\\
 &-2\big(m^{\alpha'\beta'} + H^{\alpha'\beta'}(h^\coeff)\big)\del_{\alpha'}\phi^\coeff\del_\beta\varrho^\coeff.
\endaligned
\end{equation}
\end{subequations}
Then we will establish the estimates \eqref{eq 1 thm 7-13-1} from these two equations. The proof is quite similar to that of Lemma~\ref{lem 7-12-4} and Lemma~\ref{lem 7-12-5}.

\

\noindent{\bf Step I. The estimates on $L^2$ type norms.}
Let us begin with the $E^{d-2}_P$ norm of $\big(h_{\alpha\beta}^0 - h_{\alpha\beta}^\coeff\big)$. Let $(I_1,I_2)$ be a pair of multi-indices with $|I_1| + |I_2|\leq d-2$. We derive \eqref{eq pr1 thm 7-13-1} with respect to $\del^{I_1}\Omega^{I_2}$:
$$
\aligned
&\big(m^{\alpha'\beta'} + H^{\alpha'\beta'}(h^0)\big)\del_{\alpha'}\del_{\beta'}\del^{I_1}\Omega^{I_2}\big(h^0_{\alpha\beta} - h^\coeff_{\alpha\beta}\big)
\\
&= -\del^{I_1}\Omega^{I_2}\Big(\big(H^{\alpha'\beta'}(h^0) - H^{\alpha'\beta'}(h^\coeff)\big)\del_{\alpha'}\del_{\beta'}h^\coeff_{\alpha\beta}\Big)
+ \del^{I_1}\Omega^{I_2}\big(F_{\alpha\beta}(h^0,\del h^0,\del h^0) - F_{\alpha\beta}(h^\coeff,\del h^\coeff,\del h^\coeff)\big)
\\
 & \quad -16\pi\del^{I_1}\Omega^{I_2}\big(\del_{\alpha}\phi^0\del_{\beta}\phi^0 - \del_{\alpha}\phi^\coeff\del_{\beta}\phi^\coeff\big)
 -[\del^{I_1}\Omega^{I_2},H^{\alpha'\beta'}(h^0)\del_{\alpha'}\del_{\beta'}]\big(h^0_{\alpha\beta} - h^\coeff_{\alpha\beta}\big)
\\
 & \quad +12 \del^{I_2}\Omega^{I_2}\big(\del_{\alpha}\varrho^{\coeff}\del_\beta\varrho^\coeff\big) + \coeff^{-1}\del_x^{I_1}\Omega^{I_2}\Big(V_h(\varrho^\coeff)\big(m_{\alpha\beta}+h_{\alpha\beta}^\coeff\big)\Big)
\\
 &=: T_1 + T_2 + T_3 + T_4 + T_5 + T_6.
\endaligned
$$
Then combined with \eqref{eq 1 lem 7-7-2},
\begin{equation}\label{eq pr1.5 thm 7-13-1}
\frac{d}{dt}E_g(t,\del^{I_1}\Omega^{I_2}\big(h^0_{\alpha\beta} - h^\coeff_{\alpha\beta}\big))
\leq C\sum_{i=1}^6\|T_i\|_{L^2} + C\sum_{\alpha,\beta}\big\|H^{\alpha'\beta'}(h^0)\big\|_{L^\infty(\RR^3)}E_g(t,\del^{I_1}\Omega^{I_2}\big(h^0_{\alpha\beta} - h^\coeff_{\alpha\beta}\big)).
\end{equation}
We will need to control the $L^2$ norm of these $T_i$ for $i=1,\cdots,6$. The term $T_i$ for $i=1,2,3,4$ can be bounded as follows:
\begin{equation}\label{eq pr2 thm 7-13-1}
\aligned
\big\|T_i\big\|_{L^2(\RR^3)}& \leq C(\eps_0,d)A\eps\big(\|h^0 - h^\coeff\|_{E_H^{d-1}} + \|h^0 - h^\coeff\|_{E_P^{d-1}} \|\phi^0-\phi^\coeff\|_{E_P^{d-1}}\big)
\\
& \leq  C(\eps_0,d)A\eps \mathcal{D}^{d-1}(S^0,S^\coeff)(t).
\endaligned
\end{equation}
The proof is exactly the same to the one of \eqref{eq 1 lem 7-12-1} and \eqref{eq 1 lem 7-12-3} and we omit the details.

The {\sl key terms} $T_5$ and $T_6$ are bounded as follows:
\begin{equation}\label{eq pr3 thm 7-13-1}
\aligned
&\|T_5\|_{L^2(\RR^3)}\leq C(\eps_0,d) \coeff(A\eps)^2,
\\
&\|T_6\|_{L^2(\RR^3)}\leq C(\eps_0,d) \coeff^{1/2}(A\eps)^3
 + C(\eps_0,d)\big(E_{g,\coeff^{-1/2}}^{d-2}(0,\varrho^\coeff)\big)^2
\endaligned
\end{equation} 

The estimates on $T_5$ and $T_6$ is related to the refined estimates \eqref{eq 1 lem 7-13-1} and \eqref{eq 1 lem 7-13-2}. More precisely, $T_5$ is estimated by \eqref{eq 1 lem 7-10-2}, \eqref{eq 1 prop 7-10-1 b} and \eqref{eq 1 lem 7-13-1}.
The term $T_6$ is estimated by \eqref{eq 1 lem 7-10-2} and \eqref{eq 1 lem 7-13-2}. 

Next, we combine together the above estimates on $T_i$ and observe that
$$
\big\|H^{\alpha'\beta'}(h^0)\big\|_{L^{\infty}}\leq C(\eps_0)A\eps
$$
and
$$
E_g(t,\del^{I_1}\Omega^{I_2}\big(h^0_{\alpha\beta} - h^\coeff_{\alpha\beta}\big))\leq C(\eps_0,d)\mathcal{D}^{d-1}(S^0,S^\coeff)(t). 
$$
We can thus deduce from \eqref{eq pr1.5 thm 7-13-1} that
\begin{equation}\label{eq pr4 thm 7-13-1}
\aligned
\frac{d}{dt}E^{d-2}_{g}(t,\big(h^0_{\alpha\beta} - h_{\alpha\beta}^\coeff\big))
& \leq C(\eps_0,d)A\eps \mathcal {D}^{d-1}(S^0,S^\coeff)(t)
+ C(\eps_0,d)\coeff^{1/2}(A\eps)^3
\\
&+ C(\eps_0,d)\big( E_{g,\coeff^{-1/2}}^{d-2}(0,\varrho^\coeff)\big)^2,
\endaligned
\end{equation}
where we recall the definition 
$$
\mathcal {D}^{d}(S^0,S^\coeff)(t)
:= \sum_{\alpha\beta}\|h^0_{\alpha\beta} - h^\coeff_{\alpha\beta}\|_{E_H^d}
 + \|\phi^0-\phi^\coeff\|_{E_P^d}.
$$

The estimate on the norm $\|\phi^0-\phi^1\|_{X_E^d}$ is similar to that of $h^0-h^\coeff$ (even simpler). We claim that the following estimate on the right-hand-side of \eqref{eq pr1 thm 7-13-1 b}:
\begin{subequations}\label{eq pr5 thm 7-13-1}
\begin{equation}\label{eq pr5 thm 7-13-1 a}
\big\|\big(H^{\alpha'\beta'}(h^0) - H^{\alpha'\beta'}(h^\coeff)\big)\del_{\alpha'}\del_{\beta'}\phi^\coeff\big\|_{E^{d-2}}
\leq C(\eps_0)A\eps\|h^0-h^\coeff\|_{E_H^{d-1}},
\end{equation}
\begin{equation}\label{eq pr5 thm 7-13-1 b}
\big\|\big(m^{\alpha\beta}+H^{\alpha\beta}(h^\coeff)\big)\del_{\alpha}\phi^\coeff \del_\beta \varrho^\coeff\big\|_{E^{d-2}}\leq C(\eps_0)\coeff^{1/2}(A\eps)^2.
\end{equation}
\end{subequations}
The first can be proved exactly as in the proof of \eqref{eq pr4 thm 7-13-1}. The second one is proven as follows: for any $(I_1,I_2)$ with $|I_1|+|I_2|\leq d-2$,
$$
\aligned
&\big\|\del^{I_2}\Omega^{I_2}\big((m^{\alpha\beta}+H^{\alpha\beta}(h^\coeff))\del_{\alpha}\phi^\coeff \del_\beta \varrho^\coeff\big)\big\|_{L^2(\RR^3)}
\\
& \leq \sum_{J_1+J_1'=I_1\atop J_2+J_2'=I_2}\big\|\del^{J_1}\Omega^{J_2}\big(m^{\alpha\beta} + H^{\alpha\beta}(h^\coeff)\big)\big\|_{L^{\infty}(\RR^3)}
\big\|\del^{J_1'}\Omega^{J_2'}\big(\del_{\alpha}\phi^\coeff\del_{\beta}\varrho^\coeff\big)\big\|_{L^2(\RR^3)}
\\
& \leq C(\eps_0,d)\sum_{\alpha,\beta\atop |J_1'|+|J_2'|\leq d-2}\big\|\del^{J_1'}\Omega^{J_2'}\big(\del_{\alpha}\phi^\coeff\del_{\beta}\varrho^\coeff\big)\big\|_{L^2(\RR^3)}
\\
& \leq C(\eps_0,d)\sum_{\alpha,\beta\atop |J_1'|+|J_2'|\leq d-2}\sum_{K_1+K_1'=J_1'\atop K_2+K_2'=J_2'}
\big\|\del^{K_1}\Omega^{K_2}\del_{\alpha}\phi^\coeff\del_x^{K_1'}\Omega^{K_2'}\del_{\beta}\varrho^\coeff\big\|_{L^2(\RR^3)}
\endaligned
$$
Then, when $|K_1|+|K_2|\leq d-3$,
$$
\aligned
\big\|\del^{K_1}\Omega^{K_2}\del_{\alpha}\phi^\coeff\del^{K_1'}\Omega^{K_2'}\del_{\beta}\varrho^\coeff\big\|_{L^2(\RR^3)}
& \leq \big\|\del^{K_1}\Omega^{K_2}\del_{\alpha}\phi^\coeff\big\|_{L^{\infty}(\RR^3)}
\big\|\del^{K_1'}\Omega^{K_2'}\del_{\beta}\varrho^\coeff\big\|_{L^2(\RR^3)}
\\
& \leq C(\eps_0,d)A\eps \|\varrho^\coeff\|_{E^{d-1}}
\\
& \leq C(\eps_0,d)\coeff^{1/2}(A\eps)^2.
\endaligned
$$
When $|K_1|+|K_2|=d-2$ and $K_1'=K_2'=0$, recall that $d\geq 4$:
$$
\aligned
\big\|\del^{K_1}\Omega^{K_2}\del_{\alpha}\phi^\coeff\del^{K_1'}\Omega^{K_2'}\del_{\beta}\varrho^\coeff\big\|_{L^2(\RR^3)}
& \leq \big\|\del^{K_1}\Omega^{K_2}\del_{\alpha}\phi^\coeff\big\|_{L^2(\RR^3)}
\big\|\del_{\beta}\varrho^\coeff\big\|_{L^\infty(\RR^3)}
\\
& \leq C(\eps_0,d)A\eps \big\|\varrho^\coeff\big\|_{E^{d-1}}
\leq C(\eps_0,d)\coeff^{1/2}(A\eps)^2
\endaligned
$$
So we conclude with \eqref{eq pr5 thm 7-13-1 b}, and combined with \eqref{eq 1 lem 7-7-2},
\begin{equation}\label{eq pr6 thm 7-13-1}
\aligned
\frac{d}{dt}E^{d-2}_{g}(t,\big(\phi^0 - \phi^\coeff\big))
& \leq C(\eps_0,d)A\eps \mathcal {D}^{d-1}(S^0,S^\coeff)(t) + C(\eps_0,d)\coeff^{1/2}(A\eps)^2.
\endaligned
\end{equation}

\

\noindent
{\bf Step II. Estimate of the $\mathcal{E}_{-1}$ norm.} To do so we rewrite the equation \eqref{eq pr1 thm 7-13-1 a} into the following form:
\begin{equation}\label{eq pr7 thm 7-13-1}
\aligned
\Box \big((h^0_{\alpha\beta} - h^\coeff_{\alpha\beta}\big)
&= - H^{\alpha'\beta'}(h^0)\del_{\alpha'}\del_{\beta'}\big((h^0_{\alpha\beta} - h^\coeff_{\alpha\beta}\big)
 -\big(H^{\alpha'\beta'}(h^0) - H^{\alpha'\beta'}(h^\coeff)\big)\del_{\alpha'}\del_{\beta'}h^\coeff_{\alpha\beta}
\\
 &\quad+ \big(F_{\alpha\beta}(h^0,\del h^0,\del h^0) - F_{\alpha\beta}(h^\coeff,\del h^\coeff,\del h^\coeff)\big)
 -16\pi\big(\del_{\alpha}\phi^0\del_{\beta}\phi^0 - \del_{\alpha}\phi^\coeff\del_{\beta}\phi^\coeff\big)
\\
 & \quad +12 \del_{\alpha}\varrho^{\coeff}\del_\beta\varrho^\coeff + \coeff^{-1}V_h(\varrho^\coeff)\big(m_{\alpha\beta}+h_{\alpha\beta}^\coeff\big)
\\&=: T_0 + T_1 + T_2 + T_3 + T_4 + T_5 + T_6.
\endaligned
\end{equation}
By \eqref{eq 1 lem 7-7-1} we need to control the $\mathcal{E}_{-1}$ norm of the terms $T_i$ for $i=1,\cdots 6$. By \eqref{eq 1 lem 7-6-1}, we need only to control the $X^2$ norm of these terms. Recall the condition $d\geq 4$, then $d-2\geq 2$. So we only need to control the $X^{d-2}$ norm of these terms. Note that in the {\bf Step I} we have already controlled this norm for the terms $T_i$ with $i\geq 1$. Now we only need to control the $X^2$ norm of $T_0$.
Let $(I_1,I_2)$ be a pair of multi-indices with $|I_1|+|I_2|\leq 2$. Then, we have 
$$
\aligned
&\big\|\del_x^{I_1}\Omega^{I_2}\big(H^{\alpha'\beta'}(h^0)\del_{\alpha'}\del_{\beta'}(h_{\alpha\beta}^0-h_{\alpha\beta}^\coeff)\big)\big\|_{L^2(\RR^3)}
\\
& \leq  \sum_{J_1+J_1'=I_1\atop J_2+J_2'=I_2}\big\|\del_x^{J_1}\Omega^{J_2}\big(H^{\alpha'\beta'}(h^0)\big)
\del_x^{J_1'}\Omega^{J_2'}\del_{\alpha'}\del_{\beta'}(h_{\alpha\beta}^0-h_{\alpha\beta}^\coeff)\big)\big\|_{L^2(\RR^3)}
\\
& \leq  \sum_{J_1+J_1'=I_1\atop J_2+J_2'=I_2}\big\|\del_x^{J_1}\Omega^{J_2}\big(H^{\alpha'\beta'}(h^0)\big)\big\|_{L^\infty(\RR^3)}
\big\|\del_x^{J_1'}\Omega^{J_2'}\del_{\alpha'}\del_{\beta'}(h_{\alpha\beta}^0-h_{\alpha\beta}^\coeff)\big)\big\|_{L^2(\RR^3)}
\\
& \leq  C(\eps_0)\|h^0\|_{X^4}\|h^0-h^\coeff\|_{E_P^{3}}
\\
& \leq  C(\eps_0)A\eps\|h^0-h^\coeff\|_{E_P^{d-1}}.
\endaligned
$$
Then by \eqref{eq 1 lem 7-7-1}, the following estimate on $\mathcal{E}_{-1}$ norm holds:
\begin{equation}\label{eq pr7.5 thm 7-13-1}
\|h_{\alpha\beta}^0(t,\cdot) - h_{\alpha\beta}^\coeff(t,\cdot)\|_{\mathcal{E}_{-1}}
\leq C(\eps_0)t A\eps \int_0^t \mathcal{D}^{d-1}(S^0,S^\coeff)(\tau) d\tau
+ C(1+t)\big(\|{h^0_0}_{\alpha\beta} - {h_0}_{\alpha\beta}\|_{X_H^{d-1}}\big)
\end{equation}

\

\noindent
{\bf Step III: the conclusion.}
Now by integrating \eqref{eq pr4 thm 7-13-1} and \eqref{eq pr7 thm 7-13-1}, we get the following estimate:
\begin{subequations}\label{eq pr8 thm 7-13-1}
\begin{equation}\label{eq pr8 thm 7-13-1 a}
\aligned
E_{g}^{d-2}(t,h_{\alpha\beta}^0 - h_{\alpha\beta}^{\coeff})& \leq 
E_{g}^{d-2}(0,h_{\alpha\beta}^0 - h_{\alpha\beta}^{\coeff})
+C(\eps_0,d)A\eps \int_0^t \mathcal{D}^{d-1}(S^0,S^\coeff)(\tau)d\tau
\\
&\quad + C(\eps_0,d)\coeff^{1/2}(A\eps)^3t + C(\eps_0,d)t\big( E_{g,\coeff^{-1/2}}^{d-2}(0,\varrho^\coeff)\big)^2, 
\endaligned
\end{equation}
\begin{equation}\label{eq pr8 thm 7-13-1 b}
\aligned
E^{d-2}_{g}(t,\phi^0 - \phi^\coeff)
& \leq E^{d-2}_{g}(0,\big(\phi^0 - \phi^\coeff\big))
+C(\eps_0,d)A\eps \int_0^t \mathcal {D}^{d-1}(S^0,S^\coeff)(\tau)d\tau
\\
&+ C(\eps_0,d)\coeff^{1/2}(A\eps)^2t.
\endaligned
\end{equation}
\end{subequations}
Recall that $g$ is coercive with constant $C(\eps_0)$ when $\eps_0$ is sufficiently small. Then, we have 
\begin{equation}\label{eq pr9 thm 7-13-1}
\aligned
& \mathcal{D}^{d-1}(S^0 - S^\coeff)(t)
\\
& \leq C(\eps_0,d)\sum_{\alpha,\beta}E_{g}^{d-2}(t,h_{\alpha\beta}^0 - h_{\alpha\beta}^{\coeff})
 + C(\eps_0,d)\sum_{\alpha,\beta}\|h_{\alpha\beta}^0(t,\cdot) - h_{\alpha\beta}^\coeff(t,\cdot)\|_{\mathcal{E}_{-1}}
\\
&\quad
+C(\eps_0,d)E^{d-2}_{g}(t,\phi^0 - \phi^\coeff)\leq C(\eps_0,d)^2\mathcal{D}^{d-1}(S^0 - S^\coeff)(t).
\endaligned
\end{equation}
Then by combining \eqref{eq pr7.5 thm 7-13-1}, \eqref{eq pr8 thm 7-13-1 a}, \eqref{eq pr8 thm 7-13-1 b} and \eqref{eq pr9 thm 7-13-1}, the following estimate holds:
\begin{equation}\label{eq pr10 thm 7-13-1}
\aligned
\mathcal{D}^{d-1}(S^0,S^\coeff)(t)
& \leq C(\eps_0,d)(1+T^*)\mathcal{D}^{d-1}(S^0,S^\coeff)(0)
+ C(\eps_0,d)T^*\big( E_{g,\coeff^{-1/2}}^{d-2}(0,\varrho^\coeff)\big)^2
\\
&\quad+ C(\eps_0,d)T^*\coeff^{1/2}(A\eps)^3
\\
  &\quad+C(\eps_0,d)(1+T^*)A\eps\int_0^t\mathcal{D}^{d-1}(S^0,S^\coeff)(\tau)d\tau, 
\endaligned
\end{equation}
which yields 
$$
\aligned
&\mathcal{D}^{d-1}(S^0,S^\coeff)(t)
\\
& \leq C(\eps_0,d)(1+T^*)\Big(\mathcal{D}^{d-1}(S^0,S^\coeff)(0)
+\big( E_{g,\coeff^{-1/2}}^{d-2}(0,\varrho^\coeff)\big)^2 + \coeff^{1/2}(A\eps)^3\Big)e^{C(\eps_0,d)A\eps(1+T^*)t}. 
\endaligned
$$ 
\end{proof}
 

\section*{Acknowledgments}

The authors were partially supported by the Agence Nationale de la Recherche (ANR) through the grant 06-2-134423  and ANR SIMI-1-003-01. Part of this research was done in the Fall Semester 2013 when the first author was a visiting professor at the Mathematical Sciences Research Institute (Berkeley) and was supported by the National Science Foundation under Grant No. 0932078 000.



\begin{thebibliography}{99}


\bibitem{Bachelot88} 
{\sc A. Bachelot,}
Probl\`eme de Cauchy global pour des syst\`emes de Dirac-Klein-Gordon,
Ann. Inst. Henri Poincar\'e 48 (1988), 387--422.

\bibitem{Bachelot94} 
 {\sc A.~Bachelot,}
Asymptotic completeness for the Klein-Gordon equation on the Schwarzschild metric,
Ann. Inst. Henri Poincar\'e: Phys. Th\'eor. 61 (1994), 411--441.

\bibitem{BLMN} {\sc N. Bedjaoui, P.G. LeFloch, J. Martin-Garcia, and J. Novak,}
Existence of naked singularities for the Brans-Dicke theory. An analytical and numerical study,
Class. Quantum Gravity 27 (2010), 245010.

\bibitem{BransDicke} {\sc C. Brans, and R.H. Dicke,}
Mach principle and a relativistic theory of gravitation,
Phys. Rev. 124 (1961), 925--935.

\bibitem{Buck} {\sc H.A. Buchdahl,}
Non-linear Lagrangians and cosmological theory,
Monthly Notices Royal Astr. Soc. 150 (1070), 1--8. 

\bibitem{CB}{\sc Y. Choquet-Bruhat},
{\sl General relativity and the Einstein equations}, Oxford Math. Monograph,
Oxford University Press, 2009

\bibitem{Delort01} 
{\sc J.-M. Delort,}
Existence globale et comportement asymptotique pour l'\'equation de Klein-Gordon quasi-lin\'eaire \`a donn\'ees petites en dimension $1$,
Ann. Sci. \'Ecole Norm. Sup. 34 (2001), 1--61.

\bibitem{Delort04} 
{\sc J.-M. Delort, D. Fang, and R. Xue,}
Global existence of small solutions for quadratic quasilinear Klein-Gordon systems in two space dimensions,
J. Funct. Anal. 211 (2004), 288--323.

\bibitem{Garcia} {\sc R. Garc\'{\i}a-Salcedo, R. Gonzalez, C. Moreno, Y. Napoles, Y. Leyva, and I. Quiros,}
Asymptotic properties of a supposedly regular (Dirac-Born-Infeld) modification of general relativity,
J. Cosm. Astr. Physics 2 (2010), 027. 

\bibitem{HE} {\sc S.W. Hawking and G.F.R. Ellis,}
{\sl The large scale structure of spacetime},
Cambridge Monographs on Math. Phys., Vol.~1, Cambridge Univ. Press, Cambridge, 1973.

\bibitem{Katayama} 
{\sc S. Katayama,}
Global existence for coupled systems of nonlinear wave and Klein-Gordon equations in three space dimensions,
Math. Z. 270 (2012), 487--513.

\bibitem{Lannes13} 
{\sc D. Lannes,}
Space-time resonances [after Germain, Masmoudi, Shatah],
 S\'eminaire Bourbaki, Vol. 2011/2012,
 Ast\'erisque No. 352 (2013), 355--388.

\bibitem{LeFlochMa1}
{\sc P.G. LeFloch and Y. Ma,}
{\sl The hyperboloidal foliation method,}
 World Scientific, 2014.

\bibitem{LeFlochMa2}
{\sc P.G. LeFloch and Y. Ma,} in preparation. 

\bibitem{MagnanoSokolowski} {\sc G. Magnano and L.M. Sokolowski,}
On physical equivalence between modified gravity theories and a general relativistic self-gravitating scalar field,
Phys. Rev. D 50 (1994), 5039--5059.

\bibitem{SalgadoMartinez} {\sc M. Salgado and D. Martinez-del Rio,}
The initial value problem of scalar-tensor theories of gravity,
in: VII Mexican School on Gravitation and Mathematical Physics,
J. Phys. Conf. Ser. 91, (2007), 012004.

\bibitem{Taylor11}
{\sc M.E. Taylor,}
{\sl   Partial differential equations III. Nonlinear equations,}
Appl. Math. Sc., Vol.~117, Springer Verlag, New York, 2011.

\end{thebibliography}
\end{document}